\numberwithin{equation}{section}
\theoremstyle{plain}
\newtheorem{theorem}{Theorem}[section]
\newtheorem{condition}{Condition}[section]
\newtheorem{corollary}{Corollary}[section]
\newtheorem{criterion}{Criterion}
\newtheorem{example}{Example}
\newtheorem{lemma}[theorem]{Lemma}
\newtheorem{proposition}{Proposition}[section]
\newtheorem{remark}{Remark}[section]
\newcommand{\RNum}[1]{\uppercase\expandafter{\romannumeral #1\relax}}
\newenvironment{customthm}[1]
{\innercustomthm}
  {\endinnercustomthm}
\begin{document}

\begin{frontmatter}
\title{Asymptotically  Independent   U-Statistics in High-Dimensional    Testing\thanksref{T1}}
\runtitle{ASYMPTOTICALLY  INDEPENDENT U-STATISTICS}
\thankstext{T1}{
This research is supported by NSF grants DMS-1711226, DMS-1712717, SES-1659328, CAREER SES-1846747,    and NIH grants R01GM113250, R01GM126002, R01HL105397 and R01HL116720}

\begin{aug}
\author{\fnms{Yinqiu} \snm{He}\thanksref{t1}\ead[label=e1]{yqhe@umich.edu }},
\author{\fnms{Gongjun} \snm{Xu}\thanksref{t1}\ead[label=e2]{gongjun@umich.edu }}
\author{\fnms{Chong} \snm{Wu}\thanksref{t2}\ead[label=e3]{cwu3@fsu.edu}}
\and
\author{\fnms{Wei} \snm{Pan}\thanksref{t3}
\ead[label=e4]{panxx014@umn.edu}
\ead[label=u1,url]{http://www.foo.com}}

\runauthor{He et al.}

\affiliation{University of Michigan\thanksmark{t1},  Florida State University \thanksmark{t2}, and  University of Minnesota\thanksmark{t3}}

\address{Yinqiu He and Gongjun Xu\\
Department of Statistics\\
University of Michigan\\
\printead{e1}\\
\phantom{E-mail:\ }\printead*{e2}}

\address{Chong Wu\\
Department of Statistics\\
Florida State University\\
\printead{e3}\\
}

\address{Wei Pan\\
Division of Biostatistics\\
School of Public Health\\
University of Minnesota\\
\printead{e4}\\
}
\end{aug}

\begin{abstract}
Many high-dimensional hypothesis tests aim to globally examine   marginal or low-dimensional features of a high-dimensional joint distribution, such as testing of mean vectors, covariance matrices and regression coefficients. 
This paper constructs a family of U-statistics as unbiased estimators of the $\ell_p$-norms of those features. 
 We show that under the null hypothesis, the U-statistics of different finite orders are asymptotically independent and normally distributed. Moreover, they are also asymptotically independent with the maximum-type test statistic, whose limiting distribution is an extreme value distribution. Based on the asymptotic independence property, we   propose an adaptive testing procedure which combines $p$-values computed from the U-statistics of different orders. 
We further establish   power analysis results and  
   show that the proposed adaptive procedure maintains high power against various alternatives. 
\end{abstract}

\begin{keyword}[class=MSC]
\kwd[Primary]{ 62F03}
\kwd{62F05}
\end{keyword} 
\begin{keyword}
\kwd{High-dimensional hypothesis test}
\kwd{U-statistics}
\kwd{Adaptive testing}	
\end{keyword}

\end{frontmatter}

\section{Introduction} \label{sec:intromain}
 \paragraph{Motivation}
  
Analysis of high-dimensional data, whose dimension $p$ could be much larger than the sample size $n$, has emerged as an important and active research area \cite[e.g.,][]{fan2014challenges,Storey9440,friston2009modalities,fan2011sparse}.  In many large-scale inference problems, one is often interested in  globally testing some overall patterns of low-dimensional features of the high-dimensional random observations. One example is genome-wide association studies (GWAS), whose primary goal is to identify single nucleotide polymorphisms (SNPs) associated with certain complex diseases of interest. 
 A popular approach in GWAS is to perform  univariate tests which examine each SNP one by one. This however may lead to low statistical power due to the weak effect size of each SNP \cite{manolio2009finding} and the small  statistical significance threshold ($\sim  10^{-8}$)  chosen to control the multiple-comparison type \RNum{1} error   \citep{kim2016powerful}.
Researchers therefore have proposed to globally test a genetic marker set with   many SNPs \cite{wang2011gene,kim2016powerful} in order to achieve higher statistical power and to better understand  the underlying  genetic mechanisms.  


In this paper, we focus on a family of global testing problems in the high-dimensional setting, including testing of mean vectors,   covariance matrices and regression coefficients in generalized linear models. These problems can be formulated as $H_0: \mathcal{E}=\mathbf{0}$, where $\mathbf{0}$ is an all zero vector, $\mathcal{E}=\{e_l: l \in{\mathcal{L}} \}$ is a parameter vector with  $\mathcal{L}$ being the index set,  and $e_l$'s being the corresponding parameters of  interest, e.g., elements in mean vectors, covariance matrices or coefficients in generalized linear models. 
 For the global testing problem $H_0: \mathcal{E}=\mathbf{0}$ versus $H_A: \mathcal{E} \neq \mathbf{0}$, two different types of methods are often used in the literature.   {One is sum-of-squares-type statistics. They are usually powerful against ``dense'' alternatives,  where $\mathcal{E}$ has a high proportion of nonzero elements  with a large  $\|\mathcal{E} \|_2=\sum_{l\in\mathcal{L}}e_l^2$  or its weighted variants. See examples  in mean testing \cite[e.g.,][]{bai1996effect,goeman2006testing,srivastava2008test,chen2010mean,chen2014two,gregory2015two,srivastava2016raptt} and   covariance testing \cite[e.g.,][]{bai2009,ledoit2002,chen2011,lijun2012}. The other is maximum-type statistics. They are  usually   powerful against ``sparse"  alternatives, where $\mathcal{E}$ has few nonzero elements with  a large $\|\mathcal{E} \|_{\infty}$   \citep[e.g.,][]{jiang2004,liu2008,hall2010,cai2011,cai2013two,cai2014two,shao2014}.   }
 More recently,     \cite{fan2015power,weightstatsyang2017}  also proposed to combine these two kinds of test statistics. However, for  denser or only moderately dense alternatives, neither of these two types of statistics may be powerful, as will be further illustrated in this paper both theoretically and numerically. 
  Importantly, in real applications,   the underlying  truth is usually unknown, which  could be either sparse,  dense, or in-between. 
 As global testing could be highly underpowered if an inappropriate testing method is used \citep[e.g.,][]{colantuoni2011temporal}, it is desired in practice to have a  testing procedure with high statistical power against a variety of alternatives.

 \paragraph{A Family of Asymptotically Independent U-Statistics} 
  
   To address these issues, we propose a  U-statistics framework   and introduce its applications  to adaptive  high-dimensional  testing. The  U-statistics  framework constructs {\it unbiased} and {\it asymptotically independent} estimators of   $\| \mathcal{E} \|_{a}^a:=\sum_{l\in \mathcal{L}}e_l^a$ for  different (positive) integers $a$, where $a=2$ corresponds to a sum-of-squares-type statistic, and  an even integer $a\rightarrow \infty$ yields a maximum-type statistic. The adaptive testing  then combines the information from different $\| \mathcal{E} \|_{a}^a$'s, and our power analysis shows that it is powerful against a wide range of alternatives, from highly sparse, moderately sparse to dense, to highly dense.

 To illustrate our idea, suppose $\mathbf{z}_1, \ldots, \mathbf{z}_n$ are $n$ independent and identically distributed (i.i.d.) copies of a random vector $\mathbf{z}$. We consider the setting where each parameter $e_l$ has an   unbiased kernel function estimator $K_l(\mathbf{z}_{i_1},\ldots,\mathbf{z}_{i_{\gamma_l}})$, and $\gamma_l$ is the smallest integer such that for any $1 \leq i_1\neq \ldots \neq i_{\gamma_l} \leq n$,  $
	\mathrm{E} [ K_l(\mathbf{z}_{i_1},\ldots,\mathbf{z}_{i_{\gamma_l}}) ]=e_l$. This includes many testing problems on moments of low orders, such as entries in mean vectors,  covariance matrices and score vectors of generalized linear models, which shall be discussed in details.
The family of U-statistics can be constructed generally as follows. For integers $a \geq 1$, and $1\leq i_1 \neq \ldots \neq i_{\gamma_l} \neq \ldots \neq i_{(a-1)\times\gamma_l+1} \ldots  \neq i_{a\times \gamma_l } \leq n $, since the $\mathbf{z}$'s are i.i.d., we have
$\mathrm{E}[ K_l(\mathbf{z}_{i_1},\ldots, \mathbf{z}_{i_{\gamma_l}}) \cdots K_l(\mathbf{z}_{i_{(a-1)\times \gamma_l+1}},\ldots, \mathbf{z}_{i_{a\times \gamma_l }})  ]=e_l^a.$
 Therefore, we can construct an unbiased estimator of the parameters of augmented powers  $e_l^a$ with different $a$.  
Then $\|\mathcal{E}\|_a^a$ has an unbiased estimator
 \begin{align}
\mathcal{U}(a)=& \sum_{l \in \mathcal{L} } ( P^n_{a\times \gamma_l })^{-1} \sum_{1\leq i_1\neq \ldots \neq i_{a\times  \gamma_l } \leq n} \prod_{k=1}^a K_l(\mathbf{z}_{i_{(k-1)\times \gamma_l+1}},\ldots, \mathbf{z}_{i_{k\times \gamma_l}}), \label{eq:generalustat}
\end{align} where $P^n_{k}=n!/(n-k)!$ denotes the number of $k$-permutations of $n$. We call $a$ the order of the U-statistic  $\mathcal{U}(a)$.
  If $a>b$, we say $\mathcal{U}(a)$ is of higher order than $\mathcal{U}(b)$ and vice versa.
 
This   construction procedure can be applied to many testing problems. We give three common  examples below for illustration and more detailed case-studies will be discussed in Sections \ref{sec:mainexamsec} and  \ref{sec:extension}. 
\begin{example}\label{eg:1}
 Consider one-sample mean testing of $H_0: \boldsymbol{\mu}=\mathbf{0}$, where  $\mathcal{E}=\boldsymbol{\mu}$  is the mean vector of a $p$-dimensional random vector $\mathbf{x}$. 
 Suppose $\mathbf{x}_1, \ldots, \mathbf{x}_n$ are $n$ i.i.d. copies of $\mathbf{x}$.  For each  $i=1,\ldots ,n$, $j = 1,\ldots, p $,  $x_{i,j}$ is a simple unbiased estimator of $\mu_j$, then we can take the kernel function  $K_j(\mathbf{x}_i)=x_{i,j}$. Following \eqref{eq:generalustat}, we know the U-statistic   
\begin{eqnarray*}
 \mathcal{U}(a)=(P^n_a)^{-1} \sum_{j=1}^p \sum_{1\leq i_1 \neq  \ldots \neq i_a \leq n} \prod_{k=1}^a x_{i_k,j }
\end{eqnarray*}
 is an unbiased estimator of $\| \mathcal{E} \|_a^a=\| \boldsymbol{\mu} \|_a^a = \sum_{j=1}^p \mu_j^a$. 
Please see Section  \ref{maintest} for the two-sample mean testing example and related theoretical properties.
\end{example}

\begin{example} \label{eg:2}
Suppose $\mathbf{x}_1, \ldots, \mathbf{x}_n$ are $n$ i.i.d. copies of a random vector $\mathbf{x}$ with mean vector $\boldsymbol{\mu}=\mathbf{0}$ and covariance matrix $\boldsymbol{\Sigma}=\{\sigma_{j_1,j_2}\}_{p\times p}$. For covariance testing $H_0:\sigma_{j_1,j_2}=0$ for any $1\leq j_1 \neq j_2 \leq p$, we have $\mathcal{E}=\{ \sigma_l: l \in \mathcal{L}\}$ with $\mathcal{L}=\{(j_1,j_2): 1\leq j_1 \neq j_2 \leq p \}$. Since $x_{i,j_1}x_{i,j_2}$ is a simple unbiased estimator of $\sigma_{j_1,j_2}$, then for each pair $l=(j_1,j_2)\in \mathcal{L}$, we can take the kernel function $K_l(\mathbf{x}_i)= x_{i,j_1}x_{i,j_2}$. Following \eqref{eq:generalustat},   the U-statistic 
\begin{eqnarray*}
	 \mathcal{U}(a)=(P^n_a)^{-1} \sum_{1\leq j_1 \neq j_2 \leq p} \sum_{1\leq i_1 \neq \ldots \neq i_a \leq n} \prod_{k=1}^a (x_{i_k,j_1 } x_{i_k,j_2}) 
\end{eqnarray*} 
is an unbiased estimator of $\| \mathcal{E} \|_a^a=\sum_{1\leq j_1 \neq j_2 \leq p}\sigma_{j_1, j_2}^a$.
Please see Section \ref{sec:mainexamsec} for the general case with unknown $\boldsymbol{\mu}$.
\end{example}  

\begin{example} \label{eg:3}
Consider a response variable $y$ and its covariates $\mathbf{x}\in {\mathbb  R}^p$ following a generalized linear model: $\mathrm{E}(y|\mathbf{x})=g^{-1}(\mathbf{x}^{\intercal}\boldsymbol{\beta})$, where $g$ is the canonical link function and $\boldsymbol{\beta}\in {\mathbb  R}^p$ are the regression coefficients. Suppose that  $(\mathbf{x}_i ,y_i) $,  $i=1,\ldots, n$, are i.i.d. copies of $(\mathbf{x} ,y)$. For testing $H_0: \boldsymbol{\beta}=\boldsymbol{\beta}_0$, the  score vectors  $(S_{i,j} =(y_i-\mu_{0,i})x_{i,j}: j=1,\ldots,p)^{\intercal}$ are often used in the literature, where  $\mu_{0,i}=g^{-1}(\mathbf{x}_i^{\intercal}\boldsymbol{\beta}_0)$. Note that $\mathrm{E}(S_{i,j})=0$ under $H_0$.  Thus to test $H_0$, we can take $\mathcal{E}=\{\mathrm{E}(S_{i,j}): j=1,\ldots,p\}$ and use  the U-statistic 
	\begin{eqnarray*}
	\mathcal{U}(a)=(P^n_a)^{-1}  \sum_{j=1}^p \sum_{1\leq i_1 \neq \ldots \neq i_a \leq n} \prod_{k=1}^a S_{i_k,j},
\end{eqnarray*} which  is an unbiased estimator of $\|\mathcal{E}\|_a^a=\sum_{j=1}^p \{ \mathrm{E}(S_{i,j})\}^a$. Please see Section  \ref{sec:newglm}. 
\end{example}

\paragraph{Related Literature} 
For high-dimensional testing, some other adaptive testing procedures have  recently been proposed in \cite[][]{pan2014powerful,xu2016adaptive,wu2019adaptive}. 
These works combine the $p$-values of a family of sum-of-powered statistics that are powerful against different $\|\mathcal{E} \|_{a}^a$'s.
However in these existing works, to evaluate the $p$-value of the adaptive test statistic, the joint asymptotic distribution of the statistics is  difficult to obtain or calculate. Accordingly computationally expensive resampling methods are often used in practice \cite{pan2014powerful,kim2016powerful,xu2017adaptive}. 
 For some special cases such as  testing means and the coefficients of generalized linear models,  \cite{xu2016adaptive} and \cite{wu2019adaptive} derived the limiting distributions of  the test statistics under the framework of a family of von Mises V-statistics. 
 However,  the constructed V-statistics are usually \textit{correlated}  and  \textit{biased} estimators of the target $\|\mathcal{E}\|_a^a$. It follows that in \cite{xu2016adaptive} and \cite{wu2019adaptive},  numerical approximations are still needed to calculate the tail probabilities of the adaptive test statistics;  see Remark \ref{rm:comparewithxu} and Section \ref{sec:newglm}.    In addition, these existing adaptive testing works mainly focus on the first-order moments, and their results  do not directly  apply to testing second-order moments,  such as covariance matrices.
 
To overcome these issues, this paper considers the proposed family of unbiased U-statistics.
There are some other recent works providing important results on high-dimensional U-statistics \cite[e.g.,][]{chen2018gaussian,leung2018testing,zhong2011tests}. For instance, \cite{zhong2011tests} considered testing the regression coefficients in  linear models using the fourth-order U-statistic; \cite{leung2018testing} studied the limiting distributions of rank-based U-statistics; and \cite{chen2018gaussian} studied bootstrap approximation of the second-order U-statistics. 
 However, these results do not directly apply to the   high-order  U-statistics considered in this paper. 
 

\paragraph{Our Contributions} We   establish the   theoretical properties of the U-statistics in various  high dimensional testing problems, including  testing mean vectors, regression coefficients of generalized linear models,  and  covariance matrices. Our contributions are summarized as follows.

Under the null hypothesis, we show that the normalized U-statistics  of different finite orders are  jointly normally distributed.  The result applies generally for any asymptotic regime with $n\to\infty$ and $p\to\infty$.  
In addition, we prove that all the finite-order U-statistics are asymptotically independent with each other under the null hypothesis.  
Moreover, we prove that U-statistics of finite orders are also asymptotically independent of the maximum-type test statistic with a limiting extreme value distribution.

Under the alternative hypothesis, we further analyze  the  asymptotic power  for U-statistics of different  orders. We show that   when $\mathcal{E}$ has denser  nonzero entries, $\mathcal{U}(a)$'s of lower orders  tend to be more powerful; and when $\mathcal{E}$ has  sparser nonzero entries, $\mathcal{U}(a)$'s of higher orders tend to be more powerful.
More interestingly, we show that in the  boundary case of ``moderate" sparsity levels,  $\mathcal{U}(a)$ with a finite $a>2$ gives the highest power among the family of U-statistics, clearly indicating the inadequacy of both the sum-of-squares- and the maximum-type statistics.

An important application of the independence property among $\mathcal{U}(a)$'s is to construct adaptive testing procedures by combining the information of different $\mathcal{U}(a)$'s, whose univariate distributions or $p$-values can be easily combined to form a joint distribution to calculate the $p$-value of an adaptive test statistic. Compared with other existing works \cite[e.g.,][]{xu2016adaptive,wu2019adaptive}, numerical approximations of tail probabilities are no longer needed.  
As shown in the   power analysis, an adaptive integration of information across   different tests leads to a powerful testing procedure. 


The rest of the paper is organized as follows. In Sections \ref{sec:mainexamsec} and \ref{sec:twosimdata}, we illustrate the  framework by a covariance testing problem. Particularly, in Section \ref{sec:aymindptusec}, we study the U-statistics under  null hypothesis; in Section \ref{sec:powerana}, we analyze the power of the U-statistics; in Section \ref{sec:computtest}, we develop an adaptive testing procedure. In Sections \ref{sec:simulation} and \ref{sec:realdata}, we report  simulations and a real dataset analysis. In  Section \ref{sec:extension}, we  study other high-dimensional testing  problems, including testing means, regression coefficients and two-sample covariances. In Section \ref{sec:discuss}, we discuss several   extensions   of the proposed framework. We give proofs and other stimulations in \ref{suppA}. 

\section{Motivating Example: One-Sample Covariance Testing} \label{sec:mainexamsec}
The constructed family of U-statistics and adaptive testing procedure can be applied to   various high-dimensional testing problems. In this section, we illustrate the framework with a motivating example of one-sample covariance testing.  
Analogous results for other high-dimensional testing problems in Section \ref{sec:extension} can be obtained following similar analyses.
 We showcase the study of one-sample covariance testing  problem since this is more challenging than mean testing due to the two-way dependency structure and the one-sample problem can be used as the building block for more general cases.

 Specifically, we focus on testing
\begin{eqnarray}
	{H}_0: \sigma_{j_1,j_2}=0 \quad  \forall \ 1 \leq j_1 \neq j_2 \leq p,  \label{eq:nullhypindepen}
\end{eqnarray} where $\boldsymbol{\Sigma}=\{\sigma_{j_1,j_2}: 1 \leq j_1, j_2\leq p\}$ is the covariance matrix of a $p$-dimensional real-valued random vector $\mathbf{x}=(x_1,\ldots, x_p)^{\intercal}$ with $\mathrm{E}(\mathbf{x})=\boldsymbol{\mu}=(\mu_{1}, \ldots, \mu_{p})^{\intercal}$. The observed data include   $n$ i.i.d. copies of $\mathbf{x}$, denoted by $\mathbf{x}_1, \ldots, \mathbf{x}_n$ with $\mathbf{x}_i=(x_{i,1},\ldots, x_{i,p})^{\intercal}$. In factor analysis, testing $H_0$ in \eqref{eq:nullhypindepen} can be used to examine  whether $\boldsymbol{\Sigma}$ has any significant factor or not \cite{anderson1958introduction}.

Global testing of covariance structure plays an important role in many statistical analysis and applications; see a review in  \cite{caicovreview2017}. 
Conventional tests include  the likelihood ratio test, John's test, and Nagao's test, etc.  \citep[][]{anderson1958introduction, muirhead2009aspects}. These methods, however, often fail in  the high-dimensional setting when both $n,p \rightarrow \infty$. To address this issue,   new procedures have been recently proposed \cite[e.g.,][]{bai2009,jiang2013,johnstone2001, soshnikov2002note,schott2007test, peche2009universality,ledoit2002, chen2011,jiang2004,liu2008,cai2011,lijun2012,shao2014,lan2015testing}.
However these  methods might suffer from loss of power when the sparsity level of the alternative covariance matrix varies. In the following subsections, we introduce the general U-statistics framework, study their asymptotic properties,  and develop a powerful adaptive testing procedure.

 We  introduce some notation.  For two series of numbers $u_{n,p}$, $v_{n,p}$ that depend on $n, p$: 
  $u_{n,p}=o(v_{n,p})$ denotes $\limsup_{n,p \rightarrow \infty} |u_{n,p}/v_{n,p}| =0$; 
  $u_{n,p}=O(v_{n,p})$  denotes $\limsup_{n,p \rightarrow \infty} |u_{n,p}/v_{n,p}| <\infty$; 
 $u_{n,p}=\Theta( v_{n,p})$ denotes $0< \liminf_{n,p \rightarrow \infty} |u_{n,p}/v_{n,p}| \leq \limsup_{n,p \rightarrow \infty} |u_{n,p}/v_{n,p}| <\infty$; $u_{n,p} \simeq v_{n,p}$ denotes $\lim_{n,p\rightarrow \infty} u_{n,p} /v_{n,p}=1$. Moreover, $\xrightarrow{P}$ and $\xrightarrow{D}$ represent the convergence in probability and distribution respectively. For $p$-dimensional random vector $\mathbf{x}$ with mean $\boldsymbol{\mu}$ and $\forall j_1, \ldots, j_t \in \{1,\ldots, p\}$, we write the central moment as
 \begin{eqnarray}
	\Pi_{j_1,\ldots ,j_t} =\mathrm{E}[(x_{j_1}-\mu_{j_1})\ldots (x_{j_t}-\mu_{j_t})]. \label{eq:highmomentdef}
\end{eqnarray}


\subsection{Asymptotically Independent U-Statistics} \label{sec:aymindptusec}  

For  testing   \eqref{eq:nullhypindepen}, the set of parameters that we are interested in is $\mathcal{E}=\{ \sigma_{j_1,j_2}:  1\leq j_1 \neq j_2 \leq p \}$. Following the previous analysis of \eqref{eq:generalustat}, since $\sigma_{j_1,j_2}$ has a simple unbiased estimator $x_{i_1,j_1}x_{i_1,j_2}-x_{i_1,j_1}x_{i_2,j_2}$ with $1\leq i_1\neq i_2 \leq n$, then for integers $a \geq 1$, an unbiased U-statistic of $\|\mathcal{E} \|_a^a=\sum_{1\leq j_1 \neq j_2 \leq p} \sigma_{j_1,j_2}^a$ is
\begin{align*}
	\mathcal{U}(a)=(P^n_{2a})^{-1}& \sum_{1\leq j_1 \neq j_2 \leq p} \sum_{1\leq i_1 \neq \ldots \neq i_{2a} \leq n} \prod_{k=1}^a (x_{i_{2k-1},j_1}x_{i_{2k-1},j_2}-x_{i_{2k-1},j_1}x_{i_{2k},j_2}). 
\end{align*}
 This is equivalent to
 \begin{eqnarray}
	\mathcal{U}(a)&=&\sum_{1 \leq j_1 \neq j_2 \leq p} \sum_{c=0}^a (-1)^c \binom{a}{c} \frac{1}{P^n_{a+c}}  \sum_{1\leq i_1\neq \ldots \neq i_{a+c} \leq n} \label{eq:originaluinvariance} \\
	&&\quad \prod_{k=1}^{a-c} (x_{i_k,j_1} x_{i_k,j_2})  \prod_{s=a-c+1}^{a} x_{i_{s},j_1}  \prod_{t=a+1}^{a+c}x_{i_{t},j_2}.  \notag
\end{eqnarray}
 
\begin{remark} \label{rm:anotherpesusstat}
The U-statistics can be constructed by another  method equivalently. Given  $1 \leq j_1 \neq j_2 \leq p$, define $\varphi_{j_1,j_2}=\sigma_{j_1,j_2}+\mu_{j_1}\mu_{j_2}$. Then 
 \begin{align}
 \sum_{1 \leq j_1\neq j_2\leq p}\sigma_{j_1, j_2}^a =\sum_{1 \leq j_1\neq j_2\leq p} \sum_{c=0}^a \binom{a}{c}  \varphi_{j_1,j_2}^{a-c} \times (-\mu_{j_1}\mu_{j_2})^{c}, \label{eq:unbaisedestgoal}
\end{align} which is a polynomial function of the moments $\mu_j$ and $\varphi_{j_1, j_2}$. Since $\mu_j$ and $\varphi_{j_1, j_2}$ have unbiased estimators $x_{i,j}$ and $x_{i,j_1}x_{i,j_2}$ respectively, then for $1\leq i_1 \neq \ldots \neq i_{a+c}\leq n$,
$  \mathrm{E} ( \prod_{k=1}^{a-c}x_{i_{k}, j_1}x_{i_{k}, j_2}  \prod_{s=a-c+1}^{a } x_{i_s, j_1}  \prod_{t=a+1}^{a+c } x_{i_t, j_2} )  =  \varphi_{j_1,j_2}^{a-c} \mu_{j_1} ^c \mu_{j_2}^c.
$ Given this and \eqref{eq:unbaisedestgoal}, the  U-statistics \eqref{eq:originaluinvariance} can be obtained.
\end{remark}

\begin{remark} \label{rm:leadingteststat}
The summed term with $c=0$ in \eqref{eq:originaluinvariance} is
\begin{align}
	\tilde{\mathcal{U}}(a):=\left(P^n_{a}\right)^{-1}\sum_{1\leq i_1 \neq \cdots \neq i_{a}\leq n}\sum_{1 \leq j_1\neq j_2\leq p}  \prod_{k=1}^a ( x_{i_k,j_1} x_{i_k,j_2}), \label{eq:originleadingterm} 
\end{align} which has the same form as the simplified U-statistic for mean zero observations in Example \ref{eg:2}, and is shown to be the leading term of \eqref{eq:originaluinvariance} in proof. 
\end{remark}

We next introduce  some nice properties of the U-statistics \eqref{eq:originaluinvariance}. The first one is the following  location invariant property.
 \begin{proposition}\label{prop:locinvariance}
$\mathcal{U}(a)$ constructed as in \eqref{eq:originaluinvariance} is location invariant; that is,  for any  vector $\mathbf{\Delta}\in {\mathbb  R}^p$,  the U-statistic  constructed based on the transformed data   $\{\mathbf{x}_i+\mathbf{\Delta}: i=1,\ldots,n\}$ is still $\mathcal{U}(a)$. 
 \end{proposition}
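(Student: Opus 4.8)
The plan is to argue directly from the product representation of $\mathcal{U}(a)$ displayed just above \eqref{eq:originaluinvariance},
\[
\mathcal{U}(a)=\frac{1}{P^n_{2a}}\sum_{1\leq j_1\neq j_2\leq p}\ \sum_{1\leq i_1\neq\cdots\neq i_{2a}\leq n}\ \prod_{k=1}^a\bigl(x_{i_{2k-1},j_1}x_{i_{2k-1},j_2}-x_{i_{2k-1},j_1}x_{i_{2k},j_2}\bigr),
\]
and to show that substituting $x_{i,j}\mapsto x_{i,j}+\Delta_j$ leaves each inner sum unchanged. The structural fact that makes this work is that the $k$-th kernel factor factorizes as $x_{i_{2k-1},j_1}\bigl(x_{i_{2k-1},j_2}-x_{i_{2k},j_2}\bigr)$; under the shift it becomes $\bigl(x_{i_{2k-1},j_1}+\Delta_{j_1}\bigr)\bigl(x_{i_{2k-1},j_2}-x_{i_{2k},j_2}\bigr)$, because the $\Delta_{j_2}$ contributions cancel inside the difference and the surviving $\Delta_{j_1}$ is attached only to the ``odd'' index $i_{2k-1}$.

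Next I would expand the transformed product $\prod_{k=1}^a(x_{i_{2k-1},j_1}+\Delta_{j_1})(x_{i_{2k-1},j_2}-x_{i_{2k},j_2})$ binomially in the $\Delta_{j_1}$ variables, obtaining $\sum_{S\subseteq\{1,\dots,a\}}\Delta_{j_1}^{a-|S|}\bigl(\prod_{k\in S}x_{i_{2k-1},j_1}\bigr)\prod_{k=1}^a(x_{i_{2k-1},j_2}-x_{i_{2k},j_2})$. The key observation is that whenever $S\neq\{1,\dots,a\}$, picking any $k_0\notin S$, the pair of running indices $i_{2k_0-1}$ and $i_{2k_0}$ occurs nowhere in that term except inside the single difference factor $x_{i_{2k_0-1},j_2}-x_{i_{2k_0},j_2}$: even indices never enter the $j_1$-factors, and the odd index $i_{2k_0-1}$ is absent from $\prod_{k\in S}x_{i_{2k-1},j_1}$ precisely because $k_0\notin S$. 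Hence summing over the ordered pair $(i_{2k_0-1},i_{2k_0})$ with the remaining $2a-2$ indices held fixed gives a sum that is antisymmetric under swapping the two indices, so it is $0$. Therefore only the term $S=\{1,\dots,a\}$ survives, and there the product collapses back to $\prod_{k=1}^a(x_{i_{2k-1},j_1}x_{i_{2k-1},j_2}-x_{i_{2k-1},j_1}x_{i_{2k},j_2})$; consequently the statistic built from the shifted data coincides term-by-term with $\mathcal{U}(a)$, the outer sum over $j_1\neq j_2$ being irrelevant.

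The only real obstacle is bookkeeping: one must carefully track which of $i_1,\dots,i_{2a}$ appears in which factor after the $\Delta_{j_1}$-expansion, so that the ``appears nowhere else'' claim behind the antisymmetry cancellation is fully justified; nothing here is deep, and the equivalent expansion \eqref{eq:originaluinvariance} can serve as an independent cross-check. I would also record a one-line conceptual alternative: each $\sigma_{j_1,j_2}$, hence $\|\mathcal{E}\|_a^a$, is a location-invariant functional of the sampling distribution, and the symmetric degree-$2a$ U-kernel that is unbiased for a prescribed functional over all distributions is unique (by completeness of the order statistics), so applying this to the shifted kernel yields the proposition immediately; I would present the elementary combinatorial proof as the main argument and this shortcut as a remark.
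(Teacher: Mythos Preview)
Your argument is correct and rests on the same two observations the paper uses: the factorization $x_{i_{2k-1},j_1}(x_{i_{2k-1},j_2}-x_{i_{2k},j_2})$ kills the $\Delta_{j_2}$-shift immediately, and the $\Delta_{j_1}$-terms vanish by antisymmetry of the difference factor under swapping the odd/even index pair in the full permutation sum. The organizational difference is that the paper proceeds by a telescoping induction on $a$---verifying $a=1$, then $a=2$ by shifting one kernel factor at a time, then invoking ``by induction'' for $a\geq 3$---whereas your binomial expansion in $\Delta_{j_1}$ dispatches all $a$ in one stroke by isolating a single $k_0\notin S$ and summing over $(i_{2k_0-1},i_{2k_0})$. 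Your route is slightly cleaner and avoids the inductive scaffolding, while the paper's step-by-step version makes the $a=1,2$ cases fully explicit. The completeness-of-order-statistics remark is a nice high-level sanity check, but as you anticipate it would need more care (the kernel here is not symmetric and the uniqueness claim requires specifying the distribution class), so keeping the elementary combinatorial proof as the main argument is the right call.
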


 The  following  proposition   verifies that the constructed U-statistics are unbiased estimators of $\|\mathcal{E}\|_a^a=\sum_{1 \leq j_1\neq j_2\leq p}\sigma_{j_1,j_2}^a$. 
\begin{proposition} \label{prop:unbaisedestimatorprop}
For any   integer $a$, 
$
	\mathrm{E} [\mathcal{U}(a)]=\sum_{1 \leq j_1\neq j_2\leq p}\sigma_{j_1, j_2}^a .
$ Under $H_0$ in \eqref{eq:nullhypindepen},   
$
	\mathrm{E} [\mathcal{U}(a)]=0.
$
\end{proposition}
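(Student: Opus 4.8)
The plan is to compute $\mathrm{E}[\mathcal{U}(a)]$ directly, using linearity of expectation together with the independence of $\mathbf{x}_1,\dots,\mathbf{x}_n$, and then to collapse the resulting polynomial identity. I would work with the product-of-kernels form of $\mathcal{U}(a)$, namely $\mathcal{U}(a)=(P^n_{2a})^{-1}\sum_{1\leq j_1\neq j_2\leq p}\sum_{1\leq i_1\neq\cdots\neq i_{2a}\leq n}\prod_{k=1}^a\big(x_{i_{2k-1},j_1}x_{i_{2k-1},j_2}-x_{i_{2k-1},j_1}x_{i_{2k},j_2}\big)$. Fix a pair $(j_1,j_2)$ with $j_1\neq j_2$ and a tuple of distinct indices $(i_1,\dots,i_{2a})$. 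The $a$ factors of the product are functions of the disjoint blocks of observations $\{\mathbf{x}_{i_{2k-1}},\mathbf{x}_{i_{2k}}\}$, $k=1,\dots,a$, hence are independent, so the expectation of the product equals the product of the expectations; and each factor has expectation $\mathrm{E}[x_{i_{2k-1},j_1}x_{i_{2k-1},j_2}]-\mathrm{E}[x_{i_{2k-1},j_1}]\,\mathrm{E}[x_{i_{2k},j_2}]=(\sigma_{j_1,j_2}+\mu_{j_1}\mu_{j_2})-\mu_{j_1}\mu_{j_2}=\sigma_{j_1,j_2}$, using $i_{2k-1}\neq i_{2k}$. Thus the expectation of the inner summand is $\sigma_{j_1,j_2}^a$, the same value for every tuple of distinct indices.

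Since there are exactly $P^n_{2a}$ such tuples, the normalizer $(P^n_{2a})^{-1}$ cancels and $\mathrm{E}[\mathcal{U}(a)]=\sum_{1\leq j_1\neq j_2\leq p}\sigma_{j_1,j_2}^a$, which is the first claim. The second claim is then immediate: under $H_0$ in \eqref{eq:nullhypindepen} every off-diagonal $\sigma_{j_1,j_2}$ vanishes, so the sum is $0$.

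Equivalently, one may start from the expanded representation \eqref{eq:originaluinvariance}: for distinct $i_1,\dots,i_{a+c}$, independence gives $\mathrm{E}\big[\prod_{k=1}^{a-c}x_{i_k,j_1}x_{i_k,j_2}\prod_{s=a-c+1}^{a}x_{i_s,j_1}\prod_{t=a+1}^{a+c}x_{i_t,j_2}\big]=\varphi_{j_1,j_2}^{a-c}\mu_{j_1}^c\mu_{j_2}^c$ with $\varphi_{j_1,j_2}=\sigma_{j_1,j_2}+\mu_{j_1}\mu_{j_2}$, again independent of the chosen tuple, so the $(P^n_{a+c})^{-1}$ factors cancel and $\mathrm{E}[\mathcal{U}(a)]=\sum_{1\leq j_1\neq j_2\leq p}\sum_{c=0}^a(-1)^c\binom{a}{c}\varphi_{j_1,j_2}^{a-c}\mu_{j_1}^c\mu_{j_2}^c$; the inner sum equals $(\varphi_{j_1,j_2}-\mu_{j_1}\mu_{j_2})^a=\sigma_{j_1,j_2}^a$ by the binomial theorem, recovering the same conclusion. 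This is precisely the heuristic recorded in Remark \ref{rm:anotherpesusstat}, now made rigorous by verifying that the per-tuple expectations are constant.

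There is essentially no analytic difficulty here; the proof is bookkeeping. The only point requiring genuine care — and where a hasty argument could slip — is the justification that the expectation of each inner summand does not depend on which distinct indices are used, so that averaging over all $P^n_{2a}$ (resp.\ $P^n_{a+c}$) ordered tuples returns the summand's common expectation rather than something smaller; this rests on the i.i.d.\ assumption and on the disjointness of the index blocks assigned to distinct factors within a product. I would also note explicitly that the relevant second moments $\mathrm{E}[x_{j_1}x_{j_2}]$ (hence $\varphi_{j_1,j_2}$) and first moments $\mu_j$ are assumed finite, so that the expectations and their interchange with the finite sums over $(j_1,j_2)$, $c$ and the index tuples are legitimate.
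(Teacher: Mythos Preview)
Your proof is correct and matches the paper's approach. The paper does not give a separate detailed proof of this proposition; the unbiasedness is regarded as immediate from the construction preceding \eqref{eq:originaluinvariance} (the kernel $x_{i_1,j_1}x_{i_1,j_2}-x_{i_1,j_1}x_{i_2,j_2}$ is an unbiased estimator of $\sigma_{j_1,j_2}$, and the general U-statistic construction \eqref{eq:generalustat} then yields unbiasedness for $\sigma_{j_1,j_2}^a$), with Remark~\ref{rm:anotherpesusstat} recording the binomial identity you spell out in your second derivation. Your write-up simply makes this reasoning explicit.
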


We next study the limiting properties of the constructed U-statistics under $H_0$ given the following assumptions on the random vector $\mathbf{x}=(x_1,\ldots, x_p)^{\intercal}$. 
 \begin{condition} [Moment assumption]  \label{cond:finitemomt}$\lim_{p\rightarrow \infty} \max_{1\leq j\leq p} \mathrm{E}(x_{j}-\mu_j)^8< \infty$ and  $\lim_{p\rightarrow \infty} \min_{1\leq j\leq p} \mathrm{E}(x_{j}-\mu_j)^2>0$.
 \end{condition}

 \begin{condition} [Dependence  assumption] \label{cond:alphamixing}
For a sequence of random variables $\mathbf{z}=\{z_{j}: j \geq 1 \}$ and integers $a<b$, let $\mathcal{Z}_a^b$ be the $\sigma$-algebra generated by $\{z_{j}: j \in \{a,\ldots,b\} \}$. For each $s \geq 1$, define the $\alpha$-mixing coefficient $\alpha_{\mathbf{z}}(s)=\sup_{t \geq 1}\{ |P(A \cap B )- P(A)P(B)|: A \in  \mathcal{Z}_1^t, B\in \mathcal{Z}_{t+s}^{\infty} \}$. We assume that under $H_0$, $\mathbf{x}$ is $\alpha$-mixing with $\alpha_{\mathbf{x}}(s) \leq M\delta^s$, where $\delta \in (0,1)$ and $M>0$ are some constants. 
\end{condition}

\begin{customthm}{\ref{cond:alphamixing}${}^*$} [Alternative dependence  assumption to Condition \ref{cond:alphamixing}] \label{cond:highordmominter} \label{cond:ellpmoment}
Following the notation in  \eqref{eq:highmomentdef}, we assume that under $H_0$,  for any $j_1,j_2, j_3 \in \{1,\ldots, p\}$, $\Pi_{j_1,j_2, j_3}=0$;  for any	$j_1, j_2, j_3, j_4 \in \{1,\ldots, p\}$, $\Pi_{j_1,j_2,j_3,j_4}= \kappa_1 (\sigma_{j_1,j_2}\sigma_{j_3,j_4}+\sigma_{j_1,j_3}\sigma_{j_2,j_4}+\sigma_{j_1,j_4}\sigma_{j_2,j_3})$ for some constant $\kappa_1<\infty$;  and for $t=6,8$, and any $j_1, \cdots, j_t \in \{1,\ldots, p\}$,  $\Pi_{j_1,\cdots, j_t}=0$ when at least one of these indexes appears odd times in $\{j_1, \cdots, j_t\}$. 
\end{customthm}

Condition \ref{cond:finitemomt} assumes that the eighth marginal moments of  $\mathbf{x}$  are uniformly bounded from above and the second moments are uniformly bounded from below, which are true for most light-tailed distributions. 
Condition \ref{cond:alphamixing} assumes weak dependence among different $x_j$'s under $H_0$, since the uncorrelatedness  of $x_j$'s  under $H_0$ may not imply the independence of them, especially   when $x_j$'s are non-Gaussian.  
 Under $H_0$, Condition \ref{cond:alphamixing} automatically holds when $\mathbf{x}$ is Gaussian or    $m$-dependent.   The mixing-type weak dependence is  similarly considered in previous works such as \cite[][]{bickel2008regularized,chen2014two,xu2016adaptive} and also commonly assumed in time series and spatial statistics \cite{gaetan2010spatial,pham1985some}.   Moreover,  the variables in our motivating genome-wide association studies have a local dependence structure, with their associations often decreasing to zero as the corresponding physical distances on a chromosome increase.  We note that  it suffices to have  Condition \ref{cond:alphamixing} hold up to a permutation of the variables.
 
Alternatively, we can substitute Condition \ref{cond:alphamixing} with Condition \ref{cond:highordmominter}. Condition \ref{cond:highordmominter} specifies some higher order  moments of $\mathbf{x}$ and is satisfied when   $\mathbf{x}$ follows an elliptical  distribution with finite eighth   moments and  covariance  $\boldsymbol{\Sigma}$  \citep[see][]{anderson1958introduction,frahm2004generalized,muirhead2009aspects,paindaveine2014inference}.    
Conditions \ref{cond:highordmominter} and \ref{cond:alphamixing}   become   equivalent  when $\mathbf{x}$ follows a multivariate Gaussian distribution.  
The fourth moment condition is also assumed in other high-dimensional research  \citep{cai2013two}. In this work, the eighth moment condition is needed to establish the asymptotic joint distribution of different  U-statistics.

The following theorem  specifies the asymptotic variances   of the  finite order U-statistics and their joint limiting distribution.    Since the   U-statistics are    degenerate under $H_0$, an analysis different  from the asymptotic  theory on non-degenerate U-statistics  \cite[e.g.,][]{hoeffding1992class}   is needed in the proof.

\begin{theorem}\label{thm:jointnormal}
Under $H_0$ in \eqref{eq:nullhypindepen} and Conditions \ref{cond:finitemomt} and \ref{cond:alphamixing} (or \ref{cond:highordmominter}),
for $\mathcal{U}(a)$'s   defined in   \eqref{eq:originaluinvariance} and any distinct  finite (and positive) integers  $\{a_1, \ldots, a_m\}$, as  $n,p \rightarrow \infty$, 
\begin{eqnarray}
	\Big[ \frac{\mathcal{U}(a_1)}{\sigma(a_1)}, \ldots, \frac{\mathcal{U}(a_m)}{\sigma(a_m)}  \Big]^{\intercal} \xrightarrow{D} \mathcal{N}(0,I_m),  \label{eq:jointnormalformmodif1}
\end{eqnarray}	where 
\begin{eqnarray} 
	\sigma^2(a) := \mathrm{var} [\mathcal{U}(a) ] \simeq  \frac{a!}{P^n_a}  \sum_{ \substack{1\leq j_1\neq j_2 \leq p ;\, 1\leq j_3 \neq j_4 \leq p}} (\Pi_{j_1,j_2,j_3,j_4})^a,   \label{eq:varianceformmodif1}
\end{eqnarray} with $\Pi_{j_1, j_2,j_3,j_4}$ defined  in \eqref{eq:highmomentdef}. Note that   $\sigma^2(a)=\Theta(p^2n^{-a})$.
\end{theorem}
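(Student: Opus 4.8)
The plan is to reduce each $\mathcal U(a_r)$ to a single completely degenerate leading term, read off its variance exactly, and then prove the joint CLT through the Cram\'er--Wold device and a martingale central limit theorem, exploiting that completely degenerate U-statistics of distinct orders are exactly orthogonal. First, by the location invariance of Proposition~\ref{prop:locinvariance} we may assume $\boldsymbol\mu=\mathbf 0$, so that under $H_0$ every first central moment and every off-diagonal second moment of $\mathbf x$ vanishes. Consequently the kernel $h_a(\mathbf x_1,\dots,\mathbf x_a)=\sum_{1\le j_1\ne j_2\le p}\prod_{k=1}^a x_{k,j_1}x_{k,j_2}$ underlying $\tilde{\mathcal U}(a)$ in \eqref{eq:originleadingterm} is \emph{completely degenerate}: integrating out any single argument annihilates it, since $\mathrm E[x_{k,j_1}x_{k,j_2}]=\sigma_{j_1,j_2}=0$ for $j_1\ne j_2$. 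The same elementary check applied to each of the remaining ($c\ge 1$) summands of \eqref{eq:originaluinvariance} shows it to be a completely degenerate U-statistic of order $a+c$, whose variance is therefore $\Theta(p^2 n^{-(a+c)})=o(p^2 n^{-a})$; by Cauchy--Schwarz all cross terms are negligible as well, giving $\mathcal U(a)=\tilde{\mathcal U}(a)+o_P(\sigma(a))$ and $\mathrm{var}[\mathcal U(a)]\simeq\mathrm{var}[\tilde{\mathcal U}(a)]$. For a completely degenerate U-statistic of order $a$ only the top Hoeffding projection contributes, so $\mathrm{var}[\tilde{\mathcal U}(a)]=\binom{n}{a}^{-1}\mathrm E[h_a^2]=(a!/P^n_a)\sum_{1\le j_1\ne j_2\le p,\ 1\le j_3\ne j_4\le p}(\Pi_{j_1,j_2,j_3,j_4})^a$, which is \eqref{eq:varianceformmodif1}. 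The two-sided bound $\sigma^2(a)=\Theta(p^2 n^{-a})$ then follows by isolating the diagonal pairs $(j_3,j_4)\in\{(j_1,j_2),(j_2,j_1)\}$, whose contribution is $\Theta(p^2)$ by the lower bound of Condition~\ref{cond:finitemomt}, while the off-diagonal pairs contribute $o(p^2)$ because $\Pi_{j_1,j_2,j_3,j_4}$ is small once $\{j_3,j_4\}$ separates from $\{j_1,j_2\}$ --- explicitly so under Condition~\ref{cond:highordmominter}, and via a Davydov-type covariance inequality with geometrically decaying mixing coefficients under Condition~\ref{cond:alphamixing}; the eighth-moment bound controls the surviving terms from above.

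For the joint limit, fix distinct positive integers $a_1,\dots,a_m$ and reals $t_1,\dots,t_m$; by the reduction above it suffices to prove $V_n:=\sum_{r=1}^m t_r\tilde{\mathcal U}(a_r)/\sigma(a_r)\xrightarrow{D}\mathcal N\big(0,\sum_{r} t_r^2\big)$, after which Cram\'er--Wold gives \eqref{eq:jointnormalformmodif1} with identity covariance. With $\mathcal F_i=\sigma(\mathbf x_1,\dots,\mathbf x_i)$, complete degeneracy yields $\mathrm E[\tilde{\mathcal U}(a)\mid\mathcal F_i]=\binom{n}{a}^{-1}\sum_{S\subseteq\{1,\dots,i\},\,|S|=a}h_a(\mathbf x_S)$, so $\tilde{\mathcal U}(a)=\sum_{i=1}^n D_i^{(a)}$ with $D_i^{(a)}=\binom{n}{a}^{-1}\sum_{S'\subseteq\{1,\dots,i-1\},\,|S'|=a-1}h_a(\mathbf x_{S'},\mathbf x_i)$ a martingale difference sequence; hence $V_n=\sum_i D_i^{\ast}$ with $D_i^{\ast}=\sum_r t_r D_i^{(a_r)}/\sigma(a_r)$ is a martingale difference array. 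Since martingale differences are uncorrelated across time, $\sum_i\mathrm E[D_i^{(a)}D_i^{(b)}]=\mathrm{cov}(\tilde{\mathcal U}(a),\tilde{\mathcal U}(b))$, which is $0$ for $a\ne b$ --- distinct-order completely degenerate U-statistics lie in orthogonal Hoeffding subspaces --- and $(1+o(1))\sigma^2(a)$ for $a=b$. It remains to verify the two hypotheses of the martingale CLT: (i) the conditional variance converges, $\sum_i\mathrm E[(D_i^{\ast})^2\mid\mathcal F_{i-1}]\xrightarrow{P}\sum_r t_r^2$, for which one shows $\mathrm{var}\big(\sum_i\mathrm E[D_i^{(a)}D_i^{(b)}\mid\mathcal F_{i-1}]\big)=o(\sigma^2(a)\sigma^2(b))$ so that each conditional (cross-)variance concentrates at its mean; and (ii) a Lyapunov condition, for which one shows $\sum_i\mathrm E[(D_i^{(a)})^4]=o(\sigma^4(a))$, the dominant ``diagonal'' pairing of the four copies contributing $\Theta(p^4 n^{-2a-1})$, a factor $n^{-1}$ below $\sigma^4(a)=\Theta(p^4 n^{-2a})$. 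Granting (i) and (ii), the martingale CLT gives $V_n\xrightarrow{D}\mathcal N(0,\sum_r t_r^2)$, which finishes the proof.

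The main obstacle is establishing (i) and (ii). Both reduce to bounding large sums of products of central moments $\Pi_{j_1,\dots,j_t}$ with $t\le 8$, indexed jointly by the coordinate labels $j_{\bullet}$ and by overlapping subsets of $\{1,\dots,n\}$, and to showing that every index configuration other than the ``fully paired'' ones is of strictly smaller order --- this is precisely where the eighth-moment bound of Condition~\ref{cond:finitemomt} and the weak-dependence structure are used. Under Condition~\ref{cond:highordmominter} the required moments have closed-form Isserlis/Wick-type expressions that make the accounting explicit, and many terms vanish because some index appears an odd number of times; under Condition~\ref{cond:alphamixing} the off-diagonal sums are instead controlled by mixing covariance inequalities. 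This bookkeeping is routine but lengthy, and constitutes the technical heart of the argument.
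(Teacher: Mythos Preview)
Your proposal is correct and follows essentially the same route as the paper: reduce to $\boldsymbol\mu=\mathbf 0$ by location invariance, isolate the completely degenerate leading piece $\tilde{\mathcal U}(a)$ and compute its variance exactly, use the orthogonality of distinct-order degenerate U-statistics for the vanishing cross-covariances, and then apply Cram\'er--Wold together with a martingale CLT for the filtration $\mathcal F_i=\sigma(\mathbf x_1,\dots,\mathbf x_i)$, with conditional-variance stability and a fourth-moment Lyapunov bound as the two technical hypotheses. The only difference is cosmetic---you phrase the reduction in Hoeffding-decomposition language where the paper does the same computations by hand---and your identification of the technical bulk in (i) and (ii) matches exactly the paper's two key lemmas establishing $\mathrm{var}\big(\sum_k\pi_{n,k}^2\big)\to 0$ and $\sum_k\mathrm E[D_{n,k}^4]=O(1/n)$.
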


Theorem \ref{thm:jointnormal} shows that after normalization, the finite-order U-statistics have a joint normal limiting distribution with an identity covariance matrix,  which implies that they are asymptotically independent as $n,p \rightarrow \infty$. 
The nice independence property makes it easy to combine these U-statistics and apply our proposed adaptive testing later.  Moreover, the conclusion holds  on general asymptotic regime for $n,p \to \infty$, without any constraint on the relationship between $n$ and $p$.  We will also see in Section \ref{sec:extension} that similar results  hold generally for some other testing problems.
\begin{remark}
Theorem \ref{thm:jointnormal} discusses the U-statistics of finite orders, i.e., the $a$ values do not grow with $n,p$. When $\{x_1,\ldots, x_p\}$ are  independent, Theorem \ref{thm:jointnormal} can be extended when $a=O(1) \min\{\log^{\epsilon} n,  \log^{\epsilon} p\}$ for some  $\epsilon>0$. On the other hand, we will show in  Section \ref{sec:powerana} that it is usually enough to include  $\mathcal{U}(a)$'s of finite $a$.  Therefore, we do not pursue the general case when $a$  grows with $n,p$ in this work.  
\end{remark}

 In the following, we  further discuss the maximum-type test statistic $\mathcal{U}(\infty)$, which corresponds to the $\ell_{\infty}$-norm of the parameter vector $\mathcal{E}=\{e_l: l\in \mathcal{L} \}$, that is, $\| \mathcal{E}\|_{\infty}=\max_{l \in \mathcal{L}} |e_{l}|$. In the existing literature, there is already some corresponding  established work \citep{jiang2004,cai2011} on the test statistic:
\begin{eqnarray}
	\quad	\quad \quad M_n^*:=\max_{1\leq j_1\neq j_2\leq p}| { \hat{\sigma}_{j_1,j_2}  }/{\sqrt{ \hat{\sigma}_{j_1,j_1} \hat{\sigma}_{j_2,j_2}  }  } |,\label{eq:inftyteststat}
	\end{eqnarray} 	
	 where $(\hat{\sigma}_{j_1,j_2})_{p\times p}=\sum_{i=1}^n (\mathbf{x}_i-\bar{\mathbf{x}})(\mathbf{x}_i-\bar{\mathbf{x}})^{\intercal}/n$ and $\bar{\mathbf{x}}=\sum_{i=1}^n\mathbf{x}_i/n$.  We will take $\mathcal{U}(\infty)=M_n^*$ below. The limiting distribution of $\mathcal{U}(\infty)$ was first studied in \cite{jiang2004} and extended by \cite{cai2011,liu2008,shao2014}. Next we restate the result in \cite{cai2011}, which gives the limiting distribution of \eqref{eq:inftyteststat} under the following condition.	
 	 
\begin{condition} \label{cond:maxiidcolumn}
Consider the random vector $\mathbf{x}=(x_1,\ldots,x_p)^{\intercal}$ with mean vector $\boldsymbol{\mu}=(\mu_1,\ldots, \mu_p)^{\intercal}$ and covariance matrix $\boldsymbol{\Sigma}=\mathrm{diag}( \sigma_{1,1},\ldots, \sigma_{p,p} )$. $(x_j-\mu_j)/\sqrt{\sigma_{j,j}}$ are i.i.d. for  $j=1,\ldots,p$.  Furthermore, $\mathrm{E}e^{t_0( |x_1-\mu_1|/\sqrt{\sigma_{1,1}})^{\varsigma}} < \infty$ for some $0 <\varsigma \leq 2$ and $t_0>0$.	
\end{condition} 

\begin{theorem}[{\citet[Theorem 2]{cai2011}}] \label{thm:extlimit}
Assume  Condition    \ref{cond:maxiidcolumn}  and $\log p= o(n^{\beta})$, where $\beta=\varsigma/(4+\varsigma)$. Then  
$
		P( n \times \mathcal{U}(\infty)^2+\varpi_p \leq u) \rightarrow G(u)=e^{-(1/\sqrt{8\pi})e^{-u/2}},
	$ where $\varpi_p=-4\log p+\log \log p$ and $G(u)$ is an extreme value distribution of type I.
\end{theorem}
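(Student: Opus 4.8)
Since Theorem~\ref{thm:extlimit} restates \citet[Theorem~2]{cai2011}, the paper invokes it rather than reproving it; what follows is the argument one would run to establish it directly. Throughout, write $y_{i,j}=(x_{i,j}-\mu_j)/\sqrt{\sigma_{j,j}}$, so that under Condition~\ref{cond:maxiidcolumn} the columns $\{y_{i,j}\}_j$ are i.i.d.\ in $j$ with mean $0$, variance $1$, and a common sub-exponential tail, and let $\Phi$ denote the standard normal distribution function. The plan has three steps: (i) reduce the studentized maximum to the maximum of the ``clean'' bilinear forms $T_{j_1,j_2}:=n^{-1/2}\sum_{i=1}^n y_{i,j_1}y_{i,j_2}$; (ii) establish a uniform moderate-deviation (Gaussian-tail) approximation for $T_{j_1,j_2}$; and (iii) convert the resulting first-moment count into an extreme-value limit via the Chen--Stein Poisson approximation.

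For step (i), one shows that replacing $\mu_j$ by $\bar x_j$ and $\sigma_{j,j}$ by $\hat\sigma_{j,j}$ alters $n\,\mathcal{U}(\infty)^2$ by $o_P(1)$ after the deterministic shift $\varpi_p$. This holds because $\max_j|\hat\sigma_{j,j}/\sigma_{j,j}-1|$ and $\max_j|\bar x_j-\mu_j|$ are both $O_P(\sqrt{\log p/n})$, which is negligible once multiplied by the order-$\log p$ size of the maximum (using $\beta\le 1/3$, hence $\log p=o(n^{1/3})$, which is implied by the hypothesis). A short expansion then transfers the limit from $\max_{j_1\neq j_2} n\,\hat\sigma_{j_1,j_2}^2/(\hat\sigma_{j_1,j_1}\hat\sigma_{j_2,j_2})$ to $\max_{j_1\neq j_2} T_{j_1,j_2}^2$.

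Step (ii) is the analytic heart. One needs, uniformly over pairs $(j_1,j_2)$ and over $0\le x\le c\sqrt{\log p}$,
\begin{equation*}
 P\big(|T_{j_1,j_2}|\ge x\big)=\big(1+o(1)\big)\,2\big(1-\Phi(x)\big).
\end{equation*}
This is precisely where the hypotheses $\mathrm{E}\,e^{t_0(|x_1-\mu_1|/\sqrt{\sigma_{1,1}})^{\varsigma}}<\infty$ and $\log p=o(n^{\beta})$ with $\beta=\varsigma/(4+\varsigma)$ enter: one truncates $y_{i,j}$ at a level $n^{\eta}$ so that the discarded mass is negligible at scale $p^{-2}$, applies a Cram\'er/Bernstein-type large-deviation bound to the bilinear form in the truncated variables, and checks that the Gaussian tail dominates the corrections up to $x\asymp\sqrt{\log p}$ exactly under that exponent $\beta$. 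Substituting $x^2=4\log p-\log\log p+u$ and multiplying by $\binom{p}{2}$ produces the target Poisson mean $\lambda(u)=(1/\sqrt{8\pi})\,e^{-u/2}$.

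For step (iii), set $t_p=(4\log p-\log\log p+u)^{1/2}$ and $N=\sum_{1\le j_1<j_2\le p}\mathbf 1\{|T_{j_1,j_2}|\ge t_p\}$, and declare two index-pairs ``dependent'' when they share an index, so each pair has only $\Theta(p)$ neighbours among the $\Theta(p^2)$ pairs. In the Chen--Stein bound the term $b_3$ vanishes, since the event on a pair depends only on two columns and columns are independent, hence it is independent of the $\sigma$-field generated by the events on pairs disjoint from it; the term $b_1\lesssim p^2\cdot p\cdot(p^{-2})^2\to 0$; and $b_2$ is controlled by the joint tail $P(|T_{j_1,j_2}|\ge t_p,\,|T_{j_1,j_3}|\ge t_p)$ for two pairs sharing index $j_1$, which one bounds by conditioning on column $j_1$ (the two statistics are then conditionally independent) and applying a conditional form of the step-(ii) estimate, giving $b_2\lesssim p^{2}\cdot p\cdot(p^{-2})^{2}\to 0$. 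Hence $P(N=0)\to e^{-\lambda(u)}$, i.e.\ $P(\max_{j_1\neq j_2} T_{j_1,j_2}^2<t_p^2)\to \exp(-(1/\sqrt{8\pi})e^{-u/2})$, which, combined with step (i) and $n\,\mathcal{U}(\infty)^2=\max_{j_1\neq j_2} n\,\hat\sigma_{j_1,j_2}^2/(\hat\sigma_{j_1,j_1}\hat\sigma_{j_2,j_2})$, is the assertion. The delicate point is the uniform moderate-deviation estimate of step (ii): both its validity up to the $\sqrt{\log p}$ scale, which pins down the exponent $\beta$, and its bivariate analogue needed to make $b_2$ negligible. The centering/studentization reductions and the Chen--Stein combinatorics are comparatively routine.
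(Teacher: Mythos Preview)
You are right that the paper does not prove Theorem~\ref{thm:extlimit}; it is quoted directly from \citet{cai2011}, and your three-step outline (reduction to the clean bilinear maximum, uniform moderate deviations, Poisson-type limit) is a sound reconstruction of how such results are obtained. The one place where your route and that of \citet{cai2011} diverge is step~(iii): rather than Chen--Stein, Cai and Jiang use truncated Bonferroni (inclusion--exclusion) inequalities, and to evaluate the intersection probabilities $P(\cap_{t=1}^s\{\hat G_{l_t}^2>ny_p\})$ they invoke Zaitsev's multivariate Gaussian approximation to pass to a normal vector, whose tail they then compute directly. This paper follows the same Bonferroni--Zaitsev machinery when it proves the asymptotic independence in Theorem~\ref{thm:asymindpt} (see Lemmas~\ref{lm:maxupperlowerbd}--\ref{lm:maxupperlowerbd1} and the use of \cite{zaitsev1987gaussian}). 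Your Chen--Stein argument is a legitimate alternative and arguably more direct for the marginal limit; the trade-off is that you must handle $b_2$ via a conditional moderate-deviation estimate, whereas the Bonferroni--Zaitsev route sidesteps this by reducing the joint tail to a Gaussian computation at the cost of importing a heavier approximation theorem. Either path yields the same Gumbel limit, and your identification of the moderate-deviation step as the point where the exponent $\beta=\varsigma/(4+\varsigma)$ is pinned down is exactly right (this is Lemma~6.8 in \citet{cai2011}, also invoked in the present paper's Section~\ref{sec:pfsmallbound1}).
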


Theorems \ref{thm:jointnormal} and  \ref{thm:extlimit} give the limiting distributions of $\mathcal{U}(a)$ of finite orders and $\mathcal{U}(\infty)$ respectively;  it is of interest to examine their joint distribution. The following theorem shows that  although $\mathcal{U}(\infty)$ has limiting distribution different from $\mathcal{U}(a)$, $a<\infty$, they are still asymptotically independent. 
   
\begin{theorem} \label{thm:asymindpt}  
Assume that Condition \ref{cond:finitemomt}  is  satisfied,  Condition \ref{cond:maxiidcolumn} holds for  $ \varsigma=2$, and $\log p=o(n^{1/7})$.   For finite integers  $\{a_1, \ldots, a_m\}$, under $H_0$,  $\mathcal{U}(a_1),\ldots,\mathcal{U}(a_m)$ and $\mathcal{U}(\infty)$ are mutually  asymptotically independent. In specific, for any $z_1,\ldots, z_m,y \in \mathbb{R}$,  as $n,p \rightarrow \infty$,
\begin{align*}
	 &\Big| P \Big( n\mathcal{U}(\infty)^2+\varpi_p \geq y, \frac{\mathcal{U}(a_1)}{\sigma(a_1)} \leq z_1,\ldots, \frac{\mathcal{U}(a_m)}{\sigma(a_m)} \leq z_m \Big) \notag \\
	 &\ -P\Big(n\mathcal{U}(\infty)^2+\varpi_p \geq y \Big) \times  \prod_{r=1}^m  P\Big( \frac{\mathcal{U}(a_r)}{\sigma(a_r)} \leq z_r \Big)   \Big|\to 0.
\end{align*}
\end{theorem}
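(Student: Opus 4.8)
\emph{Reductions.} By the location invariance of Proposition~\ref{prop:locinvariance} we may and do assume $\boldsymbol\mu=\mathbf{0}$, so that under $H_0$ each $\mathcal U(a)$ in \eqref{eq:originaluinvariance} equals its leading term $\tilde{\mathcal U}(a)$ of \eqref{eq:originleadingterm} up to a correction that is $o_P(\sigma(a))$, by the arguments underlying Theorem~\ref{thm:jointnormal}. The structural fact we exploit throughout is that, under Condition~\ref{cond:maxiidcolumn} with $\varsigma=2$, the centered coordinates $\{x_1-\mu_1,\dots,x_p-\mu_p\}$ are \emph{independent}, hence the columns of the data matrix are independent. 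Fixing $y$ and $z_1,\dots,z_m\in\mathbb R$, set $A=\{n\mathcal U(\infty)^2+\varpi_p\ge y\}$ and $B=\{\mathcal U(a_r)/\sigma(a_r)\le z_r,\ r=1,\dots,m\}$; by Theorem~\ref{thm:extlimit} $P(A)\to 1-G(y)$, and by Theorem~\ref{thm:jointnormal} $P(B)\to\prod_{r=1}^m\Phi(z_r)$, where $\Phi$ is the standard normal c.d.f. It therefore suffices to prove $P(A\cap B)\to(1-G(y))\prod_{r=1}^m\Phi(z_r)$. Following the proof of Theorem~\ref{thm:extlimit}, $A=\{W_n\ge 1\}$ with $W_n=\sum_{1\le j_1<j_2\le p}I_{j_1,j_2}$ and $I_{j_1,j_2}=\mathbf{1}\{|\hat\sigma_{j_1,j_2}|\ge c_n\sqrt{\hat\sigma_{j_1,j_1}\hat\sigma_{j_2,j_2}}\,\}$ for the threshold $c_n=c_n(y)$, and there $W_n\xrightarrow{D}\mathrm{Poisson}(\lambda)$, $\lambda=\lambda(y)=-\log G(y)$, is established by the Chen--Stein method.

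\emph{Bonferroni reduction.} Write $P(A\cap B)=P(B)-P(\{W_n=0\}\cap B)$ and expand $\mathbf{1}\{W_n=0\}=\prod_{j_1<j_2}(1-I_{j_1,j_2})$. Bonferroni's inequalities give, for every $u\ge 0$,
\begin{equation*}
\Big|\,P(\{W_n=0\}\cap B)-\sum_{t=0}^{u}(-1)^t S_t\,\Big|\le S_{u+1},
\qquad
S_t:=\sum_{|\mathcal S|=t}P\Big(B\cap\bigcap_{(j_1,j_2)\in\mathcal S}\{I_{j_1,j_2}=1\}\Big),
\end{equation*}
where $\mathcal S$ ranges over $t$-element sets of unordered coordinate pairs. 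From the Chen--Stein estimates in the proof of Theorem~\ref{thm:extlimit}, the contribution to $S_t$ of those $\mathcal S$ whose $2t$ coordinates are not all distinct is $o(1)$; and for $\mathcal S=\{(j_\ell,k_\ell)\}_{\ell=1}^t$ with all $2t$ coordinates distinct, column independence makes $I_{j_1,k_1},\dots,I_{j_t,k_t}$ independent, so $P(\bigcap_\ell\{I_{j_\ell,k_\ell}=1\})=\prod_\ell P(I_{j_\ell,k_\ell}=1)$ and $\sum_{\mathcal S\text{ distinct}}\prod_\ell P(I_{j_\ell,k_\ell}=1)\to\lambda^t/t!$. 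Hence the theorem reduces to showing that, uniformly over such $\mathcal S$ with $J:=\bigcup_\ell\{j_\ell,k_\ell\}$,
\begin{equation*}
P\Big(B\ \Big|\ \bigcap_{\ell=1}^{t}\{I_{j_\ell,k_\ell}=1\}\Big)\ \longrightarrow\ \prod_{r=1}^m\Phi(z_r);
\end{equation*}
indeed this gives $S_t\to\big(\prod_r\Phi(z_r)\big)\lambda^t/t!$, so $\sum_t(-1)^tS_t\to\big(\prod_r\Phi(z_r)\big)e^{-\lambda}=\big(\prod_r\Phi(z_r)\big)G(y)$, whence $P(A\cap B)\to\prod_r\Phi(z_r)-\big(\prod_r\Phi(z_r)\big)G(y)$, the Bonferroni remainder $S_{u+1}$ being controlled by letting $u\to\infty$ after $n,p\to\infty$.

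\emph{Conditional central limit theorem.} Fix $\mathcal S$ as above and let $E_J=\bigcap_\ell\{I_{j_\ell,k_\ell}=1\}$, which is measurable with respect to the $2t$ columns indexed by $J$. Split $\mathcal U(a_r)=\tilde{\mathcal U}^{J^c}(a_r)+R_r$, where $\tilde{\mathcal U}^{J^c}(a_r)$ is the sum in \eqref{eq:originleadingterm} restricted to pairs $(l_1,l_2)$ with $l_1,l_2\in J^c$ and $R_r$ collects the remaining contributions to $\mathcal U(a_r)$ (the pairs of \eqref{eq:originleadingterm} touching $J$, together with the $c\ge 1$ correction terms of \eqref{eq:originaluinvariance}). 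Since $\tilde{\mathcal U}^{J^c}(a_r)$ is a function of the columns in $J^c$ only, it is independent of $E_J$; applying Theorem~\ref{thm:jointnormal} to the one-sample covariance problem on the $p-2t$ coordinates of $J^c$ (whose U-statistic has asymptotic variance $\simeq\sigma^2(a_r)$ by \eqref{eq:varianceformmodif1}) gives $\big(\tilde{\mathcal U}^{J^c}(a_r)/\sigma(a_r)\big)_{r=1}^m\xrightarrow{D}\mathcal N(0,I_m)$, unconditionally and hence also conditionally on $E_J$. It remains to show $R_r/\sigma(a_r)\to 0$ in probability given $E_J$. The $c\ge 1$ correction terms are disposed of as in the proof of Theorem~\ref{thm:jointnormal}; for the pairs of \eqref{eq:originleadingterm} touching $J$, split them into the $O(1)$ pairs lying entirely in $J$ and the pairs $(l_1,l_2)$ with $l_1\in J$, $l_2\in J^c$. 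A pair inside $J$ contributes a term $\approx\hat\sigma_{l_1,l_2}^{a_r}$ up to lower order, and on $E_J$ one has $|\hat\sigma_{l_1,l_2}|=O_P(\sqrt{\log p/n})$, so this part is $O_P\big((\log p/n)^{a_r/2}\big)=o_P(p\,n^{-a_r/2})=o_P(\sigma(a_r))$. For the remaining part, conditioning further on the columns in $J$, each term with $l_1\in J$, $l_2\in J^c$ has conditional mean zero and these terms are conditionally independent over $l_2$, so the conditional variance is $O\big(\max_{l_1\in J}\hat\sigma_{l_1,l_1}^{a_r}\cdot p\,n^{-a_r}\big)$; since $E_J$ constrains only scale-free sample correlations, the moderate-deviation bounds in the proof of Theorem~\ref{thm:extlimit} give $\hat\sigma_{l_1,l_1}=O_P(1)$ even under $E_J$, so this is $O_P(p\,n^{-a_r})=o_P(\sigma^2(a_r))$. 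Thus $\mathcal U(a_r)/\sigma(a_r)=\tilde{\mathcal U}^{J^c}(a_r)/\sigma(a_r)+o_P(1)$ given $E_J$, and since $\mathcal N(0,I_m)$ puts no mass on $\partial\{u_r\le z_r\}$ we obtain $P(B\mid E_J)\to\prod_r\Phi(z_r)$, which completes the proof.

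\emph{Main obstacle.} The delicate step is the last one: showing that the few coordinates participating in the extreme sample correlation contaminate the U-statistics only at order $o_P(\sigma(a_r))$, and verifying — uniformly over the vanishingly rare events $E_J$ — that conditioning on a large sample correlation neither inflates the marginal sample variances nor distorts the contributions of the other coordinates. This amounts to pushing the moderate-deviation and Chen--Stein machinery behind Theorem~\ref{thm:extlimit} through jointly with the U-statistics, and is precisely where the eighth-moment bound of Condition~\ref{cond:finitemomt} and the strengthened rate $\log p=o(n^{1/7})$ are consumed; bookkeeping the Bonferroni remainders alongside the conditional limits also requires care.
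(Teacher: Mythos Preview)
Your approach coincides with the paper's in its skeleton: reduce to the leading term $\tilde{\mathcal U}(a)$, expand the maximum event by Bonferroni, split each $\tilde{\mathcal U}(a_r)$ into pairs lying entirely in $J^c$ and a remainder touching $J$, and use the column independence supplied by Condition~\ref{cond:maxiidcolumn} to decouple the clean part from the exceedance events. The paper packages the key step differently: rather than conditioning on $E_J$, it bounds $P_s:=P\bigl(\bigl|(\sigma(a) P^n_a)^{-1}\sum_{l\in L_{I_s}}U_l^a\bigr|\ge\Gamma_p\bigr)$ \emph{unconditionally} and shows $\sum_{s\le d}\sum_{l_1<\cdots<l_s}P_s\to0$ via truncation and Bernstein inequalities (Lemma~\ref{lm:expdecylm}), with a growing Bonferroni depth $d=O(\log^{1/5}p)$ and a truncated proxy $\hat M_n$ handled through Zaitsev's Gaussian approximation (Lemma~\ref{lm:maxupperlowerbd}). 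Your conditional-CLT framing is more economical to state, but making $R_r/\sigma(a_r)\to 0$ given $E_J$ rigorous requires essentially the same exponential tail inputs: for pairs inside $J$ the crude bound $(P^n_a)^{-1}|U_l^a|\le(\hat\sigma_{l_1,l_1}\hat\sigma_{l_2,l_2})^{a/2}=O_P(1)$ is \emph{not} $o_P(\sigma(a_r))$ when $p\ll n^{a_r/2}$, so one must expand $(P^n_a)^{-1}U_l^a$ into its power-sum pieces and control each conditionally via sub-Gaussian tails sharp enough to survive division by $P(E_J)\asymp p^{-2t}$. You correctly flag this as the main obstacle; resolving it in full essentially reproduces the substance of Lemma~\ref{lm:expdecylm}, and is where the rate $\log p=o(n^{1/7})$ is consumed.
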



Theorem \ref{thm:jointnormal} suggests that all the finite-order U-statistics are asymptotically independent with each other. Given this, Theorem \ref{thm:asymindpt} further shows that the maximum-type test statistic $\mathcal{U}(\infty)$ is also asymptotically mutually independent with those finite-order U-statistics.
 The conclusion shares  similarity  with some classical results on the asymptotic independence between the sum-of-squares-type and maximum-type statistics. Specifically, for random variables $w_1,\ldots,w_n$, \cite{hsing1995,ho1996asymptotic} proved the asymptotic independence between $\sum_{i=1}^n w_i^2$ and $\max_{i=1,\ldots,n} |w_i|$ for weakly dependent observations.  The similar independence properties were extensively studied  in literature \cite[e.g.][]{mccormick2000asymptotic,ho1999asymind,peng2003joint,james2007limit,xu2016adaptive,2015arXiv151208819L}.  However, there are several differences between existing literature and the results in this paper. First,  we discuss a family of U-statistics $\mathcal{U}(a)$'s, which  takes  different $a$ values, and $\mathcal{U}(2)$ here corresponding to the sum-of-squares-type statistic is only a special case of general $\mathcal{U}(a)$. Furthermore,  we have shown not only the asymptotic independence between $\mathcal{U}(a)$ and $\mathcal{U}(\infty)$, but also the asymptotic independence among $\mathcal{U}(a)$'s of finite $a$ values.  Second, the constructed $\mathcal{U}(a)$'s are unbiased estimators, which are different from the sum-of-squares statistics  usually examined in the literature. Moreover, the $x$'s are allowed to be  dependent and the theoretical development in the covariance testing involves a two-way dependence structure, which requires  different proof techniques from the existing studies. 

\begin{remark}\label{rm:standizedmax}
An alternative way to construct $\mathcal{U}(\infty)$ is to standardize  $\hat{\sigma}_{j_1,j_2}$ by its variance $\widehat{\mathrm{var}}(\hat{\sigma}_{j_1,j_2})$.  Specifically, following  \citet{cai2013two}, we take
$
	\widehat{\mathrm{var}}(\hat{\sigma}_{j_1,j_2}) = n^{-1}\sum_{i=1}^n\{(x_{i,j_1}-\bar{x}_{j_1})(x_{i,j_2}-\bar{x}_{j_2})-\hat{\sigma}_{j_1,j_2}\}^2.
$ Define $M_n^{\dag}=\max_{1\leq j_1\neq j_2\leq p}|\hat{\sigma}_{j_1,j_2}|/\{\widehat{\mathrm{var}}(\hat{\sigma}_{j_1,j_2})\}^{1/2}$ and we take $\mathcal{U}(\infty)=M_n^{\dag}$. Theoretically, we prove  that Theorem \ref{thm:asymindpt} still holds with $\mathcal{U}(\infty)=M_n^{\dag}$ in \ref{suppA} Section \ref{sec:pfstandmax}.  Numerically, we provide the simulations in \ref{suppA} Section \ref{sec:suppsimu}, which shows that $M_n^*$ in \eqref{eq:inftyteststat} generally has higher power than $M_n^{\dag}$. 
\end{remark} 


To apply hypothesis testing using the asymptotic results in Theorems \ref{thm:jointnormal} and \ref{thm:asymindpt}, we need to estimate $\mathrm{var}\{\mathcal{U}(a)\}$. In particular, we propose the following moment estimator of   \eqref{eq:varianceformmodif1}:
\begin{align}
	\mathbb{V}_u(a)=\frac{2a!}{(P^n_a)^2} \sum_{\substack{1\leq j_1\neq j_2\leq p} } \sum_{1\leq i_1\neq \ldots \neq i_a \leq n} \prod_{t=1}^a (x_{i_t,j_1}-\bar{x}_{j_1})^2(x_{i_t,j_2}-\bar{x}_{j_2})^2. \label{eq:varestnew}
\end{align}
The next result establishes the statistical  consistency of   $\mathbb{V}_u(a)$. 
\begin{condition} \label{cond:higherordermomentvarest}
For integer $a$, $\lim_{p\rightarrow \infty} \max_{1\leq j\leq p} \mathrm{E}(x_{j}-\mu_j)^{8a}< \infty$.  
\end{condition}
\begin{theorem}\label{thm:computation}
Under $H_0$ in \eqref{eq:nullhypindepen}, assume  Conditions \ref{cond:finitemomt}, \ref{cond:alphamixing} and \ref{cond:higherordermomentvarest} hold.
Then $\mathbb{V}_u(a)/\mathrm{var}\{ \mathcal{U}(a)\}\xrightarrow{P} 1$.     	
\end{theorem}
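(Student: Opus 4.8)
The plan is a first-and-second-moment (Chebyshev) argument showing that $\mathbb{V}_u(a)$ concentrates around the leading term of the variance formula \eqref{eq:varianceformmodif1}. Write $\tau_{j_1,j_2}:=\mathrm{E}[(x_{j_1}-\mu_{j_1})^2(x_{j_2}-\mu_{j_2})^2]=\Pi_{j_1,j_2,j_1,j_2}$. As follows from the analysis in the proof of Theorem \ref{thm:jointnormal}, under $H_0$ and the stated conditions the double index sum in \eqref{eq:varianceformmodif1} is dominated by the ``diagonal'' quadruples with $\{j_3,j_4\}=\{j_1,j_2\}$, so that
\[
 \mathrm{var}\{\mathcal{U}(a)\}\ \simeq\ V_a\ :=\ \frac{2\,a!}{P^n_a}\sum_{1\le j_1\neq j_2\le p}\tau_{j_1,j_2}^{\,a},\qquad V_a=\Theta(p^{2}n^{-a}).
\]
It therefore suffices to prove $\mathbb{V}_u(a)/V_a\xrightarrow{P}1$, which follows once I establish $\mathrm{E}[\mathbb{V}_u(a)]=V_a(1+o(1))$ and $\mathrm{var}[\mathbb{V}_u(a)]=o(V_a^{2})$.

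For the first moment, expand $(x_{i_t,j}-\bar x_j)^2=(x_{i_t,j}-\mu_j)^2-2(x_{i_t,j}-\mu_j)(\bar x_j-\mu_j)+(\bar x_j-\mu_j)^2$ inside \eqref{eq:varestnew} and take expectations monomial by monomial. The monomial in which every factor is $\mu$-centered contributes exactly $\tfrac{2a!}{(P^n_a)^2}\sum_{j_1\neq j_2}P^n_a\,\tau_{j_1,j_2}^a=V_a$, because the $a$ sample indices are distinct and the $\mathbf{x}_i$ are i.i.d. Every other monomial carries at least one factor $\bar x_j-\mu_j=n^{-1}\sum_i(x_{i,j}-\mu_j)$, a sample mean of i.i.d.\ mean-zero variables with uniformly bounded moments (Condition \ref{cond:higherordermomentvarest}); Rosenthal's inequality gives $\mathrm{E}|\bar x_j-\mu_j|^{k}=O(n^{-k/2})$ uniformly in $j$, and a crude term-by-term bound (pulling absolute values inside and using Cauchy--Schwarz to decouple the coordinates $j_1,j_2$) shows each such monomial contributes $O(n^{-1/2})$ relative to the leading one, uniformly over $(j_1,j_2)$ and over the choice of sample indices. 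Summing up, $\mathrm{E}[\mathbb{V}_u(a)]=V_a+O(n^{-1/2}p^{2}n^{-a})=V_a(1+o(1))$.

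For the second moment, first handle the fully $\mu$-centered version $\mathbb{V}_u^{0}(a):=\tfrac{2a!}{(P^n_a)^2}\sum_{j_1\neq j_2}\sum_{1\le i_1\neq\cdots\neq i_a\le n}\prod_{t=1}^{a}g_{i_t,j_1,j_2}$ with $g_{i,j_1,j_2}:=(x_{i,j_1}-\mu_{j_1})^2(x_{i,j_2}-\mu_{j_2})^2\ge 0$. Since $\prod_t g_{i_t,j_1,j_2}$ depends only on $\{\mathbf{x}_{i_t}\}_{t=1}^{a}$, the covariance of two such products vanishes unless their sample-index sets overlap; decomposing by the overlap size $r\in\{1,\dots,a\}$, the number of index-pairs with overlap $r$ is $\Theta(n^{2a-r})$, each covariance is $O(1)$ (by Cauchy--Schwarz and the eighth-moment bound in Condition \ref{cond:finitemomt}), and the four coordinate sums contribute $O(p^{4})$, so $\mathrm{var}[\mathbb{V}_u^{0}(a)]=O\big((P^n_a)^{-4}\textstyle\sum_{r=1}^{a}n^{2a-r}p^{4}\big)=O(n^{-2a-1}p^{4})=o(V_a^{2})$. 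For the remainder $D(a):=\mathbb{V}_u(a)-\mathbb{V}_u^{0}(a)$, every monomial again carries at least one factor $\bar x_j-\mu_j$, so $D(a)$ has both small mean and small spread; the same type of moment estimate now needs marginal moments up to order $8a$, because in the worst case as many as $2a$ single-observation deviation factors can pile onto one sample (and, after squaring for the second moment and applying Cauchy--Schwarz across the two coordinates, as many as $8a$) --- this is exactly the role of Condition \ref{cond:higherordermomentvarest} --- and yields $\mathrm{E}[D(a)^2]=o(V_a^{2})$. Then $\mathrm{var}[\mathbb{V}_u(a)]\le 2\,\mathrm{var}[\mathbb{V}_u^{0}(a)]+2\,\mathrm{E}[D(a)^2]=o(V_a^{2})$, and Chebyshev's inequality gives $\mathbb{V}_u(a)/V_a\xrightarrow{P}1$, hence $\mathbb{V}_u(a)/\mathrm{var}\{\mathcal{U}(a)\}\xrightarrow{P}1$.

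The main obstacle is the bookkeeping in the second-moment step: after expanding all of the $\bar x_j$'s and squaring, one must verify that none of the combinatorially many (but, for fixed $a$, finitely many) monomials containing $\bar x_j-\mu_j$ factors contributes at order $V_a^{2}$, uniformly over the coordinate indices --- this is where the $8a$-th moment condition is genuinely used, whereas the U-statistic variance calculation for $\mathbb{V}_u^{0}(a)$ and the first-moment computation are comparatively routine.
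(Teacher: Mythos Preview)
Your proposal is correct and follows essentially the same route as the paper: split $\mathbb{V}_u(a)$ into the fully population-centered piece $\mathbb{V}_u^{0}(a)$ (the paper's $\mathbb{V}_{u,1}(a)$) and the remainder $D(a)$ (the paper's $\mathbb{V}_{u,2}(a)$), control the first via an index-overlap variance bound using only eighth moments, and show the second is $o_{L^2}(V_a)$ by expanding the $\bar x_j$ factors and invoking the $8a$-th moment condition. The only difference is organizational---the paper phrases the conclusion as $\mathbb{V}_{u,1}(a)/\mathrm{E}[\mathbb{V}_{u,1}(a)]\xrightarrow{P}1$ and $\mathbb{V}_{u,2}(a)/\mathrm{E}[\mathbb{V}_{u,1}(a)]\xrightarrow{P}0$ after reducing to $\boldsymbol{\mu}=\mathbf{0}$ by location invariance, rather than bounding $\mathrm{E}[\mathbb{V}_u(a)]$ and $\mathrm{var}[\mathbb{V}_u(a)]$ directly---but the underlying estimates are identical.
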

Theorem \ref{thm:computation} implies that the asymptotic results in Theorems \ref{thm:jointnormal} and  \ref{thm:asymindpt} still hold by replacing $\mathrm{var}\{\mathcal{U}(a)\}$ with its estimator $\mathbb{V}_u(a)$. Specifically, under $H_0$,   $[ {\mathcal{U}(a_1)}/{\sqrt{\mathbb{V}_u(a_1)}}, \ldots, \allowbreak {\mathcal{U}(a_m)}/{\sqrt{\mathbb{V}_u(a_m)}}  ]^{\intercal} \xrightarrow{D} \mathcal{N}(0,I_m)$ under Conditions \ref{cond:finitemomt},     \ref{cond:alphamixing} and \ref{cond:higherordermomentvarest}. Moreover,  Theorem \ref{thm:asymindpt} implies that $\{\mathcal{U}(a)/\sqrt{\mathbb{V}_u(a)}\}$'s are  asymptotically independent with $\mathcal{U}(\infty)$. 


\subsection{Power Analysis} \label{sec:powerana}
In this section, we analyze the asymptotic power of the U-statistics.  The power of $\mathcal{U}(2)$ has been  studied in the literature. 
In particular, \cite{caima2013}  studied the hypothesis testing of a high-dimensional covariance matrix with   $H_0: \boldsymbol{\Sigma}={I}_p$.
  The authors characterized the boundary that distinguishes the testable region from the non-testable region in terms of the Frobenius norm $\| \boldsymbol{\Sigma}- {I}_p \|_F$, and showed that the   test statistic proposed by \cite{chen2011,caima2013}, which corresponds to $\mathcal{U}(2)$ in this paper, is rate optimal over their considered regime.   
However in practice, $\mathcal{U}(2)$ may be  not powerful if the alternative covariance matrix is sparse with a small $\|\boldsymbol{\Sigma}-{I}_p\|_F$. When  the   alternative covariance    has different sparsity levels, it is of interest to further examine  which $\mathcal{U}(a)$ achieves   the best power performance among the constructed family of U-statistics. 


To study the test power, we establish the limiting distributions of $\mathcal{U}(a)$'s under 
the alternative hypothesis $H_A: \boldsymbol{\Sigma}=\boldsymbol{\Sigma}_A$, where the alternative covariance matrix $\boldsymbol{\Sigma}_A=(\sigma_{j_1,j_2})_{p\times p}$ is specified in the following Condition  \ref{cond:rhoijconst}. Define $J_A=\{(j_1,j_2): \sigma_{j_1,j_2}\neq 0, 1\leq j_1 \neq j_2\leq p \},$ which indicates  the nonzero off-diagonal entries in  $\boldsymbol{\Sigma}_A$.   The cardinality  of $J_A$, denoted by $|J_A|,$ then represents the sparsity level of   $\boldsymbol{\Sigma}_A$.  

\begin{condition}\label{cond:rhoijconst}
Assume $|J_A|=o(p^2)$ and for $(j_1,j_2)\in J_A$, $|\sigma_{j_1,j_2}|=\Theta(\rho)$, where $\rho=\sum_{(j_1,j_2)\in J_A}|\sigma_{j_1,j_2}|/|J_A|$. 
\end{condition}
\noindent 
Here $\rho$ represents the average signal strength of $\boldsymbol{\Sigma}_A$. 
In our following power comparison of two U-statistics $\mathcal{U}(a)$ and $\mathcal{U}(b)$, we say $\mathcal{U}(a)$ is   ``better''  than $\mathcal{U}(b)$, if, under the same test power,  $\mathcal{U}(a)$ can detect a smaller average signal strength $\rho$  (please see the specific definition in Criterion \ref{cricomp} on Page \pageref{cricomp}). 
\noindent Condition \ref{cond:rhoijconst} specifies  a general family of ``local'' alternatives, which include  banded covariance matrices, block covariance matrices, and  sparse   covariance matrices whose nonzero entries are randomly located.
\begin{theorem} \label{thm:cltalternative}
Suppose  Conditions \ref{cond:finitemomt},    \ref{cond:rhoijconst}, and \ref{cond:altonecovellip} (an analogous condition to Condition \ref{cond:ellpmoment}  under $H_A$) in the Supplementary Material hold.  For $\mathcal{U}(a)$ in \eqref{eq:originaluinvariance} and  finite integers $\{a_1,\ldots, a_m\}$, if $\rho=O(|J_A|^{-1/a_t}p^{1/a_t}n^{-1/2})$ for $t=1,\ldots,m,$
then as  $n,p \rightarrow \infty$,
\begin{eqnarray*}
	\Big[ \frac{\mathcal{U}(a_1)-\mathrm{E}[\mathcal{U}(a_1)]}{\sigma(a_1)}, \ldots, \frac{\mathcal{U}(a_m)-\mathrm{E}[\mathcal{U}(a_m)]}{\sigma(a_m)}  \Big]^{\intercal} \xrightarrow{D} \mathcal{N}(0,I_m),  
\end{eqnarray*}	where for $a\in\{a_1,\ldots, a_m\},$ $\mathrm{E}[\mathcal{U}(a)]=\sum_{(j_1,j_2)\in J_A} \sigma_{j_1,j_2}^a$ and  $\sigma^2(a)=\mathrm{var}[\mathcal{U}(a)]\simeq 2a!\kappa_1^a n^{-a} \sum_{1\leq j_1\neq j_2\leq p}  \sigma_{j_1,j_1}^a \sigma_{j_2,j_2}^a,$ which is of order $\Theta(p^2n^{-a})$.
\end{theorem}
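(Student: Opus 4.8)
\emph{Proof sketch.} The plan is to parallel the proof of Theorem~\ref{thm:jointnormal}, keeping careful track of the extra terms generated by the nonzero off-diagonal entries of $\boldsymbol{\Sigma}_A$ and showing that, under the stated scaling of $\rho$, they are all asymptotically negligible. By the location invariance in Proposition~\ref{prop:locinvariance} we may and do assume $\boldsymbol{\mu}=\mathbf{0}$, so that the leading term $\tilde{\mathcal{U}}(a)$ of \eqref{eq:originleadingterm} has $\mathrm{E}[\tilde{\mathcal{U}}(a)]=\sum_{1\leq j_1\neq j_2\leq p}\sigma_{j_1,j_2}^a$; write $\mathcal{U}(a)=\tilde{\mathcal{U}}(a)+R(a)$ where $R(a)$ collects the $c\geq 1$ terms in \eqref{eq:originaluinvariance}, which are centered when $\boldsymbol{\mu}=\mathbf{0}$. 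The identity $\mathrm{E}[\mathcal{U}(a)]=\sum_{(j_1,j_2)\in J_A}\sigma_{j_1,j_2}^a$ is then immediate from Proposition~\ref{prop:unbaisedestimatorprop}, and, exactly as under $H_0$, one has $\mathrm{var}[R(a)]=o(\sigma^2(a))$; so it suffices to prove the joint normality of $(\tilde{\mathcal{U}}(a_r)-\mathrm{E}[\tilde{\mathcal{U}}(a_r)])/\sigma(a_r)$, $r=1,\ldots,m$, together with the variance formula. Note that the stated bound on $\rho$ is equivalent to $|J_A|\rho^{a_t}=O(p\,n^{-a_t/2})$, which places the alternative exactly in the contiguity regime $\mathrm{E}[\mathcal{U}(a_t)]=O(\sigma(a_t))$.

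First I would redo the variance computation of \eqref{eq:varianceformmodif1} under $H_A$. Applying the Hoeffding decomposition to the symmetrized kernel of $\tilde{\mathcal{U}}(a)$, the top-order projection contributes, to leading order, $\frac{a!}{P^n_a}\sum_{1\leq j_1\neq j_2\leq p,\,1\leq j_3\neq j_4\leq p}(\Pi_{j_1,j_2,j_3,j_4})^a$, while the lower projections are of smaller order; in particular the first projection contributes only $O(n^{-1})$ relative to this, using $\sum_{(j_1,j_2)\in J_A}\sigma_{j_1,j_2}^a=O(p\,n^{-a/2})$ from Condition~\ref{cond:rhoijconst} and the bound on $\rho$. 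Now invoke Condition~\ref{cond:altonecovellip}, under which $\Pi_{j_1,j_2,j_3,j_4}=\kappa_1(\sigma_{j_1,j_2}\sigma_{j_3,j_4}+\sigma_{j_1,j_3}\sigma_{j_2,j_4}+\sigma_{j_1,j_4}\sigma_{j_2,j_3})$, and split the quadruple sum by the matching pattern of $\{j_3,j_4\}$ and $\{j_1,j_2\}$: the pattern $\{j_3,j_4\}=\{j_1,j_2\}$ yields the main term $2a!\kappa_1^a n^{-a}\sum_{1\leq j_1\neq j_2\leq p}\sigma_{j_1,j_1}^a\sigma_{j_2,j_2}^a=\Theta(p^2 n^{-a})$ (using Condition~\ref{cond:finitemomt} to bound the $\sigma_{j,j}$'s above and below), and every other pattern forces at least one factor $\sigma_{j,j'}$ with $(j,j')\in J_A$ of size $\Theta(\rho)$, hence contributes at most a constant times $p\,|J_A|\rho^a$, $(|J_A|\rho^a)^2$, or $|J_A|\rho^2$ according to how many such factors occur, each of which is $o(p^2 n^{-a})$ because $|J_A|\rho^{a}=O(p\,n^{-a/2})$ and $|J_A|=o(p^2)$. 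This gives the claimed $\sigma^2(a)\simeq 2a!\kappa_1^a n^{-a}\sum_{1\leq j_1\neq j_2\leq p}\sigma_{j_1,j_1}^a\sigma_{j_2,j_2}^a=\Theta(p^2 n^{-a})$; the same bookkeeping shows that for $a_r\neq a_{r'}$ the cross-covariance of $\tilde{\mathcal{U}}(a_r)$ and $\tilde{\mathcal{U}}(a_{r'})$, built from $\Pi$-products with mismatched exponents, is $o(\sigma(a_r)\sigma(a_{r'}))$.

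Next, for the joint central limit theorem I would use the Cram\'er--Wold device: fix $(t_1,\ldots,t_m)\in\mathbb{R}^m$ and show $W_{n,p}:=\sum_{r=1}^m t_r(\tilde{\mathcal{U}}(a_r)-\mathrm{E}[\tilde{\mathcal{U}}(a_r)])/\sigma(a_r)\xrightarrow{D}\mathcal{N}(0,\sum_{r}t_r^2)$, the identity limiting covariance following from the cross-covariance bound above. Ordering $\mathbf{x}_1,\ldots,\mathbf{x}_n$ and setting $\mathcal{F}_k=\sigma(\mathbf{x}_1,\ldots,\mathbf{x}_k)$, decompose $W_{n,p}=\sum_{k=2}^n D_k$ into martingale differences $D_k=\mathrm{E}[W_{n,p}\mid\mathcal{F}_k]-\mathrm{E}[W_{n,p}\mid\mathcal{F}_{k-1}]$ and verify the martingale CLT: (i) $\sum_k\mathrm{E}[D_k^2\mid\mathcal{F}_{k-1}]\xrightarrow{P}\sum_r t_r^2$, and (ii) a conditional Lindeberg condition, e.g.\ via $\sum_k\mathrm{E}[D_k^4]\to 0$. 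After expanding, both reduce to bounding sums of eighth-order mixed moments over the coordinate indices, where Condition~\ref{cond:finitemomt} and the elliptical-type moment identities of Condition~\ref{cond:altonecovellip}, together with the weak-signal bound on $\rho$, are used exactly as in the proof of Theorem~\ref{thm:jointnormal} to discard the signal-contaminated contributions.

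The main obstacle is precisely this variance and conditional-variance bookkeeping: under $H_A$ essentially every combinatorial term appearing in $\mathrm{var}[\tilde{\mathcal{U}}(a)]$, in the cross-covariances, and in $\mathrm{E}[D_k^2\mid\mathcal{F}_{k-1}]$ may now carry factors $\sigma_{j_1,j_2}$ with $(j_1,j_2)\in J_A$, and one must check that all such terms are dominated by the pure-noise leading term under the sharp scaling $\rho=O(|J_A|^{-1/a_t}p^{1/a_t}n^{-1/2})$. This requires careful matching-pattern arguments over both the sample index and the two-way coordinate index; once it is in place, the martingale CLT and the Cram\'er--Wold conclusion proceed as for Theorem~\ref{thm:jointnormal}.
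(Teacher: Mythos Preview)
Your high-level plan---variance asymptotics via Condition~\ref{cond:altonecovellip}, asymptotic orthogonality across orders, then Cram\'er--Wold plus a martingale CLT---matches the paper's, and the obstacle you name (matching-pattern bookkeeping under $|J_A|\rho^{a}=O(p\,n^{-a/2})$) is exactly the content of the proof. The one substantive difference is organizational. You split $\mathcal{U}(a)=\tilde{\mathcal{U}}(a)+R(a)$ by the $c$-index alone, so your leading piece still sums over all $(j_1,j_2)$, including $J_A$. The paper instead writes $\mathcal{U}(a)=T_{U,a,1,1}+T_{U,a,1,2}+T_{U,a,2}$, splitting by \emph{both} the $c$-index and the coordinate set $J_A^c$ versus $J_A$: the leading term $T_{U,a,1,1}$ is the $c=0$ sum restricted to $(j_1,j_2)\in J_A^c$, where $\sigma_{j_1,j_2}=0$. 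This buys a lot in the martingale step: since $\mathrm{E}[x_{k,j_1}x_{k,j_2}]=0$ for every pair appearing in $T_{U,a,1,1}$, the conditional-increment formula for $A_{n,k,a}$ is literally the $H_0$ one (no ``partially future'' configurations survive), and the verification of $\sum_k\mathrm{E}[D_k^2\mid\mathcal{F}_{k-1}]\to 1$ and $\sum_k\mathrm{E}[D_k^4]\to 0$ runs parallel to Theorem~\ref{thm:jointnormal}. Your route, applying the CLT to the full $\tilde{\mathcal{U}}(a)$, forces you to compute $(\mathrm{E}_k-\mathrm{E}_{k-1})\tilde{\mathcal{U}}(a)$ under $H_A$: for $(j_1,j_2)\in J_A$ this produces extra terms carrying factors $\sigma_{j_1,j_2}^{a-m}$ from configurations with some $i_t>k$, and you must then bound those inside every conditional-variance and fourth-moment calculation. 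That is doable under the signal scaling, but it multiplies the case analysis; the paper sidesteps it by absorbing the entire $J_A$ sum into the remainder $T_{U,a,2}$ and proving $\mathrm{var}(T_{U,a,2})=o(p^2n^{-a})$ once, before the CLT. One small side remark: your claim that ``the first projection contributes only $O(n^{-1})$ relative'' is not quite right as stated---the lower Hoeffding projections do not scale by a uniform $n^{-1}$ factor here, and the paper avoids projection language entirely, computing variances by direct sample-index matching.
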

Theorem \ref{thm:cltalternative} shows that for a single U-statistic $\mathcal{U}(a)$ of finite order $a$,
\begin{align}
	P\biggr( \frac{\mathcal{U}(a)}{\sqrt{\mathrm{var}[\mathcal{U}(a)]}} > z_{1-\alpha} \biggr) \rightarrow  1- \Phi \biggr( z_{1-\alpha} - \frac{\mathrm{E}[\mathcal{U}(a)]}{\sqrt{\mathrm{var}[\mathcal{U}(a)]}} \biggr), \label{eq:powerbound} 
\end{align} where $z_{1-\alpha}$ is the upper $\alpha$ quantile of $\mathcal{N}(0,1)$ and $\Phi(\cdot)$ is the cumulative distribution function of $\mathcal{N}(0,1)$. By Theorem \ref{thm:cltalternative}, the asymptotic power of $\mathcal{U}(a)$ of the one-sided test   depends on
\begin{align}
	\frac{\mathrm{E}[\mathcal{U}(a)]}{\sqrt{\mathrm{var}[\mathcal{U}(a)]}} \simeq \frac{\sum_{(j_1,j_2)\in J_A} \sigma_{j_1,j_2}^a }{ \{2a!\kappa_1^an^{-a}\sum_{1\leq j_1\neq j_2\leq p}  (\sigma_{j_1,j_1}\sigma_{j_2,j_2})^a\}^{1/2}  }. \label{eq:ratiopower}
\end{align} 
By Theorem \ref{thm:cltalternative}, $\eqref{eq:ratiopower}=\Theta(|J_A|\rho^a p^{-1}n^{a/2}).$ It follows that  when $\mathrm{E}[\mathcal{U}(a)]$ is of the same order of $\sqrt{\mathrm{var}[\mathcal{U}(a)]}$, i.e., $\mathrm{E}[\mathcal{U}(a)]=O(1)\sqrt{\mathrm{var}[\mathcal{U}(a)]}$, the constraint of $\rho$ in  Theorem \ref{thm:cltalternative}  is satisfied.

In the following power analysis, we will first compare  $\mathcal{U}(a)$'s of finite $a$ and then compare them with $\mathcal{U}(\infty)$. As we focus on studying the relationship between the sparsity level and power, we consider an ideal case where $\sigma_{j_1,j_2}=\rho>0$ for $(j_1,j_2)\in J_A$ and $\sigma_{j,j}=\nu^2 >0$ for $j=1,\ldots, p.$ Then 
\begin{align}
	\eqref{eq:ratiopower}  \simeq |J_A|\rho^a/(\sqrt{2a!\kappa_1^a}\nu^{2a} pn^{-a/2}).\label{eq:powerasymideal1}
\end{align}
We next show  how the order of the ``best" U-statistics  changes when the sparsity level $|J_A|$  varies. 
 To be specific of the meaning of ``best", we compare the $\rho$ values needed by different U-statistics   to achieve the same asymptotic power.  Particularly,   we fix $ {\mathrm{E}[\mathcal{U}(a)]}/{\sqrt{\mathrm{var}[\mathcal{U}(a)]}} $, i.e., \eqref{eq:powerasymideal1} to be  some constant $ {M}/{\sqrt{2}}$ for different $a$'s and the asymptotic power of each $\mathcal{U}(a)$   is 
$
	\eqref{eq:powerbound}=1-\Phi( z_{1-\alpha}- {M}/{\sqrt{2}} ).
$
Then by \eqref{eq:powerasymideal1}, the $\rho$ value such that $\mathcal{U}(a)$ attains the power above is 
\begin{align}
	\rho_a=\sqrt{\kappa_1}(a!)^{\frac{1}{2a}}\nu^2 ({Mp}/{|J_A|})^{\frac{1}{a}} n^{-\frac{1}{2}}. \label{eq:rhoform}
\end{align}   
By the definition in \eqref{eq:rhoform}, we compare the power of two U-statistics $\mathcal{U}(a)$ and $\mathcal{U}(b)$ with $a\neq b$ following the Criterion \ref{cricomp} below. 
\begin{criterion}\label{cricomp}
We say $\mathcal{U}(a)$ is ``better'' than $\mathcal{U}(b)$ if $\rho_a < \rho_b$.
\end{criterion}
\noindent Given values of $n, p, |J_A|$ and $M$,   \eqref{eq:rhoform} is a function of $a$.
Therefore, to find the ``best" $\mathcal{U}(a)$, it suffices to find the order, denoted by  $a_0$, that gives the smallest $\rho_a$ value in \eqref{eq:rhoform}.    We then have the following proposition  discussing the optimality among the U-statistics of finite orders in  \eqref{eq:originaluinvariance}.  
\begin{proposition}\label{prop:ordercompare}  Given $n, p, |J_A|$ and  any constant $ M \in (0,+\infty)$, we consider $\rho_a$ in  \eqref{eq:rhoform} as a function of integer $a$, then
\begin{enumerate}
\item[(i)]  when $|J_A| \geq {Mp}$, the minimum of  $\rho_a$ is achieved at $a_0=1$; 
\item[(ii)]  when $|J_A|<{Mp}$, the minimum of $\rho_a$ is achieved at some $a_0$, which increases as $Mp/|J_A|$ increases. 
\end{enumerate}	
\end{proposition}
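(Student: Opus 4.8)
The plan is to turn the optimization over the discrete order $a$ into a single inequality that compares consecutive terms $\rho_{a+1}$ and $\rho_a$, and then exploit a clean monotonicity of the resulting ``ratio criterion''. Write $C:=Mp/|J_A|$ and note that the $a$-free factor $\sqrt{\kappa_1}\,\nu^2 n^{-1/2}$ in \eqref{eq:rhoform} does not affect the minimizer, so it suffices to study $\rho_a\propto (a!)^{1/(2a)}C^{1/a}$. Taking logarithms and clearing the positive factor $2a(a+1)$ gives, using $\log((a+1)!)=\log(a!)+\log(a+1)$,
\begin{align*}
 2a(a+1)\big(\log \rho_{a+1}-\log \rho_a\big)
 &= a\log\big((a+1)!\big)-(a+1)\log(a!)-2\log C\\
 &= a\log(a+1)-\log(a!)-\log C^2
 = \log\frac{(a+1)^a}{a!\,C^2}.
\end{align*}
Hence, setting $\phi(a):=(a+1)^a/a!$, we obtain the exact criterion $\rho_{a+1}<\rho_a \iff \phi(a)<C^2$ (with equality iff $\phi(a)=C^2$).

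The next step is to show $\phi$ is strictly increasing and unbounded, which forces the sequence $(\rho_a)_{a\ge1}$ to be unimodal. Indeed,
\[
 \frac{\phi(a+1)}{\phi(a)}=\frac{(a+2)^{a+1}}{(a+1)!}\cdot\frac{a!}{(a+1)^a}
 =\Big(1+\tfrac{1}{a+1}\Big)^{a+1}\in(1,e),
\]
so $\phi$ is strictly increasing, and since this ratio is $\ge (3/2)^2=9/4$ for all $a\ge1$ we get $\phi(a)\ge 2\,(9/4)^{a-1}\to\infty$. Consequently the inequality $\phi(a)<C^2$ holds precisely on an initial (possibly empty) block of integers, so $\rho_a$ is strictly decreasing up to the index $a_0:=\min\{a\ge1:\ \phi(a)\ge C^2\}$ and non-decreasing afterwards; in particular $\rho_{a_0}=\min_{a\ge1}\rho_a$, with $a_0$ the essentially unique minimizer (a boundary tie $C^2=\phi(a)$ can only produce two adjacent minimizers). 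The index $a_0$ is well defined because $\phi(a)\to\infty$.

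It then remains to translate this into the two regimes. For (i), $|J_A|\ge Mp$ gives $C\le1$, hence $C^2\le1<2=\phi(1)$, so $\phi(1)\ge C^2$ and $a_0=1$; here $\phi(a)\ge 2>C^2$ for all $a$, so $\rho_a$ is in fact strictly increasing and the minimizer $a_0=1$ is unique. For (ii), $|J_A|<Mp$ gives $C>1$; since $C^2=(Mp/|J_A|)^2$ is increasing in $Mp/|J_A|$ and $\phi$ is increasing, the threshold $a_0=\min\{a\ge1:\phi(a)\ge C^2\}$ is non-decreasing in $Mp/|J_A|$, and $a_0\to\infty$ as $Mp/|J_A|\to\infty$ because $\phi$ is unbounded.

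I do not expect a genuine analytic obstacle here: the one place that needs care is the exact algebraic simplification of the consecutive-term ratio (the telescoping $\log((a+1)!)-\log(a!)=\log(a+1)$ that collapses the expression to $\log\big((a+1)^a/(a!\,C^2)\big)$), after which the argument is monotonicity bookkeeping. The only conceptual point worth stating explicitly is the legitimacy of minimizing over integers: strict monotonicity of $\phi$ gives a single sign change of $\rho_{a+1}-\rho_a$, hence true unimodality, so ``the first index at which the sequence stops decreasing'' is the global minimizer, which is exactly the $a_0$ in the statement.
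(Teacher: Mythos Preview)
Your proof is correct and essentially identical to the paper's: your function $\phi(a)=(a+1)^a/a!$ is the paper's $d(a)$, your $C$ is their $\tilde M=Mp/|J_A|$, and both reduce the comparison $\rho_{a+1}\lessgtr\rho_a$ to $\phi(a)\lessgtr C^2$ and then prove $\phi$ is strictly increasing via $\phi(a+1)/\phi(a)=(1+1/(a+1))^{a+1}>1$. The only cosmetic difference is that the paper handles case (i) by noting both factors $(a!)^{1/(2a)}$ and $C^{1/a}$ are increasing when $C\le1$, whereas you fold it into the same criterion via $\phi(1)=2>C^2$; both are fine.
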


By Proposition \ref{prop:ordercompare}, the order $a_0$ that attains the smallest value of $\rho_a$ depends on the value of  $Mp/|J_A|$ and does not have a closed form solution.  We   use numerical plots to demonstrate the relationship between $a_0$ and the sparsity level. Particularly, let  $|J_A|=p^{2(1-\beta)}$, where $\beta\in (0,1)$ denotes the sparsity level. 
 To have a better visualization,  we use $g(a)=\log (\rho_a n^{1/2}\kappa_1^{-1/2}\nu^{-2})=(1/2a) \log a!+a^{-1} \log(Mp^{2\beta-1})$  instead of $\rho_a$. We plot $g(a)$ curves  in Figure \ref{fig:gavalue} for each $\beta \in \{0.1,\ldots,0.9\}$ with $M=4$ and $p\in \{100,10000\}$. Other values of $M$ and $p$ are also taken, which give similar  patterns to Figure \ref{fig:gavalue} and are not presented. 
 
 \begin{figure}[!htbp]
  \centering
\includegraphics[width=1\linewidth]{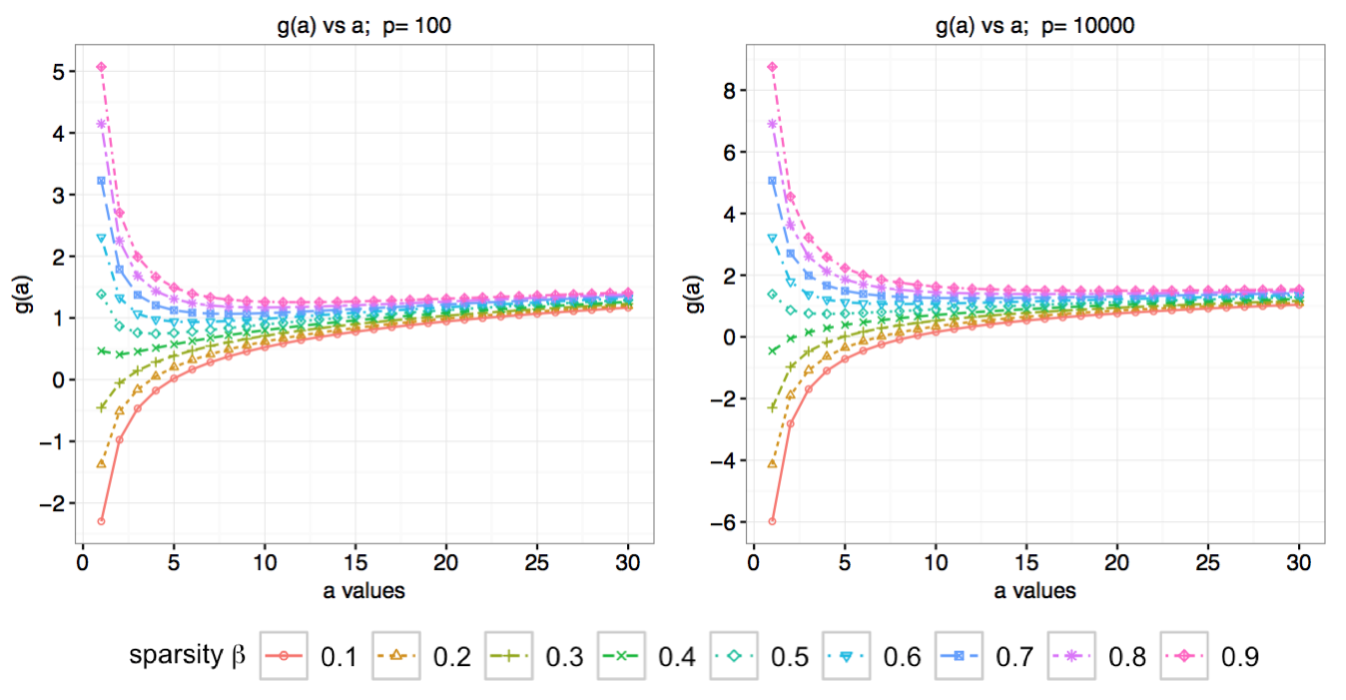}
 \caption{$g(a)$ versus $a$ with different sparsity level $\beta$ for $p=100, 10000$}
\label{fig:gavalue}
\end{figure}  
 
Figure \ref{fig:gavalue} shows that the $a_0$ such that $g(a)$ attains the smallest value increases when the sparsity level $\beta$ increases. In particular, when the sparsity level $\beta\leq 0.3$, that is, when $|J_A|$ is ``very" large and then $\boldsymbol{\Sigma}_A$ is ``very" dense, $g(a)$ has the smallest value at $a_0=1$. This is consistent with the conclusion in Proposition  \ref{prop:ordercompare} (i). When the  sparsity level $\beta$ is   between $0.4$ and $0.5$, we note that $a_0=2$ achieves the minimum of $g(a)$. This shows that when $|J_A|$ is ``moderately" large and   $\boldsymbol{\Sigma}_A$ is ``moderately" dense, $\mathcal{U}(2)$ is more powerful than  $\mathcal{U}(1)$. When the sparsity level $\beta>0.5$, we find that $a_0>2.$  This implies that when $|J_A|$ becomes smaller and   $\boldsymbol{\Sigma}_A$ becomes sparser, U-statistics of higher orders are more powerful. Additionally, we note that $a_0$   increases slowly as $\beta$ increases, which verifies   Proposition \ref{prop:ordercompare} (ii). 
Moreover, the curves converge  as $a$ increases and the differences of $g(a)$ for   large   $a$ values ($a\geq 6$)  are small.
This implies that  when selecting the range of considered orders of U-statistics, it suffices  to select an upper bound with  $a=6$ or $8$, which gives better or similar $\rho_a$ values to those larger $a$'s. 

 
In summary, when $|J_A|$ is large, i.e., $\boldsymbol{\Sigma}_A$ is dense, a small $a$ tends to obtain a smaller lower bound in terms of $\rho$. But when $|J_A|$ decreases, i.e., $\boldsymbol{\Sigma}_A$ becomes sparse, a  U-statistic of large finite order (or the maximum-type U-statistic as shown next) tends to obtain a smaller lower bound in $\rho$. This observation is consistent with the existing literature \cite{chen2011,cai2011,caima2013,caicovreview2017}.

Next, we proceed to examine the power of the maximum-type test statistic $\mathcal{U}(\infty)$, and compare it  with the U-statistics $\mathcal{U}(a)$ of finite $a$ defined in \eqref{eq:originaluinvariance}.  
By \cite{cai2011}, the rejection region for $\mathcal{U}(\infty)$ with significance level $\alpha$ is
\begin{eqnarray*}
	|\mathcal{U}(\infty)| \geq t_p:=n^{-1/2} \sqrt{4\log p - \log \log p -\log (8\pi)-2\log \log (1-\alpha)^{-1}}.
\end{eqnarray*}   
Note  $t_p \simeq 2\sqrt{\log p/n}$  and under alternative, the power for  $\mathcal{U}(\infty)$ is
\begin{eqnarray}
	P( |\mathcal{U}(\infty)| \geq t_p ). \label{eq:largestpowergoal}
\end{eqnarray}    
As discussed, we consider the alternatives satisfying Conditions \ref{cond:ellpmoment} and \ref{cond:rhoijconst},  $\sigma_{j_1,j_2}=\rho>0$ for $(j_1,j_2)\in J_A$, and $\sigma_{j,j}=\nu^2$ for $j=1,\ldots, p$. 
For simplicity, we assume $\mathrm{E}(\mathbf{x})=\boldsymbol{\mu}$ and  $\nu^2$ are given, and  focus on the simplified
\begin{align}
	\mathcal{U}(\infty)= \max_{1\leq j_1<j_2\leq p}  \Big| \nu^{-2}{n}^{-1}{{\sum}_{i=1}^n (x_{i,j_1}-\mu_{j_1})(x_{i,j_2}-\mu_{j_2}) } \Big| \label{eq:uinftyimplified}.
\end{align}  
 We show in the following proposition when  the power of $\mathcal{U}(\infty)$ asymptotically  converges to 1 or is strictly smaller than  1 under alternative.

\begin{proposition}\label{prop:exttestpowerana}
Under the considered alternative $\boldsymbol{\Sigma}_A$ above, suppose $\max_{j=1,\ldots,p}\allowbreak\mathrm{E}e^{t_0 |x_j-\mu_j|^{\varsigma}} < \infty$ for some $0 <\varsigma \leq 2$ and $t_0>0$,	    and $\log p= o( n^{\beta})$ with  $\beta=\varsigma/(4+\varsigma)$. Then for  \eqref{eq:uinftyimplified}, when $n,p\to \infty$,
\begin{enumerate}
	\item[(i)] there exists a constant $c_1 >2$ such that  if $\rho \geq  c_1 \sqrt{{\log p}/{n}}$, $\eqref{eq:largestpowergoal} \rightarrow 1$;
	\item[(ii)] there exists another constant  $0<c_2<2$ such that when $\rho\leq   c_2\sqrt{{\log p}/{n}}$,  Condition \ref{cond:ellpmoment} holds for $\kappa_1\leq 1$ and $|J_A|=o(1)p^{  \frac{2(1- {c_2}/{2} )^2}{\kappa_1+m} } (\log p)^{\frac{1}{2}-\frac{1}{2(\kappa_1+m)}} $ for some $m>0$,  we have $\eqref{eq:largestpowergoal} \leq \log (1-\alpha)^{-1}$.
 \end{enumerate}	
\end{proposition}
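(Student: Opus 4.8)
The plan is to work entrywise. Write $\hat{\rho}_{j_1,j_2}:=\nu^{-2}n^{-1}\sum_{i=1}^{n}(x_{i,j_1}-\mu_{j_1})(x_{i,j_2}-\mu_{j_2})$, so that $\mathcal{U}(\infty)=\max_{1\le j_1<j_2\le p}|\hat{\rho}_{j_1,j_2}|$ as in \eqref{eq:uinftyimplified}, with $\mathrm{E}\,\hat{\rho}_{j_1,j_2}=\sigma_{j_1,j_2}/\nu^2$ and, by Condition \ref{cond:ellpmoment} together with $\sigma_{j,j}=\nu^2$, $\mathrm{var}(\hat{\rho}_{j_1,j_2})=\kappa_1 n^{-1}$ up to lower order terms, uniformly in $(j_1,j_2)$. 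Each $\hat{\rho}_{j_1,j_2}$ is an i.i.d.\ average of the summands $\nu^{-2}(x_{i,j_1}-\mu_{j_1})(x_{i,j_2}-\mu_{j_2})$, and since $|ab|^{\varsigma/2}\le(|a|^{\varsigma}+|b|^{\varsigma})/2$, these summands satisfy $\mathrm{E}\exp\{t|\cdot|^{\varsigma/2}\}<\infty$ for a small $t>0$ under the assumed moment bound. The restriction $\log p=o(n^{\beta})$, $\beta=\varsigma/(4+\varsigma)$, is exactly what makes a Cram\'er-type moderate deviation estimate, $P(\hat{\rho}_{j_1,j_2}-\mathrm{E}\,\hat{\rho}_{j_1,j_2}\ge s)=(1-\Phi(\sqrt{n/\kappa_1}\,s))(1+o(1))$, valid uniformly over all $\Theta(p^2)$ pairs in the range $s\asymp\sqrt{\log p/n}$; this is the mechanism underlying the limiting law in Theorem \ref{thm:extlimit} and \citet{jiang2004,liu2008,cai2011}. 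Recall $t_p\simeq 2\sqrt{\log p/n}$ and, from the explicit form of $t_p$ stated below Theorem \ref{thm:extlimit}, that $\binom{p}{2}\cdot 2\{1-\Phi(\sqrt{n}\,t_p)\}\to\log(1-\alpha)^{-1}$.

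For part (i), I would bound the maximum below by one signal coordinate: $J_A$ is nonempty under the alternative, so picking $(j_1^{\star},j_2^{\star})\in J_A$ gives $\eqref{eq:largestpowergoal}\ge P(\hat{\rho}_{j_1^{\star},j_2^{\star}}\ge t_p)$. Since $\mathrm{E}\,\hat{\rho}_{j_1^{\star},j_2^{\star}}=\rho/\nu^2\ge(c_1/\nu^2)\sqrt{\log p/n}$, choosing $c_1$ large enough relative to $\nu^2$ (in particular $c_1>2$) makes $\mathrm{E}\,\hat{\rho}_{j_1^{\star},j_2^{\star}}-t_p\ge\eta\sqrt{\log p/n}$ for a fixed $\eta>0$. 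A Bernstein inequality for the i.i.d.\ average then gives $P(\hat{\rho}_{j_1^{\star},j_2^{\star}}<t_p)\le 2\exp\{-c\,\eta^2\log p\,(1+o(1))\}\to 0$, so the power in \eqref{eq:largestpowergoal} tends to $1$; only the moment bound and $\log p=o(n)$ are needed here.

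For part (ii), I would split according to $J_A$ and apply a union bound,
\[
\eqref{eq:largestpowergoal}\le\sum_{\substack{j_1<j_2\\(j_1,j_2)\notin J_A}}P\big(|\hat{\rho}_{j_1,j_2}|\ge t_p\big)+\sum_{\substack{j_1<j_2\\(j_1,j_2)\in J_A}}P\big(|\hat{\rho}_{j_1,j_2}|\ge t_p\big).
\]
For a null pair, $\hat{\rho}_{j_1,j_2}$ has mean $0$ and, since $\kappa_1\le1$, variance at most $n^{-1}$, so the moderate deviation bound gives $P(|\hat{\rho}_{j_1,j_2}|\ge t_p)\le 2\{1-\Phi(\sqrt{n}\,t_p)\}(1+o(1))$; summing over the $(1+o(1))\binom{p}{2}$ null pairs and using the calibration of $t_p$ bounds the first sum by $\log(1-\alpha)^{-1}(1+o(1))$. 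For a signal pair, $\mathrm{E}\,\hat{\rho}_{j_1,j_2}=\rho/\nu^2\le(c_2/\nu^2)\sqrt{\log p/n}$, and taking $c_2<2$ small enough relative to $\nu^2$ ensures $t_p-\rho/\nu^2\ge(2-c_2/\nu^2)\sqrt{\log p/n}\,(1-o(1))$ with a positive leading constant; substituting $s=t_p-\rho/\nu^2$ into the moderate deviation bound, replacing $\kappa_1$ by $\kappa_1+m$ for an arbitrary $m>0$ to absorb the $o(1)$ slack (and, when $\varsigma<2$, the heavier-than-sub-exponential tail of the products), and expanding the $\log p$ and $\log\log p$ terms inside $t_p$ yields
\[
P\big(|\hat{\rho}_{j_1,j_2}|\ge t_p\big)\le C\,(\log p)^{-1/2+1/(2(\kappa_1+m))}\,p^{-2(1-c_2/2)^2/(\kappa_1+m)}.
\]
Multiplying by the assumed bound $|J_A|=o(1)\,p^{2(1-c_2/2)^2/(\kappa_1+m)}(\log p)^{1/2-1/(2(\kappa_1+m))}$ makes the second sum $o(1)$, and combining the two bounds gives $\limsup_{n,p\to\infty}\eqref{eq:largestpowergoal}\le\log(1-\alpha)^{-1}$.

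The hardest step is the uniform moderate deviation estimate with the \emph{correct} constant: it must hold simultaneously for all $\Theta(p^2)$ pairs and across the full range $\sqrt{n}\,t_p\asymp 2\sqrt{\log p}=o(n^{\beta/2})$, and the exponent $2(1-c_2/2)^2/(\kappa_1+m)$ — hence the precise sparsity threshold on $|J_A|$ — is governed by the variance constant $\kappa_1$ and the $-\log\log p$ correction in $t_p$, both of which must be tracked rather than absorbed into generic constants. The fact that $(x_{i,j_1}-\mu_{j_1})(x_{i,j_2}-\mu_{j_2})$ is only a product of variables with $\varsigma$-type exponential moments (genuinely sub-exponential precisely when $\varsigma=2$, heavier-tailed otherwise) is what forces the choice $\beta=\varsigma/(4+\varsigma)$ and the slack $m>0$; the remaining steps are routine union-bound bookkeeping.
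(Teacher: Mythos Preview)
Your approach is essentially the same as the paper's: for (i) both bound the maximum from below by a single signal coordinate (the paper uses a Berry--Esseen/CLT argument rather than Bernstein, but either suffices), and for (ii) both split the union bound into $J_A$ and $J_A^c$ and apply a Cram\'er-type moderate deviation estimate (the paper invokes Lemma~6.8 of \cite{cai2011} explicitly) to obtain the same single-pair bound $C(\log p)^{-1/2+1/(2(\kappa_1+m))}p^{-2(1-c_2/2)^2/(\kappa_1+m)}$. One cosmetic point: the paper normalizes to $\nu^2=1$ at the outset, which removes the $\nu^2$ factors you carry through and makes the constants $c_1>2$, $c_2<2$ and the exponent $(1-c_2/2)^2$ line up exactly with the statement.
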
 
  

 
Recall that Proposition \ref{prop:ordercompare} shows that there exists a finite integer $a_0$, such that $\rho_{a_0}$ is the minimum of \eqref{eq:rhoform}, and  $\rho_{a_0}$ is a lower bound of $\rho$ value for the finite-order U-statistics to achieve the given asymptotic power.  
With  Propositions   \ref{prop:ordercompare} and  \ref{prop:exttestpowerana}, we next  compare the finite-order U-statistics defined in \eqref{eq:originaluinvariance} with the maximum-type test statistic $\mathcal{U}(\infty)$.

\begin{proposition}{\label{col:maxordercompare}}
 Under the conditions of Theorem \ref{thm:cltalternative} and Proposition \ref{prop:exttestpowerana}, for any finite integer $a$, there exist constants $c_1$ and $c_2$ such that when $p$ is sufficiently large,
 \begin{enumerate}
 \item[(i)] For any $M$, when  
$
|J_A|< c_1^{-{a}} {(a!)}^{\frac{1}{2}} \kappa_1^{\frac{a}{2}} (\log p)^{-\frac{a}{2} } { {Mp} } ,	
$  $\mathcal{U}(\infty)$ has higher asymptotic power than $\mathcal{U}(a)$. 
\item[(ii)] When $M$ is big enough and $
|J_A|> c_2^{-{a}} {(a!)}^{\frac{1}{2}} \kappa_1^{\frac{a}{2}} (\log p)^{-\frac{a}{2} } { {Mp} }, 	
$  $\mathcal{U}(a)$ has higher asymptotic power than $\mathcal{U}(\infty)$.
 \end{enumerate}
\end{proposition}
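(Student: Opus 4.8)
The plan is to reduce the comparison to the explicit thresholds already on hand: the signal strength $\rho_a$ of \eqref{eq:rhoform}, at which $\mathcal{U}(a)$ attains asymptotic power $\pi_M:=1-\Phi(z_{1-\alpha}-M/\sqrt{2})$, and the detection cutoffs $c_1\sqrt{\log p/n}$, $c_2\sqrt{\log p/n}$ for $\mathcal{U}(\infty)$ supplied by Proposition \ref{prop:exttestpowerana}. Concretely, I would work with the ideal alternative used throughout Section \ref{sec:powerana} ($\sigma_{j_1,j_2}=\rho$ on $J_A$, $\sigma_{j,j}=\nu^2$), fix $\rho=\rho_a$, and compare $P(|\mathcal{U}(\infty)|\geq t_p)$ with the right-hand side of \eqref{eq:powerbound} at this common $\rho$. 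Since both the power of $\mathcal{U}(a)$ in \eqref{eq:powerbound} and the power \eqref{eq:largestpowergoal} of $\mathcal{U}(\infty)$ are nondecreasing in $\rho$, comparing the two at a common $\rho$ is equivalent to ordering the $\rho$'s needed to reach a common power, i.e.\ to Criterion \ref{cricomp}; I would record this monotonicity remark first. Note also that at $\rho=\rho_a$ the ratio $\mathrm{E}[\mathcal{U}(a)]/\sqrt{\mathrm{var}[\mathcal{U}(a)]}$ equals the constant $M/\sqrt{2}$, so the constraint on $\rho$ in Theorem \ref{thm:cltalternative} holds and that theorem indeed yields power $\pi_M$ for $\mathcal{U}(a)$.

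For part (i): taking $a$-th roots in \eqref{eq:rhoform}, the hypothesis $|J_A|<c_1^{-a}(a!)^{1/2}\kappa_1^{a/2}(\log p)^{-a/2}Mp$ is, after folding the normalizing factor $\nu^{2}$ from \eqref{eq:uinftyimplified} into the constant $c_1$, exactly the inequality $\rho_a>c_1\sqrt{\log p/n}$ with $c_1>2$ the constant of Proposition \ref{prop:exttestpowerana}(i). Hence, evaluating at $\rho=\rho_a$, Proposition \ref{prop:exttestpowerana}(i) gives that the asymptotic power of $\mathcal{U}(\infty)$ tends to $1$, while the power of $\mathcal{U}(a)$ at $\rho_a$ is $\pi_M<1$ for every finite $M$; thus $\mathcal{U}(\infty)$ has strictly higher asymptotic power. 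This works for all $M\in(0,\infty)$ because $\pi_M<1$ always.

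For part (ii): symmetrically, $|J_A|>c_2^{-a}(a!)^{1/2}\kappa_1^{a/2}(\log p)^{-a/2}Mp$ rearranges to $\rho_a<c_2\sqrt{\log p/n}$ with $c_2<2$ from Proposition \ref{prop:exttestpowerana}(ii). I would then verify the two side conditions of Proposition \ref{prop:exttestpowerana}(ii): the bound $\kappa_1\leq 1$, which is assumed, and the auxiliary sparsity bound $|J_A|=o(1)\,p^{2(1-c_2/2)^2/(\kappa_1+m)}(\log p)^{1/2-1/(2(\kappa_1+m))}$ for some $m>0$ — here the freedom in choosing $c_2$ (and $m$) is used so that the admissible window for $|J_A|$, which must also contain the lower bound $|J_A|\asymp p/(\log p)^{a/2}$, is nonempty once $p$ is large. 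Under these, Proposition \ref{prop:exttestpowerana}(ii) bounds the asymptotic power of $\mathcal{U}(\infty)$ at $\rho_a$ by $\log(1-\alpha)^{-1}$, whereas the power of $\mathcal{U}(a)$ at $\rho_a$ equals $\pi_M$; since $\pi_M\to 1$ as $M\to\infty$, for $M$ large enough $\pi_M>\log(1-\alpha)^{-1}$, so $\mathcal{U}(a)$ has strictly higher asymptotic power.

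The main obstacle I anticipate is the bookkeeping in part (ii): one must simultaneously honour the lower bound on $|J_A|$ forced by $\rho_a<c_2\sqrt{\log p/n}$ and the upper bound on $|J_A|$ that Proposition \ref{prop:exttestpowerana}(ii) demands for its low-power conclusion, and check these are compatible after the $\nu^{2}$ normalization of \eqref{eq:uinftyimplified} is absorbed into $c_2$ — this pins down an elementary but slightly fiddly choice of $c_2$ and $m$ in terms of $a$, $\kappa_1$ and $\nu$. Part (i), by contrast, is a one-line rearrangement of \eqref{eq:rhoform} followed by an appeal to Proposition \ref{prop:exttestpowerana}(i).
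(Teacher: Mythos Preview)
Your proposal is correct and matches the paper's intended argument. The paper does not supply a separate proof for this proposition in the supplement; it is treated as an immediate consequence of Propositions~\ref{prop:ordercompare} and~\ref{prop:exttestpowerana}, obtained precisely by rewriting the hypotheses on $|J_A|$ as $\rho_a>c_1\sqrt{\log p/n}$ (part~(i)) and $\rho_a<c_2\sqrt{\log p/n}$ (part~(ii)) via \eqref{eq:rhoform}, and then reading off the conclusions from Proposition~\ref{prop:exttestpowerana}. Your formalization---fixing $\rho=\rho_a$, using Theorem~\ref{thm:cltalternative} to pin the power of $\mathcal{U}(a)$ at $\pi_M$, and comparing against the $\mathcal{U}(\infty)$ power bounds---is exactly the right way to make this precise, and your identification of the compatibility check between the lower bound on $|J_A|$ and the auxiliary sparsity constraint of Proposition~\ref{prop:exttestpowerana}(ii) as the only nontrivial bookkeeping is accurate.
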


From Proposition \ref{prop:ordercompare}, we know when $Mp/|J_A|=O(1)$, there exists a finite $a_0$ such that $\mathcal{U}(a_0)$ is the ``best'' among all the finite-order U-statistics; in this case, Proposition \ref{col:maxordercompare} (ii) 
further indicates that 
$\mathcal{U}(a_0)$  has higher asymptotic power than $\mathcal{U}(\infty)$. Specifically, if $Mp/|J_A|<1$, $a_0=1$, then $\mathcal{U}(1)$ is the ``best'' and its lowest detectable order of $\rho$ is $\Theta(p|J_A|^{-1}n^{-1/2})$.  More interestingly, when $\boldsymbol{\Sigma}_A$ is moderately dense or moderately sparse with $Mp/|J_A|>1$ and bounded, some U-statistic of finite order $a_0> 1$ would become the ``best''. By Figure \ref{fig:gavalue},  the value of $a_0$  
   increases as $\boldsymbol{\Sigma}_A$ becomes denser. 
   On the other hand, when $\boldsymbol{\Sigma}_A$ is ``very'' sparse with $
|J_A|< c_1^{-{a_0}} {(a_0!)}^{\frac{1}{2}} \kappa_1^{\frac{a_0}{2}}  (\log p)^{-\frac{a_0}{2} }   { {Mp} }  
$, 
 $\mathcal{U}(\infty)$ is the ``best" and its lowest detectable order of $\rho$ is  $\Theta(\sqrt{\log p/n})$.

\begin{remark}\label{rm:disthreshold}
The above power comparison results are under the constructed family of U-statistics. We note that additional formulation may further enhance the test power. For instance, \cite{chen2014two,zhong2013} showed that an adaptive thresholding in certain $\ell_{p}$-type test statistics can achieve  high power under the alternatives with sparse and faint signals. It is of interest to  incorporate the adaptive thresholding into the constructed family of U-statistics, which is left for future study.       	
\end{remark}

\begin{remark}\label{rm:notallpositive}
The analysis above focuses on the ideal case where the nonzero off-diagonal entries of $\boldsymbol{\Sigma}_A$ are the same for illustration. When these entries of $\boldsymbol{\Sigma}_A$  are different, similar analysis still applies by Theorem 	\ref{thm:cltalternative} for general covariance matrices. In specific, the asymptotic power of $\mathcal{U}(a)$  depends on the mean variance ratio \eqref{eq:ratiopower} and $\rho_a=\sqrt{\kappa_1}n^{-1/2}(a!)^{1/2a}\times (M \sum_{j=1}^p \sigma_{j,j}^a/\sum_{1\leq j_1,j_2\leq p}\sigma_{j_1,j_2}^a)^{1/a}$. We can then obtain conclusions similar to  Propositions \ref{prop:ordercompare}--\ref{col:maxordercompare}. One interesting case is  when $\boldsymbol{\Sigma}_A$ contains both positive and negative entries; the same analysis applies for even-order U-statistics, since $\sigma_{j_1,j_2}^a$'s are all non-negative for even $a$.  On the other hand, the odd-order U-statistics would have low power, since $\sum_{1\leq j_1\neq j_2\leq p} \sigma_{j_1,j_2}^a$ could be small due to the cancellation of positive and negative $\sigma_{j_1,j_2}^a$'s. We have conducted simulations when the  nonzero $\sigma_{j_1,j_2}$'s are different in Section \ref{sec:simulation}, and the results exhibit consistent  patterns as expected. 
\end{remark}

\subsection{Application to Adaptive Testing \&  Computation} \label{sec:computtest}

\paragraph{Adaptive Testing}

Power analysis in Section \ref{sec:powerana} shows that when the sparsity level of the alternative changes, the test statistic that achieves the highest power could vary. However,  since the truth is often unknown in practice,  it is unclear which test statistic should be chosen. Therefore, we develop an adaptive testing procedure by   combining the information from U-statistics of different orders, which would yield high power against various alternatives. 

In particular, we propose to combine the U-statistics through their $p$-values, which is widely used in  literature \cite{fishermethod,pan2014powerful,yu2009pathway}. One popular method is the  minimum combination, whose idea is to take the minimum $p$-value to approximate the maximum power \cite{pan2014powerful,yu2009pathway,xu2016adaptive}. Specifically, let $\Gamma$ be a candidate set of the orders of U-statistics, which   contains both finite values and $\infty$. We compute $p$-values $p_a$'s of the U-statistics $\mathcal{U}(a)$'s satisfying $a \in \Gamma$. The minimum combination takes the statistic $T_{\mathrm{adpUmin}}=\min\{p_a: a\in \Gamma \}$ and has the asymptotic $p$-value $p_{\mathrm{adpUmin}}=1-(1-T_{\mathrm{adpUmin}})^{|\Gamma|}$, where $|\Gamma|$ denotes the size of the candidate set $\Gamma$. We reject $H_0$ if $p_{\mathrm{adpUmin}}<\alpha$. Under $H_0$, $p_a$'s are asymptotically independent and uniformly distributed by the theoretical results in Section \ref{sec:aymindptusec}.  The type \RNum{1} error is asymptotically  controlled as  $P(p_{\mathrm{adpUmin}}< \alpha)=P(\min_{a\in \Gamma} p_a < p_{\alpha}^*)\to \alpha,$ where $p_{\alpha}^* = 1- (1-\alpha)^{1/|\Gamma|}$. Since  $P(\min_{a\in \Gamma} p_a < p_{\alpha}^* ) \geq P( p_a < p_{\alpha}^*)$, the  power of the  adaptive test goes to 1 if there exists $a\in \Gamma$ such that the power of $\mathcal{U}(a)$ goes to 1. We note that the power of the adaptive test is not necessarily higher than that of all the  U-statistics.  This is because the power of $\mathcal{U}(a)$ is $P(p_a<\alpha)$, and is different from $P(p_a<p_{\alpha}^*)$ since $p_{\alpha}^*<\alpha$ when $|\Gamma|>1$.  Based on our extensive simulations, we find that the adaptive test is  usually close to or even higher than the maximum power of the U-statistics.

\begin{remark}
Fisher's method \cite{fishermethod} is another popular method for combining independent $p$-values. It has the test statistic $T_{\mathrm{adpUf}}=-2\sum_{k=1}^{|\Gamma|}\log p_k$, which converges to $\chi^2_{2|\Gamma|}$ under $H_0$.  By our simulations,  the minimum combination and Fisher's method are generally comparable, while Fisher's method has higher power under several cases. Moreover, we can also use  other  methods to combine the  $p$-values, such as higher criticism \cite[][]{donoho2004,donoho2015higher}. We leave the study of how to efficiently combine the $p$-values for future research. 
\end{remark}

We select the candidate set $\Gamma$ by the power analysis in Section \ref{sec:powerana}. 
We would recommend including   $\{1,2,\ldots,6,\infty\}$, which can be powerful against a wide spectrum of alternatives. In particular, by Propositions \ref{prop:ordercompare} and \ref{col:maxordercompare}, we include $a=1,2$ that are powerful against dense signals;  $a=\infty$ that is powerful against sparse signals; and also $a = \{3,\ldots,6\}$ for the moderately dense and moderately sparse signals.
By Figure \ref{fig:gavalue}, it   generally suffices to choose finite $a$ up to 6--8, which often give similar/better performance to/than larger $a$ values. The simulations in Section \ref{sec:simulation} confirm the good performance of this choice of  $\Gamma$; and the proposed adaptive test appears to well approximate the ``best" performance even when $\Gamma$ may not always contain the unknown ``optimal"  U-statistics. 

We would like to mention that the adaptive procedure can be generalized to other testing problems, as long as similar theoretical properties are given, such as the examples in Section \ref{sec:extension}.


\paragraph{Computation}

Next we discuss the computation in the adaptive testing. 
 A direct calculation following the form of $\mathcal{U}(a)$ in \eqref{eq:originaluinvariance} and $\mathbb{V}(a)$ in \eqref{eq:varestnew} would be computationally expensive for large  $a$ with a cost of $O(p^2n^{2a})$. To address this issue, we introduce a method that can reduce the cost.


We first consider a simplified setting when   $\mathrm{E}(x_{i,j})=0$ to illustrate the idea.   As discussed in Remark \ref{rm:leadingteststat},  we examine $\tilde{\mathcal{U}}(a)$ defined in \eqref{eq:originleadingterm}. Let $\mathcal{L}=\{(j_1,j_2): 1\leq j_1 \neq j_2 \leq p \}$ denote the set of index tuples, and for each index tuple $l=(j_1,j_2)\in \mathcal{L}$, define $s_{i,l}=x_{i,j_1}x_{i,j_2}$. Note that $\tilde{\mathcal{U}}(a)=(P^n_a)^{-1} \sum_{l\in \mathcal{L}}\mathcal{U}_l (a)$, where $\mathcal{U}_l (a) = \sum_{1\leq i_1 \neq \cdots \neq i_{a}\leq n} \prod_{k=1}^a s_{i_k, l}.$
Calculating $\mathcal{U}_l(a)$ directly is of order $O(n^a)$. We then focus on reducing the computational cost of  $\mathcal{U}_l (a)$.
  For $l\in \mathcal{L}$ and  finite integers $t_1,\ldots, t_k$,  define 
  \begin{align}
	V_l^{(t_1,\ldots, t_k)}  =\prod_{r=1}^ k \Big(\sum_{i=1}^n s_{i,l}^{t_r}\Big),\quad  
 U_l^{(t_1,\cdots,t_k)}   =  \sum_{1\leq i_1\neq \ldots \neq i_k \leq n} \prod_{r=1}^k s_{i_1,l}^{t_r}. \label{eq:uvorderstat}
\end{align} 
 We can see that $\mathcal{U}_l (a) =U_l^{\mathbf{1}_{a}}$ with $\mathbf{1}_a$ being an $a$-dimensional vector of all ones,  and $U_l^{(a)}=V_l^{(a)}$ for any finite  integer $a$. 
 To reduce the computational cost of $\mathcal{U}_l(a)$, the main idea is to obtain $U_l^{\mathbf{1}_{a}}$ from $V_l^{(t_1,\ldots, t_k)}$, whose   computational cost is $O(n)$. In particular,  $\mathcal{U}_l(a)$ can be attained iteratively from $V_l^{(t_1,\ldots, t_k)}$ based on the following equation 
 \begin{align} \label{eq:2}
	U_l^{(k,\mathbf{1}_{r-k})} = V_l^{(k)} \times U_l^{\mathbf{1}_{r-k}} - (r-k) \times U_l^{(k+1, \mathbf{1}_{r-k-1})},
\end{align} 
which follows from the definitions. Algorithm \ref{Algo2} below summarizes the  steps.

\begin{algorithm}[!htbp]
 \KwData{$s_{i,l}$ \ $(1\leq i \leq n$, $l \in \mathcal{L})$. }
 \KwResult{$\tilde{\mathcal{U}}(a)$.}
 \For{$l \in \mathcal{L}$}{
  Compute and store $V_l^{(k)}=U_{l}^{(k)}= \sum_{i=1}^n s_{i,l}^k$, $(k=1,\cdots,a)$ during the algorithm\;
  $U_l^{\mathbf{1}_{1}}=V_l^{(1)}$ , $U_l^{\mathbf{1}_2}=U_l^{\mathbf{1}_{1}} V_l^{(1)}-U_l^{(2)}$\;
 \While{$3 \leq r \leq a$}{
  $T_l=U_l^{(r)}$\\
  \For{$k \leftarrow r-1$ \KwTo $1$}{
  	 $T_l =  V_l^{(k)} \times U_l^{\mathbf{1}_{r-k}} - (r-k) \times T_l $
  }
  $U_l^{\mathbf{1}_r}=T_l$
 } } $\tilde{\mathcal{U}}(a)=(P^n_a)^{-1} \sum_{l\in \mathcal{L}}U_l^{\mathbf{1}_a}$
\vspace{5pt}
 \caption{Iterative Computation Implementation}
 \label{Algo2}
\end{algorithm}

We illustrate the idea of the algorithm by some  examples. By definition,   $U_l^{(1)}=V_l^{(1)}$, which can be computed with cost    $O(n)$. Next consider in  \eqref{eq:2}, if $r=2$ and $k=1$, then
$
	U_l^{(1,1)} = V_l^{(1)} \times U_l^{{(1)}} - (2-1) \times U_l^{(2)}=V_l^{(1)}  \times V_l^{(1)} -V_l^{(2)} ,
$ which yields ${U}_l^{\mathbf{1}_2}$  with cost   $O(n)$. For $U_l^{\mathbf{1}_3}$, we first take $r=3$ and $k=2$ in \eqref{eq:2}, then with cost $O(n)$, we have
$
	U_l^{(2,{1})}= V_l^{(2)} \times U_l^{{(1)}} -  U_l^{(3)}=V_l^{(2)} \times V_l^{(1)} -  V_l^{(3)},
$ as $V_l^{(k)}=U_l^{(k)}$ by the definition. Given  $U_l^{\mathbf{1}_{2}}$ and  $U_l^{(2,{1})}$, we  obtain   
$
	 U_l^{(1,\mathbf{1}_{2})} = V_l^{(1)} \times U_l^{\mathbf{1}_{2}} - 2 \times U_l^{(2, \mathbf{1}_{1})}.
$ Thus $U_l^{\mathbf{1}_3}$ is also computed with cost   $O(n)$.  
Iteratively, for any finite integer $a$,  we can obtain $U_l^{\mathbf{1}_a}$  from $V_l^{(t_1,\ldots, t_k)}$ whose computational cost is $O(n)$. More closed form formulae representing $U_l^{\mathbf{1}_a}$ by $V_l^{(t_1,\ldots, t_k)}$ are given in Section \ref{sec:formulaevu} of  \ref{suppA}. 


Algorithm \ref{Algo2} reduces the computational cost of $\tilde{\mathcal{U}}(a)$  from $O(p^2n^a)$ to $O(p^2n)$. Its idea is general and can  be extended to compute other different U-statistics by changing the input $s_{i,l}$. In particular, the variance estimator $\mathbb{V}(a)$ can be computed with cost $O(p^2n)$ by specifying $s_{i,l}=(x_{i,j_1}-\bar{x}_{j_1})^2(x_{i,j_2}-\bar{x}_{j_2})^2$, for each $l\in \mathcal{L}=\{(j_1,j_2):1\leq j_1\neq j_2\leq p\}$. Then $\mathbb{V}(a)=2a!(P^n_a)^{-2}\sum_{l\in \mathcal{L}}\sum_{1\leq i_1\neq \ldots \neq i_a\leq n}\prod_{k=1}^a s_{i_k,l}$ and the Algorithm \ref{Algo2} can be applied.  
Moreover, when $\mathrm{E}(x_{i,j})$ is unknown, $\mathcal{U}(a)$ can still be computed with cost $O(p^2n)$ using the iterative method similar to Algorithm \ref{Algo2}. The   details are provided in Section \ref{sec:u2compute} of \ref{suppA}.

\section{Simulations and Real Data Analysis}\label{sec:twosimdata}

\subsection{Simulations} \label{sec:simulation}
 We conduct simulation studies to evaluate the performance of the proposed adaptive testing procedures, and  investigate the relationship between the power and sparsity levels.  For one-sample covariance testing discussed in Section \ref{sec:mainexamsec}, we
 generate $n$ i.i.d. $p$-dimensional $\mathbf{x}_i$ for $i=1,\ldots, n$, and consider the following five simulation settings. 


\textit{Setting 1:} $\mathbf{x}_i$ has $p$  i.i.d. entries of $\mathcal{N}(0,1)$ and $\mathrm{Gamma}(2,0.5)$ respectively. Under each case, we take $n=100$ and $p\in\{50, 100, 200, 400, 600, 800, 1000\}$  to verify the theoretical results under $H_0$ and the validity of the adaptive test across different $n$ and $p$ combinations.

For the following settings 2--5, we generate $\mathbf{x}_i$ from multivariate Gaussian distributions with mean zero and different covariance matrices $\boldsymbol{\Sigma}_A$'s. 

\textit{Setting 2:} $\boldsymbol{\Sigma}_A=(1-\rho)I_p+\rho \mathbf{1}_{p,k_0}\mathbf{1}_{p,k_0}^{\intercal},$ where $\mathbf{1}_{p,k_0} $ is a $p$-dimensional vector with the first $k_0$ elements one and the rest zero.   We take $(n,p)\in \{(100,300), (100, 600), (100, 1000)\}$, and study the power  with respect to different signal sizes $\rho$ and sparsity levels $k_0$. 

\textit{Setting 3:} The diagonal elements of $\boldsymbol{\Sigma}_A$ are all one and $|J_A|$ number of  off-diagonal elements are $\rho$ with random positions. We take $(n,p)\in \{(100, 600), (100, 1000)\}$ and let the signal size $\rho$ and sparsity level $|J_A|$ vary to examine how the power changes accordingly. 

\textit{Setting 4:} The diagonal elements of $\boldsymbol{\Sigma}_A$ are all one and $|J_A|$ number of  off-diagonal elements are uniformly generated from $(0,2\rho)$ with random positions. We take $(n,p)=(100, 1000)$ and similarly let the signal size $\rho$ and sparsity level $|J_A|$ vary to examine how the power changes accordingly. 
 
\textit{Setting 5:}  We consider the multivariate models in \cite{chen2011}. Specifically, for each $i=1,\ldots, n$, $\mathbf{x}_i= \Xi \mathbf{z}_i + \boldsymbol{\mu}$,  
where $\Xi$ is a matrix of dimension $p\times m$, and $\mathbf{z}_i$'s are i.i.d. Gaussian or Gamma random vectors. Under null hypothesis, $m=p$, $\Xi=I_p$ $\boldsymbol{\mu}=2 \mathbf{1}_p$; under alternative hypothesis, $m=p+1$, $\Xi=(\sqrt{1-\rho} I_p, \sqrt{2\rho} \mathbf{1}_p)$,  $\boldsymbol{\mu}=2(\sqrt{1-\rho}+\sqrt{2\rho})\mathbf{1}_p$. We also take the $n$ and $p$ combination in \cite{chen2011} with $(n,p)\in \{(40,159), (40,331), (80,159), (80,331), (80,642)\}$.

We compare several methods in the literature, including both maximum-type and sum-of-squares-type tests. In particular, the maximum-type test statistic in \citet{jiang2004} is taken as $\mathcal{U}(\infty)$ in this framework. Since the convergence in  \cite{jiang2004} is known to be slow, we use permutation to approximate the distribution in the simulations. In addition, we  consider some  sum-of-squares-type methods. Specifically, we examine  the identity and sphericity tests in \citet{chen2011}, which are denoted as ``Equal" and ``Spher", respectively.  We also compare the methods in  \citet{ledoit2002} and \citet{schott2007test}, which are referred to as ``LW" and ``Schott", respectively.

To illustrate, 
  Figure \ref{fig:alternsparsfigure}   summarizes the numerical results for the setting 3 when $n=100$ and $p=1000$. All the results are based on  1000 simulations at the $5\%$ nominal significance level.   In  Figure \ref{fig:alternsparsfigure}, we present the power of single U-statistics with orders in $\{1,\ldots, 6,\infty\}$.  ``adpUmin" and  ``adpUf" represent the  results of the adaptive testing procedure using the minimum combination and Fisher's method in Section  \ref{sec:powerana}  respectively.  
The simulation results show that the type \RNum{1} error rates of the U-statistics and adaptive test are well controlled under $H_0$.
In addition, Figure \ref{fig:alternsparsfigure}   exhibits several patterns that are consistent with the power analysis in Section \ref{sec:powerana}.  First, it shows that among the U-statistics, when $|J_A|$ is very small,  $\mathcal{U}(\infty)$ performs best; and when $|J_A|$ increases, the performances of some U-statistics of finite orders catch up. For instance, when $|J_A|=100$, $\mathcal{U}(6)$ and $\mathcal{U}(\infty)$ are similar and are better than the other U-statistics; when $|J_A|=400$, $\mathcal{U}(4)$ and $\mathcal{U}(5)$ are similar and better than the other U-statistics.    When $\boldsymbol{\Sigma}_A$ is relatively dense, $\mathcal{U}(2)$ and $\mathcal{U}(1)$ become more powerful. Particularly, when $|J_A|=1600$, $\mathcal{U}(2)$ is powerful; when $|J_A|$ becomes larger, such as when $|J_A|=3200$,  $\mathcal{U}(1)$ is overall the most powerful. Second, Figure \ref{fig:alternsparsfigure}  shows that ``LW", ``Schott", ``Equal", ``Spher" and $\mathcal{U}(2)$  perform similarly under various cases. In particular, these methods are not powerful when the alternative is  sparse but becomes more powerful when the alternative gets denser. This is because  they are all sum-of-squares-type statistics that target at  dense alternatives.  Third and importantly, the two adaptive tests ``adpUmin" and ``adpUf" maintain high power across different settings. Specifically, they perform better than most single U-statistics: their powers are usually close to or even higher than the best single U-statistic. Moreover, ``adpUmin" and ``adpUf"  generally have higher power than the compared existing  methods. 
 We also note that ``adpUf" overall performs better than ``adpUmin" in this simulation setting. 
In summary, Figure \ref{fig:alternsparsfigure}  demonstrates the relationship between the sparsity levels of alternatives and the power of the tests, confirming the theoretical conclusions in Section \ref{sec:powerana}. Notably, the proposed adaptive testing procedure is powerful against a wide range of alternatives, and thus advantageous in practice when the true  alternative is unknown.


\begin{figure}[h!]
    \centering
    \includegraphics[width=0.48\textwidth,height=0.28\textheight]{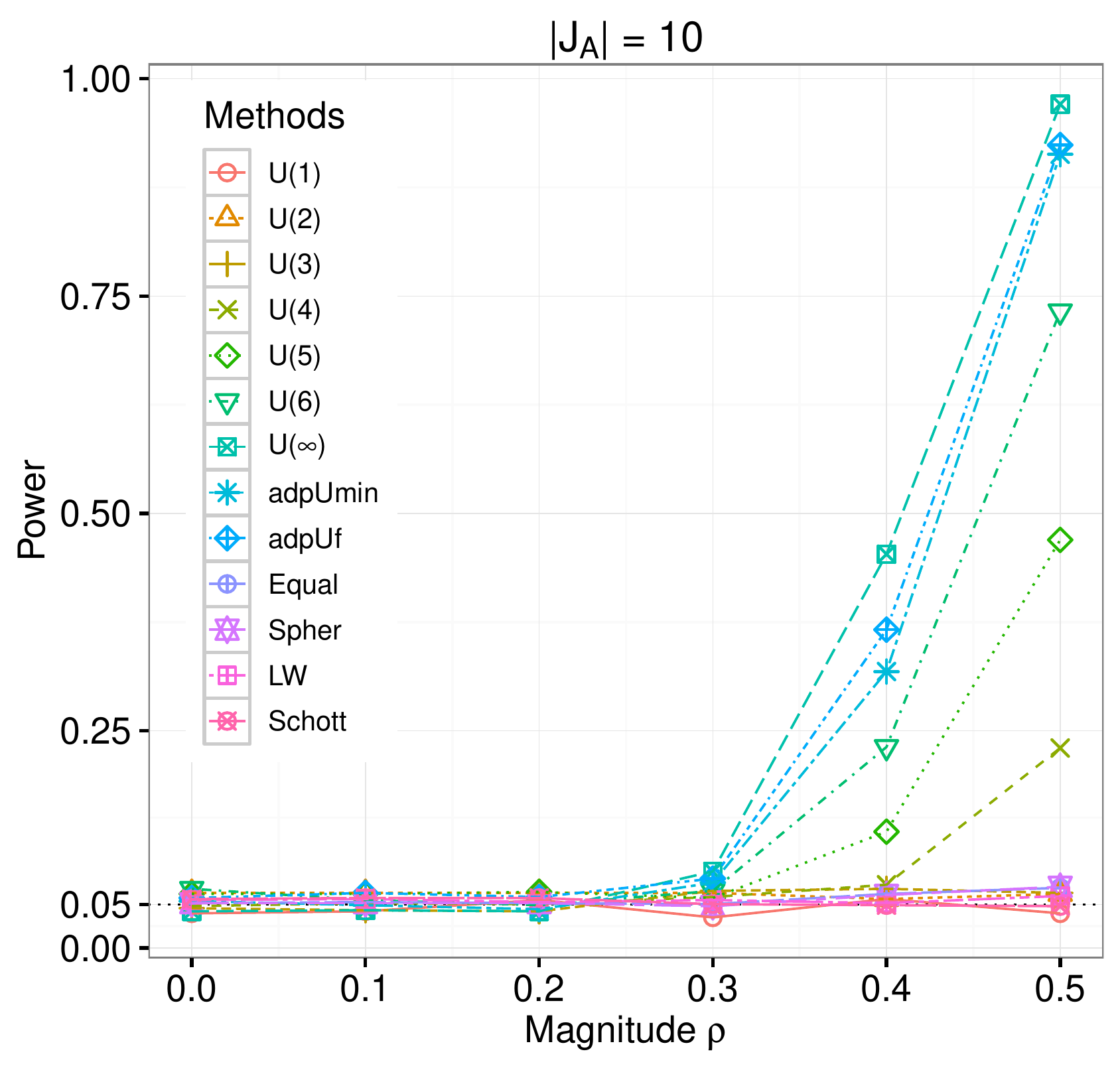} \quad
       \includegraphics[width=0.48\textwidth,height=0.28\textheight]{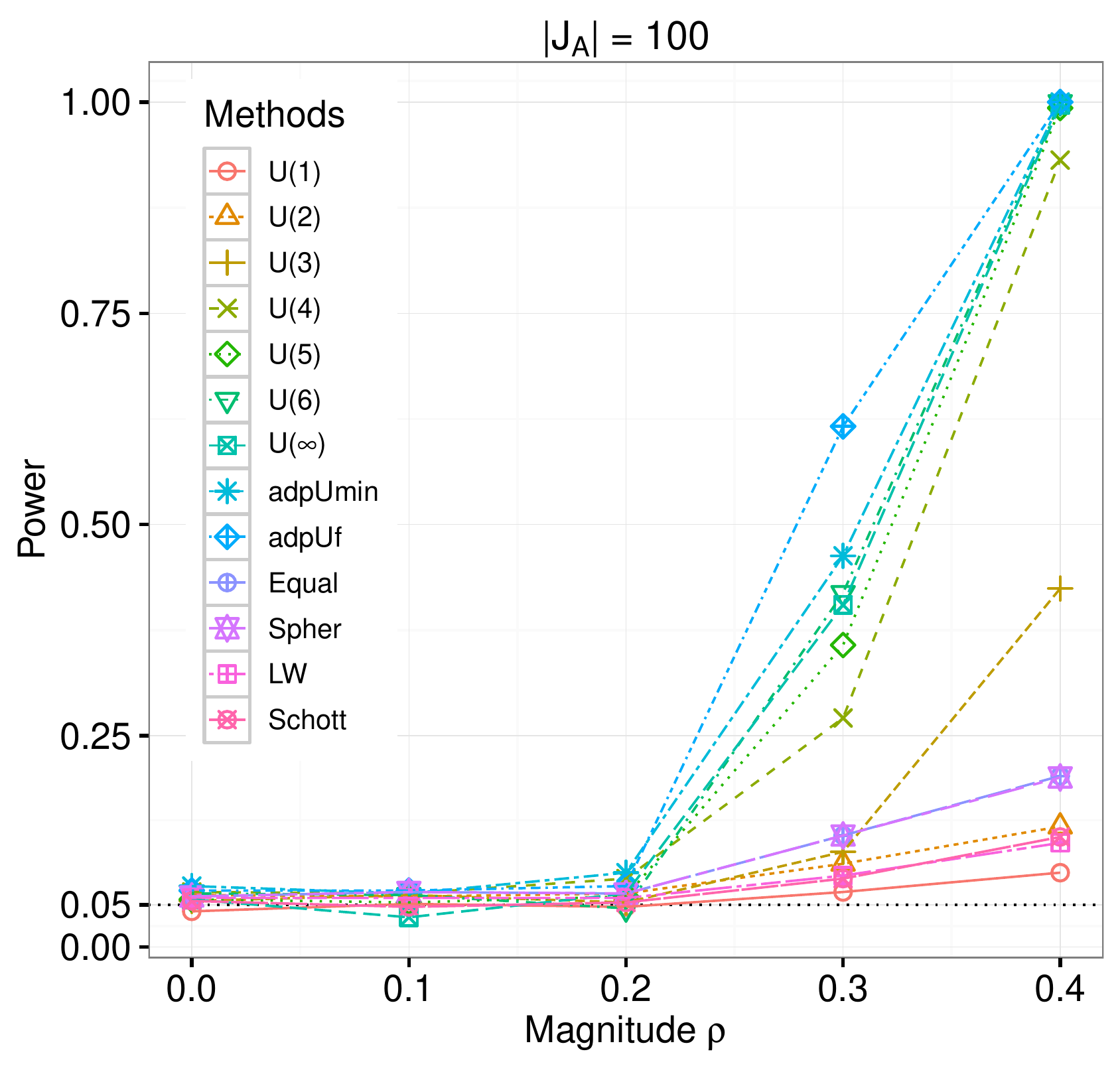} \\ 
           \includegraphics[width=0.48\textwidth,height=0.28\textheight]{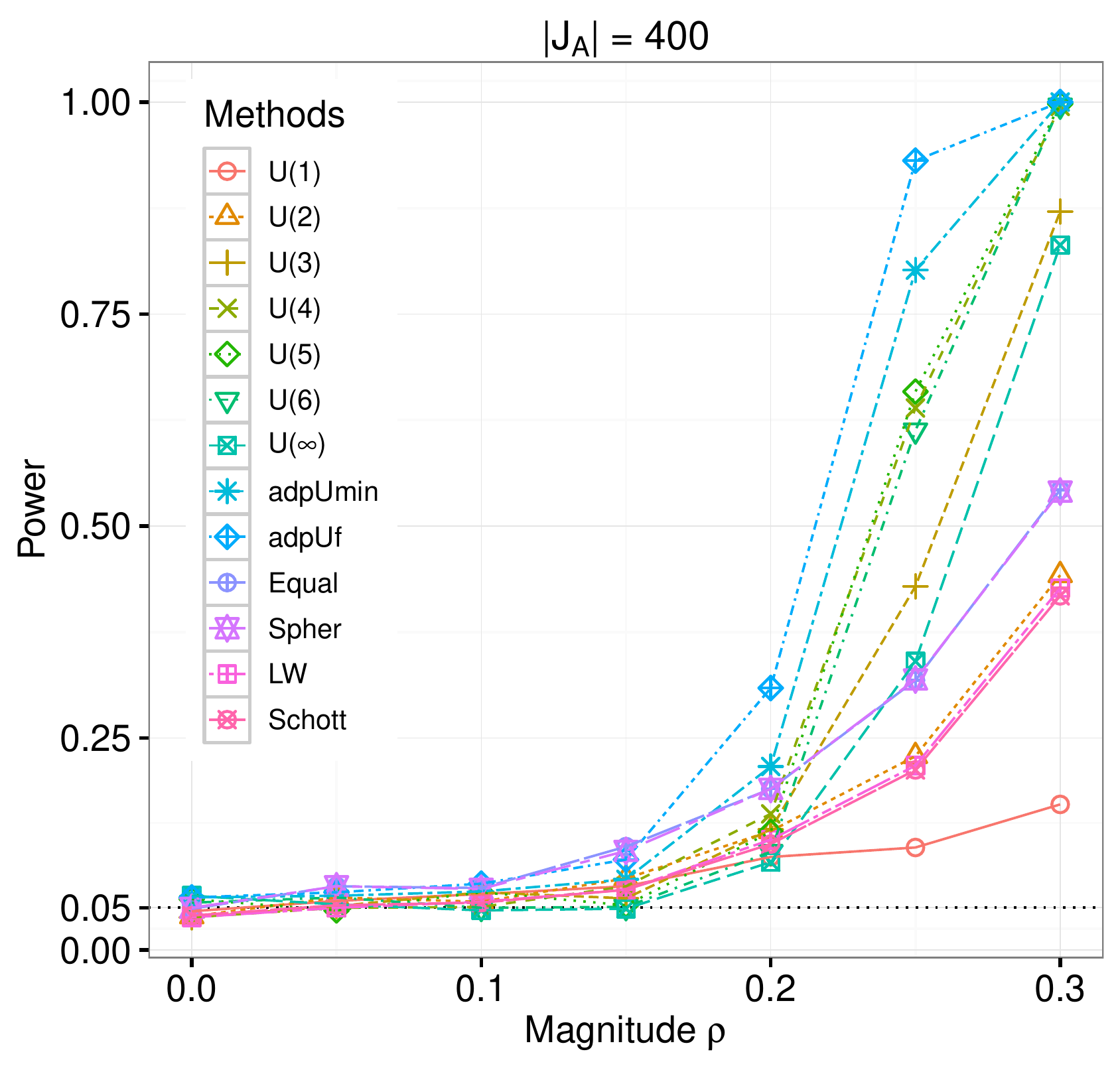} \quad
           \includegraphics[width=0.48\textwidth,height=0.28\textheight]{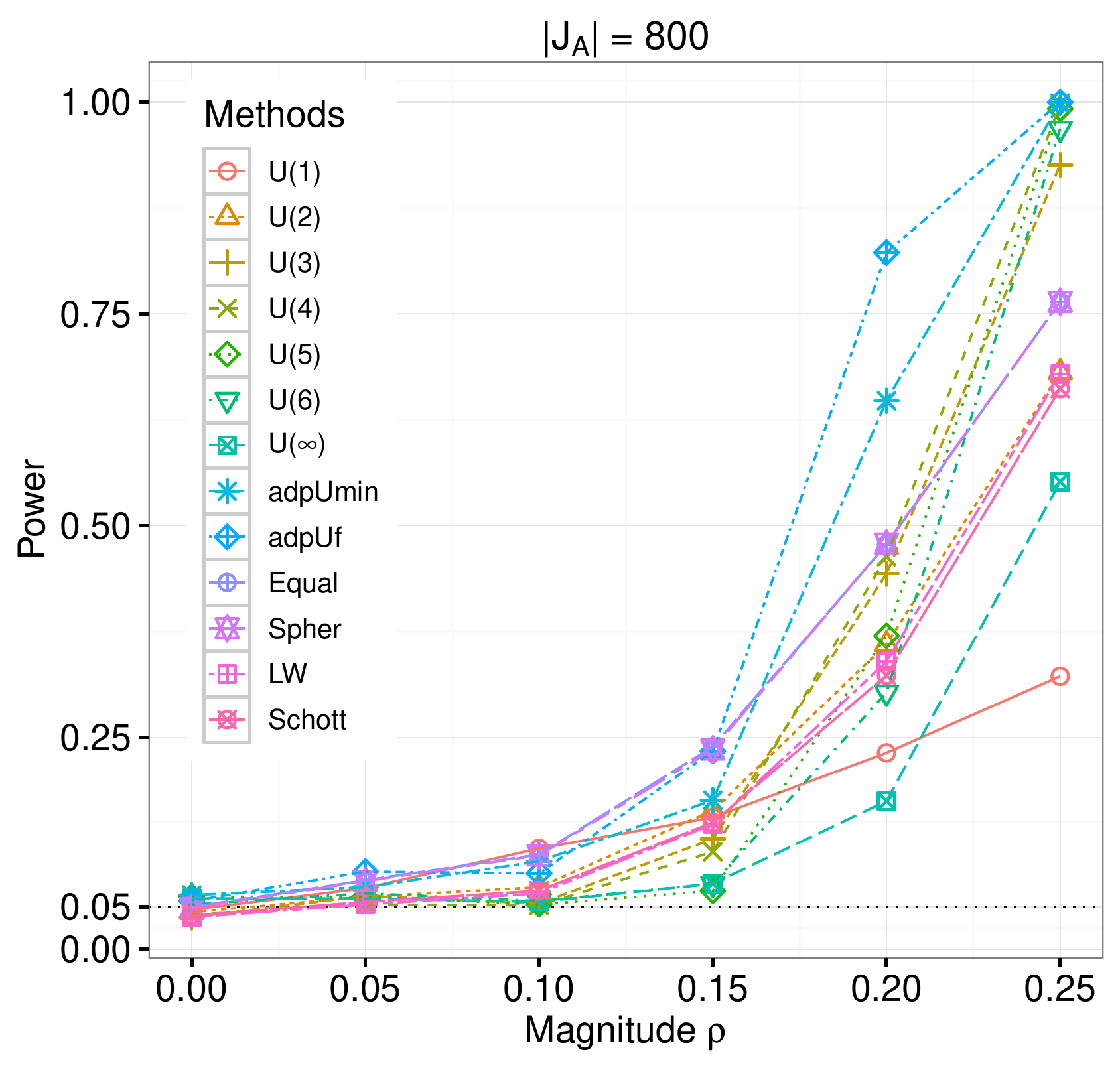} \\
           \includegraphics[width=0.48\textwidth,height=0.28\textheight]{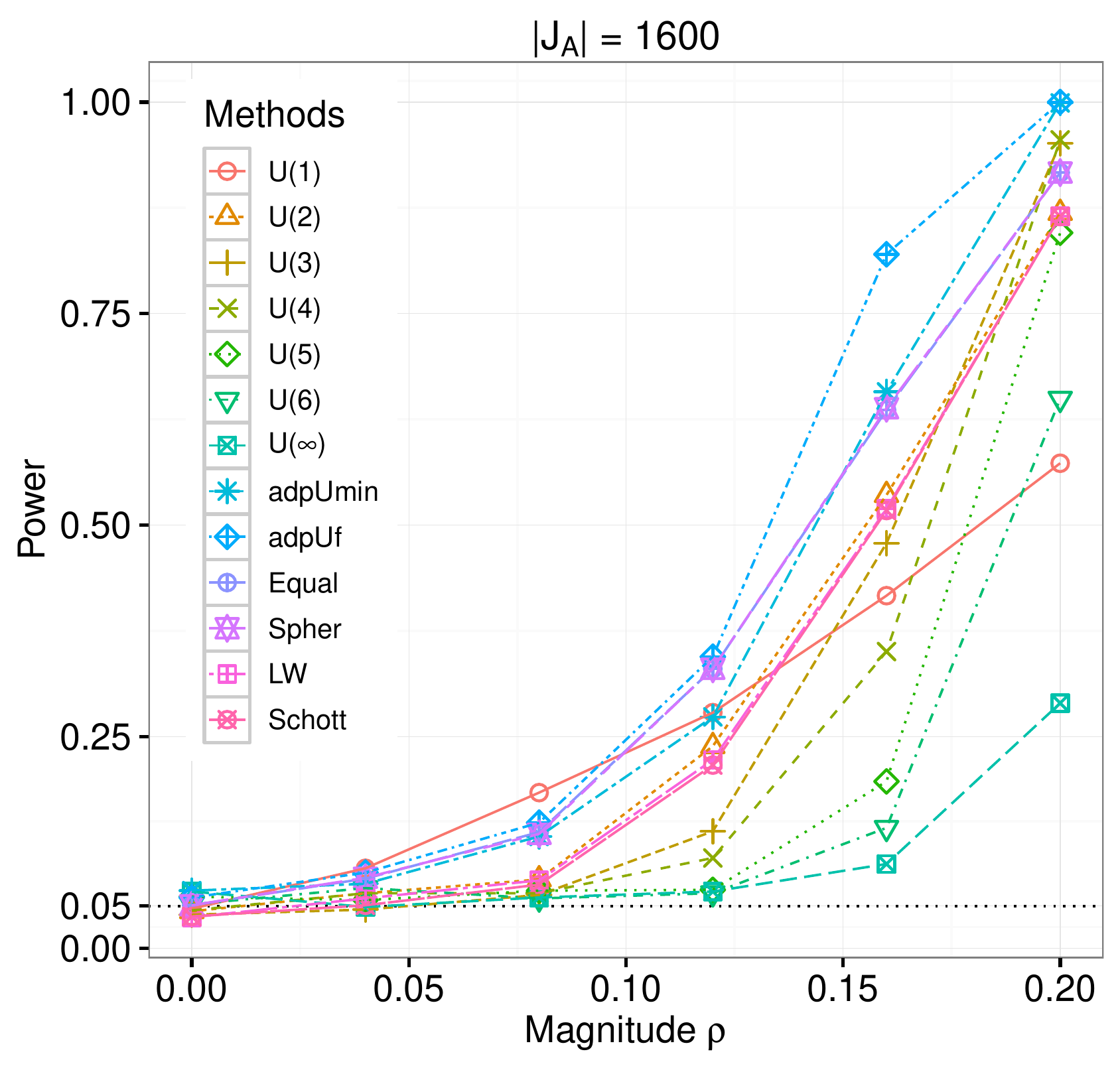} \quad
       \includegraphics[width=0.48\textwidth,height=0.28\textheight]{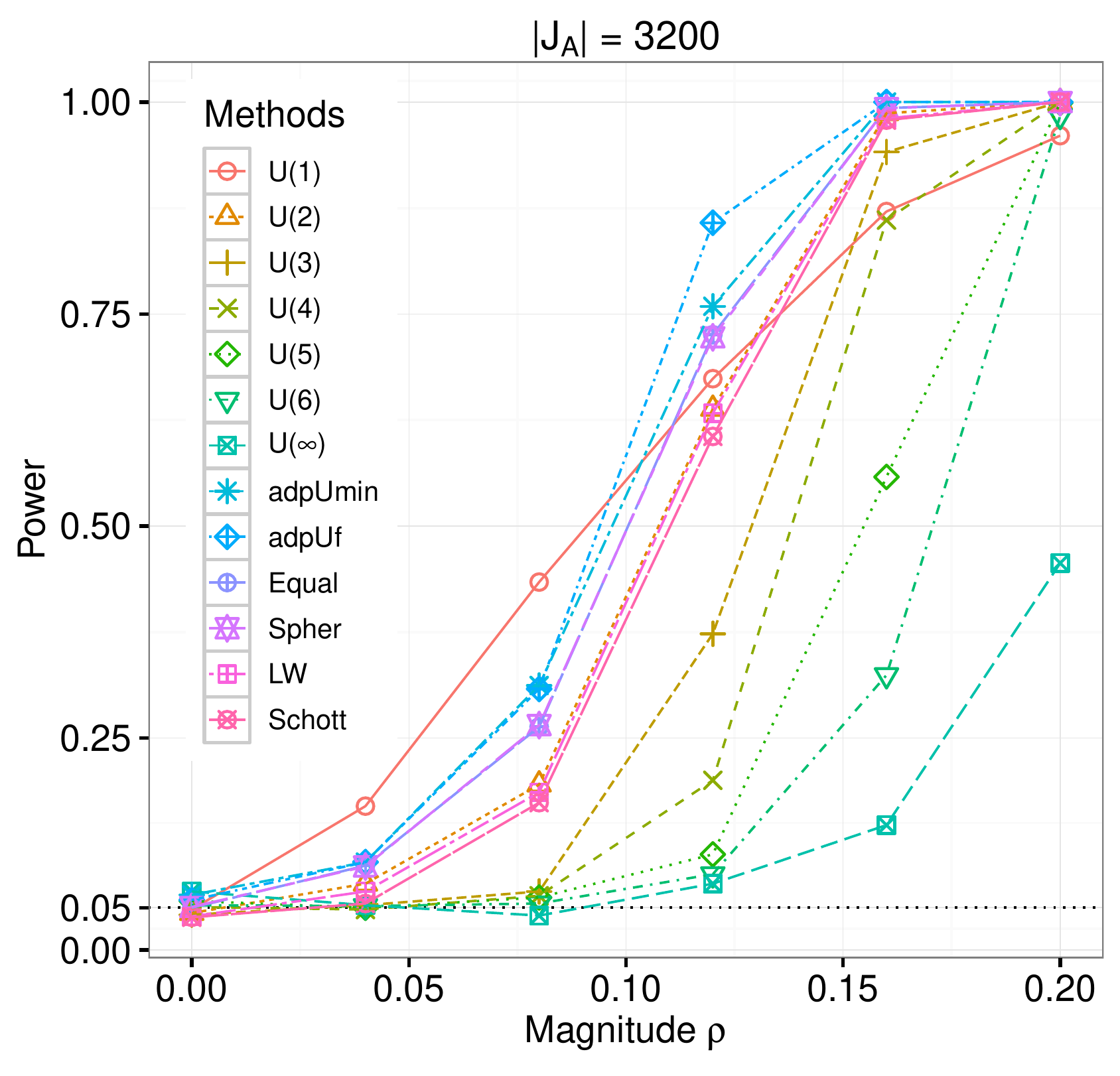} 
    \caption{Power comparison.
    }
    \label{fig:alternsparsfigure}
\end{figure}

Due to the space limitation, we provide other extensive numerical studies in \ref{suppA} Section \ref{sec:suppsimu}. The conclusions are similar to those of Figure \ref{fig:alternsparsfigure},  and consistent with the theoretical results in Section \ref{sec:powerana}. In particular, the results show that the empirical sizes of the tests are close to the nominal level, suggesting the good finite-sample performance of the asymptotic approximations. Moreover, under highly dense alternatives with only non-negative entries in the covariance matrix,  $\mathcal{U}(1)$ is the most powerful one among the $\mathcal{U}(a)$'s and the other tests in \cite{ledoit2002,schott2007test,chen2011}, in agreement with the results in Propositions \ref{prop:ordercompare} and \ref{col:maxordercompare}. 
 Furthermore, the proposed adaptive testing procedures often have higher power than most single U-statistics.  


 \subsection{Real Data Analysis} \label{sec:realdata}
 
Alzheimer's disease (AD) is the most prevalent neurodegenerative disease \citep{prince2013global} and is ranked as the sixth leading cause of death in the US  \citep{xu2018deaths}. Every 65 seconds, someone in the US develops AD \citep{alzheimer20182018}. To advance our understanding of AD, the Alzheimer's Disease Neuroimaging Initiative (ADNI) was started in 2004, collecting extensive genetic data for both healthy individuals and AD patients. To gain insight into the genetic mechanisms of AD, one can test a single SNP a time. However, due to a relatively small sample size of the ADNI data,  scanning across all SNPs failed to identify any genome-wide significant SNP (with $p$-value $<5\times 10^{-8}$)\citep{kim2016powerful}. To date, the largest meta-analysis of more than 600,000 individuals identified 29 significant risk loci \citep{jansen2019genome} and can only explain a small proportion of AD variance. On the other hand, a group of functionally related genes as annotated in a biological pathway are often involved in the same disease susceptibility and progression \citep{heinig2010trans}. Thus, pathway-based analyses, which jointly analyze a group of SNPs in a biological pathway, have become increasingly popular. We retrieve a total of 214 pathways from the KEGG database \citep{kanehisa2010kegg} for the subsequent analysis. 
 


Although pathway-based analyses with KEGG pathways are common in real studies, formally testing the correlations of the genes in a KEGG pathway has been largely untouched. Here, we apply our method and other competing methods in \cite{chen2011} to test if all the genes in a pathway have correlated gene expression levels. Perhaps as expected, all methods reject the null hypothesis for all pathways with highly significant $p$-values, since the KEGG pathways are constructed to include only the genes with similar function into the same pathway \citep{kanehisa2010kegg}, while similar function often implies co-expression (and vice versa). To compare the performance of the different tests, for each pathway we randomly select 50 subjects and restrict our analysis to pathways of at least 50 genes, leading to 103 pathways for the following analysis. Then we perturb the data by shuffling the gene expression levels of randomly selected $  100(1- \alpha)\%$ genes in a pathway before applying each test. Figure \ref{fig:realdataana} shows the performance of the tests with two significance cutoffs, where ``$\mathcal{U}(2)$" represents the single $\mathcal{U}(2)$ statistic, ``adpU" represents our proposed adaptive testing procedure using the minimum combination with candidate U-statistics of orders in $\{1,\ldots, 6, \infty\}$, and ``Equal" and ``Spher" represent the identity and sphericity tests in \cite{chen2011} respectively. Because all pathways are highly significant with all samples, we can treat all pathways as the true positives. Due to the adaptiveness of our proposed testing procedure, ``adpU" identifies more significant pathways than the competing methods across all the levels of data perturbation (mimicking the varying sparsity levels of the alternatives). 


\begin{figure}[!htbp]
    \centering
    \includegraphics[width=0.4\textwidth,height=0.25\textheight]{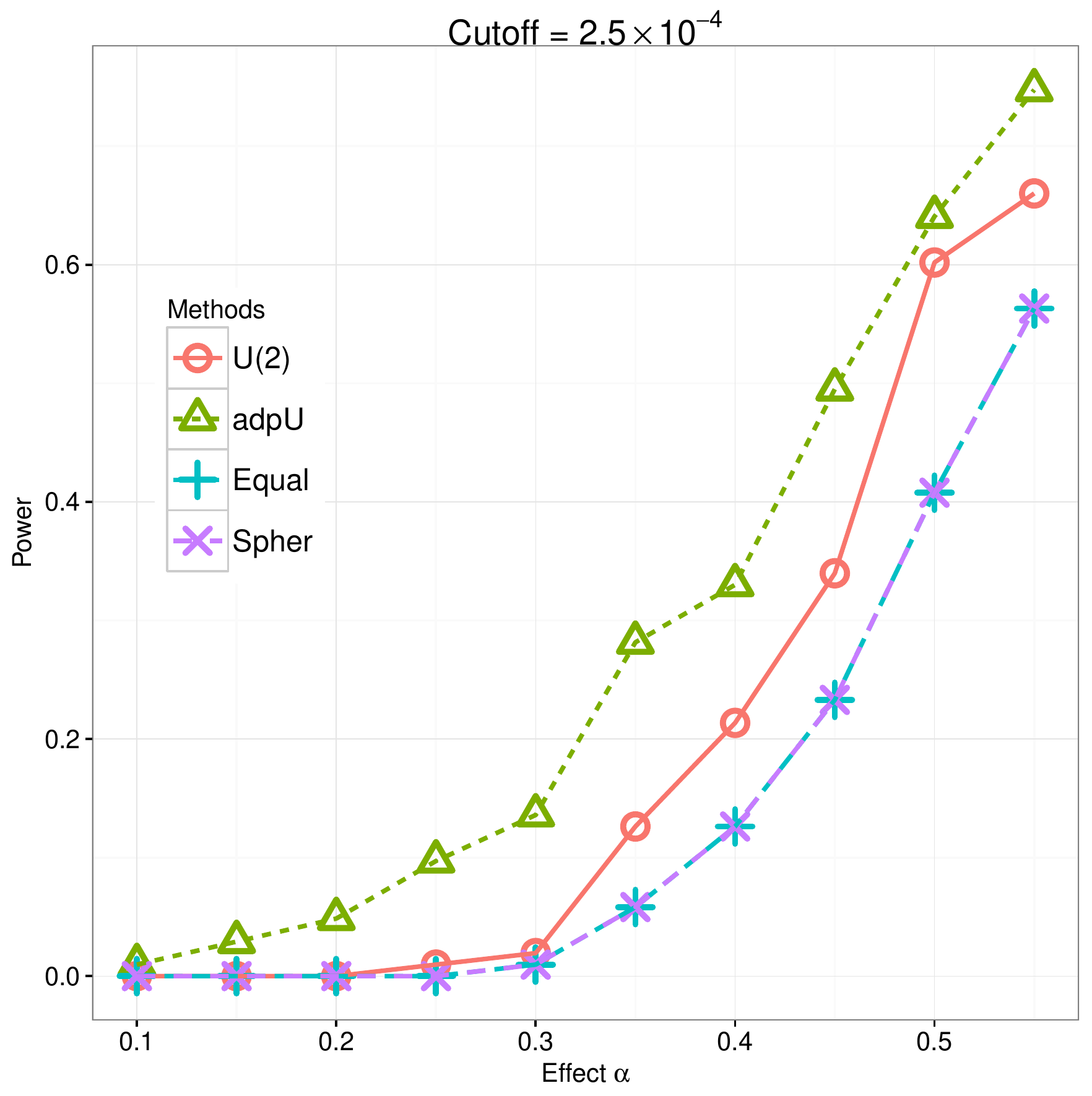} \qquad
       \includegraphics[width=0.4\textwidth,height=0.25\textheight]{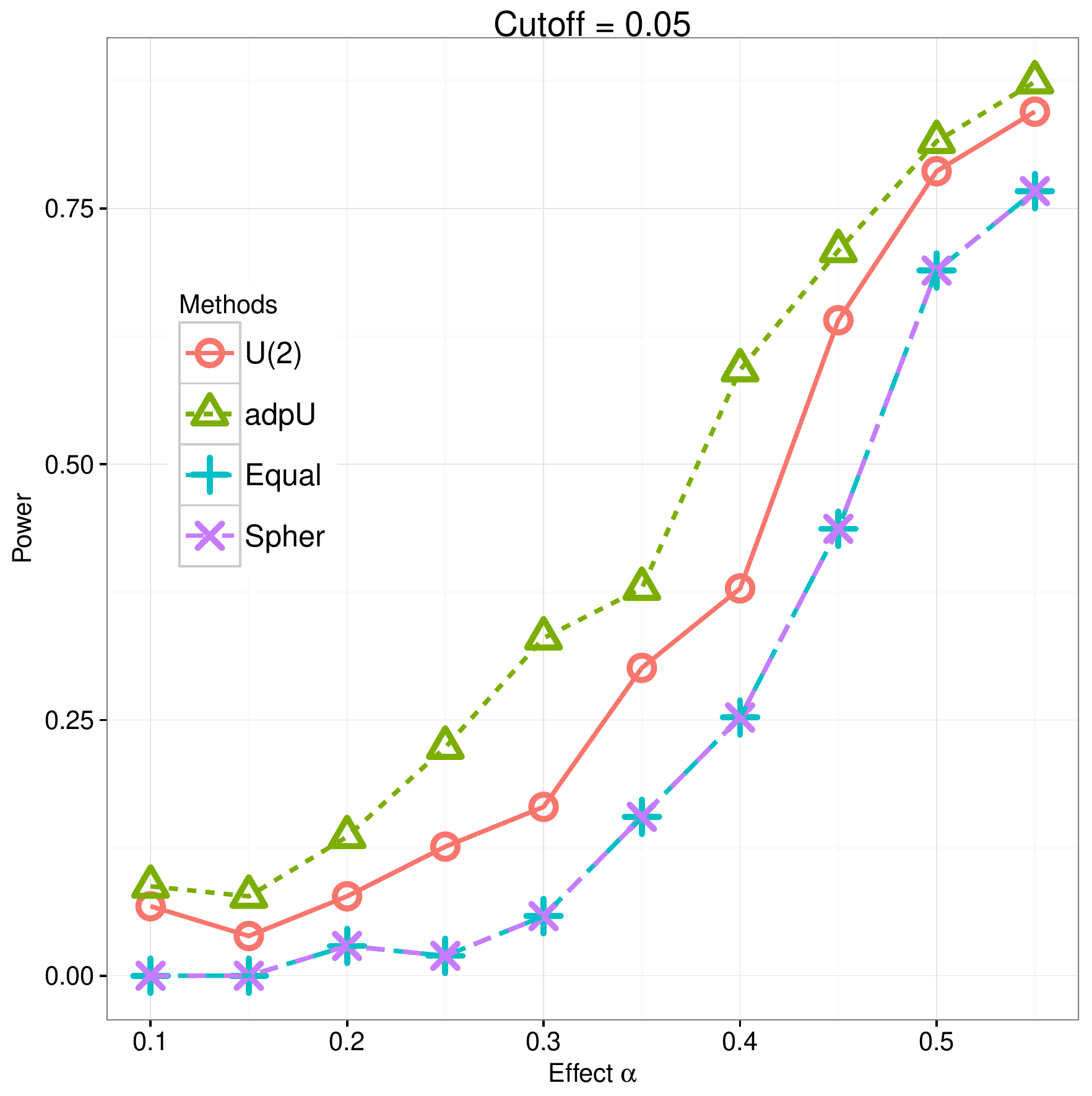}\\
    \caption{Power comparison of different methods with ADNI  data.}
    \label{fig:realdataana}
\end{figure}

\section{Other High-Dimensional Examples} \label{sec:extension}

 In this section, we apply  the proposed U-statistics framework to   other high-dimensional testing problems. Similar theoretical results  to Section \ref{sec:mainexamsec} are developed, with detailed proofs and related simulation studies   provided in   \ref{suppA}.


\subsection{Mean Testing}\label{maintest}

Testing mean vectors is widely used in many statistical analysis and applications \cite{anderson1958introduction,muirhead2009aspects}. Under high-dimensional scenarios, e.g., in genome-wide studies, dimension of the data is often much larger than the sample size, so traditional multivariate tests such as Hotelling's $T^2$-test either cannot be directly applied or have low power \cite{fan1996test}. To address this issue,  several new procedures for testing high-dimensional mean vectors have been proposed \cite[][]{bai1996effect,donoho2004,goeman2006testing,srivastava2008test,chen2010mean,hall2010,cai2014two,chen2014two,gregory2015two,donoho2015higher,srivastava2016raptt,xu2016adaptive}.   
However, many of the statistics only target at either sparse or dense alternatives, and suffer from loss of power for other types of alternatives. We next apply the U-statistics framework   to one-sample and two-sample mean testing problems.
 
\paragraph{One-sample mean testing}
We first discuss the one-sample mean vector testing. Assume that  $\mathbf{x}_1, \ldots, \mathbf{x}_n$ are $n$ i.i.d. copies of a $p$-dimensional real-valued random vector $\mathbf{x}=(x_1,\ldots, x_p)^{\intercal}$  with mean vector $\boldsymbol{\mu}=(\mu_{1}, \ldots, \mu_{p})^{\intercal}$,  covariance matrix $\boldsymbol{\Sigma}=\{\sigma_{j_1,j_2}: 1 \leq j_1, j_2\leq p\}$. We want to conduct the global test on $H_0: \boldsymbol{\mu}=\boldsymbol{\mu}_0$ where $\boldsymbol{\mu}_0=(\mu_{1,0},\ldots,\mu_{p,0})^{\intercal}$ is given.   

Similar to previous discussion, the parameter set that we are interested in is $\mathcal{E}=\{\mu_{1}-\mu_{1,0},\ldots, \mu_{p}-\mu_{p,0} \}$. For each $j=1,\ldots ,p$, $\mathrm{E}(x_{i,j})=\mu_j$, so $K_j(\mathbf{x}_i)=x_{i,j}-\mu_{j,0}$ is a kernel function, which is a simple unbiased estimator of the target. Following our construction, the U-statistic for finite $a$ is 
\begin{align}
	\mathcal{U}(a) =& \sum_{j=1}^p \frac{1}{P^n_{a}}\sum_{1 \leq i_1 \neq \cdots \neq i_{a} \leq n}  \prod_{k=1}^a (x_{i_k,j} -\mu_{j,0}) , \label{eq:ustatonesamplemean} 
\end{align}	which targets at $\|\mathcal{E}\|_a^a=\sum_{j=1}^p(\mu_j-\mu_{j,0})^a$, and the  U-statistic corresponding to $\|\mathcal{E}\|_{\infty}$ is
$	
	\mathcal{U}(\infty) =  \max_{1\leq j\leq p}  \sigma_{j,j}^{-1}(\bar{x}_j-\mu_{0,j})^2
$ with $\bar{x}_j=\sum_{i=1}^n x_{i,j}/n$. 

Given the statistics,  we have the theoretical results similar to Theorems \ref{thm:jointnormal}--\ref{thm:asymindpt}. The following Theorems \ref{thm:onesamplemean}--\ref{thm:onesamplemean2} are established under similar conditions to that of Theorems \ref{thm:jointnormal}--\ref{thm:asymindpt}. Due to the limited space, we provide  the conditions and corresponding discussions in \ref{suppA}.

\begin{theorem} \label{thm:onesamplemean}
Under $H_0$: $\boldsymbol{\mu}=\boldsymbol{\mu}_0$, assume Condition \ref{cond:onesamplemean} in \ref{suppA}. Then for any  finite integers $\{a_1,\ldots, a_m\}$, as $n,p \rightarrow \infty$, $ [ {\mathcal{U}(a_1)}/{\sigma(a_1)}, \ldots, \allowbreak {\mathcal{U}(a_m)}/{\sigma(a_m)}  ]^{\intercal} \xrightarrow{D} \mathcal{N}(0,I_m),$
where  $\sigma^2(a)=\mathrm{var}[\mathcal{U}(a)]=\sum_{i=1}^p \sum_{j=1}^p a!\sigma_{i,j}^{a}/P^n_a$ with the order of $\Theta (a!pn^{-a}).$ 
\end{theorem}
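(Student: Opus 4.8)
The plan is to follow the strategy behind the proof of Theorem~\ref{thm:jointnormal}, but specialized to the simpler one-way-dependence structure of the mean problem. First I would invoke location invariance (which holds trivially for the statistic in \eqref{eq:ustatonesamplemean}) to reduce to $\boldsymbol{\mu}_0=\mathbf{0}$, so that under $H_0$ each kernel $\prod_{k=1}^a x_{i_k,j}$ has mean zero and $\mathcal{U}(a)$ is a \emph{completely degenerate} $U$-statistic of order $a$: conditioning on any proper subset of its arguments gives zero. This degeneracy is what forces us away from classical non-degenerate $U$-statistic theory.

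Next I would establish the variance and cross-covariance formulas \emph{exactly}. Expanding $\mathcal{U}(a)\mathcal{U}(b)$ and taking expectations, a term survives only if every index value in the combined multiset of indices appears at least twice; since indices within one permutation are distinct, this forces the two index sets to coincide, hence $a=b$, and already gives $\mathrm{Cov}(\mathcal{U}(a),\mathcal{U}(b))=0$ for $a\neq b$. For $a=b$ the same bookkeeping (there are $P^n_a$ ordered tuples and $a!$ ways to match the second copy to the same set, each matched pair contributing $\prod_{k}\mathrm{E}[x_{i_k,j}x_{i_k,j'}]=\sigma_{j,j'}^a$) yields $\sigma^2(a)=a!(P^n_a)^{-1}\sum_{i,j}\sigma_{i,j}^a$ exactly as stated; the order $\Theta(a!\,p\,n^{-a})$ then follows from $P^n_a\simeq n^a$ together with $\sum_{i,j}\sigma_{i,j}^a=\Theta(p)$ (the diagonal $\sum_j\sigma_{j,j}^a$ gives the lower bound, the weak-dependence part of Condition~\ref{cond:onesamplemean} the upper bound).

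For joint normality I would use the Cram\'er--Wold device together with a martingale central limit theorem. Fix constants $c_1,\dots,c_m$, set $S_n=\sum_{t=1}^m c_t\,\mathcal{U}(a_t)/\sigma(a_t)$, and let $\mathcal{F}_k=\sigma(\mathbf{x}_1,\dots,\mathbf{x}_k)$. Complete degeneracy gives $\mathrm{E}[\mathcal{U}(a)\mid\mathcal{F}_k]=(P^n_a)^{-1}\sum_{\text{all }i_r\le k}\prod_r x_{i_r,j}$, so $\mathcal{U}(a)=\sum_{k=1}^n D_k^{(a)}$ with $D_k^{(a)}$ a martingale difference supported on tuples whose largest index equals $k$, explicitly $D_k^{(a)}\propto\sum_j x_{k,j}\sum_{I\subseteq[k-1],\,|I|=a-1}\prod_{i\in I}x_{i,j}$. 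Then $S_n=\sum_k\Delta_k$ with $\Delta_k=\sum_t c_t D_k^{(a_t)}/\sigma(a_t)$, and it remains to check the two conditions of the martingale CLT: (i) $\sum_k\mathrm{E}[\Delta_k^2\mid\mathcal{F}_{k-1}]\xrightarrow{P}\sum_t c_t^2$, and (ii) the Lyapunov bound $\sum_k\mathrm{E}[\Delta_k^4]\to 0$. For (i) the \emph{mean} of the left side is exactly $\sum_t c_t^2$, because $\sum_k\mathrm{E}[D_k^{(a)}D_k^{(b)}]=\mathrm{Cov}(\mathcal{U}(a),\mathcal{U}(b))$ by orthogonality of martingale increments, and this was computed above; so the task is to show $\mathrm{var}\big(\sum_k\mathrm{E}[\Delta_k^2\mid\mathcal{F}_{k-1}]\big)=o(1)$. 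For (ii), expanding $\mathrm{E}[(D_k^{(a)})^4]$ into eighth-order coordinate moments over index configurations with largest index $k$, summing over $k$, and normalizing by $\sigma(a)^4=\Theta((a!)^2p^2n^{-2a})$ produces a bound of order $O(p^{-1})$ plus vanishing terms — this is precisely where the eighth-moment part of Condition~\ref{cond:onesamplemean} enters.

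The main obstacle is condition (i): one must expand $\sum_k\mathrm{E}[\Delta_k^2\mid\mathcal{F}_{k-1}]$, classify the index-and-coordinate matchings into those contributing at the leading order $\sigma^2$ and the fluctuation terms, and show the latter are negligible — including the cross-order pieces $D_k^{(a)}D_k^{(b)}$ with $a\neq b$, whose total mean vanishes but whose variance must still be controlled. This requires a combinatorial partition analysis of the index set combined with the decay of the coordinate sums $\sum\prod\sigma_{\cdot,\cdot}$ supplied by the weak-dependence assumption, and is the one-way analogue of the hardest step in the proof of Theorem~\ref{thm:jointnormal}. Once (i) and (ii) are in place, the Cram\'er--Wold device yields the asserted joint convergence to $\mathcal{N}(0,I_m)$.
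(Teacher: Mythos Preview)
Your proposal is essentially the paper's own proof: reduce to $\boldsymbol{\mu}_0=\mathbf{0}$, compute the exact variance and zero cross-covariances by index matching, then apply Cram\'er--Wold with the martingale CLT via the increments $A_{n,k,a}=(\mathrm{E}_k-\mathrm{E}_{k-1})\{\mathcal{U}(a)/\sigma(a)\}$ and verify the conditional-variance and Lyapunov conditions (the paper packages these as Lemma~\ref{lm:cltonesammeanlm}). Two small corrections: Condition~\ref{cond:onesamplemean} only assumes bounded \emph{fourth} moments, not eighth---this is precisely the point the paper makes that the mean problem is lighter than the covariance one---and the Lyapunov bound $\sum_k\mathrm{E}(D_{n,k}^4)$ comes out as $O(n^{-1})$, not $O(p^{-1})$, after normalizing by $\sigma(a)^4=\Theta(p^2n^{-2a})$ and using $\sum_{j_1,\ldots,j_4}\mathrm{E}(\prod_l x_{k,j_l})=O(p^2)$.
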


\begin{theorem}\label{thm:onesamplemean2}  Under $H_0$: $\boldsymbol{\mu}=\boldsymbol{\mu}_0$, assume Condition \ref{cond:onesamplemean2} in \ref{suppA}. Then $\forall u\in \mathbb{R}$,
$
	P ( {n} \mathcal{U}(\infty)-\tau_p \leq u  ) \rightarrow \exp  \{ - \pi^{-1/2} \exp(-u/2)  \}, 
$	as $n,p \rightarrow \infty$, where $\tau_p=2\log p - \log \log p$.  In addition, for any finite integer $a$, $\{\mathcal{U}(a)/\sigma(a)\}$ and $\{ n\mathcal{U}(\infty)-\tau_p\}$ are asymptotically independent.  
\end{theorem}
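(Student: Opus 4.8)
The plan is to treat the two assertions of Theorem~\ref{thm:onesamplemean2} separately, reducing each to the corresponding step already established for one-sample covariance testing (Theorems~\ref{thm:extlimit} and~\ref{thm:asymindpt}), which is in fact the harder case because of its two-way dependence structure. For the extreme-value limit, write $n\,\mathcal{U}(\infty)=\max_{1\le j\le p}T_j^2$ with $T_j=\sqrt n\,\sigma_{j,j}^{-1/2}(\bar x_j-\mu_{j,0})$. First I would replace $(T_1,\dots,T_p)$ by a centered Gaussian vector with the same covariance, the error being controlled by a Berry--Esseen/Lindeberg bound for the linear statistics $\bar x_j$ together with a uniform moment condition and $\log p=o(n^{\beta})$; this is the moderate-deviation/Gaussian-approximation step underlying \cite{jiang2004,cai2011}. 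For the resulting weakly dependent Gaussian field, a Chen--Stein Poisson approximation for the number of coordinates with $|T_j|>\sqrt{\tau_p+u}$, whose clumping is controlled by the geometric $\alpha$-mixing analogous to Condition~\ref{cond:alphamixing}, yields $P(n\,\mathcal{U}(\infty)-\tau_p\le u)\to\exp\{-\pi^{-1/2}e^{-u/2}\}$ with $\tau_p=2\log p-\log\log p$.

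The substance of the theorem is the asymptotic independence, and here I would mirror the proof of Theorem~\ref{thm:asymindpt}, along the lines of the classical sum/max independence results \cite{hsing1995,ho1996asymptotic}. Let $\hat z:=\mathcal{U}(a)/\sigma(a)$, which is asymptotically $\mathcal N(0,1)$ by Theorem~\ref{thm:onesamplemean}, with $\sigma^2(a)=\Theta(a!\,p\,n^{-a})$ aggregating the contributions of all $p$ coordinates, and let $W_p:=n\,\mathcal{U}(\infty)-\tau_p$. It suffices to show $E\big[e^{\,\mathrm{i}t\hat z}\,\mathbf 1\{W_p\le z\}\big]-E\big[e^{\,\mathrm{i}t\hat z}\big]\,P(W_p\le z)\to0$ for every $t,z\in\mathbb R$. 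The heuristic is that $W_p$ is determined, up to $o_P(1)$, by the few coordinates lying in a random exceedance set $\mathcal S$ of bounded size, grouped into $O(1)$ mixing-separated blocks, whereas $\hat z$ is a smooth average over $\Theta(p)$ coordinates; the part of $\mathcal{U}(a)$ supported on $\mathcal S$ and its mixing neighborhood has variance $O(a!\,|\mathcal S|\,n^{-a})=o(\sigma^2(a))$ and is hence negligible, while the complementary ``bulk'' part of $\mathcal{U}(a)$ is, by the geometric decay $\alpha_{\mathbf x}(s)\le M\delta^s$, asymptotically independent of the exceedance events and thus of $W_p$.

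To make this rigorous I would (i) truncate the centered kernels $x_{i,j}-\mu_{j,0}$ at a level of order $(\log(np))^{c}$, absorbing the remainder into an $o_P(\sigma(a))$ term via a higher-moment condition analogous to Condition~\ref{cond:higherordermomentvarest} and re-centering to restore approximate unbiasedness; (ii) partition $\{1,\dots,p\}$ into ``big'' blocks of slowly growing width separated by buffer blocks whose width grows fast enough that $\alpha_{\mathbf x}(\text{buffer width})$ times the number of blocks tends to $0$, discarding the buffer coordinates' $o_P$-contribution to both $\hat z$ and $W_p$; (iii) use the Hoeffding decomposition of $\mathcal{U}(a)$ over the samples, its leading term being the analogue of $\tilde{\mathcal U}(a)$ in Remark~\ref{rm:leadingteststat}, to write $\hat z$ as an approximately additive functional of the big blocks plus an $o_P(1)$ error, and $\mathbf 1\{W_p\le z\}$ as a product over big blocks of ``no-exceedance'' indicators with a Chen--Stein/Bonferroni error; and (iv) factorize the expectation block by block up to the mixing error, after which the within-block cross term vanishes because each block's contribution to $\hat z$ is $o_P$ relative to the global normalization $\sigma(a)$. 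The distributional-approximation steps can draw on standard Gaussian-approximation techniques, and the remainder specializes the covariance-case argument to a one-way coordinate dependence structure.

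The main obstacle is controlling the interaction between the \emph{degeneracy} of $\mathcal{U}(a)$ under $H_0$ and the blocking scheme: because $\mathcal{U}(a)$ is a full-order U-statistic in the samples $i_1,\dots,i_a$, deleting the $O(1)$ exceedance coordinates and their mixing neighborhoods must be shown to change $\hat z$ by $o_P(1)$, which requires a sharp $L^2$ bound on the deleted part that beats $\sigma^2(a)=\Theta(a!\,p\,n^{-a})$ and survives both the truncation in (i) and the randomness of $\mathcal S$; establishing this bound uniformly over the random location of the maximum, simultaneously with the Chen--Stein error from the geometric mixing, is the technical crux. This is a cleaner instance of the difficulty met in Theorem~\ref{thm:asymindpt}, where the two-way dependence made the corresponding variance bookkeeping considerably more involved, so the mean-testing case follows by the same route with simplifications.
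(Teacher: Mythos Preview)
Your handling of the extreme-value limit is standard; the paper simply cites \cite{cai2014two}.

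For the asymptotic independence, the paper's argument is not the one you sketch, nor does the proof of Theorem~\ref{thm:asymindpt} work the way you describe. Theorem~\ref{thm:asymindpt} relies on Condition~\ref{cond:maxiidcolumn} (i.i.d.\ standardized columns), which gives \emph{exact} independence between any fixed exceedance-index neighborhood $L_{I_s}$ and its complement; that is what makes the Bonferroni splitting in Lemma~\ref{lm:firstsetpthm3proof} factorize cleanly. Under Condition~\ref{cond:onesamplemean2} only $\alpha$-mixing is assumed, so the paper (via Theorem~\ref{THM:TWOSAMPLEMANINF} and Lemma~\ref{lm:condindpmeas}) follows \cite{hsing1995} instead: it works on the conditional measure $\tilde P(\cdot)=P(\cdot\mid n\mathcal U(\infty)-\tau_p\le u)$, reruns the Bernstein big-/small-block CLT for $\mathcal U(a)/\sigma(a)$ on $\tilde P$ using Hsing's bound on the conditional mixing coefficient, and verifies the Lyapunov condition on $\tilde P$ by dividing the unconditional fourth-moment bound by $P(\text{conditioning event})$. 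Since the conditional and unconditional limits are both $\mathcal N(0,1)$, independence follows.

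Your steps (ii)--(iv) are a third route---block the coordinates and factorize $E[e^{it\hat z}\prod_k I_k]$ on the unconditional measure---and step (iv) as written has a gap. Within block $k$, $\hat z_k=\sigma(a)^{-1}\sum_{j\in B_k}\mathcal U^{(j)}(a)$ and the no-exceedance indicator $I_k$ depend on the same data; the telescoping bound gives $\bigl|\prod_k E[e^{it\hat z_k}I_k]-\prod_k E[e^{it\hat z_k}]E[I_k]\bigr|\le\sum_k|\mathrm{Cov}(e^{it\hat z_k},I_k)|\le 2\sum_k P(I_k=0)=O(1)$, not $o(1)$, precisely because the total exceedance probability is of order one. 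The claim that ``each block's contribution is $o_P$'' yields only $\mathrm{Var}(\hat z_k)\to 0$ and does not sharpen this sum. A Bonferroni expansion over \emph{fixed} exceedance locations together with a mixing buffer could plausibly be made to work, but that is a different argument from the block-product factorization you describe. The paper's Hsing route avoids the issue: it proves the conditional CLT directly, so no block-wise matching of conditional and unconditional laws is needed.
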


By Theorems \ref{thm:onesamplemean} and \ref{thm:onesamplemean2}, we obtain the asymptotic independence among the U-statistics and the corresponding limiting distributions of the U-statistics under $H_0$. Under the alternative hypothesis, since the power analysis of the one-sample mean testing is similar to that of the two-sample case,  we delay the power analysis after presenting the asymptotic independence property of the proposed  U-statistics in the two-sample mean testing problem.

\paragraph{Two-sample mean testing}
Next we discuss the two-sample mean testing problem. Suppose we have two groups of $p$-dimensional  observations  $\{ \mathbf{x}_{i}\}_{i=1}^{n_x}$ and $\{\mathbf{y}_{i}\}_{i=1}^{n_y}$, which are i.i.d. copies of two independent random vectors $\mathbf{x}=(x_1,\ldots, x_p)^{\intercal}$ and $\mathbf{y}=(y_1,\ldots, y_p)^{\intercal}$ respectively.  Suppose $\mathrm{E}(\mathbf{x})=\boldsymbol{\mu}=(\mu_{1},\ldots, \mu_{p})^{\intercal}$, $\mathrm{E}(\mathbf{y})=\boldsymbol{\nu}=(\nu_{1},\ldots, \nu_{p})^{\intercal}$, $\mathrm{cov}(\mathbf{x})=\boldsymbol{\Sigma}_x$ and $\mathrm{cov}(\mathbf{y})=\boldsymbol{\Sigma}_y$. We write $n = n_x+n_y$ and assume  $n_x=\Theta(n_y)$.  For easy illustration, we first consider $\boldsymbol{\Sigma}_x=\boldsymbol{\Sigma}_y=\boldsymbol{\Sigma}=\{\sigma_{j_1,j_2}: 1\leq j_1, j_2 \leq p \}$. We will then discuss the case when $\boldsymbol{\Sigma}_x\neq \boldsymbol{\Sigma}_y$,  where similar analysis applies. 
 
The two-sample mean testing examines  $H_0:$ $\boldsymbol{\mu}=\boldsymbol{\nu}$ versus $H_A:$ $\boldsymbol{\mu} \neq \boldsymbol{\nu}$, then $\mathcal{E}=(\mu_{1}-\nu_{1},\ldots, \mu_{p}-\nu_{p})^{\intercal}$.  For $1\leq j\leq p$, $1 \leq k\leq n_x$, $1\leq s \leq n_y$, $K_j(\mathbf{x}_{k}, \mathbf{y}_{s})=x_{k,j}-y_{s,j}$ is a simple unbiased estimator of $\mu_{j}-\nu_{j}$,  and thus we construct   $ \mathcal{U}(a)= \sum_{j=1}^p (P^{n_x}_{a} P^{n_y}_{a})^{-1}\sum_{1 \leq k_1 \neq \ldots \neq k_a \leq n_x\atop 1 \leq s_1 \neq \ldots \neq s_a \leq n_y}   \prod_{t=1}^a (x_{k_t,j}-y_{s_t,j}) ,$ which is also equivalent to
\begin{align}
	\mathcal{U}(a)=  \sum_{j=1}^p \sum_{c=0}^{a}  \binom{a}{c} \frac{(-1)^{a-c} }{P^{n_x}_{c} P^{n_y}_{a-c}} \sum_{\substack{1\leq k_1 \neq \cdots \neq k_c\leq n_x \\ 1 \leq s_1 \neq \cdots \neq s_{a-c} \leq n_y} } \prod_{t=1}^c x_{k_{t},j} \prod_{m=1}^{a-c}  y_{s_{m},j}. 
	 \label{eq:teststattwosample} 
\end{align}
We can check that \eqref{eq:teststattwosample} satisfies $\mathrm{E}\{\mathcal{U}(a)\}=\sum_{j=1}^p (\mu_{j}-\nu_{j})^a$, so $\mathcal{U}(a)$ is an unbiased estimator of $\|\mathcal{E}\|_a^a=\sum_{j=1}^p (\mu_{j}-\nu_{j})^a$. On the other hand, for $\|\mathcal{E} \|_{\infty}$, following the maximum-type test statistic in \citet{cai2014two}, we have
\begin{eqnarray}
	\mathcal{U}(\infty)= \max_{1\leq j \leq p} \sigma_{j,j}^{-1}(\bar{x}_{j}- \bar{y}_{j})^2, \label{eq:uinftytwosammean}
\end{eqnarray} where $\bar{x}_j=\sum_{i=1}^{n_x}x_{i,j}/n_x$, $\bar{y}_j=\sum_{i=1}^{n_y}y_{i,j}/n_y$. We then obtain results similar to Theorems \ref{thm:jointnormal}, \ref{thm:asymindpt} and \ref{thm:cltalternative}. As the conditions are similar to those in Section \ref{sec:mainexamsec}, we only keep the key conclusions, and the details of conditions and discussions are given in \ref{suppA} Section \ref{sec:extseccond}.

\begin{theorem} \label{thm:twosamplemean}
Under Condition \ref{cond:twosamplemeancond} in \ref{suppA}, $\boldsymbol{\Sigma}_x=\boldsymbol{\Sigma}_y$ and  $H_0:$ $\boldsymbol{\mu}=\boldsymbol{\nu}$,  for any   finite  integers  $(a_1,\ldots, a_m)$, as  $ n, p \rightarrow \infty $, $ [ {\mathcal{U}(a_1)}/{\sigma(a_1)},\allowbreak \ldots,  {\mathcal{U}(a_m)}/{\sigma(a_m)}  ]^{\intercal} \xrightarrow{D} \mathcal{N}(0,I_m),$
 	where   
 	$	\sigma^2(a)  \simeq a! \sum_{j_1,j_2=1}^p ( n_x+ n_y )^a \sigma_{j_1,j_2}^a /(n_xn_y)^a$ is of the order  $\Theta(a!pn^{-a})$. 
\end{theorem}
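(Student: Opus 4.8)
\emph{Proof proposal.} The plan is to follow the blueprint of the proof of Theorem~\ref{thm:jointnormal}, adapted to the two-sample structure, which is in fact somewhat simpler because the kernels here are bilinear in the two \emph{independent} samples rather than carrying a within-sample two-way dependence. First I would reduce by location invariance: each factor $x_{k_t,j}-y_{s_t,j}$ is unchanged by the shift $x_{i,j}\mapsto x_{i,j}-\mu_j$, $y_{i,j}\mapsto y_{i,j}-\nu_j$, and since $\mu_j=\nu_j$ under $H_0$ the statistic $\mathcal{U}(a)$ in \eqref{eq:teststattwosample} is unaltered, so I may assume $\mathrm{E}(x_{i,j})=\mathrm{E}(y_{i,j})=0$. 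Using the expansion \eqref{eq:teststattwosample} I would then write $\mathcal{U}(a)=\sum_{c=0}^a\binom{a}{c}(-1)^{a-c}T_c(a)$, where $T_c(a)=(P^{n_x}_cP^{n_y}_{a-c})^{-1}\sum_{j=1}^p\sum_{\mathbf k,\mathbf s}\prod_{t=1}^c x_{k_t,j}\prod_{m=1}^{a-c}y_{s_m,j}$ is a two-sample U-statistic of bi-order $(c,a-c)$; because the coordinates within each product carry distinct sample indices and are centered, every conditional expectation of $T_c(a)$ given a strict subset of its arguments vanishes, so $T_c(a)$ is a completely degenerate element of the $(c,a-c)$-th tensor chaos built from the two samples, and the representation of $\mathcal{U}(a)$ leaves no lower-order Hoeffding pieces to discard.

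Next I would carry out the second-moment analysis. For each $c$, a direct computation shows that the only index configurations contributing to $\mathrm{E}[T_c(a)^2]$ are those in which the $x$-indices of the two copies coincide as sets (and likewise the $y$-indices), giving $\mathrm{var}[T_c(a)]\simeq \dfrac{c!\,(a-c)!}{P^{n_x}_cP^{n_y}_{a-c}}\sum_{1\le j_1,j_2\le p}\sigma_{j_1,j_2}^a$, where $\boldsymbol\Sigma_x=\boldsymbol\Sigma_y=\boldsymbol\Sigma$ enters. Two chaoses of distinct bi-orders are orthogonal and $x\perp y$, so $\mathrm{cov}[T_c(a),T_{c'}(b)]=0$ whenever $(c,a-c)\neq(c',b-c')$; summing over $c$ and using $\binom{a}{c}^2c!(a-c)!=a!\binom{a}{c}$ yields $\sigma^2(a)\simeq a!\sum_c\binom{a}{c}n_x^{-c}n_y^{-(a-c)}\sum_{j_1,j_2}\sigma_{j_1,j_2}^a=a!\,(n_x+n_y)^a(n_xn_y)^{-a}\sum_{j_1,j_2}\sigma_{j_1,j_2}^a$. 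Since $n_x=\Theta(n_y)$ every summand is of order $n^{-a}$, and the moment/weak-dependence part of Condition~\ref{cond:twosamplemeancond} gives $\sum_{j_1,j_2}\sigma_{j_1,j_2}^a=\Theta(p)$, whence $\sigma^2(a)=\Theta(a!\,p\,n^{-a})$. The same orthogonality across orders shows $\mathrm{cov}[\mathcal{U}(a),\mathcal{U}(b)]=0$ exactly for $a\neq b$ (a matching of bi-orders would force $a=b$), so the limiting covariance matrix is the identity with no further estimate needed.

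It then remains to prove the joint CLT. By the Cramér--Wold device it suffices to show that for arbitrary constants $\lambda_1,\dots,\lambda_m$ the normalized linear combination $S=\sum_r\lambda_r\mathcal{U}(a_r)/\sigma(a_r)$ --- a sum of the mutually orthogonal, completely degenerate U-statistics $T_c(a_r)/\sigma(a_r)$ --- converges to $\mathcal{N}(0,\sum_r\lambda_r^2)$. I would prove this either (i) by a martingale CLT: order the pooled sample $\mathbf x_1,\dots,\mathbf x_{n_x},\mathbf y_1,\dots,\mathbf y_{n_y}$, express $S-\mathrm{E}S$ as a sum of martingale differences obtained by revealing one observation at a time, and verify that the sum of conditional variances converges to $\sum_r\lambda_r^2$ while a conditional Lyapunov bound holds; or (ii) by the multivariate fourth-moment/contraction criterion for homogeneous sums, i.e.\ showing that all contractions of the kernels are asymptotically negligible, equivalently $\mathrm{E}[S^4]\to 3(\sum_r\lambda_r^2)^2$. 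In either route the substance is the same: one must bound sums over $4$-tuples of coordinate indices of products such as $\sigma_{j_1,j_2}\sigma_{j_2,j_3}\sigma_{j_3,j_4}\sigma_{j_4,j_1}$ and their higher analogues by $O(p^2)$, using the eighth-moment bound together with the $\alpha$-mixing (or the explicit moment structure) of Condition~\ref{cond:twosamplemeancond}, uniformly in $n,p$ with no constraint linking them. Combining these steps gives the stated joint normal limit with identity covariance, hence the asymptotic independence of $\mathcal{U}(a_1),\dots,\mathcal{U}(a_m)$.

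The technical heart is the last step: establishing the central limit theorem for these degenerate, growing-dimension multilinear forms under only weak dependence of the coordinates, which reduces to proving that the contraction (or ``diagonal clique'') terms vanish. This is where the eighth-moment hypothesis and the mixing condition are genuinely used, and where the bookkeeping is heaviest, since one must handle simultaneously the two independent samples, the $a+1$ distinct bi-orders inside each $\mathcal{U}(a_r)$, and the several orders $a_1,\dots,a_m$; the remaining steps are routine index-configuration counting and the elementary algebra recorded above.
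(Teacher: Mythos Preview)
Your moment computations are correct and match the paper almost verbatim: location invariance, the decomposition into the orthogonal bi-degree pieces $T_c(a)$, the resulting closed form $\sigma^2(a)\simeq a!(n_x^{-1}+n_y^{-1})^a\sum_{j_1,j_2}\sigma_{j_1,j_2}^a$, and the exact vanishing of $\mathrm{cov}\{\mathcal U(a),\mathcal U(b)\}$ for $a\neq b$ are precisely Lemmas~\ref{lm:twosamvar}--\ref{lm:twosamcov}. Where you diverge is the CLT step. You propose either a martingale CLT along the pooled sample filtration or a fourth-moment/contraction argument; the paper instead writes $\mathcal U(a)=\sum_{j=1}^p\mathcal U^{(j)}(a)$ and applies Bernstein's big-block/small-block scheme \emph{in the coordinate index} $j$, using the $\alpha$-mixing in $j$ from Condition~\ref{cond:twosamplemeancond} to approximate the big-block sums by independent copies and then checking a Lyapunov condition (the key auxiliary input is the uniform moment bound $\max_j\mathrm E|n^{a/2}\mathcal U^{(j)}(a)|^{\omega}<\infty$, Lemma~\ref{lemma:orderfinaltwo}). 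Your route is valid---it is exactly the device the paper uses for the one-sample mean (Theorem~\ref{thm:onesamplemean}) and for the two-sample covariance (Theorem~\ref{thm:twosamnull})---but it incurs more bookkeeping with the two-sample filtration and requires the contraction/``diagonal-clique'' bounds you flag. The paper's block approach is arguably more natural here because the only dependence to tame is across $j$, and it has the side benefit of feeding directly into the conditional-probability argument used for Theorem~\ref{THM:TWOSAMPLEMANINF}. One small correction: Condition~\ref{cond:twosamplemeancond} does not assume an eighth-moment bound per se; it assumes sub-Gaussian tails (so all moments are available) and only fourth moments are actually needed in the paper's block argument.
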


\begin{theorem} \label{THM:TWOSAMPLEMANINF}
Under Condition \ref{cond:twosamplemeancond} in \ref{suppA}, $\boldsymbol{\Sigma}_x=\boldsymbol{\Sigma}_y$ and  $H_0:$ $\boldsymbol{\mu}=\boldsymbol{\nu}$, $\forall u\in \mathbb{R}$,
$
	P ( \frac{n_xn_y}{n_x+n_y} \mathcal{U}(\infty)-\tau_p \leq u ) \rightarrow \exp  \{ - \pi^{-1/2} \exp(-u/2) \}, 
$	as $n,p \rightarrow \infty$, where $\tau_p=2\log p - \log \log p$. Moreover, $\{\mathcal{U}(a)/\sigma(a)\}$ of finite integer $a$ and $\{n_xn_y \mathcal{U}(\infty)/(n_x+n_y)-\tau_p\}$ are asymptotically independent.  
\end{theorem}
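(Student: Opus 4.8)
The plan is to follow the two-part structure already used for the covariance case (Theorems \ref{thm:extlimit} and \ref{thm:asymindpt}) and for the one-sample mean case (Theorem \ref{thm:onesamplemean2}). First I would establish the marginal extreme-value limit for $\mathcal{U}(\infty)$ in \eqref{eq:uinftytwosammean}. Since the statistic is location invariant one may take $\boldsymbol{\mu}=\boldsymbol{\nu}=\mathbf{0}$ under $H_0$; writing $T_j=\sqrt{n_xn_y/(n_x+n_y)}\,\sigma_{j,j}^{-1/2}(\bar x_j-\bar y_j)$, so that $\mathcal{U}(\infty)=\frac{n_x+n_y}{n_xn_y}\max_{j}T_j^2$, the $T_j$ are asymptotically standard normal and, under the i.i.d.-column structure built into Condition \ref{cond:twosamplemeancond}, are functions of independent columns of the data. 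The convergence $P(\frac{n_xn_y}{n_x+n_y}\mathcal{U}(\infty)-\tau_p\le u)\to\exp\{-\pi^{-1/2}e^{-u/2}\}$ then follows from the two-sample maximum-statistic theory of \citet{cai2014two}, which supplies the Cramér-type moderate deviation estimates for $P(T_1>t)$ with $t$ of order $\sqrt{2\log p}$ that are needed to identify the normalizing sequence $\tau_p=2\log p-\log\log p$; this is where the growth restriction on $\log p$ in Condition \ref{cond:twosamplemeancond} enters.

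Second, for the asymptotic independence I would mirror the argument behind Theorem \ref{thm:asymindpt}. Two ingredients are combined. (i) By the Hoeffding/martingale decomposition already used to prove Theorem \ref{thm:twosamplemean}, the kernel expansion \eqref{eq:teststattwosample} reduces to its leading degenerate term (exactly as in Remark \ref{rm:leadingteststat} for the covariance case), so $\mathcal{U}(a)=\sum_{j=1}^p\mathcal{U}_j(a)+o_P(\sigma(a))$ where $\mathcal{U}_j(a)$ depends only on column $j$ and $\mathrm{var}\{\mathcal{U}_j(a)\}=\Theta(a!\,n^{-a})$ while $\sigma^2(a)=\Theta(a!\,p\,n^{-a})$; hence deleting any $o(\sqrt p)$ of the summands does not change the limit in Theorem \ref{thm:twosamplemean}. (ii) With probability tending to one, $\max_j T_j^2$ is attained in the exceedance set $\mathcal{B}_p=\{j:T_j^2>b_p\}$ for a threshold with $1\ll p\,P(T_1^2>b_p)\ll p^{1/2}$, so $|\mathcal{B}_p|=o(\sqrt p)$ with high probability while still $\max_j T_j^2>b_p$ with high probability. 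Conditioning on $\mathcal{B}_p$ and on the columns indexed by $\mathcal{B}_p$, the remaining columns are i.i.d. and independent of that conditioning, so $\sum_{j\notin\mathcal{B}_p}\mathcal{U}_j(a)$ still obeys the central limit theorem of Theorem \ref{thm:twosamplemean} and is conditionally independent of the event $\{\mathcal{U}(\infty)\le\frac{n_x+n_y}{n_xn_y}m\}=\{|\mathcal{B}_p(m)|=0\}$. Combining (i), (ii), Theorem \ref{thm:twosamplemean} and the extreme-value limit from Part~1 yields the product form $P(\tfrac{n_xn_y}{n_x+n_y}\mathcal{U}(\infty)-\tau_p\le u)\prod_r P(\mathcal{U}(a_r)/\sigma(a_r)\le z_r)$; equivalently one can phrase (ii) via a Chen--Stein Poisson approximation of the exceedance count $N_p(m)=\#\{j:\frac{n_xn_y}{n_x+n_y}\sigma_{j,j}^{-1}(\bar x_j-\bar y_j)^2>m\}$.

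I expect the main obstacle to be the bookkeeping in step (ii): one must show that the contribution $\sum_{j\in\mathcal{B}_p}\mathcal{U}_j(a)$ of the exceedance coordinates is $o_P(\sigma(a))$ even though, conditional on column $j$ being extreme (i.e. $T_j^2$ of order $\log p$), the term $\mathcal{U}_j(a)$ is itself inflated to size roughly $O_P((\log p)^{a/2}n^{-a/2})$. Bounding $E\big[\sum_{j\in\mathcal{B}_p}|\mathcal{U}_j(a)|\big]\lesssim p\,P(T_1^2>b_p)\,(\log p)^{a/2}n^{-a/2}$ and requiring this to be $o(\sqrt p\,n^{-a/2})$ forces the quantitative choice of $b_p$ and is exactly where the moment conditions of Condition \ref{cond:twosamplemeancond} and the restriction on $\log p$ are used; the same tail control is what lets one pass from $\mathcal{U}(a)$ to its leading term uniformly over the truncation. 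The remaining pieces -- reducing \eqref{eq:teststattwosample} to the one-sample-type leading term, and handling $\boldsymbol{\Sigma}_x\ne\boldsymbol{\Sigma}_y$ by replacing $\sigma_{j,j}(1/n_x+1/n_y)$ with $\sigma_{x,j,j}/n_x+\sigma_{y,j,j}/n_y$ throughout -- are routine given the one-sample development.
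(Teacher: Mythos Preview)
Your Part~1 is fine and matches the paper: the extreme-value limit is quoted from \citet{cai2014two}. The gap is in Part~2, and it stems from a misreading of Condition~\ref{cond:twosamplemeancond}. That condition does \emph{not} give i.i.d.\ columns; item~(3) only assumes the column sequence $\{(x_{i,j},y_{i,j}):\,i\}$ is $\alpha$-mixing in $j$. (You are thinking of Condition~\ref{cond:maxiidcolumn}, which is specific to the one-sample covariance problem and is what makes the removal-and-conditioning argument behind Theorem~\ref{thm:asymindpt} work.) Under $\alpha$-mixing, once you condition on the columns in the random exceedance set $\mathcal{B}_p$, the remaining columns are \emph{not} independent of that conditioning: columns near $\mathcal{B}_p$ are affected, and the set $\mathcal{B}_p$ is itself random and scattered, so you cannot simply delete $o(\sqrt p)$ summands and declare the rest conditionally i.i.d. The bound you sketch for $\sum_{j\in\mathcal{B}_p}|\mathcal{U}_j(a)|$ likewise uses the product form $p\,P(T_1^2>b_p)$, which presumes identical marginals and independence across $j$.

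The paper avoids this by taking a genuinely different route for the mean problems: it proves a \emph{conditional} CLT for $\mathcal{U}(a)/\sigma(a)$ on the measure $\tilde P(\cdot)=P(\cdot\mid \tfrac{n_xn_y}{n_x+n_y}\mathcal{U}(\infty)\le \tau_p+u)$, following \citet{hsing1995}. Concretely, it reruns the Bernstein big-block/small-block construction of Lemma~\ref{lm:twosamjointnormal} on $\tilde P$, bounds the conditional mixing coefficient by $\tilde\alpha(d)\lesssim \max_h P\{U^0_{h,d}(\infty)>\tau_p+u\}+\alpha(d)$, and then checks Lyapunov on $\tilde P$ by dominating conditional moments with the unconditional ones divided by $P\{\tfrac{n_xn_y}{n_x+n_y}\mathcal{U}(\infty)\le\tau_p+u\}$. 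This machinery is exactly what replaces your conditioning-on-exceedances step when columns are only mixing. If you want to salvage your outline, you would have to insert buffers around each exceedance index and control their total width via the mixing rate---at which point you are essentially rebuilding the block argument the paper uses.
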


Theorems \ref{thm:twosamplemean} and \ref{THM:TWOSAMPLEMANINF} provide the asymptotic  properties of finite-order U-statistics and $\mathcal{U}(\infty)$ under $H_0$. To analyze the power of $\mathcal{U}(a)$'s, we derive the asymptotic results of $\mathcal{U}(a)$'s under the alternative hypotheses.  
We focus on the  two-sample mean testing problem, while one-sample mean testing  can be obtained similarly.   Specifically, we consider the alternative $\mathcal{E}_A=\{\mu_{j}-\nu_{j} =\rho > 0\mbox{ for }  j=1,\ldots, k_0; \mu_{j}-\nu_{j} =0 \mbox{ for } j=k_0+1,\cdots, p\}$.  We then obtain  similar conclusions to Theorem \ref{thm:cltalternative}.
\begin{theorem} \label{thm:altcltmeantest}
Assume Condition \ref{cond:twosamplemeancond} in \ref{suppA} and   $k_0=o(p)$. For  any finite integers $\{a_1,\ldots, a_m\}$, if $\rho$ in $\mathcal{E}_A$ satisfies $\rho=O(k_0^{-1/a_t}p^{1/(2a_t)}n^{-1/2})$ for $t=1,\ldots,m$, then  $
	[ \mathcal{U}(a_1)-\mathrm{E}\{\mathcal{U}(a_1)\}]/{\sigma(a_1)}, \ldots,\allowbreak [\mathcal{U}(a_m)-\mathrm{E}\{\mathcal{U}(a_m)\}]/{\sigma(a_m)}  ]^{\intercal} \xrightarrow{D} \mathcal{N}(0,I_m),  
$ as  $n,p \rightarrow \infty$. 
Here $\mathrm{E}[\mathcal{U}(a)]=\|\mathcal{E}_A\|_{a}^{a}=k_0\rho^{a}$  and $\sigma^2(a)=
	\mathrm{var}\{\mathcal{U}(a)\}\simeq V_{a}$, with $V_{a}=a!\sum_{j_1,j_2=k_0+1}^p  \allowbreak (n_x+n_y)^a \sigma_{j_1,j_2}^a/ (n_x n_y)^a$ of the order $\Theta(a!pn^{-a})$.
 \end{theorem}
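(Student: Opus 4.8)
\medskip
\noindent\textbf{Proof proposal.} The plan follows the same logic as the proof of Theorem~\ref{thm:cltalternative}: under the block alternative $\mathcal{E}_A$, the stochastic fluctuation of $\mathcal{U}(a)$ is carried entirely by the $p-k_0$ ``null'' coordinates, while the $k_0$ ``signal'' coordinates contribute only the deterministic bias $k_0\rho^a$. Since \eqref{eq:teststattwosample} is unchanged when $\mathbf{x}_i,\mathbf{y}_i$ are replaced by $\mathbf{x}_i-\boldsymbol{\nu},\mathbf{y}_i-\boldsymbol{\nu}$, I may assume $\boldsymbol{\nu}=\mathbf{0}$, so that $\mu_j=\rho$ for $j\in S:=\{1,\dots,k_0\}$ and $\mu_j=0$ for $j\in N:=\{k_0+1,\dots,p\}$. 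Writing $\mathcal{U}_j(a)$ for the $j$-th coordinate summand of \eqref{eq:teststattwosample} and $\mathcal{U}^{(I)}(a)=\sum_{j\in I}\mathcal{U}_j(a)$, decompose $\mathcal{U}(a)=\mathcal{U}^{(S)}(a)+\mathcal{U}^{(N)}(a)$. Each $\mathcal{U}_j(a)$ is unbiased for $\mu_j^a$, whence $\mathrm{E}[\mathcal{U}^{(N)}(a)]=0$ and $\mathrm{E}[\mathcal{U}^{(S)}(a)]=k_0\rho^a$, giving the stated mean $\mathrm{E}[\mathcal{U}(a)]=k_0\rho^a$.

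\emph{Step 1: the null part produces the CLT.} Restricted to the coordinates in $N$, where the two group means coincide, $\mathcal{U}^{(N)}(a)$ is precisely the order-$a$ two-sample mean U-statistic built from $\{(x_{i,j})_{j\in N}\}_{i\le n_x}$ and $\{(y_{i,j})_{j\in N}\}_{i\le n_y}$ under the genuine null $H_0:\boldsymbol{\mu}=\boldsymbol{\nu}$. As $k_0=o(p)$ we have $|N|=p-k_0\simeq p$, and Condition~\ref{cond:twosamplemeancond} is inherited by the sub-vectors $(x_j)_{j\in N}$, $(y_j)_{j\in N}$ (mixing coefficients of a subsequence are no larger, up to re-indexing, and the moment bounds are unchanged). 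Applying Theorem~\ref{thm:twosamplemean} to these sub-vectors then yields $[\mathcal{U}^{(N)}(a_1)/\sigma_N(a_1),\dots,\mathcal{U}^{(N)}(a_m)/\sigma_N(a_m)]^{\intercal}\xrightarrow{D}\mathcal{N}(0,I_m)$, where $\sigma_N^2(a):=\mathrm{var}\{\mathcal{U}^{(N)}(a)\}\simeq a!\sum_{j_1,j_2\in N}(n_x+n_y)^a\sigma_{j_1,j_2}^a/(n_xn_y)^a=V_a=\Theta(a!pn^{-a})$.

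\emph{Step 2: the signal part is negligible after centering.} It remains to prove $\mathrm{var}\{\mathcal{U}^{(S)}(a)\}=o(V_a)$ for each $a\in\{a_1,\dots,a_m\}$. Here $\mathcal{U}^{(S)}(a)$ is a non-degenerate two-sample U-statistic of degrees $(a,a)$ with kernel $\sum_{j\in S}\prod_{t=1}^a(x_{k_t,j}-y_{s_t,j})$. Taking its Hoeffding decomposition and writing $x_{k,j}=\rho+\epsilon_{k,j}$ on $S$ with centered $\epsilon,\eta$, the variance of the $(c_1,c_2)$-th canonical component is bounded, using the uniformly bounded central moments in Condition~\ref{cond:twosamplemeancond}, by $C_a\,\rho^{2(a-c)}n^{-(c_1+c_2)}\sum_{j_1,j_2\in S}|\sigma_{j_1,j_2}|^{c}$ with $c=\max(c_1,c_2)$ and $C_a$ depending only on $a$; the weak-dependence part of Condition~\ref{cond:twosamplemeancond} gives $\sum_{j_1,j_2\in S}|\sigma_{j_1,j_2}|^{c}=O(k_0)$. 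Summing over $(c_1,c_2)\neq(0,0)$, the dominant terms are those with one index equal to $0$, so $\mathrm{var}\{\mathcal{U}^{(S)}(a)\}\le C_a'\,k_0\,n^{-a}\sum_{c=1}^a(\rho^2 n)^{a-c}$. The constraint $\rho=O(k_0^{-1/a_t}p^{1/(2a_t)}n^{-1/2})$, i.e.\ $\rho^2 n=O((p/k_0^2)^{1/a_t})$, together with $k_0=o(p)$, makes $k_0(\rho^2 n)^{a-c}=o(p)$ for every $1\le c\le a$, hence $\mathrm{var}\{\mathcal{U}^{(S)}(a)\}=o(a!pn^{-a})=o(V_a)$. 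By Chebyshev $\mathcal{U}^{(S)}(a)-k_0\rho^a=o_P(\sqrt{V_a})$, and combining this with Cauchy--Schwarz for the cross term also gives $\mathrm{var}\{\mathcal{U}(a)\}\simeq V_a$, confirming the asserted $\sigma^2(a)$.

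\emph{Step 3: conclusion, and the main obstacle.} For each $t$, $\{\mathcal{U}(a_t)-\mathrm{E}[\mathcal{U}(a_t)]\}/\sigma(a_t)=\mathcal{U}^{(N)}(a_t)/\sigma_N(a_t)\cdot(1+o(1))+\{\mathcal{U}^{(S)}(a_t)-k_0\rho^{a_t}\}/\sigma(a_t)=\mathcal{U}^{(N)}(a_t)/\sigma_N(a_t)+o_P(1)$, so by Step~1 and Slutsky's theorem the joint limit is $\mathcal{N}(0,I_m)$. The hard part is Step~2: extracting a bound, uniform over all Hoeffding components $(c_1,c_2)$, for a high-order, two-sample, multi-coordinate U-statistic under the alternative, and verifying that the stated window for $\rho$ --- which is exactly the scaling at which the bias $k_0\rho^a$ is of order $\sigma(a)$ --- dominates every such term against $V_a\asymp a!pn^{-a}$; this is where the weak-dependence control of $\sum_{j_1,j_2\in S}|\sigma_{j_1,j_2}|^{c}$ and the precise exponents are used. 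A secondary point is to check that Condition~\ref{cond:twosamplemeancond}, stated at the null, supplies the bounded moments needed under $H_A$ and restricts properly to the coordinates in $N$.
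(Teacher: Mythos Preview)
Your proposal is correct and follows the same overall architecture as the paper's proof: the paper makes the identical decomposition $\mathcal{U}(a)=T_{a,1}+T_{a,2}$ (your $\mathcal{U}^{(S)}+\mathcal{U}^{(N)}$), applies Theorem~\ref{thm:twosamplemean} to $T_{a,2}$ exactly as in your Step~1, and reduces everything to showing $\mathrm{var}(T_{a,1})=o(V_a)$ as in your Step~2. The only real difference is the execution of Step~2. You sketch a Hoeffding-decomposition bound indexed by $(c_1,c_2)$; the paper instead computes $\mathrm{E}(T_{a,1}^2)$ directly from the expanded form \eqref{eq:teststattwosample} and exploits a simplification you do not use: because $\boldsymbol{\nu}=\mathbf{0}$, any nonzero contribution to the second moment forces the two $y$-index sets to coincide exactly, hence $c=\tilde c$, so the only bookkeeping left is the overlap $b=|\{\mathbf{k}\}\cap\{\tilde{\mathbf{k}}\}|$ among the $x$-indices. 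This yields closed forms $\rho^{2(c-b)}(\sigma_{j_1,j_2}+\rho^2)^b\sigma_{j_1,j_2}^{a-c}$ that are then summed case-by-case, with the mixing bound $\sum_{j_1,j_2\le k_0}|\sigma_{j_1,j_2}|^m=O(k_0)$ entering in the same way you use it. Your Hoeffding route should work, but the stated component bound $C_a\rho^{2(a-c)}n^{-(c_1+c_2)}\sum|\sigma_{j_1,j_2}|^{c}$ with $c=\max(c_1,c_2)$ is asserted rather than derived and would need a careful symmetrization argument for the two-sample kernel; the paper's direct route sidesteps this and is shorter. Your final inequality $k_0(\rho^2 n)^{a-c}=o(p)$ for all $1\le c\le a$ under $\rho=O(k_0^{-1/a}p^{1/(2a)}n^{-1/2})$ and $k_0=o(p)$ is exactly what the paper verifies.
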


Next we compare the power   of different U-statistics under alternatives with different sparsity levels. 
Theorem \ref{thm:altcltmeantest} shows that under the  local alternatives, the asymptotic power of $\mathcal{U}(a)$ mainly depends on $\mathrm{E}\{\mathcal{U}(a) \}/ \sqrt{\mathrm{var}\{\mathcal{U}(a)\}}$. 
Therefore by Theorem \ref{thm:altcltmeantest}, given constant $M>0$, for each $\mathcal{U}(a)$, if $\rho=M^{{1}/{a}} k_0^{-{1}/{a}}V_a^{{1}/{(2a)}}$, then
  $\mathrm{E}\{\mathcal{U}(a) \}/ \sqrt{\mathrm{var}\{\mathcal{U}(a)\}} \simeq M$; that is, different $\mathcal{U}(a)$'s have the same	power asymptotically. For easy illustration, we consider $\sigma_{j_1,j_2}=1$ when $j_1=j_2 \in \{k_0+1,\ldots,p\}$, and $\sigma_{j_1,j_2}=0$ when $j_1\neq j_2 \in \{k_0+1,\ldots,p\}$, then 
$M^{{1}/{a}} k_0^{-{1}/{a}}V_a^{{1}/{(2a)}} \simeq \rho_a$ with  
\begin{eqnarray}
	\rho_a := a!^{\frac{1}{2a}} ({M\sqrt{p}}/{k_0})^{\frac{1}{a}}\{ ({n_x+n_y})/({n_xn_y} )\}^{\frac{1}{2}}.
	 \label{eq:deltaavalue}
\end{eqnarray}    Therefore, similarly to the analysis in Section \ref{sec:powerana}, to find the ``best" $\mathcal{U}(a)$, it suffices to find the order, denoted by  $a_0$, that gives the minimum $\rho_a$  in \eqref{eq:deltaavalue}.    We have the following result similar to Proposition \ref{prop:ordercompare}.  
\begin{proposition} \label{cor:deltaminimval}
 Given any constant $ M \in (0,+\infty)$ and $n, p, k_0$, we consider $\rho_a$ in  \eqref{eq:deltaavalue} as a function of positive integers $a$, then
 \begin{enumerate}
\item[(i)] when $k_0 \geq M\sqrt{p}$, the minimum of $\rho_a$ is achieved at $a_0=1$; 
\item[(ii)] when $k_0<M\sqrt{p}$, the minimum of $\rho_a$ is achieved at some $a_0$, which increases as $M\sqrt{p}/|J_D|$ increases.
\end{enumerate}
\end{proposition}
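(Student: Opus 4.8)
The plan is to reduce Proposition~\ref{cor:deltaminimval} to a one-variable optimization over the integer $a$, exactly paralleling the argument behind Proposition~\ref{prop:ordercompare}. First I would drop the factor $\{(n_x+n_y)/(n_xn_y)\}^{1/2}$ in \eqref{eq:deltaavalue}, which is independent of $a$, so that minimizing $\rho_a$ is equivalent to minimizing $f(a):=(a!)^{1/(2a)}C^{1/a}$ over positive integers $a$, where $C:=M\sqrt p/k_0$ is a fixed positive constant. Passing to logarithms, $\log f(a)=\tfrac1a\bigl(\tfrac12\log a!+\log C\bigr)=:h(a)$, and since $f=\exp(h)$ is monotone in $h$, it suffices to track the sign of the successive differences $h(a+1)-h(a)$.

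A short computation using $\log(a+1)!=\log a!+\log(a+1)$ and $\log a!=\sum_{k=1}^a\log k$ gives
\begin{align*}
 h(a+1)-h(a)=\frac{1}{a(a+1)}\,\psi(a),\qquad
 \psi(a):=\frac{1}{2}\sum_{k=1}^a\log\frac{a+1}{k}-\log C,
\end{align*}
so the sign of $f(a+1)-f(a)$ coincides with the sign of $\psi(a)$. The structural fact that drives everything is that $\psi$ is strictly increasing: another telescoping computation gives $\psi(a+1)-\psi(a)=\tfrac{a+1}{2}\log\tfrac{a+2}{a+1}>0$, and moreover $\psi(a)\to\infty$ (by Stirling, $\tfrac12\sum_{k=1}^a\log\tfrac{a+1}{k}=\tfrac12\bigl(a\log(a+1)-\log a!\bigr)\sim a/2$). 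Hence $\psi$ changes sign at most once, and only from negative to positive; consequently $f(a)\to\infty$ and $f$ is unimodal — non-increasing up to some index and non-decreasing thereafter — so a minimizer $a_0$ exists and may be taken as $a_0=\min\{a\ge1:\psi(a)\ge0\}$.

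For part~(i), when $k_0\ge M\sqrt p$, i.e. $C\le 1$, we have $\psi(1)=\tfrac12\log 2-\log C\ge\tfrac12\log 2>0$, so $\psi(a)>0$ for all $a\ge1$, $f$ is strictly increasing, and the minimum is attained at $a_0=1$. For part~(ii), when $C>1$, I would use that for each fixed $a$ the quantity $\psi(a)$ is strictly decreasing in $C$; therefore the threshold index $a_0=\min\{a\ge1:\psi(a)\ge0\}$ is non-decreasing as the ratio $M\sqrt p/k_0$ increases, which is precisely the asserted monotonicity of the optimal order.

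The main obstacle is really just bookkeeping rather than a genuine difficulty: it is the boundary case in which $\psi(a)=0$ at some integer $a$, where $f(a)=f(a+1)$ and the minimizer fails to be unique; one fixes the convention (e.g. the smallest minimizing index) and verifies it is consistent with the wording ``the minimum of $\rho_a$ is achieved at some $a_0$''. Apart from this, the proof is identical in spirit to that of Proposition~\ref{prop:ordercompare}, and the only two nonroutine ingredients are the closed forms for $\psi(a)$ and for $\psi(a+1)-\psi(a)$, both obtained by telescoping $\log a!=\sum_{k\le a}\log k$.
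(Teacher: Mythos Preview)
Your proposal is correct and takes essentially the same approach as the paper. The paper (in its proof of Proposition~\ref{prop:ordercompare}, which it invokes verbatim for Proposition~\ref{cor:deltaminimval}) works with the ratio $f(a+1)/f(a)=\{d(a)\tilde M^{-2}\}^{1/(2a(a+1))}$ where $d(a)=(a+1)^a/a!$ and shows $d$ is strictly increasing via $d(a+1)/d(a)=\bigl(\tfrac{a+2}{a+1}\bigr)^{a+1}>1$; your $\psi(a)$ is precisely $\tfrac12\log d(a)-\log C$, so your condition $\psi(a)\ge 0$ is the logarithmic form of the paper's $d(a)\ge \tilde M^2$, and your increment $\psi(a+1)-\psi(a)=\tfrac{a+1}{2}\log\tfrac{a+2}{a+1}$ is $\tfrac12\log\bigl(d(a+1)/d(a)\bigr)$.
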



Proposition \ref{cor:deltaminimval} shows that when the sparsity level $k_0$ is large, i.e., $\mathcal{E}_a$ is dense, a small $a$ tends to obtain a smaller lower bound in $\rho$, and vice versa. As \eqref{eq:deltaavalue} and  \eqref{eq:rhoform} are similar,  we have similar patterns to that in Figure \ref{fig:gavalue} when examining the corresponding  numerical plots of $\rho_a$. 
In addition, 
\cite{cai2014two} shows that when $\rho =\rho_{\infty}:=C_1\sqrt{\log p/n}$ for a large $C_1$, the power of $\mathcal{U}(\infty)$ converges to 1, and $\sqrt{\log p/n}$ is minimax rate optimal for sparse alternatives; see also \cite{donoho2015higher}. 
Thus, if $\rho_{\infty}<\rho_{a_0}$, i.e., 
$
	k_0<MC_1^{-a_0}\sqrt{pa_0!}/\log^{{a_0}/{2}}p,
$ $\mathcal{U}(\infty)$ is the ``best" and its lowest detectable order of $\rho$ is $\Theta(\sqrt{\log p/n})$. 
On the other hand, Proposition \ref{cor:deltaminimval} shows that when $\mathcal{E}_A$ is dense with $k_0>\sqrt{Mp}$, $\mathcal{U}(1)$ is the  ``best" and its lowest detectable  order of $\rho$ is $\Theta(\sqrt{p}k_0^{-1}n^{-1/2})$. Moreover, for some large $M$ and $C_2$, when $\mathcal{E}_A$ is ``moderately dense" or ``moderately sparse" with $C_2\sqrt{pa_0!}/\log^{{a_0}/{2}}p<k_0<\sqrt{Mp}$,    $\mathcal{U}(a_0)$ is the ``best" and  its lowest detectable order of $\rho$ is $\Theta\{(\sqrt{p}/k_0)^{\frac{1}{a_0}} n^{-1/2}\}$, which is of a smaller order than the optimal detection boundary of the sparse case $\Theta(\sqrt{\log p/n})$.

More generally, when $\boldsymbol{\Sigma}_x\neq \boldsymbol{\Sigma}_y$, similar results to Theorems \ref{thm:twosamplemean} and \ref{thm:altcltmeantest} can be obtained. In particular, we have the following corollary. 
\begin{corollary}\label{prop:generalrestwomean}
When $\boldsymbol{\Sigma}_x\neq \boldsymbol{\Sigma}_y$, under Condition \ref{cond:twosamplemeancond} in \ref{suppA}, 
Theorem \ref{thm:twosamplemean} holds with $
	\sigma^2(a)  \simeq a!\sum_{j_1,j_2=1}^p  (\sigma_{x,j_1,j_2}/n_x+ \sigma_{y,j_1,j_2}/n_y)^a$ and Theorem \ref{thm:altcltmeantest} holds with $V_{a}=a!\sum_{j_1,j_2=k_0+1}^p  \allowbreak (\sigma_{x,j_1,j_2}/n_x+\sigma_{y,j_1,j_2}/n_y)^a$. 
\end{corollary}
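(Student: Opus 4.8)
The plan is to reduce the unequal-covariance case to the machinery already developed for the equal-covariance case (Theorems \ref{thm:twosamplemean} and \ref{thm:altcltmeantest}), tracking how $\boldsymbol{\Sigma}_x \neq \boldsymbol{\Sigma}_y$ changes the variance formula but not the qualitative argument. First I would write $\mathcal{U}(a)$ as in \eqref{eq:teststattwosample} and, following the same Hoeffding-type decomposition used in the proof of Theorem \ref{thm:twosamplemean}, identify the leading term. Because $\mathbf{x}$ and $\mathbf{y}$ are independent and $K_j(\mathbf{x}_k,\mathbf{y}_s)=x_{k,j}-y_{s,j}$, the per-coordinate kernel splits additively, so the projection of $\mathcal{U}(a)$ onto the span of single-observation scores is a sum of two uncorrelated pieces, one driven by the $\mathbf{x}$-sample and one by the $\mathbf{y}$-sample. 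The key algebraic fact is that for a fixed coordinate pair $(j_1,j_2)$, the covariance contribution of a single $\mathbf{x}_k$ is governed by $\sigma_{x,j_1,j_2}/n_x$ and that of a single $\mathbf{y}_s$ by $\sigma_{y,j_1,j_2}/n_y$; the $a$-fold product structure then produces $(\sigma_{x,j_1,j_2}/n_x + \sigma_{y,j_1,j_2}/n_y)^a$ by the multinomial expansion, exactly as $((n_x+n_y)/(n_xn_y))^a\sigma_{j_1,j_2}^a$ arises when $\boldsymbol{\Sigma}_x=\boldsymbol{\Sigma}_y=\boldsymbol{\Sigma}$. So the variance computation is a direct recalculation of the one already done, with $\sigma_{j_1,j_2}/n_x$ and $\sigma_{j_1,j_2}/n_y$ replaced by $\sigma_{x,j_1,j_2}/n_x$ and $\sigma_{y,j_1,j_2}/n_y$.

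Next I would verify that the asymptotic-normality argument carries over verbatim. The CLT in Theorem \ref{thm:twosamplemean} is proved by a martingale central limit theorem (or an equivalent Stein/projection argument) applied to the leading linear term of the U-statistic, together with a bound showing the higher-order degenerate components are asymptotically negligible relative to $\sigma(a)$. Since $n_x = \Theta(n_y)$ and Condition \ref{cond:twosamplemeancond} bounds the marginal moments of both $\mathbf{x}$ and $\mathbf{y}$ uniformly, replacing $\boldsymbol{\Sigma}$ by two distinct but comparably-sized covariance matrices changes none of the order-of-magnitude bounds: one still has $\sigma^2(a) = \Theta(a!\,p\,n^{-a})$, the Lyapunov/Lindeberg conditions still hold, and the remainder terms are still $o(\sigma(a))$. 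The joint asymptotic independence of $\mathcal{U}(a_1),\ldots,\mathcal{U}(a_m)$ follows because their leading linear terms involve score functions that are mutually orthogonal in $L^2$ across distinct orders $a_r$ — a property that depends only on the i.i.d. structure within each sample and not on whether the two samples share a covariance matrix. The same substitution then updates $V_a$ in Theorem \ref{thm:altcltmeantest} to $a!\sum_{j_1,j_2=k_0+1}^p (\sigma_{x,j_1,j_2}/n_x + \sigma_{y,j_1,j_2}/n_y)^a$, and the local-alternative CLT goes through because under $\mathcal{E}_A$ the signal enters only through $\mathrm{E}[\mathcal{U}(a)] = k_0\rho^a$, which is unchanged, while the fluctuation is controlled by the updated $\sigma^2(a)$.

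The main obstacle, and the only place requiring genuine care rather than bookkeeping, is confirming that the combinatorial cross-terms in the variance expansion — those mixing $\mathbf{x}$-indices and $\mathbf{y}$-indices within the same product $\prod_{t=1}^a (x_{k_t,j}-y_{s_t,j})$ — still aggregate to the clean closed form $(\sigma_{x,j_1,j_2}/n_x + \sigma_{y,j_1,j_2}/n_y)^a$ after one expands the binomial in \eqref{eq:teststattwosample} and recombines. In the equal-covariance proof this recombination is what collapses the double sum over the binomial index $c$ back into a single power; one must check that the independence of the two samples makes the corresponding double sum factorize coordinate-pair by coordinate-pair even when the two covariance matrices differ, so that no cross-covariance between $\mathbf{x}$ and $\mathbf{y}$ appears. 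This is true precisely because $\mathrm{cov}(x_{k,j_1}, y_{s,j_2}) = 0$ for all indices, so every surviving term is a product of an $\mathbf{x}$-only moment and a $\mathbf{y}$-only moment; I would spell out this factorization and then invoke the binomial theorem to obtain the stated $V_a$ and $\sigma^2(a)$. Everything else — the negligibility of higher-order terms, the martingale CLT, the asymptotic independence across orders, and the transfer to local alternatives — is a word-for-word repetition of the equal-covariance proofs with the symbol $\sigma_{j_1,j_2}$ split into its two sample-specific versions, so I would simply note that those steps are identical and refer back to the proofs of Theorems \ref{thm:twosamplemean} and \ref{thm:altcltmeantest}.
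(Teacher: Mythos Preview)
Your proposal is correct and follows essentially the same approach as the paper. In fact, the paper's proof is even shorter: Lemmas \ref{lm:twosamvar} and \ref{lm:twomeanaltvar} (the variance computations for Theorems \ref{thm:twosamplemean} and \ref{thm:altcltmeantest}) are already stated and proved in the general case with $\sigma_{x,j_1,j_2}$ and $\sigma_{y,j_1,j_2}$ kept separate---the binomial recombination you flag as the ``main obstacle'' is exactly equation \eqref{eq:covjtwosam}---so the paper simply observes that nothing in the proofs of Theorems \ref{thm:twosamplemean} and \ref{thm:altcltmeantest} ever uses $\boldsymbol{\Sigma}_x=\boldsymbol{\Sigma}_y$ and refers back.
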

 Corollary \ref{prop:generalrestwomean} shows that  the asymptotic power of finite-order U-statistics   depends on $\mathrm{E}\{\mathcal{U}(a)\}/\sqrt{\mathrm{var}\{\mathcal{U}(a)\} }$. By the construction of finite-order U-statistics and the proof, we obtain that  $\mathrm{E}\{\mathcal{U}(a)\}=k_0\rho^a$ and $\mathrm{var}\{\mathcal{U}(a)\}=\Theta(a!pn^{-a})$. We then know that for finite-order U-statistics, similar results to Proposition \ref{cor:deltaminimval}   still hold by examining  $\mathrm{E}\{\mathcal{U}(a)\}/\sqrt{\mathrm{var}\{\mathcal{U}(a)\} }$.


The above power analysis shows that the optimal U-statistic varies when the alternative hypothesis changes.  To achieve high power across various alternatives, we can develop an adaptive test  similar to that in Section \ref{sec:computtest}. 
 Specifically, we calculate the $p$-values of the U-statistics \eqref{eq:ustatonesamplemean} and  \eqref{eq:teststattwosample} following the theoretical results above and the algorithm in Section \ref{sec:computtest}. By combining the $p$-values as discussed in  Section \ref{sec:computtest}, the asymptotic power of the adaptive test goes to 1 if there exists one $\mathcal{U}(a)$ whose power goes to 1. 

\begin{remark} \label{rm:comparewithxu}
\citet{xu2016adaptive} has also discussed the adaptive testing of two-sample mean that is powerful against various $\ell_p$-norm-like sums of $\boldsymbol{\mu}-\boldsymbol{\nu}$. But \cite{xu2016adaptive} is under the framework of a family of von Mises V-statistics where $\mathcal{V}(a)  =  \sum_{j=1}^p(\bar{x}_{j}- \bar{y}_{j})^a.$ We note that $\mathcal{V}(a)$ is equivalent  to
\begin{eqnarray*}
\mathcal{V}(a)  =\sum_{j=1}^p \sum_{c=0}^{a} (-1)^{a-c}  \binom{a}{c} ({n_x}^{c} {n_y}^{a-c})^{-1}  \sum_{\substack{1\leq k_1, \cdots, k_c\leq n_x \\1 \leq s_1, \cdots, s_{a-c} \leq n_y} } \prod_{t=1}^c x_{k_{t},j} \prod_{m=1}^{a-c}  y_{s_{m},j},	
\end{eqnarray*}   
which   allows the indexes $k$'s and $s$'s to be the same and thus is different from the U-statistics in \eqref{eq:teststattwosample}.
 \cite{xu2016adaptive} shows that the constructed V-statistics are biased estimators of $\| \boldsymbol{\mu}-\boldsymbol{\nu} \|_a^a$, and $\mathcal{V}(a)$ and $\mathcal{V}(b)$ are asymptotically independent if $a+b$ is odd, but are asymptotically correlated if $a+b$ is even.
The constructed U-statistics in this work extend the properties of those V-statistics such that $\mathcal{U}(a)$  in \eqref{eq:teststattwosample} is an  {unbiased} estimator of $\|\boldsymbol{\mu}-\boldsymbol{\nu}\|_a^a$, and  all $\mathcal{U}(a)$'s are {asymptotically independent} with each other. Given these nice statistical properties, it becomes easier to obtain the joint asymptotic distribution of the U-statistics, and then apply the adaptive test.
\end{remark}


\subsection{Two-Sample Covariance Testing} \label{sec:twosamcov}

The U-statistics framework can be applied similarly  to testing the equality of two  covariance matrices.  Suppose  $\{ \mathbf{x}_{i}\}_{i=1}^{n_x}$ and $\{\mathbf{y}_{i}\}_{i=1}^{n_y}$ are  i.i.d. copies of two independent random vectors $\mathbf{x}=(x_1,\ldots, x_p)^{\intercal}$ and $\mathbf{y}=(y_1,\ldots, y_p)^{\intercal}$ respectively.  Denote $\mathrm{E}(\mathbf{x})=\boldsymbol{\mu}=(\mu_{1},\ldots, \mu_{p})^{\intercal}$, $\mathrm{E}(\mathbf{y})=\boldsymbol{\nu}=(\nu_{1},\ldots, \nu_{p})^{\intercal}$; $\mathrm{cov}(\mathbf{x})=\boldsymbol{\Sigma}_x=\{\sigma_{x, j_1,j_2}: 1\leq j_1, j_2 \leq p\}$ and $\mathrm{cov}(\mathbf{y})=\boldsymbol{\Sigma}_y=\{\sigma_{y,j_1,j_2}: 1\leq j_1, j_2 \leq p\}$. Consider  $H_0: \boldsymbol{\Sigma}_x=\boldsymbol{\Sigma}_y=\boldsymbol{\Sigma}=(\sigma_{j_1,j_2})_{p\times p}$. Given  $1 \leq j_1,j_2 \leq p$, $1 \leq k_1 \neq k_2 \leq n_x$, and $1\leq s_1 \neq s_2 \leq n_y$,
$
	 K_{j_1,j_2}(\mathbf{x}_{k_1},\mathbf{x}_{k_2}, \mathbf{y}_{s_1},\mathbf{y}_{s_2}) 
	=  (x_{k_1,j_1}x_{k_1,j_2}-x_{k_1,j_1}x_{k_2,j_2})- (y_{s_1,j_1}y_{s_1,j_2}-y_{s_1,j_1}y_{s_2,j_2})
$ is a simple unbiased estimator of $\sigma_{x,j_1,j_2}-\sigma_{y,j_1,j_2}$.  Therefore, for  a finite positive integer  $a$,    we have the U-statistic
 \begin{align}\label{eq:ts-cov}
 \mathcal{U}(a) =  \sum_{1\leq j_1,j_2 \leq p} &\frac{1}{P^{n_x}_{2a}P^{n_y}_{2a}} \sum_{\substack{ 1\leq k_{1,1}\neq k_{1,2} \neq \ldots \\ \neq k_{a,1}\neq k_{a,2} \leq n_x}} \ \sum_{\substack{ 1\leq s_{1,1}\neq s_{1,2} \neq \ldots \\ \neq s_{a,1}\neq s_{a,2} \leq n_y}} \\
&\prod_{t=1}^a  K_{j_1,j_2}(\mathbf{x}_{k_{t,1}},\mathbf{x}_{k_{t,2}}, \mathbf{y}_{s_{t,1}},\mathbf{y}_{s_{t,2}}). \notag 
\end{align} 
As in Remark \ref{rm:anotherpesusstat}, another formulation of $ \mathcal{U}(a)$ equivalent to \eqref{eq:ts-cov} is 
\begin{align}
	\mathcal{U}(a)=&\sum_{c=0}^a \sum_{b_1=0}^c \sum_{b_2=0}^{a-c}  (-1)^{c-b_1+b_2} \sum_{1\leq j_1, j_2 \leq p}  \sum_{ \substack{ 1 \leq i_1 \neq \ldots \neq \\ i_{2c-b_1}  \leq n_x}}\ \sum_{\substack{ 1 \leq {w}_1 \neq \ldots  \neq \\ {w}_{2(a-c)-b_2}  \leq n_y}} \label{eq:twosamcovueqform2} \\
	& C_{n_x,n_y,a,c,b_1,b_2}\times \prod_{k=1}^{b_1}  (x_{i_k, j_1}x_{i_k,  j_2})  \prod_{s=b_1+1}^{c}  x_{i_{s}, j_1} \prod_{t=c+1}^{2c-b_1} x_{i_{t}, j_2}  \notag \\
	& \times \prod_{m=1}^{b_2}  (y_{{w}_m, j_1}y_{{w}_m,  j_2})\prod_{l=b_2+1}^{a-c} y_{{w}_{l}, j_1} \prod_{q=a-c+1}^{2(a-c)-b_2}y_{{w}_{q}, j_2},  \notag
\end{align} where $C_{n_x,n_y,c,b_1,b_2}=(P^{n_x}_{2c-b_1}P^{n_y}_{2(a-c)-b_2})^{-1}a!/\{b_1!(c-b_1)!b_2!(a-c-b_2)!\}$, and  \eqref{eq:twosamcovueqform2}  shall be used in the theoretical developments.

We next present the  asymptotic results of the constructed U-statistics under the null hypothesis. 
Here we assume the  regularity Condition \ref{cond:twosamplecov1} or \ref{cond:twosamplecov2}, whose details and discussions are provided in Section \ref{sec:twosamcondnull} of \ref{suppA} due to the space limitation. We mention that Condition \ref{cond:twosamplecov1} is a  mixing-type dependence assumption similar to Condition \ref{cond:alphamixing}, and Condition \ref{cond:twosamplecov2} is a moment-type  dependence assumption similar to Condition \ref{cond:ellpmoment}. Particularly, Condition \ref{cond:twosamplecov2} extends the moment assumption for second-order U-statistics in  \citet{lijun2012} to  U-statistics of general orders; please see the detailed discussions in Section \ref{sec:twosamcondnull}.

\begin{theorem} \label{thm:twosamnull}
Under $H_0$ and Condition \ref{cond:twosamplecov1} or \ref{cond:twosamplecov2} in \ref{suppA}, 
for finite integers $\{a_1,\ldots, a_m\}$,  $ [ {\mathcal{U}(a_1)}/{\sigma(a_1)}, \ldots, \allowbreak {\mathcal{U}(a_m)}/{\sigma(a_m)}  ]^{\intercal} \xrightarrow{D} \mathcal{N}(0,I_m),$ where for $a\in\{a_1,\ldots,a_m\}$, 
\begin{align*}
	&\sigma^2(a)  =   \mathrm{var}\{\mathcal{U}(a)\}\\
	 \simeq & \sum_{1\leq j_1,j_2,j_3,j_4\leq p}a! \Big\{ \frac{1}{n_x}(\Pi_{j_1,j_2,j_3,j_4}^x-\sigma_{j_1,j_2}\sigma_{j_3,j_4}) +\frac{1}{n_y}(\Pi_{j_1,j_2,j_3,j_4}^y-\sigma_{j_1,j_2}\sigma_{j_3,j_4})\Big\}^a
\end{align*}
  with  $\Pi_{j_1,j_2,j_3,j_4}^x=\mathrm{E}\{\prod_{t=1}^4(x_{1,j_t}-\mu_{j_t})\}$ and $\Pi_{j_1,j_2,j_3,j_4}^y=\mathrm{E}\{\prod_{t=1}^4(y_{1,j_t}-\nu_{j_t})\}$. 
\end{theorem}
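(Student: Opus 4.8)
The plan is to follow the blueprint of the proof of Theorem~\ref{thm:jointnormal}: isolate a ``diagonal'' leading term of each $\mathcal{U}(a)$, compute variances and cross-covariances, and then run a joint central limit theorem, after which asymptotic independence drops out of the identity covariance.

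\emph{Step 1: location invariance and the leading term.} The kernel $K_{j_1,j_2}$ in \eqref{eq:ts-cov} is built from the location-invariant estimator $x_{k_1,j_1}x_{k_1,j_2}-x_{k_1,j_1}x_{k_2,j_2}$ of $\sigma_{x,j_1,j_2}$, so by the two-sample analogue of Proposition~\ref{prop:locinvariance} I may first reduce to $\boldsymbol{\mu}=\boldsymbol{\nu}=\mathbf{0}$. In the expansion \eqref{eq:twosamcovueqform2}, the terms with $b_1=c$ and $b_2=a-c$ retain only the ``diagonal'' factors $x_{i,j_1}x_{i,j_2}$ and $y_{i,j_1}y_{i,j_2}$; summing them over $c$ gives
\[
\tilde{\mathcal{U}}(a)=\sum_{1\le j_1,j_2\le p}\frac{1}{P^{n_x}_a P^{n_y}_a}\sum_{\substack{k_1\ne\cdots\ne k_a\\ s_1\ne\cdots\ne s_a}}\prod_{t=1}^a\big(x_{k_t,j_1}x_{k_t,j_2}-y_{s_t,j_1}y_{s_t,j_2}\big),
\]
and I write $\mathcal{U}(a)=\tilde{\mathcal{U}}(a)+R(a)$, where $R(a)$ collects the remaining $O(a^3)$ types of terms, each carrying at least one ``split'' factor $x_{i,j_1}$ or $y_{i,j_1}$. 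As in Remark~\ref{rm:leadingteststat}, each split factor raises the effective order of the underlying U-statistic without a compensating gain in the number of non-negligible coordinate tuples, so I would bound the variance of each summand of $R(a)$ under Condition~\ref{cond:twosamplecov1} (or \ref{cond:twosamplecov2}) together with Condition~\ref{cond:finitemomt}, and conclude $\mathrm{Var}\{R(a)\}=o(\sigma^2(a))$; this reduces the whole problem to the $\tilde{\mathcal{U}}(a)$'s.

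\emph{Step 2: variances and asymptotic orthogonality.} Put $u_{i,l}=x_{i,j_1}x_{i,j_2}-\sigma_{j_1,j_2}$ and $v_{i,l}=y_{i,j_1}y_{i,j_2}-\sigma_{j_1,j_2}$ for $l=(j_1,j_2)$, both centered under $H_0$, so $x_{k_t,j_1}x_{k_t,j_2}-y_{s_t,j_1}y_{s_t,j_2}=u_{k_t,l}-v_{s_t,l}$. A direct second-moment computation using independence of the two samples, distinctness of the row indices, and the centering shows that the only surviving leading configurations pair the $x$-rows among themselves and the $y$-rows among themselves within one copy, giving
\[
\mathrm{Var}\{\tilde{\mathcal{U}}(a)\}\simeq a!\sum_{l,l'}\Big(\tfrac{1}{n_x}\mathrm{Cov}(u_{\cdot,l},u_{\cdot,l'})+\tfrac{1}{n_y}\mathrm{Cov}(v_{\cdot,l},v_{\cdot,l'})\Big)^{a},
\]
which is the stated $\sigma^2(a)$ once one recognizes $\mathrm{Cov}(u_{\cdot,l},u_{\cdot,l'})=\Pi^x_{j_1,j_2,j_3,j_4}-\sigma_{j_1,j_2}\sigma_{j_3,j_4}$; the weak-dependence (or elliptical-moment) assumption makes the sum of the expected $\Theta(p^2 n^{-a})$ order, since only $O(p^2)$ coordinate quadruples contribute non-negligibly. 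For $a\ne b$ the same bookkeeping forces the matched $x$-row (resp.\ $y$-row) blocks from the $\mathcal{U}(a)$ and $\mathcal{U}(b)$ pieces to be of unequal size, so some coordinate index must appear with multiplicity $\ge 3$; this costs a factor $n^{-1}$ (or $p^{-1}$ in the count of contributing tuples), whence $\mathrm{Cov}\{\tilde{\mathcal{U}}(a),\tilde{\mathcal{U}}(b)\}=o(\sigma(a)\sigma(b))$.

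\emph{Step 3: joint CLT and independence.} I would then prove \eqref{eq:jointnormalformmodif1} by the method of moments: for any nonnegative integers $m_1,\dots,m_m$, expand $\mathrm{E}\prod_{r}\{\tilde{\mathcal{U}}(a_r)/\sigma(a_r)\}^{m_r}$ over index configurations. The dominant ones pair the $x$-rows (and $y$-rows) attached to each fixed order into disjoint pairs, each pair contributing a factor $\mathrm{Cov}(u_{\cdot,l},u_{\cdot,l'})/n_x$ or $\mathrm{Cov}(v_{\cdot,l},v_{\cdot,l'})/n_y$, while every configuration coupling two distinct orders, or producing a triple coincidence, is negligible by Step~2. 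This is exactly the Wick/Isserlis pairing of a Gaussian vector with the covariance from Step~2, so the limit is $\prod_r \mathrm{E}[Z^{m_r}]$ for i.i.d.\ $Z\sim\mathcal{N}(0,1)$, and the vanishing of the cross-order configurations is precisely the asymptotic independence. (Alternatively one could use a H\'ajek-type martingale-difference decomposition of $\tilde{\mathcal{U}}(a)$ over the pooled sample and the martingale CLT, checking the conditional-variance and Lyapunov conditions via the moment bounds; with two samples the moment method is cleaner.) Finally Step~1 and Slutsky's lemma replace $\tilde{\mathcal{U}}(a)$ by $\mathcal{U}(a)$.

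\emph{Main obstacle.} The hard part is the uniform combinatorial control underlying Steps~1--2. The two-sample, two-way-dependent setting forces one to track coordinate indices $j$, $x$-rows, $y$-rows, and the diagonal-versus-split type of each of the $a$ (resp.\ $b$) kernel factors at once, and one must show that any deviation from the pattern ``all factors diagonal $+$ pairwise coordinate matching $+$ within-order row pairing'' genuinely costs a power of $n$ or of $p^{-1}$. This is where Conditions~\ref{cond:twosamplecov1}/\ref{cond:twosamplecov2} and the eighth-moment bound are used, namely to dominate sums of high-order mixed central moments of the products $x_{j_1}x_{j_2}$ and $y_{j_1}y_{j_2}$; once the leading $\tilde{\mathcal{U}}(a)$ has been isolated, the CLT itself is routine.
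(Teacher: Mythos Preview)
Your Steps~1 and~2 are essentially the paper's approach: the same leading term $\tilde{\mathcal{U}}(a)$ is isolated (Lemma~\ref{lm:twosampvartest}), the variance is computed via the centered products $\mathcal{X}_{i,l}=x_{i,j_1}x_{i,j_2}-\sigma_{j_1,j_2}$ and $\mathcal{Y}_{w,l}$, and $\mathrm{Cov}\{\tilde{\mathcal{U}}(a),\tilde{\mathcal{U}}(b)\}=0$ exactly for $a\neq b$ (Lemma~\ref{lm:twosampcovtest}). One small correction in your Step~2 reasoning: the cross-order vanishing is not about a \emph{coordinate} index appearing with multiplicity $\ge 3$, but about \emph{row} indices. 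After expanding each factor $\mathcal{X}_{i_t}-\mathcal{Y}_{w_t}$, a nonzero expectation forces the $\mathcal{X}$-rows from the $a$-copy to biject with those from the $b$-copy (and likewise for $\mathcal{Y}$), which is impossible when $a\neq b$; the covariance is then identically zero, not merely $o(\sigma(a)\sigma(b))$.

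The genuine gap is in Step~3. The method of moments requires the finiteness and convergence of $\mathrm{E}\bigl[\prod_r\{\tilde{\mathcal{U}}(a_r)/\sigma(a_r)\}^{m_r}\bigr]$ for \emph{all} $(m_1,\dots,m_m)$. But in the $2k$-th moment of $\tilde{\mathcal{U}}(a)$, a single $x$-row can appear in all $2k$ copies, producing a factor $\mathrm{E}\bigl[\prod_{s=1}^{2k}\mathcal{X}_{i,l_s}\bigr]$ that involves $(4k)$-th moments of $x_j$. Conditions~\ref{cond:twosamplecov1}(2) and~\ref{cond:twosamplecov2}(2) give only eighth moments, and Condition~\ref{cond:twosamplecov2}(3) specifies $\Pi^x_{j_1,\dots,j_t}$ only for $t\le 8$, so for $k\ge 3$ these moments need not even exist. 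The Wick-pairing heuristic therefore cannot be made rigorous under the stated hypotheses without an additional truncation argument that you do not supply.

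The paper instead uses exactly the alternative you dismiss: a martingale CLT with respect to the pooled filtration $\mathcal{F}_k=\sigma(R_1,\dots,R_k)$, where $R_i=\mathbf{x}_i$ for $i\le n_x$ and $R_{n_x+j}=\mathbf{y}_j$. This requires only $\mathrm{var}\bigl(\sum_k \pi_{n,k}^2\bigr)\to 0$ and $\sum_k \mathrm{E}(D_{n,k}^4)\to 0$ (Lemmas~\ref{lm:twosamvario1}--\ref{lm:twosamvario2}), which involve at most eighth-order moments of $x_j,y_j$---precisely what the conditions provide. The two-sample structure is handled by splitting the martingale increments into the ranges $1\le k\le n_x$ and $n_x<k\le n_x+n_y$ and computing $A_{n,k,a}$ separately on each; this is no harder than the one-sample case, so your claim that the moment method is ``cleaner'' here is not borne out. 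Swap your primary and alternative approaches in Step~3 and the plan is complete.
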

  
Theorem \ref{thm:twosamnull} provides the asymptotic independence and joint normality of the finite-order U-statistics, which are similar to Theorems \ref{thm:jointnormal}, \ref{thm:onesamplemean} and \ref{thm:twosamplemean}. 
To further study the power of these finite-order U-statistics, we next consider the alternative hypotheses where $\boldsymbol{\Sigma}_x\neq \boldsymbol{\Sigma}_y$.  Let $\mathbb{J}_0$ be the largest subset of $\{1,\ldots,p\}$ such that $\sigma_{x,j_1,j_2}=\sigma_{y,j_1,j_2}=\sigma_{j_1,j_2}$ for any $j_1,j_2\in \mathbb{J}_0$. We then obtain the following theorem under the regularity conditions given in Section \ref{sec:pfthm49alttwo} of \ref{suppA}. 
\begin{theorem} \label{thm:twosamaltclt}
Under Conditions \ref{cond:twosamaltdist} and \ref{cond:twosamaltmoment} in the \ref{suppA}, for finite integers $\{a_1,\ldots, a_m\}$, 
$
	[ \mathcal{U}(a_1)-\mathrm{E}\{\mathcal{U}(a_1)\}]/{\sigma(a_1)}, \ldots, \allowbreak [\mathcal{U}(a_m)-\mathrm{E}\{\mathcal{U}(a_m)\}]/{\sigma(a_m)}  ]^{\intercal} \xrightarrow{D} \mathcal{N}(0,I_m),  
$ where $$\sigma^2(a)=\mathrm{var}\{\mathcal{U}(a)\}\simeq  a! C_{\kappa,a}\sum_{ j_1,j_2,j_3,j_4\in \mathbb{J}_0} \sigma_{j_1,j_2}^a\sigma_{j_3,j_4}^a,$$ and $ C_{\kappa,a}=\{(\kappa_x-1)/n_x+(\kappa_y-1)/n_y\}^a+ 2(\kappa_x/n_x+\kappa_y/n_y)^a$ with $\kappa_x$ and $\kappa_y$ given in Condition  \ref{cond:twosamaltdist}.
\end{theorem}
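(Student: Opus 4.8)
The plan is to mirror the arguments used for Theorems~\ref{thm:twosamnull} and \ref{thm:cltalternative}: reduce the problem to the behaviour of the U-statistic on the ``agreeing'' block $\mathbb{J}_0$, treat the resulting object as a completely degenerate U-statistic of order $a$, and obtain its limit from a martingale central limit theorem. Write $\mathcal{U}(a)=\sum_{1\le j_1,j_2\le p}\mathcal{U}_{j_1,j_2}(a)$, where $\mathcal{U}_{j_1,j_2}(a)$ is the two-sample U-statistic with kernel $\prod_{t=1}^a K_{j_1,j_2}$, an unbiased estimator of $(\sigma_{x,j_1,j_2}-\sigma_{y,j_1,j_2})^a$. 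First I would split the index set into $\mathbb{J}_0\times\mathbb{J}_0$ and its complement. On $\mathbb{J}_0\times\mathbb{J}_0$ the target vanishes, so each $\mathcal{U}_{j_1,j_2}(a)$ is centered; the complementary pairs carry the signal, but under Conditions~\ref{cond:twosamaltdist}--\ref{cond:twosamaltmoment} (which bound $|\{1,\dots,p\}\setminus\mathbb{J}_0|$ and the signal strength) the variance of their centered part is of smaller order than $\sigma^2(a)$, so after normalization it is $o_P(1)$. This is the two-sample analogue of restricting to the ``noise'' part in Theorem~\ref{thm:cltalternative}.

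Next, on the $\mathbb{J}_0$ block I would carry out the Hoeffding/orthogonal decomposition of $\sum_{j_1,j_2\in\mathbb{J}_0}\mathcal{U}_{j_1,j_2}(a)$ jointly in the two samples. Because each factor $K_{j_1,j_2}$ has mean zero on $\mathbb{J}_0$, the conditional expectation of $\prod_{t}K_{j_1,j_2}$ given any set of observations that does not touch all $a$ factors vanishes; hence the statistic is degenerate up to order $a-1$, and the leading term is the order-$a$ canonical component, in which each of the $a$ factors is represented by a single centered rank-one statistic built from one $x$-observation or one $y$-observation. A variance computation, using the fourth-moment (elliptical-type) assumptions to evaluate $\mathrm{E}\{(x_{1,j_1}x_{1,j_2}-\sigma_{j_1,j_2})(x_{1,j_3}x_{1,j_4}-\sigma_{j_3,j_4})\}$ and its $y$-analogue and then expanding over the choice of an $x$-slot versus a $y$-slot in each of the $a$ factors, shows that all lower-order components contribute $o(\sigma^2(a))$ and that the leading component has variance $\simeq a!\,C_{\kappa,a}\sum_{j_1,\dots,j_4\in\mathbb{J}_0}\sigma_{j_1,j_2}^a\sigma_{j_3,j_4}^a$ with $C_{\kappa,a}=\{(\kappa_x-1)/n_x+(\kappa_y-1)/n_y\}^a+2(\kappa_x/n_x+\kappa_y/n_y)^a$, exactly the stated $\sigma^2(a)$. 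Asymptotic normality of the leading component then follows from a martingale CLT applied to the pooled, ordered observations: the martingale differences are themselves lower-order degenerate sums, and the conditional-variance and conditional-Lindeberg (fourth-moment) conditions are verified from the eighth-order moment bounds in the conditions together with the weak dependence across coordinates $j$, uniformly in $n,p$. For the joint statement I would use the Cram\'er--Wold device: the leading components of distinct orders $a_1,\dots,a_m$ lie in mutually orthogonal polynomial chaoses of the two sample vectors, so their asymptotic cross-covariances vanish and the limiting covariance is $I_m$.

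I would carry this out in the order (1) coordinate split and negligibility of the $\mathbb{J}_0^c$ part; (2) Hoeffding expansion on $\mathbb{J}_0$ and isolation of the order-$a$ leading component; (3) exact asymptotic variance via the fourth-moment conditions, matching $\sigma^2(a)$; (4) martingale CLT for the leading component with $o_P(1)$ control of the remainder; (5) Cram\'er--Wold/orthogonality for the identity-covariance joint limit. The main obstacle is steps~(2)--(4): the two-way dependence --- weak dependence across the $p$ coordinates combined with the order-$2a$ product structure within a sample, now doubled by having two independent samples --- makes the Hoeffding expansion carry a large number of cross-covariance terms, and one must show both that the order-$a$ component genuinely dominates all others uniformly in $n,p$ and that the induced martingale array satisfies the Lindeberg-type conditions without any constraint linking $n$ and $p$. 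This is precisely where the eighth-moment-type assumptions in Conditions~\ref{cond:twosamaltdist}--\ref{cond:twosamaltmoment} enter, just as the eighth-moment assumption enters the proof of Theorem~\ref{thm:jointnormal}.
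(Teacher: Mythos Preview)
Your proposal is correct and takes essentially the same approach as the paper. The only cosmetic difference is the order of the two reductions: the paper first peels off the algebraic remainder $\tilde{\mathcal{U}}^*(a)$ (the $b_1+b_2<a$ terms in the expansion~(4.4)) and then splits the leading term $\tilde{\mathcal{U}}(a)$ into $T_{D,a,1}+T_{D,a,2}$ according to whether $(j_1,j_2)\in\mathbb{J}_0\times\mathbb{J}_0$, whereas you split by $\mathbb{J}_0$ first and then invoke the Hoeffding decomposition on that block; both routes isolate the same completely degenerate object $\sum_{j_1,j_2\in\mathbb{J}_0}\prod_{t=1}^a(\mathcal{X}_{i_t,j_1,j_2}-\mathcal{Y}_{w_t,j_1,j_2})$, after which the variance identification, the martingale CLT on the pooled filtration $R_1,\dots,R_{n_x+n_y}$, and the Cram\'er--Wold orthogonality across orders proceed identically.
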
 
 
Given the asymptotic results under the alternatives,
we next analyze the power of the finite-order U-statistics. By Theorem  \ref{thm:twosamaltclt},  the asymptotic power of $\mathcal{U}(a)$ depends on $\mathrm{E}\{\mathcal{U}(a)\}/\sqrt{ \mathrm{var}\{\mathcal{U}(a)\} }$. Let $J_D=\{(j_1,j_2): \sigma_{x,j_1,j_2}\neq \sigma_{y,j_1,j_2}, 1\leq j_1,j_2\leq p \}$, then $\mathrm{E}\{\mathcal{U}(a)\}=\sum_{(j_1,j_2)\in J_D}(\sigma_{x,j_1,j_2}-\sigma_{y,j_1,j_2})^a$. Similarly to Section \ref{sec:powerana}, to study the relationship between the sparsity level of $\boldsymbol{\Sigma}_x-\boldsymbol{\Sigma}_y$ and the   power of U-statistics, we consider the case where the non-zero differences between  $\boldsymbol{\Sigma}_x$ and   $\boldsymbol{\Sigma}_y$ are the same. Specifically, let $\sigma_{x,j_1,j_2}-\sigma_{y,j_1,j_2}=\rho$ for $(j_1,j_2)\in J_D$, and then $\mathrm{E}\{\mathcal{U}(a)\}= |J_D|\rho^a$.  Following the analysis in Section \ref{sec:powerana}, we compare the $\rho$ values needed by different $\mathcal{U}(a)$'s to achieve $\mathrm{E}\{\mathcal{U}(a) \}/\sqrt{\mathrm{var}\{ \mathcal{U}(a) \}}\simeq M$ for a given constant $M$. In particular, for given integer $a$, suppose $\mathrm{E}\{\mathcal{U}(a) \}/\sqrt{\mathrm{var}\{ \mathcal{U}(a) \}}\simeq M$ is achieved when $\rho=\rho_a$.  
For any $a\neq b$, we compare  $\mathcal{U}(a)$ and $\mathcal{U}(b)$ following Criterion \ref{cricomp}.


We use the following example as an illustration, where $\boldsymbol{\Sigma}_x$ and $\boldsymbol{\Sigma}_y$  satisfy the conditions of Theorem  \ref{thm:twosamaltclt}. Specifically, we assume that $\boldsymbol{\Sigma}_x=(\sigma_{x,j_1,j_2})_{p\times p}$ has the diagonal elements $\sigma_{x,j,j}=\nu^2$; and the off-diagonal elements $\sigma_{x,j_1,j_2}=h_{|j_1-j_2|}\in(0,\nu^2)$ with $h_{|j_1-j_2|}=\Theta(\nu^2)$ when $|j_1-j_2|\leq s$, while $\sigma_{x,j_1,j_2}=0$ when $|j_1-j_2|> s$. This  covers the moving average covariance structure of order $s$, and $\boldsymbol{\Sigma}_x$ is a banded matrix with bandwidth $s$. In addition,
 we assume the bandwidth $s=o(p)$ and $p-|\mathbb{J}_0|=o(p)$. By the definition of $\mathbb{J}_0$, the assumption $p-|\mathbb{J}_0|=o(p)$ implies that a large square sub-matrix of $\boldsymbol{\Sigma}_x$ and $\boldsymbol{\Sigma}_y$ are the same. For simplicity, we let $n_x=n_y$ with $n=n_x+n_y$, and a similar analysis can be applied  when $n_x\neq n_y$. 
 By Theorem \ref{thm:twosamaltclt}, $\mathrm{var}\{\mathcal{U}(a)\}\simeq (n/2)^{-a}a! \{2\kappa_1^a+\kappa_2^a\}\{ p\nu^{2a} + 2\sum_{t=1}^s h_{t}^a (p-t)\}^2$, where $\kappa_1=\kappa_x+\kappa_y$ and $\kappa_2=\kappa_x+\kappa_y-2$.  Therefore we know for given finite integer $a$, $\mathrm{E}\{\mathcal{U}(a) \}/\sqrt{\mathrm{var}\{ \mathcal{U}(a) \}}\simeq M$ holds when $\rho=\rho_a$ defined as 
  \begin{align*}
\rho_a = & \frac{(a!)^{\frac{1}{2a}} \sqrt{\kappa_1} \nu  }{(n/2)^{1/2}}\Big(\frac{M{p}}{|J_D|}\Big)^{1/a} \Big\{2+ \Big(\frac{\kappa_2}{\kappa_1}\Big)^a\Big\}^{\frac{1}{2a}} \Big\{ 1+ { 2\sum_{t=1}^s \Big( \frac{h_t}{\nu^2}\Big)^a \Big(1-\frac{t}{p}\Big) } \Big\}^{\frac{1}{a}}. 
\end{align*}
We next compare the $\rho_a$'s and obtain the following proposition. 

\begin{proposition}\label{prop:ordercomptwosam}
There exists $\mathbb{D}_0$ that only depends on the given $\kappa_x,\kappa_y,\nu^2,s$, and $h_t, t=1,\ldots, s$, and satisfies $\mathbb{D}_0=\Theta(1/s^2)$ such that 
\begin{enumerate}
	\item[(i)] When $|J_D|\geq Mp/\sqrt{\mathbb{D}_0}$,  the minimum of $\rho_a$ is achieved at $a_0=1$.
\item[(ii)] When $|J_D|< Mp/\sqrt{\mathbb{D}_0}$, the minimum of $\rho_a$ is achieved at some $a_0$, which increases as $Mp/|J_D|$ increases.
\end{enumerate} 
\end{proposition}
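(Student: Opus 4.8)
The plan is to view $\log\rho_a$ as a sequence indexed by the positive integer $a$ and to locate its minimizer by a discrete log-convexity argument, in the spirit of the proof of Proposition~\ref{prop:ordercompare}. Write $D:=Mp/|J_D|$, $Q(a):=2+(\kappa_2/\kappa_1)^a$ and $R(a):=1+2\sum_{t=1}^s (h_t/\nu^2)^a(1-t/p)$, so that the displayed expression for $\rho_a$ reads $\log\rho_a=c+\psi(a)$ with $\psi(a)=\tfrac{1}{2a}\{\log a!+2\log D+\log Q(a)+2\log R(a)\}$ and $c$ independent of $a$. Multiplying $\psi(a)-\psi(a+1)$ by $2a(a+1)>0$ and simplifying the factorial terms gives
\[\psi(a)\le\psi(a+1)\ \Longleftrightarrow\ 2\log D\le L(a),\qquad L(a):=A(a)+B_Q(a)+2B_R(a),\]
where $A(a):=a\log(a+1)-\log a!$, and for a positive sequence $f$ we set $B_f(a):=a\log f(a+1)-(a+1)\log f(a)$.

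The crux is that $L$ is nondecreasing in $a$. First, $A(a+1)-A(a)=(a+1)\log\frac{a+2}{a+1}>0$. Next, for any positive $f$ one computes $B_f(a+1)-B_f(a)=(a+1)\log\{f(a)f(a+2)/f(a+1)^2\}$, which is $\ge 0$ exactly when $f$ is log-convex; and both $Q$ and $R$ are log-convex, being sums of a constant sequence (namely $2$, resp.\ $1$) and log-affine sequences $a\mapsto(\kappa_2/\kappa_1)^a$, resp.\ $a\mapsto 2(1-t/p)(h_t/\nu^2)^a$ --- here $\kappa_2/\kappa_1\in[0,1)$ under the stated moment conditions, and a sum of log-convex sequences is log-convex by the Cauchy--Schwarz inequality applied to $(\sqrt{f_i(a)})_i$ and $(\sqrt{f_i(a+2)})_i$. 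Hence $L$ is a sum of nondecreasing sequences, so nondecreasing; moreover $L(a)\to\infty$, since $A(a)\sim a$ while $B_Q(a)$ and $B_R(a)$ stay bounded (as $Q(a)\to2$, $R(a)\to1$).

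Because $L$ is nondecreasing with $L(a)\to\infty$, the set $\{a\ge1: 2\log D\le L(a)\}$ equals $\{a\ge a_0\}$ for a finite $a_0$; hence $\psi$ is strictly decreasing on $\{1,\dots,a_0\}$ and nondecreasing afterwards, so $\rho_a$ is minimized precisely at $a_0=\min\{a\ge1: L(a)\ge 2\log D\}$. For part (i), $a_0=1$ iff $2\log D\le L(1)$, i.e.\ $D^2\le e^{L(1)}=2Q(2)R(2)^2/\{Q(1)^2R(1)^4\}$; taking $\mathbb{D}_0$ to be this quantity (with $1-t/p$ replaced by its limit $1$, legitimate since $s=o(p)$) shows $a_0=1$ whenever $Mp/|J_D|\le\sqrt{\mathbb{D}_0}$, and since $Q(1),Q(2)=\Theta(1)$ while $R(1),R(2)=\Theta(s)$ --- each of the $s$ bands contributing a $\Theta(1)$ summand with $1-t/p=1-o(1)$ --- we obtain $\mathbb{D}_0=\Theta(1/s^2)$, depending only on $\kappa_x,\kappa_y,\nu^2,s,h_1,\dots,h_s$. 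For part (ii), if $Mp/|J_D|>\sqrt{\mathbb{D}_0}$ then $a_0\ge2$, and since $L$ is nondecreasing the quantity $a_0=\min\{a:L(a)\ge 2\log(Mp/|J_D|)\}$ is nondecreasing in $Mp/|J_D|$.

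The main obstacle is proving the monotonicity of $L$, for which the decisive and not entirely obvious ingredient is the log-convexity in $a$ of the auxiliary sequences $Q$ and $R$; a secondary point is handling the mild $p$-dependence of $R$ entering $\mathbb{D}_0$, which we dispatch by passing to the $p\to\infty$ limit (equivalently, stating the threshold up to a $1+o(1)$ factor).
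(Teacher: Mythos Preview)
Your proof is correct and follows essentially the same route as the paper's: both reduce $\rho_{a+1}\ge\rho_a$ to the inequality $D^2\le\mathbb D(a)$ (your $e^{L(a)}$ is exactly the paper's $\mathbb D(a)=\mathbb D_1(a)\mathbb D_2(a)\mathbb D_3(a)$), and both establish monotonicity of $\mathbb D(a)$ via the same three ingredients---$(1+1/(a+1))^{a+1}>1$ for the factorial part, and AM--GM/Cauchy--Schwarz for the $Q$ and $R$ parts, which you package as log-convexity. The only cosmetic difference is that you work additively with $L(a)=\log\mathbb D(a)$ and phrase the key inequalities $Q(a)Q(a+2)\ge Q(a+1)^2$, $R(a)R(a+2)\ge R(a+1)^2$ as log-convexity of sums of geometric sequences, while the paper verifies the same inequalities by direct computation; one minor caution is that since $\mathbb D_0$ is taken as a strict lower bound for $\mathbb D(1)$ (via the constant $C$ and $1-t/p\to1$), part~(ii) does not guarantee $a_0\ge2$, only the stated monotonicity in $Mp/|J_D|$.
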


Proposition \ref{prop:ordercomptwosam} is similar to Propositions \ref{prop:ordercompare} and \ref{cor:deltaminimval}.
Following the analysis in Section \ref{sec:powerana}, Proposition \ref{prop:ordercomptwosam}  shows that 
when  the difference $\boldsymbol{\Sigma}_x-\boldsymbol{\Sigma}_y$ is ``very" dense with  $|J_D|\geq Mp/\sqrt{\mathbb{D}_0}$, $\mathcal{U}(1)$ is the most powerful U-statistic; when $\boldsymbol{\Sigma}_x-\boldsymbol{\Sigma}_y$ becomes sparser as $Mp/|J_D|$ decreases, a higher order U-statistic is more powerful;  when the  $\boldsymbol{\Sigma}_x-\boldsymbol{\Sigma}_y$ is ``moderately" dense or sparse, a  U-statistic of finite order $a_0>1$ would be the most powerful one.

The power analysis above shows that the power of the U-statistics varies when the alternative changes. To maintain high power across different alternatives, we can develop an adaptive testing procedure similar to that in Section \ref{sec:computtest}. 
Given the asymptotic independence in Theorem \ref{thm:twosamnull}, an adaptive testing procedure using the constructed $\mathcal{U}(a)$'s is valid with the type \RNum{1} error asymptotically controlled. Also, the adaptive test achieves high power  by combining the U-statistics as discussed in Section \ref{sec:computtest}.   



 We provide simulation studies on two-sample covariance testing in \ref{suppA} Section \ref{sec:simulothertesting}. By the simulations, we first find that the type \RNum{1} errors of the U statistics and the adaptive test are well controlled under $H_0$. This verifies the theoretical results in Theorem \ref{thm:twosamaltclt}. 
 Second, similarly to the one-sample covariance testing, we find that generally when the difference $\boldsymbol{\Sigma}_x-\boldsymbol{\Sigma}_y$ is sparser, a U-statistic of higher order is more powerful, and vice versa. Moreover, under moderately sparse/dense alternatives, $\mathcal{U}(a_0)$ with $a_0>1$ could achieve the highest power. The results are consistent with Proposition \ref{prop:ordercomptwosam}.  Third, we compare the proposed adaptive test with existing methods in literature including  \cite{schott2007test,srivastava2010testing,lijun2012,cai2013two}, and find that the proposed adaptive testing procedure  maintains high power across various alternatives.       
  
\begin{remark}
Similarly to Section \ref{sec:mainexamsec}, we can let $\mathcal{U}(\infty)$ be the maximum-type test statistic in \cite{cai2013two}, and expect that the result similar to Theorem \ref{thm:asymindpt} holds under certain regularity conditions. However, as the dependence structure of two-sample covariance matrices is more complicated than the one-sample case, it is more challenging to establish the asymptotic joint distribution of $\mathcal{U}(\infty)$ and  finite-order U-statistics. We leave this interesting problem for future study, while find in simulations that the performance of $\mathcal{U}(\infty)$ is similar to  high-order U-statistics  $\mathcal{U}(a)$'s. 
\end{remark}


\subsection{Generalized Linear Model}\label{sec:newglm}
In this section, we consider the Example \ref{eg:3} of generalized linear models (on Page \pageref{eg:3}) to show that the proposed framework can be extended to other testing problems.  
Similarly to the results in Section \ref{maintest}, we show that the constructed U-statistics are asymptotically  independent and normally distributed, and also establish the power analysis results of the U-statistics. We  provide the details in Section \ref{sec:extglm} of  \ref{suppA}. Recently, \citet{wu2019adaptive}   also discussed the adaptive testing of generalized linear model. But similarly to \cite{xu2016adaptive}, \cite{wu2019adaptive}  is under the framework of a family of von Mises V-statistics, and thus is different from the current paper as discussed in Remark \ref{rm:comparewithxu}. Moreover, the current work provides the theoretical power analysis while \cite{wu2019adaptive} did not.

\section{Discussion} \label{sec:discuss}


This paper introduces a general  U-statistics framework for applications to high-dimensional  adaptive testing. Particularly, we focus on the examples including testing of means, covariances and regression coefficients in generalized linear models. Under the null hypothesis, we prove that the U-statistics of finite orders have asymptotic  joint normality,   and establish the asymptotic mutual independence among the finite-order U-statistics and $\mathcal{U}(\infty)$. Moreover, under alternative hypotheses, we analyze the power of different U-statistics and demonstrate   how  the most powerful U-statistic changes with  the sparsity level of the  alternative parameters. Based on the theoretical results, we propose an adaptive testing procedure, which is powerful against different alternatives. The superior performance of this adaptive testing is confirmed  in the simulations and real data analysis.

There are several possible extensions of the U-statistics framework in this paper.  First, by our current proof, the convergence rate in Theorem \ref{thm:asymindpt} is bounded by $O(\log^{-1/2} p)$, which is an upper bound and not sharp. From our extensive simulations, we find that the type \RNum{1} error rate  of the adaptive testing is well-controlled with a relatively small $p$, e.g., $p=50.$ We might obtain a shaper bound  of the convergence rate, but more refined concentration property of the high-dimensional and high-order U-statistics is needed.
Second, the proposed framework requires that the elements in the parameter set $\mathcal{E}$ have unbiased estimates. When we can not obtain unbiased estimates easily, e.g., for the precision matrix, the proposed construction may not follow directly.  Nevertheless   we may use ``nearly" unbiased estimators to construct   ``U-statistics"   for hypothesis testing, such as the ``nearly" unbiased estimator of the precision matrix proposed in \cite{xia2015testing}; the main challenge is then to control the accumulative bias over the parameters under high-dimensions. 
Third, this paper discusses the examples where the elements in $\mathcal{E}$ are comparable. When the parameters in $\mathcal{E}$ are not comparable,  such as $\mathcal{E}$   containing both means and covariances parameters,  the construction of U-statistics still follows but the theoretical derivation may require a careful case-by-case examination. Fourth, the   construction of the U-statistics treats the parameters in $\mathcal{E}$ with equal weight. More generally, we could assign different weights to different parameter estimators. For instance, standardizing the data is one example of assigning different weights. 
As inappropriate weight assignments could lead to power loss, when the truth is unknown, how to effectively assign weights   to maximize the test power is an interesting research question. We shall discuss these  extensions in the future as a significant amount of additional work is still needed.

 In addition to the examples  in this paper, 
 the proposed U-statistics framework can be applied to other high-dimensional hypothesis testing problems. 
 For example, 
  it can  be applied to testing the block-diagonality of a covariance matrix, whose theoretical analysis would be similar to the considered one sample and two sample covariance testing problems. 
  It can also be used to test  high-dimensional regression coefficients in   complex regression models other than the generalized linear models, following a similar construction based on the score functions. A key step is then to characterize the impact of nuisance parameters that are estimated under the null hypothesis, and challenges arise especially   when the  nuisance parameters are  high-dimensional.  Such interesting extensions will be further explored in our follow-up studies.

  \section*{Acknowledgements}
  The authors thank  Co-Editors Prof. Edward I. George and  Prof. Richard J. Samworth, an Associate Editor, and three anonymous referees for their constructive comments.  
The authors also thank  Prof. Ping-Shou Zhong for sharing the code of the paper \cite{chen2011} and  Prof. Xuming He and Prof. Peter Song for helpful discussions.  

 \begin{supplement}[id=suppA]
 \sname{Supplementary Material}
 \slink[doi]{XXX}
 \sdatatype{Supplementary.pdf}
\stitle{}
 \sdescription{This supplementary material contains the technical proofs of the main paper and additional simulations.}
 \end{supplement}

\appendix
\bibliographystyle{chicago}
\bibliography{Citation.bib}

\newpage
\quad
\smallskip

\begin{center}
\large{\textbf{SUPPLEMENT TO ''ASYMPTOTICALLY INDEPENDENT
U-STATISTICS IN HIGH DIMENSIONAL ADAPTIVE
TESTING''}}	
\end{center}

\normalsize
\medskip

We give proofs of the main results and additional simulations in this supplementary material. 
For simplicity, we use $C$ to  represent some generic positive constant,  which does not change with $(n,p)$   and  may represent different values from place to place. 

\appendix

\section{Proofs and Supplementary Results} \label{sec:a}

\subsection{Proof of Proposition \ref{prop:locinvariance}} \label{proof:prop21}

To prove $\mathcal{U}(a)$ in  \eqref{eq:originaluinvariance} is location invariant, we examine the equivalent form,
\begin{align*}
\mathcal{U}(a)=(P^n_{2a})^{-1} \sum_{1\leq j_1 \neq j_2 \leq p} \sum_{1\leq i_1 \neq \ldots \neq i_{2a} \leq n} \prod_{k=1}^a (x_{i_{2k-1},j_1}x_{i_{2k-1},j_2}-x_{i_{2k-1},j_1}x_{i_{2k},j_2}).
\end{align*} We consider $\boldsymbol{\Delta}=(\Delta_1,\ldots,\Delta_p)^{\intercal}\in \mathbb{R}^p$, and examine $a=1$ first. For each $(j_1,j_2)$, since
\begin{eqnarray*}
	&&(x_{i_1,j_1}+\Delta_{j_1})(x_{i_1,j_2}+\Delta_{j_2})-(x_{i_1,j_1}+\Delta_{j_1})(x_{i_2,j_2}+\Delta_{j_2}) \notag \\
	&=&(x_{i_1,j_1}x_{i_1,j_2}-x_{i_1,j_1}x_{i_2,j_2})+\Delta_{j_1}(x_{i_1,j_2}-x_{i_2,j_2}),  
\end{eqnarray*} then it follows that
\begin{eqnarray*}
&& \sum_{1\leq i_1 \neq i_2 \leq n} [(x_{i_1,j_1}+\Delta_{j_1})(x_{i_1,j_2}+\Delta_{j_2})-(x_{i_1,j_1}+\Delta_{j_1})(x_{i_2,j_2}+\Delta_{j_2})] \notag \\
&& \quad \quad - \sum_{1\leq i_1 \neq i_2 \leq n} (x_{i_1,j_1}x_{i_1,j_2}-x_{i_1,j_1}x_{i_2,j_2})  \notag \\
	&= & \sum_{1\leq i_1 \neq i_2 \leq n} \Delta_{j_1}(x_{i_1,j_2}-x_{i_2,j_2}) + \sum_{i=1}^n \Delta_{j_1}(x_{i,j_2}-x_{i,j_2})  \notag \\
	&= & ~ \Delta_{j_1}  \sum_{i_1=1}^n \sum_{i_2=1}^n (x_{i_1,j_2}-  x_{i_2,j_2})  \notag \\
	&=&~0.
\end{eqnarray*} That is,   $\mathcal{U}(1)$ is location invariant. For $a=2$, given $(j_1,j_2)$, following a similar analysis to $\mathcal{U}(1)$,  we have
\begin{align}\label{eq:u2invariantproof1}
 & \sum_{1\leq i_1 \neq \ldots \neq i_{4} \leq n} \Big \{ [(x_{i_1,j_1}+\Delta_{j_1}) (x_{i_1,j_2}+\Delta_{j_2})-(x_{i_1,j_1}+\Delta_{j_1})    (x_{i_2,j_2}+\Delta_{j_2})]\\
	& \quad \quad \times   [(x_{i_3,j_1}+\Delta_{j_1})(x_{i_3,j_2}+\Delta_{j_2})-(x_{i_3,j_1}+\Delta_{j_1})(x_{i_4,j_2}+\Delta_{j_2})]\Big\}  \notag \\
	&   - \sum_{1\leq i_1 \neq \ldots \neq i_{4} \leq n}\Big\{  (x_{i_1,j_1}x_{i_1,j_2}-x_{i_1,j_1}x_{i_2,j_2})\notag \\
	& \quad  \quad\times  [(x_{i_3,j_1}+\Delta_{j_1})(x_{i_3,j_2}+\Delta_{j_2}) -(x_{i_3,j_1}+\Delta_{j_1})(x_{i_4,j_2}+\Delta_{j_2})] \Big\} \notag \\
	=&~ 0.  \notag  
\end{align}  
Similarly, we also have 
\begin{align}\label{eq:u2invariantproof2}
 & \sum_{1\leq i_1 \neq \ldots \neq i_{4} \leq n} \Big\{  (x_{i_1,j_1}x_{i_1,j_2}-x_{i_1,j_1}x_{i_2,j_2}) \\
	& \quad  \quad \times [(x_{i_3,j_1}+\Delta_{j_1}) (x_{i_3,j_2}+\Delta_{j_2})  -(x_{i_3,j_1}+\Delta_{j_1})  (x_{i_4,j_2}+\Delta_{j_2})] \Big\}  \notag \\
	& -\sum_{1\leq i_1 \neq \ldots \neq i_{4} \leq n} [(x_{i_1,j_1}x_{i_1,j_2}-x_{i_1,j_1}x_{i_2,j_2})  (x_{i_3,j_1}x_{i_3,j_2}-x_{i_3,j_1}x_{i_4,j_2}) ] \notag \\
	= &~ 0.  \notag
\end{align} Combining  \eqref{eq:u2invariantproof1} and \eqref{eq:u2invariantproof2}, we know $\mathcal{U}(2)$ is location invariant. Following the  argument above similarly, by  induction, we obtain that  $\mathcal{U}(a)$ is location invariant for a general integer $a \geq 3$.

\subsection{Proof of Theorem \ref{thm:jointnormal}} \label{sec:detailofjointnormal}

For  the covariance testing example in Section \ref{sec:mainexamsec}, $\mathcal{U}(a)$ is location invariant by  Proposition \ref{prop:locinvariance},  and $\mathcal{U}(\infty)$ is also  location invariant straightforwardly by its expression in \eqref{eq:inftyteststat}. Then we  assume without loss of generality that $\mathrm{E}({\mathbf x})={\mathbf 0}$  in this section. To prove Theorem \ref{thm:jointnormal}, we first derive the variances and  the covariances of the U-statistics, and then prove the asymptotic joint normality of the U-statistics. 



In particular, the next Lemma \ref{lm:varianceorder} derives the asymptotic form of variance $\sigma^2(a)$ in  \eqref{eq:varianceformmodif1}.


\begin{lemma} \label{lm:varianceorder}
Under the conditions of Theorem \ref{thm:jointnormal}, 
	  for any finite integer $a$,  following the notation in \eqref{eq:highmomentdef},
\begin{eqnarray*}
	\sigma^2(a) = \frac{a!}{P^n_a}  \sum_{ \substack{1\leq j_1\neq j_2 \leq p;\\ 1\leq j_3 \neq j_4 \leq p}} (\Pi_{j_1,j_2,j_3,j_4})^a \{1+o(1)\},   \notag 
\end{eqnarray*} which is of order $\Theta(p^2n^{-a})$. In addition,  for  $\tilde{\mathcal{U}}(a)$ defined in \eqref{eq:originleadingterm} and  $\tilde{\mathcal{U}}^*(a):=\mathcal{U}(a)-\tilde{\mathcal{U}}(a)$, we have    $\mathrm{var}\{ \mathcal{U} (a)\} = \mathrm{var}\{\tilde{\mathcal{U} }(a)\}\{1+o(1)\}$, $\mathrm{var}\{\tilde{\mathcal{U}}^* (a)\} = o(1)\times \mathrm{var}\{\tilde{\mathcal{U} }(a)\}$, and $\tilde{\mathcal{U}}^*(a)/\sigma(a) \xrightarrow{P} 0$. 
\end{lemma}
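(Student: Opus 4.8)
The plan is to exploit the complete degeneracy of the leading term $\tilde{\mathcal{U}}(a)$ under $H_0$ and then to show the remainder $\tilde{\mathcal{U}}^*(a)$ is negligible in $L^2$. Throughout one uses the standing reduction $\mathrm{E}(\mathbf{x})=\mathbf{0}$ (justified by Proposition \ref{prop:locinvariance}), so that $\sigma_{j_1,j_2}=\Pi_{j_1,j_2}=0$ for all $j_1\neq j_2$ under $H_0$.

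First I would compute $\mathrm{var}\{\tilde{\mathcal{U}}(a)\}$ exactly. Write $\tilde{\mathcal{U}}(a)=\sum_{j_1\neq j_2}U_n^{(j_1,j_2)}$ with $U_n^{(j_1,j_2)}=\binom{n}{a}^{-1}\sum_{|S|=a}\prod_{i\in S}x_{i,j_1}x_{i,j_2}$. Since the atom $x_{j_1}x_{j_2}$ has mean $\sigma_{j_1,j_2}=0$, the symmetric kernel of $U_n^{(j_1,j_2)}$ is completely degenerate of order $a$; conditioning on all but one coordinate of a non-matching index set then kills the covariance of any two such U-statistics, leaving $\mathrm{cov}(U_n^{(j_1,j_2)},U_n^{(j_3,j_4)})=\binom{n}{a}^{-1}(\Pi_{j_1,j_2,j_3,j_4})^a=\tfrac{a!}{P^n_a}(\Pi_{j_1,j_2,j_3,j_4})^a$. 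Summing over the two index pairs gives $\mathrm{var}\{\tilde{\mathcal{U}}(a)\}=\tfrac{a!}{P^n_a}\sum_{j_1\neq j_2;\,j_3\neq j_4}(\Pi_{j_1,j_2,j_3,j_4})^a$ exactly, so the ``$\{1+o(1)\}$'' in \eqref{eq:varianceformmodif1} will come only from replacing $\mathrm{var}\{\mathcal{U}(a)\}$ by $\mathrm{var}\{\tilde{\mathcal{U}}(a)\}$. To obtain the order $\Theta(p^2)$ of this sum, the diagonal pairs $(j_1,j_2)=(j_3,j_4)$ alone give $\sum_{j_1\neq j_2}(\mathrm{E}\,x_{j_1}^2x_{j_2}^2)^a\geq c\,p(p-1)$ by the lower moment bound of Condition \ref{cond:finitemomt} and the near-independence of distant coordinates, which is the lower bound; for the upper bound the eighth-moment bound of Condition \ref{cond:finitemomt} makes each $|\Pi_{j_1,j_2,j_3,j_4}|^a$ bounded, while a Davydov-type covariance inequality under Condition \ref{cond:alphamixing}, or directly the prescribed fourth-moment pairing structure under Condition \ref{cond:highordmominter} (together with $\sigma_{j_1,j_2}=0$ off-diagonal), forces $\Pi_{j_1,j_2,j_3,j_4}$ to be negligible unless the four indices cluster into two nearby pairs, so only $O(p^2)$ tuples contribute.

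Next I would control the remainder. By \eqref{eq:originaluinvariance}, $\tilde{\mathcal{U}}^*(a)=\sum_{c=1}^a(-1)^c\binom{a}{c}W_c$, where $W_c$ is, up to its pre-factor, a U-statistic of order $a+c$ whose kernel is a product of the mean-zero atoms $x_{j_1}x_{j_2}$, $x_{j_1}$, $x_{j_2}$, hence completely degenerate of order $a+c$. Orthogonality of Hoeffding components of distinct degeneracy orders then gives $\mathrm{cov}(\tilde{\mathcal{U}}(a),W_c)=0$ and $\mathrm{cov}(W_c,W_{c'})=0$ for $c\neq c'$, so $\mathrm{var}\{\mathcal{U}(a)\}=\mathrm{var}\{\tilde{\mathcal{U}}(a)\}+\sum_{c\geq 1}\binom{a}{c}^2\mathrm{var}\{W_c\}$; repeating the covariance computation of the previous step for each $W_c$ (the matching-coordinate factors are still products of at most fourth moments, and the weak-dependence arguments again cap the number of contributing index tuples at $O(p^2)$) yields $\mathrm{var}\{W_c\}=O(p^2n^{-(a+c)})$, hence $\mathrm{var}\{\tilde{\mathcal{U}}^*(a)\}=O(p^2n^{-(a+1)})=o(\mathrm{var}\{\tilde{\mathcal{U}}(a)\})$. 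This establishes $\mathrm{var}\{\mathcal{U}(a)\}=\mathrm{var}\{\tilde{\mathcal{U}}(a)\}\{1+o(1)\}=\Theta(p^2n^{-a})$ and the stated form of $\sigma^2(a)$; and since $\mathrm{E}\{\tilde{\mathcal{U}}^*(a)\}=0$ under $H_0$ (each $W_c$, $c\geq 1$, has mean $\sum_{j_1\neq j_2}\varphi_{j_1,j_2}^{a-c}\mu_{j_1}^c\mu_{j_2}^c=0$ when $\boldsymbol{\mu}=\mathbf{0}$, cf.\ Remark \ref{rm:anotherpesusstat}), Chebyshev's inequality gives $\tilde{\mathcal{U}}^*(a)/\sigma(a)\xrightarrow{P}0$.

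The main obstacle is the combinatorial/weak-dependence bookkeeping behind the claim that only $O(p^2)$ index tuples contribute to each variance or covariance sum: under Condition \ref{cond:alphamixing} this needs a quantitative covariance inequality for mixing sequences together with an enumeration of the ways the atoms of $W_c^{(j_1,j_2)}$ and $W_c^{(j_3,j_4)}$ can pair so as to produce a non-negligible product moment, and under Condition \ref{cond:highordmominter} one must still track all the overlaps of the symmetrized kernels. The remaining ingredients — the degeneracy-based covariance identities, orthogonality across Hoeffding orders, and the final Chebyshev bound — are routine.
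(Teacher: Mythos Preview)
Your proposal is correct and takes essentially the same approach as the paper: both use that every atom $x_{j_1}x_{j_2}$, $x_{j_1}$, $x_{j_2}$ has mean zero under $H_0$ to force $\{\mathbf{i}\}=\{\tilde{\mathbf{i}}\}$ (hence $c_1=c_2$), which yields the exact formula for $\mathrm{var}\{\tilde{\mathcal{U}}(a)\}$ and the orthogonal splitting $\mathrm{var}\{\mathcal{U}(a)\}=\mathrm{var}\{\tilde{\mathcal{U}}(a)\}+\sum_{c\geq 1}\binom{a}{c}^{2}\mathrm{var}\{W_c\}$, and then bound each $\mathrm{var}\{W_c\}$ by $O(p^{2}n^{-(a+c)})$ through the weak-dependence case analysis you correctly flag as the main obstacle. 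Your Hoeffding-degeneracy phrasing is a clean repackaging of what the paper does by direct index matching (its Cases~1--4 under Condition~\ref{cond:alphamixing} and the analogous simplification under Condition~\ref{cond:highordmominter}), but the underlying computations are identical.
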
 
\begin{proof}
\textit{See Section  \ref{sec:proofvarianceorder} on Page \pageref{sec:proofvarianceorder}.}
\end{proof}


\noindent Moreover, the following Lemma \ref{lm:covariancezro} shows that the covariances between different $\mathcal{U}(a)$'s asymptotically converge to 0.
\begin{lemma} \label{lm:covariancezro} 
Under the  conditions  of Theorem \ref{thm:jointnormal},  for finite integers $a\neq b$,   $\mathrm{cov}\{\mathcal{U}(a) /\sigma(a),  \mathcal{U}(b)/ \sigma(b) \} \to 0$, as $n,p\rightarrow \infty$. 
\end{lemma}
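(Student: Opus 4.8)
\textbf{Proof plan for Lemma \ref{lm:covariancezro}.}
The plan is to reduce each $\mathcal{U}(a)$ to its leading term $\tilde{\mathcal{U}}(a)$ of \eqref{eq:originleadingterm}, show that the leading terms have \emph{exactly} zero covariance under $H_0$, and absorb the discarded cross terms using Cauchy--Schwarz together with the variance bounds of Lemma \ref{lm:varianceorder}. Throughout I work under $H_0$ and, by Proposition \ref{prop:locinvariance}, assume without loss of generality that $\mathrm{E}(\mathbf{x})=\mathbf{0}$, so that $\mathrm{E}(x_{j_1}x_{j_2})=\sigma_{j_1,j_2}$, which vanishes for $j_1\neq j_2$. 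First I would write $\mathcal{U}(a)=\tilde{\mathcal{U}}(a)+\tilde{\mathcal{U}}^*(a)$ as in Lemma \ref{lm:varianceorder} and expand bilinearly,
\[
\mathrm{cov}\{\mathcal{U}(a),\mathcal{U}(b)\}=\mathrm{cov}\{\tilde{\mathcal{U}}(a),\tilde{\mathcal{U}}(b)\}+\mathrm{cov}\{\tilde{\mathcal{U}}(a),\tilde{\mathcal{U}}^*(b)\}+\mathrm{cov}\{\tilde{\mathcal{U}}^*(a),\tilde{\mathcal{U}}(b)\}+\mathrm{cov}\{\tilde{\mathcal{U}}^*(a),\tilde{\mathcal{U}}^*(b)\}.
\]
Since Lemma \ref{lm:varianceorder} gives $\mathrm{var}\{\tilde{\mathcal{U}}(a)\}\simeq\sigma^2(a)$ and $\mathrm{var}\{\tilde{\mathcal{U}}^*(a)\}=o(\sigma^2(a))$, Cauchy--Schwarz shows each of the last three covariances is $o(\sigma(a)\sigma(b))$. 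Hence, after dividing by $\sigma(a)\sigma(b)$, it suffices to treat the leading--leading term.

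Next I would compute $\mathrm{cov}\{\tilde{\mathcal{U}}(a),\tilde{\mathcal{U}}(b)\}$ directly. Because $\mathrm{E}[\tilde{\mathcal{U}}(a)]=\sum_{1\le j_1\neq j_2\le p}\sigma_{j_1,j_2}^a=0$ under $H_0$, this covariance equals $\mathrm{E}[\tilde{\mathcal{U}}(a)\tilde{\mathcal{U}}(b)]$. Writing $s_{i,(j_1,j_2)}=x_{i,j_1}x_{i,j_2}$ and expanding,
\[
\mathrm{E}[\tilde{\mathcal{U}}(a)\tilde{\mathcal{U}}(b)]=\frac{1}{P^n_a P^n_b}\sum_{\substack{1\le j_1\neq j_2\le p\\ 1\le j_3\neq j_4\le p}}\ \sum_{\substack{1\le i_1\neq\cdots\neq i_a\le n\\ 1\le i_1'\neq\cdots\neq i_b'\le n}}\mathrm{E}\Big[\prod_{k=1}^a s_{i_k,(j_1,j_2)}\prod_{k'=1}^b s_{i_{k'}',(j_3,j_4)}\Big].
\]
Under $H_0$ with $\boldsymbol{\mu}=\mathbf{0}$ every single factor $s_{i,(j_1,j_2)}$, $j_1\neq j_2$, has mean zero, and the $\mathbf{x}_i$'s are independent; therefore a summand vanishes unless every sample index appearing in it occurs at least twice. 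The indices within $\{i_1,\dots,i_a\}$ are distinct, as are those within $\{i_1',\dots,i_b'\}$, so a shared index occurs exactly twice and an unshared one exactly once. Requiring every index to occur at least twice forces $\{i_1,\dots,i_a\}=\{i_1',\dots,i_b'\}$ as sets, hence $a=b$. Since $a\neq b$, no nonzero summand exists and $\mathrm{cov}\{\tilde{\mathcal{U}}(a),\tilde{\mathcal{U}}(b)\}=0$ identically.

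Combining the two steps, $\mathrm{cov}\{\mathcal{U}(a),\mathcal{U}(b)\}=o(\sigma(a)\sigma(b))$, which gives the claim. The hard part is not in this lemma but in Lemma \ref{lm:varianceorder}, i.e.\ establishing that $\tilde{\mathcal{U}}(a)$ truly dominates and that $\tilde{\mathcal{U}}^*(a)/\sigma(a)\xrightarrow{P}0$; once that is in hand, the remaining work here is a short exact combinatorial identity, and the only point requiring care is the index-matching argument, which must simultaneously exploit the $H_0$/mean-zero structure of the kernels and the within-copy distinctness of the sample indices.
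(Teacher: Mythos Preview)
Your proposal is correct and essentially identical to the paper's proof: both reduce to the leading terms $\tilde{\mathcal{U}}(a),\tilde{\mathcal{U}}(b)$ via Lemma \ref{lm:varianceorder} and Cauchy--Schwarz, then show $\mathrm{E}[\tilde{\mathcal{U}}(a)\tilde{\mathcal{U}}(b)]=0$ exactly by the same index-matching argument (under $H_0$ each kernel factor has mean zero, so a nonzero summand forces $\{i_1,\dots,i_a\}=\{i_1',\dots,i_b'\}$, contradicting $a\neq b$). Your identification that the real work lies in Lemma \ref{lm:varianceorder} is also precisely how the paper organizes things.
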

\begin{proof}
\textit{See Section  \ref{sec:proofcovariancezro} on Page \pageref{sec:proofcovariancezro}.}
\end{proof}

Lemmas \ref{lm:varianceorder} and  \ref{lm:covariancezro} together establish that the  covariance matrix of $[ {\mathcal{U}(a_1)}/{\sigma(a_1)}, \ldots, {\mathcal{U}(a_m)}/{\sigma(a_m)}  ]^{\intercal}$ converges to  $I_m$ asymptotically. To finish the proof of Theorem \ref{thm:jointnormal}, it remains to show that the joint limiting distribution of the U-statistics is normal.  
 
For finite integers $a_1,\ldots, a_m$, to obtain the joint asymptotic normality  of $[\mathcal{U}(a_1)/\sigma(a_1),\allowbreak \ldots, \mathcal{U}(a_m)/\sigma(a_m)]^{\intercal}$, by the Cram\'{e}r-Wold theorem, 
it is equivalent to prove that any fixed linear combination of $[\mathcal{U}(a_1)/\sigma(a_1),\allowbreak \ldots, \mathcal{U}(a_m)/\sigma(a_m)]^{\intercal}$ converges to normal. 
Recall that Lemma \ref{lm:varianceorder} shows that $\tilde{\mathcal{U}}^*(a)/\sigma(a)\xrightarrow{P} 0$ for any finite integer $a$. 
Thus by the Slutsky's theorem,  it suffices to prove that any fixed linear combination of $[\tilde{\mathcal{U}}(a_1)/\sigma(a_1),\allowbreak \ldots, \allowbreak \tilde{\mathcal{U}}(a_m)/\sigma(a_m)]^{\intercal}$ converges to normal.  
To be specific, we show that for constants $t_1,\ldots,t_m$  satisfying $\sum_{r=1}^m t_r^2=1$, 
\begin{eqnarray}
Z_n:=\sum_{r=1}^m t_r {\tilde{\mathcal{U}}(a_r)  }/{\sigma(a_r)}\xrightarrow{D} \mathcal{N}(0,1).  \label{eq:znnormal}
\end{eqnarray} 


To prove \eqref{eq:znnormal}, we apply the martingale central limit theorem  in \citet{heyde1970}  (similar arguments can date back to    \citet{bai1996effect}). 
Let $\mathcal{F}_{0}= \{  \emptyset,\Omega\}$, $\mathcal{F}_{k}=\sigma\{\mathbf{x}_1,\cdots,\mathbf{x}_k \}$,  and $\mathrm{E}_k(\cdot)$ denote the conditional expectation given $\mathcal{F}_k$ for $k=1, \cdots, n$.   Define $D_{n,k}=(\mathrm{E}_{k}-\mathrm{E}_{k-1})Z_n$ and $\pi^2_{n,k}=\mathrm{E}_{k-1}(D_{n,k}^2)$. Note that $\mathrm{E}_0(\cdot)=\mathrm{E}(\cdot)$, and  $\mathrm{E}(Z_n)=0$ as $\mathrm{E}(\mathbf{x})=\mathbf{0}$. It follows that $Z_n= \sum_{k=1}^n D_{n,k}$. By martingale central limit theorem, to prove \eqref{eq:znnormal}, it is sufficient to show
\begin{align}\label{eq:cltgoalnull}
	{\sum_{k=1}^n\pi^2_{n,k}}/{\mathrm{var}(Z_n)} \xrightarrow{P} 1,\qquad {\sum_{k=1}^n\mathrm{E}(D_{n,k}^4 )}/{\mathrm{var}^2(Z_n)} \rightarrow 0.
\end{align} 
Here  $\mathrm{var}(Z_n)\to \sum_{r=1}^m t_r^2=1$ by Lemmas \ref{lm:varianceorder} and   \ref{lm:covariancezro}, and $\mathrm{E}(\sum_{k=1}^n \pi^2_{n,k} )=\mathrm{var} (Z_n)$ by the following Lemma \ref{lm:meaninvarsigma}. 
\begin{lemma} \label{lm:meaninvarsigma}
Under the  conditions  of Theorem \ref{thm:jointnormal}, 
$\mathrm{E}(\sum_{k=1}^n \pi^2_{n,k} )=\mathrm{var} (Z_n)$. 
\end{lemma}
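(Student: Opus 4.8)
The plan is to recognize that $\{D_{n,k}\}_{k=1}^n$ is a martingale difference array with respect to the filtration $\{\mathcal{F}_k\}$, so the claimed identity is nothing more than the Pythagorean ($L^2$-orthogonality) property of martingale increments. First I would record that, for fixed $n$ and $p$, $Z_n$ is a finite linear combination of the $\tilde{\mathcal{U}}(a_r)$, each of which is in turn a finite sum of products of coordinates of $\mathbf{x}_1,\ldots,\mathbf{x}_n$ whose eighth marginal moments are uniformly bounded by Condition \ref{cond:finitemomt}. Hence $Z_n\in L^2$ (indeed $L^4$), every conditional expectation appearing below is well defined, and interchanging $\mathrm{E}$ with the finite sums is legitimate. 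Moreover, since $\mathrm{E}(\mathbf{x})=\mathbf{0}$ and we are under $H_0$, the distinctness of the summation indices makes $\mathrm{E}[\prod_{k=1}^{a}x_{i_k,j_1}x_{i_k,j_2}]$ factor into a product of terms $\mathrm{E}(x_{i,j_1}x_{i,j_2})=\sigma_{j_1,j_2}=0$ for $j_1\neq j_2$, so $\mathrm{E}[\tilde{\mathcal{U}}(a_r)]=0$ and therefore $\mathrm{E}(Z_n)=0$.

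Next I would verify the telescoping and the martingale-difference property. Because $Z_n$ is $\mathcal{F}_n$-measurable with $\mathcal{F}_0=\{\emptyset,\Omega\}$, one has $\sum_{k=1}^n D_{n,k}=\sum_{k=1}^n(\mathrm{E}_k-\mathrm{E}_{k-1})Z_n=\mathrm{E}_n Z_n-\mathrm{E}_0 Z_n=Z_n$, consistent with the statement $Z_n=\sum_{k=1}^n D_{n,k}$ used in the proof of Theorem \ref{thm:jointnormal}. Each $D_{n,k}$ is $\mathcal{F}_k$-measurable, and $\mathrm{E}_{k-1}(D_{n,k})=\mathrm{E}_{k-1}\mathrm{E}_k Z_n-\mathrm{E}_{k-1}Z_n=0$. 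Consequently, for any $k<l$, since $D_{n,k}$ is $\mathcal{F}_{l-1}$-measurable, $\mathrm{E}(D_{n,k}D_{n,l})=\mathrm{E}[D_{n,k}\,\mathrm{E}_{l-1}(D_{n,l})]=0$.

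Finally I would expand the variance: $\mathrm{var}(Z_n)=\mathrm{E}(Z_n^2)=\mathrm{E}[(\sum_{k=1}^n D_{n,k})^2]=\sum_{k=1}^n\mathrm{E}(D_{n,k}^2)+2\sum_{k<l}\mathrm{E}(D_{n,k}D_{n,l})$, and the cross terms vanish by the previous step, leaving $\mathrm{var}(Z_n)=\sum_{k=1}^n\mathrm{E}(D_{n,k}^2)=\sum_{k=1}^n\mathrm{E}[\mathrm{E}_{k-1}(D_{n,k}^2)]=\sum_{k=1}^n\mathrm{E}(\pi^2_{n,k})=\mathrm{E}(\sum_{k=1}^n\pi^2_{n,k})$, which is the assertion. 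There is essentially no genuine obstacle here — this is the standard orthogonality identity for $L^2$ martingales — so the only points requiring (minor) care are the bookkeeping items already flagged: confirming $\mathrm{E}(Z_n)=0$ under $H_0$ so that the telescoping terminates correctly, and invoking the $L^2$ membership guaranteed by Condition \ref{cond:finitemomt} (and recorded quantitatively in Lemma \ref{lm:varianceorder}) to justify every conditioning step and interchange of expectation with a finite sum.
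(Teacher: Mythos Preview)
Your proof is correct and follows essentially the same approach as the paper: both establish the $L^2$-orthogonality of the martingale differences $\{D_{n,k}\}$ and then expand $\mathrm{var}(Z_n)=\mathrm{E}[(\sum_k D_{n,k})^2]$. The only cosmetic difference is that the paper verifies $\mathrm{E}(D_{n,k_1}D_{n,k_2})=0$ by expanding the product of conditional expectations directly, whereas you use the cleaner tower-property step $\mathrm{E}(D_{n,k}D_{n,l})=\mathrm{E}[D_{n,k}\,\mathrm{E}_{l-1}(D_{n,l})]=0$; your additional remarks on $L^2$ membership and $\mathrm{E}(Z_n)=0$ are sound bookkeeping that the paper leaves implicit.
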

\begin{proof}
\textit{See Section \ref{sec:proofmeaninvarsigma} on Page \pageref{sec:proofmeaninvarsigma}.} 
\end{proof}

\noindent Therefore to prove \eqref{eq:cltgoalnull},   it suffices to show 
\begin{eqnarray} \label{eq:cltgoal1}
	\mathrm{var}\Biggr( {\sum_{k=1}^n\pi^2_{n,k}}\Biggr) \to 0 \quad \text{and} \quad 
	{\sum_{k=1}^n\mathrm{E}(D_{n,k}^4 )}\rightarrow 0. 
\end{eqnarray}

Note that $D_{n,k}$ and $\pi_{n,k}^2$ in \eqref{eq:cltgoal1} can be written as $D_{n,k}=\sum_{r=1}^m t_rA_{n,k,a_r}$ and $\pi_{n,k}^2=\sum_{1\leq r_1,r_2\leq m}\mathrm{E}_{k-1}(A_{n,k,a_{r_1}}A_{n,k,a_{r_2}})$, where we define $A_{n,k,a}=(\mathrm{E}_k-\mathrm{E}_{k-1})\{\tilde{\mathcal{U}}(a)/\sigma(a)\}$ for each finite integer $a$. 
The following Lemma \ref{lm:cltabnkform} gives the explicit form of $A_{n,k,a}$.
\begin{lemma} \label{lm:cltabnkform}
For finite integer $a$, when $k<a$, $A_{n,k,a}=0$; when $k \geq a$,
	\begin{align*}
		A_{n,k,a} = \frac{a}{\sigma(a)P^n_{a}} \sum_{1\leq i_1 \neq \cdots \neq i_{a-1}\leq k-1}\, \sum_{1\leq j_1\neq j_2 \leq p} 
	(x_{k,j_1} x_{k,j_2})\times \prod_{t=1}^{a-1}(x_{i_t,j_1} x_{i_t,j_2}). \notag 
	 \end{align*}
\end{lemma}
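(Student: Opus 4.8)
The plan is to expand the conditional-expectation difference directly and read off the surviving terms using independence of the $\mathbf{x}_i$'s together with the fact that, under $H_0$ with $\mathrm{E}(\mathbf{x})=\mathbf{0}$ (which we may assume by location invariance, Proposition \ref{prop:locinvariance}), each factor $x_{i,j_1}x_{i,j_2}$ with $j_1\neq j_2$ has mean $\sigma_{j_1,j_2}=0$. Abbreviating $s_{i,l}=x_{i,j_1}x_{i,j_2}$ for $l=(j_1,j_2)$, we have $\tilde{\mathcal{U}}(a)=(P^n_a)^{-1}\sum_{l}\sum_{1\le i_1\ne\cdots\ne i_a\le n}\prod_{t=1}^a s_{i_t,l}$, so by linearity $A_{n,k,a}=\{\sigma(a)P^n_a\}^{-1}\sum_l\sum_{i_1\ne\cdots\ne i_a}(\mathrm{E}_k-\mathrm{E}_{k-1})\prod_{t=1}^a s_{i_t,l}$, and it suffices to evaluate $(\mathrm{E}_k-\mathrm{E}_{k-1})\prod_{t=1}^a s_{i_t,l}$ for each fixed ordered tuple of distinct indices.

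Next I would compute this quantity term by term. Since $\mathbf{x}_k$ is independent of $\mathcal{F}_{k-1}$, the vectors $\mathbf{x}_1,\ldots,\mathbf{x}_n$ are i.i.d., and each $s_{i,l}$ has mean zero under $H_0$, one gets $\mathrm{E}_k\prod_{t} s_{i_t,l}=\prod_t s_{i_t,l}\cdot\mathbf 1\{\max_t i_t\le k\}$ and likewise $\mathrm{E}_{k-1}\prod_t s_{i_t,l}=\prod_t s_{i_t,l}\cdot\mathbf 1\{\max_t i_t\le k-1\}$: whenever some index exceeds the conditioning time, the corresponding independent mean-zero factor forces the conditional expectation to vanish. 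Hence the difference is zero unless the value $k$ appears among $i_1,\ldots,i_a$ while all the remaining indices lie in $\{1,\ldots,k-1\}$, in which case it equals $\prod_t s_{i_t,l}$. In particular $A_{n,k,a}=0$ when $k<a$, because one cannot place $a$ distinct indices in $\{1,\ldots,k\}$ while using the value $k$.

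Finally I would regroup the surviving tuples when $k\ge a$. The index $k$ can occupy any one of the $a$ ordered slots, and the other $a-1$ slots range over all ordered $(a-1)$-tuples of distinct indices in $\{1,\ldots,k-1\}$; by symmetry of $\prod_t s_{i_t,l}$ under permutation of the slots these $a$ configurations contribute equally, giving an overall factor $a$. This produces $A_{n,k,a}=\frac{a}{\sigma(a)P^n_a}\sum_l s_{k,l}\sum_{1\le i_1\ne\cdots\ne i_{a-1}\le k-1}\prod_{t=1}^{a-1}s_{i_t,l}$, which upon substituting $s_{i,l}=x_{i,j_1}x_{i,j_2}$ and summing over $l=(j_1,j_2)$ with $1\le j_1\ne j_2\le p$ is exactly the claimed formula.

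There is no genuinely hard step; the argument is elementary once the mean-zero reduction is in place. The only point needing care is the bookkeeping in the last step — isolating the unique slot holding the index $k$ and checking that the combinatorial multiplicity is precisely $a$, which holds because the sum is over \emph{ordered} tuples of distinct indices. I would also note in passing that this explicit representation is exactly what exhibits $\{A_{n,k,a}\}$ (hence $\{D_{n,k}\}$) as a martingale-difference array for $(\mathcal{F}_k)$, the structure required to apply the martingale central limit theorem in the remainder of the proof of Theorem \ref{thm:jointnormal}.
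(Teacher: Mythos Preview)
Your proposal is correct and follows essentially the same approach as the paper's proof: both compute $(\mathrm{E}_k-\mathrm{E}_{k-1})\prod_{t=1}^a s_{i_t,l}$ directly using independence of the $\mathbf{x}_i$'s and the mean-zero property $\mathrm{E}(s_{i,l})=\sigma_{j_1,j_2}=0$ under $H_0$, arrive at the same characterization of the surviving tuples (those with $\max_t i_t=k$), and then collect the combinatorial factor $a$. Your indicator-function formulation $\mathrm{E}_k\prod_t s_{i_t,l}=\prod_t s_{i_t,l}\cdot\mathbf{1}\{\max_t i_t\le k\}$ is a slightly more compact way of stating what the paper establishes by explicit case analysis, but the substance is identical.
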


\begin{proof}\textit{See Section \ref{sec:proofcltabnkform} on Page \pageref{sec:proofcltabnkform}.} \end{proof}

\noindent 
With the form of  $A_{n,k,a}$ in Lemma \ref{lm:cltabnkform}, the forms of $D_{n,k}$ and $\pi_{n,k}^2$ can be obtained, and we can prove the  next two Lemmas \ref{lm:targetorder} and   \ref{lm:secondgoaljointnormal}, which  suggest that \eqref{eq:cltgoal1} holds. 

\begin{lemma}\label{lm:targetorder}
Under the conditions of Theorem \ref{thm:jointnormal}, $ \mathrm{var}( \sum_{k=1}^n \pi^2_{n,k} ) \to 0$. In particular, under Condition \ref{cond:alphamixing}, $ \mathrm{var}( \sum_{k=1}^n \pi^2_{n,k} ) = O(p^{-1}\log^3 p)$; under Condition \ref{cond:ellpmoment}, $ \mathrm{var}( \sum_{k=1}^n \pi^2_{n,k} ) = O(n^{-1}+p^{-2})$. 
\end{lemma}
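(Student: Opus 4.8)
The plan is to turn the claim into a fourth-moment computation for the $\mathbf{x}_i$'s and then control it by classifying how the summation indices can coincide.

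First I would reduce to a pairwise covariance bound. Since $D_{n,k}=\sum_{r=1}^m t_rA_{n,k,a_r}$, we have $\pi^2_{n,k}=\sum_{1\le r_1,r_2\le m}t_{r_1}t_{r_2}\,\mathrm{E}_{k-1}\!\big(A_{n,k,a_{r_1}}A_{n,k,a_{r_2}}\big)$, so $\sum_{k=1}^n\pi^2_{n,k}=\sum_{r_1,r_2}t_{r_1}t_{r_2}W_{a_{r_1},a_{r_2}}$ with $W_{a,b}:=\sum_{k=1}^n\mathrm{E}_{k-1}\!\big(A_{n,k,a}A_{n,k,b}\big)$. As $m$ is fixed and the $t_r$ are bounded, it suffices to prove $\mathrm{cov}(W_{a,b},W_{c,d})\to 0$ for all finite orders $a,b,c,d\in\{a_1,\dots,a_m\}$. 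Substituting the form of $A_{n,k,a}$ from Lemma~\ref{lm:cltabnkform} and using that $\mathbf{x}_k$ is independent of $\mathcal{F}_{k-1}$ and mean zero (the reduction to mean zero being legitimate by Proposition~\ref{prop:locinvariance}), the inner conditional expectation over $\mathbf{x}_k$ produces a factor $\Pi_{j_1,j_2,j_1',j_2'}$ from \eqref{eq:highmomentdef}, giving
\[
W_{a,b}=\frac{ab}{\sigma(a)\sigma(b)P^n_aP^n_b}\sum_{k=1}^n\ \sum_{\substack{j_1\ne j_2\\ j_1'\ne j_2'}}\Pi_{j_1,j_2,j_1',j_2'}\sum_{i_1\ne\cdots\ne i_{a-1}\le k-1}\ \sum_{i_1'\ne\cdots\ne i_{b-1}'\le k-1}\ \prod_{t=1}^{a-1}x_{i_t,j_1}x_{i_t,j_2}\prod_{s=1}^{b-1}x_{i_s',j_1'}x_{i_s',j_2'},
\]
whose prefactor is of order $p^{-2}n^{-(a+b)/2}$ since $\sigma^2(a)=\Theta(p^2n^{-a})$ by Lemma~\ref{lm:varianceorder} and \eqref{eq:varianceformmodif1}.

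Next I would expand $\mathrm{E}[W_{a,b}W_{c,d}]$ by multiplying the two sums and taking expectation. In the product each sample index appears to total degree at most $8$ (degree $2$ per occurrence, at most four occurrences), so the resulting expectations are joint moments of $\mathbf{x}$ of order at most $8$, finite by Condition~\ref{cond:finitemomt} together with H\"older's inequality. I would group the terms by (i) the partition of the sample indices $\{i_t,i_s'\}$ of $W_{a,b}$ and those of $W_{c,d}$ recording which coincide, (ii) whether the outer indices $k,k'$ coincide and their relation to the $i$'s, and (iii) the coincidence pattern among the dimension indices. Each pattern contributes the prefactor $\Theta(p^{-4}n^{-(a+b+c+d)/2})$ times $\Theta(n^{\#\{\text{free sample indices}\}})$ times a sum over the remaining free dimension indices of products of $\Pi$'s and marginal moments. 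The dominant pattern is the one in which the sample indices of the two $W$'s stay disjoint and the moment factorizes across $W_{a,b}$ and $W_{c,d}$; it reproduces $\mathrm{E}[W_{a,b}]\mathrm{E}[W_{c,d}]$ up to $o(1)$ and cancels in the covariance — this matches the identity $\mathrm{E}(\sum_k\pi^2_{n,k})=\mathrm{var}(Z_n)\to1$ of Lemma~\ref{lm:meaninvarsigma}.

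The remaining work is to show every non-dominant pattern is negligible, which is also the main obstacle, since the number of such patterns is large and each must be matched against the normalization. Under Condition~\ref{cond:highordmominter} every joint moment of $\mathbf{x}$ factorizes exactly into products of entries of $\boldsymbol{\Sigma}$, which under $H_0$ are diagonal, so each dimension sum reduces to a product of sums $\sum_j\sigma_{j,j}^{\,t}=\Theta(p)$; a direct count then shows that any forced coincidence among sample or dimension indices costs at least one power of $n$ relative to $n^{(a+b+c+d)/2}$ or at least a factor $p^{-2}$ relative to $p^{4}$, yielding $\mathrm{var}(\sum_k\pi^2_{n,k})=O(n^{-1}+p^{-2})$. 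Under Condition~\ref{cond:alphamixing} the moments no longer vanish but, by exponential decay of $\alpha_{\mathbf{x}}(\cdot)$ and the standard covariance bound for mixing sequences, the ``neighbourhood'' sums $\sum_{j_2}|\Pi_{j_1,j_2,j_3,j_4}|$ and their higher-order analogues are $O(\mathrm{polylog}\,p)$ uniformly in the other indices; since at most three such nested dimension sums accumulate, one gets $\mathrm{var}(\sum_k\pi^2_{n,k})=O(p^{-1}\log^3 p)$. The delicate point throughout is to verify that the unique non-negligible pattern is exactly the one cancelling $\mathrm{E}[W_{a,b}]\mathrm{E}[W_{c,d}]$ — rather than a spurious larger-order term — which relies on the sharp factorization of Condition~\ref{cond:highordmominter} (resp.\ the sharp mixing bounds of Condition~\ref{cond:alphamixing}).
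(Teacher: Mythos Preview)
Your approach is correct and shares the same technical core as the paper: expand the variance into a fourth-moment computation over sample and dimension indices, then classify coincidence patterns. The organizational difference is that the paper first applies a Cauchy--Schwarz step,
\[
\mathrm{var}\Big(\sum_{k=1}^n\pi_{n,k}^2\Big)\le Cn^2\max_{k;\,r_1,r_2}\mathrm{var}\big(\mathbb{T}_{k,a_{r_1},a_{r_2}}\big),\qquad \mathbb{T}_{k,a,b}:=\mathrm{E}_{k-1}\!\big(A_{n,k,a}A_{n,k,b}\big),
\]
and then bounds $\mathrm{var}(\mathbb{T}_{k,a,b})$ for a single $k$. This removes the outer $k,k'$ summation from the moment computation and avoids your cross-covariances $\mathrm{cov}(W_{a,b},W_{c,d})$ with $(a,b)\ne(c,d)$, so the case analysis is over four tuples of $i$-indices and eight $j$-indices only. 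Your direct route is more work but yields the same bounds.

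One point of imprecision: you write that under Condition~\ref{cond:ellpmoment} ``every joint moment of $\mathbf{x}$ factorizes exactly into products of entries of $\boldsymbol{\Sigma}$.'' That holds for fourth moments, but Condition~\ref{cond:ellpmoment} only asserts that sixth and eighth moments \emph{vanish} when some index has odd multiplicity; it does not give an Isserlis-type formula for them. This matters in your ``Part~I'' pattern where a shared sample index produces $\mathrm{E}\big(x_{i,j_1}^2x_{i,j_2}^2x_{i,j_3}^2x_{i,j_4}^2\big)$ with $j_1,\dots,j_4$ distinct: this moment is merely bounded (by Condition~\ref{cond:finitemomt}), not factorizable, so you lose one power of $n$ but no power of $p$ --- which is exactly what gives the $O(n^{-1})$ term. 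Your final rates $O(n^{-1}+p^{-2})$ and $O(p^{-1}\log^3 p)$ are right; just replace ``factorize exactly'' by ``vanish under odd multiplicity plus eighth-moment boundedness'' in the justification.
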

 
 \begin{proof}
\textit{See Section \ref{sec:prooftargetorder} on Page \pageref{sec:prooftargetorder}.}	
\end{proof}
 
\begin{lemma}\label{lm:secondgoaljointnormal} 
Under the conditions of Theorem \ref{thm:jointnormal}, $\sum_{k=1}^n\mathrm{E}(D_{n,k}^4)=O(1/n)$. 
\end{lemma}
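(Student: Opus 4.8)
Since $m$ and the constants $t_1,\dots,t_m$ are fixed, the elementary bound $(\sum_{r=1}^m b_r)^4\le m^3\sum_{r=1}^m b_r^4$ gives $\sum_{k=1}^n\mathrm{E}(D_{n,k}^4)\le m^3\sum_{r=1}^m t_r^4\sum_{k=1}^n\mathrm{E}(A_{n,k,a_r}^4)$, so it suffices to prove $\sum_{k=1}^n\mathrm{E}(A_{n,k,a}^4)=O(1/n)$ for each fixed finite integer $a$. By Lemma \ref{lm:cltabnkform} (and $A_{n,k,a}=0$ for $k<a$), for $k\ge a$ we may write $A_{n,k,a}=\frac{a}{\sigma(a)P^n_a}\sum_{1\le j_1\ne j_2\le p}x_{k,j_1}x_{k,j_2}\,Y_{k,j_1,j_2}$, where $Y_{k,j_1,j_2}=\sum_{1\le i_1\ne\cdots\ne i_{a-1}\le k-1}\prod_{t=1}^{a-1}x_{i_t,j_1}x_{i_t,j_2}$ is $\mathcal F_{k-1}$-measurable; thus $A_{n,k,a}$ is a zero-diagonal quadratic form in $\mathbf x_k$ with $\mathcal F_{k-1}$-measurable coefficients. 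As in Section \ref{sec:detailofjointnormal} we assume $\mathrm{E}(\mathbf x)=\mathbf 0$.

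The core step is to bound $\mathrm{E}(A_{n,k,a}^4)=(\tfrac{a}{\sigma(a)P^n_a})^4\,\mathrm{E}\big[\big(\sum_{j_1\ne j_2}x_{k,j_1}x_{k,j_2}Y_{k,j_1,j_2}\big)^4\big]$. I would expand the fourth power over four column pairs $(j_1^{(s)},j_2^{(s)})$, $s=1,\dots,4$. Taking $\mathrm{E}_{k-1}$ first factors out the row-$k$ moment $\mathrm{E}\big[\prod_{s=1}^4 x_{k,j_1^{(s)}}x_{k,j_2^{(s)}}\big]$, an eighth-degree marginal moment that is $O(1)$ uniformly in the indices by Condition \ref{cond:finitemomt} (generalized H\"older reduces it to eighth marginal moments). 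What remains is $\mathrm{E}\big[\prod_{s=1}^4 Y_{k,j_1^{(s)},j_2^{(s)}}\big]$, summed against these $O(1)$ factors over the eight column indices. Expanding each $Y$ and using that $\mathbf x_1,\dots,\mathbf x_{k-1}$ are i.i.d.\ with mean $\mathbf 0$, the expectation factorizes over distinct rows; since $\mathrm{E}(x_{i,j_1}x_{i,j_2})=\sigma_{j_1,j_2}=0$ under $H_0$ whenever $j_1\ne j_2$, every row index occurring in $\{1,\dots,k-1\}$ must occur in at least two of the four copies. Consequently at most $2(a-1)$ distinct such rows appear, the number of admissible row-index assignments is $O(k^{2(a-1)})$, and each per-row factor is again a marginal moment of degree at most $8$, hence $O(1)$ by Condition \ref{cond:finitemomt}.

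It then remains to count the surviving column configurations. The possible ``sharing patterns'' of the four copies among the used rows are controlled combinatorially: each copy contributes $a-1$ row slots, each used row occupies at least two slots, and for each pattern the requirement that the relevant marginal moments be non-zero forces matchings among the eight column indices. The dominant pattern is the one in which copies are paired ($1$--$2$ and $3$--$4$, etc.) so that the two members of a pair share all their $a-1$ rows; there the pairwise relations $\{j_1^{(1)},j_2^{(1)}\}=\{j_1^{(2)},j_2^{(2)}\}$ and $\{j_1^{(3)},j_2^{(3)}\}=\{j_1^{(4)},j_2^{(4)}\}$ (imposed, under $H_0$, by the $\alpha$-mixing decay of Condition \ref{cond:alphamixing} or the moment relations of Condition \ref{cond:highordmominter}) leave exactly four ``free'' column indices, so the column sum is $O(p^4)$; any other pattern has either fewer free rows or fewer free columns and is subdominant. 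This is the same multi-index accounting as in the proof of Lemma \ref{lm:targetorder} for $\mathrm{var}(\sum_k\pi_{n,k}^2)$, and can reuse those estimates. Collecting terms, $\mathrm{E}\big[\big(\sum_{j_1\ne j_2}x_{k,j_1}x_{k,j_2}Y_{k,j_1,j_2}\big)^4\big]=O(k^{2(a-1)}p^4)$ uniformly in $k\le n$. Since $\sigma^2(a)=\Theta(p^2n^{-a})$ (Theorem \ref{thm:jointnormal}/Lemma \ref{lm:varianceorder}) and $P^n_a=\Theta(n^a)$, we get $(\tfrac{a}{\sigma(a)P^n_a})^4=\Theta(p^{-4}n^{-2a})$, so $\mathrm{E}(A_{n,k,a}^4)=O(k^{2a-2}n^{-2a})$; summing with $\sum_{k=1}^n k^{2a-2}=\Theta(n^{2a-1})$ yields $\sum_{k=1}^n\mathrm{E}(A_{n,k,a}^4)=O(n^{-1})$, which is the claim.

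The main obstacle will be the column-index enumeration: checking across all ways the four copies can share rows (the paired pattern, chains and cycles, three or four copies linked through a common row, etc.) that none exceeds the $O(k^{2(a-1)}p^4)$ bound, and, under Condition \ref{cond:alphamixing}, that the resulting geometric column sums are genuinely $O(1)$ so that no logarithmic factors enter the dominant term. This is precisely the type of bookkeeping already handled in Lemmas \ref{lm:varianceorder} and \ref{lm:targetorder}, so the plan is to set up the combinatorics once and invoke those estimates.
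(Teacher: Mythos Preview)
Your proposal is correct and takes essentially the same approach as the paper: expand the fourth moment, use that under $H_0$ each row index must appear in at least two of the four copies (giving the $O(n^{2(a-1)})$ row count), and show the column sum is $O(p^4)$ via the independence/moment/$\alpha$-mixing case analysis already developed for Lemmas~\ref{lm:varianceorder} and~\ref{lm:targetorder}. The only organizational differences are that the paper works directly with the mixed product $\mathrm{E}\bigl(\prod_{l=1}^4 A_{n,k,a_{r_l}}\bigr)$ rather than reducing to $a_{r_1}=\cdots=a_{r_4}$ via your convexity inequality, and it bounds $k\le n$ immediately to obtain a uniform-in-$k$ estimate $\mathrm{E}\bigl(\prod_l A_{n,k,a_{r_l}}\bigr)=O(n^{-2})$ rather than tracking the $k^{2(a-1)}$ dependence and summing at the end; both routes yield the same $O(1/n)$.
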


\begin{proof}
\textit{See Section \ref{sec:proofsecondgoaljointnormal} on Page \pageref{sec:proofsecondgoaljointnormal}.}
\end{proof}

Finally, by \citet{heyde1970},  we have as $n,p \rightarrow \infty$, 
\begin{eqnarray}\label{eq:thm1berryessenbound}
	& & \sup_t \Big| P(Z_n \leq t )-\Phi(t) \Big|   \\
	 &\leq & C\Biggr\{ \mathrm{E}\Biggr[\frac{\sum_{k=1}^n\mathrm{E}_{k-1}(D_{n,k}^2)}{\mathrm{var}(Z_n)} -1 \Biggr]^2  +  \frac{\sum_{k=1}^n\mathrm{E}\left(D_{n,k}^4 \right)}{\mathrm{var}^2(Z_n)}  \Biggr\}^{1/5} \notag \\
	&\rightarrow & 0, \notag
\end{eqnarray} which proves \eqref{eq:znnormal}.  In summary, Theorem \ref{thm:jointnormal} is proved.

\subsection{Proof of Theorem \ref{thm:asymindpt}} \label{sec:asymindptproofsec}



In this section, we first introduce some notation, and then present the proof. 

\vspace{0.2em}
\noindent \textit{Notation.}\ \ For $\mathcal{U}(a)$ in \eqref{eq:originaluinvariance},  by the symmetricity of covariance matrix,  
we can replace $\sum_{1 \leq j_1 \neq j_2 \leq p}$ by $2\times \sum_{1 \leq j_1 < j_2 \leq p}.$ 
This implies that the summation over $\{(j_1,j_2): 1 \leq j_1 \neq j_2 \leq p\}$ is equivalent to the  summation over $\{(j_1,j_2):1 \leq j_1 < j_2 \leq p\}$ up to a constant.  Without loss of generality, we consider $j_1 < j_2$ below. We rewrite the index set  $\{(j_1,j_2): 1\leq j_1 < j_2 \leq p \}$ as 
\begin{eqnarray}
	L:=\Big\{ (j^1_l, j^2_l): 1\leq l \leq q =\binom{p}{2} \Big\}, \label{eq:wdef}
\end{eqnarray} where 
$
	j_l^1=\arg\min_{1\leq k\leq p-1} \{\sum_{t=1}^k(p-t)\geq l\}$ and $j_l^2=l+j_l^1-\sum_{t=1}^{j_l^1-1}(p-t)
$.
For each $(j^1_l, j^2_l)\in L$, define
\begin{align} 
	U_l^a = & \sum_{1 \leq i_1 \neq \ldots \neq i_a \leq n} \prod_{k=1}^a x_{i_k,j^1_l}x_{i_k,j^2_l}.\label{eq:vladefinition} 
\end{align} Then $\tilde{\mathcal{U}}(a)=2(P^n_a)^{-1}\sum_{l=1}^q U_l^a$ following the definition in  \eqref{eq:originleadingterm}.  Furthermore, we define 
\begin{align}
	\tilde{G}_l = &~\sum_{i=1}^n  \frac{x_{i,j^1_l}}{ \sqrt{\sigma_{j_l^1, j_l^1}} }\times \frac{x_{i,j^2_l}}{\sqrt{\sigma_{j_l^2, j_l^2}}}, \label{eq:indpalldefinewvar} \\
	 M_n = & ~\max_{1 \leq l \leq q}\ ( \tilde{G}_l )^2, \notag \\
	 \hat{G}_l=&~  \sum_{i=1}^n   \frac{x_{i,j^1_l}}{ \sqrt{\sigma_{j_l^1, j_l^1} }}\times  \frac{x_{i,j^2_l}}{\sqrt{\sigma_{j_l^2, j_l^2}}} \mathbf{1}\Big\{ \Big| \frac{x_{i,j^1_l}}{ \sqrt{\sigma_{j_l^1, j_l^1}} } \times \frac{x_{i,j^2_l}}{\sqrt{\sigma_{j_l^2, j_l^2}}} \Big| \leq \tau_n \Big\} \notag \\
	 &-\mathrm{E} \Biggr[ \sum_{i=1}^n   \frac{x_{i,j^1_l}}{ \sqrt{\sigma_{j_l^1, j_l^1}} }\times  \frac{x_{i,j^2_l}}{\sqrt{\sigma_{j_l^2, j_l^2}}} \mathbf{1}\Big\{ \Big| \frac{x_{i,j^1_l}}{ \sqrt{\sigma_{j_l^1, j_l^1}} } \times \frac{x_{i,j^2_l}}{\sqrt{\sigma_{j_l^2, j_l^2}}} \Big| \leq \tau_n \Big\}   \Biggr],  \notag \\
	 \hat{M}_n = &~ \max_{1 \leq l \leq q}\ ( \hat{G}_l )^2,  \notag
\end{align} where we define $\sigma_{j_l^1, j_l^1}={\mathrm{var}( x_{i,j^1_l})} $, $\sigma_{j_l^2, j_l^2}={\mathrm{var}( x_{i,j^2_l})} $, $\tau_n=\tau \log (p+n)$ with $\tau$ being a sufficiently large positive constant and $\mathbf{1}\{\cdot\}$ represents an indicator function. In addition,  we define $|\textbf{a}|_{\min}=\min_{1\leq i \leq p} |a_i|$ for $\mathbf{a}\in \mathbb{R}^p$, and 
\begin{eqnarray}
	y_p = 4\log p - \log \log p+y . \label{eq:ypdefition}
\end{eqnarray}

\noindent \textit{Proof.}\ \  
Similarly to Section \ref{sec:detailofjointnormal}, since   $\mathcal{U}(a)$  in \eqref{eq:originaluinvariance}  and  $\mathcal{U}(\infty)$  in \eqref{eq:inftyteststat} are location invariant, we assume without loss of generality that $\mathrm{E}({\mathbf x})={\mathbf 0}$.

To prove Theorem \ref{thm:asymindpt}, we first establish  the asymptotic independence between $\hat{M}_n/n$ and $\tilde{\mathcal{U}}(a)/\sigma(a_r)$ for $r=1,\ldots,m$, and then we show that $n\mathcal{U}^2(\infty)$  and $\mathcal{U}(a_r)$  are close to $\hat{M}_n/n$ and $\tilde{\mathcal{U}}(a_r)$,   respectively. 
Specifically, the following Lemma \ref{lm:firstsetpthm3proof} shows that $\hat{M}_n/n$ and $\tilde{\mathcal{U}}(a_r)/\sigma(a_r)$'s are asymptotically independent.
 \begin{lemma}  \label{lm:firstsetpthm3proof}
Under the  conditions of Theorem \ref{thm:asymindpt}, when $\tau>0$ in \eqref{eq:indpalldefinewvar} is a sufficiently large constant, 
\begin{align*}
  	&\Biggr| P\Big( \frac{\hat{M}_n}{n} > y_p, \frac{\tilde{\mathcal{U}}(a_1)  }{\sigma(a_1)} \leq z_1,\ldots, \frac{\tilde{\mathcal{U}}(a_m)  }{\sigma(a_m)}\leq z_m \Big) \notag \\ 
  	& \ - P\Big( \frac{\hat{M}_n}{n} > y_p \Big) \prod_{r=1}^m P\Big(\frac{\tilde{\mathcal{U}}(a_r)  }{\sigma(a_r)} \leq z_r \Big) \Biggr| \rightarrow 0. \notag
\end{align*}	
\end{lemma}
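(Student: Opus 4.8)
The plan is to establish the asymptotic independence of $\hat{M}_n/n$ and the vector $(\tilde{\mathcal{U}}(a_1)/\sigma(a_1),\ldots,\tilde{\mathcal{U}}(a_m)/\sigma(a_m))$ by combining a Poisson-approximation / Chen--Stein argument for the maximum-type statistic with a conditioning / moment-truncation device. First I would recall that $\hat{M}_n=\max_{1\le l\le q}(\hat{G}_l)^2$ is built from the \emph{truncated} summands $x_{i,j_l^1}x_{i,j_l^2}/\sqrt{\sigma_{j_l^1,j_l^1}\sigma_{j_l^2,j_l^2}}\mathbf{1}\{|\cdots|\le\tau_n\}$, which are bounded by $\tau_n=\tau\log(p+n)$ and centered; this boundedness is exactly what makes the event $\{\hat M_n/n>y_p\}$ amenable to a Chen--Stein Poisson approximation, so that $P(\hat M_n/n>y_p)\to 1-e^{-(1/\sqrt{8\pi})e^{-y/2}}$ after accounting for the $\binom{p}{2}$ near-independent maxima under the $\alpha$-mixing / $\varsigma=2$ sub-Gaussian hypotheses of Theorem \ref{thm:asymindpt}.

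Next I would exploit the following structural observation: the event $\{\hat M_n/n>y_p\}$ is governed by a \emph{small} number of index pairs $l$ (those near the maximum), while $\tilde{\mathcal{U}}(a_r)$ is a degenerate $U$-statistic whose fluctuations, by Lemma \ref{lm:varianceorder} and the martingale decomposition in the proof of Theorem \ref{thm:jointnormal}, are spread diffusely over all $\binom{p}{2}$ pairs and all $n$ observations — no single pair or observation contributes nonnegligibly. Concretely, I would (i) restrict to a high-probability set on which at most one (or a bounded number of) index pair(s) $l^*$ achieves a value of $(\hat G_{l^*})^2/n$ exceeding, say, $4\log p - C\log\log p$; (ii) condition on the identity of $l^*$ and on the $2$ coordinates $(j_{l^*}^1, j_{l^*}^2)$; (iii) argue that the conditional law of $(\tilde{\mathcal{U}}(a_1)/\sigma(a_1),\ldots,\tilde{\mathcal{U}}(a_m)/\sigma(a_m))$, after removing the $o_P(1)$ contribution of terms involving coordinates $j_{l^*}^1,j_{l^*}^2$, still converges to $\mathcal N(0,I_m)$ by the same martingale CLT used for Theorem \ref{thm:jointnormal} applied to the $(p-2)$-dimensional sub-vector; and (iv) note that the size of the removed piece is controlled by Condition \ref{cond:finitemomt} (bounded eighth moments) and the variance bound $\sigma^2(a)=\Theta(p^2 n^{-a})$, so deleting two coordinates changes $\tilde{\mathcal{U}}(a_r)$ by $O_P(p^{-1/2})$ relative to $\sigma(a_r)$. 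Combining the Chen--Stein limit for the max with the stable conditional CLT for the $U$-statistics, and using that the latter does not depend (asymptotically) on which $l^*$ was conditioned upon, yields the product form in the statement.

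The technical workhorse I would invoke is a blocking/truncation lemma: split the sum in $\tilde{\mathcal{U}}(a_r)$ into a ``small'' part depending on the coordinates that can be near-extremal and a ``large'' part independent enough from $\hat M_n$, bound the small part via Markov's inequality using the $8a$-th moment controls, and handle the large part via the asymptotic independence of disjoint $\alpha$-mixing blocks (as in Condition \ref{cond:alphamixing}), together with a Berry--Esseen-type bound for the joint CLT analogous to \eqref{eq:thm1berryessenbound}. The main obstacle will be making precise the claim that the extremal index pair $l^*$ carries asymptotically negligible information about the bulk $U$-statistic fluctuations: because $x_{i,j}$'s are only \emph{uncorrelated} (not independent) under $H_0$ and the $U$-statistic involves products over \emph{all} pairs sharing a coordinate with $j_{l^*}^1$ or $j_{l^*}^2$, one must carefully quantify, using $\alpha$-mixing decay $\alpha_{\mathbf{x}}(s)\le M\delta^s$ (or the explicit moment formulas of Condition \ref{cond:ellpmoment}), that the number of ``affected'' summands and their variance contribution are both $o(\sigma^2(a))$, uniformly over the choice of $l^*$. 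This is where the condition $\log p = o(n^{1/7})$ enters — it gives enough room for the truncation level $\tau_n$ and the Poisson approximation error to be simultaneously negligible against the $U$-statistic's Berry--Esseen rate.
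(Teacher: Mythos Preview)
Your intuition — the maximum is driven by a few index pairs while each $\tilde{\mathcal{U}}(a_r)$ is spread diffusely over all $\binom{p}{2}$ pairs — is correct, and the paper exploits exactly this. But two points in your execution need correction.

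First, a hypothesis slip: Theorem \ref{thm:asymindpt} (and hence this lemma) assumes Condition \ref{cond:maxiidcolumn}, i.e.\ the standardized columns $(x_j-\mu_j)/\sqrt{\sigma_{j,j}}$ are \emph{i.i.d.}, not merely $\alpha$-mixing as in Conditions \ref{cond:alphamixing}/\ref{cond:ellpmoment}. This is what makes the decoupling below exact rather than approximate; your mixing-based argument is fighting a harder problem than the one actually posed.

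Second, and more seriously, your steps (ii)--(iii) — condition on the argmax index $l^*$ and then claim that the $U$-statistic restricted to coordinates other than $j_{l^*}^1,j_{l^*}^2$ is asymptotically unaffected — has a gap. The event that $l^*$ attains the maximum constrains \emph{every} $\hat G_l$, and each pair $l\neq l^*$ shares coordinates with many summands of $\tilde{\mathcal{U}}(a_r)$. Removing only the two coordinates of $l^*$ does not sever the dependence that conditioning on a data-dependent maximizer induces on the bulk.

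The paper's remedy is to avoid conditioning on a random index altogether. It writes $\{\hat M_n/n>y_p\}=\bigcup_{l}\{\hat G_l^2>ny_p\}$ and applies two-sided Bonferroni inequalities at depth $d=O(\log^{1/5}p)$, reducing the problem to intersections $\bigcap_{t=1}^s\{\hat G_{l_t}^2>ny_p\}$ for \emph{fixed} tuples $(l_1,\ldots,l_s)$. For each fixed tuple it splits $\sum_l U_l^a$ into the $O(ps)$ pairs in $L_{I_s}$ (those sharing a coordinate with some $j_{l_t}^1$ or $j_{l_t}^2$) and the complement $L\setminus L_{I_s}$; under i.i.d.\ columns, $\{\hat G_{l_t}\}_{t\le s}$ is \emph{genuinely independent} of $\{U_l^a:l\in L\setminus L_{I_s}\}$, so the joint probability factors up to a $\pm\Gamma_p$ slack with $\Gamma_p=\Theta(\log^{-1/2}p)$. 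The $L_{I_s}$ contribution is shown uniformly negligible over all tuples via sub-exponential Bernstein-type tail bounds (Lemma \ref{lm:expdecylm}), and the inclusion--exclusion sums $\sum_{l_1<\cdots<l_s}P(\bigcap_t\{\hat G_{l_t}^2>ny_p\})$ are evaluated by Zaitsev's multivariate Gaussian approximation (Lemmas \ref{lm:maxupperlowerbd}--\ref{lm:maxupperlowerbd1}); this is where $\log p=o(n^{1/7})$ and the sub-Gaussian tails $\varsigma=2$ enter. In short, replacing ``which pair is the argmax'' by ``which fixed pairs exceed the threshold'' lets one work with deterministic index sets and the exact independence that the conditioning route cannot deliver.
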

\begin{proof}
	\textit{See Section \ref{sec:lemma9pproof} on Page \pageref{sec:lemma9pproof}}. 
\end{proof}

To show that $\hat{M}_n/n$ and $n\mathcal{U}(\infty)^2$ are close,  we use $M_n/n$ defined in \eqref{eq:indpalldefinewvar} as an intermediate variable. 
We next prove that $M_n/n$ and $\hat{M}_n/n$ have small difference in the sense that the conclusion in Lemma \ref{lm:firstsetpthm3proof} still holds by replacing $\hat{M}_n$ with $M_n$.  This is formally stated in the following Lemma \ref{lm:boundedsmalldiff}.
\begin{lemma} \label{lm:boundedsmalldiff}
Under the  conditions of Theorem \ref{thm:asymindpt},
\begin{align*}
	&\Biggr| P\Big( \frac{{M}_n}{n} > y_p, \frac{\tilde{\mathcal{U}}(a_1)  }{\sigma(a_1)} \leq z_1,\ldots, \frac{\tilde{\mathcal{U}}(a_m)  }{\sigma(a_m)}\leq z_m \Big) \notag \\ 
  	& \ - P\Big( \frac{{M}_n}{n} > y_p \Big) \prod_{r=1}^m P\Big(\frac{\tilde{\mathcal{U}}(a_r)  }{\sigma(a_r)} \leq z_r \Big) \Biggr| \rightarrow 0. \notag
\end{align*}
\end{lemma}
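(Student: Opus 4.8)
The plan is to show that $M_n/n$ and $\hat{M}_n/n$ differ by a \emph{deterministic} vanishing amount on an event of probability tending to one, and then to transfer the factorization of Lemma~\ref{lm:firstsetpthm3proof} from $\hat{M}_n$ to $M_n$ by a threshold-squeezing argument. Write $\xi_{i,l}=(x_{i,j_l^1}/\sqrt{\sigma_{j_l^1,j_l^1}})(x_{i,j_l^2}/\sqrt{\sigma_{j_l^2,j_l^2}})$, so $\tilde{G}_l=\sum_{i=1}^n\xi_{i,l}$ in \eqref{eq:indpalldefinewvar}. Since $j_l^1\neq j_l^2$ and $\sigma_{j_l^1,j_l^2}=0$ under $H_0$, we have $\mathrm{E}\,\xi_{1,l}=0$, hence $\hat{G}_l=\sum_{i=1}^n\xi_{i,l}\mathbf{1}\{|\xi_{i,l}|\le\tau_n\}+n\bar{\xi}_l$ with $\bar{\xi}_l:=\mathrm{E}[\xi_{1,l}\mathbf{1}\{|\xi_{1,l}|>\tau_n\}]$.

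Step 1 (truncation is harmless). Under Condition~\ref{cond:maxiidcolumn} with $\varsigma=2$ each $(x_j-\mu_j)/\sqrt{\sigma_{j,j}}$ is sub-Gaussian, so each $\xi_{i,l}$ is sub-exponential uniformly in $l$, with $P(|\xi_{1,l}|>t)\le Ce^{-ct}$; moreover $\mathrm{var}(\xi_{1,l})=1$ because the columns are i.i.d. A union bound over the $nq$ pairs ($q=\binom{p}{2}$) gives $P(\max_{i,l}|\xi_{i,l}|>\tau_n)\le nq\,C(p+n)^{-c\tau}\to0$ once $\tau$ is a large enough constant (the growth condition $\log p=o(n^{1/7})$ being more than sufficient here); let $\Omega_n^{(1)}$ be the complementary event. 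The same tail bound yields $\max_l|\bar{\xi}_l|\le\mathrm{E}[|\xi_{1,l}|\mathbf{1}\{|\xi_{1,l}|>\tau_n\}]\le C(p+n)^{-c\tau/2}=:\eta_n$, polynomially small. On $\Omega_n^{(1)}$, $\xi_{i,l}\mathbf{1}\{|\xi_{i,l}|\le\tau_n\}=\xi_{i,l}$ for all $i,l$, so $\hat{G}_l=\tilde{G}_l+n\bar{\xi}_l$ with $\max_l|n\bar{\xi}_l|\le n\eta_n$. Intersecting with $\Omega_n^{(2)}:=\{\max_l|\tilde{G}_l|\le C_0\sqrt{n\log p}\}$ — which has probability tending to one by Bernstein's inequality for the sub-exponential sums $\tilde{G}_l$ and a union bound, for $C_0$ large — we obtain on $\Omega_n:=\Omega_n^{(1)}\cap\Omega_n^{(2)}$,
\[
\Big|\tfrac{\hat{M}_n}{n}-\tfrac{M_n}{n}\Big|=\tfrac1n\big|\max_l\hat{G}_l^2-\max_l\tilde{G}_l^2\big|\le\tfrac1n\big(2C_0\sqrt{n\log p}\cdot n\eta_n+n^2\eta_n^2\big)=:\varepsilon_n,
\]
and $\varepsilon_n\to0$ (indeed polynomially fast) for $\tau$ large, since $\eta_n$ is polynomially small in $p+n$ while the remaining factors are only polynomial in $n,p$.

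Step 2 (transfer). On $\Omega_n$, $\{M_n/n>y_p\}\subseteq\{\hat{M}_n/n>y_p-\varepsilon_n\}$ and $\{\hat{M}_n/n>y_p+\varepsilon_n\}\subseteq\{M_n/n>y_p\}$, where $y_p\mp\varepsilon_n$ is of the form \eqref{eq:ypdefition} with $y$ replaced by $y\mp\varepsilon_n$. Since $P(\Omega_n^c)\to0$, the probability $P(M_n/n>y_p,\ \tilde{\mathcal{U}}(a_1)/\sigma(a_1)\le z_1,\dots,\tilde{\mathcal{U}}(a_m)/\sigma(a_m)\le z_m)$ is squeezed, up to $o(1)$, between the two corresponding probabilities with $\hat{M}_n$ at thresholds $y_p\mp\varepsilon_n$; by Lemma~\ref{lm:firstsetpthm3proof} applied at these shifted thresholds each of these equals $P(\hat{M}_n/n>y_p\mp\varepsilon_n)\prod_{r=1}^m P(\tilde{\mathcal{U}}(a_r)/\sigma(a_r)\le z_r)+o(1)$. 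The marginal law of $\hat{M}_n/n$ converges to the continuous extreme-value distribution (this is part of the machinery behind Lemma~\ref{lm:firstsetpthm3proof} and of Theorem~\ref{thm:extlimit}), so $P(\hat{M}_n/n>y_p\mp\varepsilon_n)\to1-G(y)$ as $\varepsilon_n\to0$; the same squeeze applied to the marginal gives $P(M_n/n>y_p)\to1-G(y)$ too. Hence both bounds and the target all converge to $(1-G(y))\prod_r P(\tilde{\mathcal{U}}(a_r)/\sigma(a_r)\le z_r)$, which is exactly the asserted factorization.

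The step I expect to be most delicate is Step 1: choosing $\tau$ (and verifying the sub-exponential tail and Bernstein bounds) so that simultaneously the no-truncation event has probability $\to1$ in the regime $\log p=o(n^{1/7})$ and the residual shift $\varepsilon_n$ is deterministically $o(1)$, all while tracking the centering $n\bar{\xi}_l$ that the truncation introduces. Step 2 is then a routine sandwich.
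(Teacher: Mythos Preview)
Your proposal is correct and follows essentially the same route as the paper: establish that $|M_n/n-\hat{M}_n/n|\xrightarrow{P}0$ via the truncation event, then transfer the factorization of Lemma~\ref{lm:firstsetpthm3proof} from $\hat{M}_n$ to $M_n$. The paper organizes the transfer as a three-term decomposition $\Delta_{P,1}+\Delta_{P,2}+\Delta_{P,3}$ (applying Lemma~\ref{lm:firstsetpthm3proof} once at the fixed threshold $y_p$) rather than your sandwich at $y_p\pm\varepsilon_n$, but these are equivalent; if anything, your Step~1 is more explicit than the paper's in tracking the centering shift $n\bar{\xi}_l$ and in invoking $\Omega_n^{(2)}$ to control $\max_l|\tilde G_l|$.
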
 
\begin{proof}
\textit{See Section  \ref{sec:lemma10proof} on Page \pageref{sec:lemma10proof}.}	
\end{proof}

Given Lemma \ref{lm:boundedsmalldiff}, we further prove that $M_n/n$ and $\tilde{\mathcal{U}}(a)/\sigma(a_r)$ are close to $n\mathcal{U}^2(\infty)$ and $\mathcal{U}(a_r)$, respectively. 
In particular, by the proof of Theorem 3 in \citet{cai2011}, we know 
$
	\{{n^2 \mathcal{U}^2(\infty)-M_n}\}/{n} \xrightarrow{P} 0.
$ In addition, 
Lemma \ref{lm:varianceorder} proves  that  $\{\mathcal{U}(a_r)-\tilde{\mathcal{U}}(a_r)\}/{\sigma(a_r)}\xrightarrow{P} 0$. 
Based on these results and  Lemma \ref{lm:boundedsmalldiff}, the following Lemma \ref{lm:addprop0termtrue} shows that  the conclusion in Lemma \ref{lm:boundedsmalldiff} still holds by replacing $M_n/n$ with $n\mathcal{U}^2(\infty)$ and replacing $\tilde{\mathcal{U}}(a_r)$ with  $\mathcal{U}(a_r)$. 
\begin{lemma}\label{lm:addprop0termtrue}
 Under the conditions of Theorem \ref{thm:asymindpt},  
 \begin{align}
	& \Big| P\Big( n  \mathcal{U}^2(\infty) > y_p, \frac{{\mathcal{U}}(a_1)  }{\sigma(a_1)} \leq z_1, \ldots,  \frac{{\mathcal{U}}(a_m)  }{\sigma(a_m)} \leq z_m\Big) \notag \\ 
	&\ -  P\Big( n  \mathcal{U}^2(\infty) > y_p \Big) \prod_{r=1}^m P \Big( \frac{{\mathcal{U}}(a_r)  }{\sigma(a_r)} \leq z_r \Big)\Big| \to 0. \notag
\end{align}
\end{lemma}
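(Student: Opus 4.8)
The plan is to obtain the statement from Lemma~\ref{lm:boundedsmalldiff} by a Slutsky-type sandwiching argument, replacing $M_n/n$ and $\tilde{\mathcal{U}}(a_r)/\sigma(a_r)$ by the asymptotically equivalent statistics $n\mathcal{U}^2(\infty)$ and $\mathcal{U}(a_r)/\sigma(a_r)$. Write $W_n=n\mathcal{U}^2(\infty)$, $\widehat{W}_n=M_n/n$, $X_{n,r}=\mathcal{U}(a_r)/\sigma(a_r)$ and $\widehat{X}_{n,r}=\tilde{\mathcal{U}}(a_r)/\sigma(a_r)$. The two facts I would use are $W_n-\widehat{W}_n\xrightarrow{P}0$ (from the proof of Theorem~3 in \citet{cai2011}, as noted just above the lemma) and $X_{n,r}-\widehat{X}_{n,r}\xrightarrow{P}0$ for each $r$ (part of Lemma~\ref{lm:varianceorder}). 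I would also record the marginal limits: by Theorem~\ref{thm:extlimit}, $P(W_n>u_p)\to 1-G(u)$ for every $u\in\mathbb{R}$, where $u_p:=4\log p-\log\log p+u$ and $G$ is continuous, whence also $P(\widehat{W}_n>u_p)\to 1-G(u)$; and by Theorem~\ref{thm:jointnormal}, $P(X_{n,r}\le z)\to\Phi(z)$ and $P(\widehat{X}_{n,r}\le z)\to\Phi(z)$.

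Fix $\varepsilon>0$. On the event $\{|W_n-\widehat{W}_n|\le\varepsilon\}\cap\bigcap_{r=1}^m\{|X_{n,r}-\widehat{X}_{n,r}|\le\varepsilon\}$, whose complement has probability $o(1)$ by the two preceding convergences, one has the inclusions
\begin{align*}
\{W_n>y_p\}\cap\bigcap_{r=1}^m\{X_{n,r}\le z_r\}&\subseteq\{\widehat{W}_n>y_p-\varepsilon\}\cap\bigcap_{r=1}^m\{\widehat{X}_{n,r}\le z_r+\varepsilon\},\\
\{\widehat{W}_n>y_p+\varepsilon\}\cap\bigcap_{r=1}^m\{\widehat{X}_{n,r}\le z_r-\varepsilon\}&\subseteq\{W_n>y_p\}\cap\bigcap_{r=1}^m\{X_{n,r}\le z_r\}.
\end{align*}
Because $y_p\pm\varepsilon=4\log p-\log\log p+(y\pm\varepsilon)$ has exactly the normalized form of the threshold in Lemma~\ref{lm:boundedsmalldiff}, that lemma applies with $y$ replaced by $y\pm\varepsilon$ and each $z_r$ by $z_r\mp\varepsilon$, so $P(\widehat{W}_n>y_p\pm\varepsilon,\ \widehat{X}_{n,1}\le z_1\mp\varepsilon,\ldots,\widehat{X}_{n,m}\le z_m\mp\varepsilon)=P(\widehat{W}_n>y_p\pm\varepsilon)\prod_{r=1}^m P(\widehat{X}_{n,r}\le z_r\mp\varepsilon)+o(1)$. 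Feeding the two inclusions through this identity, adding back the $o(1)$ bad-event probability, letting $n,p\to\infty$, and handling the product term $P(W_n>y_p)\prod_{r=1}^m P(X_{n,r}\le z_r)\to(1-G(y))\prod_{r=1}^m\Phi(z_r)$ the same way, I would reach
\begin{align*}
&\limsup_{n,p\to\infty}\Big|P\big(W_n>y_p,\,X_{n,1}\le z_1,\ldots,X_{n,m}\le z_m\big)-P(W_n>y_p)\prod_{r=1}^m P(X_{n,r}\le z_r)\Big|\\
&\qquad\le\ \big[G(y+\varepsilon)-G(y-\varepsilon)\big]+\sum_{r=1}^m\big[\Phi(z_r+\varepsilon)-\Phi(z_r-\varepsilon)\big],
\end{align*}
using that each factor of a product of probabilities lies in $[0,1]$ to telescope the perturbations.

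The proof then finishes by letting $\varepsilon\downarrow 0$: continuity of $G$ and of $\Phi$ makes the right-hand side vanish, and since the left-hand side is $\varepsilon$-free, the claimed convergence follows. I do not expect a genuine obstacle in this step — all the substance sits in Lemmas~\ref{lm:firstsetpthm3proof}--\ref{lm:boundedsmalldiff} and in the estimate $n^2\mathcal{U}^2(\infty)-M_n=o_P(n)$ borrowed from \citet{cai2011}. The only care required is routine bookkeeping: checking that the $\varepsilon$-shifted thresholds still satisfy the hypotheses of Lemma~\ref{lm:boundedsmalldiff} (they do, since it holds for arbitrary real $y,z_1,\ldots,z_m$), and ordering the limits so that $n,p\to\infty$ precedes $\varepsilon\downarrow 0$.
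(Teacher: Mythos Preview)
Your argument is correct and follows essentially the same Slutsky-type $\varepsilon$-sandwich route as the paper: you invoke Lemma~\ref{lm:boundedsmalldiff} at shifted thresholds, use $n\mathcal{U}^2(\infty)-M_n/n\xrightarrow{P}0$ and $\{\mathcal{U}(a_r)-\tilde{\mathcal{U}}(a_r)\}/\sigma(a_r)\xrightarrow{P}0$, and close with continuity of $G$ and $\Phi$. The only organizational difference is that the paper performs the two replacements sequentially (first $\tilde{\mathcal{U}}\to\mathcal{U}$ keeping $M_n/n$, then $M_n/n\to n\mathcal{U}^2(\infty)$), while you do both at once; your single-pass version is slightly more economical but not a different idea.
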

\begin{proof}
\textit{See Section \ref{sec:lemma11proof} on Page \pageref{sec:lemma11proof}.}	
\end{proof}
\noindent Lemma \ref{lm:addprop0termtrue} then proves  Theorem \ref{thm:asymindpt}.  

\subsection{Proof of Theorem \ref{thm:computation}} \label{sec:proofthm24var}
As both $\mathcal{U}(a)$ and $\mathbb{V}_u(a)$ are location invariant in the sense of Proposition \ref{prop:locinvariance},    we  assume $\mathrm{E}({\mathbf x})={\mathbf 0}$.
To prove Theorem \ref{thm:computation}, we decompose $\mathbb{V}_u(a)=\mathbb{V}_{u,1}(a)+\mathbb{V}_{u,2}(a)$, where we define  
\begin{eqnarray*}
	\mathbb{V}_{u,1}(a)=\frac{2a!}{(P^n_a)^2} \sum_{\substack{1\leq j_1\neq j_2\leq p} } \sum_{1\leq i_1\neq \ldots \neq i_a \leq n} \prod_{t=1}^a x_{i_t,j_1}^2x_{i_t,j_2}^2, 
\end{eqnarray*}  and $\mathbb{V}_{u,2}(a)=\mathbb{V}_{u}(a)-\mathbb{V}_{u,1}(a)$. 
The next Lemma \ref{lm:varestcong2} shows that $\mathbb{V}_{u,1}(a)$ is of a larger order than $\mathbb{V}_{u,2}(a)$, and thus it is the leading term in   $\mathbb{V}_{u}(a)$. 
\begin{lemma}\label{lm:varestcong2}
Under the conditions of Theorem \ref{thm:computation}, ${ \mathbb{V}_{u,1}(a) }/{ \mathrm{E}\{\mathbb{V}_{u,1}(a)\} } \xrightarrow{P} 1$ and ${ \mathbb{V}_{u,2}(a) }/{ \mathrm{E}\{\mathbb{V}_{u,1}(a)\} } \xrightarrow{P} 0.$	
\end{lemma}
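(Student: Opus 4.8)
The plan is to treat $\mathbb{V}_{u,1}(a)$ and $\mathbb{V}_{u,2}(a)$ separately: the first by a second-moment (Chebyshev) argument, the second by showing it is negligible in expectation.

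For the first claim, write $s_{i,l}=x_{i,j_1}^2x_{i,j_2}^2$ for an index pair $l=(j_1,j_2)$ with $1\le j_1\ne j_2\le p$, and let $\bar U_l=(P^n_a)^{-1}\sum_{1\le i_1\ne\cdots\ne i_a\le n}\prod_{t=1}^a s_{i_t,l}$ be the corresponding standard (nonnegative-kernel) U-statistic of order $a$, so that $\mathbb{V}_{u,1}(a)=\tfrac{2a!}{P^n_a}\sum_{1\le j_1\ne j_2\le p}\bar U_l$. First I would compute $\mathrm{E}\{\mathbb{V}_{u,1}(a)\}=\tfrac{2a!}{P^n_a}\sum_{1\le j_1\ne j_2\le p}(\mathrm{E}[x_{j_1}^2x_{j_2}^2])^a$: Condition~\ref{cond:finitemomt} bounds each factor from above (Cauchy--Schwarz), while the weak dependence (Condition~\ref{cond:alphamixing} or \ref{cond:ellpmoment}) forces $\mathrm{E}[x_{j_1}^2x_{j_2}^2]$ to be bounded below for all but $o(p^2)$ pairs, so $\mathrm{E}\{\mathbb{V}_{u,1}(a)\}=\Theta(p^2n^{-a})$; moreover, since under $H_0$ the moment $\Pi_{j_1,j_2,j_3,j_4}$ in \eqref{eq:varianceformmodif1} is dominated by its two ``diagonal'' pairings $(j_3,j_4)\in\{(j_1,j_2),(j_2,j_1)\}$, each equal to $\mathrm{E}[x_{j_1}^2x_{j_2}^2]$, one gets $\mathrm{E}\{\mathbb{V}_{u,1}(a)\}\simeq\sigma^2(a)$ — this is what makes $\mathbb{V}_{u,1}(a)$ the correct leading term. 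Then I would bound $\mathrm{var}\{\mathbb{V}_{u,1}(a)\}=\tfrac{(2a!)^2}{(P^n_a)^2}\sum_{l,l'}\mathrm{cov}(\bar U_l,\bar U_{l'})$ via a Hoeffding decomposition of each covariance into components $\zeta_c^{l,l'}$ attached to $c$ shared sample indices, $c=1,\dots,a$, carrying weights $O(n^{-c})$. The diagonal terms $l=l'$ contribute $O(p^2 n^{-2a-1})$, using only finiteness of $\mathrm{E}[x_j^8]$, hence of $\mathrm{E}[s_{1,l}^2]$, from Condition~\ref{cond:finitemomt}. For $l\ne l'$, each $\zeta_c^{l,l'}$ is a covariance between a function of $(x_{j_1},x_{j_2})$ and a function of $(x_{j_1'},x_{j_2'})$, which by the mixing or moment assumption is negligible unless the two coordinate pairs ``interact'', of which there are only $O(p^3\,\mathrm{polylog}\,p)$; summing, the off-diagonal part is $o(p^4n^{-2a})$, in the same spirit as Lemma~\ref{lm:targetorder}. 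Hence $\mathrm{var}\{\mathbb{V}_{u,1}(a)\}/[\mathrm{E}\{\mathbb{V}_{u,1}(a)\}]^2\to 0$, and Chebyshev's inequality gives $\mathbb{V}_{u,1}(a)/\mathrm{E}\{\mathbb{V}_{u,1}(a)\}\xrightarrow{P}1$.

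For the second claim, expand each factor $(x_{i_t,j_1}-\bar x_{j_1})^2(x_{i_t,j_2}-\bar x_{j_2})^2$ as $x_{i_t,j_1}^2x_{i_t,j_2}^2$ plus terms carrying at least one factor $\bar x_{j_1}$ or $\bar x_{j_2}$; then $\mathbb{V}_{u,2}(a)=\mathbb{V}_u(a)-\mathbb{V}_{u,1}(a)$ collects exactly the products in which at least one factor is of the second kind. Under $H_0$ one has $\mathrm{E}(\bar x_j^2)=\sigma_{j,j}/n=O(n^{-1})$ and, more generally, $\mathrm{E}|\bar x_j|^{2r}=O(n^{-r})$ for $r$ up to roughly $4a$ thanks to the $8a$-th moment bound in Condition~\ref{cond:higherordermomentvarest}; bounding each term of $\mathbb{V}_{u,2}(a)$ by H\"older/Cauchy--Schwarz together with these moment bounds shows $\mathrm{E}|\mathbb{V}_{u,2}(a)|=o(p^2n^{-a})=o(\mathrm{E}\{\mathbb{V}_{u,1}(a)\})$, after which Markov's inequality yields $\mathbb{V}_{u,2}(a)/\mathrm{E}\{\mathbb{V}_{u,1}(a)\}\xrightarrow{P}0$.

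I expect the main obstacle to be the off-diagonal variance bound in the first claim: there are $O(p^4)$ covariances $\mathrm{cov}(\bar U_l,\bar U_{l'})$ and one must show their sum is $o(p^4n^{-2a})$, which forces a joint use of two effects — the $O(n^{-c})$ smallness coming from the U-statistic (Hoeffding) structure, and the near-vanishing of the kernel-level covariances coming from weak dependence when the coordinate pairs are far apart — together with careful bookkeeping of the polylogarithmically-many ``interacting'' pairs, exactly as in the proof of Lemma~\ref{lm:targetorder}. The $\mathbb{V}_{u,2}(a)$ part is conceptually routine but combinatorially heavy, since the sample-mean centering produces terms with up to $\sim 4a$ powers of a single coordinate, which is why Condition~\ref{cond:higherordermomentvarest} is imposed.
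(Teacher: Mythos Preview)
Your overall plan (Chebyshev for $\mathbb V_{u,1}$, Markov for $\mathbb V_{u,2}$) matches the paper, but you have misidentified where the work lies.

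For the first claim, the variance bound is not an obstacle at all, and no weak dependence in the $j$-direction is needed. The paper simply observes that for any two index tuples $\mathbf i,\tilde{\mathbf i}\in\mathcal P(n,a)$ with $\{\mathbf i\}\cap\{\tilde{\mathbf i}\}=\emptyset$, independence of observations gives
\[
\mathrm E\Big(\prod_{t=1}^a x_{i_t,j_1}^2x_{i_t,j_2}^2\,x_{\tilde i_t,j_3}^2x_{\tilde i_t,j_4}^2\Big)
=\{\mathrm E(x_{1,j_1}^2x_{1,j_2}^2)\}^a\{\mathrm E(x_{1,j_3}^2x_{1,j_4}^2)\}^a,
\]
so the corresponding contribution to $\mathrm{var}\{\mathbb V_{u,1}(a)\}$ is \emph{exactly} zero, for every choice of $(j_1,j_2,j_3,j_4)$. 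The remaining pairs have $|\{\mathbf i\}\cup\{\tilde{\mathbf i}\}|\le 2a-1$, there are $O(n^{2a-1})$ of them, and each term is $O(1)$ by Condition~\ref{cond:finitemomt} alone; summing trivially over all $O(p^4)$ quadruples gives $\mathrm{var}\{\mathbb V_{u,1}(a)\}=O(p^4n^{-2a-1})$ in one line. Your Hoeffding decomposition with a separate off-diagonal analysis via mixing would also reach the goal, but it is unnecessary machinery here --- the kernel $x_{i,j_1}^2x_{i,j_2}^2$ is a simple product, so the factoring over disjoint sample indices is automatic.

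For the second claim, the paper takes a different route from yours: it bounds the \emph{second} moment $\mathrm E\{\mathbb V_{u,2}^2(a)\}=o(p^4n^{-2a})$ rather than $\mathrm E|\mathbb V_{u,2}(a)|$. Instead of treating $\bar x_j$ as a unit and invoking $\mathrm E|\bar x_j|^{2r}=O(n^{-r})$, the paper expands each $\bar x_j$ back into $n^{-1}\sum_m x_{m,j}$, writes $\mathrm E(T_{s_1,r_1,s_2,r_2}^2)$ as a large sum over sample indices, and uses $\mathrm E(x_{i,j})=0$ to force every index coming from a $\bar x$ to pair with another index; this caps the size of the index union and yields $\mathrm E(T_{s_1,r_1,s_2,r_2}^2)=O(p^4n^{-2a-\sum_k(s_k+2r_k)})$. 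Your first-moment approach can be made to work, but a pure H\"older bound on $\bar x_j$ is not quite enough by itself: after taking absolute values you lose the cancellation in terms like $\sum_{\mathbf i} x_{i_1,j_1}\,x_{i_1,j_2}^2\prod_{t\ge 2}(\cdots)$, and controlling these still requires the zero-mean/weak-dependence structure in the $j$-direction. The paper's second-moment route sidesteps this by working at the level of individual $x_{m,j}$'s, where $\mathrm E(x_{m,j})=0$ does the bookkeeping directly; this is also where Condition~\ref{cond:higherordermomentvarest} is actually used.
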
  
\begin{proof}
\textit{See Section \ref{lm:pfthm24lm} on Page \pageref{lm:pfthm24lm}.}
\end{proof}

Lemma \ref{lm:varestcong2} implies  that $ \mathbb{V}_{u}(a) /\mathrm{E}\{\mathbb{V}_{u,1}(a)\}  \xrightarrow{P} 1$. 
As $\mathbb{V}_u(a) > 0$ with probability 1,  $\mathrm{E}\{\mathbb{V}_{u,1}(a)\}/ \mathbb{V}_{u}(a) \xrightarrow{P} 1 $. 
In addition,  note that 
$\mathrm{E}\{\mathbb{V}_{u,1}(a)\}={2a!}{(P^n_a)^{-1}} \sum_{1\leq j_1\neq j_2\leq p} \{\mathrm{E}(x_{1,j_1}^2 x_{1,j_2}^2)\}^a.$ By \eqref{eq:varoduaoncov}  and \eqref{eq:varordp4top2} in Section \ref{par:complpflm1}, we have $\mathrm{var}\{\mathcal{U}_{}(a)\}/ \mathrm{E}\{\mathbb{V}_{u,1}(a)\} \to 1$. Therefore,  
\begin{align*}
	\frac{\mathbb{V}_u(a)}{\mathrm{var}\{\mathcal{U}(a)\}}=\frac{\mathbb{V}_u(a)}{\mathrm{E}\{\mathbb{V}_{u,1}(a)\}} \times \frac{\mathrm{E}\{\mathbb{V}_{u,1}(a)\} }{ \mathrm{var}\{\mathcal{U}(a)\}}\xrightarrow{P} 1.
\end{align*}

\subsection{Proof of Theorem \ref{thm:cltalternative}} \label{sec:updatepoweronesampf}
We first present Condition \ref{cond:altonecovellip} in Theorem \ref{thm:cltalternative},  which is a generalized version of  Condition \ref{cond:ellpmoment} under $H_A$. 
\begin{condition}\label{cond:altonecovellip}
Following the central moment notation in  \eqref{eq:highmomentdef}, for $t\leq 8$, we assume that there exists constant $\tilde{\kappa}_t$ such that $\Pi_{j_1,\ldots,j_t}=\tilde{\kappa}_t\mathrm{E}(\prod_{k=1}^t z_{j_k}),$ where $1\leq j_1,\ldots, j_t \leq p$ and $(z_1,\ldots, z_p)^{\intercal}\sim \mathcal{N}(\mathbf{0}, \boldsymbol{\Sigma}_A).$
\end{condition}
\noindent  Condition \ref{cond:altonecovellip} generalizes Condition \ref{cond:ellpmoment} to the alternative setting. Similarly to Condition \ref{cond:ellpmoment}, Condition \ref{cond:altonecovellip} is satisfied when $\mathbf{x}$ follows an elliptical  distribution with certain moment conditions \citep[see][]{frahm2004generalized,maruyama2003estimation}. To be consistent with the notation in Condition \ref{cond:ellpmoment}, we let $\kappa_1=\tilde{\kappa}_4$ below. 



\vspace{0.3em}
	
We next introduce some notation, and then provide the proof.  

\vspace{0.3em}
\noindent \textit{Notation.}\ \  
For each given $j_1\in \{1,\ldots, p\},$ we define
\begin{align*}
&J_{j_1}=\{(j_1,j_2): \sigma_{j_1,j_2}\neq 0, 1\leq j_1 \neq j_2\leq p \}, \quad \\
 &J_{j_1}^c=\{ (j_1,j_2): \sigma_{j_1,j_2}=0, 1\leq j_1\neq j_2\leq p \}.
\end{align*} Then $J_A=\cup_{j_1=1}^p J_{j_1}$, and we correspondingly  define $J_A^c= \cup_{j_1=1}^p J_{j_1}^c,$ which is the 
set difference of  $\{(j_1,j_2):1\leq j_1\neq j_2\leq p\}$ and $J_A$.  
Moreover, we define $F(a,c)=(-1)^c \binom{a}{c}/P^n_{a+c}$, and  
\begin{align*}
	K(c,j_1,j_2)= F(a,c)\sum_{1\leq i_1\neq \ldots \neq i_{a+c} \leq n}  \prod_{t=1}^{a-c} (x_{i_t,j_1}x_{i_t,j_2})  \prod_{t=a-c+1}^{a} x_{i_t,j_1} \prod_{t=a+1}^{a+c} x_{i_t,j_2}. 
\end{align*}
We decompose $\mathcal{U}(a)=T_{U,a,1,1}+T_{U,a,1,2}+T_{U,a,2}$, where 
\begin{eqnarray}
\quad \quad &&T_{U,a,1,1}=\sum_{(j_1,j_2)\in J_A^c}K(0,j_1,j_2),\ \   T_{U,a,1,2}=\sum_{(j_1,j_2)\in J_A^c}\sum_{c=1}^aK(c,j_1,j_2), \label{eq:uadcomalt} \\
&& T_{U,a,2}=\sum_{(j_1,j_2)\in J_A}\sum_{c=0}^a K(c,j_1,j_2). \notag
\end{eqnarray}

\noindent \textit{Proof.}\ \ 
Similarly to Section \ref{sec:detailofjointnormal}, we first derive the variances and the covariances of the U-statistics, and then prove the asymptotic joint normality of the U-statistics.
Particularly, the next Lemma \ref{lm:pfaltvarlm1} derives  the asymptotic form of  $\mathrm{var}\{\mathcal{U}(a)\}$, and additionally shows that among the three terms in \eqref{eq:uadcomalt},  $T_{U,a,1,1}$ is the leading one. 
\begin{lemma}\label{lm:pfaltvarlm1}
Under the conditions of Theorem \ref{thm:cltalternative},  $\sigma^2(a)=\mathrm{var}\{\mathcal{U}(a)\}\simeq \mathrm{var}(T_{U,a,1,1})$, where
\begin{align*}
\mathrm{var}(T_{U,a,1,1})\simeq 2a!\kappa_1^a n^{-a} \sum_{1\leq j_1\neq j_2\leq p}  \sigma_{j_1,j_1}^a \sigma_{j_2,j_2}^a,	
\end{align*} which is $\Theta(p^2n^{-a})$. Moreover, $\mathrm{var}(T_{U,a,1,2})=o(p^2n^{-a})$,  $\mathrm{var}(T_{U,a,2})=o(p^2n^{-a})$ and $\{\mathcal{U}(a)-T_{U,a,1,1}\}/\sigma(a)\xrightarrow{P}0.$ 
\end{lemma}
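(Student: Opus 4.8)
The plan is to decompose $\mathcal{U}(a)$ as in \eqref{eq:uadcomalt} into the three pieces $T_{U,a,1,1}$, $T_{U,a,1,2}$, $T_{U,a,2}$, show that the first is the leading term with the claimed variance, and bound the variances of the other two. I would start with $T_{U,a,1,1}=\sum_{(j_1,j_2)\in J_A^c} K(0,j_1,j_2)$, which is a sum of degenerate $U$-statistics of the form $(P^n_a)^{-1}\sum_{1\le i_1\ne\cdots\ne i_a\le n}\prod_{t=1}^a(x_{i_t,j_1}x_{i_t,j_2})$ over the index pairs with $\sigma_{j_1,j_2}=0$. Since each $x_{i,j_1}x_{i,j_2}$ is mean zero on $J_A^c$, the variance computation is exactly parallel to Lemma \ref{lm:varianceorder}: for a single pair the variance is $\frac{a!}{P^n_a}\,\mathrm{E}[(x_{1,j_1}x_{1,j_2})^2]^a\{1+o(1)\}=\frac{a!}{P^n_a}(\Pi_{j_1,j_1,j_2,j_2})^a\{1+o(1)\}$, and cross-pair covariances are lower order by the same mixing / moment argument used there (Condition \ref{cond:altonecovellip} supplies the Gaussian-type moment factorization, so $\Pi_{j_1,j_1,j_2,j_2}=\kappa_1\,\sigma_{j_1,j_1}\sigma_{j_2,j_2}$ for $(j_1,j_2)\in J_A^c$). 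Summing over $(j_1,j_2)\in J_A^c$ and using $|J_A^c|=p^2-o(p^2)$ gives $\mathrm{var}(T_{U,a,1,1})\simeq 2a!\kappa_1^a n^{-a}\sum_{1\le j_1\ne j_2\le p}\sigma_{j_1,j_1}^a\sigma_{j_2,j_2}^a$, which is $\Theta(p^2n^{-a})$ under Condition \ref{cond:finitemomt}.

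Next I would bound $\mathrm{var}(T_{U,a,1,2})$ and $\mathrm{var}(T_{U,a,2})$. For $T_{U,a,1,2}$ the summands $K(c,j_1,j_2)$ with $c\ge1$ and $(j_1,j_2)\in J_A^c$ involve products $\prod x_{i_t,j_1}x_{i_t,j_2}\prod x_{i_t,j_1}\prod x_{i_t,j_2}$; the extra loose $x_{\cdot,j_1}$ and $x_{\cdot,j_2}$ factors are themselves mean zero (we are in the mean-zero reduction, legitimate by Proposition \ref{prop:locinvariance}), so each $K(c,j_1,j_2)$ with $c\ge1$ is still degenerate but of higher "permutation order" $a+c$, hence its variance carries an extra factor $n^{-c}$; combined with the constraint on $\rho$ in the hypothesis, $\mathrm{var}(T_{U,a,1,2})=o(p^2n^{-a})$. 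For $T_{U,a,2}=\sum_{(j_1,j_2)\in J_A}\sum_{c}K(c,j_1,j_2)$ the key point is $|J_A|=o(p^2)$: here the $x_{i,j_1}x_{i,j_2}$ are no longer mean zero, so $K(c,j_1,j_2)$ has a nonzero mean contributing to $\mathrm{E}[\mathcal{U}(a)]=\sum_{(j_1,j_2)\in J_A}\sigma_{j_1,j_2}^a$, but its centered variance is controlled by Hoeffding-type decomposition; the number of pairs is $o(p^2)$ and the signal bound $\rho=O(|J_A|^{-1/a}p^{1/a}n^{-1/2})$ is exactly what forces $\mathrm{var}(T_{U,a,2})=o(p^2n^{-a})$ as well. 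Adding these and using $\mathrm{Cov}$ bounds of the same type (Cauchy--Schwarz plus the individual variance orders) gives $\mathrm{var}\{\mathcal{U}(a)\}\simeq\mathrm{var}(T_{U,a,1,1})$, and Chebyshev then yields $\{\mathcal{U}(a)-T_{U,a,1,1}\}/\sigma(a)\xrightarrow{P}0$.

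The main obstacle is the careful bookkeeping in the variance of $T_{U,a,1,2}$ and $T_{U,a,2}$: one must track, for each pair of index tuples $(j_1,j_2),(j_1',j_2')$ and each pair of permutation-index configurations, exactly which $x$-moments appear, use that an index appearing an odd number of times in a product kills the expectation (the mean-zero property on $J_A^c$, and Condition \ref{cond:altonecovellip} for higher moments), and then verify that every surviving term is $o(p^2n^{-a})$ after invoking the $\rho$-constraint and $|J_A|=o(p^2)$. This is the analogue of the proof of Lemma \ref{lm:varianceorder} but with the extra loose factors from the $\varphi$/$\mu$ expansion in Remark \ref{rm:anotherpesusstat} and with the nonzero-mean pairs in $J_A$; I would organize it by first reducing to the mean-zero case via Proposition \ref{prop:locinvariance}, then handling $J_A^c$ and $J_A$ separately, and within each, separating the $c=0$ term from the $c\ge1$ terms. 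The remaining conclusions (the explicit asymptotic form and the $\Theta(p^2n^{-a})$ order) then follow by direct summation using Conditions \ref{cond:finitemomt} and \ref{cond:rhoijconst}.
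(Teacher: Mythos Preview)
Your proposal is correct and follows essentially the same route as the paper: the same three-piece decomposition \eqref{eq:uadcomalt}, the same index-matching argument $\{\mathbf{i}\}=\{\tilde{\mathbf{i}}\}$ for each variance, and the same case analysis on $|\{j_1,j_2\}\cap\{j_3,j_4\}|$ using Condition \ref{cond:altonecovellip}. One small refinement to flag: even in $\mathrm{var}(T_{U,a,1,1})$ the cross-pair covariances are not killed by the moment argument alone --- when $(j_1,j_2),(j_3,j_4)\in J_A^c$ but, say, $(j_1,j_3)\in J_A$, the factorization gives a nonzero $\kappa_1\sigma_{j_1,j_3}\sigma_{j_2,j_4}$ term, and it is the hypothesis $|J_A|\rho^a=O(pn^{-a/2})$ (which you already invoke for $T_{U,a,1,2}$ and $T_{U,a,2}$) that makes those contributions $o(p^2n^{-a})$.
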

\begin{proof}
\textit{See Section \ref{sec:pfaltvarlm1} on Page \pageref{sec:pfaltvarlm1}.}
\end{proof}
\noindent 
The following Lemma \ref{lm:pfaltvarlm2} shows that the covariance between two different U-statistics asymptotically converges to 0. 
\begin{lemma}\label{lm:pfaltvarlm2}
Under the conditions of Theorem \ref{thm:cltalternative},   for two integers $a\neq b,$ $\mathrm{cov}\{\mathcal{U}(a)/\sigma(a),\mathcal{U}(b)/\sigma(b)\}\to 0.$ 
\end{lemma}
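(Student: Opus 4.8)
**Proof proposal for Lemma \ref{lm:pfaltvarlm2} (asymptotic decorrelation of $\mathcal{U}(a)$ and $\mathcal{U}(b)$ under $H_A$).**

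The plan is to reduce the claim to a statement about the leading terms and then exploit a parity/matching argument in the expectation of cross-products, exactly parallel to the proof of Lemma \ref{lm:covariancezro} under $H_0$, but now accounting for the nonzero mean $\mathrm{E}[\mathcal{U}(a)]=\sum_{(j_1,j_2)\in J_A}\sigma_{j_1,j_2}^a$. First I would use Lemma \ref{lm:pfaltvarlm1}, which already shows $\{\mathcal{U}(a)-T_{U,a,1,1}\}/\sigma(a)\xrightarrow{P}0$ and $\sigma^2(a)\asymp\mathrm{var}(T_{U,a,1,1})$; hence it suffices to prove $\mathrm{cov}(T_{U,a,1,1},T_{U,b,1,1})=o(\sigma(a)\sigma(b))=o(p^2 n^{-(a+b)/2})$. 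Note $T_{U,a,1,1}$ is centered (it is a sum over $(j_1,j_2)\in J_A^c$ of degenerate U-statistics $K(0,j_1,j_2)$ whose summands are products $\prod_{t=1}^a x_{i_t,j_1}x_{i_t,j_2}$ with distinct indices $i_1,\dots,i_a$, and $\sigma_{j_1,j_2}=0$ there), so it has the same structural form as the null leading term $\tilde{\mathcal{U}}(a)$ restricted to $J_A^c$. Thus the covariance computation is essentially the one in Section \ref{sec:proofcovariancezro}, and the decorrelation on $J_A^c$ follows from the same argument: when $a\neq b$, any cross-product $\mathrm{E}[K(0,j_1,j_2)K(0,j_3,j_4)]$ forces the two index blocks of sizes $a$ and $b$ to overlap, and under Condition \ref{cond:altonecovellip} (the elliptical-type moment structure with $\Pi_{j_1,\dots,j_t}=\tilde\kappa_t\,\mathrm{E}(\prod z_{j_k})$) the resulting moments either vanish or are of strictly smaller order than $\sigma(a)\sigma(b)$.

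The key steps, in order, would be: (i) invoke Lemma \ref{lm:pfaltvarlm1} to replace $\mathcal{U}(a),\mathcal{U}(b)$ by $T_{U,a,1,1},T_{U,b,1,1}$ and reduce to bounding $\mathrm{cov}(T_{U,a,1,1},T_{U,b,1,1})$; (ii) expand this covariance as a double sum over $(j_1,j_2),(j_3,j_4)\in J_A^c$ and over the index sets $\{i_1,\dots,i_a\}$, $\{i'_1,\dots,i'_b\}$, and observe that because the $\mathbf{x}_i$ are i.i.d. and each single $\mathbf{x}_i$ appears to the first power within a block, a nonzero contribution requires every $i$ used in the $a$-block to coincide with some $i'$ in the $b$-block and vice versa — impossible when $a\neq b$ unless there is strict "overlap with multiplicity", which then produces higher marginal moments but fewer free summation indices; (iii) bound the number of such configurations and the size of the associated moments using Condition \ref{cond:altonecovellip}, Condition \ref{cond:finitemomt} (finite eighth moments), and Condition \ref{cond:rhoijconst} ($|J_A|=o(p^2)$, so $|J_A^c|\simeq p^2$ and the signal contribution through $J_A$ is negligible in the covariance of the $J_A^c$-supported leading terms); (iv) conclude each term is $o(p^2 n^{-(a+b)/2})$, hence $\mathrm{cov}\{\mathcal{U}(a)/\sigma(a),\mathcal{U}(b)/\sigma(b)\}\to 0$. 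One can phrase (ii)–(iii) cleanly by noting that the summands of $T_{U,a,1,1}$ and $T_{U,b,1,1}$ are, after centering, orthogonal across different "$i$-patterns," so only diagonal patterns survive, and for $a\neq b$ even the diagonal patterns cannot be matched without collapsing indices.

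The main obstacle I anticipate is the bookkeeping in step (iii): one must carefully enumerate which partial matchings of the two index blocks (of unequal sizes $a$ and $b$) give nonzero expectation under the moment structure of Condition \ref{cond:altonecovellip}, and verify that in every case the gain in moment order is strictly outweighed by the loss of a free summation index in $n$ (each collapsed pair of indices costs a factor $n^{-1}$ but buys only a bounded-moment factor), and simultaneously that the $j$-index sums stay $O(p^2)$ rather than growing — this last point uses that $\sigma_{j_1,j_2}=0$ on $J_A^c$ so there is no "chaining" through nonzero covariances among the column indices, exactly as in the null proof. Since $T_{U,a,1,1}$ lives on $J_A^c$ where the covariance structure is the same as under $H_0$, I expect this to go through with only cosmetic changes from Lemma \ref{lm:covariancezro}, and the write-up can largely cite that argument, inserting Condition \ref{cond:altonecovellip} wherever Condition \ref{cond:ellpmoment} was used and noting $|J_A^c|\simeq p^2$.
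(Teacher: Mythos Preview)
Your approach is essentially the paper's: reduce to the leading terms $T_{U,a,1,1},T_{U,b,1,1}$ via Lemma~\ref{lm:pfaltvarlm1} and Cauchy--Schwarz, then control their covariance using that $\sigma_{j_1,j_2}=0$ on $J_A^c$. You are, however, anticipating much more work than is needed: because the indices within each block are \emph{distinct} (so your ``overlap with multiplicity'' scenario cannot occur), whenever $a\neq b$ there is necessarily some $i\in\{\tilde{\mathbf{i}}\}\setminus\{\mathbf{i}\}$, and the expectation factors out $\mathrm{E}(x_{i,j_3}x_{i,j_4})=\sigma_{j_3,j_4}=0$ for $(j_3,j_4)\in J_A^c$; hence $\mathrm{E}\{T_{U,a,1,1}T_{U,b,1,1}\}=0$ \emph{exactly}, and your entire step~(iii) bookkeeping is unnecessary.
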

\begin{proof}
\textit{See Section \ref{sec:pfaltvarlm2} on Page \pageref{sec:pfaltvarlm2}.}
\end{proof}

To finish the proof, it remains to obtain the joint asymptotic normality  of $[\mathcal{U}(a_1)/\sigma(a_1),\allowbreak \ldots, \mathcal{U}(a_m)/\sigma(a_m)]^{\intercal}$. 
By the Cram\'{e}r-Wold theorem, it is equivalent to prove that any fixed linear combination of $[\mathcal{U}(a_1)/\sigma(a_1),\allowbreak \ldots, \mathcal{U}(a_m)/\sigma(a_m)]^{\intercal}$  converges to a normal distribution. By Lemma \ref{lm:pfaltvarlm1},  $\{\mathcal{U}(a)-T_{U,a,1,1}\}/\sigma(a)\xrightarrow{P}0$, thus by the Slutsky's theorem,  it suffices to prove that any fixed linear combination of $[T_{U,a_1,1,1}/\sigma(a_1),\allowbreak \ldots, T_{U,a_m,1,1}/\sigma(a_m)]^{\intercal}$ converges to a normal distribution.  
Similarly to  Section \ref{sec:detailofjointnormal}, we redefine $Z_n$  as below with $\sum_{r=1}^m t_r^2=1$, and prove that
\begin{eqnarray}
Z_n:=\sum_{r=1}^m t_r {T_{U,a_r,1,1} }/{\sigma(a_r)}\xrightarrow{D} \mathcal{N}(0,1).  \label{eq:znnormalalt}
\end{eqnarray} 

We next prove \eqref{eq:znnormalalt} by the martingale central limit theorem, similarly to 
  Section \ref{sec:detailofjointnormal}. In particular, we define $\mathrm{E}_k(\cdot)$ in the same way as in Section \ref{sec:detailofjointnormal}, and still define  $D_{n,k}=(\mathrm{E}_k-\mathrm{E}_{k-1})Z_n$ and $\pi_{n,k}^2=\mathrm{E}_{k-1}(D_{n,k}^2)$. 
 It follows that  $D_{n,k}=\sum_{r=1}^m t_rA_{n,k,a_r}$ and $\pi_{n,k}^2=\sum_{1\leq r_1,r_2\leq m}t_{r_1}t_{r_2}\mathrm{E}_{k-1}(A_{n,k,a_{r_1}}A_{n,k,a_{r_2}})$, where we redefine $A_{n,k,a_r}=(\mathrm{E}_k-\mathrm{E}_{k-1})\{T_{U,a_r,1,1}/\sigma(a_r)\}$. 
  Note that $\sigma_{j_1,j_2}=0$ when $(j_1,j_2)\in J_A^c$,  and $T_{U,a,1,1}$ is a summation over $(j_1,j_2) \in J_A^c$. Thus the proof of Lemma  \ref{lm:cltabnkform} in Section \ref{sec:proofcltabnkform} applies similarly, and we  obtain the explicit form of $A_{n,k,a}$.  
Specifically, for each finite integer $a$, when $k<a$, $A_{n,k,a}=0$; when $k \geq a$,
\begin{align*}
		A_{n,k,a} = \frac{a}{\sigma(a)P^n_{a}} \sum_{1\leq i_1 \neq \cdots \neq i_{a-1}\leq k-1}\, \sum_{(j_1,j_2)\in J_A^c} 
	(x_{k,j_1} x_{k,j_2}) \prod_{t=1}^{a-1}(x_{i_t,j_1} x_{i_t,j_2}). \notag 
	 \end{align*}
With the form of $A_{n,k,a}$, we can obtain the explicit forms of $D_{n,k}$ and $\pi_{n,k}^2$. Then we can prove the following two  Lemmas  \ref{lm:altcltmomentgoal1} and  \ref{lm:altcltmomentgoal2}, which suggests that \eqref{eq:znnormalalt} holds. 

\begin{lemma} \label{lm:altcltmomentgoal1}
Under the conditions of Theorem \ref{thm:cltalternative}, $
	\mathrm{var}( \sum_{k=1}^n \pi_{n,k}^2 )\to 0$.
\end{lemma}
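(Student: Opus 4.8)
The plan is to follow the proof of Lemma~\ref{lm:targetorder}, the corresponding statement under $H_0$, almost line by line; the only modifications are that the index pairs now range over $J_A^c$ rather than over all off-diagonal $(j_1,j_2)$, and that Condition~\ref{cond:altonecovellip} replaces the role played there by Condition~\ref{cond:ellpmoment}. Throughout we may take $\mathrm{E}(\mathbf{x})=\mathbf{0}$ by the location invariance of Proposition~\ref{prop:locinvariance}, so that $\mathrm{E}[K(0,j_1,j_2)]=\sigma_{j_1,j_2}^a=0$ for $(j_1,j_2)\in J_A^c$ and hence $\mathrm{E}(Z_n)=0$ for $Z_n$ as in \eqref{eq:znnormalalt}. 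First I would insert the explicit expression for $A_{n,k,a}$ displayed just above the lemma into $\pi_{n,k}^2=\sum_{1\le r_1,r_2\le m}t_{r_1}t_{r_2}\,\mathrm{E}_{k-1}(A_{n,k,a_{r_1}}A_{n,k,a_{r_2}})$ and carry out the conditional expectation: since $\mathbf{x}_k$ is independent of $\mathcal{F}_{k-1}$, the only factor affected is $\mathrm{E}[(x_{k,j_1}x_{k,j_2})(x_{k,j_1'}x_{k,j_2'})]=\Pi_{j_1,j_2,j_1',j_2'}$, and because the relevant pairs lie in $J_A^c$ we have $\sigma_{j_1,j_2}=\sigma_{j_1',j_2'}=0$, so Condition~\ref{cond:altonecovellip} collapses this to $\kappa_1(\sigma_{j_1,j_1'}\sigma_{j_2,j_2'}+\sigma_{j_1,j_2'}\sigma_{j_2,j_1'})$. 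Thus $\pi_{n,k}^2$ becomes a weighted sum, over pairs in $J_A^c$ and over summation indices in $\{1,\dots,k-1\}$, of products of two such $\sigma$-factors times $\prod_t(x_{i_t,j_1}x_{i_t,j_2})(x_{i'_t,j_1'}x_{i'_t,j_2'})$ --- the same combinatorial shape as in the null proof, with $\boldsymbol{\Sigma}_A$ in place of the identity and $\kappa_1=\tilde{\kappa}_4$.

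Next I would put $W_n:=\sum_{k=1}^n\pi_{n,k}^2$ and expand $\mathrm{var}(W_n)=\mathrm{E}(W_n^2)-(\mathrm{E}W_n)^2$. By independence of $\mathbf{x}_1,\dots,\mathbf{x}_n$, the terms of $\mathrm{E}(W_n^2)$ whose two copies of the index families share no common coordinate factor reassemble into $(\mathrm{E}W_n)^2$ and cancel, so the survivors are the configurations with at least one coincidence. Each coincidence either merges two summation indices in $\{1,\dots,n\}$ (suppressing the term by a factor $n^{-1}$) or forces a covariance constraint among the $j$-indices that removes a free coordinate (suppressing the term by a factor $p^{-1}$, where Condition~\ref{cond:finitemomt} is used to bound $\sigma_{j,j}$ above and below and to control the resulting higher central moments --- coordinates of a given $\mathbf{x}_i$ can occur up to eight times in $W_n^2$, which is exactly where the eighth-moment hypothesis is needed). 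Dividing by $\prod_{r=1}^m\sigma(a_r)^2=\Theta(p^{2m}n^{-\sum_r a_r})$ from Lemma~\ref{lm:pfaltvarlm1} and comparing orders yields $\mathrm{var}(W_n)=O(n^{-1}+p^{-2})$ up to a further error of order $|J_A|/p^2=o(1)$ coming from the restriction of the $j$-sums to $J_A^c$; since $\mathrm{E}(W_n)=\mathrm{var}(Z_n)\to1$ by (the analogue of) Lemma~\ref{lm:meaninvarsigma} together with Lemmas~\ref{lm:pfaltvarlm1} and \ref{lm:pfaltvarlm2}, this gives $\mathrm{var}\big(\sum_{k=1}^n\pi_{n,k}^2\big)\to0$.

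The main obstacle, and the bulk of the work, is the bookkeeping in the second step: enumerating the patterns of coincidence among the two copies of the index families of $W_n$ and checking that every surviving configuration loses at least one power of $n$ or of $p$ relative to $\prod_r\sigma(a_r)^2$. This is tedious but completely parallel to the corresponding computation in the proof of Lemma~\ref{lm:targetorder} under Condition~\ref{cond:ellpmoment}; the restriction to $J_A^c$ only improves the estimates, since it merely deletes pairs from the sums, and the sparsity assumption $|J_A|=o(p^2)$ of Condition~\ref{cond:rhoijconst} guarantees that the gap between summing over $J_A^c$ and over all off-diagonal pairs is asymptotically negligible. No analytic ingredient beyond Conditions~\ref{cond:finitemomt}, \ref{cond:rhoijconst} and \ref{cond:altonecovellip} enters.
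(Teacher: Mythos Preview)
There is a genuine gap. Under $H_0$ with Condition~\ref{cond:ellpmoment}, every off-diagonal covariance vanishes, so for off-diagonal pairs $(j_1,j_2),(j_3,j_4)$ the fourth moment $\Pi_{j_1,j_2,j_3,j_4}=\kappa_1(\sigma_{j_1,j_3}\sigma_{j_2,j_4}+\sigma_{j_1,j_4}\sigma_{j_2,j_3})$ is nonzero only when $\{j_1,j_2\}=\{j_3,j_4\}$. Under the alternative this collapse fails: although the summation pairs $(j_1,j_2),(j_3,j_4)$ lie in $J_A^c$, the \emph{cross} pairs $(j_1,j_3),(j_2,j_4)$, etc., may lie in $J_A$, so $\Pi_{j_1,j_2,j_3,j_4}$ can be nonzero with all four indices distinct. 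When you square $W_n$, this generates new nonvanishing configurations --- the paper's cases $M=|\{j_1,j_2\}\cup\{j_3,j_4\}|\in\{3,4\}$, and also new sub-cases within $M=2$ where $\{j_1,j_2\}\cap\{j_5,j_6\}=\emptyset$ --- that have no counterpart in the null computation of Lemma~\ref{lm:targetorder}. Your claim that ``the restriction to $J_A^c$ only improves the estimates, since it merely deletes pairs from the sums'' is therefore wrong: passing from $H_0$ to $H_A$ changes the covariance structure of $\mathbf{x}$, which \emph{adds} nonzero terms rather than removing them, and none of these is handled by a ``lose a factor of $p$ per $j$-coincidence'' heuristic.

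Bounding these extra contributions is precisely where the local-alternative hypothesis $\rho=O(|J_A|^{-1/a_t}p^{1/a_t}n^{-1/2})$ of Theorem~\ref{thm:cltalternative} (equivalently $|J_A|\rho^a=O(pn^{-a/2})$) is needed, and it does real work: the surviving terms have orders such as $|J_A|^2\rho^{2(a_1+a_2-2-2s)}$, $|J_A|^3\rho^{2(a_1+a_2)-2(s+1)}$, $D_{\max}|J_A|^2\rho^{\cdots}$, $|J_A|^4\rho^{2(a_1+a_2)}$, each shown to be $o(n^{-2})$ only by invoking this bound together with $|J_A|=o(p^2)$. Your final sentence that ``no analytic ingredient beyond Conditions~\ref{cond:finitemomt}, \ref{cond:rhoijconst} and \ref{cond:altonecovellip} enters'' is thus incorrect; the proof cannot close without the constraint on $\rho$. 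The paper's argument (Section~\ref{sec:pfaltcltmomentgoal1}) accordingly decomposes $\mathbb{T}_{k,a_1,a_2}$ by $M\in\{2,3,4\}$ and treats each case separately; the analysis for $M=3,4$ is substantially longer than, and not parallel to, anything in the null proof.
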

\begin{proof}
\textit{See Section \ref{sec:pfaltcltmomentgoal1}  on Page \pageref{sec:pfaltcltmomentgoal1}.}
\end{proof}
\begin{lemma} \label{lm:altcltmomentgoal2}
Under the conditions of Theorem \ref{thm:cltalternative}	, $\sum_{k=1}^n \mathrm{E}(D_{n,k}^4)\to 0$.
\end{lemma}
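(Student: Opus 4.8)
The plan is to reproduce the argument used for the null case (Lemma~\ref{lm:secondgoaljointnormal}), now with the coordinate summations restricted to $J_A^c$ and with the moment bookkeeping carried out under Condition~\ref{cond:altonecovellip} in place of Conditions~\ref{cond:alphamixing}/\ref{cond:ellpmoment}. First, since $m$ is fixed and $\sum_{r=1}^m t_r^2=1$, convexity of $x\mapsto x^4$ gives $\mathrm{E}(D_{n,k}^4)\le m^3\sum_{r=1}^m t_r^4\,\mathrm{E}(A_{n,k,a_r}^4)\le m^3\sum_{r=1}^m\mathrm{E}(A_{n,k,a_r}^4)$, so it suffices to prove $\sum_{k=1}^n\mathrm{E}(A_{n,k,a}^4)\to 0$ for each fixed finite $a\in\{a_1,\ldots,a_m\}$. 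Using location invariance to assume $\mathrm{E}(\mathbf{x})=\mathbf{0}$ and recalling the explicit form of $A_{n,k,a}$ derived just above, set $s_{i,l}=x_{i,j_1^l}x_{i,j_2^l}$ for $l=(j_1^l,j_2^l)\in J_A^c$ and $W_{k,l}=\sum_{1\le i_1\neq\cdots\neq i_{a-1}\le k-1}\prod_{t=1}^{a-1}s_{i_t,l}$, so that $A_{n,k,a}=\{a/(\sigma(a)P^n_a)\}\sum_{l\in J_A^c}s_{k,l}W_{k,l}$, where $W_{k,l}$ is $\mathcal{F}_{k-1}$-measurable while $s_{k,l}$ is independent of $\mathcal{F}_{k-1}$ and mean zero, because $\mathrm{E}(s_{k,l})=\sigma_{j_1^l,j_2^l}+\mu_{j_1^l}\mu_{j_2^l}=0$ for $l\in J_A^c$.

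Next, by independence of $\mathbf{x}_k$ from $\mathcal{F}_{k-1}$,
\begin{align*}
\mathrm{E}(A_{n,k,a}^4)=\Big(\frac{a}{\sigma(a)P^n_a}\Big)^4\sum_{l_1,l_2,l_3,l_4\in J_A^c}\mathrm{E}\Big[\prod_{u=1}^4 s_{k,l_u}\Big]\,\mathrm{E}\Big[\prod_{u=1}^4 W_{k,l_u}\Big].
\end{align*}
Both factors are controlled by the moment structure. For the $\mathbf{x}_k$-factor, Condition~\ref{cond:finitemomt} and Hölder's inequality give $|\mathrm{E}[\prod_{u=1}^4 s_{k,l_u}]|\le C$ uniformly, and Condition~\ref{cond:altonecovellip} (a Wick-type expansion in products of covariances) together with the sparsity $|J_A|=o(p^2)$ shows that, since $\sigma_{j_1^l,j_2^l}=0$ for $l\in J_A^c$, a nonzero contribution forces the four index pairs into a bounded number of ``matched'' configurations; the leading such family is $l_1=l_2$, $l_3=l_4$ (and permutations), which contains $O(|J_A^c|^2)=O(p^4)$ tuples, while all other configurations are of strictly smaller order. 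For the $W$-factor, each $W_{k,l}$ is itself a centered degenerate U-statistic of order $a-1$ in $\mathbf{x}_1,\ldots,\mathbf{x}_{k-1}$, with $\mathrm{E}(W_{k,l}^2)=O(k^{a-1})$, and the same combinatorial accounting as in the proof of Lemma~\ref{lm:secondgoaljointnormal} yields $|\mathrm{E}[\prod_{u=1}^4 W_{k,l_u}]|=O(k^{2(a-1)})$ for the matched $l$-tuples.

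Collecting orders, with $\sigma^2(a)=\Theta(p^2 n^{-a})$ from Lemma~\ref{lm:pfaltvarlm1} and $P^n_a=\Theta(n^a)$, one gets $\mathrm{E}(A_{n,k,a}^4)=O\big(p^{-4}n^{-2a}\cdot p^4\cdot k^{2(a-1)}\big)=O\big(k^{2a-2}n^{-2a}\big)$, and summing over $k\le n$ gives $\sum_{k=1}^n\mathrm{E}(A_{n,k,a}^4)=O(n^{-1})\to 0$, which proves the lemma. The main obstacle is the middle step: verifying that, once the pairs $l_u$ range over $J_A^c$ and the moments obey Condition~\ref{cond:altonecovellip}, the surviving index configurations in both $\mathrm{E}[\prod_u s_{k,l_u}]$ and $\mathrm{E}[\prod_u W_{k,l_u}]$ are only the paired ones at leading order, so that the effective numbers of $(l_1,l_2,l_3,l_4)$-tuples and of observation-index tuples stay at the same orders as in the null analysis; in particular one must check that the off-diagonal correlations permitted under $H_A$ on the sparse set $J_A$, and the cross-pairings they induce, do not inflate these counts.
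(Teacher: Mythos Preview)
Your overall plan is the same as the paper's: reduce to a single $a$, use the explicit form of $A_{n,k,a}$, and exploit that each sample index must appear at least twice so that the $\mathbf i$-count is $O(k^{2(a-1)})$. The reduction via convexity and the factorization by independence of $\mathbf x_k$ from $\mathcal F_{k-1}$ are fine.

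The gap is in the treatment of the unmatched $l$-configurations. Your bound separates the product as $\bigl|\mathrm E[\prod_u s_{k,l_u}]\bigr|\cdot\bigl|\mathrm E[\prod_u W_{k,l_u}]\bigr|$ and then controls the two factors independently, using the uniform bound $\bigl|\mathrm E[\prod_u W_{k,l_u}]\bigr|=O(k^{2(a-1)})$ and hoping that $\sum_{l_1,\dots,l_4}\bigl|\mathrm E[\prod_u s_{k,l_u}]\bigr|=O(p^4)$. The second estimate is not true in general under $H_A$: when the eight $j$-indices are distinct but pair up inside $J_A$, the eighth moment of $\mathbf x_k$ is of size $\rho^4$, and summing over such tuples gives a contribution of order $|J_A|^4\rho^4$. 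With only $|J_A|\rho^a=O(pn^{-a/2})$ available, $|J_A|\rho$ need not be $O(p)$ (take $a=2$ and $p\gg n$), so this term can exceed $p^4$.

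What rescues the argument, and what the paper does, is that for those same unmatched $l$-tuples the second factor is \emph{also} small: each shared sample index $i$ contributes a fourth moment like $\mathrm E[x_{i,j_1}x_{i,j_2}x_{i,j_5}x_{i,j_6}]$, which is $O(\rho^2)$ rather than $O(1)$ since $\sigma_{j_1,j_2}=\sigma_{j_5,j_6}=0$ on $J_A^c$. Tracking these $\rho$-factors jointly, the full contribution is $|J_A|^4\rho^{4a}\cdot O(k^{2(a-1)})$, and now $|J_A|^4\rho^{4a}=O(p^4n^{-2a})=o(p^4)$ does follow from the hypotheses of Theorem~\ref{thm:cltalternative}. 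The paper obtains this cleanly by keeping the summed term $Q^*$ intact and applying the Wick expansion of Condition~\ref{cond:altonecovellip} to all $8a$ factors at once, so that $Q^*$ is a linear combination of products $\prod_{t=1}^{4a}\sigma_{j_{g_{2t-1}},j_{g_{2t}}}$; the case analysis on $|\{j_1,\dots,j_8\}|\in\{4,5,6,7,8\}$ then gives bounds $O(p^{8-k}|J_A|^{k-4}\rho^{(k-4)a})=o(p^4)$ directly. So your sketch is on the right track, but the step you flag as ``the main obstacle'' genuinely requires the joint bookkeeping; the two factors cannot be decoupled.
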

\begin{proof}
\textit{See Section \ref{sec:pfaltcltmomentgoal2} on Page \pageref{sec:pfaltcltmomentgoal2}.}
\end{proof}
\noindent By Lemmas  \ref{lm:altcltmomentgoal1} and  \ref{lm:altcltmomentgoal2}, \eqref{eq:znnormalalt} holds and thus Theorem  \ref{thm:cltalternative} is proved. 

\subsection{Proof of Proposition \ref{prop:ordercompare}} \label{proof:prop23}
Consider the setting when $n, p$ and $ |J_A|$ are given and  the value of $M$ is fixed as $\Theta(1)$. We next examine $\rho_a$ in \eqref{eq:rhoform} as a function of integer $a$ in the following two cases.
	\paragraph*{(i) $|J_A|>{Mp}$}
	
	When $Mp/|J_A|<1$, both $(Mp/|J_A|)^{1/a}$ and $ (a!)^{{1}/{(2a)}}$ are increasing functions of integer $a$. 
	Thus $\rho_a$ 
	is an increasing function of $a$.  Since $a\in \mathbb{Z}^+$, $\rho_a$ reaches the minimum  value at $a=1$.
	 
	\paragraph*{(ii)  $|J_A| \leq {Mp}$}
Define $\tilde{M}=Mp/|J_A|$, and $f(a)=(a!)^{{1}/{(2a)}} (\tilde{M})^{{1}/{a}} $. 
Note that $\rho_a$ and $f(a)$ only differs by a constant. 
To find the minimum of $\rho_a$,  it suffices to examine the minimum of $f(a)$.

In the following, we  show  that  when $f(a)$ starts to not decrease at some value, it will strictly increase afterwards.  Specifically, we prove that $f(a+2)/f(a+1)>1$  if $f(a+1)/f(a)\geq 1$.  Note that
\begin{align*}
	\frac{f(a+1)}{f(a)}=&~  \frac{ \{(a+1)!\}^{\frac{1}{2(a+1)}} (\tilde{M})^{\frac{1}{a+1}} }{ (a!)^{\frac{1}{2a}} (\tilde{M})^{\frac{1}{a}} } \notag \\
	=&~ \Big[\frac{ \{(a+1)!\}^{a} \tilde{M}^{2a}}{ (a!)^{a+1} \tilde{M}^{2(a+1)}}   \Big]^{\frac{1}{2a(a+1)}} = \{ d(a) \times \tilde{M}^{-2}\}^{\frac{1}{2a(a+1)}}, 	
\end{align*} where $d(a)=(a+1)^a (a!)^{-1}.$ It follows that $f(a+1)/f(a)>1$ and $f(a+1)/f(a)=1$ are equivalent to $d(a)>\tilde{M}^2$ and $d(a)=\tilde{M}^2$, respectively. We next show that $d(a)$ is a strictly increasing function of $a$. In particular,
\begin{align*}
	\frac{d(a+1)}{d(a)}=\frac{(a+2)^{a+1} a!}{(a+1)^a (a+1)!}=\Big( \frac{a+2}{a+1} \Big)^{a+1}>1.
\end{align*}Therefore we have  $d(a+1)>\tilde{M}^2$ if $d(a)\geq \tilde{M}^2$; and equivalently this implies that  $f(a+2)/f(a+1)>1$ if $f(a+1)/f(a)\geq 1$.  

Suppose  $a_0$ is the first integer such that $d(a_0)\geq \tilde{M}^2$, i.e.,  for any integer $ 1\leq a<a_0$,  $d(a)< \tilde{M}^2$. By the analysis above, we know  $f(a)$ is decreasing  when $a<a_0$, and $f(a)$ is strictly increasing when $a>a_0$. Thus $a_0$ achieves the minimum of $f(a)$. Since  $d(a)$ is a strictly increasing function of $a$, we know $a_0<\infty$ for fixed $\tilde{M}$, and $a_0$ increases as $\tilde{M}$ increases. Therefore the second part of proposition \ref{prop:ordercompare}  is proved.

\subsection{Proof of Proposition \ref{prop:exttestpowerana}} \label{proof:prop24}
\begin{proof}
Consider the simplified test statistic given in \eqref{eq:uinftyimplified}.
We assume   $\mathrm{E}(x_{i,j})=0$ and $\mathrm{var}(x_{i,j}^2)=1$, $\forall j=1,\ldots ,p$ without loss of generality. It is then equivalent to  examine 
$
	\mathcal{U}(\infty) =\max_{1\leq j_1 < j_2 \leq p} | {\sum_{k=1}^n x_{k,j_1}x_{k,j_2}}/{  n} |.
$   
We next prove (i) and (ii) of Proposition \ref{prop:exttestpowerana} in the following Sections \ref{sec:lowerboundproof} and  \ref{sec:uppboundproof},  respectively.


\subsubsection{Proof of (i)} \label{sec:lowerboundproof} 
Under the alternative, 
we consider $n$ i.i.d. observations $(x_{k,1}, x_{k,2})$, satisfying $\mathrm{E}(x_{k,1}x_{k,2})=\rho$, for $k=1,\ldots,n$. Then by Condition \ref{cond:ellpmoment},
$
 \mathrm{var}(x_{k,1}x_{k,2})=\mathrm{E}(x_{k,1}^2 x_{k,2}^2)-[\mathrm{E}(x_{k,1}x_{k,2})]^2= \kappa_1(1+2\rho^2)-\rho^2.
$ 
The power of $\mathcal{U}(\infty)$ satisfies that
\begin{eqnarray}
	&& P ( |\, \mathcal{U}(\infty)\, | \geq t_p  ) \label{eq:lowerbound1powerinfty} \\
	&=&  P\Big( \max_{1\leq j_1 < j_2 \leq p} \Big| {\sum_{k=1}^n x_{k,j_1}x_{k,j_2}}/{n} \Big| \geq t_p \Big) \notag \\
	&\geq &  P \Big( \Big| { \sum_{k=1}^n x_{k,1}x_{k,2} }/{n}  \Big| \geq t_p \Big)\notag\\ 
	&\geq & P \Big({ \sum_{k=1}^n x_{k,1}x_{k,2} }/{n}\geq t_p \Big)  \notag \\
   &= & P \Biggr(\frac{ \sum_{k=1}^n (x_{k,1}x_{k,2} -\rho) }{\sqrt{n} \sqrt{\mathrm{var}(x_{k,1}x_{k,2})}}\geq \frac{\sqrt{n}( t_p -\rho) }{\sqrt{\mathrm{var}(x_{k,1}x_{k,2})}} \Biggr). \notag
 \end{eqnarray} 
We apply the central limit theorem on $x_{k,1}x_{k,2}$, $k=1,\ldots,n$, and obtain
\begin{eqnarray*}
	\frac{ \sum_{k=1}^n (x_{k,1}x_{k,2}  - \rho) }{\sqrt{n} \sqrt{\mathrm{var}(x_{k,1}x_{k,2})} } \xrightarrow{D} \mathcal{N}(0,1).
\end{eqnarray*} 
 Suppose $Z$ follows a standard Gaussian distribution.  As $\log p\rightarrow \infty$, $\log p/n=o(1)$, and by  Berry-Esseen Theorem,  we have
 \begin{eqnarray*}  
 \eqref{eq:lowerbound1powerinfty}  &\geq & P \Biggr( Z \geq \frac{\sqrt{n}( t_p -\rho) }{\sqrt{\mathrm{var}(x_{k1}x_{k2})}} \Biggr) -\frac{C \mathrm{E}|x_{k1}x_{k2}|^3 }{ [\mathrm{var}(x_{k1}x_{k2})]^{\frac{3}{2}} \sqrt{n}} \notag \\
	&\geq & P \Biggr( Z \geq \frac{\sqrt{n}[ n^{-1/2}\sqrt{4\log p} -\rho] }{\sqrt{\kappa_1(1+2\rho^2)-\rho^2}} \Biggr) -\frac{C \sqrt{\mathrm{E}|x_{k1}|^6 \mathrm{E}|x_{k2}|^6 } }{ [\mathrm{var}(x_{k1}x_{k2})]^{\frac{3}{2}} \sqrt{n}} \\
	&\geq & P ( Z \geq  C(2-c_1)\sqrt{\log p}) -\frac{C }{ \sqrt{n}}   \\
	&\rightarrow & 1 + o(1),
\end{eqnarray*} where the second inequality uses $t_p \leq n^{-1/2} \sqrt{4\log p}$ when $p$ is sufficiently large; the third inequality uses $\rho\geq c_1\sqrt{\log p/n}$; and   the last step of  convergence holds when $c_1>2$.

\subsubsection{Proof of (ii)}  \label{sec:uppboundproof}
Recall the notation $J_A$ and $J_A^c$ in Section \ref{sec:updatepoweronesampf}. 
Under the considered alternative, when $(j_1,j_2)\in J_A$, $\mathrm{E}(x_{k,j_1}x_{k,j_2})=\rho$; and when $(j_3,j_4)\in J_A^c$, $\mathrm{E}(x_{k,j_3}x_{k,j_4})=0$. We have
\begin{eqnarray}
	&& P ( |\, \mathcal{U}(\infty)\, | \geq t_p  )\label{eq:twoboundstogether} \\
	 &\leq & \sum_{1\leq j_1 < j_2 \leq p } P\Big( \Big| {\sum_{k=1}^n x_{k,j_1}x_{k,j_2}}/{n} \Big| \geq t_p \Big) \notag \\
	&\leq & \frac{1}{2} \sum_{(j_1,j_2)\in J_A} P\Big( \Big| {\sum_{k=1}^n x_{k,j_1}x_{k,j_2}}/{n} \Big| \geq t_p \Big)  \notag \\
	& & + \frac{1}{2}\sum_{(j_3,j_4)\in J_A^c} P\Big(\Big| {\sum_{k=1}^n x_{k,j_3}x_{k,j_4}}/{n} \Big| \geq t_p   \Big). \notag
\end{eqnarray} 
Next we show that under the conditions of Proposition \ref{prop:exttestpowerana},
\begin{eqnarray}
	\sum_{(j_1,j_2)\in J_A} P\Big( \Big| {\sum_{k=1}^n x_{k,j_1}x_{k,j_2} }/{n} \Big| \geq t_p \Big) \rightarrow 0, \label{eq:smallbound1}
\end{eqnarray} and
\begin{eqnarray}
	&& \frac{1}{2} \sum_{(j_3,j_4)\in J_A^c}  P\Big(\Big| {\sum_{k=1}^n x_{k,j_3}x_{k,j_4}}/{n} \Big| \geq t_p   \Big) \leq  \log (1-\alpha)^{-1}. \label{eq:smallbound2}
\end{eqnarray}


%
\paragraph{Proof of \eqref{eq:smallbound1}}\label{sec:pfsmallbound1}
To prove \eqref{eq:smallbound1}, we derive an upper bound of $P( | {\sum_{k=1}^n x_{k,j_1}x_{k,j_2} }/{n}| \geq t_p )$ for each $(j_1,j_2)\in J_A$ by  Lemma 6.8 in \citet{cai2011}.
In the following, we consider a fixed index pair $(j_1,j_2)$; and  for easy presentation, 
we write $m_0=\sqrt{\mathrm{var}(x_{k,j_1}x_{k,j_2})}$ and $\xi_k=({x_{k,j_1}x_{k,j_2}-\rho})/m_0.$    
When $(j_1,j_2)\in J_A$, we have $\mathrm{E}(\xi_k)=0$, $\mathrm{var}(\xi_k)=1$, and by Condition \ref{cond:ellpmoment}, $m_0^2=\kappa_1(1+2\rho^2)-\rho^2$. It follows that
\begin{eqnarray*}
 P\Big( {\sum_{k=1}^n x_{k,j_1}x_{k,j_2} }/{n} \geq t_p \Big)
 =  P\Big( \frac{\sum_{k=1}^n \xi_k}{\sqrt{n \log p}} \geq y_n \Big),
\end{eqnarray*} where $y_n=\sqrt{{n}/{\log p}} m_0^{-1}(t_p -\rho)$. 
We next show that $y_n$ and $\xi_k, k=1,\ldots,n$ satisfy the conditions of  Lemma 6.8 in \cite{cai2011}. First  note that $y_n\rightarrow {y}=(2-c_2)m_0^{-1}$, and ${y}>0$ as $c_2<2$. 
 We then show that $\mathrm{E} \{\exp(\tilde{t}_0 |\xi_k|^{\vartheta})\}<\infty$ for some $\tilde{t}_0>0$ and $0<\vartheta\leq 1$.  In particular, given $\varsigma$ and $t_0$ in Proposition \ref{prop:exttestpowerana}, we take $\vartheta =\varsigma/2\in (0,1]$ and $\tilde{t}_0=t_0(2m_0)^{\vartheta}/2>0$.  By Lemma \ref{lm:inequaluvtheta},
\begin{eqnarray*}
	&& |x_{k,j_1} x_{k,j_2} -\rho|^{\vartheta} \leq (|x_{k,j_1} x_{k,j_2}| + |\rho|)^{\vartheta} \leq |x_{k,j_1} x_{k,j_2}|^{\vartheta}+  |\rho|^{\vartheta}\notag \\
	&&\leq \Big( \frac{x_{k,j_1}^2+ x_{k,j_2}^2}{2} \Big)^{\vartheta} + |\rho|^{\vartheta} \leq \frac{1}{2^{\vartheta}} (|x_{k,j_1}|^{2\vartheta}+|x_{k,j_2}|^{2\vartheta}) + |\rho|^{\vartheta}. 
\end{eqnarray*} It follows that 
\begin{eqnarray}
	\quad \quad && \mathrm{E} \exp(\tilde{t}_0 |\xi_k|^{\vartheta}) \label{eq:expmomentbdd}  \\
	 &\leq & \mathrm{E} \exp \Big[ \frac{\tilde{t}_0}{(2m_0)^{\vartheta}}(|x_{k,j_1}|^{2\vartheta}+|x_{k,j_2}|^{2\vartheta}) + \frac{\tilde{t}_0}{m_0^{\vartheta}} |\rho|^{\vartheta}  \Big] \notag \\
	&=& \mathrm{E}  [   \exp ( {2^{-1}{t}_0|x_{k,j_1}|^{\varsigma}} ) \times   \exp ({2^{-1}{t}_0|x_{k,j_2}|^{\varsigma} }) ] \times \exp ( t_0 2^{\vartheta-1} |\rho|^{\vartheta}  )  \notag \\
	&\leq &\sqrt{\mathrm{E}  [   \exp ( {{t}_0|x_{k,j_1}|^{\varsigma}} )] \times \mathrm{E} [  \exp ({{t}_0|x_{k,j_2}|^{\varsigma} }) ]  }\times \exp ( t_0 2^{\vartheta-1} |\rho|^{\vartheta}  ),  \notag
\end{eqnarray}where the last inequality follows from the   H\"older's inequality. By the conditions in Proposition \ref{prop:exttestpowerana}, we know  $\max_{(j_1,j_2)\in J_A}\mathrm{E}(t_0|x_{k,j_1}|^{\varsigma})\times \mathrm{E}(t_0|x_{k,j_2}|^{\varsigma})<\infty$ and $\rho\leq c_2\sqrt{\log p/n}=o(1)$. Therefore, $\eqref{eq:expmomentbdd}<\infty$. In summary, $y_n$ and $\xi_k, k=1,\ldots,n$ satisfy the conditions of  Lemma 6.8 in \cite{cai2011}.

By Lemma 6.8 in \cite{cai2011}, as $\log p=o(n^{\beta})$ and $\beta=\vartheta/(2+\vartheta)=\varsigma/(4+\varsigma)$,  
\begin{eqnarray}
	P\Big( \frac{\sum_{k=1}^n \xi_k}{\sqrt{n \log p}} \geq y_n \Big) \simeq \frac{p^{-y_n^2/2} (\log p)^{-1/2}}{\sqrt{2\pi} y}.  \label{eq:snlogpapproxprob}
\end{eqnarray}	
Let $z_0=-\log (8\pi)-2\log \log (1-\alpha)^{-1}$, then we can write $t_p=n^{-1/2}\{4\log p-\log\log p+z_0\}^{1/2}$ 
and
\begin{eqnarray*}
	y_n^2 &=& \frac{n}{\log p} (t_p-\rho)^2 \times \frac{1}{\mathrm{var}(x_{k,j_1}x_{k,j_2})} \notag \\
	&=& \frac{n}{\log p} (t_p^2 -2\rho t_p + \rho^2) \times \frac{1}{\mathrm{var}(x_{k,j_1}x_{k,j_2})} \notag \\
	 &\geq & \frac{1}{\mathrm{var}(x_{k,j_1}x_{k,j_2})} \times \Big\{ \frac{1}{\log p} \Big( 4\log p - \log \log p +z_0 \Big)   \notag \\
	&&- \frac{2c_2\sqrt{\log p}\sqrt{4\log p-\log \log p+ z_0} }{\log p } + \frac{c_2^2 \log p}{\log p }\Big\}, 
\end{eqnarray*} where the last inequality holds when $\rho\leq c_2\sqrt{\log p/n}$ and $c_2<2$.
Then
\begin{align*}
	&~p^{-y_n^2/2}\notag \\
=&~ \exp ( - (\log p) y_n^2/2 ) \notag \\
\leq &~\exp \Biggr\{ -  \frac{1}{\mathrm{var}(x_{k,j_1}x_{k,j_2})} \Big[ \frac{1}{2}\Big( 4\log p - \log \log p -\log (8\pi)-2\log \log (1-\alpha)^{-1} \Big) \notag  \\
&~ \quad \quad \quad \quad \quad \quad \quad \quad \quad \quad \quad - c_2 \sqrt{\log p}\sqrt{4\log p-\log \log p+ z_0}  +\frac{c_2^2\log p}{2}  \Big] \Biggr\}  \notag   \\
=&~ \Big\{ p^{-2} \sqrt{\log p} \times \sqrt{8\pi} \log (1-\alpha)^{-1} \times p^{-\frac{c_2^2}{2}}  \notag \\
&~ \quad \times \exp \Big( c_2 \sqrt{\log p}\sqrt{4\log p-\log \log p+ z_0}  \Big) \Big\}^{1/\{\mathrm{var}(x_{k,j_1}x_{k,j_2})\}}.  
\end{align*}	 
By Condition \ref{cond:ellpmoment}, $\mathrm{var}(x_{k,j_1}x_{k,j_2})=\kappa_1+(2\kappa_1-1)\rho^2$; and as $\rho=o(1)$, there exists a  constant $m>0$ such that $\mathrm{var}(x_{k,j_1}x_{k,j_2}) \leq \kappa_1 + m$. 
 Thus 
\begin{eqnarray*}
	&& p^{-y_n^2/2} (\log p)^{-1/2} \notag \\
	&\leq &(\log p)^{-1/2} \Big\{ p^{-2} \sqrt{\log p}\times \sqrt{8\pi} (\log (1-\alpha)^{-1}) p^{-\frac{c_2^2}{2}} \notag \\
	&& \times  \exp \Big( c_2 \sqrt{\log p}\sqrt{4\log p-\log \log p+ z_0}  \Big) \Big\}^{1/\{\mathrm{var}(x_{k1}x_{k2})\}} \notag  \\
	&\leq & (\log p)^{-1/2} \Big[  \sqrt{8\pi} \log (1-\alpha)^{-1} \sqrt{\log p} \times p^{-2-\frac{c_2^2}{2}+2c_2} \Big]^{1/(\kappa_1+m)}.
\end{eqnarray*}
Recall that $y=(2-c_2)[\mathrm{var}(x_{k,j_1}x_{k,j_2})]^{-1/2}$. Then by \eqref{eq:snlogpapproxprob},
\begin{align}
 \quad &~ \frac{1}{2}\sum_{(j_1,j_2)\in J_A} P\Big(  {\sum_{k=1}^n x_{k,j_1}x_{k,j_2}}/{n} \geq t_p \Big) \label{eq:smallpartupperbd0} \\
	\quad =&~\frac{1}{2}\sum_{(j_1,j_2)\in J_A} P\Big( \frac{\sum_{k=1}^n \xi_k}{\sqrt{n \log p}} \geq y_n \Big) \notag \\ 
	\leq &~ \frac{|J_A|}{2} \frac{(\log p)^{-1/2} }{y\sqrt{2\pi}} \Big(\sqrt{8\pi} \log (1-\alpha)^{-1} \sqrt{\log p} \times p^{-2-\frac{c_2^2}{2}+2c_2} \Big)^{\frac{1}{\kappa_1+m}} \notag \\
	=& ~C_{\alpha}\exp\Biggr( 2 \log \Big[ p^{ -\frac{(1-c_2+{c_2^2}/{4})}{ (\kappa_1+m)} }\Big\{\sqrt{|J_A|} (\log p)^{\frac{1}{4(\kappa_1+m)}-\frac{1}{4}}\Big\}\Big]  \Biggr), \notag
\end{align} where $C_{\alpha}=\frac{1}{2y\sqrt{2\pi}}[\sqrt{8\pi}\log (1-\alpha)^{-1}]^{1/(\kappa_1+m)}$.
Thus, $\eqref{eq:smallpartupperbd0}\rightarrow 0$ when
\begin{eqnarray*}
	{p^{- \frac{(1- {c_2}/{2} )^2}{\kappa_1+m} }}{\sqrt{|J_A|} (\log p)^{\frac{1}{4(\kappa_1+m)}-\frac{1}{4}}}\to 0.
\end{eqnarray*} 

Similarly, we have 
\begin{eqnarray} 
	 && \sum_{(j_1,j_2)\in J_A}P\Big( \frac{\sum_{k=1}^n x_{k,j_1}x_{k,j_2} }{n} \leq -t_p \Big) \label{eq:smallpartupperbd2} \\
 &=& \sum_{(j_1,j_2)\in J_A}  P\Big( \frac{\sum_{k=1}^n (-x_{k,j_1}x_{k,j_2} +\rho)}{n \sqrt{ \mathrm{var}(x_{k,j_1}x_{k,j_2}) }} \geq \frac{t_p +\rho}{\sqrt{\mathrm{var}(x_{k,j_1}x_{k,j_2})}} \Big), \notag
\end{eqnarray} and $\eqref{eq:smallpartupperbd2}\rightarrow 0$ following the similar arguments as above. In summary, \eqref{eq:smallbound1} holds when $J_A=o(1)p^{  \frac{2(1- {c_2}/{2} )^2}{\kappa_1+m} } (\log p)^{\frac{1}{2}-\frac{1}{2(\kappa_1+m)}} $ for some $m>0$.

\smallskip


%
\paragraph{Proof of  \eqref{eq:smallbound2}}
Similarly to Section \ref{sec:pfsmallbound1}, we derive an upper bound of $P( {\sum_{k=1}^n x_{k,j_3}x_{k,j_4}}/{n}\geq t_p )$ for each $(j_3,j_4)\in J_A^c$ by Lemma 6.8 in \cite{cai2011}. In the following, we consider a fixed index pair $(j_3,j_4)$; and for easy presentation, we write $\tilde{\xi}_k=x_{k,j_3}x_{k,j_4}/\sqrt{\kappa_1}$, $k=1,\ldots,n$. When $(j_3,j_4)\in J_A^c$, $\mathrm{E}(x_{k,j_3}x_{k,j_4})=0$ and $\mathrm{var}(x_{k,j_3}x_{k,j_4})=\mathrm{E}\{(x_{k,j_3}x_{k,j_4})^2\}=\kappa_1$, then we have $\mathrm{E}(\tilde{\xi}_k)=0$ and $\mathrm{var}(\tilde{\xi}_k)=1$. 
To prove  \eqref{eq:smallbound2}, we write
\begin{align*}
 P\Big( {\sum_{k=1}^n x_{k,j_3}x_{k,j_4}}/{n}\geq t_p   \Big) = P\Big( \frac{\sum_{k=1}^n \tilde{\xi}_k}{\sqrt{n \log p}} \geq \tilde{y}_n \Big),
\end{align*} where $\tilde{y}_n=\sqrt{{n}/{\log p}}\times t_p / \sqrt{\kappa_1} \rightarrow \tilde{y}=2/\sqrt{\kappa_1}.$
Similarly to Section \ref{sec:pfsmallbound1},
we know $\tilde{y}_n$ and $\tilde{\xi}_k$, $k=1,\ldots,n$ also satisfy the conditions of Lemma 6.8 in \cite{cai2011}.   Thus by Lemma 6.8 in \cite{cai2011}, for $z_0=-\log (8\pi)-2\log \log (1-\alpha)^{-1}$ and $t_p=n^{-1/2}\sqrt{4\log p-\log\log p+z_0}$,
\begin{eqnarray*}
	 &&~P\Big( \frac{\sum_{k=1}^n \tilde{\xi}_k}{\sqrt{n \log p}} \geq \tilde{y}_n \Big) \notag \\
	& \simeq &~ \frac{p^{-\tilde{y}_n^2/2} (\log p)^{-1/2}}{\sqrt{2\pi} \tilde{y}} \\
	 &=&~ p^{-2/\kappa_1} (\log p)^{1/(2\kappa_1)-1/2} \frac{\exp(-z_0/(2\kappa_1))}{\sqrt{2\pi}\tilde{y}}  \\
	 &\leq &~(8\pi)^{1/(2\kappa_1)}\frac{\sqrt{\kappa_1}}{2\sqrt{2\pi}} p^{-2/\kappa_1}(\log p)^{1/(2\kappa_1)-1/2}  \{\log (1-\alpha)^{-1}\}^{1/\kappa_1}.
\end{eqnarray*} Then for $\kappa_1\leq 1$ and a small $\alpha>0$, \begin{eqnarray}
	&&\frac{1}{2}\sum_{(j_1,j_2)\in J_A^c} P\Big( {\sum_{k=1}^n x_{k,j_3}x_{k,j_4}}/{n}  \geq t_p   \Big) \label{eq:sumupperboundorderk1} \\
	&\leq & \frac{1}{2} \frac{p(p-1)-|J_A|}{p^{2/\kappa_1} (\log p)^{-1/(2\kappa_1)+1/2} } (8\pi)^{1/(2\kappa_1)}\frac{\sqrt{\kappa_1}}{2\sqrt{2\pi}} \{\log (1-\alpha)^{-1}\}^{1/\kappa_1}, \notag
\end{eqnarray} which attains the maximum order at $\kappa_1=1$, when $\kappa_1\leq 1$ and $n, p \to \infty$. Therefore asymptotically,
$
	\eqref{eq:sumupperboundorderk1} \leq  2^{-1} \log (1-\alpha)^{-1}.
$ 
By similar arguments, we know when $n,p\rightarrow \infty$, 
\begin{eqnarray*}
	&&\frac{1}{2}\sum_{(j_3,j_4)\in J_A^c} P\Big( {\sum_{k=1}^n x_{k,j_3}x_{k,j_4}}/{n} \leq -t_p   \Big) \leq \frac{1}{2}  \log (1-\alpha)^{-1}.
\end{eqnarray*}   In summary,  we have \eqref{eq:smallbound2} holds.

Combining \eqref{eq:smallbound1} and \eqref{eq:smallbound2}, we obtain
$
 \eqref{eq:twoboundstogether} \leq \log (1-\alpha)^{-1}.
$
\end{proof}

\subsection{Conditions of Theorems \ref{thm:onesamplemean}--\ref{thm:altcltmeantest}} \label{sec:extseccond}


The conditions of Theorem \ref{thm:onesamplemean} are listed in the following Condition  \ref{cond:onesamplemean}. 
\begin{condition} \label{cond:onesamplemean} \quad

\begin{enumerate}
	\item[(1)] $\lim_{p\rightarrow \infty} \max_{1\leq j\leq p} \mathrm{E}(x_{j}-\mu_j)^4< \infty$;  $\lim_{p\rightarrow \infty} \min_{1\leq j\leq p} \mathrm{E}(x_{j}-\mu_j)^2>0$.
	\item[(2)] $\mathbf{x}$ is $\alpha$-mixing with $\alpha_{{x}}(s) \leq M\delta^s$, where $\delta \in (0,1)$ and $M>0$ are some constants. In addition, $\sum_{j_1,j_2=1}^p\sigma_{j_1,j_2}^a=\Theta(p)$.
\end{enumerate}	
\end{condition}

Condition \ref{cond:onesamplemean} is similar  to   Conditions \ref{cond:finitemomt} and \ref{cond:alphamixing} of Theorem \ref{thm:jointnormal}.  As  the mean is a lower order moment function than the covariance, Condition \ref{cond:onesamplemean} (1) is weaker than Condition \ref{cond:finitemomt} in  that  only the fourth  moments are needed to be uniformly bounded  instead of the eighth moments. Condition \ref{cond:onesamplemean} (2) is a regularization condition  of the structure of the covariance matrix.  
 

\bigskip

The conditions of Theorem \ref{thm:onesamplemean2} are list in the following Condition \ref{cond:onesamplemean2}.
\begin{condition} \label{cond:onesamplemean2} \quad
	
\begin{enumerate}
	\item[(1)]  There exists constant $B$ such that $B^{-1} \leq \lambda_{\min}(\boldsymbol{\Sigma}) \leq \lambda_{\max}(\boldsymbol{\Sigma}) \leq B$, where $\lambda_{\min}(\boldsymbol{\Sigma})$ and $\lambda_{\max}(\boldsymbol{\Sigma})$ denote the minimum and maximum eigenvalues of the covariance matrix $\boldsymbol{\Sigma}$; and  all correlations are bounded away from $-1$ and $1$, i.e., $\max_{1\leq j_1 \neq j_2\leq p}|\sigma_{j_1,j_2}|/(\sigma_{j_1,j_2} \sigma_{j_2,j_2})^{1/2}<1-\eta$ for some $\eta>0$.
	\item[(2)] $\log p=o(n^{1/4})$; $\max_{1\leq j \leq p}\mathrm{E}[\exp( h (x_{j} -\mu_{j} )^2 )] < \infty,$ for $h\in [-M_1,M_1]$, where $M_1>0$ is some constant. 
	\item[(3)] $\{ (x_{i,j}, i=1,\ldots,n): 1\leq j\leq p  \}$ is $\alpha$-mixing with $\alpha_{{{x}}}(s) \leq C\delta^s$, where $\delta \in (0,1)$ and $C>0$ is some constant, and  $\sum_{j_1,j_2=1}^p\sigma_{j_1,j_2}^a=\Theta(p)$.
\end{enumerate}	
\end{condition}

In Condition \ref{cond:onesamplemean2}, (1) and (2) are assumed to establish the extreme value distribution of $ \mathcal{U}(\infty)$, as in \citet{cai2014two} and \citet{xu2016adaptive}.
Furthermore, the mixing condition in Condition  (3) is used to establish the joint independence of finite order U-statistics and $\mathcal{U}(\infty)$, following the argument in \citet{hsing1995}.   

\bigskip

The conditions of Theorems \ref{thm:twosamplemean}--\ref{thm:altcltmeantest} are listed in the Condition \ref{cond:twosamplemeancond} below. 
\begin{condition} \label{cond:twosamplemeancond} \quad

\begin{enumerate}
		\item[(1)] There exists constant $B$ such that $B^{-1} \leq \lambda_{\min}(\boldsymbol{\Sigma}_x) \leq \lambda_{\max}(\boldsymbol{\Sigma}_x) \leq B$, where $\lambda_{\min}(\boldsymbol{\Sigma}_x)$ and $\lambda_{\max}(\boldsymbol{\Sigma}_x)$ denote the minimum and maximum eigenvalues of  $\boldsymbol{\Sigma}_x$; and  all correlations are bounded away from $-1$ and $1$, i.e., $\max_{1\leq j_1 \neq j_2\leq p}|\sigma_{x,j_1,j_2}|/(\sigma_{x,j_1,j_2} \sigma_{x,j_2,j_2})^{1/2}<1-\eta$ for some $\eta>0$. In addition, we assume the same assumptions  hold for $\boldsymbol{\Sigma}_y$. 
	\item[(2)] $n,p\rightarrow \infty$, $\log p=o(1)n^{1/4}$ and $n_x/n \rightarrow \gamma \in (0,1)$. In addition, $\max_{1\leq j \leq p}\mathrm{E}[\exp( h (x_{j}-\mu_{j} )^2 )] < \infty$ and $\max_{1\leq j \leq p}\mathrm{E}[\exp( h (y_{j}-\nu_{j} )^2 )] < \infty,$ for $h\in [-M,M]$, where $M$ is a positive constant.
	\item[(3)]  $\{ (x_{i,j}, i=1,\ldots,n) : 1\leq j\leq p  \}$ and $\{ (y_{i,j},i=1,\ldots,n) : 1\leq j\leq p  \}$ are $\alpha$-mixing with $\alpha_{{{x}}}(s) \leq C\delta_x^s$ and $\alpha_{{{y}}}(s) \leq C\delta_y^s$, where $\delta_x, \delta_y \in (0,1)$ and $C$ is some constant. We also assume $\sum_{j_1,j_2=1}^p\{\sigma_{x,j_1,j_2}/\gamma + \sigma_{y,j_1,j_2}/(1-\gamma)\}^a= \Theta(p)$. 
\end{enumerate}
\end{condition}

 Condition  \ref{cond:twosamplemeancond} is similar to Condition \ref{cond:onesamplemean2}. They are  assumed to establish both the limiting distributions  and asymptotic independence properties of $\mathcal{U}(a)$  and $\mathcal{U}(\infty)$ for testing two-sample mean. 

\subsection{Proof of Theorems \ref{thm:onesamplemean} and \ref{thm:onesamplemean2}} \label{sec:proof3132}
\begin{proof}
Under $H_0$, for $\mathcal{U}(a)$ in \eqref{eq:ustatonesamplemean}, we assume without loss of generality that $\boldsymbol{\mu}_0=\mathbf{0}$, and then write $\mathcal{U}(a) =\sum_{j=1}^p (P^n_a)^{-1} \sum_{1 \leq i_1 \neq \cdots \neq i_{a} \leq n}  \prod_{k=1}^a x_{i_k,j}.$ 

We start with the proof of Theorem \ref{thm:onesamplemean}.
Similarly to Section  \ref{sec:detailofjointnormal}, we first derive the variances and  the covariances of the U-statistics; and then prove the asymptotic joint normality of the U-statistics. In particular, for $\mathrm{var}\{ \mathcal{U}(a)\}$ in Theorem \ref{thm:onesamplemean}, as $\mathrm{E}\{\mathcal{U}(a) \}=0$ under $H_0$, 
\begin{align*}
	\mathrm{var}\{ \mathcal{U}(a)\}=\mathrm{E}\{\mathcal{U}^2(a)\}=& ({P^n_{a}})^{-2} \sum_{\substack{1\leq j_1 \leq p,\\ 1\leq j_2 \leq p}}\ \sum_{\substack{1 \leq i_1 \neq \cdots \neq i_{a} \leq n,\\ 1 \leq \tilde{i}_1 \neq \cdots \neq \tilde{i}_{a} \leq n}} \mathrm{E}\Big(\prod_{k=1}^a x_{i_k,j_1} x_{\tilde{i}_k,j_2}\Big).\end{align*}
Note that 	$
		\mathrm{E}(\prod_{k=1}^a x_{i_k,j_1} x_{\tilde{i}_k,j_2})= 0
$ when $\{i_1,\ldots, i_a\}\neq \{\tilde{i}_1,\ldots, \tilde{i}_a\}$; and $
		\mathrm{E}(\prod_{k=1}^a x_{i_k,j_1} x_{\tilde{i}_k,j_2})= \sigma_{j_1,j_2}^a
$ when $\{i_1,\ldots, i_a\}= \{\tilde{i}_1,\ldots, \tilde{i}_a\}$. Then
	\begin{eqnarray}
		\mathrm{var}\{ \mathcal{U}(a)\}= ({P^n_{a}})^{-1} \sum_{\substack{1\leq j_1,j_2 \leq p}} a! \sigma_{j_1,j_2}^a. \label{eq:meanvarpartsummedtermexp}
	\end{eqnarray} By Condition \ref{cond:onesamplemean},   $\sum_{1\leq j_1,j_2\leq p}\sigma_{j_1,j_2}^a=\Theta(p)$. Thus $\mathrm{var}\{ \mathcal{U}(a)\}=\Theta(pn^{-a})$. 

Second, we show that $\mathrm{cov}\{\mathcal{U}(a), \mathcal{U}(b)\}=0$. 
Note that $\mathrm{cov}\{\mathcal{U}(a), \mathcal{U}(b)\}=\mathrm{E}\{\mathcal{U}(a)\mathcal{U}(b)\}$ under $H_0$, and
\begin{align*}
	\mathrm{E}\{\mathcal{U}(a)\mathcal{U}(b)\}=(P^n_aP^n_b)^{-1} \sum_{\substack{1\leq j_1 \leq p,\\ 1\leq j_2 \leq p}}\ \sum_{\substack{1 \leq i_1 \neq \cdots \neq i_{a} \leq n,\\ 1 \leq \tilde{i}_1 \neq \cdots \neq \tilde{i}_{a} \leq n}} \mathrm{E}\Big(\prod_{k=1}^a x_{i_k,j_1} \prod_{t=1}^b  x_{\tilde{i}_t,j_2}\Big).
\end{align*} Since $a\neq b$, $\{i_1,\ldots, i_a\}\neq \{\tilde{i}_1,\ldots, \tilde{i}_b \}$. Suppose  there exists an index $i\in \{i_1,\ldots, i_a\}$ and $i\not \in \{\tilde{i}_1,\ldots, \tilde{i}_b \}$. Then under $H_0$,
\begin{align*}
	\mathrm{E}\Big( \prod_{k=1}^a x_{i_k,j_1} \prod_{t=1}^b  x_{\tilde{i}_t,j_2} \Big)=\mathrm{E}(x_{i,j})\mathrm{E}(\mathrm{all\ the\ remaining\ terms})= 0.
\end{align*}  Therefore, $\mathrm{E}\{\mathcal{U}(a)\mathcal{U}(b) \}=0$.


In summary, the  covariance matrix of $[\mathcal{U}(a_1)/\sigma(a_1),\allowbreak \ldots, \mathcal{U}(a_m)/\sigma(a_m)]^{\intercal}$ asymptotically converges to  $I_m$. To finish the proof of Theorem \ref{thm:onesamplemean}, it remains to show that the  joint limiting distribution of the U-statistics is normal. 
By the  Cram\'{e}r-Wold theorem, it is sufficient to prove that any fixed linear combination of these U-statistics  converges to a normal distribution. 
Similarly to  Section \ref{sec:detailofjointnormal},  we use the martingale central limit theorem \cite[p.476]{billingsley1995}. 
Specifically, we redefine $Z_n$  as below with $\sum_{r=1}^m t_r^2=1$, and prove that
\begin{eqnarray}
	Z_n:=\sum_{r=1}^mt_r \mathcal{U}(a_r)/\sigma(a_r)\xrightarrow{D} \mathcal{N}(0,1).\label{eq:znnormalaltonesam}
\end{eqnarray}
 With the redefined $Z_n$, we define $\mathrm{E}_k(\cdot)$ in the same way as in Section \ref{sec:detailofjointnormal}, and still define  $D_{n,k}=(\mathrm{E}_k-\mathrm{E}_{k-1})Z_n$ and $\pi_{n,k}^2=\mathrm{E}_{k-1}(D_{n,k}^2)$.  Similarly to Section \ref{sec:detailofjointnormal}, we have $D_{n,k}=(\mathrm{E}_{k}-\mathrm{E}_{k-1})Z_n=\sum_{r=1}^m t_rA_{n,k,a_r},$ where we redefine $A_{n,k,a_r}=(\mathrm{E}_k-\mathrm{E}_{k-1})\{\mathcal{U}(a_r)/\sigma(a_r)\}$. In addition, similarly to Lemma \ref{lm:cltabnkform}, 
 we obtain that when $k<a_r$,  $A_{n,k,a_r}=0$; and when $k\geq a_r$, 
\begin{eqnarray*}
	A_{n,k,a_r}=\frac{a_r}{\sigma(a_r)P^n_{a_r} } \sum_{j=1}^p \sum_{1 \leq i_1 \neq \cdots \neq i_{a_r-1} \leq k-1}   x_{k,j} \times \prod_{t=1}^{a_r-1}x_{i_t,j}. 
\end{eqnarray*}  
Given the form of $A_{n,k,a_r}$, we can obtain the forms of $D_{n,k}$ and $\pi_{n,k}^2$. To prove \eqref{eq:znnormalalt}, by the martingale central limit theorem, it suffices to prove the following Lemma \ref{lm:cltonesammeanlm}. 
\begin{lemma}\label{lm:cltonesammeanlm}
Under the conditions of Theorem \ref{thm:onesamplemean}, $\mathrm{var}(\sum_{k=1}^n\pi^2_{n,k})\to 0$ and $\sum_{k=1}^n\mathrm{E}(D_{n,k}^4)\to 0$. 	
\end{lemma}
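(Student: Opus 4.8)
The plan is to verify the two conditions of the martingale central limit theorem for $Z_n = \sum_{r=1}^m t_r \mathcal{U}(a_r)/\sigma(a_r)$, namely $\mathrm{var}(\sum_{k=1}^n \pi_{n,k}^2) \to 0$ (which, combined with $\mathrm{E}(\sum_{k=1}^n \pi_{n,k}^2) = \mathrm{var}(Z_n) \to 1$, gives the conditional-variance convergence) and the Lindeberg-type bound $\sum_{k=1}^n \mathrm{E}(D_{n,k}^4) \to 0$. Since $D_{n,k} = \sum_{r=1}^m t_r A_{n,k,a_r}$ and $\pi_{n,k}^2 = \sum_{r_1,r_2} t_{r_1} t_{r_2} \mathrm{E}_{k-1}(A_{n,k,a_{r_1}} A_{n,k,a_{r_2}})$, both quantities expand into finitely many terms indexed by pairs $(a_{r_1}, a_{r_2})$, so by the triangle inequality it suffices to bound each term separately; this reduces Lemma \ref{lm:cltonesammeanlm} to estimates on $\mathrm{var}\{\sum_{k=1}^n \mathrm{E}_{k-1}(A_{n,k,a} A_{n,k,b})\}$ and $\sum_{k=1}^n \mathrm{E}(A_{n,k,a}^4)$ for fixed finite integers $a,b$.

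For the fourth-moment term, I would plug in the explicit form
\[
A_{n,k,a} = \frac{a}{\sigma(a) P^n_a} \sum_{1 \le i_1 \ne \cdots \ne i_{a-1} \le k-1} \sum_{j=1}^p x_{k,j} \prod_{t=1}^{a-1} x_{i_t,j},
\]
take the fourth power, and expand $\mathrm{E}(A_{n,k,a}^4)$ as a sum over four ``column'' indices $j_1,j_2,j_3,j_4$ and four (multi)sets of row indices. Under $H_0$, $\mathrm{E}(x_{k,j})=0$, so only terms in which the fresh index $k$ appears an even number of times in each factor survive; using Condition \ref{cond:onesamplemean}(1) to bound the fourth marginal moments and Condition \ref{cond:onesamplemean}(2) to control the number of surviving column combinations (via $\sum_{j_1,j_2}\sigma_{j_1,j_2}^a = \Theta(p)$ and its analogues, with the mixing giving summability of higher cross-moments over columns), I expect $\sum_{k=1}^n \mathrm{E}(A_{n,k,a}^4) = O(1/n)$, exactly as in the covariance case (Lemma \ref{lm:secondgoaljointnormal}). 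The mean-testing situation is genuinely simpler than the covariance testing of Section \ref{sec:detailofjointnormal} because there is only a one-way dependence structure among the coordinates rather than the two-way $(j_1,j_2)$ structure.

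For the conditional-variance term, I would write $\sum_{k=1}^n \mathrm{E}_{k-1}(A_{n,k,a} A_{n,k,b})$ explicitly. Conditioning on $\mathcal{F}_{k-1}$ and using $\mathrm{E}(x_{k,j_1} x_{k,j_2}) = \sigma_{j_1,j_2}$, this becomes a double U-statistic-like object in $\mathbf{x}_1,\dots,\mathbf{x}_{k-1}$ with coefficients $\sigma_{j_1,j_2}$; its expectation equals the $(a,b)$ contribution to $\mathrm{var}(Z_n)$ by Lemma \ref{lm:meaninvarsigma}-type reasoning (which in fact forces this term to vanish when $a \ne b$, consistent with $\mathrm{cov}\{\mathcal U(a),\mathcal U(b)\}=0$ shown above). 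Then I would bound its variance by expanding the square, summing over all index configurations, and again invoking the eighth-order (here only needs up to, say, order $2\max(a,b)$, covered by Condition \ref{cond:higherordermomentvarest}-type assumptions implicit in the finite-moment setup or by the Gaussian/$m$-dependent special cases) moment bounds and the mixing-induced summability across columns; I anticipate this variance is $O(1/n) + o(1)$ in $p$, tending to $0$. The main obstacle will be the bookkeeping in the conditional-variance estimate: one must carefully track which row indices coincide and which column combinations give non-negligible moment products, and show that the ``diagonal'' configurations (where the expectation is reproduced) dominate while all ``off-diagonal'' configurations are lower order — this is the step where the mixing assumption $\alpha_{\mathbf{x}}(s) \le M\delta^s$ (or the moment version) does the real work, since uncorrelatedness of the $x_j$'s under $H_0$ alone does not kill the relevant cross-column fourth moments. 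Once Lemma \ref{lm:cltonesammeanlm} is established, \eqref{eq:znnormalaltonesam} follows from the martingale CLT as in \citet{billingsley1995}, and then the Cram\'er--Wold device and Slutsky's theorem (together with the already-derived covariance structure) complete the proof of Theorem \ref{thm:onesamplemean}.
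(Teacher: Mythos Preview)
Your proposal is correct and follows essentially the same approach as the paper: reduce via Cauchy--Schwarz to bounding $\mathrm{var}(\mathbb{T}_{k,a_1,a_2})$ with $\mathbb{T}_{k,a_1,a_2}=\mathrm{E}_{k-1}(A_{n,k,a_1}A_{n,k,a_2})$ and $\mathrm{E}(\prod_{l=1}^4 A_{n,k,a_l})$, then expand using the explicit form of $A_{n,k,a}$, control the number of surviving row-index configurations by the constraint that each index must appear at least twice, and handle the column sums via the $\alpha$-mixing threshold argument (Lemma~\ref{lm:mixingineq} with a cutoff $\tilde K_0\asymp\log p$) exactly as in the covariance case. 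One small correction: Condition~\ref{cond:onesamplemean}(1) only requires fourth moments, and that is all you need here---each sample index appears at most four times in the relevant products---so there is no need to invoke Condition~\ref{cond:higherordermomentvarest} or higher-order assumptions.
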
 
\begin{proof}
\textit{See Section \ref{sec:onesammeanlm} on Page \pageref{sec:onesammeanlm}.}	
\end{proof}
With Lemma \ref{lm:cltonesammeanlm}, the asymptotic joint normality in Theorem \ref{thm:onesamplemean} is obtained by the martingale central limit theorem. For Theorem \ref{thm:onesamplemean2}, the limiting distribution of $\mathcal{U}(\infty)$ follows from \citet{cai2014two}. In addition, the asymptotic independence between $\mathcal{U}(a)/\sigma(a)$ and $n\mathcal{U}(\infty)-\tau_p$ can be obtained similarly as the proof of Theorem \ref{THM:TWOSAMPLEMANINF}. We defer the details to Section  \ref{sec:proofasymindptwosam}.




\subsection{Proof of Theorem \ref{thm:twosamplemean}} \label{sec:proofthm33}


By the following Proposition \ref{prop:locinvatwosam}, we assume that under $H_0$, $\boldsymbol{\mu}=\boldsymbol{\nu}=\mathbf{0}$, without loss of generality. 
	
\begin{proposition}\label{prop:locinvatwosam}
 $\mathcal{U}(a)$  constructed in \eqref{eq:teststattwosample} and  \eqref{eq:uinftytwosammean} are location invariant; that is, for any vector $\mathbf{\Delta}\in {\mathbb  R}^p$, the U-statistic  constructed based on the transformed data $\{\mathbf{x}_i+\Delta: i=1,\ldots, n_x\}$ and $\{\mathbf{y}_i+\Delta: i=1,\ldots, n_y\}$ is still $\mathcal{U}(a)$.
\end{proposition}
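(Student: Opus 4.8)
The plan is to reduce the claim to the ``difference form'' of the statistics, in which location invariance is immediate. Recall that the expression \eqref{eq:teststattwosample} for $\mathcal{U}(a)$ was obtained precisely as the expansion of
\[
\mathcal{U}(a)=\sum_{j=1}^p \frac{1}{P^{n_x}_{a}P^{n_y}_{a}}\sum_{\substack{1\leq k_1\neq\cdots\neq k_a\leq n_x\\ 1\leq s_1\neq\cdots\neq s_a\leq n_y}}\prod_{t=1}^a\big(x_{k_t,j}-y_{s_t,j}\big),
\]
and this algebraic identity holds for an arbitrary real data array, in particular for the shifted data $\{\mathbf{x}_i+\mathbf{\Delta}\}$, $\{\mathbf{y}_i+\mathbf{\Delta}\}$. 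Hence it suffices to verify that the right-hand side above is unchanged when $x_{k,j}$ is replaced by $x_{k,j}+\Delta_j$ and $y_{s,j}$ by $y_{s,j}+\Delta_j$.

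First I would fix $\mathbf{\Delta}=(\Delta_1,\ldots,\Delta_p)^{\intercal}\in\mathbb{R}^p$ and observe that for every $j$, every admissible tuple of indices, and every $t$,
\[
(x_{k_t,j}+\Delta_j)-(y_{s_t,j}+\Delta_j)=x_{k_t,j}-y_{s_t,j};
\]
thus each factor in $\prod_{t=1}^a(\cdot)$ is unchanged, and therefore so are the product, the inner double sum, the outer sum over $j$, and $\mathcal{U}(a)$ itself. This settles the finite-order case. If one insisted on arguing directly from \eqref{eq:teststattwosample} without invoking its equivalence with the difference form, one would instead expand each $(x_{k_t,j}+\Delta_j)$ and collect powers of $\Delta_j$, showing that every $\Delta_j$-dependent contribution telescopes to zero over the free summation indices, exactly as in the proof of Proposition~\ref{prop:locinvariance} for one-sample covariance; the difference form simply makes this bookkeeping unnecessary. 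For $\mathcal{U}(\infty)$ in \eqref{eq:uinftytwosammean}, the shift sends $\bar{x}_j\mapsto\bar{x}_j+\Delta_j$ and $\bar{y}_j\mapsto\bar{y}_j+\Delta_j$, so $\bar{x}_j-\bar{y}_j$ is invariant, while the scaling $\sigma_{j,j}$ (the marginal variance, or its pooled sample estimate computed from centered residuals) is unaffected by a common location shift; taking the maximum over $j$ gives invariance of $\mathcal{U}(\infty)$.

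There is essentially no real obstacle here: the only step requiring attention is recording that \eqref{eq:teststattwosample} coincides with the difference form (already noted when \eqref{eq:teststattwosample} was derived), after which the argument is one line. The same reasoning applies verbatim to the one-sample mean statistics \eqref{eq:ustatonesamplemean} and $\mathcal{U}(\infty)=\max_{1\leq j\leq p}\sigma_{j,j}^{-1}(\bar{x}_j-\mu_{j,0})^2$ with $\mathbf{\Delta}$ applied jointly to the data and to $\boldsymbol{\mu}_0$, which is the justification for assuming $\boldsymbol{\mu}_0=\mathbf{0}$ (resp.\ $\boldsymbol{\mu}=\boldsymbol{\nu}=\mathbf{0}$) at the start of the corresponding proofs.
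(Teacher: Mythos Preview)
Your proposal is correct and matches the paper's approach exactly: the paper simply notes that the result is ``obtained straightforwardly from the definitions'' in the difference form $\mathcal{U}(a)=\sum_{j=1}^p (P^{n_x}_{a}P^{n_y}_{a})^{-1}\sum\prod_{t=1}^a(x_{k_t,j}-y_{s_t,j})$ and $\mathcal{U}(\infty)=\max_j\sigma_{j,j}^{-1}(\bar x_j-\bar y_j)^2$, and skips the proof. You have supplied precisely the one-line verification the paper omits.
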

\noindent Proposition \ref{prop:locinvatwosam} can be obtained straightforwardly from the definitions $ \mathcal{U}(a)= \sum_{j=1}^p (P^{n_x}_{a} P^{n_y}_{a})^{-1}\times \sum_{1 \leq k_1 \neq \ldots \neq k_a \leq n_x;\atop 1 \leq s_1 \neq \ldots \neq s_a \leq n_y}   \prod_{t=1}^a (x_{k_t,j}-y_{s_t,j})$ in \eqref{eq:teststattwosample}, and $\mathcal{U}(\infty)= \max_{1\leq j \leq p} \sigma_{j,j}^{-1}\times (\bar{x}_{j}- \bar{y}_{j})^2$ in \eqref{eq:uinftytwosammean}. The proof is thus skipped.  

\smallskip


The following proof proceeds by  deriving the variances,  covariances and asymptotic joint normality of the U-statistics. 
Particularly, the next Lemma \ref{lm:twosamvar}  derives the asymptotic form of $\sigma^2(a)$ in Theorem \ref{thm:twosamplemean}.
\begin{lemma} \label{lm:twosamvar} Under the conditions of Theorem \ref{thm:twosamplemean},
\begin{eqnarray*}
	\mathrm{var}\{\mathcal{U}(a)\}\simeq  \sum_{1\leq j_1,j_2\leq p} a! \Big(\frac{\sigma_{x,j_1,j_2}}{n_x} + \frac{\sigma_{y,j_1,j_2}}{n_y}\Big)^a = \Theta(pn^{-a}).
\end{eqnarray*} When $\sigma_{x,j_1,j_2}=\sigma_{y,j_1,j_2}=\sigma_{j_1,j_2},$ we have $\mathrm{var}[\mathcal{U}(a)]\simeq \sum_{j_1,j_2=1}^pa!(n_x+n_y)^a\sigma_{j_1,j_2}^a/(n_xn_y)^a$. 
\end{lemma}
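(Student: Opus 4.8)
The plan is to compute $\mathrm{var}\{\mathcal{U}(a)\}$ exactly via a second-moment expansion, working with the equivalent representation \eqref{eq:teststattwosample} rather than the product-of-differences form, and then read off the asymptotics. By Proposition \ref{prop:locinvatwosam} and $H_0:\boldsymbol{\mu}=\boldsymbol{\nu}$, I may assume without loss of generality that $\boldsymbol{\mu}=\boldsymbol{\nu}=\mathbf{0}$, so that every mixed monomial appearing in \eqref{eq:teststattwosample} has mean zero and $\mathrm{E}\{\mathcal{U}(a)\}=0$. Writing $\mathcal{U}(a)=\sum_{j=1}^p W_j$ with $W_j$ the coordinate-$j$ statistic, I have $\mathrm{var}\{\mathcal{U}(a)\}=\sum_{1\le j_1,j_2\le p}\mathrm{E}[W_{j_1}W_{j_2}]$, and $W_j=\sum_{c=0}^a\binom{a}{c}(-1)^{a-c}R_{j,c}$ where $R_{j,c}=(P^{n_x}_c P^{n_y}_{a-c})^{-1}\sum_{\mathbf{k},\mathbf{s}}\prod_{t=1}^c x_{k_t,j}\prod_{m=1}^{a-c}y_{s_m,j}$ averages over $c$-tuples of distinct $x$-indices and $(a-c)$-tuples of distinct $y$-indices.

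The core step is to evaluate $\mathrm{E}[R_{j_1,c}R_{j_2,c'}]$. Because the two samples are independent and all coordinates are centered, this expectation factorizes into an $x$-part and a $y$-part; in the $x$-part each observation index can occur at most once from each of the two factors (the index tuples inside each $R$ are injective), so the expectation vanishes unless $c=c'$ and the two sets of $x$-indices coincide exactly, each matched observation then contributing $\mathrm{E}(x_{k,j_1}x_{k,j_2})=\sigma_{x,j_1,j_2}$; symmetrically for the $y$-part with $\sigma_{y,j_1,j_2}$ and $a-c$ indices. Counting the surviving index configurations (choose the common $c$-set, then order it into the first and the second tuple) gives $c!\,P^{n_x}_c$ from the $x$-coordinates and $(a-c)!\,P^{n_y}_{a-c}$ from the $y$-coordinates, so after dividing by the normalizers $\mathrm{E}[R_{j_1,c}R_{j_2,c}]=c!(a-c)!\,\sigma_{x,j_1,j_2}^c\sigma_{y,j_1,j_2}^{a-c}/(P^{n_x}_c P^{n_y}_{a-c})$. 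Only $c=c'$ survives, the corresponding coefficient in $\mathrm{E}[W_{j_1}W_{j_2}]$ is $\binom{a}{c}^2$, and $\binom{a}{c}^2c!(a-c)!=a!\binom{a}{c}$, which yields the exact identity
\[
\mathrm{var}\{\mathcal{U}(a)\}=a!\sum_{1\le j_1,j_2\le p}\sum_{c=0}^a\binom{a}{c}\frac{\sigma_{x,j_1,j_2}^c\,\sigma_{y,j_1,j_2}^{a-c}}{P^{n_x}_c\,P^{n_y}_{a-c}}.
\]

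Since $a$ is fixed, $P^{n_x}_c=n_x^c\{1+O(1/n)\}$ and $P^{n_y}_{a-c}=n_y^{a-c}\{1+O(1/n)\}$ uniformly over $c\le a$ and over $(j_1,j_2)$, so up to a factor $1+o(1)$ the displayed sum equals $a!\sum_{j_1,j_2}(\sigma_{x,j_1,j_2}/n_x+\sigma_{y,j_1,j_2}/n_y)^a$; the boundedness of the covariance entries in Condition \ref{cond:twosamplemeancond}(1) and the geometric $\alpha$-mixing in Condition \ref{cond:twosamplemeancond}(3) (which forces $\sum_{j_1,j_2}(|\sigma_{x,j_1,j_2}|^a+|\sigma_{y,j_1,j_2}|^a)=O(p)$) make this passage uniform, and the assumed $\sum_{j_1,j_2}\{\sigma_{x,j_1,j_2}/\gamma+\sigma_{y,j_1,j_2}/(1-\gamma)\}^a=\Theta(p)$ together with $n_x/n\to\gamma$ pins down the order $\Theta(pn^{-a})$. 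The special case $\sigma_{x,j_1,j_2}=\sigma_{y,j_1,j_2}=\sigma_{j_1,j_2}$ is then immediate from $(\sigma_{j_1,j_2}/n_x+\sigma_{j_1,j_2}/n_y)^a=\sigma_{j_1,j_2}^a(n_x+n_y)^a/(n_xn_y)^a$. I expect the main obstacle to be the careful combinatorial bookkeeping in the second step — in particular justifying that only the fully matched ``diagonal'' configurations contribute, which is precisely the degeneracy of $\mathcal{U}(a)$ under $H_0$ responsible for $\sigma^2(a)$ being of order $n^{-a}$ rather than $n^{-1}$ — with a minor additional care needed to turn the exact identity into the stated $\simeq$ relation uniformly when $p$ may grow arbitrarily fast relative to $n$.
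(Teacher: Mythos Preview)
Your proposal is correct and follows essentially the same route as the paper: both reduce to mean zero via Proposition~\ref{prop:locinvatwosam}, decompose $\mathcal{U}(a)$ coordinate-wise, expand the second moment in the form \eqref{eq:teststattwosample}, use the matching argument (only $c=\tilde c$ with identical index sets survives) to get the exact identity $a!\sum_{c}\binom{a}{c}\sigma_{x,j_1,j_2}^{c}\sigma_{y,j_1,j_2}^{a-c}/(P^{n_x}_cP^{n_y}_{a-c})$, and then replace $P^{n_x}_c,P^{n_y}_{a-c}$ by $n_x^c,n_y^{a-c}$ and invoke Condition~\ref{cond:twosamplemeancond} for the order $\Theta(pn^{-a})$. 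Your write-up is in fact a touch more explicit than the paper's about the uniformity of the $P^n_c\simeq n^c$ step and about how the mixing decay in Condition~\ref{cond:twosamplemeancond}(3) yields the $O(p)$ upper bound on $\sum_{j_1,j_2}|\sigma_{\cdot,j_1,j_2}|^a$.
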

\begin{proof}
\textit{See Section  \ref{sec:lmtwosamvar} on Page \pageref{sec:lmtwosamvar}.}	
\end{proof}
\noindent In addition, the following Lemma \ref{lm:twosamcov} shows that different $\mathcal{U}(a)$'s of finite $a$ are uncorrelated.
\begin{lemma} \label{lm:twosamcov}
Under the conditions of Theorem \ref{thm:twosamplemean}, 
 for finite integers $a\neq b$, $\mathrm{cov}\{ \mathcal{U}(a),\mathcal{U}(b) \}=0.$
\end{lemma}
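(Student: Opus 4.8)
The plan is to reduce the claim to the identity $\mathrm{E}\{\mathcal{U}(a)\mathcal{U}(b)\}=0$ and then to a short counting argument on index multiplicities, mirroring the one-sample computation in the proof of Theorem \ref{thm:onesamplemean}. First I would invoke Proposition \ref{prop:locinvatwosam} to assume without loss of generality that $\boldsymbol{\mu}=\boldsymbol{\nu}=\mathbf 0$ under $H_0$. Since $\mathcal{U}(a)$ is unbiased for $\sum_{j=1}^{p}(\mu_j-\nu_j)^a$, we have $\mathrm{E}\{\mathcal{U}(a)\}=0$ for every integer $a\ge 1$, hence $\mathrm{cov}\{\mathcal{U}(a),\mathcal{U}(b)\}=\mathrm{E}\{\mathcal{U}(a)\mathcal{U}(b)\}$. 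It thus suffices to show this expectation vanishes when $a\ne b$.

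Next I would work from the expanded representation \eqref{eq:teststattwosample}, in which $\mathcal{U}(a)$ is a finite linear combination (over $j$ and the split parameter $c\in\{0,\dots,a\}$) of sums of monomials $x_{k_1,j}\cdots x_{k_c,j}\,y_{s_1,j}\cdots y_{s_{a-c},j}$ with the $k$'s mutually distinct in $\{1,\dots,n_x\}$ and the $s$'s mutually distinct in $\{1,\dots,n_y\}$, and similarly for $\mathcal{U}(b)$ with index $j'$, split $c'$, and indices $k',s'$. Multiplying and moving the expectation inside the finite sums, a generic term equals a harmless constant (a ratio of permutation counts $P^{n_x}_{\bullet},P^{n_y}_{\bullet}$ times binomial factors) multiplied by $\mathrm{E}\big(\prod_{t=1}^{c}x_{k_t,j}\prod_{u=1}^{c'}x_{k'_u,j'}\big)\cdot\mathrm{E}\big(\prod_{t=1}^{a-c}y_{s_t,j}\prod_{u=1}^{b-c'}y_{s'_u,j'}\big)$, where the factorization into an $\mathbf x$-factor and a $\mathbf y$-factor uses the independence of the two samples.

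The key step is the vanishing criterion for these two factors. In the $\mathbf x$-factor the indices $k_1,\dots,k_c$ are mutually distinct, and $k'_1,\dots,k'_{c'}$ are mutually distinct, so every observation index occurs in at most two of the factors; since $\mathrm{E}(x_{i,j})=0$ and the $\mathbf x_i$ are i.i.d., the factor is nonzero only if every occurring index occurs exactly twice, which forces $\{k_1,\dots,k_c\}=\{k'_1,\dots,k'_{c'}\}$ and in particular $c=c'$ (the edge cases $c=0$ and $c=a$ being handled the same way). Applying the identical reasoning to the $\mathbf y$-factor forces $a-c=b-c'$. Combining the two equalities yields $a=b$, contradicting $a\ne b$; hence every term in the expansion vanishes and $\mathrm{E}\{\mathcal{U}(a)\mathcal{U}(b)\}=0$, which proves the lemma.

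The argument is essentially combinatorial bookkeeping, so I do not anticipate a genuine obstacle; the only points requiring care are keeping the $\mathbf x$-indices and $\mathbf y$-indices strictly separate (they refer to independent observations even when numerically equal), recognizing that the repetitions relevant to the pairing are those occurring \emph{across} the two U-statistics rather than within one of them, and checking that the prefactors $P^{n_x}_{\bullet}$, $P^{n_y}_{\bullet}$ and binomial coefficients play no role because the expectation they multiply is already zero.
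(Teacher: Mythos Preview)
Your proposal is correct and follows essentially the same route as the paper: reduce to $\boldsymbol{\mu}=\boldsymbol{\nu}=\mathbf 0$ via location invariance, expand $\mathrm{E}\{\mathcal{U}(a)\mathcal{U}(b)\}$ using \eqref{eq:teststattwosample}, factor each term into an $\mathbf x$-part and a $\mathbf y$-part by independence of the two samples, and observe that nonvanishing of both parts forces $c=c'$ and $a-c=b-c'$, contradicting $a\neq b$. Your phrasing of the index-pairing step is in fact slightly more careful than the paper's (which asserts both set inequalities simultaneously rather than ``at least one''), but the underlying argument is identical.
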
 
\begin{proof}
\textit{See Section  \ref{sec:prooftwosamcov} on Page \pageref{sec:prooftwosamcov}.}	
\end{proof}
\noindent We  then know $\mathrm{cov}\{\mathcal{U}(a_1)/\sigma(a_1),\ldots,\mathcal{U}(a_m)/\sigma(a_m)\}=I_m$ by Lemmas \ref{lm:twosamvar} and \ref{lm:twosamcov}. 
The next Lemma \ref{lm:twosamjointnormal} further proves the asymptotic joint normality of the U-statistics.
\begin{lemma}\label{lm:twosamjointnormal}
Under the conditions of Theorem \ref{thm:twosamplemean},
 for finite integers $a_1,\ldots,a_m$, $\{\mathcal{U}(a_1)/\sigma(a_1),\ldots,\mathcal{U}(a_m)/\sigma(a_m)\}\xrightarrow{D} \mathcal{N}(0,I_m). $
\end{lemma}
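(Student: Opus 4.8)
The plan is to establish Lemma~\ref{lm:twosamjointnormal} by the same martingale strategy already used for Theorem~\ref{thm:jointnormal} and Theorem~\ref{thm:onesamplemean}, adapted to the two-sample structure. By Lemmas~\ref{lm:twosamvar} and \ref{lm:twosamcov} the covariance matrix of $[\mathcal{U}(a_1)/\sigma(a_1),\ldots,\mathcal{U}(a_m)/\sigma(a_m)]^{\intercal}$ already converges to $I_m$, so by the Cram\'er--Wold device it suffices to show that for any constants $t_1,\ldots,t_m$ with $\sum_{r=1}^m t_r^2=1$ the linear combination $Z_n:=\sum_{r=1}^m t_r \mathcal{U}(a_r)/\sigma(a_r)$ converges in distribution to $\mathcal{N}(0,1)$. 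Under $H_0$ with $\boldsymbol{\mu}=\boldsymbol{\nu}=\mathbf{0}$ (legitimate by Proposition~\ref{prop:locinvatwosam}), $\mathrm{E}(Z_n)=0$, so the task is a CLT for a degenerate two-sample U-statistic.

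First I would set up the martingale decomposition. The natural filtration now interleaves the two samples: order the observations as $\mathbf{x}_1,\ldots,\mathbf{x}_{n_x},\mathbf{y}_1,\ldots,\mathbf{y}_{n_y}$ and let $\mathcal{F}_k$ be the $\sigma$-field generated by the first $k$ of them, with $\mathrm{E}_k$ the corresponding conditional expectation. Writing $D_{n,k}=(\mathrm{E}_k-\mathrm{E}_{k-1})Z_n$, we get $Z_n=\sum_k D_{n,k}$, and by the martingale CLT (as in \citet{heyde1970} or \citet[p.476]{billingsley1995}) it is enough to verify the conditional-variance convergence $\sum_k \mathrm{E}_{k-1}(D_{n,k}^2)/\mathrm{var}(Z_n)\xrightarrow{P}1$ and the Lyapunov-type bound $\sum_k \mathrm{E}(D_{n,k}^4)\to 0$. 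As in the one-sample proofs, the first reduces (since the unconditional mean of $\sum_k\mathrm{E}_{k-1}(D_{n,k}^2)$ equals $\mathrm{var}(Z_n)$) to showing $\mathrm{var}\bigl(\sum_k\mathrm{E}_{k-1}(D_{n,k}^2)\bigr)\to 0$. The key computational input is the explicit form of $A_{n,k,a}:=(\mathrm{E}_k-\mathrm{E}_{k-1})\{\mathcal{U}(a)/\sigma(a)\}$, which one obtains from the expansion \eqref{eq:teststattwosample} of $\mathcal{U}(a)$: when the $k$-th observation is an $\mathbf{x}$, the increment picks up the terms where $x_{k,j}$ appears exactly once paired with $a-1$ earlier $x$'s or $a-c$ earlier $y$'s (summed over the $c$ index), and symmetrically when it is a $\mathbf{y}$; $A_{n,k,a}=0$ when too few observations from the relevant sample have been seen. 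With this form in hand, $D_{n,k}=\sum_r t_r A_{n,k,a_r}$ and $\pi_{n,k}^2=\mathrm{E}_{k-1}(D_{n,k}^2)$ are concrete polynomials in the data, and the two moment conditions follow by the same combinatorial bookkeeping used in Lemmas~\ref{lm:targetorder}, \ref{lm:secondgoaljointnormal}, \ref{lm:cltonesammeanlm}: count the index patterns, use $\sigma^2(a)=\Theta(a!pn^{-a})$ from Lemma~\ref{lm:twosamvar}, the mixing hypothesis in Condition~\ref{cond:twosamplemeancond}(3) (or the explicit moment structure) to control cross-column sums, and the uniform exponential-moment bound in Condition~\ref{cond:twosamplemeancond}(2) to control high powers. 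I expect the bounds $\mathrm{var}(\sum_k\pi_{n,k}^2)=O(n^{-1}+p^{-1}\log^3 p)$ and $\sum_k\mathrm{E}(D_{n,k}^4)=O(n^{-1})$, exactly paralleling the earlier lemmas.

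The main obstacle is bookkeeping rather than conceptual: the two-sample increment $A_{n,k,a}$ is a sum over $c$ of products mixing $x$- and $y$-factors, so when forming $D_{n,k}^2$, $D_{n,k}^4$, and $\pi_{n,k}^2$ one faces substantially more index patterns than in the one-sample case, and one must check that the cross terms between the $\mathbf{x}$-part and the $\mathbf{y}$-part of the filtration do not inflate the order. The assumption $n_x=\Theta(n_y)$ (equivalently $n_x/n\to\gamma\in(0,1)$) keeps every factor $1/n_x$, $1/n_y$ of order $1/n$, which is what makes the orders collapse to the one-sample rates; I would be careful to invoke it whenever replacing $n_x$ or $n_y$ by $n$. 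Apart from that, the structure is a direct transcription of Sections~\ref{sec:detailofjointnormal} and \ref{sec:proof3132}: state the increment lemma for $A_{n,k,a}$, then state and prove the two moment lemmas, then invoke \citet{heyde1970} to conclude \eqref{eq:znnormalaltonesam}-type convergence, and finally assemble via Cram\'er--Wold and Slutsky (noting that here, unlike the covariance case, $\mathcal{U}(a)$ itself — not merely a leading term $\tilde{\mathcal{U}}(a)$ — is already the martingale object, so no additional negligibility step is needed).
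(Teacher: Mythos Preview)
Your martingale approach is correct and would work, but it is \emph{not} the route the paper takes for this lemma. For the two-sample mean case the paper switches to Bernstein's big-block/small-block method (as in \cite[p.~338]{ibragimov1971independent}), partitioning over the \emph{dimension} index $j$ rather than over observations: it writes $\sigma^{-1}(a)\mathcal{U}(a)=\sum_{j=1}^p \sigma^{-1}(a)\mathcal{U}^{(j)}(a)$, groups the $p$ summands into $r$ big blocks separated by small blocks, shows the small-block and remainder sums are $o_p(1)$ via the $\alpha$-mixing inequality (Lemma~\ref{lm:mixingineq}) and the moment bound $\max_j\mathrm{E}\{n^{a/2}\mathcal{U}^{(j)}(a)\}^\omega<\infty$ (Lemma~\ref{lemma:orderfinaltwo}), and then verifies the Lyapunov condition for the approximately independent big-block sums.

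The trade-offs are as follows. Your observation-indexed martingale avoids introducing a new technique and keeps full parallelism with Sections~\ref{sec:detailofjointnormal} and \ref{sec:proof3132}, at the price of the extra bookkeeping you anticipate: the filtration splits into an $\mathbf{x}$-phase ($k\le n_x$), where only the $c=a$ term of \eqref{eq:teststattwosample} survives in $A_{n,k,a}$, and a $\mathbf{y}$-phase ($k>n_x$), where the increment is a genuine $c$-sum mixing $x$- and $y$-factors; this is exactly the structure the paper handles later for two-sample \emph{covariance} in Section~\ref{secpftwosamplecov}, so the combinatorics are tractable. The paper's blocking route sidesteps that phase split entirely (the blocking variable $j$ is symmetric in the two samples), needs only the compact moment lemma~\ref{lemma:orderfinaltwo} rather than explicit increment formulas, and --- perhaps the decisive reason for the choice here --- feeds directly into the proof of Theorem~\ref{THM:TWOSAMPLEMANINF}, where the asymptotic independence with $\mathcal{U}(\infty)$ is obtained by rerunning the same block argument on a conditional measure (Lemma~\ref{lm:condindpmeas}).
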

\begin{proof}
\textit{See Section \ref{sec:prooftwosamjointnormal} on Page \pageref{sec:prooftwosamjointnormal}.}	
\end{proof}
Combining Lemmas \ref{lm:twosamvar}--\ref{lm:twosamjointnormal}, we finish the proof of Theorem  \ref{thm:twosamplemean}.  


\subsection{Proof of Theorem \ref{THM:TWOSAMPLEMANINF}} \label{sec:proofasymindptwosam}
For $\mathcal{U}(\infty)$ in \eqref{eq:uinftytwosammean}, the limiting distribution of $\mathcal{U}(\infty)$ is established in  \citet{cai2014two} and \cite{xu2016adaptive}.  
We next prove the asymptotic independence between $\mathcal{U}(\infty)$ and $\mathcal{U}(a)$ by a similar argument to that in \citet{hsing1995}, see also \cite{xu2016adaptive}. In this proof, we reserve the notation $P$ for the probability measure on which $x_{i,j}$ and $y_{i,j}$ are defined, and the expectation with respect to $P$ is denoted as $\mathrm{E}$. Define $\tilde{\mathcal{U}}_c(a)/\sigma(a)$ on the conditional probability measure $\tilde{P}$, given the event $n_xn_y\mathcal{U}(\infty)/(n_x+n_y)-\tau_p\leq u$ such that
\begin{align*}
&~\tilde{P}\Big\{\tilde{\mathcal{U}}_c(a)/\sigma(a)\leq u'\Big\} \notag \\
=&~{P}\Big\{{\mathcal{U}}(a)/\sigma(a)\leq u' ~\Big| ~ \frac{n_xn_y}{n_x+n_y}\mathcal{U}(\infty)\leq \tau_p+u \Big\}.	
\end{align*}
The expectation with respect to $\tilde{P}$ is denoted by $\tilde{\mathrm{E}}$. 
To show the asymptotic independence, it is sufficient to prove the following Lemma \ref{lm:condindpmeas}.
\begin{lemma}\label{lm:condindpmeas}
Under the conditions of  Theorem \ref{THM:TWOSAMPLEMANINF}, 
$\tilde{\mathcal{U}}_c(a)/\sigma(a)\xrightarrow{D} \mathcal{N}(0,1)$ on the conditional measure $\tilde{P}$. 
\end{lemma}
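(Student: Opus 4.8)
The plan is to establish that, under the conditional probability measure $\tilde{P}$ obtained by conditioning on the event $A_n:=\{\tfrac{n_xn_y}{n_x+n_y}\mathcal{U}(\infty)\le\tau_p+u\}$, the normalized statistic $\tilde{\mathcal{U}}_c(a)/\sigma(a)$ still converges in distribution to $\mathcal{N}(0,1)$. Since $P(A_n)\to\exp\{-\pi^{-1/2}e^{-u/2}\}\in(0,1)$ for finite $u$ (the cases $u=\pm\infty$ being trivial), this conditional limit coincides with the unconditional limit of $\mathcal{U}(a)/\sigma(a)$ from Theorem \ref{thm:twosamplemean}, which is exactly the asymptotic independence asserted in Theorem \ref{THM:TWOSAMPLEMANINF}. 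A useful preliminary observation is that $\liminf_n P(A_n)>0$, so any $P$-statement of the form $\xi_n\xrightarrow{P}0$ transfers verbatim to $\tilde P$; this lets me freely pass to leading terms and to Gaussian surrogates.

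First I would carry out two reductions. Using the representation of $\mathcal{U}(a)$ in \eqref{eq:teststattwosample} and the variance computations and martingale decomposition behind Lemmas \ref{lm:twosamvar}--\ref{lm:twosamjointnormal}, I reduce to the (leading part of the) U-statistic written as a sum over coordinates $j$ of mean-zero, weakly dependent summands. Then, writing $W_j:=\tfrac{n_xn_y}{n_x+n_y}\sigma_{j,j}^{-1}(\bar x_j-\bar y_j)^2$ so that $\tfrac{n_xn_y}{n_x+n_y}\mathcal{U}(\infty)=\max_{1\le j\le p}W_j$, I replace the $W_j$ by truncated Gaussian surrogates using the exponential-moment and eigenvalue bounds of Condition \ref{cond:twosamplemeancond}(1)--(2) and $\log p=o(n^{1/4})$, exactly as in the derivation of the extreme-value limit in \citet{cai2014two}; after this the surrogate coordinates are nearly i.i.d.\ $\chi^2_1$ up to the $\alpha$-mixing dependence of Condition \ref{cond:twosamplemeancond}(3), and on $A_n$ the maximum sits near $\tau_p=2\log p-\log\log p$.

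The heart of the argument is a decoupling in the spirit of \citet{hsing1995}, adapted as in \citet{xu2016adaptive}. I fix a threshold $b_n$ with $b_n\to\infty$ and $b_n/\log p\to 0$ (e.g.\ $b_n=\epsilon\log p$ with $\epsilon<2$), set $\mathcal J:=\{j:W_j>b_n\}$, and split the coordinate sum defining $\mathcal{U}(a)$ into its $\mathcal J$- and $\mathcal J^c$-parts. A first-moment bound gives $\mathrm E|\mathcal J|=\sum_{j=1}^pP(W_j>b_n)=o(p)$ (indeed $o(p^{1-\delta})$ for some $\delta>0$ with the above choice), whence: (a) the $\mathcal J$-part is $o_P(\sigma(a))$, by a conditional-on-$\mathcal J$ second-moment estimate together with the fact that each coordinate contributes at most $O(p^{-1/2})$ to the normalized statistic; (b) with probability tending to $1$ the event $A_n$ is determined by $(\mathcal J,\{x_{i,j},y_{i,j}:j\in\mathcal J\})$ alone, since every $j\in\mathcal J^c$ has $W_j\le b_n\ll 2\log p\le\tau_p+u$ and hence cannot affect the maximum; (c) conditionally on that same $\sigma$-field, the $\mathcal J^c$-part of $\mathcal{U}(a)$ still obeys the Lindeberg/fourth-moment conditions driving the martingale CLT of Lemma \ref{lm:twosamjointnormal}, with normalizer unchanged up to a factor $1+o(1)$ (deleting $o(p)$ coordinates does not change $\sigma^2(a)$ asymptotically, by Lemma \ref{lm:twosamvar} and the weak dependence). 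Combining (a)--(c) yields $\tilde{\mathrm{E}}[e^{\mathrm{i}t\,\tilde{\mathcal{U}}_c(a)/\sigma(a)}]=P(A_n)^{-1}\,\mathrm{E}[e^{\mathrm{i}t\,\mathcal{U}(a)/\sigma(a)}\mathbf{1}_{A_n}]\to e^{-t^2/2}$, which proves the lemma.

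The main obstacle will be step (c). Unlike the classical i.i.d.\ setting, here $\mathcal J$ is data-dependent, the per-coordinate summands are only $\alpha$-mixing rather than independent, and each summand is itself a U-statistic over the sample indices, so the ``sum'' and the ``max'' are genuinely entangled. One must therefore condition on $(\mathcal J,\{x_{i,j},y_{i,j}:j\in\mathcal J\})$, argue that the residual constraint $\{W_j\le b_n\ \forall j\in\mathcal J^c\}$ is a typical event that perturbs neither $\sigma^2(a)$ nor the Lindeberg bounds of Lemma \ref{lm:twosamjointnormal}, and control the ``boundary'' coordinates with $W_j$ just below $b_n$, which feed into both the maximum and the sum. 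Making the two opposing requirements on $b_n$ compatible — small enough that the $\mathcal J$-part is negligible in the sum, yet large enough that $\mathcal J^c$ truly cannot influence $A_n$ — together with the Gaussian approximation needing $\log p=o(n^{1/4})$, is the delicate balance the proof must strike.
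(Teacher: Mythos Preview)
Your outline takes a different route from the paper, and the difficulty you flag in step~(c) is a real gap. The paper does \emph{not} isolate an extremal index set $\mathcal J$. Instead it reruns the big-block/small-block construction of Lemma~\ref{lm:twosamjointnormal} directly on the conditional measure $\tilde P$, the new ingredient being a bound on the \emph{conditional} mixing coefficient (essentially Lemma~2.2 of \cite{hsing1995}):
\[
\tilde\alpha(d)\ \le\ 4\,\frac{\max_{1\le h\le p-d}P\bigl\{\max_{h\le j\le h+d}U^{(j)}(\infty)>\tau_p+u\bigr\}+\alpha(d)}{[P(A_n)]^{3}},
\]
with $U^{(j)}(\infty)=\tfrac{n_xn_y}{n_x+n_y}\sigma_{j,j}^{-1}(\bar x_j-\bar y_j)^2$. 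The window-maximum probability is $O(dp^{-1}\sqrt{\log p})$ by sub-Gaussianity, so for suitable $r,b_2$ one has $r\tilde\alpha(b_2)\to 0$ and the big blocks can be replaced by independent copies under $\tilde P$; the small-block, tail and Lyapunov bounds then follow from $\tilde{\mathrm E}[\,\cdot\,]\le \mathrm E[\,\cdot\,]/P(A_n)$ applied to the unconditional estimates already obtained. (Incidentally, Lemma~\ref{lm:twosamjointnormal} uses Bernstein blocks and Lyapunov, not a martingale CLT.) This handles the conditioning and the weak dependence in one stroke, with no random-set bookkeeping.

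Your step~(c) trades the original conditional CLT for another: given $(\mathcal J,\{x_{i,j},y_{i,j}:j\in\mathcal J\})$, every $j\in\mathcal J^c$ still carries the constraint $W_j\le b_n$, and under $\alpha$-mixing the $\mathcal J^c$-data remain tied to the $\mathcal J$-data. The phrase ``typical event that perturbs neither $\sigma^2(a)$ nor the Lindeberg bounds'' is exactly what must be proved, and making it rigorous for a random, data-dependent index set under mixing seems to force you back to a conditional-mixing argument of the Hsing type anyway. Step~(a) is also more delicate than stated: on $\{W_j>b_n\}$ the moments of $\mathcal U^{(j)}(a)$ inflate (already for $a=1$, $\mathcal U^{(j)}(1)$ is proportional to $\bar x_j-\bar y_j$ while $W_j\propto(\bar x_j-\bar y_j)^2$, and for even $a$ the bias $\mathrm E[\mathcal U^{(j)}(a)\mathbf 1_{\{W_j>b_n\}}]$ accumulates over $p$ coordinates), so ``each coordinate contributes $O(p^{-1/2})$'' does not survive conditioning and a plain Cauchy--Schwarz bound fails for any admissible $b_n<2\log p$.
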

\begin{proof}
\textit{See Section   \ref{sec:condindpmeas} on Page \pageref{sec:condindpmeas}.}
\end{proof}



\subsection{Proof of Theorem \ref{thm:altcltmeantest}} \label{sec:proofthem34}
 By Proposition \ref{prop:locinvatwosam}, we assume $\mathrm{E}(\mathbf{y})=\boldsymbol{\nu}=\mathbf{0}$, without loss of generality. Then under the considered alternative $\mathcal{E}_A$, $\mathrm{E}(\mathbf{x})=\boldsymbol{\mu}=\{\mu_j=\rho: j=1,\ldots,k_0; \mu_j=0: j=k_0+1,\ldots, p \}$. 
 Define $\varphi_{j_1,j_2}=\sigma_{j_1,j_2}+\mu_{j_1}\mu_{j_2}$. We have $\mathrm{E}(x_{i,j_1}x_{i,j_2})=\varphi_{j_1,j_2}$, and under $\boldsymbol{\nu}=\mathbf{0}$, $\mathrm{E}(y_{i,j_1}y_{i,j_2})=\sigma_{j_1,j_2}$.  

Similarly to the proof of Theorem \ref{thm:cltalternative} in Section \ref{sec:updatepoweronesampf},  we  decompose $\mathcal{U}(a)=T_{a,1}+T_{a,2}$, where 
\begin{align}
	T_{a,1}=& \sum_{j=1}^{k_0}\sum_{c=0}^{a}  \sum_{\substack{ 1\leq k_1 \neq \cdots \neq k_c\leq n_x, \\ 1 \leq s_1 \neq \cdots \neq s_{a-c} \leq n_y} } G(a,c)\prod_{t=1}^c x_{k_{t},j} \prod_{m=1}^{a-c}  y_{s_{m},j}, \label{eq:altmeandeft}\\
  T_{a,2}=& \sum_{j=k_0+1}^{p} \sum_{c=0}^{a}  \sum_{\substack{1\leq k_1 \neq \cdots \neq k_c\leq n_x, \\ 1 \leq s_1 \neq \cdots \neq s_{a-c} \leq n_y} } G(a,c)\prod_{t=1}^c x_{k_{t},j} \prod_{m=1}^{a-c}  y_{s_{m},j},
  \notag  
\end{align} with $G(a,c)=(-1)^{a-c}  \binom{a}{c} (P^{n_x}_{c} P^{n_y}_{a-c})^{-1}$. Then $\mathrm{E}(T_{a,1})=\sum_{j=1}^{k_0} (\mu_j-\nu_j)^a=k_0\rho^a$ and $\mathrm{E}(T_{a,2})=\sum_{j=k_0+1}^p (\mu_j-\nu_j)^a=0$.



To prove Theorem \ref{thm:altcltmeantest}, we derive the variances,  covariances, and asymptotic joint normality of the U-statistics. Particularly, 
the next Lemma \ref{lm:twomeanaltvar} gives the asymptotic form of $\sigma^2(a)=\mathrm{var}\{\mathcal{U}(a)\}$, and shows that $T_{a,2}$ is the leading component. 
\begin{lemma}\label{lm:twomeanaltvar}
Under the conditions of Theorem \ref{thm:altcltmeantest}, 
\begin{align}
	\mathrm{var}\{\mathcal{U}(a)\}\simeq \sum_{k_0+1\leq j_1,j_2\leq p} a! \Big(\frac{\sigma_{x,j_1,j_2}}{n_x} + \frac{\sigma_{y,j_1,j_2}}{n_y}\Big)^a.\label{eq:varalttwosammean}
\end{align}
$\mathrm{var}(T_{a,2})=\Theta(pn^{-a})$	and $\mathrm{var}(T_{a,1})=o(1)\mathrm{var}(T_{a,2})$. It follows that $\{T_{a,1}-\mathrm{E}(T_{a,1}) \}/\sigma(a) \xrightarrow{P} 0.$
\end{lemma}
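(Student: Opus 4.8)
\textbf{Proof proposal for Lemma \ref{lm:twomeanaltvar}.}

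The plan is to compute $\mathrm{var}\{\mathcal{U}(a)\}$ by first isolating the dominant contribution. Since $\mathrm{E}(x_{k_t,j})=\rho$ for $j\le k_0$ and $\mathrm{E}(x_{k_t,j})=0$ for $j>k_0$, and since $T_{a,1}$ involves a sum over only $k_0=o(p)$ coordinates while $T_{a,2}$ involves a sum over $p-k_0=\Theta(p)$ coordinates, one expects $T_{a,2}$ to dominate. The key identity to exploit is that $T_{a,2}$ is, up to the centering of $x$'s by $\boldsymbol{\mu}$, exactly the two-sample mean U-statistic analyzed under $H_0$ in Theorem \ref{thm:twosamplemean}: indeed for $j>k_0$, $(x_{k,j}-y_{s,j})$ has mean zero and the product structure of \eqref{eq:teststattwosample} reduces, after expanding and using independence across $i$ within each permutation, to a degenerate U-statistic whose variance was identified in Lemma \ref{lm:twosamvar}. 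Concretely I would write $T_{a,2}=\sum_{j>k_0}\mathcal{U}_j(a)$ with $\mathcal{U}_j(a)$ the single-coordinate version, compute $\mathrm{E}\{T_{a,2}^2\}=\sum_{j_1,j_2>k_0}\mathrm{E}\{\mathcal{U}_{j_1}(a)\mathcal{U}_{j_2}(a)\}$, and observe that only the fully matched index configuration survives (because any unmatched $x$ or $y$ index contributes a mean-zero factor under $\boldsymbol{\nu}=\mathbf 0$), which gives the leading term $\sum_{k_0+1\le j_1,j_2\le p} a!\,(\sigma_{x,j_1,j_2}/n_x+\sigma_{y,j_1,j_2}/n_y)^a$ exactly as in Lemma \ref{lm:twosamvar}. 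Under Condition \ref{cond:twosamplemeancond}(3) this is $\Theta(pn^{-a})$.

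Next I would bound $\mathrm{var}(T_{a,1})$. Since $T_{a,1}$ sums over $j\le k_0$ with $k_0=o(p)$, and each single-coordinate term has variance (and squared mean) of order $O(n^{-a})$ up to constants depending on $\rho$ and the moments, a crude bound gives $\mathrm{var}(T_{a,1})=O(k_0^2 n^{-a}\,\text{poly}(\rho)+k_0 n^{-a})$, which under the constraint $\rho=O(k_0^{-1/a_t}p^{1/(2a_t)}n^{-1/2})$ and $k_0=o(p)$ is $o(pn^{-a})=o(1)\mathrm{var}(T_{a,2})$. Here the growth in $\rho$ is exactly tuned so that $\mathrm{E}(T_{a,1})=k_0\rho^a=O(\sqrt{p}\,n^{-a/2})=O(\sigma(a))$, and one must check that the higher moments appearing in $\mathrm{var}(T_{a,1})$ (which bring extra powers of $\rho$) do not blow up — this uses $\rho=o(1)$ and the exponential moment condition in Condition \ref{cond:twosamplemeancond}(2). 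I would also need to control $\mathrm{cov}(T_{a,1},T_{a,2})$; by Cauchy--Schwarz this is $O(\sqrt{\mathrm{var}(T_{a,1})\mathrm{var}(T_{a,2})})=o(\mathrm{var}(T_{a,2}))$, so it is absorbed. Combining, $\sigma^2(a)=\mathrm{var}\{\mathcal{U}(a)\}\simeq\mathrm{var}(T_{a,2})$, which is \eqref{eq:varalttwosammean}. The final claim $\{T_{a,1}-\mathrm{E}(T_{a,1})\}/\sigma(a)\xrightarrow{P}0$ then follows from Chebyshev's inequality since $\mathrm{var}(T_{a,1})/\sigma^2(a)\to 0$.

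The main obstacle I anticipate is the careful bookkeeping in the variance of $T_{a,1}$: because the $x$'s there have nonzero mean $\rho$, expanding $\mathrm{E}\{T_{a,1}^2\}$ produces many index-matching patterns (not just the fully-matched one), each weighted by products of $\rho$'s and central moments of various orders, and one must verify that every such pattern is $O(k_0^2\,n^{-a}\,\rho^{2a-2r}\cdot(\text{moment factors}))$ for appropriate $r$ and that the worst case is still $o(pn^{-a})$ under the stated $\rho$-constraint. This requires tracking how each ``broken'' match trades a factor of $n$ against a factor of $\rho^2$ (or a higher moment), analogous to the index-counting arguments in the proof of Lemma \ref{lm:pfaltvarlm1}, but now complicated by the two-sample structure and the nonzero mean. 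Once that combinatorial estimate is in place, the rest is routine. I would organize the argument by: (1) reduce $\mathcal{U}(a)$ to $T_{a,1}+T_{a,2}$ via \eqref{eq:altmeandeft}; (2) identify $\mathrm{var}(T_{a,2})$ exactly as in Lemma \ref{lm:twosamvar}; (3) bound $\mathrm{var}(T_{a,1})$ and $\mathrm{cov}(T_{a,1},T_{a,2})$; (4) conclude via Chebyshev.
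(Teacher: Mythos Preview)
Your proposal is correct and follows essentially the same route as the paper: identify $\mathrm{var}(T_{a,2})$ with the null-case computation of Lemma~\ref{lm:twosamvar}, then show $\mathrm{var}(T_{a,1})=o(pn^{-a})$ by expanding $\mathrm{E}(T_{a,1}^2)-\{\mathrm{E}(T_{a,1})\}^2$ and classifying index-matching patterns (the paper organizes this into three pieces according to whether $c=a,b=0$; $c\le a-1,b=0$; or $b\ge1$, where $b=|\{\mathbf{k}\}\cap\{\tilde{\mathbf{k}}\}|$), with Cauchy--Schwarz handling the cross term and Chebyshev giving the final convergence. One small refinement worth flagging: the ingredient that makes the cross-$j$ sums $\sum_{1\le j_1,j_2\le k_0}|\sigma_{j_1,j_2}|^{m}=O(k_0)$ (rather than $O(k_0^2)$) is the $\alpha$-mixing part of Condition~\ref{cond:twosamplemeancond}(3) via Lemma~\ref{lm:mixingineq}, not the exponential-moment condition~(2) you cite---without mixing you would only recover $\mathrm{var}(T_{a,1})=o(pn^{-a})$ under the stronger restriction $k_0=o(\sqrt{p})$.
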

\begin{proof}
\textit{See Section  \ref{sec:pftwomeanaltvar} on Page \pageref{sec:pftwomeanaltvar}.}
\end{proof}
\noindent In addition, the following Lemma \ref{lm:twomeanaltcov} shows that the covariance between two U-statistics asymptotically converges to 0.
\begin{lemma}\label{lm:twomeanaltcov}
Under the conditions of Theorem \ref{thm:altcltmeantest}, for two  finite integers $a\neq b$, $\{\sigma(a)\sigma(b)\}^{-1}\mathrm{cov}\{\mathcal{U}(a),\mathcal{U}(b)  \}\to 0$. 
\end{lemma}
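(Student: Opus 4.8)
The plan is to exploit the decomposition $\mathcal{U}(a) = T_{a,1} + T_{a,2}$ from \eqref{eq:altmeandeft} together with the variance estimates already recorded in Lemma \ref{lm:twomeanaltvar}. Expanding $\mathrm{cov}\{\mathcal{U}(a),\mathcal{U}(b)\}$ bilinearly,
\begin{align*}
\mathrm{cov}\{\mathcal{U}(a),\mathcal{U}(b)\} = \mathrm{cov}(T_{a,1},T_{b,1}) + \mathrm{cov}(T_{a,1},T_{b,2}) + \mathrm{cov}(T_{a,2},T_{b,1}) + \mathrm{cov}(T_{a,2},T_{b,2}),
\end{align*}
so it suffices to show that each of the four terms is $o(\sigma(a)\sigma(b))$.

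For the three terms carrying at least one factor $T_{\cdot,1}$, I would apply the Cauchy--Schwarz inequality, e.g. $|\mathrm{cov}(T_{a,1},T_{b,2})| \le \{\mathrm{var}(T_{a,1})\,\mathrm{var}(T_{b,2})\}^{1/2}$. By Lemma \ref{lm:twomeanaltvar} we have $\mathrm{var}(T_{a,1}) = o(1)\,\mathrm{var}(T_{a,2})$, while $\mathrm{var}(T_{a,2}) = \Theta(pn^{-a}) = \Theta(\sigma^2(a))$ and likewise $\mathrm{var}(T_{b,2}) = \Theta(\sigma^2(b))$; hence this bound is $o(\sigma(a)\sigma(b))$, and $\mathrm{cov}(T_{a,2},T_{b,1})$ and $\mathrm{cov}(T_{a,1},T_{b,1})$ are handled the same way. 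Thus only the leading term $\mathrm{cov}(T_{a,2},T_{b,2})$ remains.

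For that term I would show it vanishes exactly. By Proposition \ref{prop:locinvatwosam} we may take $\boldsymbol{\nu} = \mathbf{0}$, and under the alternative $\mathcal{E}_A$ we have $\mu_j - \nu_j = 0$, hence $\mathrm{E}(x_{i,j}) = \mathrm{E}(y_{i,j}) = 0$, for every $j \in \{k_0+1,\dots,p\}$. Consequently $T_{a,2}$ is precisely the two-sample mean U-statistic \eqref{eq:teststattwosample} built from the subvectors $(x_{i,j})_{j>k_0}$ and $(y_{i,j})_{j>k_0}$ under the null $\boldsymbol{\mu} = \boldsymbol{\nu}$, with $\mathrm{E}(T_{a,2}) = \mathrm{E}(T_{b,2}) = 0$. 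Expanding $\mathrm{E}(T_{a,2}T_{b,2})$ with the original product form $\prod_{t=1}^{a}(x_{k_t,j_1}-y_{s_t,j_1})\prod_{u=1}^{b}(x_{k'_u,j_2}-y_{s'_u,j_2})$ and using the independence of the $x$- and $y$-samples, every surviving term factors into an $x$-expectation times a $y$-expectation; a nonzero $x$-expectation forces the distinct index set $\{k_t\}$ contributing $x$'s from the first factor to coincide with the one from the second, and similarly for the $y$-indices, and these two index-matching constraints are jointly incompatible with $a \neq b$. Hence $\mathrm{cov}(T_{a,2},T_{b,2}) = \mathrm{E}(T_{a,2}T_{b,2}) = 0$ for $a \neq b$; equivalently one may invoke the null-case identity of Lemma \ref{lm:twosamcov} applied to coordinates $j>k_0$. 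Combining the four bounds yields $\{\sigma(a)\sigma(b)\}^{-1}\mathrm{cov}\{\mathcal{U}(a),\mathcal{U}(b)\}\to 0$.

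The calculation is largely routine once Lemma \ref{lm:twomeanaltvar} is available; the one step needing genuine care is the exact vanishing of $\mathrm{cov}(T_{a,2},T_{b,2})$, specifically tracking the index-pairing constraints imposed separately on the $x$- and $y$-samples and verifying they cannot both hold when $a \neq b$ — this is exactly what makes the even/odd parity phenomenon of the von Mises V-statistics in \cite{xu2016adaptive} disappear for the unbiased U-statistics considered here.
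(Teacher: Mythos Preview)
Your proposal is correct and follows essentially the same route as the paper: decompose $\mathcal{U}(a)=T_{a,1}+T_{a,2}$, use Cauchy--Schwarz together with $\mathrm{var}(T_{a,1})=o(\sigma^2(a))$ from Lemma~\ref{lm:twomeanaltvar} to dispose of the three cross terms, and then show $\mathrm{cov}(T_{a,2},T_{b,2})=0$ by reducing to the null-case identity of Lemma~\ref{lm:twosamcov} on the coordinates $j>k_0$. Your direct index-pairing sketch for the last step is also valid, though the paper simply defers to the computation already carried out in the proof of Lemma~\ref{lm:twosamcov} via the expanded form~\eqref{eq:teststattwosample}.
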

\begin{proof}
\textit{See Section \ref{sec:pftwomeanaltcov} on Page \pageref{sec:pftwomeanaltcov}.}
\end{proof}

By the analysis above, we know that the  covariance matrix of $[\{\mathcal{U}(a_1)-\mathrm{E}[\mathcal{U}(a_1)]\}/\sigma(a_1),\allowbreak \ldots, \{\mathcal{U}(a_m)-\mathrm{E}[\mathcal{U}(a_m) ] \}/\sigma(a_m)]^{\intercal}$ asymptotically converges to  $I_m$. To prove Theorem \ref{thm:altcltmeantest}, it remains to show that the joint limiting distribution of the U-statistics is  normal. By the Cram\'{e}r-Wold theorem, it is equivalent to prove that any fixed linear combination of these U-statistics  converges to a normal distribution. By Lemma \ref{lm:twomeanaltvar} and the Slutsky's theorem, it suffices to show that any fixed linear combination of  $[T_{a_1,2}/\sqrt{\mathrm{var}(T_{a_1,2})},\ldots, T_{a_m,2}/\sqrt{\mathrm{var}(T_{a_m,2})}]^{\intercal}$ converges to  a normal distribution for any finite $m$. 
Since $\mu_j=\nu_j$ for $j\in\{k_0+1,\ldots, p \}$, and each $T_{a_t,2}$ is a summation over $j\in\{k_0+1,\ldots, p \}$, we know the analysis under $H_0$ in Section \ref{sec:proofthm33}  can be applied to $T_{a_t,2}$ similarly. 
 Given $k_0=o(p)$, we know $[T_{a_1,2}/\sqrt{\mathrm{var}(T_{a_1,2})},\ldots, T_{a_m,2}/\sqrt{\mathrm{var}(T_{a_m,2})}]^{\intercal}$ has the joint asymptotic normality. In summary, Theorem \ref{thm:altcltmeantest} is proved.

\subsection{Proof of Theorem \ref{thm:twosamnull}} \label{sec:firstpfthmtwoclt} 
We first provide the details of the conditions of  Theorem \ref{thm:twosamnull} in Section \ref{sec:twosamcondnull} and then prove Theorem \ref{thm:twosamnull} in Section \ref{secpftwosamplecov}.   

\subsubsection{Conditions of Theorem  \ref{thm:twosamnull}} \label{sec:twosamcondnull}


 Theorem \ref{thm:twosamnull} can be proved by the following Condition \ref{cond:twosamplecov1} or  Condition  \ref{cond:twosamplecov2}. 
 Note that Conditions  \ref{cond:twosamplecov1} and   \ref{cond:twosamplecov2} are assumed under $H_0$, where  $\boldsymbol{\Sigma}_x=\boldsymbol{\Sigma}_y=\boldsymbol{\Sigma}=(\sigma_{j_1,j_2})_{p\times p}$.

\begin{condition} \label{cond:twosamplecov1} \quad

\begin{enumerate}
	\item[(1)] $n,p\rightarrow \infty$, and $n_x/n \rightarrow \gamma \in (0,1)$.
	\item[(2)] $\lim_{p\rightarrow \infty} \max_{1\leq j\leq p} \mathrm{E}(x_{j}-\mu_{j})^8< \infty$;  $\lim_{p\rightarrow \infty} \min_{1\leq j\leq p} \mathrm{E}(x_{j}-\mu_{j})^2>0$;  $\lim_{p\rightarrow \infty} \max_{1\leq j\leq p} \mathrm{E}(y_{j}-\nu_{j})^8< \infty$; and  $\lim_{p\rightarrow \infty} \min_{1\leq j\leq p} \mathrm{E}(y_{j}-\nu_{j})^2>0$.
	\item[(3)] $\{ (x_{i,j}, i=1,\ldots,n) : 1\leq j\leq p  \}$ and $\{ (y_{i,j},i=1,\ldots,n) : 1\leq j\leq p  \}$ are $\alpha$-mixing with $\alpha_{{{x}}}(s) \leq C\delta_x^s$ and $\alpha_{{{y}}}(s) \leq C\delta_y^s$, where $\delta_x, \delta_y \in (0,1)$ and $C$ is some constant.
   \item[(4)]  For any finite integer $a$, $
	\sum_{ {1\leq j_1,j_2,j_3,j_4 \leq p} }  ( \sigma_{j_1,j_3}\sigma_{j_2,j_4})^{a} =\Theta(p^2).$ 
\end{enumerate}
\end{condition}


Condition \ref{cond:twosamplecov1} (2) is similar to Condition \ref{cond:finitemomt}. 
Condition \ref{cond:twosamplecov1} (3) assumes $\alpha$-mixing on the two samples, which is similar to Condition \ref{cond:alphamixing}. Condition  \ref{cond:twosamplecov1} (4) is a regularity condition on the covariance structure, and it is naturally satisfied for even $a$, given Condition  \ref{cond:twosamplecov1} (3).  

Alternatively, we introduce another set of conditions similar to Condition  \ref{cond:ellpmoment}. 
We define some notation. Suppose $(z_1,\ldots, z_p)^{\intercal}\sim \mathcal{N}(\mathbf{0}, \boldsymbol{\Sigma}).$ Given  indexes $1\leq j_1,\ldots, j_t \leq p$, define $\Pi^0_{j_1,\ldots,j_t}=\mathrm{E}(\prod_{k=1}^t z_{j_k}).$ Moreover, we  define $\Pi^x_{j_1,\ldots, j_t}=\mathrm{E}\{\prod_{k=1}^t (x_{j_k}-\mu_{j_k})\}$ and $\Pi^y_{j_1,\ldots, j_t}=\mathrm{E}\{\prod_{k=1}^t (y_{j_k}-\nu_{j_k})\}$. 
In addition, for given integers $a$ and $b$, let $\mathbb{G}_{a,b}$ be a collection of tuples $
	\mathcal{G}=(g_1,g_2,\ldots, g_{4(a+b)-1}, g_{4(a+b)})\in \{1,\ldots,8\}^{4(a+b)}
$, which satisfies that $g_{2t-1}\neq g_{2t}$ for $t=1,\ldots, 2(a+b)$, and the number of $g$'s equal to $m$ is $a$  for $m\in\{1,2,3,4\}$ and is $b$ for $m\in\{5,6,7,8\}$. For any $\mathcal{G}\in \mathbb{G}_{a,b}$, we define 
$
	\mathbb{V}_{a,b,\mathcal{G}}=\sum_{1\leq j_1,\ldots,j_8\leq p}\prod_{t=1}^{2(a+b)} \sigma_{j_{g_{2t-1}},\,  j_{g_{2t}} },
$  and  let $S_{\mathcal{G}}$ denote the  number of distinct sets among the $2(a+b)$ number of sets, $\{g_{2t-1},g_{2t}\},$ for $t=1,\ldots, 2(a+b),$  induced by $\mathcal{G}$. Note that generally $S_{\mathcal{G}}\geq 4$ and when $S_{\mathcal{G}}=4$, by the symmetricity of $j$ indexes, $\mathbb{V}_{a,b,\mathcal{G}}=\mathbb{V}_{a,b,0}$ where $\mathbb{V}_{a,b,0}:=\sum_{1\leq j_1,\ldots,j_8\leq  p}\sum_{1\leq j_1,\ldots,j_8\leq p} \sigma_{j_1,j_2}^a\sigma_{j_3,j_4}^a \sigma_{j_5,j_6}^b\sigma_{j_7,j_8}^{b}.$ 


\begin{condition} \label{cond:twosamplecov2} \quad

\begin{enumerate}
	\item[(1)] $n,p\rightarrow \infty$, and $n_x/n \rightarrow \gamma \in (0,1)$.
	\item[(2)] $\lim_{p\rightarrow \infty} \max_{1\leq j\leq p} \mathrm{E}(x_{j}-\mu_{j})^8< \infty$;  $\lim_{p\rightarrow \infty} \min_{1\leq j\leq p} \mathrm{E}(x_{j}-\mu_{j})^2>0$;  $\lim_{p\rightarrow \infty} \max_{1\leq j\leq p} \mathrm{E}(y_{j}-\nu_{j})^8< \infty$; and  $\lim_{p\rightarrow \infty} \min_{1\leq j\leq p} \mathrm{E}(y_{j}-\nu_{j})^2>0$.
	\item[(3)] For $t=3, 4, 6, 8$, there exist constants  $\kappa_{x,t},\kappa_{y,t}\geq 1$ such that  $\Pi^x_{j_1,\ldots, j_t}=\kappa_{x,t}\Pi^0_{j_1,\ldots,j_t}$ and $\Pi^y_{j_1,\ldots, j_t}=\kappa_{y,t}\Pi^0_{j_1,\ldots,j_t}$. 
\item[(4)] For $a, b\in \{a_1,\ldots ,a_m\}$, and any $\mathcal{G}\in \mathbb{G}_{a,b}$ define above,  if $S_{\mathcal{G}}>4$, we assume  $\mathbb{V}_{a,b,\mathcal{G}}=o(1)\mathbb{V}_{a,b,0}$.
\end{enumerate}
\end{condition}

We note that Condition \ref{cond:twosamplecov2} (3) and (4) are alternative dependence assumptions to Condition \ref{cond:twosamplecov1} (3) and (4).  Condition \ref{cond:twosamplecov2} (3) is an extension from Condition \ref{cond:ellpmoment}, and is also satisfied when the distributions of $\mathbf{x}$ and $\mathbf{y}$ follow elliptical distributions \cite{kan2008moments}. 
 Condition \ref{cond:twosamplecov2} (4) implies some weak dependence structure in covariance matrix $\boldsymbol{\Sigma}$. 
To better illustrate the condition, we consider the case when $a=b=2$ as an example. We note that 
 \begin{align*}
	\mathbb{V}_{a,b,0}=\sum_{1\leq j_1,\ldots, j_8\leq p} (\sigma_{j_1,j_2}\sigma_{j_3,j_4}\sigma_{j_5,j_6}\sigma_{j_7,j_8})^2= \{\mathrm{tr}(\Sigma^2)\}^4,
\end{align*} and $\mathbb{V}_{a,b,0}=\mathbb{V}_{a,b,\mathcal{G}}$ when $\mathcal{G}=(1,2,3,4,5,6,7,8,1,2,3,4,5,6,7,8)$ with $S_{\mathcal{G}}=4$. 
Moreover, if $\mathcal{G}=( 1,3,2,4,1,2,3,4,5,6,7,8,5,6,7,8)$ with $S_{\mathcal{G}}=6$, 
\begin{align*}
\mathbb{V}_{a,b,\mathcal{G}}=\sum_{1\leq j_1,\ldots, j_8\leq p} (\sigma_{j_1,j_3}\sigma_{j_2,j_4})(\sigma_{j_1,j_2}\sigma_{j_3,j_4})(\sigma_{j_5,j_6}\sigma_{j_7,j_8})^2=\mathrm{tr}(\Sigma^4)\{\mathrm{tr}(\Sigma^2)\}^2;	
\end{align*} if $\mathcal{G}=( 1,3,2,4, 1,2,3,4,5,7,6,8, 5,6,7,8)$ with $S_{\mathcal{G}}=8$,
 \begin{align*}
\mathbb{V}_{a,b,\mathcal{G}}=\sum_{1\leq j_1,\ldots, j_8\leq p} (\sigma_{j_1,j_3}\sigma_{j_2,j_4})(\sigma_{j_1,j_2}\sigma_{j_3,j_4})(\sigma_{j_5,j_7}\sigma_{j_6,j_8})(\sigma_{j_5,j_6}\sigma_{j_7,j_8})=\{\mathrm{tr}(\Sigma^4)\}^2;	
\end{align*} 
if $\mathcal{G}=(1,6,2,5,3,7,4,8,1,3,2,4,5,7,6,8)$ with $S_{\mathcal{G}}=8$, 
\begin{align*}
\mathbb{V}_{a,b,\mathcal{G}}=	 \sum_{1\leq j_1,\ldots, j_8\leq p}(\sigma_{j_1,j_6}\sigma_{j_2,j_5}) (\sigma_{j_3,j_7}\sigma_{j_4,j_8}) (\sigma_{j_1,j_3}\sigma_{j_2,j_4}) (\sigma_{j_5,j_7}\sigma_{j_6,j_8})=\mathrm{tr}(\Sigma^8).
\end{align*}
In this case, Condition \ref{cond:twosamplecov2} (4) is equivalent to  $\mathrm{tr}(\Sigma^4)=o[\{\mathrm{tr}(\Sigma^2)\}^2]$ and $\mathrm{tr}(\Sigma^8)=o[\{\mathrm{tr}(\Sigma^2)\}^4]$, which are  similarly assumed in \cite{lijun2012}.
 In addition, we consider another example where the $p\times p$ covariance matrix $\boldsymbol{\Sigma}$ is of banded structure with bandwidth $s$ and has the nonzero entries being positive constants. It follows that $\mathbb{V}_{a,b,0}=\Theta(p^4s^4)$ and $\mathbb{V}_{a,b,\mathcal{G}}=O(p^3s^5)$ when $S_{\mathcal{G}}>4$. Therefore, in this example, Condition \ref{cond:twosamplecov2} (4) is satisfied when $s=o(p)$. 

\subsubsection{Proof of Theorem  \ref{thm:twosamnull}} \label{secpftwosamplecov}
Since $\mathcal{U}(a)$ is location invariant, we assume $\mathrm{E}(\mathbf{x})=\mathbf{0}$ and $\mathrm{E}(\mathbf{y})=\mathbf{0}$, without loss of generality, in this section. 
We decompose  $\mathcal{U}(a)=\tilde{\mathcal{U}}(a)+\tilde{\mathcal{U}}^*(a)$, where we redefine
\begin{align*}
	\tilde{\mathcal{U}}(a)=\sum_{1\leq j_1, j_2 \leq p} \frac{1}{P^{n_x}_a P^{n_y}_a}\sum_{\substack{ 1\leq i_{1}\neq \ldots \neq i_{a} \leq n_x; \\  1\leq w_{1}\neq \ldots \neq w_{a} \leq n_y }}   \prod_{t=1}^a  (x_{i_{t},j_1}x_{i_{t},j_2} -y_{w_{t},j_1}y_{w_{t},j_2}),
\end{align*} and $\tilde{\mathcal{U}}^*(a)=\mathcal{U}(a)-\tilde{\mathcal{U}}(a).$ 
To prove Theorem \ref{thm:twosamnull}, we derive the variances,  covariances, and asymptotic joint normality of the U-statistics. Particularly, 
the following Lemma \ref{lm:twosampvartest} derives the asymptotic form of $\mathrm{var}\{\mathcal{U}(a)\}$, and shows that $\tilde{\mathcal{U}}(a)$ is the leading term. 
\begin{lemma}\label{lm:twosampvartest}
Under the conditions of Theorem \ref{thm:twosamnull}, 
$\mathrm{var}\{ \tilde{\mathcal{U}}^*(a) \}=o(1)\mathrm{var}\{\tilde{\mathcal{U}}(a)\}$, $\mathcal{U}^*(a)/\sigma(a)\xrightarrow{P}0,$ and  	
\begin{align*}
&\mathrm{var}\{\mathcal{U}(a)\}\notag \\
\simeq &\sum_{1\leq j_1,j_2,j_3,j_4\leq p}a! \Big\{ \frac{1}{n_x}(\Pi_{j_1,j_2,j_3,j_4}^x-\sigma_{j_1,j_2}\sigma_{j_3,j_4}) +\frac{1}{n_y}(\Pi_{j_1,j_2,j_3,j_4}^y-\sigma_{j_1,j_2}\sigma_{j_3,j_4})\Big\}^a.	
\end{align*}
In particular, under Condition \ref{cond:twosamplecov1}, $\mathrm{var}\{{\mathcal{U}}(a)\}=\Theta(p^2n^{-a});$ 
 under Condition \ref{cond:twosamplecov2}, $\mathrm{var}\{{\mathcal{U}}(a)\}=\Theta(n^{-a})\sum_{1\leq j_1,j_2,j_3,j_4\leq p}( \sigma_{j_1,j_3}\sigma_{j_2,j_4})^a.$  
\end{lemma}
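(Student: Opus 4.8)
The plan is to prove Lemma~\ref{lm:twosampvartest} by the same two-stage strategy already used for the one-sample covariance case in Lemma~\ref{lm:varianceorder}, adapted to the two-sample degenerate setting. First I would observe that since $\mathcal{U}(a)$ is location invariant we may take $\mathrm{E}(\mathbf{x})=\mathrm{E}(\mathbf{y})=\mathbf{0}$. The statistic $\mathcal{U}(a)$ in \eqref{eq:twosamcovueqform2} is a signed sum of products of ``pure'' $x$-moment and $y$-moment pieces; the term $\tilde{\mathcal{U}}(a)$ collects the contributions with $c=0$ in each of the two $b$-expansions, i.e.\ the products built only from $s_{i,l}^x := x_{i,j_1}x_{i,j_2}$ and $s_{w,l}^y := y_{w,j_1}y_{w,j_2}$, and $\tilde{\mathcal{U}}^*(a)=\mathcal{U}(a)-\tilde{\mathcal{U}}(a)$ is everything else. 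The first step is then to expand $\mathcal{U}(a)=\sum_j (P^{n_x}_{2a}P^{n_y}_{2a})^{-1}\sum\sum \prod_t K_{j_1,j_2}(\cdots)$ into its $\tilde{\mathcal{U}}$-piece plus a remainder, exactly paralleling Remark~\ref{rm:anotherpesusstat} and the passage from \eqref{eq:originaluinvariance} to $\tilde{\mathcal{U}}(a)$.

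Second I would compute $\mathrm{var}\{\tilde{\mathcal{U}}(a)\}$ directly. Writing $\tilde{s}_{i,l}^x = s_{i,l}^x-\sigma_{j_1,j_2}$ and $\tilde{s}_{w,l}^y = s_{w,l}^y-\sigma_{j_1,j_2}$ (both mean zero under $H_0$ since $\boldsymbol{\Sigma}_x=\boldsymbol{\Sigma}_y=\boldsymbol{\Sigma}$), and noting $x$ and $y$ are independent, $\tilde{\mathcal{U}}(a)$ is a genuine (degenerate) two-sample U-statistic whose kernel is a product over $t=1,\dots,a$ of $(\tilde{s}^x - \tilde{s}^y)$-type increments. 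Expanding the square of the sum over $(j_1,j_2)$ and using the standard U-statistic variance identity, the leading contribution comes from matching each of the $a$ ``blocks'' of $x$-indices (resp.\ $y$-indices) to a distinct block, which gives a factor $a!\,(P^{n_x}_a)^{-1}$ (resp.\ $a!\,(P^{n_y}_a)^{-1}$) and forces the moment $\mathrm{E}[\tilde{s}^x_{\cdot,(j_1,j_2)}\tilde{s}^x_{\cdot,(j_3,j_4)}] = \Pi^x_{j_1,j_2,j_3,j_4}-\sigma_{j_1,j_2}\sigma_{j_3,j_4}$ (and likewise for $y$). Collecting terms yields $\mathrm{var}\{\tilde{\mathcal{U}}(a)\}\simeq \sum_{j_1,j_2,j_3,j_4} a!\{n_x^{-1}(\Pi^x_{j_1j_2j_3j_4}-\sigma_{j_1j_2}\sigma_{j_3j_4}) + n_y^{-1}(\Pi^y_{j_1j_2j_3j_4}-\sigma_{j_1j_2}\sigma_{j_3j_4})\}^a$ after a binomial expansion across the two samples, where lower-order index-matchings are shown to be negligible using Condition~\ref{cond:twosamplecov1}(3)--(4) (the $\alpha$-mixing decay kills off-block correlations after summing over $j$'s) or, alternatively, Condition~\ref{cond:twosamplecov2}(3)--(4) (the moment factorization and the $S_{\mathcal{G}}>4 \Rightarrow \mathbb{V}_{a,b,\mathcal{G}}=o(\mathbb{V}_{a,b,0})$ bound). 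The order claims then follow: under Condition~\ref{cond:twosamplecov1}, $\sum (\sigma_{j_1,j_3}\sigma_{j_2,j_4})^a = \Theta(p^2)$ by part (4), so $\mathrm{var}\{\mathcal{U}(a)\}=\Theta(p^2 n^{-a})$; under Condition~\ref{cond:twosamplecov2}, the leading sum is $\Theta(n^{-a})\sum_{j_1,\dots,j_4}(\sigma_{j_1,j_3}\sigma_{j_2,j_4})^a$ up to the $\kappa$-constants from part (3).

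Third, I would bound $\mathrm{var}\{\tilde{\mathcal{U}}^*(a)\}$ and show it is $o(\mathrm{var}\{\tilde{\mathcal{U}}(a)\})$. Every summand in $\tilde{\mathcal{U}}^*(a)$ involves at least one ``split'' factor $x_{i,j_1}$ or $x_{i,j_2}$ (as opposed to the paired $x_{i,j_1}x_{i,j_2}$), coming with a normalization $(P^{n_x}_{2c-b_1})^{-1}$ or the $y$-analogue whose exponent exceeds what a clean matching would require; as in the one-sample proof (Section~\ref{sec:proofvarianceorder}, specifically the estimates \eqref{eq:varoduaoncov}--\eqref{eq:varordp4top2} referenced in Theorem~\ref{thm:computation}'s proof), these split factors are handled by grouping indices and using the boundedness of the 8th marginal moments (Condition (2)) together with the weak-dependence condition; the upshot is a gain of at least one power of $n^{-1}$ or of $p^{-1}$ relative to $\mathrm{var}\{\tilde{\mathcal{U}}(a)\}$. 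Then $\tilde{\mathcal{U}}^*(a)/\sigma(a)\xrightarrow{P}0$ by Chebyshev, and $\mathrm{var}\{\mathcal{U}(a)\}=\mathrm{var}\{\tilde{\mathcal{U}}(a)\}\{1+o(1)\}$ by Cauchy--Schwarz on the cross term. I expect the main obstacle to be the bookkeeping in step two: the two-sample structure means each variance term is indexed by a \emph{pair} of partitions (one for the $x$-blocks, one for the $y$-blocks) together with the eight $j$-indices, so carefully identifying which partition shapes are leading and verifying the $o(1)$ bounds on the rest — especially reconciling the $\alpha$-mixing route with the moment-condition route and getting the $\kappa_{x,t},\kappa_{y,t}$ constants to assemble into exactly the stated $\{n_x^{-1}(\cdots)+n_y^{-1}(\cdots)\}^a$ form — is where the real work lies; the rest is a direct transcription of the one-sample arguments.
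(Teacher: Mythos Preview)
Your proposal is correct and follows essentially the same approach as the paper: derive $\mathrm{var}\{\tilde{\mathcal{U}}(a)\}$ by the index-matching/degeneracy argument giving the binomial formula $a!\{\mathbf{X}/n_x+\mathbf{Y}/n_y\}^a$, then bound $\mathrm{var}\{\tilde{\mathcal{U}}^*(a)\}$ via the $T_{b_1,b_2,c}$ decomposition with $b_1+b_2\le a-1$, treating Conditions~\ref{cond:twosamplecov1} and~\ref{cond:twosamplecov2} separately. One small caution on your wording in step two: the leading variance contribution does \emph{not} come from matching all $a$ $x$-indices \emph{and} all $a$ $y$-indices (that would be order $n^{-2a}$), but from the events $|\{\mathbf{i}\}\cap\{\tilde{\mathbf{i}}\}|+|\{\mathbf{w}\}\cap\{\tilde{\mathbf{w}}\}|=a$, i.e.\ $m$ $x$-matches and $a-m$ $y$-matches summed over $m$---which is exactly the ``binomial expansion across the two samples'' you mention next, so your endpoint is right even though the intermediate description is slightly off.
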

\begin{proof}
\textit{See Section \ref{sec:pftwosampvartest} on Page \pageref{sec:pftwosampvartest}.}
\end{proof}

\noindent Given Lemma \ref{lm:twosampvartest}, the next Lemma \ref{lm:twosampcovtest} shows that the covariance between two U-statistics asymptotically converges to 0.
\begin{lemma} \label{lm:twosampcovtest}
Under the conditions of Theorem \ref{thm:twosamnull},  for finite integers $a\neq b$, $\mathrm{cov}\{\mathcal{U}(a)/\sigma(a),\mathcal{U}(b)/\sigma(b)\}\to 0$ as $n,p\to \infty$. 
\end{lemma}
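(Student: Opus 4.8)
\textbf{Proof proposal for Lemma \ref{lm:twosampcovtest}.}

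The plan is to parallel the covariance computation in Lemma \ref{lm:covariancezro} (one-sample covariance testing) and Lemma \ref{lm:twosamcov} (two-sample mean testing), adapting it to the two-way-independent, two-sample covariance structure. First I would use the location invariance of $\mathcal{U}(a)$ to reduce to the centered case $\mathrm{E}(\mathbf{x})=\mathrm{E}(\mathbf{y})=\mathbf{0}$, and then invoke Lemma \ref{lm:twosampvartest} to replace $\mathcal{U}(a)$ and $\mathcal{U}(b)$ by their leading terms $\tilde{\mathcal{U}}(a)$ and $\tilde{\mathcal{U}}(b)$: since $\tilde{\mathcal{U}}^*(a)/\sigma(a)\xrightarrow{P}0$ and the second moments are uniformly controlled, it suffices to show $\mathrm{cov}\{\tilde{\mathcal{U}}(a),\tilde{\mathcal{U}}(b)\}=o(\sigma(a)\sigma(b))$. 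Here $\tilde{\mathcal{U}}(a)$ is a sum over $(j_1,j_2)$ of products over $a$ distinct $\mathbf{x}$-indices minus $a$ distinct $\mathbf{y}$-indices of the centered kernels $x_{i,j_1}x_{i,j_2}-\sigma_{j_1,j_2}$ etc.; because the two samples are independent, the cross-covariance splits into an $\mathbf{x}$-part and a $\mathbf{y}$-part, each of which has exactly the structure already handled in the one-sample case.

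The key step is the combinatorial/moment bookkeeping. Expanding $\mathrm{cov}\{\tilde{\mathcal{U}}(a),\tilde{\mathcal{U}}(b)\}$, each nonzero term requires that every $\mathbf{x}$-index (and every $\mathbf{y}$-index) appearing in the $\mathcal{U}(a)$-factor be matched with one in the $\mathcal{U}(b)$-factor, since each centered kernel $x_{i,j_1}x_{i,j_2}-\sigma_{j_1,j_2}$ has mean zero; an unmatched index forces the whole expectation to vanish (using the within-column $\alpha$-mixing of Condition \ref{cond:twosamplecov1}, or the moment relations of Condition \ref{cond:twosamplecov2}, to bound the contribution of indices that are ``almost'' unmatched). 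Because $a\neq b$, a perfect one-to-one matching of the $a$ $\mathbf{x}$-indices against the $b$ $\mathbf{x}$-indices is impossible, so at least one index must be shared among three or more kernels. Counting the number of free summation indices (over $i$'s, $w$'s, and $j$'s) against the normalization $P^{n_x}_a P^{n_y}_a P^{n_x}_b P^{n_y}_b$ in the denominator, one finds that any such ``collision'' configuration loses at least one factor of $n$ relative to $\sigma(a)\sigma(b)=\Theta(p^2 n^{-(a+b)/2})$ (under Condition \ref{cond:twosamplecov1}; under Condition \ref{cond:twosamplecov2} one additionally uses part (4) to show the collided $\sigma$-index sums are $o(1)$ times the leading ones). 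Hence every surviving term is $o(\sigma(a)\sigma(b))$, and since the number of index patterns is bounded by a constant depending only on the finite integers $a,b$, summing them still gives $o(\sigma(a)\sigma(b))$.

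The main obstacle I anticipate is the precise order-counting of the collision terms when $\mathbf{x}$ and $\mathbf{y}$ contributions are entangled through the shared column indices $(j_1,j_2)$ — the moment $\mathrm{E}\{\prod (x_{i,j}-\mu_j)\}$ and $\mathrm{E}\{\prod(y_{w,j}-\nu_j)\}$ factors need to be bounded simultaneously across a partition of the eight or more column indices, and Condition \ref{cond:twosamplecov1}(3) only gives exponential decay of mixing coefficients rather than outright independence, so one must carefully separate ``close'' column indices (bounded contribution, controlled via the fourth/eighth moment bound in Condition \ref{cond:twosamplecov1}(2) and the mixing decay) from ``far'' ones (which contribute zero or negligibly). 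This is exactly the technical heart of the one-sample argument in the proof of Lemma \ref{lm:covariancezro}, and I would organize the proof so that the two-sample case reduces, term by term, to two independent copies of that estimate plus the observation that $a\neq b$ rules out the only index pattern that would have been of the same order as $\sigma(a)\sigma(b)$. The remaining steps — collecting the bounds and concluding $\mathrm{cov}\{\mathcal{U}(a)/\sigma(a),\mathcal{U}(b)/\sigma(b)\}\to 0$ — are then routine.
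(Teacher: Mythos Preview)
Your outline (center, replace $\mathcal{U}$ by $\tilde{\mathcal{U}}$ via Lemma~\ref{lm:twosampvartest}, then use index matching) is exactly what the paper does, but you are making the core step much harder than it is. The paper obtains $\mathrm{E}\{\tilde{\mathcal{U}}(a)\tilde{\mathcal{U}}(b)\}=0$ \emph{exactly}, so no collision-counting, no factor-of-$n$ bookkeeping, and none of the mixing or column-index analysis you flag as the ``main obstacle'' is needed. Under $H_0$ write each factor as $(x_{i_t,j_1}x_{i_t,j_2}-\sigma_{j_1,j_2})-(y_{w_t,j_1}y_{w_t,j_2}-\sigma_{j_1,j_2})=\mathcal{X}_{i_t}-\mathcal{Y}_{w_t}$ with both pieces mean zero, expand the $a$-fold and $b$-fold products, and use independence of the two samples: a generic term factors into an $x$-expectation over the chosen subsets $S\subseteq\{1,\dots,a\}$, $T\subseteq\{1,\dots,b\}$ and a $y$-expectation over their complements. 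Since the $i$'s (resp.\ $\tilde{i}$'s) are distinct within each tuple, the $x$-expectation is nonzero only if $\{i_t:t\in S\}=\{\tilde{i}_s:s\in T\}$, forcing $|S|=|T|$; the $y$-expectation likewise forces $a-|S|=b-|T|$. Together these give $a=b$, a contradiction, so every term vanishes identically.

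Your phrasing ``perfect matching of the $a$ $\mathbf{x}$-indices against the $b$ $\mathbf{x}$-indices is impossible'' is imprecise: after expansion the number of $x$-kernels contributed by each side is $|S|$ and $|T|$, not $a$ and $b$, and these \emph{can} match; the obstruction comes from the joint $x$- and $y$-constraints, not from either sample alone. Your collision-and-mixing route would still reach the conclusion, but it proves a strictly weaker bound ($o(\sigma(a)\sigma(b))$ instead of $0$) at much greater cost, and the column-index entanglement you worry about never materializes because the vanishing already occurs at the level of sample indices, before any column-index argument is needed.
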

\begin{proof}
\textit{See Section  \ref{lm:pftwosampcovtest} on Page \pageref{lm:pftwosampcovtest}.}
\end{proof}

To finish the proof, it remains to obtain the joint asymptotic normality  of $[\mathcal{U}(a_1)/\sigma(a_1),\allowbreak \ldots, \mathcal{U}(a_m)/\sigma(a_m)]^{\intercal}$ for different finite integers $a_1,\ldots, a_m$.
By Cram\'{e}r-Wold theorem, it is equivalent to prove that any of their fixed linear combination converges to normal. In addition, by Lemma \ref{lm:twosampvartest} and the Slutsky's theorem,  it suffices to prove that any fixed linear combination of $[\tilde{\mathcal{U}}(a_1)/\sigma(a_1),\allowbreak \ldots, \tilde{\mathcal{U}}(a_m)/\sigma(a_m)]^{\intercal}$ converges to normal. 
Specifically, similarly to  Section \ref{sec:detailofjointnormal}, we redefine $Z_n$  as below with $\sum_{r=1}^m t_r^2=1$, and prove that
\begin{eqnarray}
Z_n:=\sum_{r=1}^m t_r \tilde{\mathcal{U}}(a_r)/\sigma(a_r)\xrightarrow{D} \mathcal{N}(0,1).  \label{eq:znnormaltwosamcov}
\end{eqnarray} 
We next prove \eqref{eq:znnormaltwosamcov} following the proof of  Theorem \ref{thm:jointnormal} in Section \ref{sec:detailofjointnormal} and apply the martingale central limit theorem \cite[p.476]{billingsley1995}.
 To construct a martingale difference, we write $\mathbf{x}_{i}=(x_{i,1},\ldots, x_{i,p})^{\intercal}$ and $\mathbf{y}_i=(y_{i,1},\ldots, y_{i,p})^{\intercal}$; and define a new random vector
\begin{eqnarray*}
 R_{i}=  \mathbf{x}_{i} \quad \mbox{for}\ i=1,2,\ldots, n_x; && R_{n_x+j}=\mathbf{y}_j \quad \mbox{for}\ j=1,2,\ldots, n_y.
\end{eqnarray*}
 We then define $\mathcal{F}_0=\{\emptyset, \Omega \}$ and  $\mathcal{F}_k=\sigma \{R_1,\ldots, R_k \}$ for  $k=1,2,\ldots, n_x+n_y$; and let $\mathrm{E}_k(\cdot)$ denote the conditional expectation given $\mathcal{F}_k$ for $k=1, \cdots, n_x+n_y$.  Define  $D_{n,k}=(\mathrm{E}_k-\mathrm{E}_{k-1})Z_n$ and $\pi_{n,k}^2=\mathrm{E}_{k-1}(D_{n,k}^2)$.	 It follows that $Z_n=\sum_{k=1}^n D_{n,k}$ as $\mathrm{E}_0(Z_n)=\mathrm{E}(Z_n)=0.$ To prove \eqref{eq:znnormaltwosamcov}, by the martingale central limit theorem, it suffices to prove 
\begin{eqnarray}
\quad &&\sum_{k=1}^n \pi_{n,k}^2/ \mathrm{var}(Z_n) \xrightarrow{P} 1 \quad \text{and}\quad \sum_{k=1}^n \mathrm{E}(D_{n,k}^4)/ \mathrm{var}^2(Z_n)  \to 0. \label{eq:twosamcltg}
\end{eqnarray}
To prove \eqref{eq:twosamcltg}, we derive the explicit forms of $D_{n,k}$ and $\pi_{n,k}^2$ in Section \ref{lm:pftwosampiform}. 
Similarly to Section \ref{sec:detailofjointnormal}, the following Lemma   \ref{lm:twosamvario1} and Lemma \ref{lm:twosamvario2} suggest that \eqref{eq:twosamcltg} holds.
\begin{lemma} \label{lm:twosamvario1}
Under the conditions of Theorem  \ref{thm:twosamnull}, $\mathrm{var}(\sum_{k=1}^{n_x+n_y}\pi_{n,k}^2)\to 0$.
\end{lemma}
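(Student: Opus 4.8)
\textbf{Proof plan for Lemma \ref{lm:twosamvario1}.}

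The goal is to show that $\mathrm{var}(\sum_{k=1}^{n_x+n_y}\pi_{n,k}^2)\to 0$, where $\pi_{n,k}^2=\mathrm{E}_{k-1}(D_{n,k}^2)$ and $D_{n,k}=(\mathrm{E}_k-\mathrm{E}_{k-1})Z_n$ with $Z_n=\sum_{r=1}^m t_r\tilde{\mathcal{U}}(a_r)/\sigma(a_r)$. The plan is to follow the same blueprint as in the proof of Lemma \ref{lm:targetorder} (the one-sample analogue), but carefully tracking the two-sample bookkeeping introduced by the combined sequence $R_1,\ldots,R_{n_x+n_y}$. First I would write $D_{n,k}=\sum_{r=1}^m t_r A_{n,k,a_r}$ where $A_{n,k,a_r}=(\mathrm{E}_k-\mathrm{E}_{k-1})\{\tilde{\mathcal{U}}(a_r)/\sigma(a_r)\}$, and obtain the explicit form of $A_{n,k,a_r}$ as a sum over index tuples among $\{1,\dots,k-1\}$ with the $k$-th ``increment'' factored out — distinguishing the two regimes $k\le n_x$ (the increment is an $\mathbf{x}$-term $x_{k,j_1}x_{k,j_2}$) and $k>n_x$ (the increment is a $\mathbf{y}$-term). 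This gives $\pi_{n,k}^2=\sum_{1\le r_1,r_2\le m}t_{r_1}t_{r_2}\mathrm{E}_{k-1}(A_{n,k,a_{r_1}}A_{n,k,a_{r_2}})$, and $\sum_k\pi_{n,k}^2$ becomes a quadratic form in the $\mathbf{x}$'s and $\mathbf{y}$'s over nested index ranges.

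Next I would compute $\mathrm{var}(\sum_k\pi_{n,k}^2)=\mathrm{E}[(\sum_k\pi_{n,k}^2)^2]-(\mathrm{E}\sum_k\pi_{n,k}^2)^2$ by expanding into sums of expectations of products of the $x$- and $y$-variables and controlling them combinatorially. The key point is that $\mathrm{E}\sum_k\pi_{n,k}^2=\mathrm{var}(Z_n)\to 1$ (the two-sample analogue of Lemma \ref{lm:meaninvarsigma}, which follows from the martingale-difference decomposition), so one needs to show the second moment $\mathrm{E}[(\sum_k\pi_{n,k}^2)^2]$ converges to the same limit. As in Lemma \ref{lm:targetorder}, this reduces to showing that the ``diagonal'' pairings of index tuples dominate and the off-diagonal cross terms are of smaller order. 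Under Condition \ref{cond:twosamplecov1} this is handled via the $\alpha$-mixing bound $\alpha_{\mathbf{x}}(s)\le C\delta_x^s$, $\alpha_{\mathbf{y}}(s)\le C\delta_y^s$ together with the moment bounds in Condition \ref{cond:twosamplecov1}(2), yielding a bound like $O(p^{-1}\mathrm{polylog}\,p)$; under Condition \ref{cond:twosamplecov2} one uses the moment identities in part (3) and the weak-dependence assumption $\mathbb{V}_{a,b,\mathcal{G}}=o(1)\mathbb{V}_{a,b,0}$ for $S_{\mathcal{G}}>4$ in part (4) to kill the off-diagonal terms, giving a bound like $O(n^{-1}+p^{-2})$. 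In both cases the normalization by $\sigma(a_r)^2=\Theta(p^2 n^{-a_r})$ (from Lemma \ref{lm:twosampvartest}) is what makes the ratio vanish.

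The main obstacle, as in the one-sample case, is the combinatorial accounting of the index tuples in $\mathrm{E}[(\sum_k\pi_{n,k}^2)^2]$: one must enumerate how the four nested summation indices (two from $\pi_{n,k}^2$, two more from the square, each ranging over ordered tuples of sizes $a_{r_i}-1$) can coincide or be disjoint, and for each pattern bound the corresponding moment product using either the mixing inequality or the elliptical-moment structure. The extra wrinkle relative to Lemma \ref{lm:targetorder} is that the increment at step $k$ switches type at $k=n_x$, so the sum $\sum_k$ splits into an $\mathbf{x}$-block and a $\mathbf{y}$-block and the cross-block terms (one $\pi_{n,k}^2$ from each block) must be shown to factor appropriately by independence of the two samples. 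Once the dominant diagonal contribution is identified as matching $\mathrm{var}^2(Z_n)\to 1$ and all remaining terms are shown to be $o(1)$, the lemma follows; I would defer the detailed enumeration to the same style of case analysis used in Section \ref{sec:prooftargetorder}.
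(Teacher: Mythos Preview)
Your overall blueprint is correct and matches the paper's in spirit: decompose $D_{n,k}=\sum_r t_r A_{n,k,a_r}$, write $\pi_{n,k}^2=\sum_{r_1,r_2}t_{r_1}t_{r_2}\mathbb{T}_{k,a_{r_1},a_{r_2}}$ with $\mathbb{T}_{k,a_1,a_2}=\mathrm{E}_{k-1}(A_{n,k,a_1}A_{n,k,a_2})$, distinguish the regimes $k\le n_x$ and $k>n_x$, and control moments via the mixing inequality (Condition~\ref{cond:twosamplecov1}) or the moment identities and the $\mathbb{V}_{a,b,\mathcal{G}}=o(1)\mathbb{V}_{a,b,0}$ assumption (Condition~\ref{cond:twosamplecov2}).

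The main difference is organizational. You propose to expand $\mathrm{E}\bigl[(\sum_k\pi_{n,k}^2)^2\bigr]$ directly and match it against $(\mathrm{E}\sum_k\pi_{n,k}^2)^2$, which forces you to track cross-$k$ covariances and worry about the ``cross-block'' pairs you mention. The paper instead first applies the crude Cauchy--Schwarz bound
\[
\mathrm{var}\Bigl(\sum_{k}\pi_{n,k}^2\Bigr)\le C\,n^2\max_{k;\,r_1,r_2}\mathrm{var}\bigl(\mathbb{T}_{k,a_{r_1},a_{r_2}}\bigr),
\]
reducing everything to a single-$k$ problem: it suffices to show $\mathrm{var}(\mathbb{T}_{k,a_1,a_2})=o(n^{-2})$ uniformly in $k$. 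This completely eliminates the cross-$k$ and cross-block bookkeeping. For each fixed $k$ (treating $k\le n_x$ and $k>n_x$ separately), the paper then introduces an auxiliary quantity $G_{a_1,a_2,1}$---essentially $\mathrm{E}(\mathbb{T}_{k,a_1,a_2}^2)$ restricted to the event that the two outer index sets are disjoint---and shows both $|\mathrm{E}(\mathbb{T}_{k,a_1,a_2}^2)-G_{a_1,a_2,1}|=o(n^{-2})$ and $|\{\mathrm{E}(\mathbb{T}_{k,a_1,a_2})\}^2-G_{a_1,a_2,1}|=o(n^{-2})$. Your ``diagonal pairings dominate'' is exactly this step, but packaged per $k$ rather than globally. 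One small correction: for $k>n_x$ the form of $A_{n,k,a}$ is not simply a $\mathbf{y}$-increment times past terms; it is a sum over $s$ of products $\mathcal{Y}_{k-n_x,j_1,j_2}\prod_{t=1}^s\mathcal{X}_{i_t,j_1,j_2}\prod_{\tilde t=1}^{a-s-1}\mathcal{Y}_{w_{\tilde t},j_1,j_2}$, mixing $x$- and $y$-factors from the past (see the derivation of $D_{n,k}$ in the paper). Also, under Condition~\ref{cond:twosamplecov2} the normalization $\sigma^2(a)$ is $\Theta(n^{-a})\sum_{j_1,\dots,j_4}(\sigma_{j_1,j_3}\sigma_{j_2,j_4})^a$ rather than $\Theta(p^2n^{-a})$, so the combinatorial bounds on the $j$-sums should be phrased relative to $\mathbb{V}_{a,b,0}$ rather than to powers of $p$.
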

\begin{proof}
\textit{See Section \ref{sec:pftwosamvario1} on Page \pageref{sec:pftwosamvario1}.}
\end{proof}
\begin{lemma} \label{lm:twosamvario2}
Under the conditions of Theorem  \ref{thm:twosamnull}, $\sum_{k=1}^{n_x+n_y}\mathrm{E}(D_{n,k}^4)\to 0.$ 
\end{lemma}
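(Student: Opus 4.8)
The plan is to establish the fourth moment Lyapunov-type bound $\sum_{k=1}^{n_x+n_y}\mathrm{E}(D_{n,k}^4)\to 0$ by first obtaining an explicit expression for $D_{n,k}=(\mathrm{E}_k-\mathrm{E}_{k-1})Z_n$, analogous to Lemma \ref{lm:cltabnkform} in the one-sample case. Recall that $Z_n=\sum_{r=1}^m t_r\tilde{\mathcal{U}}(a_r)/\sigma(a_r)$, and that the martingale filtration is built on the pooled sequence $R_1,\ldots,R_{n_x+n_y}$ where the first $n_x$ entries are the $\mathbf{x}_i$'s and the remaining $n_y$ are the $\mathbf{y}_j$'s. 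First I would record that $D_{n,k}=\sum_{r=1}^m t_r A_{n,k,a_r}$, where $A_{n,k,a}=(\mathrm{E}_k-\mathrm{E}_{k-1})\{\tilde{\mathcal{U}}(a)/\sigma(a)\}$, and that $A_{n,k,a}$ vanishes unless $k$ is large enough; for $k$ in the $\mathbf{x}$-block it picks out the terms of $\tilde{\mathcal{U}}(a)$ in which index $k$ appears among the $\mathbf{x}$-indices (and the remaining $\mathbf{x}$-indices are $\le k-1$ while all $\mathbf{y}$-indices are free), and symmetrically for $k$ in the $\mathbf{y}$-block. This is a direct consequence of the degeneracy of the kernel and the i.i.d. structure, exactly as in Section \ref{sec:proofcltabnkform}.

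Next, since $D_{n,k}^4$ expands into a sum over (at most $m^4$) products $A_{n,k,a_{r_1}}A_{n,k,a_{r_2}}A_{n,k,a_{r_3}}A_{n,k,a_{r_4}}$ with bounded coefficients $t_{r_1}\cdots t_{r_4}$, it suffices by the triangle inequality to bound $\sum_k \mathrm{E}(A_{n,k,a}^4)$ and more generally $\sum_k|\mathrm{E}(A_{n,k,a_{r_1}}\cdots A_{n,k,a_{r_4}})|$ for finite $a_{r_i}$'s; by Cauchy-Schwarz the cross terms reduce to the diagonal ones $\sum_k\mathrm{E}(A_{n,k,a}^4)$, so I would focus on those. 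Using the explicit form of $A_{n,k,a}$, $\mathrm{E}(A_{n,k,a}^4)$ is $\sigma(a)^{-4}(P^{n_x}_a P^{n_y}_a)^{-4}$ times an eighth-degree polynomial in covariance-type moments summed over index tuples $(j_1,\ldots,j_8)$ and over the lower-index sets. I would then count the dominant contributions: the number of free lower-indices is at most $k-1$ per factor, but the constraint that the covariances of mean-zero entries vanish unless indices are matched forces heavy pairing, which (under Condition \ref{cond:twosamplecov1}(4) or Condition \ref{cond:twosamplecov2}(4), controlling $\mathbb{V}_{a,b,\mathcal{G}}$ for $S_{\mathcal{G}}>4$) makes the sum over $j$'s of the expected order $\Theta(p^2)$ or $\Theta(\sum(\sigma_{j_1,j_3}\sigma_{j_2,j_4})^a)$, and the sum over the lower-index sets of order $O(n^{2a-2})$ per $k$ after accounting for the pairing, with an extra $O(n^{-1})$ gained from the summation over $k$. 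Combined with $\sigma^2(a)=\Theta(p^2 n^{-a})$ (Lemma \ref{lm:twosampvartest}) and $(P^{n_x}_a P^{n_y}_a)^{-1}=\Theta(n^{-2a})$, the powers of $n$ and $p$ would cancel to leave $\sum_k\mathrm{E}(A_{n,k,a}^4)=O(1/n)\to 0$, giving $\sum_{k=1}^{n_x+n_y}\mathrm{E}(D_{n,k}^4)=O(1/n)$.

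The main obstacle is the combinatorial bookkeeping of the eighth-moment expansion under the two-way dependence structure: because each factor $A_{n,k,a}$ already carries a double index $(j_1,j_2)$ over $\{1,\ldots,p\}^2$ plus $a-1$ lower sample-indices, the fourth power involves up to eight $j$-indices and $4(a-1)$ lower indices, and one must identify which pairings of the $j$-indices across the four factors survive after taking expectations and which of them contribute at the leading order $p^2$ versus strictly lower order. Under Condition \ref{cond:twosamplecov1} this is handled by the $\alpha$-mixing moment bounds (so that any sum with ``too few'' matched indices decays by a factor of $p^{-1}$ or better, as in Lemma \ref{lm:secondgoaljointnormal}); under Condition \ref{cond:twosamplecov2} it is handled by the hypothesis $\mathbb{V}_{a,b,\mathcal{G}}=o(\mathbb{V}_{a,b,0})$ for $S_{\mathcal{G}}>4$, together with the elliptical-type moment relations $\Pi^x_{j_1,\ldots,j_t}=\kappa_{x,t}\Pi^0_{j_1,\ldots,j_t}$. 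I expect this step to parallel the proof of Lemma \ref{lm:secondgoaljointnormal} closely, but with the extra complication that the sample indices are split between two independent blocks, so one must also track separately how many of the four factors' ``new'' index $k$ lies in the $\mathbf{x}$-block versus the $\mathbf{y}$-block; fortunately, since $n_x=\Theta(n_y)=\Theta(n)$, these cases are all of the same order and no new phenomenon arises. Once the bound $\sum_{k=1}^{n_x+n_y}\mathrm{E}(D_{n,k}^4)=O(1/n)$ is in hand, together with Lemma \ref{lm:twosamvario1} it yields \eqref{eq:twosamcltg} via the Heyde-type Berry-Esseen inequality used in Section \ref{sec:detailofjointnormal}, completing the proof that $Z_n\xrightarrow{D}\mathcal{N}(0,1)$ and hence Theorem \ref{thm:twosamnull}.
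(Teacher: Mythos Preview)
Your overall strategy is the paper's: expand $D_{n,k}^4$ into products of $A_{n,k,a_r}$'s, reduce by Cauchy--Schwarz to bounding $\sum_k\mathrm{E}(A_{n,k,a}^4)$ for each fixed finite $a$, and control the $n$- and $p$-orders separately, treating $k\le n_x$ and $k>n_x$ in turn. The claimed conclusion $\sum_k\mathrm{E}(D_{n,k}^4)=O(n^{-1})$ is also correct.

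That said, several of the intermediate orders you quote are inconsistent and do not combine to $O(n^{-1})$ as written. For $k\le n_x$, applying $\mathrm{E}_k-\mathrm{E}_{k-1}$ kills every term containing a $\mathcal{Y}$-factor (since $\mathrm{E}_k$ integrates out all of $\mathbf{y}$ and the $\mathbf{w}$-indices are distinct), and the free $\mathbf{w}$-sum then cancels the factor $(P^{n_y}_a)^{-1}$; hence the prefactor of $A_{n,k,a}$ is $\Theta\{n^{-a}\sigma^{-1}(a)\}$, not $\Theta\{n^{-2a}\sigma^{-1}(a)\}$. In $\mathrm{E}(A_{n,k,a}^4)$ the $j$-sum ranges over \emph{eight} indices and is $O(p^4)$ (equivalently $O\{n^{2a}\sigma^4(a)\}$ under either Condition~\ref{cond:twosamplecov1} or~\ref{cond:twosamplecov2}), not $\Theta(p^2)$; the constrained lower-index sum is $O(n^{2(a-1)})$ as you say. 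Combining these gives $\mathrm{E}(A_{n,k,a}^4)=O(n^{-2})$, and summing over $k$ contributes a factor of $n$ (not $n^{-1}$), for a total of $O(n^{-1})$. Finally, note that $k$ is the \emph{same} across all four factors in $D_{n,k}^4$, so there is no mixing of $\mathbf{x}$-block and $\mathbf{y}$-block ``new'' indices to track; the genuine extra complication for $n_x<k\le n_x+n_y$ is that $A_{n,k,a}$ itself is a sum $\sum_s A_{n,k,a,s}$ over the number $s$ of retained $\mathcal{X}$-factors (see the derivation in Section~\ref{lm:pftwosampiform}), and the paper handles this by a further Cauchy--Schwarz reduction to $\mathrm{E}(A_{n,k,a,s}^4)$ for each fixed $s$, which is then bounded exactly as in the $k\le n_x$ case.
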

\begin{proof}
\textit{See Section \ref{sec:pftwosamvario2} on Page \pageref{sec:pftwosamvario2}.}
\end{proof}
In summary, Theorem  \ref{thm:twosamnull} is proved.

\subsection{Proof of Theorem \ref{thm:twosamaltclt}} \label{sec:pfthm49alttwo}

In this section, we first provide the conditions of Theorem \ref{thm:twosamaltclt} in Section \ref{sec:twosampowerconds}
 and then prove Theorem \ref{thm:twosamaltclt} in Section \ref{sec:proofalttwosam}.
\subsubsection{Conditions} \label{sec:twosampowerconds}

Theorem \ref{thm:twosamaltclt} is established under the following Conditions \ref{cond:twosamaltdist} and \ref{cond:twosamaltmoment}, where Condition \ref{cond:twosamaltdist}  is the same as Condition \ref{cond:twosamplecov2} (1)--(3). 
\begin{condition}\label{cond:twosamaltdist} \quad

\begin{enumerate}
	\item[(1)] $n,p\rightarrow \infty$, and $n_x/n \rightarrow \gamma \in (0,1)$.
	\item[(2)] $\lim_{p\rightarrow \infty} \max_{1\leq j\leq p} \mathrm{E}(x_{j}-\mu_{j})^8< \infty$;  $\lim_{p\rightarrow \infty} \min_{1\leq j\leq p} \mathrm{E}(x_{j}-\mu_{j})^2>0$;  $\lim_{p\rightarrow \infty} \max_{1\leq j\leq p} \mathrm{E}(y_{j}-\nu_{j})^8< \infty$; and  $\lim_{p\rightarrow \infty} \min_{1\leq j\leq p} \mathrm{E}(y_{j}-\nu_{j})^2>0$.
	\item[(3)] For $t=3, 4, 6, 8$, there exist  $\kappa_{x,t},\kappa_{y,t}\geq 1$ such that  $\Pi^x_{j_1,\ldots, j_t}=\kappa_{x,t}\Pi^0_{j_1,\ldots,j_t}$ and $\Pi^y_{j_1,\ldots, j_t}=\kappa_{y,t}\Pi^0_{j_1,\ldots,j_t}$. 
\end{enumerate}	
\end{condition}

To provide Condition \ref{cond:twosamaltmoment}, we first define some notation.  
The difference between $\boldsymbol{\Sigma}_x$ and $\boldsymbol{\Sigma}_y$ is defined as $D_{x,y}=\boldsymbol{\Sigma}_x-\boldsymbol{\Sigma}_y=(D_{j_1,j_2})_{p\times p}$. 
Let $\mathbb{J}_0\subseteq \{1,\ldots, p\}$ be the largest set such that for any $j_1,j_2\in \mathbb{J}_0$, $\sigma_{x,j_1,j_2}=\sigma_{y,j_1,j_2}$. 
%
Define $J_{0,D}=\{(j_1,j_2): j_1\ \mathrm{or}\ j_2 \not\in \mathbb{J}_0  \}$. 
Given $\mathbb{J}_0$ and $a,b\in \{a_1,\ldots,a_m\}$, we define
$
	\mathbb{V}_{a,b,0,0}=\sum_{j_1,\ldots,j_8 \in \mathbb{J}_0}(\sigma_{x,j_1,j_2}\sigma_{x,j_3,j_4})^a(\sigma_{x,j_5,j_6}\sigma_{x,j_7,j_8})^b,
$ which also equals to $\sum_{j_1,\ldots,j_8 \in \mathbb{J}_0}(\sigma_{y,j_1,j_2}\sigma_{y,j_3,j_4})^a(\sigma_{y,j_5,j_6}\sigma_{y,j_7,j_8})^b$ by the definition of $\mathbb{J}_0$. In addition, for any tuple $\mathcal{G}=(g_1,g_2,\ldots,g_{4(a+b)-1},g_{4(a+b)})\in \mathbb{G}_{a,b}$ specified in Condition \ref{cond:twosamplecov2}, we define
$
	\mathbb{V}_{a,b,\mathcal{G},0}=\sum_{j_1,\ldots,j_8 \in \mathbb{J}_0}\prod_{t=1}^{2(a+b)} \sigma_{j_{g_{2t-1}},\,  j_{g_{2t}} }. 
$ Note that $\mathbb{V}_{a,b,0,0}$ and  $\mathbb{V}_{a,b,\mathcal{G},0}$ are defined similarly to $\mathbb{V}_{a,b,0}$ and  $\mathbb{V}_{a,b,\mathcal{G}}$ in Condition \ref{cond:twosamplecov2} by changing the range of $j$ indexes from $\{1,\ldots,p\}$ to $\mathbb{J}_0$. Moreover, let $\mathcal{H}=\{(h_1,h_2),(h_3,h_4)\}\in \mathbb{H}$, 
where $\mathbb{H}$ includes $\{(1,2),(3,4)\}$, $ \{(1,3), (2,4)\}$ and $\{(1,4), (2,3)\}$.
 For any $a\in \{a_1, \ldots, a_m\}$ and given $\mathcal{H}\in \mathbb{H}$, define
\begin{align}
&\mathbb{V}_{a, \mathcal{H} , x , 1}=\sum_{ \substack{ (j_1,j_2),(j_3,j_4) \in J_{0,D}} } 	|\sigma_{x,j_{h_{1}},j_{h_{2}}}\sigma_{x,j_{h_{3}},j_{h_{4}}}|^a\label{eq:condtwosamdef} \\
 & \mathbb{V}_{a,\mathcal{H},x,2}=\sum_{ \substack{ (j_1,j_2),(j_3,j_4) \in J_{0,D}} } 	|D_{j_{h_{1}},j_{h_{2}}} \sigma_{x,j_{h_3},j_{h_4}} |^a,	\notag \\
  & \mathbb{V}_{a,\mathcal{H},D,3}=\sum_{ \substack{ (j_1,j_2),(j_3,j_4) \in J_{0,D}} } 	|D_{j_{h_1},j_{h_2}}D_{j_{h_3},j_{h_4}}|^a.\notag
\end{align}
Similarly, we also define $\mathbb{V}_{a, \mathcal{H} , y , 1}$ and $\mathbb{V}_{a,\mathcal{H},y,2}$ by replacing $\sigma_{x}$'s with $\sigma_{y}$'s. We next present Condition \ref{cond:twosamaltmoment} of Theorem  \ref{thm:twosamaltclt}.
\begin{condition}\label{cond:twosamaltmoment}
For any $a,b\in \{a_1,\ldots,a_m\}$,    $\mathcal{G}\in \mathbb{G}_{a,b}$, and $\mathcal{H}\in \mathbb{H}$, we assume (A1) $\mathbb{V}_{a,b,\mathcal{G},0}=o(1)\mathbb{V}_{a,b,0,0}$;  (A2) $\mathbb{V}_{a,\mathcal{H},D,3}=O(n^{-a})\mathbb{V}_{a,a,0,0}^{1/2}$;  
and (A3) $ \mathbb{V}_{a,\mathcal{H},x,t}=o(1)\mathbb{V}_{a,a,0,0}^{1/2}$, for  $t=1,2$. 

\end{condition}

Equivalently we can also  replace (A3) in Condition \ref{cond:twosamaltmoment} by  (A3)$^*$ $ \mathbb{V}_{a,\mathcal{H},y,t}=o(1)\mathbb{V}_{a,a,0,0}^{1/2}$, for $t=1,2$.
This is because by $D_{j_1,j_2}=\sigma_{x,j_1,j_2}-\sigma_{y,j_1,j_2}$ and  the  H\"older's inequality, we know  \textit{(A2)} and \textit{(A3)} induce \textit{(A3)$^*$}; and \textit{(A2)} and \textit{(A3)$^*$} also induce \textit{(A3)}. 
Thus it is equivalent to assume \textit{(A3)} or \textit{(A3)$^*$} in Condition \ref{cond:twosamaltmoment}.


We next discuss Condition \ref{cond:twosamaltmoment}. Let $\boldsymbol{\Sigma}_C=\{\sigma_{x,j_1,j_2}:j_1,j_2\in \mathbb{J}_0\}=\{\sigma_{y,j_1,j_2}:j_1,j_2\in \mathbb{J}_0\}$, which is the common submatrix of $\boldsymbol{\Sigma}_x$ and $\boldsymbol{\Sigma}_y$ by the definition of $\mathbb{J}_0$. In Condition \ref{cond:twosamaltmoment}, \textit{(A1)} implies some weak dependence structure of $\boldsymbol{\Sigma}_C$ similar to Condition \ref{cond:twosamplecov2} (4). We consider an example where $\boldsymbol{\Sigma}_x$ has the banded structure with the bandwidth $s$ and the entries being positive constants.   Then \textit{(A1)} holds if   $s=o(p)$. Moreover, 
under the considered example, 
$\mathbb{V}_{a,a,0,0}^{1/2}=(\sum_{j_1,j_2\in \mathbb{J}_0}\sigma_{x,j_1,j_2}^a)^2\geq C|\mathbb{J}_0|^4$ and $\mathbb{V}_{a,\mathcal{H},x,1} \leq C|J_{0,D}|^2=C_2(p-|\mathbb{J}_0|)^4$. Then \textit{(A3)} for $t=1$  
holds when $p-|\mathbb{J}_0|=o(p)$, 
which implies that the number of entries that are different in $\boldsymbol{\Sigma}_x$ and $\boldsymbol{\Sigma}_y$ is $o(p^2)$. In addition, \textit{(A2)} and \textit{(A3)} for $t=2$ are  regularity conditions on the difference matrix $D_{x,y}$. 
For illustration, we consider an example where $D_{j_1,j_2}=\rho>0$ for any $(j_1,j_2)\in J_{0,D}$, and $\boldsymbol{\Sigma}_x=I_p$. Then $\mathbb{V}_{a,a,0,0}^{1/2}=|\mathbb{J}_0|^2$,   $\mathbb{V}_{a,\mathcal{H},x,2}\leq |J_{0,D}|\rho^ap$, and $ \mathbb{V}_{a,\mathcal{H},D,3}\leq |J_{0,D}|^2\rho^{2a}$. Under this example, \textit{(A2)} and \textit{(A3)} of $t=2$  hold if $|J_{0,D}|\rho^a=O(n^{-a/2}p)$ and $|\mathbb{J}_0|\simeq p$, which are similar to the assumption  in  Theorem \ref{thm:cltalternative}.

\subsubsection{Proof}\label{sec:proofalttwosam}
In this section, we prove Theorem \ref{thm:twosamaltclt} under  Conditions \ref{cond:twosamaltdist} and \ref{cond:twosamaltmoment}.      
Recall that we decompose $\mathcal{U}(a)=\tilde{\mathcal{U}}(a)+\tilde{\mathcal{U}}^*(a)$ in Section \ref{sec:firstpfthmtwoclt}. 
We further decompose $\tilde{\mathcal{U}}(a)=T_{D,a,1}+T_{D,a,2}$, where 
\begin{align*}
	T_{D,a,1}=&\sum_{j_1,j_2 \in \mathbb{J}_0} \frac{1}{P^{n_x}_a P^{n_y}_a}\sum_{\substack{ 1\leq i_{1}\neq \ldots \neq i_{a} \leq n_x; \\ 1\leq w_{1}\neq \ldots \neq w_{a} \leq n_y }} \   \prod_{t=1}^a  (x_{i_{t},j_1}x_{i_{t},j_2} -y_{w_{t},j_1}y_{w_{t},j_2}), \notag \\
	T_{D,a,2}=&\sum_{(j_1,j_2)\in J_{0,D}} \frac{1}{P^{n_x}_a P^{n_y}_a}\sum_{\substack{ 1\leq i_{1}\neq \ldots \neq i_{a} \leq n_x; \\ 1\leq w_{1}\neq \ldots \neq w_{a} \leq n_y }} \    \prod_{t=1}^a  (x_{i_{t},j_1}x_{i_{t},j_2} -y_{w_{t},j_1}y_{w_{t},j_2}). \notag
\end{align*} 
It follows that $\mathcal{U}(a)=T_{D,a,1}+T_{D,a,2}+\tilde{\mathcal{U}}^*(a).$ 
To prove Theorem \ref{thm:twosamaltclt}, we derive the variances, covariances and asymptotic joint normality of the U-statistics. In particular, 
the next Lemma \ref{lm:twosamaltsmall1} derives the asymptotic form of $\mathrm{var}\{\mathcal{U}(a)\}$, and shows that $T_{D,a,1}$ is the leading component. 
\begin{lemma}\label{lm:twosamaltsmall1}
Under the conditions of Theorem \ref{thm:twosamaltclt}, 
$$\mathrm{var}\{\mathcal{U}(a)\}\simeq \sum_{1\leq j_1,j_2,j_3,j_4\in \mathbb{J}_0} a! C_{\kappa,a}\sigma_{j_1,j_2}^a\sigma_{j_3,j_4}^a,$$ where $C_{\kappa,a}=\{(\kappa_x-1)/n_x+(\kappa_y-1)/n_y\}^a+ 2(\kappa_x/n_x+\kappa_y/n_y)^a$. In addition, 	
$\mathrm{var}(T_{D,a,2})=o(1)\mathrm{var}(T_{D,a,1})$ and $\mathrm{var}\{ \tilde{\mathcal{U}}^*(a)\}=o(1)\mathrm{var}\{ \tilde{\mathcal{U}}(a)\}$. It follows that  	$\{T_{D,a,2}-\mathrm{E}(T_{D,a,2})\}/\sigma(a)\xrightarrow{P}0$ and  $[\tilde{\mathcal{U}}^*(a)- \mathrm{E}\{\tilde{\mathcal{U}}^*(a)\}]/\sigma(a)\xrightarrow{P }0$.  
\end{lemma}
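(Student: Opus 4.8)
The plan is to build on the decomposition $\mathcal{U}(a)=T_{D,a,1}+T_{D,a,2}+\tilde{\mathcal{U}}^*(a)$ introduced above, show that $T_{D,a,1}$ is the only piece contributing at leading order to the variance, and deduce that the other two pieces, once centered and divided by $\sigma(a)$, are $o_P(1)$ via Chebyshev. The point of isolating $T_{D,a,1}$ is that it sums only over pairs $(j_1,j_2)\in\mathbb{J}_0$, for which $\mathrm{E}(x_{i,j_1}x_{i,j_2}-y_{w,j_1}y_{w,j_2})=\sigma_{j_1,j_2}-\sigma_{j_1,j_2}=0$; hence $T_{D,a,1}$ is a centered, completely degenerate two-sample $U$-statistic and its variance can be computed by exactly the moment bookkeeping used in the proof of Lemma \ref{lm:twosampvartest} under $H_0$, now restricted to index pairs in $\mathbb{J}_0$.

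To evaluate $\mathrm{var}(T_{D,a,1})$, first write each factor as $x_{i,j_1}x_{i,j_2}-y_{w,j_1}y_{w,j_2}=\phi_i^{(j_1,j_2)}-\psi_w^{(j_1,j_2)}$ with $\phi_i^{(j_1,j_2)}=x_{i,j_1}x_{i,j_2}-\sigma_{j_1,j_2}$ and $\psi_w^{(j_1,j_2)}=y_{w,j_1}y_{w,j_2}-\sigma_{j_1,j_2}$ both mean zero, and expand $\prod_{t=1}^a(\phi_{i_t}-\psi_{w_t})$ over subsets $S\subseteq\{1,\dots,a\}$. Summing out the free indices turns the $|S|=c$ contribution into a binomial multiple of the product of a pure-$x$, order-$c$ degenerate statistic and an independent pure-$y$, order-$(a-c)$ degenerate statistic; since statistics of different orders built from mean-zero summands that are independent within a sample are \emph{exactly} uncorrelated (the collision-counting argument behind Lemma \ref{lm:covariancezro}), the $c$-blocks are mutually uncorrelated and $\mathrm{var}(T_{D,a,1})=\sum_{c=0}^a\binom{a}{c}^2\mathrm{var}(W_c)$, where $W_c$ denotes one $|S|=c$ block. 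Computing $\mathrm{var}(W_c)$ through its perfect-matching leading term gives $\mathrm{var}(W_c)\simeq\sum_{j_1,\dots,j_4\in\mathbb{J}_0}\frac{c!\,(a-c)!}{P^{n_x}_c P^{n_y}_{a-c}}(\Pi^x_{j_1,j_2,j_3,j_4}-\sigma_{j_1,j_2}\sigma_{j_3,j_4})^c(\Pi^y_{j_1,j_2,j_3,j_4}-\sigma_{j_1,j_2}\sigma_{j_3,j_4})^{a-c}$, and the identity $\binom{a}{c}^2 c!\,(a-c)!=a!\binom{a}{c}$ together with $P^{n_x}_c\simeq n_x^c$, $P^{n_y}_{a-c}\simeq n_y^{a-c}$ collapses the $c$-sum to $a!\sum_{j_1,\dots,j_4\in\mathbb{J}_0}\bigl(\tfrac{\Pi^x_{j_1,j_2,j_3,j_4}-\sigma_{j_1,j_2}\sigma_{j_3,j_4}}{n_x}+\tfrac{\Pi^y_{j_1,j_2,j_3,j_4}-\sigma_{j_1,j_2}\sigma_{j_3,j_4}}{n_y}\bigr)^a$, i.e. the formula of Lemma \ref{lm:twosampvartest} restricted to $\mathbb{J}_0$. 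Finally, substituting the fourth-moment identity $\Pi^x_{j_1,j_2,j_3,j_4}=\kappa_x(\sigma_{j_1,j_2}\sigma_{j_3,j_4}+\sigma_{j_1,j_3}\sigma_{j_2,j_4}+\sigma_{j_1,j_4}\sigma_{j_2,j_3})$ (and likewise for $y$) from Condition \ref{cond:twosamaltdist}, expanding the $a$-th power, and discarding every resulting monomial whose index pattern has more than four distinct index pairs — which is $o(1)\mathbb{V}_{a,a,0,0}$ by Condition \ref{cond:twosamaltmoment}(A1) — leaves precisely $a!\,C_{\kappa,a}\sum_{j_1,\dots,j_4\in\mathbb{J}_0}\sigma_{j_1,j_2}^a\sigma_{j_3,j_4}^a$ with $C_{\kappa,a}=\{(\kappa_x-1)/n_x+(\kappa_y-1)/n_y\}^a+2(\kappa_x/n_x+\kappa_y/n_y)^a$, using $\mathbb{V}_{a,a,0,0}^{1/2}=\sum_{j_1,\dots,j_4\in\mathbb{J}_0}\sigma_{j_1,j_2}^a\sigma_{j_3,j_4}^a$ and $C_{\kappa,a}=\Theta(n^{-a})$.

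For $T_{D,a,2}$, which sums over $(j_1,j_2)\in J_{0,D}$, the kernel has nonzero mean $D_{j_1,j_2}=\sigma_{x,j_1,j_2}-\sigma_{y,j_1,j_2}$, so I would Hoeffding-decompose each factor as $D_{j_1,j_2}+\tilde\phi_i^{(j_1,j_2)}-\tilde\psi_w^{(j_1,j_2)}$ with $\tilde\phi,\tilde\psi$ the centered products, expand $\prod_{t=1}^a$, and note the all-constant term reproduces $\mathrm{E}(T_{D,a,2})$. Each remaining term with $k\ge1$ non-constant factors contributes to the variance, after summing free indices, an amount of order $n^{-k}$ times sums over $(j_1,j_2),(j_3,j_4)\in J_{0,D}$ of products of powers of $D$ and of centered fourth moments $\Pi^x_{j_1,j_2,j_3,j_4}-\sigma_{x,j_1,j_2}\sigma_{x,j_3,j_4}$ (again evaluated via Condition \ref{cond:twosamaltdist}); bounding these by Condition \ref{cond:twosamaltmoment}(A2)--(A3) shows each is $o\bigl(n^{-a}\mathbb{V}_{a,a,0,0}^{1/2}\bigr)=o(\mathrm{var}(T_{D,a,1}))$, whence $\mathrm{var}(T_{D,a,2})=o(\mathrm{var}(T_{D,a,1}))$. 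The bound $\mathrm{var}\{\tilde{\mathcal{U}}^*(a)\}=o(1)\mathrm{var}\{\tilde{\mathcal{U}}(a)\}$ is obtained exactly as in the proof of Lemma \ref{lm:twosampvartest}: every term comprising $\tilde{\mathcal{U}}^*(a)$ in \eqref{eq:twosamcovueqform2} uses strictly fewer distinct sample indices than $\tilde{\mathcal{U}}(a)$ and is therefore of smaller order, and that argument uses only the moment bounds guaranteed by Condition \ref{cond:twosamaltdist}, not $H_0$. Combining the three pieces with the Cauchy--Schwarz bound on the cross-covariances yields $\sigma^2(a)=\mathrm{var}\{\mathcal{U}(a)\}\simeq\mathrm{var}\{\tilde{\mathcal{U}}(a)\}\simeq\mathrm{var}(T_{D,a,1})=a!\,C_{\kappa,a}\sum_{j_1,\dots,j_4\in\mathbb{J}_0}\sigma_{j_1,j_2}^a\sigma_{j_3,j_4}^a$, and the convergences $\{T_{D,a,2}-\mathrm{E}(T_{D,a,2})\}/\sigma(a)\xrightarrow{P}0$ and $[\tilde{\mathcal{U}}^*(a)-\mathrm{E}\{\tilde{\mathcal{U}}^*(a)\}]/\sigma(a)\xrightarrow{P}0$ then follow from Chebyshev's inequality.

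The main obstacle will be the $T_{D,a,2}$ estimate. Whereas $\mathrm{var}(T_{D,a,1})$ essentially recycles the null-case computation, $T_{D,a,2}$ is non-degenerate and its Hoeffding expansion produces many terms mixing powers of the signal matrix $D_{x,y}$, powers of $\sigma_x$ and $\sigma_y$, and distinct-index factors $P^n_k$; showing each term is $o(\mathrm{var}(T_{D,a,1}))$ requires carefully threading the scalings in Condition \ref{cond:twosamaltmoment}(A2)--(A3) through every case, since the term with $k<a$ non-constant factors carries the prefactor $n^{-k}$, which is \emph{larger} than the $n^{-a}$ of the leading variance and must be offset by the smallness of the $J_{0,D}$-indexed sums (controlled by $|\mathbb{J}_0|\simeq p$ and the magnitude of the signal). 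A secondary point requiring care is re-justifying, under the alternative-hypothesis conditions rather than the null, both the exact uncorrelatedness of the $c$-blocks of $T_{D,a,1}$ and the $o(1)$ control of $\mathrm{var}\{\tilde{\mathcal{U}}^*(a)\}$, though both follow the templates already established.
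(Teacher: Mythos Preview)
Your proposal is correct and follows essentially the same three-part strategy as the paper: recycle the null-case computation (Lemma \ref{lm:twosampvartest}) restricted to $\mathbb{J}_0$ for $\mathrm{var}(T_{D,a,1})$; Hoeffding-expand $T_{D,a,2}$ around its nonzero mean and bound each mixed $D/\sigma$-term via Condition \ref{cond:twosamaltmoment}(A2)--(A3); and reuse the ``strictly fewer sample indices'' argument for $\tilde{\mathcal{U}}^*(a)$.

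One clarification: your claim that the $\tilde{\mathcal{U}}^*(a)$ step ``uses only the moment bounds guaranteed by Condition \ref{cond:twosamaltdist}'' is not quite right. The paper splits $\mathrm{var}(T_{b_1,b_2,c})$ into a sum over $j$-indices in $\mathbb{J}_0$ (where the null-case template applies directly, giving an extra $n^{-1}$) and a sum over $(j_1,j_2),(\tilde j_1,\tilde j_2)\in J_{0,D}$. For the latter, the extra $n^{-1}$ from fewer indices is not by itself enough; one still has to control the $p$-sums over $J_{0,D}$, and this again requires the H\"older-plus-$\mathbb{V}_{a,\mathcal{H},x,t}$/$\mathbb{V}_{a,\mathcal{H},D,3}$ bounds of Condition \ref{cond:twosamaltmoment}, exactly as in your $T_{D,a,2}$ analysis. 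So the $\tilde{\mathcal{U}}^*(a)$ piece reuses both templates, not just the null one.
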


\begin{proof}
	\textit{See Section \ref{sec:pftwocovaltlm} on Page \pageref{sec:pftwocovaltlm}.}
\end{proof}

Lemma \ref{lm:twosamaltsmall1} gives that $\{T_{D,a,2}-\mathrm{E}(T_{D,a,2})\}/\sigma(a)\xrightarrow{P}0$ and  $[\tilde{\mathcal{U}}^*(a)- \mathrm{E}\{\tilde{\mathcal{U}}^*(a)\}]/\sigma(a)\xrightarrow{P}0$. Thus by the Slutsky's theorem, to prove Theorem \ref{thm:twosamaltclt}, it   suffices to prove 
\begin{align}
\Big[ \frac{T_{D,a_1,1}}{\sqrt{\mathrm{var}(T_{D,a_1,1})}},\ldots, \frac{T_{D,a_m,1}}{\sqrt{\mathrm{var}(T_{D,a_m,1})}}\Big]\xrightarrow{D} \mathcal{N}(\mathbf{0}, I_m). \label{eq:ta2twosamdist}
\end{align}
Note that $T_{D,a,1}$ is a summation over $j$ indexes in $\mathbb{J}_0$, and by the definition of $\mathbb{J}_0$, $\sigma_{x,j_1,j_2}=\sigma_{y,j_1,j_2}$ for any $j_1,j_2 \in \mathbb{J}_0$. Therefore the analysis under $H_0$ can be similarly applied to $T_{D,a,1}$. Given    Condition  \ref{cond:twosamaltdist} and Condition \ref{cond:twosamaltmoment} \textit{(A1)}, we can obtain  \eqref{eq:ta2twosamdist} similarly as in Section  \ref{secpftwosamplecov}. 
In summary, Theorem \ref{thm:twosamaltclt} is proved.

\subsection{Proof of Proposition \ref{prop:ordercomptwosam}}
In this section, we prove Proposition \ref{prop:ordercomptwosam}. Under the considered example, as $p-|\mathbb{J}_0|=o(p)$, we have 
$
\sum_{j_1,j_2,j_3,j_4\in \mathbb{J}_0} \sigma_{x,j_1,j_2}^a\sigma_{x,j_3,j_4}^a 
\simeq \{p\nu^{2a} + 2\sum_{t=1}^s h_{t}^a (p-t)\}^2. 
$ 
Then by Lemma \ref{lm:twosamaltsmall1},  when $n_x=n_y=n/2$, 
\begin{eqnarray}
&& \mathrm{var}\{\mathcal{U}(a)\}
\simeq	(n/2)^{-a}a! (2\kappa_1^a+\kappa_2^a)\Big\{ p\nu^{2a} + 2\sum_{t=1}^s h_{t}^a (p-t)\Big\}^2, 	\label{eq:varuatwocovpower}
\end{eqnarray}where $\kappa_1=\kappa_x+\kappa_y$ and $\kappa_2=\kappa_x+\kappa_y-2$.


Recall that $\rho_a$ is defined to be the value such that when $\rho=\rho_a$ under the  alternative, $\mathrm{E}\{\mathcal{U}(a) \}/\sqrt{\mathrm{var}\{ \mathcal{U}(a) \}}\simeq M$ for given $M$. By \eqref{eq:varuatwocovpower}, $\rho_a$ satisfies
\begin{align*}
|J_D|^2 \rho_a^{2a} = M^2 (n/2)^{-a}a! (2\kappa_1^a+\kappa_2^a)	\Big\{ p\nu^{2a} + 2 \sum_{t=1}^s h_t^a (p-t) \Big\}^2. 
\end{align*}
We next obtain
\begin{align*}
\rho_a =  \frac{(a!)^{\frac{1}{2a}}\sqrt{\kappa_1}\nu}{(n/2)^{1/2}} \Big(\frac{Mp}{|J_D|}\Big)^{1/a} \Big\{2+ \Big(\frac{\kappa_2}{\kappa_1}\Big)^a\Big\}^{\frac{1}{2a}} \Big\{ 1+ { 2\sum_{t=1}^s \Big( \frac{h_t}{\nu^2}\Big)^a \Big(1-\frac{t}{p}\Big) } \Big\}^{\frac{1}{a}}. 
\end{align*} 
Let $\tilde{M}=Mp/|J_D|$, $\tilde{h}_t=h_t/\nu^2$, $\tilde{\nu}=\sqrt{\kappa_1}\nu$, and $\tilde{\kappa}_r=\kappa_2/\kappa_1$. It follows that
\begin{align*}
\rho_a = \tilde{\nu}(a!)^{\frac{1}{2a}} (n/2)^{-1/2}(\tilde{M})^{\frac{1}{a}} (2+ \tilde{\kappa}_r^a)^{\frac{1}{2a}}\Big\{ 1+2\sum_{t=1}^s \tilde{h}_t^a \Big(1-\frac{t}{p}\Big) \Big\}^{\frac{1}{a}}.
\end{align*}

Similarly to Section \ref{proof:prop23}, we study $\rho_a$ as a function of integer $a$ and show that if $\rho_a$ starts to not decrease at some value, it will increase afterwards. Specifically, we show that when $\rho_{a+1}/ \rho_a\geq 1$, $\rho_{a+2}/\rho_{a+1}>1$. Note that
\begin{align*}
\frac{\rho_{a+1}}{\rho_a} =&~\Biggr[\frac{  (a+1)! \tilde{M}^2 (2+\tilde{\kappa}_r^{a+1})\Big\{1+2\sum_{t=1}^s \tilde{h}_t^{a+1} \Big(1-\frac{t}{p}\Big)  \Big\}^2  }{   (a!)^{ 1+ \frac{1}{a} } \tilde{M}^{ 2+\frac{2}{a} }  (2+\tilde{\kappa}_r^a)^{ 1+\frac{1}{a} } \Big\{ 1+ 2\sum_{t=1}^s \tilde{h}_t^a\Big(1-\frac{t}{p}\Big)\Big\}^{2(1+\frac{1}{a})}   }	\Biggr]^{\frac{1}{2(a+1)}} \notag \\
=&~ \{\mathbb{D}(a)\tilde{M}^{-2} \}^{\frac{1}{2a(a+1)}},
\end{align*} where $\mathbb{D}(a)= \mathbb{D}_1(a)\times \mathbb{D}_2(a)\times \mathbb{D}_3(a)$ with 
 $\mathbb{D}_1(a)=(a+1)^a/a!$, $\mathbb{D}_2(a)=(2+\tilde{\kappa}_r^{a+1})^a/ (2+\tilde{\kappa}_r^a)^{ {a+1}}$ and
\begin{align*}
\mathbb{D}_3(a)={  \Big\{1+2\sum_{t=1}^s \tilde{h}_t^{a+1} \Big(1-\frac{t}{p}\Big)  \Big\}^{2a}  }\Big/{ \Big\{ 1+ 2\sum_{t=1}^s \tilde{h}_t^a\Big(1-\frac{t}{p}\Big)\Big\}^{2(a+1)}  }.	
\end{align*} 
It follows that $\rho_{a+1}/\rho_a> 1$ and $\rho_{a+1}/\rho_a= 1$ are equivalent to $\mathbb{D}(a)> \tilde{M}^2$ and $\mathbb{D}(a)=\tilde{M}^2$, respectively. 
 

We next show that $\mathbb{D}(a)$ is a strictly increasing functions of $a$ as 
 $\mathbb{D}_1(a+1)/\mathbb{D}_1(a)>1$, $\mathbb{D}_2(a+1)/\mathbb{D}_2(a)\geq 1$ and $\mathbb{D}_3(a+1)/\mathbb{D}_3(a)\geq 1$. Particularly,
\begin{align*}
	\frac{\mathbb{D}_1(a+1)}{\mathbb{D}_1(a)}=\frac{(a+2)^{a+1}}{(a+1)!}\frac{a!}{(a+1)^a}=\Big(1+\frac{1}{a+1}\Big)^{a+1} >1;
\end{align*}
\begin{align*}
	\frac{\mathbb{D}_2(a+1)}{\mathbb{D}_2(a)}=\frac{(2+\tilde{\kappa}_r^{a+2})^{a+1}}{ (2+\tilde{\kappa}_r^{a+1})^{ {a+2}}}\times \frac{(2+\tilde{\kappa}_r^a)^{ {a+1}} }{ (2+\tilde{\kappa}_r^{a+1})^a}=\Big\{\frac{ (2+\tilde{\kappa}_r^{a+2})(2+\tilde{\kappa}_r^{a}) }{ (2+\tilde{\kappa}_r^{a+1})^{2} }\Big\}^{a+1} \geq 1,
\end{align*} where we use $2\tilde{\kappa}_r^{a+1}\leq \tilde{\kappa}_r^{a+2}+\tilde{\kappa}_r^{a}$ by the inequality of arithmetic and geometric means; and 
\begin{align*}
	\frac{\mathbb{D}_3(a+1)}{\mathbb{D}_3(a)}=&~\Biggr[ \frac{  \Big\{1+2\sum_{t=1}^s \tilde{h}_t^{a+2} (1-\frac{t}{p})  \Big\}  \Big\{1+2\sum_{t=1}^s \tilde{h}_t^{a} (1-\frac{t}{p})  \Big\} }{  \Big\{1+2\sum_{t=1}^s \tilde{h}_t^{a+1} (1-\frac{t}{p})  \Big\}^2  }\Biggr]^{4(a+1)}\geq 1,
\end{align*} where we use $\sum_{t=1}^s \tilde{h}_t^{a+2} (1-{t}/{p})+ \sum_{t=1}^s \tilde{h}_t^{a} (1-{t}/{p}) \geq  2\sum_{t=1}^s \tilde{h}_t^{a+1} (1-{t}/{p})$ by the inequality of arithmetic and geometric means and $\{\sum_{t=1}^s \tilde{h}_t^{a+2} (1-{t}/{p})\}\{ \sum_{t=1}^s \tilde{h}_t^{a} (1-{t}/{p})\} \geq  \{\sum_{t=1}^s \tilde{h}_t^{a+1} (1-{t}/{p})\}^2$ by the H\"older's inequality. 
In summary, $\mathbb{D}(a+1)/\mathbb{D}(a)>1$, and thus $\mathbb{D}(a)$ is a strictly increasing function of $a$.

Given the monotonicity of $\mathbb{D}(a)$, we know that if $\mathbb{D}(a)\geq \tilde{M}^2$, $\mathbb{D}(a+1)> \tilde{M}^2$; equivalently this implies that if $\rho_{a+1}\geq \rho_a$, $\rho_{a+2}> \rho_{a+1}$. Suppose $a_0$ is the first integer such that $\mathbb{D}(a_0)\geq \tilde{M}^2$, i.e.,  for any integer $ 1\leq a<a_0$,  $\mathbb{D}(a)< \tilde{M}^2$. By the analysis above, we know  $\rho_a$ is decreasing  when $a<a_0$, and $\rho_a$ is strictly increasing when $a>a_0$. Thus $a_0$ achieves the minimum of $\rho_a$. Since  $\mathbb{D}(a)$ is strictly increasing in $a$, we know $a_0<\infty$ given $ \tilde{M}$, and $a_0$ increases as $ \tilde{M}$ increases.


Moreover, as $s=o(p)$, there exists some constant $C$ such that
\begin{align*}
\mathbb{D}(1)=\frac{2+\tilde{\kappa}_r^2}{ (2+\tilde{\kappa}_r)^2 }\times \frac{ \{1+2\sum_{t=1}^s\tilde{h}_t^2(1-t/p)\}^2 }{ \{ 1+2\sum_{t=1}^s \tilde{h}_t(1-t/p) \}^4   }\geq \mathbb{D}_0, \notag
\end{align*} where 
\begin{align*}
\mathbb{D}_0=C\times \frac{2+\tilde{\kappa}_r^2}{ (2+\tilde{\kappa}_r)^2 }\times \frac{ \{1+2\sum_{t=1}^s\tilde{h}_t^2\}^2}{ \{ 1+2\sum_{t=1}^s \tilde{h}_t \}^4   },	
\end{align*}and we have $\mathbb{D}_0=\Theta(1/s^2)$. 
Therefore, when $ \mathbb{D}_0\geq \tilde{M}^2$, i.e., $|J_D|\geq Mp/\sqrt{\mathbb{D}_0}$, we know $\mathbb{D}(1)\geq \tilde{M}^2$ and the minimum of $\mathbb{D}(a)$ is achieved at $a_0=1$. This indicates that the minimum of $\rho_a$ is achieved at $a_0=1$.

\subsection{Results on the Generalized Linear Model in Section \ref{sec:newglm}} \label{sec:extglm}

\subsubsection{Limiting results and power analysis} \label{sec:mainresultglm}

We have shown that the U-statistics framework can be used to test means and covariance matrices. Here we give an example of  generalized linear models to show that the framework can be extended to other testing problems. 

Consider a response variable  $y$ and covariates $\mathbf{x} = (x_1,\cdots, x_p)^{\intercal}$ following a generalized linear model   
\begin{eqnarray}
	\mathrm{E}(y|\mathbf{x})=g^{-1}(\mathbf{x}^{\intercal}\boldsymbol{\beta}), \label{eq:glmmodel}
\end{eqnarray} where $g$ is the canonical link function and $\boldsymbol{\beta}$ is the regression coefficients of interest.  We are interested in testing:
$
	H_0: \boldsymbol{\beta}=\boldsymbol{\beta}_0$ {versus}  $H_A: \boldsymbol{\beta} \neq \boldsymbol{\beta}_0.
$
We define the score vector  $\mathbf{S}=(S_1,\ldots, S_p)^{\intercal}$ for $\boldsymbol{\beta}$ in \eqref{eq:glmmodel}, where $S_j=  (y-\mu_{0}) x_{j}$, $1\leq j \leq p$ with $\mu_{0}=g^{-1}(\mathbf{x}^{\intercal} \boldsymbol{\beta}_0)$. 
Given that $E(S_j)=0$ under $H_0$,  the  target parameters can be considered as  $\mathcal{E}=\{ \mathrm{E}(S_j): j=1,\ldots,p \}$. 

Suppose that   $(\mathbf{x}_{i}, y_i)$, $i=1,\ldots, n$, are $n$ i.i.d. observations. Many existing tests for generalized linear models \cite[see, e.g.,][]{guo2016tests,wu2019adaptive} are based on the score vectors ${\mathbf S}_i=(S_{i,1},\ldots, S_{i,p})^{\intercal}$, where $S_{i,j}=(y_i-\mu_{0,i})x_{i,j}$.
Note that  ${\mathbf S}_i$'s  are i.i.d. copies of $\mathbf{S}$ with mean $(\mathrm{E}(S_1),\ldots, \mathrm{E}(S_p))^{\intercal}$ and the covariance matrix denoted by $\boldsymbol{\Sigma}=\{\sigma_{j_1,j_2}: 1\leq j_1,j_2 \leq p \}$. Therefore, $K_j( \mathbf{x}_{i},y_i)=S_{i,j}=(y_i-\mu_{0,i})x_{i,j}$ provides a simple  kernel function. Following \eqref{eq:generalustat}, 
$\mathcal{U}(a)=\sum_{j=1}^p (P^n_a)^{-1} \sum_{1\leq i_1 \neq \ldots \neq i_a \leq n} \prod_{k=1}^a S_{i_k,j}$, 
which is an unbiased estimator of $\|\mathcal{E}\|_a^a=\sum_{j=1}^p \{\mathrm{E}(S_j)\}^a$ for finite integers $a$.  Moreover, we define $\mathcal{U}(\infty)=\max_{1\leq j\leq p}  \sigma_{j,j}^{-1} (\sum_{i=1}^n S_{i,j} /n)^2$, which corresponds to the  $\|\mathcal{E}\|_{\infty}$.

Asymptotic results of the U-statistics are stated below, where we assume the conditions  similar to that of Theorem \ref{thm:onesamplemean}.

\begin{condition} \label{cond:glmnormal} \quad
\begin{enumerate}
	\item[(1)] There exists constant $B$ such that $B^{-1} \leq \lambda_{\min}(\boldsymbol{\Sigma}) \leq \lambda_{\max}(\boldsymbol{\Sigma}) \leq B$, where $\lambda_{\min}(\boldsymbol{\Sigma})$ and $\lambda_{\max}(\boldsymbol{\Sigma})$ denote the minimum and maximum eigenvalues of the covariance matrix $\boldsymbol{\Sigma}$; and  all correlations are bounded away from $-1$ and $1$, i.e., $\max_{1\leq j_1 \neq j_2\leq p}|\sigma_{j_1,j_2}|/(\sigma_{j_1,j_2} \sigma_{j_2,j_2})^{1/2}<1-\eta$ for some $\eta>0$.
	\item[(2)] $\log p=o(1)n^{1/4}$ and $\max_{1\leq j \leq p}\mathrm{E}[\exp\{ h (S_{j} -\mathrm{E}(S_{j}))^2  \}] < \infty,$ for $h\in [-M,M]$, where $M$ is a positive constant.
	\item[(3)] Similarly to Condition \ref{cond:alphamixing},  $\{(S_{i,j},i=1\ldots,n): 1\leq j\leq p  \}$ is $\alpha$-mixing with $\alpha_S(s) \leq C\delta^s$, where $\delta \in (0,1)$ and $C$ is some constant. In addition, for finite integer $a$, $\sum_{j_1,j_2=1}^p\sigma_{j_1,j_2}^a=\Theta(p)$. 
\end{enumerate} 
\end{condition}

\begin{theorem} \label{thm:glmnormal}
Under Condition \ref{cond:glmnormal} and  $H_0$: $\boldsymbol{\beta}=\boldsymbol{\beta}_0$, 
 	for any finite  integers $(a_1,\ldots, a_m)$, as $n,p \rightarrow \infty$, $ [ {\mathcal{U}(a_1)}/{\sigma(a_1)},\allowbreak \ldots,  {\mathcal{U}(a_m)}/{\sigma(a_m)}  ]^{\intercal} \xrightarrow{D} \mathcal{N}(0,I_m),$
where    $\sigma^2(a) =\sum_{i=1}^p \sum_{j=1}^p \sigma_{i,j}^{a}/P^n_a$, which is of order $\Theta(pn^{-a})$. 
Besides, 
$
	P ( {n}\mathcal{U}(\infty)-\tau_p \leq u ) \rightarrow \exp \{ - \pi^{-1/2} \exp(-u/2)  \}, 
$ $\forall u\in \mathbb{R}$, 	where $\tau_p=2\log p - \log \log p$. 
In addition, for any finite integer  $a$, $\{\mathcal{U}(a)/\sigma(a)\}$ and $\{ n\mathcal{U}(\infty)-\tau_p\}$ are asymptotically independent.  
\end{theorem}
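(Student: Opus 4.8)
\textbf{Proof proposal for Theorem \ref{thm:glmnormal}.}
The plan is to reduce the three claims (joint normality of the finite-order U-statistics, the extreme-value limit for $\mathcal{U}(\infty)$, and their mutual asymptotic independence) to the corresponding statements already established for one-sample mean testing in Theorems \ref{thm:onesamplemean} and \ref{thm:onesamplemean2}. The key observation is structural: writing $S_{i,j}=(y_i-\mu_{0,i})x_{i,j}$, the vectors $\mathbf{S}_i=(S_{i,1},\ldots,S_{i,p})^{\intercal}$ are i.i.d.\ copies of a $p$-dimensional random vector $\mathbf{S}$ with $\mathrm{E}(\mathbf{S})=\mathbf{0}$ under $H_0$, and the U-statistics $\mathcal{U}(a)=\sum_{j=1}^p(P^n_a)^{-1}\sum_{1\le i_1\neq\cdots\neq i_a\le n}\prod_{k=1}^a S_{i_k,j}$ have exactly the same algebraic form as the one-sample mean U-statistics in \eqref{eq:ustatonesamplemean} with $x_{i,j}-\mu_{j,0}$ replaced by $S_{i,j}$, and similarly $\mathcal{U}(\infty)=\max_{1\le j\le p}\sigma_{j,j}^{-1}(\sum_{i=1}^n S_{i,j}/n)^2$ matches the maximum statistic in Theorem \ref{thm:onesamplemean2}. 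So the first step is to verify that Condition \ref{cond:glmnormal} on $\mathbf{S}$ implies exactly the hypotheses (Condition \ref{cond:onesamplemean} and Condition \ref{cond:onesamplemean2}) imposed on the generic mean-zero vector in those theorems: the eigenvalue and correlation bounds in \ref{cond:glmnormal}(1), the sub-Gaussian-type moment bound $\mathrm{E}[\exp\{h(S_j-\mathrm{E}S_j)^2\}]<\infty$ and $\log p=o(n^{1/4})$ in \ref{cond:glmnormal}(2), and the $\alpha$-mixing plus $\sum_{j_1,j_2}\sigma_{j_1,j_2}^a=\Theta(p)$ in \ref{cond:glmnormal}(3) are designed precisely to line up with those conditions.

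Granting that alignment, the second step is to quote the conclusions directly. Applying Theorem \ref{thm:onesamplemean} to the i.i.d.\ sample $\{\mathbf{S}_i\}$ gives $[\mathcal{U}(a_1)/\sigma(a_1),\ldots,\mathcal{U}(a_m)/\sigma(a_m)]^{\intercal}\xrightarrow{D}\mathcal{N}(0,I_m)$ with $\sigma^2(a)=\mathrm{var}\{\mathcal{U}(a)\}=\sum_{i=1}^p\sum_{j=1}^p a!\,\sigma_{i,j}^a/P^n_a$, which by \ref{cond:glmnormal}(3) is $\Theta(pn^{-a})$; note the proof there only used $\mathrm{E}(\mathbf{S})=\mathbf{0}$, fourth-moment control, the mixing condition, and the covariance-trace regularization. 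Applying Theorem \ref{thm:onesamplemean2} gives the extreme-value limit $P(n\mathcal{U}(\infty)-\tau_p\le u)\to\exp\{-\pi^{-1/2}\exp(-u/2)\}$ with $\tau_p=2\log p-\log\log p$ (this piece rests on \citet{cai2014two} under the eigenvalue/correlation/sub-Gaussian conditions, which are \ref{cond:glmnormal}(1)--(2)), together with the asymptotic independence of each $\mathcal{U}(a)/\sigma(a)$ and $n\mathcal{U}(\infty)-\tau_p$, proved there via the Hsing-type conditional-measure argument. Stating this carefully and invoking the earlier theorems is the bulk of the write-up.

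The one point that requires genuine care — and the main obstacle — is confirming that the moment and dependence conditions on $\mathbf{S}$ are consistent with the generalized linear model structure, i.e.\ that Condition \ref{cond:glmnormal} is not vacuous and that $S_{i,j}=(y_i-\mu_{0,i})x_{i,j}$ genuinely inherits the needed properties. In particular one should note that the sub-Gaussian-type bound $\mathrm{E}[\exp\{h S_j^2\}]<\infty$ is a condition on the product $(y-\mu_0)x_j$, which is heavier-tailed than either factor; under $H_0$, $y-\mu_0$ is the centered response and boundedness of such exponential moments typically forces restrictions on the covariate tails and the link function (e.g.\ bounded covariates, or logistic/Poisson models with light-tailed designs). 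I would add a short remark spelling out that \ref{cond:glmnormal} holds, e.g., when the covariates are uniformly bounded and the conditional response has light tails, so the reduction is legitimate; the $\alpha$-mixing assumption on $\{S_{i,j}:j\}$ similarly reflects weak dependence among coordinates of $\mathbf{x}$. Beyond this sanity check, no new probabilistic machinery is needed: the theorem is a corollary of the mean-testing results once the conditions are matched, with the variance formula $\sigma^2(a)=\sum_{i,j}\sigma_{i,j}^a/P^n_a$ read off from \eqref{eq:meanvarpartsummedtermexp} (up to the $a!$ bookkeeping already present there).
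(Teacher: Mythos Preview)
Your proposal is correct and matches the paper's own approach: the paper's proof of Theorem \ref{thm:glmnormal} states in one sentence that ``the arguments in Section \ref{sec:proof3132} can be applied to proving Theorem \ref{thm:glmnormal} by replacing $x_{i,j}$'s with $S_{i,j}$'s, and therefore the details are skipped,'' which is exactly your reduction to Theorems \ref{thm:onesamplemean} and \ref{thm:onesamplemean2}. Your additional remarks on the non-vacuousness of Condition \ref{cond:glmnormal} for the GLM structure and the $a!$ discrepancy in the variance formula are useful sanity checks that go slightly beyond what the paper spells out, but the core argument is the same.
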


 Next we compare the power of $\mathcal{U}(a)$'s under alternatives with different sparsity levels. Similarly to the mean testing problems, we consider the  alternative $\mathcal{E}_A=\{\mathrm{E}(S_{j}) =\rho > 0\mbox{ for }  j=1,\ldots, k_0; \mathrm{E}(S_{j})=0 \mbox{ for } j=k_0+1,\cdots, p\}$, where $k_0$ denotes  the number of nonzero entries.  
   
\begin{theorem} \label{thm:altcltglmtest}
Assume Condition \ref{cond:glmnormal} and $k_0=o(p)$. For any finite integers $\{a_1,\ldots, a_m\}$, if $\rho$ in $\mathcal{E}_A$ satisfies $\rho=O(k_0^{-1/a_t} p^{1/(2a_t)}n^{-1/2})$ for $t=1,\ldots,m$, then $
	[ \mathcal{U}(a_1)-\mathrm{E}\{\mathcal{U}(a_1)\}]/{\sigma(a_1)}, \ldots,\allowbreak [\mathcal{U}(a_m)-\mathrm{E}\{\mathcal{U}(a_m)\}]/{\sigma(a_m)}  ]^{\intercal} \xrightarrow{D} \mathcal{N}(0,I_m),  
$ as  $n,p \rightarrow \infty$. 
In addition, $\mathrm{E}[\mathcal{U}(a)]=\|\mathcal{E}_A\|_a^a=k_0\rho^a$   and $$\sigma^2(a) \simeq \sum_{j_1=k_0+1}^p \sum_{j_2=k_0+1}^p a! \sigma_{j_1,j_2}^a /P^n_a,$$ which is $\Theta(a!pn^{-a})$.
\end{theorem}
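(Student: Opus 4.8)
The plan is to follow the same three-stage template used for Theorems \ref{thm:jointnormal}, \ref{thm:onesamplemean} and \ref{thm:altcltmeantest}, namely: (i) identify the leading term of $\mathcal{U}(a)$ and compute the asymptotic variances and cross-covariances; (ii) invoke the Cram\'er--Wold device and reduce the joint asymptotic normality of $[\mathcal{U}(a_1)/\sigma(a_1),\ldots,\mathcal{U}(a_m)/\sigma(a_m)]^{\intercal}$ to a univariate CLT for a fixed linear combination of leading terms; (iii) prove that univariate CLT by the martingale central limit theorem of \citet{heyde1970}, exactly as in Section \ref{sec:detailofjointnormal}. The key observation that makes this work is that the score-based U-statistic $\mathcal{U}(a)=\sum_{j=1}^p (P^n_a)^{-1}\sum_{1\le i_1\neq\cdots\neq i_a\le n}\prod_{k=1}^a S_{i_k,j}$ is structurally identical to the one-sample mean U-statistic \eqref{eq:ustatonesamplemean} with the observation $x_{i,j}-\mu_{j,0}$ replaced by the score $S_{i,j}=(y_i-\mu_{0,i})x_{i,j}$, which under $H_0$ satisfies $\mathrm{E}(S_{i,j})=0$. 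Hence I would first establish that $\{\mathbf{S}_i\}$ are i.i.d.\ with mean zero under $H_0$, finite moments and the $\alpha$-mixing property across coordinates (these are precisely Condition \ref{cond:glmnormal}(2)--(3)), and then essentially transport the proof of Theorem \ref{thm:onesamplemean} verbatim.

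Concretely, for the variance I would compute, as in \eqref{eq:meanvarpartsummedtermexp}, that under $H_0$
\begin{align*}
\mathrm{var}\{\mathcal{U}(a)\}=\mathrm{E}\{\mathcal{U}^2(a)\}=(P^n_a)^{-1}\sum_{1\le j_1,j_2\le p} a!\,\sigma_{j_1,j_2}^a,
\end{align*}
using that $\mathrm{E}(\prod_{k=1}^a S_{i_k,j_1}S_{\tilde i_k,j_2})$ vanishes unless $\{i_1,\ldots,i_a\}=\{\tilde i_1,\ldots,\tilde i_a\}$, and equals $\sigma_{j_1,j_2}^a$ otherwise; Condition \ref{cond:glmnormal}(3) then gives $\sigma^2(a)=\Theta(pn^{-a})$. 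For $a\ne b$ the same index-disjointness argument yields $\mathrm{cov}\{\mathcal{U}(a),\mathcal{U}(b)\}=0$ exactly (not just asymptotically), since some summation index appears an odd number of times and $\mathrm{E}(S_{i,j})=0$ factorizes it out. For the martingale step I would define $\mathcal{F}_k=\sigma\{(\mathbf{x}_1,y_1),\ldots,(\mathbf{x}_k,y_k)\}$, set $Z_n=\sum_r t_r\mathcal{U}(a_r)/\sigma(a_r)$ with $\sum_r t_r^2=1$, $D_{n,k}=(\mathrm{E}_k-\mathrm{E}_{k-1})Z_n$, and obtain the explicit form $A_{n,k,a}=\frac{a}{\sigma(a)P^n_a}\sum_{j=1}^p\sum_{1\le i_1\neq\cdots\neq i_{a-1}\le k-1}S_{k,j}\prod_{t=1}^{a-1}S_{i_t,j}$ as in Lemma \ref{lm:cltabnkform}, and then verify the two Lyapunov-type conditions $\mathrm{var}(\sum_k\pi^2_{n,k})\to 0$ and $\sum_k\mathrm{E}(D_{n,k}^4)\to 0$ (the analogue of Lemma \ref{lm:cltonesammeanlm}); the $\alpha$-mixing of the score coordinates controls the sums over $p$ exactly as in Section \ref{sec:onesammeanlm}. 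The extreme-value limit for $\mathcal{U}(\infty)$ and its asymptotic independence from the finite-order U-statistics follow from \citet{cai2014two} and the argument of \citet{hsing1995} under Condition \ref{cond:glmnormal}(1)--(3), identically to Theorems \ref{thm:onesamplemean2} and \ref{THM:TWOSAMPLEMANINF}.

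For the alternative statement (Theorem \ref{thm:altcltglmtest}), I would decompose $\mathcal{U}(a)=T_{a,1}+T_{a,2}$ with $T_{a,1}$ the sum over the $k_0$ nonzero coordinates $j\le k_0$ and $T_{a,2}$ the sum over $j>k_0$, paralleling \eqref{eq:altmeandeft}. Since $\mathrm{E}(S_{i,j})=0$ for $j>k_0$, the analysis under $H_0$ applies to $T_{a,2}$; I would show $\mathrm{var}(T_{a,2})=\Theta(a!pn^{-a})$, $\mathrm{var}(T_{a,1})=o(1)\mathrm{var}(T_{a,2})$ under the constraint $\rho=O(k_0^{-1/a_t}p^{1/(2a_t)}n^{-1/2})$ (which forces $k_0\rho^{a}$ to be of smaller or comparable order to $\sqrt{\mathrm{var}(T_{a,2})}$), hence $\{T_{a,1}-\mathrm{E}(T_{a,1})\}/\sigma(a)\xrightarrow{P}0$, and $\mathrm{E}[\mathcal{U}(a)]=k_0\rho^a$. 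By Slutsky, the joint normality reduces to that of $[T_{a_1,2}/\sqrt{\mathrm{var}(T_{a_1,2})},\ldots]$, which is handled by the same martingale CLT as above. The main obstacle is the same one that appears in all these proofs: controlling $\mathrm{var}(\sum_k\pi^2_{n,k})$, since $\pi^2_{n,k}$ is a quadratic form in $A_{n,k,a_{r_1}}A_{n,k,a_{r_2}}$ and expanding it produces high-order sums over $(n,p)$ whose dominant terms must be shown negligible relative to $\mathrm{var}^2(Z_n)$; here the $\alpha$-mixing of the score coordinates (Condition \ref{cond:glmnormal}(3)), together with the moment bounds from Condition \ref{cond:glmnormal}(2), is exactly what tames these cross terms, and the bookkeeping mirrors Lemma \ref{lm:targetorder} / Lemma \ref{lm:cltonesammeanlm}.
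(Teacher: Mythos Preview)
Your plan is correct and matches the paper's proof essentially line for line: the paper likewise decomposes $\mathcal{U}(a)=T_{a,1}+T_{a,2}$ over $j\le k_0$ and $j>k_0$, invokes the null-hypothesis analysis (i.e.\ the proof of Theorem \ref{thm:glmnormal}, itself a transcription of Theorem \ref{thm:onesamplemean} with $x_{i,j}$ replaced by $S_{i,j}$) to get joint normality for the $T_{a_t,2}$'s, proves $\mathrm{var}(T_{a,1})=o(pn^{-a})$ by expanding $\mathrm{E}(T_{a,1}^2)$ over the intersection size $b=|\{i_1,\dots,i_a\}\cap\{\tilde i_1,\dots,\tilde i_a\}|$ and using the $\alpha$-mixing bound on $\sum_{j_1,j_2\le k_0}\sigma_{j_1,j_2}^b$ together with $k_0\rho^a=O(p^{1/2}n^{-a/2})$, and concludes via Slutsky. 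The only detail your sketch leaves implicit is this case-by-case variance computation for $T_{a,1}$, which is routine and exactly parallels Section \ref{sec:pftwomeanaltvar}.
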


Theorem \ref{thm:altcltglmtest}
 shows that under the considered local alternatives, the asymptotic  power of $\mathcal{U}(a)$ mainly depends on $\mathrm{E}\{\mathcal{U}(a) \}/ \sqrt{\mathrm{var}\{\mathcal{U}(a)\}}$. 
Therefore, for a given constant $M>0$, if $\rho=\rho_a$ defined as $\rho_a=M^{{1}/{a}} k_0^{-{1}/{a}}a!^{{1}/{(2a)}}\times (\sum_{j_1=k_0+1}^p \sum_{j_2=k_0+1}^p \sigma_{j_1,j_2}^a)^{{1}/{(2a)}}n^{-1/2}$,
  we know that different $\mathcal{U}(a)$'s asymptotically have the same	 power.  For illustration, we further assume that $\sigma_{j,j}=1$ when $j \in \{k_0+1,\ldots,p\}$, and $\sigma_{j_1,j_2}=0$ when $j_1\neq j_2 \in \{k_0+1,\ldots,p\}$, then 
\begin{eqnarray}
	\rho_a\simeq (M\sqrt{p}/k_0)^{\frac{1}{a}}a!^{\frac{1}{2a}}n^{-\frac{1}{2}}.\label{eq:deltaavalueglm}
\end{eqnarray}    Therefore, following the analysis in Section \ref{maintest}, to find the ``best" $\mathcal{U}(a)$, it suffices to find the order, denoted by  $a_0$, that gives the smallest $\rho_a$ value in \eqref{eq:deltaavalueglm}. 
Since \eqref{eq:deltaavalueglm} is only different from  \eqref{eq:deltaavalue} by a constant that does not depend on the order $a$,   Proposition \ref{cor:deltaminimval} still holds.
Consider  $a_0\geq 1$ as specified in  Proposition  \ref{cor:deltaminimval}; then, similar to results in the two-sample mean testing,  we know when $k_0\geq \sqrt{ Mp}$, $a_0=1$ and $\mathcal{U}(1)$ is   ``better" than $\mathcal{U}(\infty)$; when $ k_0 < C_1  { \sqrt{ p} }/{ \log^{{a_0}/{2} } p}$ for some $C_1$, $\mathcal{U}(\infty)$ is the ``best"; and when $C_2 { \sqrt{p} }/{ \log^{{a_0}/{2} } p}   < k_0 <  \sqrt{ Mp}$ for some $C_2$, 
$\mathcal{U}(a_0)$ is the ``best". 
In addition, given the similar  results obtained in Theorem \ref{thm:glmnormal} and  power analysis, we can also develop adaptive testing procedure similar to that in Section \ref{sec:computtest}.


\begin{remark}\label{rm:glmrm}
More generally, if the generalized linear model also has covariates $\mathbf{z}$ that we want to adjust for, the corresponding generalized linear model becomes  $\mathrm{E}(y|\mathbf{x})=g^{-1}(\mathbf{x}^{\intercal}\boldsymbol{\beta}+\mathbf{z}^{\intercal}\boldsymbol{\alpha})$, where $\boldsymbol{\alpha}$ denote  the regression coefficients for $\mathbf{z}$. To test 	$H_0: \boldsymbol{\beta}=\boldsymbol{\beta}_0   $ v.s. $H_A: \boldsymbol{\beta} \neq \boldsymbol{\beta}_0,$
we can replace $\mu_{0,j}$ by $\hat \mu_{0,j} = g^{-1}(\mathbf{x}_i^{\intercal} \boldsymbol{\beta}_0+\mathbf{z}_i^{\intercal} \hat{\boldsymbol{\alpha}})$
where $\hat{\boldsymbol{\alpha}}$ is an estimator of $\boldsymbol{\alpha}$. For instance, when $\mathbf{z}$ is low dimensional, we can take $\hat{\boldsymbol{\alpha}}$ as the maximum likelihood estimator under $H_0$. Then similar conclusion to Theorem \ref{thm:glmnormal} can be derived under certain regularity conditions.
We present simulation studies on generalized linear model   in \ref{suppA} Section \ref{sec:glmsim} to illustrate the good performance of the U-statistics and we leave the details of theoretical developments with nuisance parameters for future study. 
 \end{remark}

\subsubsection{Proof of Theorems \ref{thm:glmnormal} and \ref{thm:altcltglmtest} (on Page \pageref{thm:glmnormal})}
Theorem \ref{thm:glmnormal} is proved following the proof of Theorem \ref{thm:onesamplemean} in Section \ref{sec:proof3132}.  Specifically, the  arguments in Section \ref{sec:proof3132} can be  applied to proving Theorem \ref{thm:glmnormal} by replacing $x_{i,j}$'s  with $S_{i,j}$'s, and therefore the details are skipped.

The proof of Theorem \ref{thm:altcltglmtest}  is similar to the proof of Theorem \ref{thm:altcltmeantest} in Section \ref{sec:proofthem34}.  
In particular, we decompose $\mathcal{U}(a)=T_{a,1}+T_{a,2}$, where we redefine
\begin{eqnarray*}
	T_{a,1} = \sum_{j=1}^{k_0} \frac{1}{P^n_{a}}\sum_{1 \leq i_1 \neq \cdots \neq i_{a} \leq n}  \prod_{k=1}^a  S_{i_k,j}, \quad	T_{a,2} =\sum_{j=k_0+1}^{p} \frac{1}{P^n_{a}}\sum_{1 \leq i_1 \neq \cdots \neq i_{a} \leq n}  \prod_{k=1}^a  S_{i_k,j}.
\end{eqnarray*}
Note that $T_{a,2}$ is a summation over $j\in \{k_0+1,\ldots, p\}$ and $\mathrm{E}(S_j)=0$ for $j\in \{k_0+1,\ldots, p\}$. Thus the conclusions similar to that in  Theorem \ref{thm:glmnormal} hold for $T_{a,2}$. 
Specifically, we have $\mathrm{var}(T_{a,2})=\Theta\{(p-k_0)n^{-a}\}$ and  
\begin{align}
	\Big[T_{a_1,2}/\sqrt{\mathrm{var}(T_{a_1,2})},\ldots, T_{a_m,2}/\sqrt{\mathrm{var}(T_{a_m,2})}\Big]\xrightarrow{D}\mathcal{N}(0,I_m).\label{eq:glmnormpart1}
\end{align}
When $\mathrm{var}(T_{a,1})=o(1)\mathrm{var}(T_{a,2})$, which will be proved later, we have $\sigma^2(a)\simeq \mathrm{var}(T_{a,2})$ and $\{T_{a,1}-\mathrm{E}(T_{a,1})\}/\sigma(a)\xrightarrow{P}0$. By the Slutsky's theorem and \eqref{eq:glmnormpart1},  Theorem \ref{thm:altcltglmtest} is proved.
 


To finish the proof of Theorem \ref{thm:altcltglmtest}, it remains to prove $\mathrm{var}(T_{a,1})=o(1)\mathrm{var}(T_{a,2})$. 
The analysis above gives that $
	\mathrm{var}(T_{a,2})=\Theta\{(p-k_0)n^{-a}\}.   
$ 
As $k_0=o(p)$, to prove  $\mathrm{var}(T_{a,1})=o(1)\mathrm{var}(T_{a,2})$, it suffices to show $\mathrm{var}(T_{a,1})=o(pn^{-a})$.
Note that $\mathrm{var}(T_{a,1})=\mathrm{E}(T_{a,1}^2)-\{\mathrm{E}(T_{a,1})\}^2$,  $\mathrm{E}(T_{a,1})=k_0\rho^a$,  and 
\begin{align*}
	\mathrm{E}(T_{a,1}^2)=\frac{1}{(P^n_a)^2} \sum_{1\leq j_1,j_2\leq k_0}\sum_{\substack{ 1 \leq i_1 \neq \cdots \neq i_{a} \leq n; \\ 1 \leq \tilde{i}_1 \neq \cdots \neq \tilde{i}_{a} \leq n} }\mathrm{E}\Big\{ \prod_{k=1}^a  (S_{i_k,j_1}S_{\tilde{i}_k,j_2})\Big\}.
\end{align*} 
For $0\leq b\leq a$, define an event $B_{S,b}=\{ \{i_1,\ldots, i_a\}\cap \{\tilde{i}_1,\ldots, \tilde{i}_a \}\mathrm{\ is\ of\ size\ }b  \}$ and correspondingly 
\begin{align*}
G_{S,a,2,b}=&~(P^n_a)^{-2}\sum_{1\leq j_1,j_2\leq k_0}\sum_{\substack{ 1 \leq i_1 \neq \cdots \neq i_{a} \leq n; \\ 1 \leq \tilde{i}_1 \neq \cdots \neq \tilde{i}_{a} \leq n} }\mathrm{E}\Big\{ \prod_{k=1}^a  (S_{i_k,j_1}S_{\tilde{i}_k,j_2})\times \mathbf{1}_{B_{S,b} } \Big\}	. 
\end{align*}Then $\mathrm{E}(T_{a,1}^2)=\sum_{b=0}^aG_{S,a,2,b}$. To prove $\mathrm{E}(T_{a,1}^2)-\{\mathrm{E}(T_{a,1})\}^2=o(pn^{-a})$, we show $ G_{S,a,2,0}-\{\mathrm{E}(T_{a,1})\}^2=o(pn^{-a})$ and $\sum_{b=1}^a G_{S,a,2,b}=o(pn^{-a})$, respectively.  

When $b=0$, $\{i_1,\ldots, i_a\}\cap\{\tilde{i}_1,\ldots, \tilde{i}_a \}=\emptyset$, and it follows that $G_{S,a,2,0}=(P^n_a)^{-2}k_0^2 P^n_{2a} \rho^{2a}.$
By $\mathrm{E}(T_{a,1})=k_0\rho^a$ and $k_0^2\rho^{2a}=O(pn^{-a})$, we have $|G_{S,a,2,0}-\{ \mathrm{E}(T_{a,1})\}^2|=o(k_0^2\rho^{2a})=o(pn^{-a})$. When  $b\geq 1$, 
\begin{align*}
	G_{S,a,2,b}=&~C(P^n_a)^{-2}\sum_{1\leq j_1,j_2\leq k_0}P^n_{2a-b} (\sigma_{j_1,j_2}+\rho^2)^b \rho^{2(a-b)}.
\end{align*} The maximum order of $G_{S,a,2,b}$ is bounded by the following two quantities:
\begin{eqnarray}
	&&\sum_{1\leq j_1,j_2\leq k_0}\frac{P^n_{2a-b}}{(P^n_a)^{2}} \sigma_{j_1,j_2}^b \rho^{2(a-b)}, \label{eq:maxorderglmfirst} \\
	&& \sum_{1\leq j_1,j_2\leq k_0}\frac{P^n_{2a-b}}{(P^n_a)^{2}} \rho^{2a}. \label{eq:maxorderglmsecond}
\end{eqnarray} For \eqref{eq:maxorderglmfirst}, as $b\geq 1$, by  Condition \ref{cond:glmnormal} (3) and Lemma \ref{lm:mixingineq}, $\eqref{eq:maxorderglmfirst}=O\{k_0n^{-b}\rho^{2(a-b)}\}$. As $k_0=o(p)$ and $\rho=O(k_0^{-1/a}p^{1/(2a)}n^{-1/2})$, we know $\eqref{eq:maxorderglmfirst}=o(pn^{-a})$. For \eqref{eq:maxorderglmsecond}, when $b\geq 1$, $\eqref{eq:maxorderglmsecond} =O(k_0^2n^{-b}\rho^{2a})=o(k_0^2\rho^{2a})=o(pn^{-a})$. In summary, we have 
$
	|\mathrm{var}(T_{a,1})|
	\leq |\{\mathrm{E}(T_{a,1})\}^2-G_{S,a,2,0}|+\sum_{b=1}^a |G_{S,a,2,b}|=o(pn^{-a}).
$ Therefore, Theorem \ref{thm:altcltglmtest} is proved.

\section{Assisted Lemmas} \label{sec:alllemma}

In the following Sections \ref{sec:pfjointnormwhole}--\ref{sec:twocovalt}, we provide the proofs of all the assisted lemmas used in Section \ref{sec:a}. The proofs of Remark \ref{rm:standizedmax} and Corollary \ref{prop:generalrestwomean} are provided in Sections  \ref{sec:pfstandmax} and \ref{sec:pfcor41}, respectively. To facilitate the presentation of the proofs, we first introduce some notation and then provide four  technical Lemmas \ref{lm:mixingineq}--\ref{lm:inequaluvtheta}. 

\subsubsection*{Notation}\label{par:notationindpcond}
We define some notation to simplify the representation of  summations in the following proofs.  
For $a<n$, $\mathcal{P}(n,a)$ denotes the collection of $a$-tuples  $\mathbf{i}=(i_1,\ldots, i_a)$ satisfying $1\leq i_1\neq \ldots \neq  i_a\leq n.$ Given $\mathbf{i}\in \mathcal{P}(n,a)$, we define $\{\mathbf{i}\}$ as the corresponding set containing the elements of $\mathbf{i}$ without order, that is, $\{\mathbf{i}\}=\{i_1,\ldots, i_a\}$.   
 We apply usual set operations on the corresponding set of $\{\mathbf{i}\}$. For example, $|\{\mathbf{i}\}|$ denotes the size of the  set $\{i_1,\ldots, i_a\}$, which is $a$ in this case. In addition, for any two integers $a, b <n $, and two tuples $\mathbf{i}\in \mathcal{P}(n,a)$ and $\mathbf{m} \in \mathcal{P}(n,b)$, the operations $\{\mathbf{i}\}\cup \{\mathbf{m}\}$ and  $\{\mathbf{i}\}\cap \{\mathbf{m}\}$ give the sets that equal to the union $\{i_1,\ldots, i_a\}\cup \{m_1,\ldots, m_b\}$ and intersection $\{i_1,\ldots, i_a\}\cap \{m_1,\ldots, m_b\}$ respectively.  Moreover, we write $\{\mathbf{i}\}=\{\mathbf{m}\}$ and $\{\mathbf{i}\}\neq \{\mathbf{m}\}$ to indicate that the two sets $\{i_1,\ldots, i_a\}$ and $\{m_1,\ldots, m_b\}$ contain the same elements or not respectively. 
 
In addition, let $\mathcal{C}(n,a)$ denote the collection of $a$-tuples  $\mathbf{i}=(i_1,\ldots, i_a)$ satisfying $1\leq i_1, \ldots ,i_a\leq n$ without constraining the elements to be different. Similarly, we define $\{\mathbf{i}\}$  as the set containing the elements of $\mathbf{i}$ without order, and the set operations also apply similarly as above. Note that $|\{\mathbf{i}\}|$ may be smaller than $a$ under this case. 
 
We next list four technical lemmas which shall be used in the proofs later. 
\begin{lemma}\cite[Eq. (3.5)]{guyon1995random} \label{lm:mixingineq}
Under the mixing assumption in Condition \ref{cond:alphamixing},  
suppose $Z_1$ and $Z_2$ are $\mathcal{Z}_1^t$-measurable  and $\mathcal{Z}_{t+m}^{\infty}$-measurable random variables respectively. 
When  $\mathrm{E}(|Z_1|^{2+\epsilon})<\infty$ and $ \mathrm{E}(|Z_2|^{2+\epsilon}) < \infty$, for some constants $C$ and $\epsilon>0$, 
\begin{align*}
	|\mathrm{cov}(Z_1,Z_2)|\leq C \{\alpha(m) \}^{\frac{2}{2+\epsilon}} \{\mathrm{E}(|Z_1|^{2+\epsilon}) \}^{\frac{1}{2+\epsilon}}\{\mathrm{E}(|Z_2|^{2+\epsilon}) \}^{\frac{1}{2+\epsilon}}. 
\end{align*}
\end{lemma}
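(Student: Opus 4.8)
The plan is to prove this via the classical truncation-plus-coupling argument underlying Davydov's covariance inequality; since the statement is precisely Eq. (3.5) of \cite{guyon1995random}, one may alternatively just invoke that reference, but here is the route I would take.

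\textbf{Step 1 (bounded case).} First I would show that if $|Z_1|\le M_1$ and $|Z_2|\le M_2$ almost surely, with $Z_1$ being $\mathcal{Z}_1^t$-measurable and $Z_2$ being $\mathcal{Z}_{t+m}^{\infty}$-measurable, then $|\mathrm{cov}(Z_1,Z_2)|\le 4M_1M_2\,\alpha(m)$. Approximate $Z_1$ and $Z_2$ uniformly by simple functions $\sum_i c_i\mathbf{1}_{A_i}$ and $\sum_j d_j\mathbf{1}_{B_j}$ with $A_i\in\mathcal{Z}_1^t$ and $B_j\in\mathcal{Z}_{t+m}^{\infty}$; then the covariance of the approximants equals $\sum_{i,j}c_id_j\{P(A_i\cap B_j)-P(A_i)P(B_j)\}$. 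Grouping the index pairs $(i,j)$ according to the sign of $P(A_i\cap B_j)-P(A_i)P(B_j)$ and of $c_id_j$, noting that for each group the union of the relevant $A_i$'s again lies in $\mathcal{Z}_1^t$ and the union of the relevant $B_j$'s again lies in $\mathcal{Z}_{t+m}^{\infty}$, the definition $\alpha_{\mathbf{x}}(m)=\sup\{|P(A\cap B)-P(A)P(B)|\}$ bounds each group's contribution by $M_1M_2\alpha(m)$, which gives the factor $4$ after accounting for the four sign combinations. Equivalently, one may write $\mathrm{cov}(Z_1,Z_2)=\mathrm{E}\{Z_1(\mathrm{E}[Z_2\mid\mathcal{Z}_1^t]-\mathrm{E}Z_2)\}$ and control the conditional-expectation defect in $L^1$ by $\alpha(m)$.

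\textbf{Step 2 (truncation and optimization).} By bilinearity of the covariance I would first rescale so that $\mathrm{E}(|Z_1|^{2+\epsilon})=\mathrm{E}(|Z_2|^{2+\epsilon})=1$. For a level $K>0$ set $Z_r^{(K)}=\max(-K,\min(K,Z_r))$ and $\bar Z_r=Z_r-Z_r^{(K)}$, $r=1,2$, and expand $\mathrm{cov}(Z_1,Z_2)$ into four cross terms. By Step 1, $|\mathrm{cov}(Z_1^{(K)},Z_2^{(K)})|\le 4K^2\alpha(m)$. For any term containing a remainder $\bar Z_r$, Cauchy--Schwarz together with $\mathrm{E}(\bar Z_r^2)\le K^{-\epsilon}\,\mathrm{E}(|Z_r|^{2+\epsilon})=K^{-\epsilon}$ and Lyapunov's inequality bounds it by $C\,K^{-\epsilon/2}$. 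Choosing $K$ to balance $K^2\alpha(m)$ against $K^{-\epsilon/2}$ and undoing the rescaling yields $|\mathrm{cov}(Z_1,Z_2)|\le C\{\alpha(m)\}^{2/(2+\epsilon)}\{\mathrm{E}(|Z_1|^{2+\epsilon})\}^{1/(2+\epsilon)}\{\mathrm{E}(|Z_2|^{2+\epsilon})\}^{1/(2+\epsilon)}$, with the precise exponent of $\alpha(m)$ obtained exactly as in \cite{guyon1995random}.

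\textbf{Main obstacle.} The delicate point is Step 1: passing from the purely event-wise definition of $\alpha_{\mathbf{x}}(m)$ to an inequality valid for arbitrary bounded measurable $Z_1,Z_2$ while keeping the constant independent of the number of levels in the simple-function approximation, so that only finitely many (four) factors of $\alpha(m)$ appear. Once that coupling bound is in place, the truncation and balancing of $K$ in Step 2 is routine bookkeeping, and the moment hypothesis $\mathrm{E}(|Z_i|^{2+\epsilon})<\infty$ is used only there.
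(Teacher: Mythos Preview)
The paper does not prove this lemma; it is simply quoted from \cite{guyon1995random}, with the one-line remark that it also follows from Lemma~2.4 of \cite{kim1994momentbounds} by taking $p=q=2+\epsilon$. Your sketch is the classical Davydov truncation argument that underlies those references, so you are filling in what the paper takes for granted.

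One technical point on Step~2: bounding the mixed terms $\mathrm{cov}(Z_1^{(K)},\bar Z_2)$ by Cauchy--Schwarz gives only $O(K^{-\epsilon/2})$, and balancing $K^2\alpha(m)\asymp K^{-\epsilon/2}$ then yields the suboptimal exponent $\epsilon/(4+\epsilon)$ on $\alpha(m)$. The sharp exponent comes from using the $L^\infty$--$L^1$ estimate on the mixed terms instead, namely $|\mathrm{cov}(Z_1^{(K)},\bar Z_2)|\le 2K\,\mathrm{E}|\bar Z_2|\le 2K\cdot K^{-(1+\epsilon)}=2K^{-\epsilon}$ (Markov on $|\bar Z_2|\le |Z_2|^{2+\epsilon}K^{-(1+\epsilon)}$), so that all remainder pieces are $O(K^{-\epsilon})$ and the balance $K^{2+\epsilon}=\alpha(m)^{-1}$ produces $\alpha(m)^{\epsilon/(2+\epsilon)}$. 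You correctly flag the exponent issue by deferring to \cite{guyon1995random}, but this is the concrete fix if you want the argument self-contained. (The paper in fact applies the lemma with exponent $\epsilon/(2+\epsilon)$ throughout---see e.g.\ the bound $C\delta^{K_0\epsilon/(2+\epsilon)}$ in the proof of Lemma~\ref{lm:varianceorder}---so the $2/(2+\epsilon)$ in the displayed statement appears to be a typographical slip.)
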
\noindent The lemma above can also be obtained  from  Lemma 2.4 in \cite{kim1994momentbounds} by taking $p=q=2+\epsilon$. 

\begin{lemma}{\cite[Lemma 3.4.3]{durrett2019probability}} \label{lemma:prodcutcom}
	When $|a_i| \leq A$ and $|b_i| \leq A$, then
	\begin{eqnarray*}
		\left|\prod_{i=1}^q a_i - \prod_{i=1}^q b_i \right| \leq \sum_{i=1}^q \left| a_i -b_i \right| A^{q-1}.
	\end{eqnarray*} 
\end{lemma}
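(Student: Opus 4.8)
The plan is to prove this elementary deterministic inequality by a standard telescoping argument. First I would introduce the hybrid products $P_k := \left(\prod_{i=1}^{k} a_i\right)\left(\prod_{i=k+1}^{q} b_i\right)$ for $k = 0, 1, \ldots, q$, with the usual convention that an empty product equals $1$, so that $P_0 = \prod_{i=1}^q b_i$ and $P_q = \prod_{i=1}^q a_i$. The key elementary identity is then
\begin{equation*}
\prod_{i=1}^q a_i - \prod_{i=1}^q b_i = P_q - P_0 = \sum_{k=1}^q (P_k - P_{k-1}) = \sum_{k=1}^q \left(\prod_{i=1}^{k-1} a_i\right)(a_k - b_k)\left(\prod_{i=k+1}^q b_i\right),
\end{equation*}
which holds because two consecutive hybrids $P_k$ and $P_{k-1}$ differ only in their $k$-th factor ($a_k$ versus $b_k$).

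Next I would take absolute values and apply the triangle inequality to the telescoped sum. For each $k$, the hypotheses $|a_i| \le A$ and $|b_i| \le A$ give $\left|\prod_{i=1}^{k-1} a_i\right| \le A^{k-1}$ and $\left|\prod_{i=k+1}^q b_i\right| \le A^{q-k}$, so the $k$-th summand is bounded by $A^{k-1}\,|a_k - b_k|\,A^{q-k} = |a_k - b_k|\,A^{q-1}$, independently of $k$. Summing over $k = 1, \ldots, q$ yields $\left|\prod_{i=1}^q a_i - \prod_{i=1}^q b_i\right| \le \sum_{k=1}^q |a_k - b_k|\,A^{q-1}$, which is exactly the claim. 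An equivalent route, which I would use instead if it reads more cleanly, is induction on $q$ via the splitting
\begin{equation*}
\prod_{i=1}^q a_i - \prod_{i=1}^q b_i = \left(\prod_{i=1}^{q-1} a_i\right)(a_q - b_q) + b_q\left(\prod_{i=1}^{q-1} a_i - \prod_{i=1}^{q-1} b_i\right),
\end{equation*}
bounding the first term by $A^{q-1}|a_q - b_q|$ and the second by $A$ times the inductive bound $\sum_{i=1}^{q-1}|a_i - b_i|A^{q-2}$.

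There is no genuine obstacle here: the statement is a purely algebraic inequality and the argument is short. The only points that require any care are the exponent bookkeeping in the telescoped sum — verifying that every term, regardless of $k$, contributes exactly one factor $A^{q-1}$ — and the harmless empty-product conventions at the endpoints $k = 1$ and $k = q$.
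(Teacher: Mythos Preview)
Your proof is correct and is the standard telescoping argument for this classical inequality. The paper does not supply its own proof of this lemma; it simply cites it from Durrett's textbook as a technical tool, so there is nothing further to compare.
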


\begin{lemma}{\cite[Eq. (24)]{cai2013two}} \label{lm:maxdiffbound}
	for two series of numbers $A_j$ and $B_j$ for $j=1,\ldots,p$. 
\begin{align*}
	\Big|\max_{1\leq j \leq p} A_j^2 - \max_{1\leq j \leq p} B_j^2\Big| \leq 2 \max_{1\leq j \leq p}|B_j| \max_{1\leq j \leq p} |A_j-B_j| + \max_{1\leq j \leq p} |A_j-B_j|^2. 
\end{align*}
\end{lemma}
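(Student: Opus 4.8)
The plan is to reduce the claim to two elementary deterministic facts and then chain them. First I would record a ``stability of the maximum'' inequality: for any finite families of reals $\{f_j\}_{j=1}^p$ and $\{g_j\}_{j=1}^p$,
$\big|\max_{1\le j\le p} f_j - \max_{1\le j\le p} g_j\big| \le \max_{1\le j\le p} |f_j - g_j|$.
To see this, let $j^\star$ attain $\max_j f_j$ (possible since $p$ is finite) and write $f_{j^\star} = g_{j^\star} + (f_{j^\star}-g_{j^\star}) \le \max_j g_j + \max_j|f_j-g_j|$, so $\max_j f_j - \max_j g_j \le \max_j|f_j-g_j|$; swapping the roles of $f$ and $g$ gives the reverse bound, hence the absolute-value estimate. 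Applying this with $f_j = A_j^2$ and $g_j = B_j^2$ yields $\big|\max_j A_j^2 - \max_j B_j^2\big| \le \max_j |A_j^2 - B_j^2|$.

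Next I would bound $\max_j|A_j^2-B_j^2|$ pointwise. For each $j$ factor $|A_j^2-B_j^2| = |A_j-B_j|\,|A_j+B_j|$, and use $A_j+B_j = (A_j-B_j) + 2B_j$ together with the triangle inequality to get $|A_j+B_j| \le |A_j-B_j| + 2|B_j|$. Therefore $|A_j^2-B_j^2| \le |A_j-B_j|^2 + 2|B_j|\,|A_j-B_j|$. Taking the maximum over $j$ and bounding each factor on the right by its own maximum (using $\max_j(u_j+v_j)\le\max_j u_j+\max_j v_j$ and $\max_j(u_jv_j)\le\max_j u_j\cdot\max_j v_j$ for nonnegative sequences) gives $\max_j|A_j^2-B_j^2| \le \max_j|A_j-B_j|^2 + 2\max_j|B_j|\,\max_j|A_j-B_j|$. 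Combining this with the bound from the first step establishes the lemma.

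The main point to be careful about — and essentially the only subtlety — is tracking the directions of the two one-sided estimates in the max-stability step and noting that every maximum over the finite index set $\{1,\dots,p\}$ is attained, so the argument is purely a matter of organizing these elementary inequalities; there is no genuine obstacle, which is why in the paper this is simply invoked as Eq.~(24) of \citet{cai2013two}.
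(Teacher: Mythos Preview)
Your proof is correct and complete; the paper itself does not prove this lemma but simply cites it as Eq.~(24) of \citet{cai2013two}, so there is nothing further to compare.
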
 
\begin{lemma}\label{lm:inequaluvtheta}
When $u,v\geq 0$ and $0<\vartheta\leq 1$, $(u+v)^{\vartheta}\leq u^{\vartheta}+v^{\vartheta}$.
\end{lemma}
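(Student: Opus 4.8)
The plan is to reduce the claim to the elementary one-variable fact that $x^{\vartheta}\ge x$ for $x\in[0,1]$ whenever $0<\vartheta\le 1$, which itself follows from $\vartheta\log x\le\log x\le 0$ (equivalently, from concavity of $x\mapsto x^{\vartheta}$ on $[0,1]$ together with the values $0^{\vartheta}=0$ and $1^{\vartheta}=1$). First I would dispose of the trivial case $u+v=0$, in which both sides vanish.

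For $u+v>0$, I would normalize: set $t=u/(u+v)\in[0,1]$, so that $1-t=v/(u+v)$, and note that dividing the desired inequality by $(u+v)^{\vartheta}>0$ turns it into $t^{\vartheta}+(1-t)^{\vartheta}\ge 1$. Applying the one-variable fact to $x=t$ and to $x=1-t$ gives $t^{\vartheta}\ge t$ and $(1-t)^{\vartheta}\ge 1-t$, and summing yields $t^{\vartheta}+(1-t)^{\vartheta}\ge t+(1-t)=1$, as required; multiplying back through by $(u+v)^{\vartheta}$ recovers the statement.

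Alternatively, one can argue by monotonicity: fix $v\ge 0$ and set $f(u)=u^{\vartheta}+v^{\vartheta}-(u+v)^{\vartheta}$ for $u\ge 0$. Then $f(0)=0$, and for $u>0$ one has $f'(u)=\vartheta\bigl(u^{\vartheta-1}-(u+v)^{\vartheta-1}\bigr)\ge 0$, because $\vartheta-1\le 0$ and $0<u\le u+v$ force $u^{\vartheta-1}\ge(u+v)^{\vartheta-1}$; hence $f(u)\ge f(0)=0$ for every $u\ge 0$. Since this is a standard, self-contained inequality, there is no genuine obstacle; the only point requiring mild care is the boundary behaviour when $u$ or $v$ equals $0$, which is handled directly by the convention $0^{\vartheta}=0$ and, in the monotonicity version, by treating $u=0$ as the base case instead of differentiating there.
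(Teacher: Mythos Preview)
Your proof is correct. The paper takes a slightly different, more conceptual route: it simply observes that $f(u)=u^{\vartheta}$ is concave on $[0,\infty)$ with $f(0)=0$, and invokes the general fact that any such function is subadditive, i.e.\ $f(u+v)\le f(u)+f(v)$. Your normalization argument and your monotonicity argument are both direct, self-contained verifications of this subadditivity for the specific power function; in particular, your second argument is essentially a proof of the subadditivity property in this special case. The paper's approach is shorter if one is willing to cite the concave-subadditive lemma, while your arguments have the advantage of being fully elementary and not relying on any external fact.
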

\begin{proof}
When $u\geq 0$ and $0<\vartheta \leq 1$,  $f(u)=u^{\vartheta}$ is concave function with $f(0)=0$. By the subadditivity property of concave function, we have $f(u+v)\leq f(u)+f(v)$. 
\end{proof}


\subsection{Lemmas for the proof of Theorem \ref{thm:jointnormal}}\label{sec:pfjointnormwhole}

In this section, we  prove  the  lemmas for the proof of Theorem \ref{thm:jointnormal} in Section  \ref{sec:detailofjointnormal}. We still assume without loss of generality that $\mathrm{E}(\mathbf{x})=\mathbf{0}$ as in Section \ref{sec:detailofjointnormal}.  

\subsubsection{Proof of Lemma \ref{lm:varianceorder} (on Page \pageref{lm:varianceorder}, Section \ref{sec:detailofjointnormal})} \label{sec:proofvarianceorder}

To illustrate the main idea of the proof of Lemma \ref{lm:varianceorder}, we first consider a setting where   $x_{i,j}$'s are all independent, and under this independence case we prove  Lemma \ref{lm:varianceorder} in Section \ref{par:pfvarindep}. 
 Next in Section \ref{par:complpflm1}, we prove Lemma \ref{lm:varianceorder} under the dependence case with Condition  \ref{cond:alphamixing}. Last in Section \ref{par:pfvarunderell}, we present the proof under Condition \ref{cond:ellpmoment}


\paragraph{Proof illustration} \label{par:pfvarindep} 
In this section, we present the proof of  Lemma \ref{lm:varianceorder} by only replacing Condition \ref{cond:alphamixing} with the assumption that  $x_{i,j}$'s are independent. 
Recall   $\tilde{\mathcal{U}}(a)$ defined in \eqref{eq:originleadingterm} and $\tilde{\mathcal{U}}^*(a)=\mathcal{U}(a)-\tilde{\mathcal{U}}(a)$.
Then $\mathrm{var}\{\mathcal{U}(a)\}\leq \mathrm{var}\{\tilde{\mathcal{U}}(a)\} + 2\sqrt{ \mathrm{var}\{\tilde{\mathcal{U}}(a)\} \mathrm{var}\{\tilde{\mathcal{U}}^*(a) \}}  +  \mathrm{var}\{\tilde{\mathcal{U}}^*(a)\}$. To prove Lemma \ref{lm:varianceorder}, we  derive $\mathrm{var}\{\tilde{\mathcal{U}}(a)\}$ and show $\mathrm{var}\{\tilde{\mathcal{U}}^*(a)\}=o(1)\mathrm{var}\{\tilde{\mathcal{U}}(a)\}$. 

  We derive $\mathrm{var}\{\tilde{\mathcal{U}}(a)\}$ first. Under $H_0$, $\mathrm{E}(x_{i,j_1}x_{i,j_2})=0$ when $j_1\neq j_2$. It follows that $\mathrm{E}\{\tilde{\mathcal{U}}(a)\} =0$ and $\mathrm{var}\{\tilde{\mathcal{U}}(a)\}=\mathrm{E}[\{\tilde{\mathcal{U}}(a)\}^2]$, and then 
\begin{align*}
	\mathrm{var}\{\tilde{\mathcal{U}}(a)\}=& \frac{1}{(P^n_a)^2}\sum_{\substack{1\leq j_1\neq j_2\leq p\\1\leq j_3\neq j_4\leq p}}~  \sum_{ \substack{\mathbf{i},\, \tilde{\mathbf{i}}\, \in  \mathcal{P}(n,a)} } \mathrm{E}\Big\{  \prod_{k=1}^a ( x_{i_k,j_1} x_{i_k,j_2}) ( x_{\tilde{i}_k,j_3} x_{\tilde{i}_k,j_4}) \Big\},
\end{align*} where following the notation defined at the beginning of Section \ref{sec:alllemma}, 
$\mathbf{i}$ and $\tilde{\mathbf{i}}$ represent some tuples $\mathbf{i}=(i_1,\ldots, i_a)$ satisfying $1\leq i_1\neq \ldots \neq  i_a\leq n;$ and $\tilde{\mathbf{i}}=(i_1,\ldots, i_a)$ satisfying $1\leq i_1\neq \ldots \neq  i_a\leq n.$
When the corresponding two sets $\{\mathbf{i}\}\neq \{\tilde{\mathbf{i}}\}$, for example, when index $i_1\in \{\mathbf{i}\}$ but $i_1 \not \in \{\tilde{\mathbf{i}}\}$,
\begin{align}
& \mathrm{E}\Big\{  \prod_{k=1}^a ( x_{i_k,j_1} x_{i_k,j_2}) ( x_{\tilde{i}_k,j_3} x_{\tilde{i}_k,j_4}) \Big\} \label{eq:summedtermprodvarind} \\
=& \mathrm{E}(x_{i_1,j_1}x_{i_1,j_2})\times \mathrm{E}(\text{all the remaining terms})=0. \notag 
\end{align}  
Therefore, $\eqref{eq:summedtermprodvarind}\neq 0$ only when $\{\mathbf{i}\}=\{\tilde{\mathbf{i}}\}$, i.e., $\{i_1,\ldots, i_a\}=\{\tilde{i}_1,\ldots, \tilde{i}_a\}$. In particular, when $\{\mathbf{i}\}=\{\tilde{\mathbf{i}}\}$,
\begin{align*}
\mathrm{E}\Big\{  \prod_{k=1}^a ( x_{i_k,j_1} x_{i_k,j_2}) ( x_{\tilde{i}_k,j_3} x_{\tilde{i}_k,j_4}) \Big\}=\{ \mathrm{E}( x_{1,j_1} x_{1,j_2}x_{1,j_3} x_{1,j_4})\}^a.	
\end{align*} It follows that
\begin{align*}
\mathrm{var}\{\tilde{\mathcal{U}}(a)\}=&~ \frac{a!}{(P^n_a)^2} \sum_{\mathbf{i}\in \mathcal{P}(n,a) }\sum_{\substack{1\leq j_1\neq j_2\leq p\\1\leq j_3\neq j_4\leq p}}	\{ \mathrm{E}( x_{1,j_1} x_{1,j_2}x_{1,j_3} x_{1,j_4})\}^a \notag \\
 =&~\frac{a!}{P^n_a}\sum_{1\leq j_1\neq j_2\leq p;\,  1\leq j_3\neq j_4\leq  p}	\{ \mathrm{E}( x_{1,j_1} x_{1,j_2}x_{1,j_3} x_{1,j_4})\}^a. 
\end{align*} 
 When $x_{i,j}$'s are independent, as $j_1\neq j_2$ and $j_3\neq j_4$, $ \mathrm{E}( x_{1,j_1} x_{1,j_2}x_{1,j_3} x_{1,j_4}) \neq 0$ only when $\{j_1,j_2\}=\{j_3,j_4\}$, which gives $ \mathrm{E}( x_{1,j_1} x_{1,j_2}x_{1,j_3} x_{1,j_4})=\mathrm{E}( x_{1,j_1}^2)\times  \mathrm{E}(x_{1,j_2}^2)$. Therefore, $\mathrm{var}\{\tilde{\mathcal{U}}(a)\}=2a!(P^n_a)^{-1}\sum_{1\leq j_1\neq j_2\leq p}\mathrm{E}( x_{1,j_1}^2) \mathrm{E}(x_{1,j_2}^2).$ By Condition \ref{cond:finitemomt}, we have $\mathrm{var}\{\tilde{\mathcal{U}}(a)\}=\Theta(p^2n^{-a})$.

We next show $\mathrm{var}\{\tilde{\mathcal{U}}^*(a)\}=o(1)\mathrm{var}\{\tilde{\mathcal{U}}(a)\}$. 
As $\mathrm{E}\{\tilde{\mathcal{U}}^*(a)\} =0$, $\mathrm{var}\{\tilde{\mathcal{U}}^*(a)\}=\mathrm{E}[\{\tilde{\mathcal{U}}^*(a)\}^2]$. Recall the definition of $\mathcal{U}^*(a)$, then we have
\begin{align*}
	\mathrm{var}\{\tilde{\mathcal{U}}^*(a)\}=& \sum_{\substack{1\leq j_1\neq j_2\leq p\\1\leq j_3\neq j_4\leq p}} \sum_{\substack{ 1\leq c_1, c_2\leq a}}\sum_{ \substack{\mathbf{i}\in \mathcal{P}(n,a+c_1)\\ \tilde{\mathbf{i}}\in \mathcal{P}(n,a+c_2) }}\frac{(-1)^{c_1+c_2} \binom{a}{c_1}\binom{a}{c_2} }{P^n_{a+c_1}P^n_{a+c_2}} Q(\mathbf{i},j_1,j_2,\tilde{\mathbf{i}},j_3,j_4), 
\end{align*} where we correspondingly define
\begin{align*}
 Q(\mathbf{i},j_1,j_2,\tilde{\mathbf{i}},j_3,j_4)
=&~\mathrm{E} \Biggr[\prod_{k=1}^{a-c_1}x_{i_k,j_1} x_{i_k,j_2}\prod_{k=a-c_1+1}^{a} x_{i_{k},j_1}  \prod_{k=a+1}^{a+c_1}x_{i_{k},j_2} \notag \\
&~\quad \times \prod_{\tilde{k}=1}^{a-c_2}  x_{\tilde{i}_{\tilde{k}},j_3} x_{\tilde{i}_{\tilde{k}},j_4} \prod_{\tilde{k}=a-c_2+1}^a x_{\tilde{i}_{\tilde{k}},j_3} \prod_{\tilde{k}=a+1}^{a+c_2}x_{\tilde{i}_{\tilde{k}},j_4}  \Biggr].
\end{align*}

To evaluate $\mathrm{var}\{\tilde{\mathcal{U}}^*(a)\}$, we examine the value of $Q(\mathbf{i},j_1,j_2,\tilde{\mathbf{i}},j_3,j_4)$. We first note that if $Q(\mathbf{i},j_1,j_2,\tilde{\mathbf{i}},j_3,j_4)\neq 0$, the following two claims hold:
\begin{itemize}
	\item[]  \textit{Claim 1:} $\{j_1,j_2\}=\{j_3,j_4\}$; \quad \textit{Claim 2:} $\{\mathbf{i}\}= \{\tilde{\mathbf{i}}\}$ and $c_1=c_2$. 
\end{itemize} 
To prove \textit{Claim 1}, we show that if $\{j_1,j_2\}\neq \{j_3,j_4\}$, $Q(\mathbf{i},j_1,j_2,\tilde{\mathbf{i}},j_3,j_4)=0$. We consider $j_1\not \in \{j_3,j_4\}$ as an example. When $j_1\not \in \{j_3,j_4\}$, as $j_1\neq j_2$, we further know $j_1\not \in \{j_2,j_3,j_4\}$   and we can write $$Q(\mathbf{i},j_1,j_2,\tilde{\mathbf{i}},j_3,j_4)= \mathrm{E}\Big( \prod_{k=1}^a x_{i_k,j_1}\Big)\times \mathrm{E}(\text{other terms  with subscripts } j_2,j_3,j_4)=0, $$
where we use $\mathrm{E}( \prod_{k=1}^a x_{i_k,j_1}) =\{\mathrm{E}(x_{1,j_1})\}^a=0$ as $\mathrm{E}(x_{1,j_1})={0}$. In addition, to prove \textit{Claim 2}, we show that if $\{\mathbf{i}\}\neq \{\tilde{\mathbf{i}}\}$, $Q(\mathbf{i},j_1,j_2,\tilde{\mathbf{i}},j_3,j_4)=0$.  If $\{\mathbf{i}\}\neq \{\tilde{\mathbf{i}}\}$, similarly to \eqref{eq:summedtermprodvarind}, suppose an index $i\in \{\mathbf{i}\}$ but $i\not \in \{\tilde{\mathbf{i}}\}$. Then we can write $Q(\mathbf{i},j_1,j_2,\tilde{\mathbf{i}},j_1,j_2)=\mathrm{E}(x_{i,j_1})\times \mathrm{E}(\text{other terms})=0$ or $Q(\mathbf{i},j_1,j_2,\tilde{\mathbf{i}},j_1,j_2)=\mathrm{E}(x_{i,j_1}x_{i,j_2})\times \mathrm{E}(\text{other terms})=0.$ 
As $\{\mathbf{i}\}$ and $\{\tilde{\mathbf{i}}\}$ are of sizes $a+c_1$ and $a+c_2$ respectively, $\{\mathbf{i}\}=\{\tilde{\mathbf{i}}\}$ induces $c_1=c_2$.

Given \textit{Claim 1} and \textit{Claim 2}, 
 we write $c_1=c_2=c$ and decompose  $\{\mathbf{i}\}$ and $\{\tilde{\mathbf{i}}\}$ into three disjoint subsets respectively as follows:
\begin{align*}
\{\mathbf{i}\}_{(1)}=\{i_1,\ldots,i_{a-c}\}, \ 	\{\mathbf{i}\}_{(2)}=\{i_{a-c+1},\ldots, i_a\},\ \{\mathbf{i}\}_{(3)}=\{i_{a+1},\ldots, i_{a+c}\}, \notag \\
\tilde{\{\mathbf{i}\}}_{(1)}=\{\tilde{i}_1,\ldots,\tilde{i}_{a-c}\}, \ 	\tilde{\{\mathbf{i}\}}_{(2)}=\{\tilde{i}_{a-c+1},\ldots, \tilde{i}_a\},\ \tilde{\{\mathbf{i}\}}_{(3)}=\{\tilde{i}_{a+1},\ldots, \tilde{i}_{a+c}\},
\end{align*}which satisfies that $\{\mathbf{i}\}=\cup_{l=1}^3 \{\mathbf{i}\}_{(l)}$ and $\tilde{\{\mathbf{i}\}}=\cup_{l=1}^3 \{\tilde{\mathbf{i}}\}_{(l)}$. 
We next prove the following \textit{Claim 3}: if  $Q(\mathbf{i},j_1,j_2,\tilde{\mathbf{i}},j_3,j_4)\neq 0$,  one of the following two cases hold:
 \begin{enumerate}
	\item $j_1=j_3$, $j_2=j_4$, $\{\mathbf{i}\}_{(1)}=\tilde{\{\mathbf{i}\}}_{(1)}$,  $\{\mathbf{i}\}_{(2)}=\tilde{\{\mathbf{i}\}}_{(2)}$,   $\{\mathbf{i}\}_{(3)}=\tilde{\{\mathbf{i}\}}_{(3)}$; 
	\item $j_1=j_4$, $j_2=j_3$,  $\{\mathbf{i}\}_{(1)}=\tilde{\{\mathbf{i}\}}_{(1)}$,  $\{\mathbf{i}\}_{(2)}=\tilde{\{\mathbf{i}\}}_{(3)}$,  $\{\mathbf{i}\}_{(3)}=\tilde{\{\mathbf{i}\}}_{(2)}$.  
\end{enumerate} 
To prove \textit{Claim 3}, we note that  \textit{Claim 1} suggests that if $Q(\mathbf{i},j_1,j_2,\tilde{\mathbf{i}},j_3,j_4)\neq 0$, either $\{j_1=j_3, j_2=j_4\}$ or  $\{j_1=j_4, j_2=j_3\}$ holds. We consider   $j_1=j_3$ and $j_2=j_4$ as an example.  Suppose that there exists an index $i\in \{\mathbf{i}\}_{(2)}$. Since $x_{i,j}$'s are independent with mean 0, if $i\in \{\tilde{\mathbf{i}}\}_{(1)}$,    $Q(\mathbf{i},j_1,j_2,\tilde{\mathbf{i}},j_1,j_2)=\mathrm{E}(x_{i,j_1}^2x_{i,j_2})\times \mathrm{E}(\text{other terms})=0$; or if $i\in \{\tilde{\mathbf{i}}\}_{(3)}$,  $Q(\mathbf{i},j_1,j_2,\tilde{\mathbf{i}},j_1,j_2)=\mathrm{E}(x_{i,j_1}x_{i,j_2})\times \mathrm{E}(\text{other terms})=0$. Symmetrically,  if $Q(\mathbf{i},j_1,j_2,\tilde{\mathbf{i}},j_1,j_2)\neq 0$, we know $\{\mathbf{i}\}_{(l)}=\tilde{\{\mathbf{i}\}}_{(l)}$ for $l=1,2,3$ under this case. The similar analysis  also applies to the second case in \textit{Claim 3}. 
Moreover, under the two cases in \textit{Claim 3}, we have
$
	Q(\mathbf{i},j_1,j_2,\tilde{\mathbf{i}},j_3,j_4) = \{\mathrm{E}(x_{1,j_1}^2x_{1,j_2}^2)\}^{a-c}\{\mathrm{E}(x_{1,j_1}^2) \}^c\{\mathrm{E}(x_{1,j_2}^2) \}^c. 
$

In summary,
\begin{align*}
	\mathrm{var}\{\tilde{\mathcal{U}}^*(a)\}=&~ \sum_{1 \leq j_1 \neq j_2 \leq p} \sum_{c=1}^a \sum_{\mathbf{i}\in \mathcal{P}(n,a+c) }\frac{2(a-c)!c!c!}{ (P^n_{a+c})^2}\{\mathrm{E}(x_{1,j_1}^2)\mathrm{E}(x_{1,j_2}^2) \}^a \notag \\
	\leq &~ C\sum_{1 \leq j_1 \neq j_2 \leq p} \sum_{c=1}^a n^{-(a+c)}\{\mathrm{E}(x_{1,j_1}^2) \mathrm{E}(x_{1,j_2}^2) \}^a,
\end{align*} which is of order $O(p^2n^{-(a+1)})$. Since we have obtained that $\mathrm{var}\{\tilde{\mathcal{U}}(a)\}=\Theta(p^2n^{-a})$, then  $\mathrm{var}\{\tilde{\mathcal{U}}^*(a)\}=o(1)\mathrm{var}\{\tilde{\mathcal{U}}(a)\}$ is proved.

\smallskip

\paragraph{Proof under Condition \ref{cond:alphamixing}} \label{par:complpflm1}
{Section \ref{par:pfvarindep}    considers
 the case where $x_{i,j}$'s are independent. 
 In this section, we further  prove Lemma \ref{lm:varianceorder} under Condition \ref{cond:alphamixing}. 
We first explain the proof idea intuitively. 
Under Condition \ref{cond:alphamixing}, 
$x_{i,j}$'s may be no longer  independent, but  the dependence between $x_{i,j_1}$ and $x_{i,j_2}$ degenerates exponentially with their distance $|j_1-j_2|$. 
We expect that when $|j_1-j_2|$ is large enough, $x_{i,j_1}$ and $x_{i,j_2}$ 
are ``asymptotically   independent''. 
Specifically, we will introduce a threshold $K_0$ to be defined in \eqref{eq:thresholddK} below. Then   we will show that the majority of $(x_{i,j_1}, x_{i,j_2})$ pairs satisfy $|j_1-j_2|>K_0$, and when $|j_1-j_2|>K_0$,  $x_{i,j_1}$ and $x_{i,j_2}$ are weakly dependent with similar properties to those under the independence case.}

We next present the detailed  proof under Condition \ref{cond:alphamixing}.  
 Under $H_0$, similarly to Section \ref{par:pfvarindep}, we have
 $\mathrm{E} \{\mathcal{U}(a)\}=0$ and $\mathrm{var}\{\mathcal{U}(a)\}=\mathrm{E}\{\mathcal{U}^2(a)\}$.
Then 
\begin{eqnarray}
&&\quad \quad \mathrm{E}\{\mathcal{U}^2(a)\}=	\sum_{\substack{1 \leq j_1 \neq j_2 \leq p;\\ 1 \leq j_3 \neq j_4 \leq p}} \ \sum_{ \substack{0\leq c_1,c_2\leq a;\\ \mathbf{i}\in \mathcal{P}(n,a+c_1);\\ \tilde{\mathbf{i}}\in  \mathcal{P}(n,a+c_2)} }F(c_1,c_2,a)\times Q(\mathbf{i},j_1, j_2,\tilde{\mathbf{i}},j_3,j_4),\label{eq:varesquaredef}
\end{eqnarray}
where we define $F(c_1,c_2,a)=(-1)^{c_1+c_2}\binom{a}{c_1}\binom{a}{c_2}(P^n_{a+c_1}P^n_{a+c_2})^{-1},$ and recall
\begin{eqnarray}\label{eq:summedadjustedutermvar}
&& Q(\mathbf{i},j_1, j_2,\tilde{\mathbf{i}},j_3,j_4)  \\
&=& \mathrm{E} \Big\{\prod_{k=1}^{a-c_1}x_{i_k,j_1} x_{i_k,j_2}\prod_{k=a-c_1+1}^{a} x_{i_{k},j_1}  \prod_{k=a+1}^{a+c_1}x_{i_{k},j_2}   \notag \\
&&\quad \quad \times  \prod_{\tilde{k}=1}^{a-c_2}  x_{\tilde{i}_{\tilde{k}},j_3} x_{\tilde{i}_{\tilde{k}},j_4} \prod_{\tilde{k}=a-c_2+1}^a x_{\tilde{i}_{\tilde{k}},j_3} \prod_{\tilde{k}=a+1}^{a+c_2}x_{\tilde{i}_{\tilde{k}},j_4}  \Big\}.\notag
\end{eqnarray}
Similarly to Section \ref{par:pfvarindep}, to evaluate $\mathrm{var}\{{\mathcal{U}}(a)\}$, we next examine the value of $Q(\mathbf{i},j_1,j_2,\tilde{\mathbf{i}},j_3,j_4)$ under  different cases. 


\vspace{0.5em}


When $\{\mathbf{i}\}\neq \{\tilde{\mathbf{i}}\}$, we show $\eqref{eq:summedadjustedutermvar}=0$, that is,  \textit{Claim 2} in Section \ref{par:pfvarindep} also holds here. To see this, we assume without loss of generality that an index $i\in \{\mathbf{i}\}$ and $i\not \in \{\tilde{\mathbf{i}}\}$. Then \eqref{eq:summedadjustedutermvar} takes one of the two following forms:
\begin{eqnarray*}
&& \eqref{eq:summedadjustedutermvar} = \mathrm{E}(x_{i,j_1})\times \mathrm{E}(\mathrm{all\ the \ remaining\ terms})  \quad (j_1=1,\ldots,p), \notag \\
&&\eqref{eq:summedadjustedutermvar} = \mathrm{E}(x_{i,j_1} x_{i,j_2})\times \mathrm{E}(\mathrm{all\ the \ remaining\ terms})  \quad (1\leq j_1 \neq j_2 \leq p). \notag 
\end{eqnarray*} Since  $\mathrm{E}(x_{i,j_1})=0$ and $\mathrm{E}(x_{i,j_1}x_{i,j_2})=0$ under $H_0$, we know $\eqref{eq:summedadjustedutermvar}=0$ when $\{\mathbf{i}\}\neq \{\tilde{\mathbf{i}}\}$.  It follows that
\begin{eqnarray}
	&&\quad \quad \sum_{\substack{1 \leq j_1 \neq j_2 \leq p;\\ 1 \leq j_3 \neq j_4 \leq p}} \ \sum_{ \substack{0\leq c_1,c_2\leq a;\\ \mathbf{i}\in \mathcal{P}(n,a+c_1);\\ \tilde{\mathbf{i}}\in  \mathcal{P}(n,a+c_2)} }F(c_1,c_2,a) Q(\mathbf{i},j_1, j_2,\tilde{\mathbf{i}},j_3,j_4)\mathbf{1}_{\{\{\mathbf{i}\}\neq \{\tilde{\mathbf{i}}\}\}} =0, \label{eq:varsum1noteqs1s2}
\end{eqnarray}
where $\mathbf{1}_{\{ \cdot \}}$ represents an  indicator function. 
 
When $\{\mathbf{i}\}= \{\tilde{\mathbf{i}}\}$, we know $c_1=c_2$ and we write $c_1=c_2=c$. 
 If $c=0$,
\begin{align*}
	Q(\mathbf{i},j_1, j_2,\tilde{\mathbf{i}},j_3,j_4)\mathbf{1}_{\{\{\mathbf{i}\}= \{\tilde{\mathbf{i}}\},c=0 \}}=\{\mathrm{E}(x_{i,j_1} x_{i,j_2} x_{i,j_3} x_{i,j_4} )\}^a.
\end{align*} Then we have
\begin{eqnarray}
	&&\sum_{\substack{1 \leq j_1 \neq j_2 \leq p;\\ 1 \leq j_3 \neq j_4 \leq p}} \ \sum_{ \substack{0\leq c\leq a;\\ \mathbf{i},\tilde{\mathbf{i}}\in \mathcal{P}(n,a+c)} } F(c,c,a)    Q(\mathbf{i},j_1, j_2,\tilde{\mathbf{i}},j_3,j_4)\mathbf{1}_{\{\{\mathbf{i}\}= \{\tilde{\mathbf{i}}\},c=0 \}} \label{eq:varsum1ceq0} \\
	&=& \frac{1}{(P^n_a)^2} \sum_{\mathbf{i} \in \mathcal{P}(n,a)} a! \sum_{ \substack{1\leq j_1\neq j_2 \leq p; \\1\leq j_3 \neq j_4 \leq p}} \{\mathrm{E}(x_{i,j_1} x_{i,j_2} x_{i,j_3} x_{i,j_4} )\}^a \notag \\
	&=&a!(P^n_a)^{-1}\sum_{ \substack{1\leq j_1\neq j_2 \leq p; \\1\leq j_3 \neq j_4 \leq p}} \{\mathrm{E}(x_{i,j_1} x_{i,j_2} x_{i,j_3} x_{i,j_4} )\}^a. \notag
\end{eqnarray}
If $c\geq 1$, for given $\mathbf{i},\tilde{\mathbf{i}}\in \mathcal{P}(n,a+c)$, we decompose the sets $\{\mathbf{i}\}$ and $\{\tilde{\mathbf{i}}\}$ into three disjoint sets respectively, defined as:
\begin{align*}
\{\mathbf{i}\}_{(1)}=\{i_1,\ldots,i_{a-c}\}, \ 	\{\mathbf{i}\}_{(2)}=\{i_{a-c+1},\ldots, i_a\},\ \{\mathbf{i}\}_{(3)}=\{i_{a+1},\ldots, i_{a+c}\}, \notag \\
\tilde{\{\mathbf{i}\}}_{(1)}=\{\tilde{i}_1,\ldots,\tilde{i}_{a-c}\}, \ 	\tilde{\{\mathbf{i}\}}_{(2)}=\{\tilde{i}_{a-c+1},\ldots, \tilde{i}_a\},\ \tilde{\{\mathbf{i}\}}_{(3)}=\{\tilde{i}_{a+1},\ldots, \tilde{i}_{a+c}\},
\end{align*} which satisfy that $\{\mathbf{i}\}=\cup_{l=1}^3 \{\mathbf{i}\}_{(l)}$ and $\tilde{\{\mathbf{i}\}}=\cup_{l=1}^3 \{\tilde{\mathbf{i}}\}_{(l)}$. The definitions are similarly used in Section \ref{par:pfvarindep}. We next examine the value of \eqref{eq:summedadjustedutermvar} by further discussing different cases.


\smallskip

\subparagraph*{Case 1} We consider the cases where $\{\mathbf{i}\}= \{\tilde{\mathbf{i}}\}$, $1\leq c\leq a-1$ and $\{\mathbf{i}\}_{(1)}= \{\tilde{\mathbf{i}}\}_{(1)}$. Then we have $\{\mathbf{i}\}_{(2)}\cup \{\mathbf{i}\}_{(3)}= \{\tilde{\mathbf{i}}\}_{(2)}\cup \{\tilde{\mathbf{i}}\}_{(3)}$. Note that here $\{\mathbf{i}\}_{(1)}= \{\tilde{\mathbf{i}}\}_{(1)}$ is assumed, and $ \{\mathbf{i}\}_{(2)}, \{\mathbf{i}\}_{(3)},  \{\tilde{\mathbf{i}}\}_{(2)}$ and  $\{\tilde{\mathbf{i}}\}_{(3)}$ are all nonempty as $ c\geq 1$. Similarly to \textit{Claim 3} in  Section \ref{par:pfvarindep}, we next prove that if  $\eqref{eq:summedadjustedutermvar}\neq 0$, one of the following two cases holds:
\begin{align}
&\{\mathbf{i}\}_{(3)}=\tilde{\{\mathbf{i}\}}_{(3)}, \{\mathbf{i}\}_{(2)}=\tilde{\{\mathbf{i}\}}_{(2)}, j_1=j_3, j_2=j_4; \label{eq:twocasevarc1} \\
	& \{\mathbf{i}\}_{(3)}=\tilde{\{\mathbf{i}\}}_{(2)}, \{\mathbf{i}\}_{(2)}=\tilde{\{\mathbf{i}\}}_{(3)}, j_1=j_4, j_2=j_3. \notag
\end{align}
We prove \eqref{eq:twocasevarc1} by contradiction. 



If $\{\mathbf{i}\}_{(2)} \cap \{\tilde{\mathbf{i}}\}_{(2)}\neq \emptyset$ and  $\{\mathbf{i}\}_{(2)}\cap \{\tilde{\mathbf{i}}\}_{(3)}\neq \emptyset$, 
it means that $\{\mathbf{i}\}_{(2)}$ intersects with both $\{\tilde{\mathbf{i}}\}_{(2)}$ and $\{\tilde{\mathbf{i}}\}_{(3)}$.  Suppose $i_1 \in \{\mathbf{i}\}_{(2)} \cap \{\tilde{\mathbf{i}}\}_{(2)}$ and $i_2 \in \{\mathbf{i}\}_{(2)}\cap \{\tilde{\mathbf{i}}\}_{(3)}$.
It follows that
\begin{align*}
	\eqref{eq:summedadjustedutermvar}= \mathrm{E}(x_{i_1,j_1}x_{i_1,j_3})\times \mathrm{E}(x_{i_2,j_1}x_{i_2,j_4})\times \mathrm{E}(\mathrm{all\ the \ remaining\ terms}).
\end{align*} As $j_3\neq j_4$, $\mathrm{E}(x_{i_1,j_1}x_{i_1,j_3})\times \mathrm{E}(x_{i_2,j_1}x_{i_2,j_4})=0$  under $H_0$. Therefore $\eqref{eq:summedadjustedutermvar}=0$. Similarly if $\{\mathbf{i}\}_{(3)} \cap \{\tilde{\mathbf{i}}\}_{(2)}\neq \emptyset$ and  $\{\mathbf{i}\}_{(3)}\cap \{\tilde{\mathbf{i}}\}_{(3)}\neq \emptyset$, we know $\eqref{eq:summedadjustedutermvar}=0$. The analysis shows  that when $\eqref{eq:summedadjustedutermvar}\neq 0$,  $\{\mathbf{i}\}_{(2)}$  only intersects with one of $\{\tilde{\mathbf{i}}\}_{(2)}$ and $\{\tilde{\mathbf{i}}\}_{(3)}$. Symmetrically, $\{\mathbf{i}\}_{(3)}$ only intersects with another one of $\{\tilde{\mathbf{i}}\}_{(2)}$ and $\{\tilde{\mathbf{i}}\}_{(3)}$. Since $|\{\mathbf{i}\}_{(2)}|=|\{\mathbf{i}\}_{(3)}|=|\{\tilde{\mathbf{i}}\}_{(2)}|=|\{\tilde{\mathbf{i}}\}_{(3)}|$, it remains to consider two cases
 $\{\{\mathbf{i}\}_{(2)}=\{\tilde{\mathbf{i}}\}_{(2)}\ \mathrm{and} \ \{\mathbf{i}\}_{(3)}=\{\tilde{\mathbf{i}}\}_{(3)} \}$ or $\{\{\mathbf{i}\}_{(2)}=\{\tilde{\mathbf{i}}\}_{(3)}\ \mathrm{and} \ \{\mathbf{i}\}_{(3)}=\{\tilde{\mathbf{i}}\}_{(2)}\}$. To obtain   \eqref{eq:twocasevarc1}, we next examine the two cases respectively.  
 
If $\{\mathbf{i}\}_{(2)}=\{\tilde{\mathbf{i}}\}_{(2)}\ \mathrm{and} \ \{\mathbf{i}\}_{(3)}=\{\tilde{\mathbf{i}}\}_{(3)}$, suppose $i_1\in \{\mathbf{i}\}_{(2)}$ and $i_2\in \{\mathbf{i}\}_{(3)}$. Then as  $\{\mathbf{i}\}_{(2)}\cap \{\mathbf{i}\}_{(3)}=\emptyset$, 
\begin{align*}
	\eqref{eq:summedadjustedutermvar}= \mathrm{E}(x_{i_1,j_1}x_{i_1,j_3})\times \mathrm{E}(x_{i_2,j_2}x_{i_2,j_4})\times \mathrm{E}(\mathrm{all\ the \ remaining\ terms}),
\end{align*} which is nonzero only when $j_1=j_3$ and $j_2=j_4$. 
Similarly,  if $\{\mathbf{i}\}_{(2)}=\{\tilde{\mathbf{i}}\}_{(3)}\ \mathrm{and} \ \{\mathbf{i}\}_{(3)}=\{\tilde{\mathbf{i}}\}_{(2)}$, $\eqref{eq:summedadjustedutermvar}\neq 0$ only when $j_1=j_4$ and $j_2=j_3$.  
In summary, if  $\eqref{eq:summedadjustedutermvar}\neq 0$, \eqref{eq:twocasevarc1} is obtained, and
\begin{align*}
	& ~Q(\mathbf{i},j_1, j_2,\tilde{\mathbf{i}},j_3,j_4)\times \mathbf{1}_{\{ \{\mathbf{i}\}=\{\tilde{\mathbf{i}}\}, \{\mathbf{i}\}_{(1)}=\{\tilde{\mathbf{i}}\}_{(1)},1\leq c\leq a-1 \} }\notag \\
	=&~Q(\mathbf{i},j_1, j_2,\tilde{\mathbf{i}},j_3,j_4) \mathbf{1}_{\{\{\mathbf{i}\}_{(1)}=\{\tilde{\mathbf{i}}\}_{(1)}, 1\leq c\leq a-1 \} } \notag \\
	&~\times  \Biggr(\mathbf{1}_{\Big\{\substack{ \{\mathbf{i}\}_{(2)}=\{\tilde{\mathbf{i}}\}_{(2)},\, j_1=j_3,\\\{\mathbf{i}\}_{(3)}=\{\tilde{\mathbf{i}}\}_{(3)},\, j_2=j_4} \Big\}} + \mathbf{1}_{\Big\{\substack{\{\mathbf{i}\}_{(2)}=\{\tilde{\mathbf{i}}\}_{(3)},\, j_1=j_4,  \\ \{\mathbf{i}\}_{(3)}=\{\tilde{\mathbf{i}}\}_{(2)},\, j_2=j_3}\Big\}}\Biggr). \notag 
\end{align*} 
In addition, under the two cases in  \eqref{eq:twocasevarc1}, we have $Q(\mathbf{i},j_1, j_2,\tilde{\mathbf{i}},j_3,j_4)=\{\mathrm{E}(x_{i,j_1}^2 x_{i,j_2}^2)\}^{a-c} \{\mathrm{E}(x_{i,j_1}^2) \mathrm{E}(x_{i,j_2}^2)\}^c$.  
Therefore,
\begin{eqnarray}\label{eq:varsum1vars1stareqs2star}
	&&\sum_{\substack{1 \leq j_1 \neq j_2 \leq p;\\ 1 \leq j_3 \neq j_4 \leq p}} \ \sum_{ \substack{0\leq c\leq a;\\ \mathbf{i},\tilde{\mathbf{i}}\in \mathcal{P}(n,a+c)} } F(c,c,a)    Q(\mathbf{i},j_1, j_2,\tilde{\mathbf{i}},j_3,j_4)\mathbf{1}_{\Big\{\substack{ \{\mathbf{i}\}=\{\tilde{\mathbf{i}}\},\\ \{\mathbf{i}\}_{(1)}=\{\tilde{\mathbf{i}}\}_{(1)},\\1\leq c\leq a-1} \Big\} }  \\
&=&\sum_{\substack{1\leq c\leq a-1;\\ \mathbf{i}\in \mathcal{P}(n,a+c);\\ 1\leq j_1 \neq j_2 \leq p}}   \frac{\binom{a}{c}^22(a-c)!c!c!}{(P^n_{a+c})^2}\{\mathrm{E}(x_{i,j_1}^2 x_{i,j_2}^2)\}^{a-c} \{\mathrm{E}(x_{i,j_1}^2) \mathrm{E}(x_{i,j_2}^2)\}^c.\notag \\
     &=&\sum_{c=1}^{a-1} O(p^2n^{-(a+c)}), \notag
\end{eqnarray}where the last equation uses Condition \ref{cond:finitemomt}.
\medskip

\subparagraph*{Case 2} We consider the cases when $\{\mathbf{i}\}=\{\tilde{\mathbf{i}}\}$, $1\leq c\leq a-1$, $\{\mathbf{i}\}_{(1)}\neq \{\tilde{\mathbf{i}}\}_{(1)}$ and $\{\mathbf{i}\}_{(1)}\cap \{\tilde{\mathbf{i}}\}_{(1)}\neq \emptyset$. Suppose that there exists an index $i_1 \in \{\mathbf{i}\}_{(1)}\cap \{\tilde{\mathbf{i}}\}_{(1)}$. Since $\{\mathbf{i}\}_{(1)}\neq \{\tilde{\mathbf{i}}\}_{(1)}$ and $|\{\mathbf{i}\}_{(1)}|=|\{\tilde{\mathbf{i}}\}_{(1)}|$, there exists another  index $i_2\in \{\mathbf{i}\}_{(1)}$ and $i_2\not \in \{\tilde{\mathbf{i}}\}_{(1)}$. As $\{\mathbf{i}\}=\{\tilde{\mathbf{i}}\}$, we know  $i_2\in \{\tilde{\mathbf{i}}\}_{(2)}\cup \{\tilde{\mathbf{i}}\}_{(3)}$. Without loss of generality, we assume $i_2\in \{\tilde{\mathbf{i}}\}_{(2)}$, then 
\begin{eqnarray}
\quad\ && \eqref{eq:summedadjustedutermvar}=\mathrm{E}(x_{i_1,j_1}x_{i_1,j_2}x_{i_1,j_3}x_{i_1,j_4}) \mathrm{E}(x_{i_2,j_1}x_{i_2,j_2}x_{i_2,j_3}) \mathrm{E}(\mathrm{other\ terms}).\label{eq:var1case2}
\end{eqnarray} 
As  $j_1\neq j_2$ and $j_3\neq j_4$ in summation, it suffices to discuss four sub-cases $\{j_1=j_3 \mathrm{\ and \ } j_2=j_4\}$, $\{j_1=j_4 \mathrm{\ and \ } j_2=j_3\}$, $\{j_1 \neq j_3 \mathrm{\ and \ } j_1 \neq j_4\}$ and $\{j_2 \neq j_3 \mathrm{\ and \ } j_2 \neq j_4\}$ under Case 2.  

 \smallskip
\textit{Case 2.1} If $j_1=j_3$ and $j_2=j_4$, \eqref{eq:var1case2} gives
\begin{eqnarray*}
	\eqref{eq:summedadjustedutermvar}=\mathrm{E}(x_{i_1,j_1}^2x_{i_1,j_2}^2)\times \mathrm{E}(x_{i_2,j_1}^2x_{i_2,j_2}) \times \mathrm{E}(\mathrm{all\ the \ remaining\ terms}). 
\end{eqnarray*} 
{When $x_{i,j}$'s are independent as in Section \ref{par:pfvarindep}, we know $\mathrm{E}(x_{i_2,j_1}^2x_{i_2,j_2})=\mathrm{E}(x_{i_2,j_1}^2)\mathrm{E}(x_{i_2,j_2})=0$ and thus $\eqref{eq:summedadjustedutermvar}=0$. Alternatively, under Condition \ref{cond:alphamixing}, \eqref{eq:summedadjustedutermvar} may no longer be 0 due to the dependence of $x_{i,j}$'s.  But as discussed at the beginning of Section \ref{par:complpflm1}, we expect that $x_{i,j_1}$ and $x_{i,j_2}$ are ``asymptotically independent'' as $|j_1-j_2|$ increases, and thus we expect that  \eqref{eq:summedadjustedutermvar} is close to 0 when $|j_1-j_2|$ is large. 
To quantitatively evaluate  \eqref{eq:summedadjustedutermvar} based on $|j_1-j_2|$,  we  introduce a threshold $K_0$ below, and discuss the value of  \eqref{eq:summedadjustedutermvar} when $|j_1-j_2|>K_0$ and $|j_1-j_2|\leq K_0$,  respectively.} 

Specifically, given $\delta$ in Condition \ref{cond:alphamixing} and  positive constants $\mu$ and $\epsilon$, we  define
\begin{eqnarray} \label{eq:thresholddK}
K_0 = {-(2+\epsilon)(4+\mu) (\log p)}/{(\epsilon \log \delta)}.
\end{eqnarray} When $|j_1-j_2|>K_0$,  by Conditions \ref{cond:finitemomt} and  \ref{cond:alphamixing},
 we have 
\begin{align*}
|\eqref{eq:summedadjustedutermvar}|\leq C\times |\mathrm{E}(x_{i_2,j_1}^2x_{i_2,j_2})|&= C\times |\mathrm{cov}(x_{i_2,j_1}^2, x_{i_2,j_2})|\notag \\
& \leq C\delta^{\frac{K_0\epsilon}{2+\epsilon}}=O(1)p^{-(4+\mu)},	
\end{align*} where $|\mathrm{cov}(x_{i_2,j_1}^2, x_{i_2,j_2})|\leq C\delta^{\frac{K_0\epsilon}{2+\epsilon}}$ holds by the $\alpha$-mixing inequality in Lemma \ref{lm:mixingineq}. When $|j_1-j_2|\leq K_0$, by the uniform boundedness of moments from Condition \ref{cond:finitemomt}, we have $\eqref{eq:summedadjustedutermvar}=O(1)$. To summarize, 
we define an event  $S_{nem}=\{\{\mathbf{i}\}=\{\tilde{\mathbf{i}}\}, 1\leq c\leq a-1, \{\mathbf{i}\}_{(1)}\neq \{\tilde{\mathbf{i}}\}_{(1)},\{\mathbf{i}\}_{(1)}\cap \{\tilde{\mathbf{i}}\}_{(1)} \neq \emptyset\}$. Then
\begin{align*}
&~ Q(\mathbf{i},j_1, j_2,\tilde{\mathbf{i}},j_3,j_4)\times \mathbf{1}_{\{S_{nem}, j_1=j_3, j_2=j_4 \}}\notag \\
=&~	Q(\mathbf{i},j_1, j_2,\tilde{\mathbf{i}},j_3,j_4) \times \Biggr(\mathbf{1}_{\Big\{\substack{S_{nem}, j_1=j_3, j_2=j_4,\\ |j_1-j_2|>K_0 }\Big\}}+ \mathbf{1}_{\Big\{\substack{S_{nem}, j_1=j_3, j_2=j_4,\\ |j_1-j_2|\leq K_0 }\Big\}} \Biggr).\notag 
\end{align*} 
The analysis above gives $Q(\mathbf{i},j_1, j_2,\tilde{\mathbf{i}},j_3,j_4)\mathbf{1}_{\{S_{nem}, j_1=j_3, j_2=j_4, |j_1-j_2|>K_0 \}}=O(1)p^{-(4+\mu)}$ and $Q(\mathbf{i},j_1, j_2,\tilde{\mathbf{i}},j_3,j_4)\mathbf{1}_{\{S_{nem}, j_1=j_3, j_2=j_4, |j_1-j_2|\leq K_0 \}}=O(1)$, respectively.  
Moreover, the total number of $(j_1,j_2)$ pairs satisfying $|j_1-j_2|\leq K_0$ and $|j_1-j_2|> K_0$ are $O(p^2)$ and $O(pK_0)$, respectively.  Therefore,
\begin{eqnarray}
	&& \quad  \Biggr|\sum_{\substack{1 \leq j_1 \neq j_2 \leq p;\\ 1 \leq j_3 \neq j_4 \leq p}} \ \sum_{ \substack{0\leq c\leq a;\\ \mathbf{i},\tilde{\mathbf{i}}\in \mathcal{P}(n,a+c)} } F(c,c,a)    Q(\mathbf{i},j_1, j_2,\tilde{\mathbf{i}},j_3,j_4)\mathbf{1}_{\{S_{nem}, j_1=j_3, j_2=j_4 \}} \Biggr|\label{eq:match22smallcase2}  \\
	&\leq &  \sum_{\substack{1 \leq j_1 \neq j_2 \leq p;\\ 1 \leq j_3 \neq j_4 \leq p}} \ \sum_{ \substack{0\leq c\leq a;\\ \mathbf{i},\tilde{\mathbf{i}}\in \mathcal{P}(n,a+c)} } \Big|F(c,c,a)\Big|\times \mathbf{1}_{\{S_{nem}, j_1=j_3, j_2=j_4\}} \notag \\
	&& \quad \quad \quad \quad \quad\quad\quad\quad  \times \Big\{O(p^{-(4+\mu)}) \mathbf{1}_{\{  |j_1-j_2|> K_0 \}} +C\times  \mathbf{1}_{\{  |j_1-j_2|\leq K_0 \}}\Big\} \notag\\
	&=& \sum_{c=1}^{a-1} n^{-(a+c)}\Big\{O(1) p^2 p^{-(4+\mu)}+O(1) pK_0 \Big\}= o(p^2n^{-a}). \notag
\end{eqnarray}

\textit{Case 2.2} If $j_1=j_4$ and $j_2=j_3$, similarly to \textit{Case 2.1}, we have
\begin{eqnarray}\label{eq:match22smallcase2sim}
\quad &\quad &\Biggr|\sum_{\substack{1 \leq j_1 \neq j_2 \leq p;\\ 1 \leq j_3 \neq j_4 \leq p}}  \sum_{ \substack{0\leq c\leq a;\\ \mathbf{i},\tilde{\mathbf{i}}\in \mathcal{P}(n,a+c)} } F(c,c,a)    Q(\mathbf{i},j_1, j_2,\tilde{\mathbf{i}},j_3,j_4)\mathbf{1}_{\{S_{nem}, j_1=j_4, j_2=j_3 \}}\Biggr|\\
 &=&	o(p^2n^{-a}).\notag 
\end{eqnarray}

\textit{Case 2.3}  We discuss the cases where $j_1\neq j_3$ and $j_1 \neq j_4$. If $x_{i,j}$'s are independent as in Section \ref{par:pfvarindep}, we know $\mathrm{E}(x_{i_1,j_1}x_{i_1,j_2}x_{i_1,j_3}x_{i_1,j_4})=\mathrm{E}(x_{i_1,j_1})\mathrm{E}(\text{other terms})=0$; thus by \eqref{eq:var1case2},  $\eqref{eq:summedadjustedutermvar}=0$ under this setting. Similarly to \textit{Case 2.1}, under Condition \ref{cond:alphamixing}, \eqref{eq:summedadjustedutermvar} may be no longer 0, and we will discuss the value of  \eqref{eq:summedadjustedutermvar} using the threshold $K_0$ in \eqref{eq:thresholddK}.    

To evaluate \eqref{eq:summedadjustedutermvar}, by \eqref{eq:var1case2}, we examine $\mathrm{E}(x_{i_1,j_1}x_{i_1,j_2}x_{i_1,j_3}x_{i_1,j_4})$. Let $(\tilde{j}_1,\tilde{j}_2,\tilde{j}_3,\tilde{j}_4)$  be the ordered version of $(j_1,j_2,j_3,j_4)$ satisfying $\tilde{j}_1\leq \tilde{j}_2\leq \tilde{j}_3 \leq \tilde{j}_4$, then $\mathrm{E}(x_{i_1,j_1}x_{i_1,j_2}x_{i_1,j_3}x_{i_1,j_4})=\mathrm{E}(x_{i_1,\tilde{j}_1}x_{i_1,\tilde{j}_2}x_{i_1,\tilde{j}_3}x_{i_1,\tilde{j}_4})$. 
Under the considered cases where  $j_1\neq j_3$ and $j_1 \neq j_4$,  at least  one of the two equations, $\mathrm{E}(x_{i_1,\tilde{j}_1}x_{i_1,\tilde{j}_2})=0$ and  $\mathrm{E}(x_{i_1,\tilde{j}_3}x_{i_1,\tilde{j}_4})=0$, holds. It follows that $\mathrm{E}(x_{i_1,\tilde{j}_1}x_{i_1,\tilde{j}_2}x_{i_1,\tilde{j}_3}x_{i_1,\tilde{j}_4})=\mathrm{cov}( x_{i_1,\tilde{j}_1}x_{i_1,\tilde{j}_2} \ , \  x_{i_1,\tilde{j}_3}x_{i_1,\tilde{j}_4})$. We thus can write 
 \begin{align}
	  \quad	 | \mathrm{E}(x_{i_1,j_1}x_{i_1,j_2}x_{i_1,j_3}x_{i_1,j_4})|=&~ |\mathrm{E}(x_{i_1,\tilde{j}_1}x_{i_1,\tilde{j}_2}x_{i_1,\tilde{j}_3}x_{i_1,\tilde{j}_4})| \label{eq:fourthintermomtbound} \\
	   	 =&~ |\mathrm{cov}( x_{i_1,\tilde{j}_1}x_{i_1,\tilde{j}_2} \ , \  x_{i_1,\tilde{j}_3}x_{i_1,\tilde{j}_4})| \notag \\
	   	 =&~ |\mathrm{cov}( x_{i_1,\tilde{j}_1} \ ,\ x_{i_1,\tilde{j}_2} x_{i_1,\tilde{j}_3}x_{i_1,\tilde{j}_4}) | \notag \\
	   	 =&~ |\mathrm{cov}( x_{i_1,\tilde{j}_1} x_{i_1,\tilde{j}_2} x_{i_1,\tilde{j}_3} \ , \ x_{i_1,\tilde{j}_4})|. \notag 
\end{align}

We next discuss the value of \eqref{eq:fourthintermomtbound} based on the
the maximum distance between the  indexes in $(\tilde{j}_1,\tilde{j}_2,\tilde{j}_3,\tilde{j}_4)$, which is defined as  
\begin{eqnarray}
	\kappa_m=\max\{|\tilde{j}_2-\tilde{j}_1|, |\tilde{j}_3-\tilde{j}_2|,|\tilde{j}_4-\tilde{j}_3|\}. \label{eq:kappamaxdef}
\end{eqnarray}We evaluate \eqref{eq:fourthintermomtbound} when $\kappa_m> K_0$ and $\kappa_m\leq K_0$, respectively. 
First, if $\kappa_m> K_0$, by $\mathrm{E}(\mathbf{x})=\mathbf{0}$, Conditions \ref{cond:finitemomt},   \ref{cond:alphamixing}, and Lemma \ref{lm:mixingineq}, we have $\eqref{eq:fourthintermomtbound}\leq C\delta^{\frac{K_0\epsilon}{2+\epsilon}}=O(p^{-(4+\mu)}).$ If $\kappa_m\leq K_0$, by Condition \ref{cond:finitemomt}, $\eqref{eq:fourthintermomtbound}=O(1).$ 
It follows that  $Q(\mathbf{i},j_1, j_2,\tilde{\mathbf{i}},j_3,j_4)\mathbf{1}_{\{S_{nem}, j_1\neq j_3, j_1\neq j_4, \kappa_m >K_0 \}}=O(p^{-(4+\mu)}),$ and $Q(\mathbf{i},j_1, j_2,\tilde{\mathbf{i}},j_3,j_4)\mathbf{1}_{\{S_{nem}, j_1\neq j_3, j_1\neq j_4, \kappa_m \leq K_0 \}}=O(1),$ where the event $S_{nem}$ is defined in \textit{Case 2.1}.  
Note that the total number of $(j_1,j_2,j_3,j_4)$ tuples satisfying $\kappa_m>K_0$ and $\kappa_m\leq K_0$ are $O(p^4)$ and $O(pK_0^3)$, respectively. Thus
\begin{eqnarray}
\quad\ && \Big|\sum_{\substack{1 \leq j_1 \neq j_2 \leq p;\\ 1 \leq j_3 \neq j_4 \leq p}}  \sum_{ \substack{0\leq c\leq a;\\ \mathbf{i},\tilde{\mathbf{i}}\in \mathcal{P}(n,a+c)} } F(c,c,a)  Q(\mathbf{i},j_1, j_2,\tilde{\mathbf{i}},j_3,j_4)\mathbf{1}_{\{S_{nem},j_1\neq j_3 ,j_1\neq j_4\}} \Big|\label{eq:match22smallcase3dif}  \\
	&\leq &  \sum_{\substack{1 \leq j_1 \neq j_2 \leq p;\\ 1 \leq j_3 \neq j_4 \leq p}}  \sum_{ \substack{0\leq c\leq a;\\ \mathbf{i},\tilde{\mathbf{i}}\in \mathcal{P}(n,a+c)} } |F(c,c,a)|\times \mathbf{1}_{\{ S_{nem}, j_1\neq j_3 ,j_1\neq j_4\}} \notag \\
	&&\quad \quad \quad \quad \quad \quad \quad \quad \quad  \times \Big[O(p^{-(4+\mu)})\mathbf{1}_{\{\kappa_m>K_0\}}+C\times  \mathbf{1}_{\{  \kappa_m \leq K_0  \}}\Big]\notag\\
	&=& \sum_{c=1}^{a-1} n^{-(a+c)}\{ p^2O( p^{-(4+\mu)})+O(1) pK_0^3 \}= o(p^2n^{-a}). \notag
\end{eqnarray} 

\textit{Case 2.4} If $j_2\neq j_3$ and $j_2 \neq j_4$,  similarly to \textit{Case 2.3}, we have 
\begin{eqnarray} \label{eq:match22smallcase3difsim}
	\quad\ &\quad &\Big|\sum_{\substack{1 \leq j_1 \neq j_2 \leq p;\\ 1 \leq j_3 \neq j_4 \leq p}}  \sum_{ \substack{0\leq c\leq a;\\ \mathbf{i},\tilde{\mathbf{i}}\in \mathcal{P}(n,a+c)} }  F(c,c,a)  Q(\mathbf{i},j_1, j_2,\tilde{\mathbf{i}},j_3,j_4)\mathbf{1}_{\{S_{nem},j_2\neq j_3 ,j_2\neq j_4\}} \Big| \\
	&=&o(p^2n^{-a}). \notag
\end{eqnarray}
By \eqref{eq:match22smallcase2}, \eqref{eq:match22smallcase2sim}, \eqref{eq:match22smallcase3dif},  \eqref{eq:match22smallcase3difsim}, and the definition of $S_{nem}$, we obtain
\begin{eqnarray}
	&&\sum_{\substack{1 \leq j_1 \neq j_2 \leq p;\\ 1 \leq j_3 \neq j_4 \leq p}}  \sum_{ \substack{0\leq c\leq a;\\ \mathbf{i},\tilde{\mathbf{i}}\in \mathcal{P}(n,a+c)} }  F(c,c,a)  Q(\mathbf{i},j_1, j_2,\tilde{\mathbf{i}},j_3,j_4)\label{eq:smsummasmallord}\\
	&&\quad \quad  \times \mathbf{1}_{\{\{\mathbf{i}\}=\{\tilde{\mathbf{i}}\}, 1\leq c\leq a-1, \{\mathbf{i}\}_{(1)}\neq \{\tilde{\mathbf{i}}\}_{(1)},\{\mathbf{i}\}_{(1)}\cap \{\tilde{\mathbf{i}}\}_{(1)} \neq \emptyset\}}=o(p^2n^{-a}). \notag
\end{eqnarray}

\smallskip
\subparagraph*{Case 3} We consider $\{\mathbf{i}\}=\{\tilde{\mathbf{i}}\}$, $1\leq c\leq a-1$, and $\{\mathbf{i}\}_{(1)}\cap \{\tilde{\mathbf{i}}\}_{(1)}=\emptyset$. Here $\{\mathbf{i}\}_{(1)}$ and $\{\tilde{\mathbf{i}}\}_{(1)}$ are not empty as $c\leq a-1$. Suppose there exist  $i_1\in \{\mathbf{i}\}_{(1)}$ and $i_2 \in \{\tilde{\mathbf{i}}\}_{(1)}$ with $i_1\neq i_2$. Since $\{\mathbf{i}\}=\{\tilde{\mathbf{i}}\}$ and $\{\mathbf{i}\}_{(1)}\cap \{\tilde{\mathbf{i}}\}_{(1)}=\emptyset$, we know $i_1\in  \{\tilde{\mathbf{i}}\}_{(2)}\cup  \{\tilde{\mathbf{i}}\}_{(3)}$ and $i_2 \in \{\mathbf{i}\}_{(2)}\cup \{\mathbf{i}\}_{(3)}$.  Without loss of generality, we assume $i_1\in \{\tilde{\mathbf{i}}\}_{(2)}$ and $i_2 \in \{\mathbf{i}\}_{(2)}$, then
\begin{align*}
	\eqref{eq:summedadjustedutermvar}=\mathrm{E}(x_{i_1,j_1}x_{i_1,j_2}x_{i_1,j_3}) \times \mathrm{E}(x_{i_2,j_3}x_{i_2,j_4}x_{i_2,j_1})  \times \mathrm{E}(\mathrm{other\ terms}).
\end{align*} 
To evaluate \eqref{eq:summedadjustedutermvar}, we examine $\mathrm{E}(x_{i_1,j_1}x_{i_1,j_2}x_{i_1,j_3})  \mathrm{E}(x_{i_2,j_3}x_{i_2,j_4}x_{i_2,j_1}).$ As  $\mathrm{E}(\mathbf{x})=\mathbf{0}$, we can write
\begin{align*}
	\mathrm{E}(x_{i_1,j_1}x_{i_1,j_2}x_{i_1,j_3})=\mathrm{cov}(x_{i_1,j_1}\, ,\, x_{i_1,j_2}x_{i_1,j_3} )=&~\mathrm{cov}(x_{i_1,j_2}\, ,\, x_{i_1,j_1}x_{i_1,j_3} )\notag \\
	=&~\mathrm{cov}(x_{i_1,j_3}\, ,\, x_{i_1,j_1}x_{i_1,j_2} ),
\end{align*} and similarly,
\begin{align*}
\mathrm{E}(x_{i_2,j_3}x_{i_2,j_4}x_{i_2,j_1})=\mathrm{cov}(x_{i_2,j_3}\, ,\, x_{i_2,j_4}x_{i_2,j_1} ) =&~\mathrm{cov}(x_{i_2,j_4}\, , \allowbreak \, x_{i_2,j_3}x_{i_2,j_1} )\notag \\
=&~\mathrm{cov}(x_{i_2,j_1}\, ,\, x_{i_2,j_3}x_{i_2,j_4} ).	
\end{align*}
Recall $\kappa_m$  in \eqref{eq:kappamaxdef} and $K_0$  in \eqref{eq:thresholddK}. If $\kappa_m> K_0$,  
by  Conditions \ref{cond:finitemomt} and  \ref{cond:alphamixing}, and Lemma \ref{lm:mixingineq}, we have
\begin{align} \label{eq:threeorderintervar}
	\Big|\mathrm{E}(x_{i_1,j_1}x_{i_1,j_2}x_{i_1,j_3}) \mathrm{E}(x_{i_2,j_3}x_{i_2,j_4}x_{i_2,j_1})\Big|\leq C\delta^{\frac{K_0\epsilon}{2+\epsilon}}=O(1)p^{-(4+\mu)}.
\end{align}If $\kappa_m\leq K_0$, by Condition \ref{cond:finitemomt}, $\mathrm{E}(x_{i_1,j_1}x_{i_1,j_2}x_{i_1,j_3}) \mathrm{E}(x_{i_2,j_3}x_{i_2,j_4}x_{i_2,j_1})=O(1)$. 
Note that the total number of $(j_1,j_2,j_3,j_4)$ tuples satisfying $\kappa_m>K_0$ and $\kappa_m\leq K_0$ are $O(p^4)$ and $O(pK_0^3)$, respectively. Therefore,
\begin{eqnarray}
\quad && \Biggr|\sum_{\substack{1 \leq j_1 \neq j_2 \leq p;\\ 1 \leq j_3 \neq j_4 \leq p}}  \sum_{ \substack{0\leq c\leq a;\\ \mathbf{i},\tilde{\mathbf{i}}\in \mathcal{P}(n,a+c)} }  F(c,c,a)  Q(\mathbf{i},j_1, j_2,\tilde{\mathbf{i}},j_3,j_4) \mathbf{1}_{ \Big\{\substack{\{\mathbf{i}\}=\{\tilde{\mathbf{i}}\};\\ 1 \leq c\leq a-1;\\ \{\mathbf{i}\}_{(1)} \cap \{\tilde{\mathbf{i}}\}_{(1)}=\emptyset} \Big\} }\Biggr| \label{eq:smallcasenointer}  \\
	&\leq &  \sum_{\substack{1 \leq j_1 \neq j_2 \leq p;\\ 1 \leq j_3 \neq j_4 \leq p}}  \sum_{ \substack{0\leq c\leq a;\\ \mathbf{i},\tilde{\mathbf{i}}\in \mathcal{P}(n,a+c)} } \Big|F(c,c,a)\Big|\times \mathbf{1}_{ \Big\{\substack{\{\mathbf{i}\}=\{\tilde{\mathbf{i}}\}; 1 \leq c\leq a-1;\\ \{\mathbf{i}\}_{(1)} \cap \{\tilde{\mathbf{i}}\}_{(1)}=\emptyset} \Big\} } \notag \\
	&& \quad \quad \quad \quad  \times  \Big[Cp^{-(4+\mu)}\mathbf{1}_{\{ \kappa_{m}> K_0 \}} +C \mathbf{1}_{\{ \kappa_{m}\leq K_0 \}}\Big] \notag\\
	&=& \sum_{c=1}^{a-1} n^{-(a+c)}\{O(1) p^4 p^{-(4+\mu)}+O(1) pK_0^3 \}\notag = o(p^2n^{-a}). \notag
\end{eqnarray}

\subparagraph*{Case 4} When $\{\mathbf{i}\}=\{\tilde{\mathbf{i}}\}$ and $c=a$, we know $\{\mathbf{i}\}_{(1)}=\{\tilde{\mathbf{i}}\}_{(1)}=\emptyset$ and $\{\mathbf{i}\}_{(2)}\cup \{\mathbf{i}\}_{(3)}=\{\tilde{\mathbf{i}}\}_{(2)}\cup \{\tilde{\mathbf{i}}\}_{(3)}$. Then similarly \textit{Case 1}, we have
\begin{eqnarray}\label{eq:covcase4ca}
\quad && \Big|\sum_{\substack{1 \leq j_1 \neq j_2 \leq p;\\ 1 \leq j_3 \neq j_4 \leq p}}  \sum_{ \substack{\mathbf{i},\tilde{\mathbf{i}}\in \mathcal{P}(n,a+c)} }  F(c,c,a)  Q(\mathbf{i},j_1, j_2,\tilde{\mathbf{i}},j_3,j_4)\mathbf{1}_{\{\{\mathbf{i}\}=\{\tilde{\mathbf{i}}\}, c=a \}}\Big| \\
	&=&o(p^2n^{-a}).\notag
\end{eqnarray}

In summary, by \eqref{eq:varesquaredef},  \eqref{eq:varsum1noteqs1s2}--\eqref{eq:varsum1vars1stareqs2star},  \eqref{eq:smsummasmallord}, \eqref{eq:smallcasenointer}, and \eqref{eq:covcase4ca},
	 \begin{align}
	 		\mathrm{var}\{\mathcal{U}(a)\}= \frac{a!}{P^n_a}  \sum_{ \substack{1\leq j_1\neq j_2 \leq p \\1\leq j_3 \neq j_4 \leq p}} \{\mathrm{E}(x_{i,j_1} x_{i,j_2} x_{i,j_3} x_{i,j_4} )\}^a + o(p^2n^{-a}). \label{eq:varoduaoncov}
	 \end{align} 
Note that we assume $\mathrm{E}(\mathbf{x})=\mathbf{0}$. For the general case with $\mathrm{E}(\mathbf{x})=\boldsymbol{\mu}$, by Proposition \ref{prop:locinvariance}, it is equivalent to replace $x_{i,j}$ by  $x_{i,j}-\mu_j$ in \eqref{eq:varoduaoncov}.

We next show that $\mathrm{var}\{\tilde{\mathcal{U}}(a)\}=\eqref{eq:varsum1ceq0}$ and $\mathrm{var}[\tilde{\mathcal{U}}^*(a)]=o(p^2n^{-a})$. First note that $\mathrm{E}\{\tilde{\mathcal{U}}(a)\}=\mathrm{E}\{\tilde{\mathcal{U}}^*(a)\}=0$ under $H_0$ as $\mathrm{E}(\mathbf{x})=\mathbf{0}$. Then it suffices to show $\mathrm{E}\{\{\tilde{\mathcal{U}}(a)\}^2\}=\eqref{eq:varsum1ceq0}$ and $\mathrm{E}\{\{\tilde{\mathcal{U}}^*(a)\}^2\}=o(p^2n^{-a})$. By the definition of  $\tilde{\mathcal{U}}(a)$  in \eqref{eq:originleadingterm}, we know
\begin{eqnarray}\label{eq:utildeholdsvar}
	&&\quad \quad \mathrm{E}\{\tilde{\mathcal{U}}^2(a) \} \\
	&=&\sum_{ \substack{1\leq j_1\neq j_2 \leq p \\1\leq j_3 \neq j_4 \leq p}}\sum_{ \substack{0\leq c_1,c_2\leq a;\\ \mathbf{i}\in \mathcal{P}(n,a+c_1);\\ \tilde{\mathbf{i}}\in  \mathcal{P}(n,a+c_2)} }F(c_1,c_2,a)  Q(\mathbf{i},j_1, j_2,\tilde{\mathbf{i}},j_3,j_4)\times \mathbf{1}_{\{c_1=c_2=0\}}. \notag
\end{eqnarray} Therefore, $\mathrm{E}\{\tilde{\mathcal{U}}^2(a)\}=\eqref{eq:varsum1ceq0}$ from previous discussion. Moreover, as $\tilde{\mathcal{U}}^*(a)=\mathcal{U}(a)-\tilde{\mathcal{U}}(a)$, we know
\begin{align}\label{eq:utildestarlemmab1}
	\tilde{\mathcal{U}}^*(a)
	=& \sum_{c=0}^a \mathbf{1}_{\{c\geq 1\}}\sum_{1 \leq j_1 \neq j_2 \leq p} (-1)^c \binom{a}{c} \frac{1}{P^n_{a+c}}  \sum_{\mathbf{i}\in \mathcal{P}(n,a+c)} \\
	&\quad\times \prod_{k=1}^{a-c} (x_{i_k,j_1} x_{i_k,j_2})  \prod_{k=a-c+1}^{a} x_{i_{k},j_1}  \prod_{k=a+1}^{a+c}x_{i_{k},j_2}.  \notag
\end{align} 
It follows that
\begin{eqnarray}
\quad &&\quad \mathrm{E}[\{\tilde{\mathcal{U}}^*(a)\}^2]\label{eq:utildstordervar} \\
&=&\sum_{ \substack{1\leq j_1\neq j_2 \leq p \\1\leq j_3 \neq j_4 \leq p}}\sum_{ \substack{0\leq c_1,c_2\leq a;\\ \mathbf{i}\in \mathcal{P}(n,a+c_1);\\ \tilde{\mathbf{i}}\in  \mathcal{P}(n,a+c_2)} }F(c_1,c_2,a)  Q(\mathbf{i},j_1, j_2,\tilde{\mathbf{i}},j_3,j_4)\times \mathbf{1}_{\left\{\substack{c_1\geq 1,  c_2\geq 1} \right\}}. \notag
\end{eqnarray} Also by  previous discussion, we know $\mathrm{E}[\{\tilde{\mathcal{U}}^*(a)\}^2]=o(p^2n^{-a})$.

To finish the proof of Lemma \ref{lm:varianceorder}, it remains to show $\mathrm{var}\{\tilde{\mathcal{U}}(a)\}=\eqref{eq:varsum1ceq0}=\Theta(p^2n^{-a})$, and it suffices to prove
	\begin{eqnarray}
		\sum_{ \substack{1\leq j_1\neq j_2 \leq p; \\1\leq j_3 \neq j_4 \leq p}} \{\mathrm{E}(x_{i,j_1} x_{i,j_2} x_{i,j_3} x_{i,j_4} )\}^a=\Theta(p^2). \label{eq:sum4jordervar}
	\end{eqnarray} 
To prove \eqref{eq:sum4jordervar}, we examine 
$\mathrm{E}(x_{i,j_1} x_{i,j_2} x_{i,j_3} x_{i,j_4} )$. 	Similarly to \textit{Case 2} above, as  $j_1\neq j_2$ and $j_3\neq j_4$ in summation, it suffices to discuss four cases $\{j_1=j_3 \mathrm{\ and \ } j_2=j_4\}$, $\{j_1=j_4 \mathrm{\ and \ } j_2=j_3\}$, $\{j_1 \neq j_3 \mathrm{\ and \ } j_1 \neq j_4\}$, and $\{j_2 \neq j_3 \mathrm{\ and \ } j_2 \neq j_4\}$. 	

If $j_1=j_3$, $j_2=j_4$, and $|j_1-j_2|> K_0$, then by 	Conditions \ref{cond:finitemomt},   \ref{cond:alphamixing}, and Lemma \ref{lm:mixingineq}, we have
\begin{align*}
	   	 |\mathrm{E}(x_{i,j_1}x_{i,j_2}x_{i,j_3}x_{i,j_4})|=& \mathrm{E}(x_{i,j_1}^2x_{i,j_2}^2) = \mathrm{cov} (x_{i,j_1}^2, x_{i,j_2}^2) + \mathrm{E}(x_{i,j_1}^2) \mathrm{E}(x_{i,j_2}^2) \notag \\
	   	 \geq & \Theta(1)- |\mathrm{cov} (x_{i,j_1}^2, x_{i,j_2}^2) | \geq \Theta(1)-C\delta^{\frac{K_0\epsilon}{2+\epsilon}}=\Theta(1).
	   \end{align*}	
If $j_1=j_3$, $j_2=j_4$, and $|j_1-j_2|\leq K_0$, by Condition \ref{cond:finitemomt}, 	$\mathrm{E}(x_{i,j_1}x_{i,j_2}x_{i,j_3}x_{i,j_4})=O(1)$.    
Note that $(j_1,j_2)$ pairs satisfying $|j_1-j_2|>K_0$ and $|j_1-j_2|\leq K_0$ are $O(p^2)$ and $O(pK_0)$, respectively. Thus,
\begin{align} \label{eq:varorder4thone}
	&~\sum_{ \substack{1\leq j_1\neq j_2 \leq p; \\1\leq j_3 \neq j_4 \leq p}} [\mathrm{E}(x_{i,j_1} x_{i,j_2} x_{i,j_3} x_{i,j_4} )]^a \mathbf{1}_{\{ j_1=j_3,j_2=j_4\}}  \\
	=&~\sum_{ \substack{1\leq j_1\neq j_2 \leq p; \\1\leq j_3 \neq j_4 \leq p}} \Big[\mathrm{E}\Big(\prod_{t=1}^4x_{i,j_t} \Big)\Big]^a \mathbf{1}_{\left\{ \substack{ j_1=j_3,\\j_2=j_4}\right\}} [\mathbf{1}_{\{  |j_1-j_2|> K_0\}}+\mathbf{1}_{\{ |j_1-j_2|\leq K_0\}}] \notag \\
	=&~ \Theta(p^2)+O(pK_0)=\Theta(p^2). \notag
\end{align}	If $j_1=j_4$ and $j_2=j_3$, similarly to \eqref{eq:varorder4thone}, we have
\begin{align}\label{eq:varorder4thtwo}
	\sum_{ \substack{1\leq j_1\neq j_2 \leq p \\1\leq j_3 \neq j_4 \leq p}} [\mathrm{E}(x_{i,j_1} x_{i,j_2} x_{i,j_3} x_{i,j_4} )]^a \mathbf{1}_{\{ j_1=j_4,j_2=j_3\}}=\Theta(p^2).
\end{align}
 If $j_1\neq j_3$ and $j_1 \neq j_4$, we know \eqref{eq:fourthintermomtbound} holds. Recall $K_0$  in \eqref{eq:thresholddK} and $\kappa_m$  in \eqref{eq:kappamaxdef}.  
 Similarly to the analysis of  \eqref{eq:match22smallcase3dif}, we have
 \begin{align} \label{eq:varorder4ththree}
	&\sum_{ \substack{1\leq j_1\neq j_2 \leq p \\1\leq j_3 \neq j_4 \leq p}} [\mathrm{E}(x_{i,j_1} x_{i,j_2} x_{i,j_3} x_{i,j_4} )]^a \mathbf{1}_{\{ j_1\neq j_3,j_1 \neq j_4\}} \\
	=&~\sum_{ \substack{1\leq j_1\neq j_2 \leq p; \\1\leq j_3 \neq j_4 \leq p}} \Big[\mathrm{E}\Big(\prod_{t=1}^4x_{i,j_t} \Big)\Big]^a \mathbf{1}_{\left\{ \substack{j_1\neq j_3, j_1 \neq j_4}\right\}} \Big[\mathbf{1}_{\{\kappa_m>K_0\}}+\mathbf{1}_{\{\kappa_m\leq K_0\}}\Big] \notag \\
	=&~ o(p^2). \notag
\end{align}If $j_2\neq j_3$ and $j_2\neq j_4$, similarly to \eqref{eq:varorder4ththree}, we have
\begin{eqnarray}\label{eq:varorder4thfour}
&&\sum_{ \substack{1\leq j_1\neq j_2 \leq p \\1\leq j_3 \neq j_4 \leq p}} [\mathrm{E}(x_{i,j_1} x_{i,j_2} x_{i,j_3} x_{i,j_4} )]^a \mathbf{1}_{\{ j_2\neq j_3,j_2 \neq j_4\}}=o(p^2).	
\end{eqnarray}
In summary, combining \eqref{eq:varorder4thone}--\eqref{eq:varorder4thfour}, we have
\begin{eqnarray}
&& \sum_{ \substack{1\leq j_1\neq j_2 \leq p; \\1\leq j_3 \neq j_4 \leq p}}\Big[\mathrm{E}\Big(\prod_{t=1}^4x_{i,j_t} \Big)\Big]^a\simeq 	2\sum_{1\leq j_1\neq j_2 \leq p}\{\mathrm{E}(x_{i,j_1}^2x_{i,j_2}^2)\}^a. \label{eq:varordp4top2}
\end{eqnarray}
Combining  \eqref{eq:varoduaoncov},  \eqref{eq:utildeholdsvar} and \eqref{eq:varordp4top2}, Lemma \ref{lm:varianceorder} is proved. 


\paragraph{Proof under Condition \ref{cond:highordmominter}} \label{par:pfvarunderell}

In this section, we prove Lemma \ref{lm:varianceorder} by substituting Condition \ref{cond:alphamixing} with Condition  \ref{cond:highordmominter}. 
Following the notation in  Section  \ref{par:complpflm1}, we have
\begin{align*}
	\mathrm{var}\{\mathcal{U}(a)\}
	=\sum_{\substack{1 \leq j_1 \neq j_2 \leq p;\\ 1 \leq j_3 \neq j_4 \leq p}} \ \sum_{ \substack{0\leq c_1,c_2\leq a;\\ \mathbf{i}\in \mathcal{P}(n,a+c_1);\\ \tilde{\mathbf{i}}\in  \mathcal{P}(n,a+c_2)} }F(c_1,c_2,a)\times Q(\mathbf{i},j_1, j_2,\tilde{\mathbf{i}},j_3,j_4).
\end{align*}

When $\{\mathbf{i}\}\neq \{\tilde{\mathbf{i}}\}$, under $H_0$, we know  $\eqref{eq:summedadjustedutermvar}=0$ and  \eqref{eq:varsum1noteqs1s2} holds similarly. As $\{\mathbf{i}\}$ and $\{\tilde{\mathbf{i}}\}$ are of sizes $a+c_1$ and $a+c_2$ respectively, in the following we consider $\{\mathbf{i}\}=\{\tilde{\mathbf{i}}\}$, which induces $c_1=c_2$ and we write $c_1=c_2=c$. 

When $\{\mathbf{i}\}= \{\tilde{\mathbf{i}}\}$ and $c=0$, we know \eqref{eq:varsum1ceq0} also holds similarly, and   $\mathrm{var}\{\tilde{\mathcal{U}}(a)\}=\eqref{eq:varsum1ceq0}$ by \eqref{eq:utildeholdsvar}.  By Condition \ref{cond:highordmominter},
\begin{align}
 &~\mathrm{E}(x_{i,j_1}x_{i,j_2}x_{i,j_3}x_{i,j_4}) \label{eq:lmb1fourthorder} \\
 	       	 =&~ \kappa_1\Big\{\mathrm{E}(x_{i,j_1} x_{i,j_2}) \mathrm{E}(x_{i,j_3} x_{i,j_4})+ \mathrm{E}(x_{i,j_1} x_{i,j_3})\mathrm{E}(x_{i,j_2} x_{i,j_4}) \notag \\
	        &\quad \ +\mathrm{E}(x_{i,j_1} x_{i,j_4})\mathrm{E}(x_{i,j_2} x_{i,j_3})\Big\}. \notag
\end{align}Since $j_1\neq j_2$ and $j_3\neq j_4$, we know under $H_0$, $\eqref{eq:lmb1fourthorder}\neq 0$ only when $\{j_1=j_3, j_2=j_4\}$ or $\{j_1=j_4, j_2=j_3\}$; and then $
	\eqref{eq:lmb1fourthorder}=\kappa_1\mathrm{E}(x_{i,j_1}^2)\mathrm{E}(x_{i,j_2}^2)$. Thus 
	\begin{align*}
		\eqref{eq:varsum1ceq0}=2a!(P^n_a)^{-1}\sum_{1\leq j_1\neq j_2\leq p}\{\kappa_1\mathrm{E}(x_{i,j_1}^2)\mathrm{E}(x_{i,j_2}^2)\}^a=\Theta(p^2n^{-a}),
	\end{align*}  where the second equation follows from  Condition \ref{cond:finitemomt}. 

 When $\{\mathbf{i}\}= \{\tilde{\mathbf{i}}\}$ and $c\geq 1$,  $|\{\mathbf{i}\}_{(2)}|=|\{\mathbf{i}\}_{(3)}|=| \{\tilde{\mathbf{i}}\}_{(2)}|=| \{\tilde{\mathbf{i}}\}_{(3)}|>0$. Without loss of generality, we first consider  an index $i\in \{\mathbf{i}\}_{(2)}$, and discuss four cases.  

\textit{Case 1.1} If $i\not \in \{\tilde{\mathbf{i}}\}$,   since  $\mathrm{E}(\mathbf{x})=\mathbf{0}$, we know
\begin{align*}
\eqref{eq:summedadjustedutermvar} = \mathrm{E}(x_{i,j_1})\times \mathrm{E}(\mathrm{all\ the \ remaining\ terms})=0.
\end{align*} 

\textit{Case 1.2} If $i\in \{\tilde{\mathbf{i}}\}_{(2)}$,
\begin{align*}
\eqref{eq:summedadjustedutermvar} = \mathrm{E}(x_{i,j_1}x_{i,j_3})\times \mathrm{E}(\mathrm{all\ the \ remaining\ terms}),
\end{align*} which is nonzero when $j_1=j_3$. 

\textit{Case 1.3} If $i\in\{\tilde{\mathbf{i}}\}_{(3)}$, 
\begin{align*}
\eqref{eq:summedadjustedutermvar} = \mathrm{E}(x_{i,j_1}x_{i,j_4})\times \mathrm{E}(\mathrm{all\ the \ remaining\ terms})=0,
\end{align*} which is nonzero when $j_1=j_4$. 

\textit{Case 1.4} If $i\in \{\tilde{\mathbf{i}}\}_{(1)}$, 
this suggests $\{{\mathbf{i}}\}_{(1)}\neq \emptyset$ and thus $c\leq a-1$.  
By Condition \ref{cond:highordmominter},
\begin{align}\label{eq:threelmb1term}
\eqref{eq:summedadjustedutermvar} = \mathrm{E}(x_{i,j_1}x_{i,j_3}x_{i,j_4})\times \mathrm{E}[\mathrm{all\ the \ remaining\ terms}]=0.
\end{align}

 When $\{\mathbf{i}\}= \{\tilde{\mathbf{i}}\}$ and $c\leq a-1$,  we have $\{{\mathbf{i}}\}_{(1)}\neq \emptyset.$ 
We assume without loss of generality that an index $i\in \{{\mathbf{i}}\}_{(1)}$, and then discuss two cases. 

\textit{Case 2.1} If $i\in \{\tilde{\mathbf{i}}\}_{(2)}\cup \{\tilde{\mathbf{i}}\}_{(3)}$, symmetrically, \eqref{eq:summedadjustedutermvar} takes a form similarly to that in \eqref{eq:threelmb1term}, which is 0 under $H_0$ by Condition \ref{cond:highordmominter}. 

\textit{Case 2.2} If $i\not \in \{\tilde{\mathbf{i}}\}$,  by $j_1\neq j_2$, we know under $H_0$,
\begin{align*}
	\eqref{eq:summedadjustedutermvar}=\mathrm{E}(x_{i,j_1}x_{i,j_2})\times \mathrm{E}(\mathrm{all\ the \ remaining\ terms})=0. 
\end{align*}
In summary, $\eqref{eq:summedadjustedutermvar}\neq 0$  only when 
one of the following two cases holds:
\begin{enumerate}
	\item[1.] $j_1=j_3$, $j_2=j_4$, $\{{\mathbf{i}}\}_{(1)}=\{\tilde{\mathbf{i}}\}_{(1)}$, $\{{\mathbf{i}}\}_{(2)}=\{\tilde{\mathbf{i}}\}_{(2)}$, $\{{\mathbf{i}}\}_{(3)}=\{\tilde{\mathbf{i}}\}_{(3)}$;
	\item[2.] $j_1=j_4$, $j_2=j_3$, $\{{\mathbf{i}}\}_{(1)}=\{\tilde{\mathbf{i}}\}_{(1)}$, $\{{\mathbf{i}}\}_{(2)}=\{\tilde{\mathbf{i}}\}_{(3)}$, $\{{\mathbf{i}}\}_{(3)}=\{\tilde{\mathbf{i}}\}_{(2)}$. 
\end{enumerate}Under these two cases, $\eqref{eq:summedadjustedutermvar} = \{\kappa_1\mathrm{E}(x_{i,j_1}^2x_{i,j_2}^2)\}^{a-c}\{\mathrm{E}(x_{i,j_1}^2)\}^{c}\{\mathrm{E}(x_{i,j_2}^2)\}^{c}.$
It follows that when $\{\mathbf{i}\}=\{\tilde{\mathbf{i}}\}$ and $c\geq 1$, 
\begin{eqnarray}
	&&\quad \sum_{\substack{1 \leq j_1 \neq j_2 \leq p;\\ 1 \leq j_3 \neq j_4 \leq p}}  \sum_{ \substack{0\leq c\leq a;\\ \mathbf{i},\tilde{\mathbf{i}}\in \mathcal{P}(n,a+c)} } F(c,c,a)    Q(\mathbf{i},j_1, j_2,\tilde{\mathbf{i}},j_3,j_4)\mathbf{1}_{\{\{\mathbf{i}\}=\{\tilde{\mathbf{i}}\},c\geq 1 \}}  \label{eq:cond2proofvarcgeq1} \\
	&=&\sum_{\substack{1\leq c \leq a; \\ 1 \leq j_1 \neq j_2 \leq p}}  \binom{a}{c}^2\frac{2}{P^n_{a+c}}  \{\kappa_1\mathrm{E}(x_{i,j_1}^2x_{i,j_2}^2)\}^{a-c}\{\mathrm{E}(x_{i,j_1}^2)\}^{c}\{\mathrm{E}(x_{i,j_2}^2)\}^{c} \notag \\
	&=& \sum_{c=1}^a O(p^2n^{-(a+c)})=o(pn^{-a}), \notag
\end{eqnarray} where the last two equations use  Condition \ref{cond:finitemomt}. Similarly to Section \ref{sec:proofvarianceorder}, by  \eqref{eq:varsum1noteqs1s2} and \eqref{eq:utildstordervar}, we know $\mathrm{var}\{\tilde{\mathcal{U}}^*(a)\}=\eqref{eq:cond2proofvarcgeq1}=o(pn^{-a})=o(1)\mathrm{var}\{\tilde{\mathcal{U}}(a)\}$. 

\begin{remark}
$\kappa_1$ is assumed to be a constant in Condition \ref{cond:ellpmoment}. But the similar arguments apply in the proof if $\kappa_1$ changes with $n,p$ but converges to a constant. 
\end{remark}

\subsubsection{Proof of Lemma \ref{lm:covariancezro} (on Page \pageref{lm:covariancezro}, Section \ref{sec:detailofjointnormal})} \label{sec:proofcovariancezro}

Note that for two integers $a\neq b$, $\mathrm{cov}\{ \mathcal{U}(a)/\sigma(a), \mathcal{U}(b)/\sigma(b)  \}=\mathrm{E}[\mathcal{U}(a)\mathcal{U}(b)/\{\sigma(a)\sigma(b) \} ]$, and by Lemma \ref{lm:varianceorder}, $\mathrm{var}\{\tilde{\mathcal{U}}^*(a)\}=o(1)\mathrm{var}\{\tilde{\mathcal{U}}(a)\}$. 
Recall $\sigma^2(a)=\mathrm{var}\{ \mathcal{U}(a)\}$ from definition. Then by  Cauchy-Schwarz inequality,  we have  
\begin{align*}
 \mathrm{cov}\{ {\mathcal{U}(a) }/{\sigma(a) }, \, {\mathcal{U}(b) }/{\sigma(b) }  \}
=\mathrm{E}\{\tilde{\mathcal{U}}(a)\tilde{\mathcal{U}}(b) \}/\{\sigma(a) \sigma(b) \}+ o(1). 
\end{align*} In addition,
\begin{align*}
	 \mathrm{E}\{\tilde{\mathcal{U}}(a)\tilde{\mathcal{U}}(b) \} =&  \sum_{ \substack{1 \leq j_1 \neq j_2 \leq p,\\ 1 \leq j_3 \neq j_4 \leq p}}\sum_{ \substack{{\mathbf{i} \in \mathcal{P}(n,a) }, \\  \tilde{\mathbf{i} } \in \mathcal{P}(n,b)} }\mathrm{E}\Big\{  \prod_{k=1}^a ( x_{i_k,j_1} x_{i_k,j_2}) \prod_{\tilde{k}=1}^b( x_{\tilde{i}_{\tilde{k}},j_3} x_{\tilde{i}_{\tilde{k}},j_4}) \Big\}. 
\end{align*}
Since $a\neq b$, we know the two sets $\{i_1,\ldots, i_a\}$ and $ \{\tilde{i}_1, \ldots, \tilde{i}_b\}$ can not be the same. Following similar analysis to that of  \eqref{eq:summedtermprodvarind}, as $\mathrm{E}(x_{i,j_1}x_{i,j_2})=0$ under $H_0$, we have $\mathrm{E}\{\tilde{\mathcal{U}}(a)\tilde{\mathcal{U}}(b) \}=0$, and thus $\mathrm{cov}\{ \mathcal{U}(a)/\sigma(a), \mathcal{U}(b)/\sigma(b)  \}=o(1)$.

In particular, we note that given Lemma
 \ref{lm:varianceorder}, the argument does not depend on whether Condition \ref{cond:alphamixing} or \ref{cond:ellpmoment} is specified.

\subsubsection{Proof of Lemma \ref{lm:meaninvarsigma} (on Page \pageref{lm:meaninvarsigma}, Section \ref{sec:detailofjointnormal})} \label{sec:proofmeaninvarsigma}

We first show for $1\leq k_1 \neq k_2 \leq n$, $\mathrm{E}(D_{n,k_1}D_{n,k_2})=0$. Without loss of generality, we consider $k_1<k_2$.  Then $\mathrm{E}_{k_1}Z_n \in \mathcal{F}_{k_2}$, and
\begin{align*}
	& \mathrm{E}(D_{n,k_1}D_{n,k_2})\\
	=&\mathrm{E}\left[ (\mathrm{E}_{k_1}Z_n-\mathrm{E}_{k_1-1}Z_n)(\mathrm{E}_{k_2}Z_n-\mathrm{E}_{k_2-1}Z_n) \right] \\
	=& \mathrm{E} [ \mathrm{E}_{k_1}Z_n\times \mathrm{E}_{k_2}Z_n- \mathrm{E}_{k_1-1}  Z_n\times\mathrm{E}_{k_2}Z_n -\mathrm{E}_{k_1}Z_n\times \mathrm{E}_{k_2-1}Z_n \notag \\
	& \quad +\mathrm{E}_{k_1-1}Z_n\times  \mathrm{E}_{k_2-1}Z_n    ]\\
	=& \mathrm{E}[(\mathrm{E}_{k_1}Z_n ) Z_n] - \mathrm{E}[(\mathrm{E}_{k_1-1}Z_n)Z_n]-\mathrm{E}[(\mathrm{E}_{k_1}Z_n)Z_n]+\mathrm{E}[(\mathrm{E}_{k_1-1}Z_n)Z_n] \\
	=& 0.
\end{align*} 	
It follows that 
\begin{align*}
	\mathrm{E}\Biggr(\sum_{k=1}^n \pi^2_{n,k} \Biggr)=\sum_{k=1}^n \mathrm{E}\left(D^2_{n,k} \right) = \mathrm{E} \Biggr( \sum_{k=1}^n D_{n,k}\Biggr)^2 = \mathrm{var} (Z_n),
\end{align*} where the last equation uses the fact that $\mathrm{E}(D_{n,k})=0$ and $Z_n=\sum_{k=1}^nD_{n,k}$ from construction. 

In particular, we  note that the argument does not depend on whether Condition \ref{cond:alphamixing} or \ref{cond:ellpmoment} is specified.  

\subsubsection{Proof of Lemma \ref{lm:cltabnkform} (on Page \pageref{lm:cltabnkform}, Section \ref{sec:detailofjointnormal})} \label{sec:proofcltabnkform}

For given finite integer $a$, we derive the expression of $( \mathrm{E}_{k} -\mathrm{E}_{k-1}) [{\tilde{\mathcal{U}}(a) }/{\sigma(a)}]$. The form of $A_{n,k,a_r}$ for a general finite integer $a_r$ in Lemma \ref{lm:cltabnkform}  follows similarly. 

By the definition in \eqref{eq:originleadingterm}, we know
\begin{eqnarray}
	&&\quad \quad (\mathrm{E}_k-\mathrm{E}_{k-1})\tilde{\mathcal{U}}(a)=(P^n_{a})^{-1}\sum_{\substack{ 1\leq j_1 \neq j_2 \leq p; \\ \mathbf{i}\in \mathcal{P}(n,a) }} 
	 (\mathrm{E}_k-\mathrm{E}_{k-1})\Big[\prod_{t=1}^a x_{i_t,j_1} x_{i_t,j_2}\Big].\label{eq:diffexpank}
\end{eqnarray}
To derive \eqref{eq:diffexpank},  we next examine the value of
\begin{align}
	( \mathrm{E}_{k} -\mathrm{E}_{k-1} ) \Big[\prod_{t=1}^a x_{i_t,j_1} x_{i_t,j_2}\Big]. \label{eq:differenceuaexpect}
\end{align}  

We claim $\eqref{eq:differenceuaexpect} \neq 0$  only when $k \in \{i_1 , \ldots, i_{a} \}$. If $k \not \in \{i_1 , \ldots, i_{a} \}$, we assume without loss of generality that $i_1,\ldots, i_m<k$ and $i_{m+1},\ldots, i_a>k$. Then
\begin{align*}
	&~ \left( \mathrm{E}_{k} -\mathrm{E}_{k-1}\right) \Big[\prod_{t=1}^a x_{i_t,j_1} x_{i_t,j_2}\Big] \\
	=&~ \Big(\prod_{t=1}^m x_{{i}_t,j_1}x_{{i}_t,j_2}\Big) \Big[\mathrm{E}_k \Big( \prod_{t=m+1}^ax_{{i}_{t},j_1}x_{{i}_{t},j_2} \Big)  -   \mathrm{E}_{k-1} \Big( \prod_{t=m+1}^ax_{{i}_{t},j_1}x_{{i}_{t},j_2} \Big)\Big] \\
	 =&~ 0.
\end{align*}
Thus  if $\eqref{eq:differenceuaexpect} \neq 0$, we know  $k\in \{i_1,\ldots, i_a\}$. In addition, we next show $\eqref{eq:differenceuaexpect} \neq 0$ only when $i_1,\ldots, i_a\leq k$. Suppose that if there exist some indexes  in $\{i_1,\ldots,i_a\}$ that are greater than $k$, we assume without loss of generality that $i_m=k$, $i_1,\ldots, i_{m-1}<k$,   and $i_{m+1},\ldots, i_a>k$. Then
\begin{eqnarray*}
 \mathrm{E}_{k}\Big(\prod_{t=1}^a x_{i_t,j_1} x_{i_t,j_2}\Big) 
	 &=& \Big(\prod_{t=1}^m x_{{i}_t,j_1}x_{{i}_t,j_2}\Big) \mathrm{E}_{k} \Big( \prod_{t=m+1}^ax_{{i}_{t},j_1}x_{{i}_{t},j_2} \Big) \\
	 &=& \Big(\prod_{t=1}^m x_{{i}_t,j_1}x_{{i}_t,j_2}\Big) \mathrm{E} \Big( \prod_{t=m+1}^ax_{{i}_{t},j_1}x_{{i}_{t},j_2} \Big) =0,
\end{eqnarray*} and    
\begin{align*}
 \mathrm{E}_{k-1}\Big(\prod_{t=1}^a x_{i_t,j_1} x_{i_t,j_2}\Big) 
	 =&~ \Big(\prod_{t=1}^{m-1} x_{{i}_t,j_1}x_{{i}_t,j_2}\Big)  \mathrm{E}_{k-1} \Big(x_{k,j_1}x_{k,j_2} \prod_{t=m+1}^ax_{{i}_{t},j_1}x_{{i}_{t},j_2}\Big) \\
	 =&~ \Big(\prod_{t=1}^{m-1} x_{{i}_t,j_1}x_{{i}_t,j_2}\Big) \mathrm{E} (x_{k,j_1}x_{k,j_2})\prod_{t=m+1}^a \mathrm{E} (x_{{i}_t,j_1}x_{{i}_t,j_2}) =0.
\end{align*} Therefore, we know $\eqref{eq:differenceuaexpect}\neq 0$ when $k\in\{i_1,\ldots,i_a\}$ and $i_1,\ldots,i_a\leq k$. 

When $k<a$, there exist some indexes  in  $\{i_1,\ldots,i_a\} > k$. Thus $\eqref{eq:differenceuaexpect}=0$, and  $\eqref{eq:diffexpank}=0$. 
When $k\geq a$, assume without loss of generality that  $i_a=k$ and $i_1,\cdots,i_{a-1} \leq k-1$, then
\begin{eqnarray*}
\mathrm{E}_{k-1}\Big[\Big(\prod_{t=1}^{a-1} x_{i_t,j_1} x_{i_t,j_2}\Big) x_{k,j_1}x_{k,j_2}\Big] =\Big(\prod_{t=1}^{a-1} x_{i_t,j_1} x_{i_t,j_2}\Big) \mathrm{E}(x_{k,j_1} x_{k,j_2})=0,\notag 
\end{eqnarray*} and
\begin{eqnarray*}
	&& \mathrm{E}_{k}\Big[\Big(\prod_{t=1}^{a-1} x_{i_t,j_1} x_{i_t,j_2}\Big) x_{k,j_1}x_{k,j_2}\Big]  =\Big(\prod_{t=1}^{a-1} x_{i_t,j_1} x_{i_t,j_2}\Big) x_{k,j_1}x_{k,j_2}.
\end{eqnarray*}
In summary,  for $k \geq a$,
\begin{align*}
	&~ \left( \mathrm{E}_{k} -\mathrm{E}_{k-1}\right) \frac{\tilde{\mathcal{U}}(a) }{\sigma(a)} \\
	=&~ \frac{1}{\sigma(a)P^n_{a}}    \sum_{ \substack{ 1\leq i_1 \neq \cdots \neq i_{a-1}\leq k-1;\\ 1\leq j_1\neq j_2\leq p}}\binom{a}{1}\times \left( \mathrm{E}_{k} -\mathrm{E}_{k-1}\right)\Big[\Big(\prod_{t=1}^{a-1} x_{i_t,j_1} x_{i_t,j_2}\Big) x_{k,j_1}x_{k,j_2}\Big] \\
	 =&~\frac{a}{\sigma(a)P^n_a} \sum_{1\leq i_1 \neq \cdots \neq i_{a-1}\leq k-1}\sum_{1\leq j_1\neq j_2\leq p} (x_{k,j_1} x_{k,j_2})\times \prod_{t=1}^{a-1}(x_{i_t,j_1} x_{i_t,j_2}).
\end{align*} 
In particular, we  note that the argument does not depend on whether Condition \ref{cond:alphamixing} or \ref{cond:ellpmoment} is specified.

\subsubsection{Proof of Lemma \ref{lm:targetorder} (on Page \pageref{lm:targetorder}, Section  \ref{sec:detailofjointnormal})} \label{sec:prooftargetorder}
By Lemma \ref{lm:cltabnkform}, we know the explicit form of $D_{n,k}=\sum_{r=1}^m t_{r}A_{n,k,a_r}$, and it follows that  
$\pi_{n,k}^2=  \sum_{1\leq r_1,r_2\leq m} t_{r_1}t_{r_2}\mathrm{E}_{k-1}( A_{n,k,a_{r_1}} A_{n,k,a_{r_2}}).$
Note that by Cauchy-Schwarz inequality, for some constant $C$, $$\mathrm{var}\Big(\sum_{k=1}^n \pi_{n,k}^2\Big)\leq Cn^2 \max_{1\leq k\leq n;\, 1\leq r_1,r_2\leq m} \mathrm{var}(\mathbb{T}_{k,a_{r_1},a_{r_2}}), $$
where we define  $c(n,a_r)=[a_r\times \{ \sigma(a_r)P^n_{a_r}\}^{-1}]^{2}$ and 
\begin{eqnarray*}
\mathbb{T}_{k,a_{r_1},a_{r_2}}&=&\mathrm{E}_{k-1}( A_{n,k,a_{r_1}} A_{n,k,a_{r_2}}) \notag \\
&=& \sum_{ \substack{ \mathbf{i} \in \mathcal{P}(k-1,a_{r_1}-1), \\ \tilde{\mathbf{i}}\in  \mathcal{P}(k-1,a_{r_2}-1) } } \sum_{\substack{1\leq j_1 \neq j_2\leq p,\\ 1\leq j_3 \neq j_4\leq p}} \{c(n,a_{r_1})c(n,a_{r_2})\}^{1/2} \notag \\
&& \times \, \mathrm{E}\Big( \prod_{t=1}^4x_{k,j_t}\Big) \times \Big(\prod_{t=1}^{a_{r_1}-1} x_{i_t,j_1} x_{i_t,j_2}\Big)\times\Big(\prod_{{t}=1}^{a_{r_2}-1}   x_{\tilde{i}_{{t}},j_3} x_{\tilde{i}_{{t}},j_4}\Big).
\end{eqnarray*} Therefore to prove Lemma \ref{lm:targetorder}, it suffices to prove $\mathrm{var}(\mathbb{T}_{k,a_{r_1},a_{r_2}})=o(n^{-2})$ for every  $1\leq k\leq n$ and $1\leq r_1,r_2\leq m$.  

Without loss of generality, we prove $\mathrm{var}(\mathbb{T}_{k,a_{1},a_{2}})=o(n^{-2})$ for any fixed constants $a_1$ and $a_2$ and $1\leq k\leq n$. Similarly to Section \ref{sec:proofvarianceorder}, for illustration,  we first consider  a simple setting where $x_{i,j}$'s are independent in Section \ref{par:proftaaindpcond}. 
Next in Section \ref{sec:pfvartaamigixing},  we prove that under Condition \ref{cond:alphamixing}, $\mathrm{var}(\mathbb{T}_{k,a_{1},a_{2}})=O(n^{-2}p^{-1}\log^3 p)=o(n^{-2})$. Last in Section \ref{par:pfmomentvartaa}, we prove that under Condition \ref{cond:ellpmoment}, $\mathrm{var}(\mathbb{T}_{k,a_{1},a_{2}})=O(n^{-2}p^{-2}+n^{-3})=o(n^{-2})$. Then Lemma \ref{lm:targetorder} is proved.

\paragraph{Proof illustration}\label{par:proftaaindpcond}
In this section, we assume $x_{i,j}$'s are independent and prove $\mathbb{T}_{k,a_{1},a_{2}}=o(n^{-2})$.


When $x_{i,j}$'s are independent, since $j_1\neq j_2$ and $j_3\neq j_4$, we know that $\mathrm{E}( x_{k,j_1}x_{k,j_2}x_{k,j_3}x_{k,j_4}) \neq 0$ only when $\{j_1, j_2\}=\{j_3, j_4\}$; and it follows that $\mathrm{E}( x_{k,j_1}x_{k,j_2}x_{k,j_3}x_{k,j_4})=\mathrm{E}(x_{1,j_1}^2)\mathrm{E}(x_{1,j_2}^2)$. Thus $\mathbb{T}_{k,a_1,a_2}=2c(n,a)\times T_{k,a_1,a_2}$, where we define   
\begin{align*}
T_{k,a_1,a_2}=\sum_{\substack{ \mathbf{i} \in \mathcal{P}(k-1,a_1-1), \\ \tilde{\mathbf{i}} \in \mathcal{P}(k-1,a_2-1) }} \sum_{1\leq j_1 \neq j_2\leq p} \prod_{t=1}^2\mathrm{E}(x_{1,j_t}^2)\Big(\prod_{t=1}^{a_1-1} x_{i_t,j_1} x_{i_t,j_2}\Big)\Big(\prod_{{t}=1}^{a_2-1}   x_{\tilde{i}_{{t}},j_1} x_{\tilde{i}_{{t}},j_2}\Big).
\end{align*}
We note that $c(n,a)$ is of order $\Theta(p^{-2}n^{-a})$ by Lemma \ref{lm:varianceorder}. To prove $\mathrm{var}(\mathbb{T}_{k,a,a})=o(n^{-2})$, it suffices to show that $\mathrm{var}(T_{k,a_1,a_2})=o(n^{a_1+a_2-2}p^4)$. If $a_1=a_2=1$, $T_{k,a_1,a_2}$ is not random and thus $\mathrm{var}(T_{k,a_1,a_2})=0$. It remains to consider $a_1\geq 1$ or $a_2\geq 1$ below. To examine $\mathrm{var}(T_{k,a_1,a_2})$, we will first consider $\mathrm{E}(T_{k,a_1,a_2})$ and $\mathrm{E}(T_{k,a_1,a_2}^2)$, then  $\mathrm{var}(T_{k,a_1,a_2})=\mathrm{E}(T_{k,a_1,a_2}^2)-\{ \mathrm{E}(T_{k,a_1,a_2})\}^2 $.


For $\mathrm{E}(T_{k,a_1,a_2})$,  note that $\mathrm{E}\{  (\prod_{t=1}^{a_1-1} x_{i_t,j_1} x_{i_t,j_2})(\prod_{{t}=1}^{a_2-1}   x_{\tilde{i}_{{t}},j_1} x_{\tilde{i}_{{t}},j_2}) \} \neq 0$ only when $\{\mathbf{i}\}=\{\tilde{\mathbf{i}}\}$ for given $\mathbf{i} \in \mathcal{P}(k-1,a_1-1)$ and $ \tilde{\mathbf{i}} \in \mathcal{P}(k-1,a_2-1)$. 
Therefore, if $a_1\neq a_2$, $\mathrm{E}(T_{k,a_1,a_2})=0$. 
If $a_1=a_2=a$ for some  $a$, we have
\begin{align}
	\mathrm{E}(T_{k,a_1,a_2})=\sum_{\substack{ \mathbf{i} \in \mathcal{P}(k-1,a-1), \\ \tilde{\mathbf{i}} \in \mathcal{P}(k-1,a-1) }} \mathbf{1}_{\{ \{\mathbf{i}\}=\{\tilde{\mathbf{i}} \} \} } \sum_{1\leq j_1\neq j_2\leq p} \{ \mathrm{E}(x_{1,j_1}^2)\mathrm{E}(x_{1,j_2}^2) \}^{a}, \label{eq:etaaksq}
\end{align} 
where $\mathbf{1}_{\{ \{\mathbf{i}\}=\{\tilde{\mathbf{i}} \} \} } $ represents an indicator such that the two sets $\{\mathbf{i}\}=\{\tilde{\mathbf{i}} \}$; and we write 
\begin{align*}
	\{\mathrm{E}(T_{k,a_1,a_2})\}^2=&\sum_{\substack{ \mathbf{i},\, \mathbf{m} \in \mathcal{P}(k-1,a-1), \\ \tilde{\mathbf{i}},\, \tilde{\mathbf{m}} \in \mathcal{P}(k-1,a-1) }} \,  \sum_{\substack{1\leq j_1\neq j_2\leq p,\\ 1\leq j_3\neq j_4\leq p}} \mathbf{1}_{\{\substack{\{\mathbf{i}\}=\{\tilde{\mathbf{i}}\},\, \{\mathbf{m}\}=\{\tilde{\mathbf{m}} \}  \}}}  \prod_{t=1}^4\{\mathrm{E}(x_{1,j_t}^2)\}^a.
\end{align*}  where $\mathbf{1}_{\{\substack{\{\mathbf{i}\}=\{\tilde{\mathbf{i}}\},\, \{\mathbf{m}\}=\{\tilde{\mathbf{m}} \}  \}}}$ represents an indicator such that $\{\mathbf{i}\}=\{\tilde{\mathbf{i}}\}$ and $\{\mathbf{m}\}=\{\tilde{\mathbf{m}}\}$ hold at the same time. 

For $\mathrm{E}(T_{k,a_1,a_2}^2)$, we have
\begin{align}
	\mathrm{E}(T_{k,a_1,a_2}^2)=&\sum_{\substack{ \mathbf{i},\, \mathbf{m} \in \mathcal{P}(k-1,a_1-1), \\ \tilde{\mathbf{i}},\, \tilde{\mathbf{m}} \in \mathcal{P}(k-1,a_2-1) }} \,   \sum_{\substack{1\leq j_1\neq j_2\leq p,\\ 1\leq j_3\neq j_4\leq p}} \tilde{Q}(\mathbf{i},\tilde{\mathbf{i}},\mathbf{m},\tilde{\mathbf{m}},\mathbf{j}),\label{eq:taakindpsqexp}  
\end{align} where for the simplicity of notation, we define 
\begin{align*}
	\tilde{Q}(\mathbf{i},\tilde{\mathbf{i}},\mathbf{m},\tilde{\mathbf{m}},\mathbf{j})=\mathrm{E}\Big(  \prod_{t=1}^{a-1} x_{i_t,j_1} x_{i_t,j_2}  x_{\tilde{i}_t,j_1} x_{\tilde{i}_t,j_2}x_{m_t,j_3} x_{m_t,j_4}  x_{\tilde{m}_t,j_3} x_{\tilde{m}_t,j_4} \Big)\prod_{t=1}^4\mathrm{E}(x_{1,j_t}^2).
\end{align*}
We decompose $\mathrm{E}(T_{k,a_1,a_2}^2)=\mathrm{E}(T_{k,a_1,a_2}^2)_{(1)}+\mathrm{E}(T_{k,a_1,a_2}^2)_{(2)}$, where
\begin{align*}
\mathrm{E}(T_{k,a_1,a_2}^2)_{(1)}=&\sum_{\substack{ \mathbf{i},\, \mathbf{m} \in \mathcal{P}(k-1,a_1-1), \\ \tilde{\mathbf{i}},\, \tilde{\mathbf{m}} \in \mathcal{P}(k-1,a_2-1) }} \,  \sum_{\substack{1\leq j_1\neq j_2\leq p,\\ 1\leq j_3\neq j_4\leq p}}  \mathbf{1}_{\Big\{\substack{\{\mathbf{i}\}=\{\tilde{\mathbf{i}}\},\\ \, \{\mathbf{m}\}=\{\tilde{\mathbf{m}} \}}  \Big\}} \tilde{Q}(\mathbf{i},\tilde{\mathbf{i}},\mathbf{m},\tilde{\mathbf{m}},\mathbf{j}),\\ 
\mathrm{E}(T_{k,a_1,a_2}^2)_{(2)}=&\sum_{\substack{ \mathbf{i},\, \mathbf{m} \in \mathcal{P}(k-1,a_1-1), \\ \tilde{\mathbf{i}},\, \tilde{\mathbf{m}} \in \mathcal{P}(k-1,a_2-1) }} \,  \sum_{\substack{1\leq j_1\neq j_2\leq p,\\ 1\leq j_3\neq j_4\leq p}}  \mathbf{1}_{\Big\{ \substack{\{\mathbf{i}\}\neq \{\tilde{\mathbf{i}}\} \text{ or }\\ \{\mathbf{m}\}\neq \{\tilde{\mathbf{m}}\} }\Big\}} \tilde{Q}(\mathbf{i},\tilde{\mathbf{i}},\mathbf{m},\tilde{\mathbf{m}},\mathbf{j}),  \notag
\end{align*} where the two indicators 
$\mathbf{1}_{\{\substack{\{\mathbf{i}\}=\{\tilde{\mathbf{i}}\}, \, \{\mathbf{m}\}=\{\tilde{\mathbf{m}} \}}  \}}$ and $\mathbf{1}_{\{ \substack{\{\mathbf{i}\}\neq \{\tilde{\mathbf{i}}\} \text{ or } \{\mathbf{m}\}\neq \{\tilde{\mathbf{m}}\} }\}}$ represent that $\{\mathbf{i}\}=\{\tilde{\mathbf{i}}\}$ and $\{\mathbf{m}\}=\{\tilde{\mathbf{m}}\}$ hold at the same time or not, respectively. 
 To prove $\mathrm{var}(T_{k,a_1,a_2})=o(n^{a_1+a_2-2}p^4)$, since $|\mathrm{var}(T_{k,a_1,a_2})|\leq |\mathrm{E}(T_{k,a_1,a_2}^2)_{(1)}-\{\mathrm{E}(T_{k,a_1,a_2})\}^2|+|\mathrm{E}(T_{k,a_1,a_2})_{(2)}|$, we show $|\mathrm{E}(T_{k,a_1,a_2}^2)_{(1)}-\{\mathrm{E}(T_{k,a_1,a_2})\}^2|=o(n^{2(a-1)}p^4)$ and $\mathrm{E}(T_{k,a_1,a_2})_{(2)}=o(n^{a_1+a_2-2}p^4)$,  respectively below.  
 
\subparagraph*{Part I: $|\mathrm{E}(T_{k,a_1,a_2}^2)_{(1)}-\{\mathrm{E}(T_{k,a_1,a_2})\}^2|=o(n^{a_1+a_2-2}p^4)$}
By the analysis above, $\mathrm{E}(T_{k,a_1,a_2})=0$  if $a_1\neq a_2$. Also we know $\mathrm{E}(T_{k,a_1,a_2}^2)_{(1)}=0$ if $a_1\neq a_2$, since $\{\mathbf{i}\}= \{\tilde{\mathbf{i}}\}$ and $\{\mathbf{m}\}= \{\tilde{\mathbf{m}}\}$ will not happen.
Thus it remains to consider  $a_1=a_2=a$ for some $a$ below. 
By the forms of $\mathrm{E}(T_{k,a_1,a_2}^2)_{(1)}$ and $\{\mathrm{E}(T_{k,a_1,a_2})\}^2$, we consider $\{\mathbf{i}\}=\{\tilde{\mathbf{i}}\}$ and  $\{\mathbf{m}\}=\{\tilde{\mathbf{m}}\}$. 
If $\{\mathbf{i}\}\cap \{\mathbf{m}\}=\emptyset$, 
\begin{align}
	\tilde{Q}(\mathbf{i},\tilde{\mathbf{i}},\mathbf{m},\tilde{\mathbf{m}},\mathbf{j})=\prod_{t=1}^4\{\mathrm{E}(x_{1,j_t}^2)\}^a,\label{eq:sumqtermindecond}
\end{align}
where we use the independence between $x_{i,j}$'s and $j_1\neq j_2$ and $j_3\neq j_4$.   
 If $\{j_1,j_2\}\cap \{j_3,j_4\}=\emptyset$,   \eqref{eq:sumqtermindecond} also holds similarly by the independence between $x_{i,j}$'s.
In summary, when $\{\mathbf{i}\}=\{\tilde{\mathbf{i}}\}$ and $\{\mathbf{m}\}=\{\tilde{\mathbf{m}}\}$, we know that  $| \mathrm{E}\{\tilde{Q}(\mathbf{i},\tilde{\mathbf{i}},\mathbf{m},\tilde{\mathbf{m}},\mathbf{j})\} - \prod_{t=1}^4\{\mathrm{E}(x_{1,j_t}^2)\}^a |= 0$, if $\{\mathbf{i}\}\cap \{\mathbf{m}\}=\emptyset$ or $\{j_1,j_2\}\cap \{j_3,j_4\}=\emptyset$. It follows that
\begin{align}
	&~|\mathrm{E}(T_{k,a_1,a_2}^2)_{(1)}-\{\mathrm{E}(T_{k,a_1,a_2})\}^2| \label{eq:bdddifftaaindpcond} \\
\leq &~  \sum_{\substack{ \mathbf{i},\, \mathbf{m} \in \mathcal{P}(k-1,a_1-1), \\ \tilde{\mathbf{i}},\, \tilde{\mathbf{m}} \in \mathcal{P}(k-1,a_2-1) }} \,  \sum_{\substack{1\leq j_1\neq j_2\leq p,\\ 1\leq j_3\neq j_4\leq p}} \mathbf{1}_{\Big\{ \substack{\{\mathbf{i}\}=\{\tilde{\mathbf{i}}\}, \, \{\mathbf{m}\}=\{\tilde{\mathbf{m}}\}, \, \{\mathbf{i}\}\cap \{\mathbf{m}\}\neq \emptyset,\\  \{j_1,j_2\}\cap \{j_3,j_4\}\neq \emptyset }\Big\}} \notag \\
&~\quad \times   \Big| \tilde{Q}(\mathbf{i},\tilde{\mathbf{i}},\mathbf{m},\tilde{\mathbf{m}},\mathbf{j}) - \prod_{t=1}^4\{\mathrm{E}(x_{1,j_t}^2)\}^a \Big|
   \notag \\
	\leq & ~ C n^{a_1+a_2-3}p^{4-1}=o(n^{a_1+a_2-2}p^4), \notag
\end{align} where we use the boundedness of moments in Condition \ref{cond:finitemomt} and the facts:
\begin{align*}
\sum_{\substack{ \mathbf{i},\, \mathbf{m} \in \mathcal{P}(k-1,a_1-1); \, \tilde{\mathbf{i}},\, \tilde{\mathbf{m}} \in \mathcal{P}(k-1,a_2-1) }} \,&\mathbf{1}_{\{\{\mathbf{i}\}=\{\tilde{\mathbf{i}}\}, \, \{\mathbf{m}\}=\{\tilde{\mathbf{m}}\}, \, \{\mathbf{i}\}\cap \{\mathbf{m}\}\neq \emptyset\}} \leq Cn^{a_1+a_2-3}, \notag \\
 	\sum_{\substack{1\leq j_1\neq j_2\leq p;\, 1\leq j_3\neq j_4\leq p}} &\mathbf{1}_{\{ \{j_1,j_2\}\cap \{j_3,j_4\}\neq \emptyset\} } \leq Cp^{4-1}.
\end{align*} 

\subparagraph*{Part II: $\mathrm{E}(T_{k,a_1,a_2})_{(2)}=o(n^{a_1+a_2-2}p^4)$} 
We claim that $\tilde{Q}(\mathbf{i},\tilde{\mathbf{i}},\mathbf{m},\tilde{\mathbf{m}},\mathbf{j}) =0$ when $|\{\mathbf{i}\}\cup \{\tilde{\mathbf{i}}\}\cup \{\mathbf{m}\} \cup \{\tilde{\mathbf{m}}\}|>a_1+a_2-2$, i.e., one of the index only appears once in the four index sets. To see this, we assume, without loss of generality, $i_1\in \{\mathbf{i}\}$ but $i_1 \not \in \{\tilde{\mathbf{i}}\} \cup \{\mathbf{m}\}\cup \{\tilde{\mathbf{m}}\}$, then 
\begin{align}
	\tilde{Q}(\mathbf{i},\tilde{\mathbf{i}},\mathbf{m},\tilde{\mathbf{m}},\mathbf{j})=\mathrm{E}(x_{i_1,j_1}x_{i_1,j_2})\times \mathrm{E} (\text{the remaining terms})=0.\label{eq:qtildezero}
\end{align}Thus when $\tilde{Q}(\mathbf{i},\tilde{\mathbf{i}},\mathbf{m},\tilde{\mathbf{m}},\mathbf{j})\neq 0$, the  union of the four sets satisfies
\begin{align}
	|\{\mathbf{i}\}\cup \{\tilde{\mathbf{i}}\}\cup \{\mathbf{m}\} \cup \{\tilde{\mathbf{m}}\}|\leq a_1+a_2-2. \label{eq:sizecontroltaaindpcond}
\end{align}
In addition,  note that we need to consider $\{\mathbf{i}\}\neq \{\tilde{\mathbf{i}}\}$ or  $\{\mathbf{m}\}\neq \{ \tilde{\mathbf{m}}\}$ when analyzing $\mathrm{E}(T_{k,a_1,a_2}^2)_{(2)}$. Assume, without loss of generality, that there exists an index $i_1\in \{\mathbf{i}\}$ but $i_1\not \in \{\tilde{\mathbf{i}}\}$. Similarly to  \eqref{eq:qtildezero}, we have $\tilde{Q}(\mathbf{i},\tilde{\mathbf{i}},\mathbf{m},\tilde{\mathbf{m}},\mathbf{j})\neq 0$ only when $i_1\in \{\mathbf{m}\}\cup \{\tilde{\mathbf{m}}\}$.  If $i_1\in \{\mathbf{m}\}$ and $i_1 \in \{\tilde{\mathbf{m}}\}$,
\begin{align*}
	\tilde{Q}(\mathbf{i},\tilde{\mathbf{i}},\mathbf{m},\tilde{\mathbf{m}},\mathbf{j})=\mathrm{E}(x_{1,j_1}x_{1,j_3}x_{1,j_4})\times \mathrm{E}(\text{all the remaining terms})=0,
\end{align*} as $j_3\neq j_4$ and $x_{i,j}$'s are independent;
if $i_1$ is only in one of $\{\mathbf{m}\}$ and $\{\tilde{\mathbf{m}}\}$, for example, $i_1\in \{\mathbf{m}\}$ but $i_1\not \in \{\tilde{\mathbf{m}}\}$, then 
\begin{align*}
	\tilde{Q}(\mathbf{i},\tilde{\mathbf{i}},\mathbf{m},\tilde{\mathbf{m}},\mathbf{j})=\mathrm{E}(x_{1,j_1}x_{1,j_3})\times \mathrm{E}(\text{all the remaining terms}), 
\end{align*} which is nonzero only when $j_1=j_3$. 
 By analyzing  the indexes in $\{\tilde{\mathbf{i}}\}$ symmetrically, we further know $\tilde{Q}(\mathbf{i},\tilde{\mathbf{i}},\mathbf{m},\tilde{\mathbf{m}},\mathbf{j})\neq 0$ only when $\{j_1,j_2\}=\{j_3,j_4\}$.  Therefore,  
\begin{align}
	|\{j_1,j_2,j_3,j_4\}|= 2. \label{eq:jindxorderindep}
\end{align}
Combining \eqref{eq:sizecontroltaaindpcond} and \eqref{eq:jindxorderindep}, and by the boundedness of moments in Condition \ref{cond:finitemomt}, we have 
\begin{align}
	|\mathrm{E}(T_{k,a_1,a_2}^2)_{(2)}|= O(n^{a_1+a_2-2}p^2). \label{eq:taasecondindpcond}
\end{align}

In summary, combining \eqref{eq:bdddifftaaindpcond} and \eqref{eq:taasecondindpcond}, we have 
\begin{align*}
	|\mathrm{var}(T_{k,a_1,a_2})|=&~ |\mathrm{E}(T_{k,a_1,a_2}^2)-\{ \mathrm{E}(T_{k,a_1,a_2})\}^2|\notag \\
	\leq & ~|\mathrm{E}(T_{k,a_1,a_2}^2)_{(1)}-\{\mathrm{E}(T_{k,a_1,a_2})\}^2|+|\mathrm{E}(T_{k,a_1,a_2}^2)_{(2)}|\notag \\
	=&~ O(n^{a_1+a_2-3}p^3)+O(n^{a_1+a_2-2}p^2). \notag 
\end{align*} which is $o(n^{a_1+a_2-2}p^4)$. 

\paragraph{Proof under Condition \ref{cond:alphamixing}} \label{sec:pfvartaamigixing} \quad


\smallskip
\subparagraph*{Proof idea}
Section \ref{par:proftaaindpcond} assumes that $x_{i,j}$'s are independent. In this section, we further prove Lemma \ref{lm:targetorder} under Condition \ref{cond:alphamixing}. Similarly to Section \ref{par:complpflm1}, we know that under Condition \ref{cond:alphamixing}, $x_{i,j}$'s may be  no longer independent, but the dependence between $x_{i,j_1}$ and $x_{i,j_2}$ degenerates exponentially with their distance $|j_1-j_2|$. To quantitatively examine $|j_1-j_2|$,  we will introduce a threshold of distance $D_0$ to be defined in  \eqref{eq:thresholdd} below, which is similar to $K_0$ in  \eqref{eq:thresholddK}.  Intuitively, when $|j_1-j_2|>D_0$, $x_{i,j_1}$ and $x_{i,j_2}$ are ``asymptotically independent'' with similar properties to those under the independence case in Section \ref{par:proftaaindpcond}.  The following proof  will provide comprehensive discussions based on  $D_0$. 

Recall that as argued at the beginning of Section \ref{sec:prooftargetorder},  to prove Lemma \ref{lm:targetorder}, it suffices to show $\mathrm{var}(\mathbb{T}_{k,a_{1},a_{2}})=O(n^{-2}p^{-1}\log^3 p)=o(n^{-2})$ for any fixed integers $a_1$ and $a_2$. To facilitate the discussion, we define some notation to be used in the proof.


\smallskip
\subparagraph*{Notation} \label{sec:vartaanotations}
For given tuples $\mathbf{i}^{(l)}=(i_1,\ldots,i_{a_l-1}) \in \mathcal{P}(k-1,a_l-1)$ with $l=1,2$, we define $(\mathbf{i}^{(1)},{\mathbf{i}}^{(2)} )=(i_1^{(1)},\ldots,i_{a_1-1}^{(1)}, {i}_1^{(2)},\ldots,{i}_{a_2-1}^{(2)})$, and let $S(\mathbf{i}^{(1)},{\mathbf{i}}^{(2)})$ be a collection of tuples $(\mathbf{i}^{(1)},{\mathbf{i}}^{(2)} )$ where $\mathbf{i}^{(l)} \in \mathcal{P}(k-1,a_l-1)$ for $l=1,2$.   
Moreover, we  define $\mathcal{J}=\{(j_1,j_2): 1\leq j_1\neq j_2 \leq p)$. Then 
\begin{align*}
	\mathbb{T}_{k,a_{1},a_{2}}=\sum_{\substack{ S(\mathbf{i}^{(1)},\mathbf{i}^{(2)}); \\  (j_1,j_2), (j_3,j_4)\in \mathcal{J} }  }\Big\{\prod_{l=1}^2c(n,a_l)\Big\}^{1/2}\times  \mathbb{X}( k,\mathbf{i}^{(l)}, j_{2l-1},j_{2l}: l=1,2), 
\end{align*} where we recall that $c(n,a)=[a\times \{ \sigma(a)P^n_a\}^{-1}]^{2}$ and we define
\begin{eqnarray*}
\mathbb{X}( k,\mathbf{i}^{(l)}, j_{2l-1},j_{2l}: l=1,2)=\mathrm{E}\Big( \prod_{t=1}^4 x_{k,j_t}  \Big)\prod_{l=1}^2 \prod_{t=1}^{a_l-1} (x_{i_t^{(l)},\, j_{2l-1}} x_{i_t^{(l)},\, j_{2l}} ). \notag 
\end{eqnarray*}

In addition, for easy representation, we define $a_3=a_1$ and $a_4=a_2$. Then for given tuples 
$\mathbf{i}^{(l)} \in \mathcal{P}(k-1,a_l-1)$ with $l=1,2,3,4$, 
we define the tuple 
$$(\mathbf{i}^{(1)},{\mathbf{i}}^{(2)},\mathbf{i}^{(3)},{\mathbf{i}}^{(4)} )=(i_1^{(1)},\ldots,i_{a_1-1}^{(1)}, {i}_1^{(2)},\ldots,{i}_{a_2-1}^{(2)}, {i}_1^{(3)},\ldots,{i}_{a_1-1}^{(3)}, {i}_1^{(4)},\ldots,{i}_{a_2-1}^{(4)}),$$ and let  $S(\mathbf{i}^{(1)},{\mathbf{i}}^{(2)}, \mathbf{i}^{(3)},{\mathbf{i}}^{(4)})$ be a collection of $(\mathbf{i}^{(1)},{\mathbf{i}}^{(2)},\mathbf{i}^{(3)},{\mathbf{i}}^{(4)} )$ where $\mathbf{i}^{(l)} \in \mathcal{P}(k-1,a_l-1)$ with $l=1,2,3,4$. Then we can write 
\begin{align*}
	\mathbb{T}_{k,a_{1},a_{2}}^2 =\sum_{ \substack{S(\mathbf{i}^{(1)},{\mathbf{i}}^{(2)}, \mathbf{i}^{(3)}, \mathbf{i}^{(4)});\\ (j_1,j_2), (j_3,j_4), (j_5,j_6), (j_7,j_8) \in \mathcal{J} } } \prod_{l=1}^2c(n,a_l)\, \mathbb{X}( k,\mathbf{i}^{(l)}, j_{2l-1},j_{2l}: l=1,2,3,4),  
\end{align*} where we define
\begin{align*}
&~\mathbb{X}( k,\mathbf{i}^{(l)}, j_{2l-1},j_{2l}: l=1,2,3,4)\notag \\
=&~	\mathrm{E}\Big(\prod_{t=1}^4 x_{k,j_t}\Big) \mathrm{E}\Big(\prod_{t=5}^8 x_{k,j_t} \Big) \prod_{l=1}^4\prod_{t=1}^{a_l-1} x_{i_t^{(l)}, \, j_{2l-1}}x_{i_t^{(l)},\, j_{2l}}. 
\end{align*}

 Recall the definitions at the beginning of Section \ref{par:notationindpcond}. $\{\mathbf{i}^{(1)}\} = \{\mathbf{i}^{(2)}\}$ represents that the two tuples have the same elements without order. 
 We next decompose $S(\mathbf{i}^{(1)},{\mathbf{i}}^{(2)})$ into two parts: the collection $S(\mathbf{i}^{(1)},{\mathbf{i}}^{(2)},1)$ contains the tuples $(\mathbf{i}^{(1)},{\mathbf{i}}^{(2)} )$ satisfying $\{\mathbf{i}^{(1)}\} \neq \{\mathbf{i}^{(2)}\}$, and the collection $S(\mathbf{i}^{(1)},{\mathbf{i}}^{(2)},2)$ contains the tuples $(\mathbf{i}^{(1)},{\mathbf{i}}^{(2)} )$ satisfying $\{\mathbf{i}^{(1)}\} = \{\mathbf{i}^{(2)}\}$. Then we can write $\mathbb{T}_{k,a_{1},a_{2}}=\sum_{v=1}^2\mathbb{T}_{k,a_{1},a_{2},v}$, where 
\begin{align*}
	\mathbb{T}_{k,a_{1},a_{2},v}=\sum_{\substack{ S(\mathbf{i}^{(1)},\mathbf{i}^{(2)},v); \\  (j_1,j_2), (j_3,j_4)\in \mathcal{J} }  }\Big\{\prod_{l=1}^2c(n,a_l)\Big\}^{1/2}\times  \mathbb{X}( k,\mathbf{i}^{(l)}, j_{2l-1},j_{2l}: l=1,2). 
\end{align*}
In addition, for $v=1,2$, we let  the collection $S(\mathbf{i}^{(1)},{\mathbf{i}}^{(2)},\mathbf{i}^{(3)},{\mathbf{i}}^{(4)},v,v)$ contain the tuples $(\mathbf{i}^{(1)},{\mathbf{i}}^{(2)},\mathbf{i}^{(3)},{\mathbf{i}}^{(4)})$ such that $ (\mathbf{i}^{(1)},{\mathbf{i}}^{(2)})\in S(\mathbf{i}^{(1)},{\mathbf{i}}^{(2)},v)$ and $ (\mathbf{i}^{(3)},{\mathbf{i}}^{(4)})\in S(\mathbf{i}^{(3)},{\mathbf{i}}^{(4)},v)$. It follows that for $v=1,2$, we can write
\begin{align}
	\mathbb{T}_{k,a_{1},a_{2},v}^2 = \sum_{ \substack{ S(\mathbf{i}^{(1)},{\mathbf{i}}^{(2)},\mathbf{i}^{(3)},{\mathbf{i}}^{(4)},v,v);\\(j_1,j_2), (j_3,j_4), (j_5,j_6), (j_7,j_8)\in \mathcal{J}  }} \prod_{l=1}^2c(n,a_l)\label{eq:tbbavsq} \\
	\times \mathbb{X}( k,\mathbf{i}^{(l)}, j_{2l-1},j_{2l}: l=1,2,3,4). \notag
\end{align}

We next define some notation on  the $j$ indexes. Given a tuple $(j_{t_1},j_{t_2},j_{t_3},j_{t_4})$, we write its corresponding ordered version as
\begin{eqnarray}\label{eq:orderversion}
	(\tilde{j}_{t_1}, \tilde{j}_{t_2}, \tilde{j}_{t_3}, \tilde{j}_{t_4})\quad \text{satisfying} \quad \tilde{j}_{t_1}\leq \tilde{j}_{t_2}\leq \tilde{j}_{t_3}\leq \tilde{j}_{t_4}. 
\end{eqnarray}
 Given the ordered indexes, we define the maximum distance between indexes in the given tuple as $\mathbb{D}_{M}(j_{t_1},j_{t_2},j_{t_3},j_{t_4})= \max \{ \tilde{j}_{t_2}-\tilde{j}_{t_1}, \tilde{j}_{t_3}-\tilde{j}_{t_2}, \tilde{j}_{t_4}-\tilde{j}_{t_3}\}$. For the simplicity of  presentation later, for tuples $(j_1,j_2), (j_3,j_4), \allowbreak (j_5,j_6), (j_7,j_8)\in \mathcal{J}$,  we further define
\begin{eqnarray}\label{eq:distancedef}
	\kappa_1=\mathbb{D}_{M}(j_1,j_2,j_3,j_4), &&\kappa_2=\mathbb{D}_{M}(j_5,j_6,j_7,j_8),  \\ 
	\kappa_3=\mathbb{D}_{M}(j_1,j_2,j_5,j_6) &&\kappa_4=\mathbb{D}_{M}(j_1,j_2,j_7,j_8).  \notag
\end{eqnarray}
In the following discussion, to quantitatively evaluate the distances in \eqref{eq:distancedef}, we introduce a threshold $D_0$ below. In particular, given small positive constants $\mu$ and $ \epsilon$, and $\delta$ in Condition \ref{cond:alphamixing}, we define
\begin{eqnarray} \label{eq:thresholdd}
	D_0 = \frac{-(2+\epsilon)(8+\mu) \log p}{\epsilon \log \delta}, 
\end{eqnarray} which will be used as discussed at the beginning of this section on Page \pageref{sec:pfvartaamigixing}. 


\medskip
\subparagraph*{Proof}  
We present the proof of $\mathrm{var}(\mathbb{T}_{k,a_{1},a_{2}})=O(n^{-2}p^{-1}\log^3p)$ based on the notation above. 
 Note that  we can write $\mathbb{T}_{k,a_{1},a_{2}}=\sum_{v=1}^2\mathbb{T}_{k,a_{1},a_{2},v}$. By the Cauchy-Schwarz inequality,  we know it suffices to show $\mathrm{var}(\mathbb{T}_{k,a_{1},a_{2},v})=O(n^{-2}p^{-1}\log^3p)$  for $v=1,2$ respectively.  

\smallskip

\subparagraph*{Step I: $\mathrm{var}(\mathbb{T}_{k,a_{1},a_{2},1})=O(n^{-2}p^{-1}\log^3p)$}

By the definition of $\mathbb{T}_{k,a_{1},a_{2},1}$, we have $\{\mathbf{i}^{(1)}\} \neq \{\mathbf{i}^{(2)}\}$ for $(\mathbf{i}^{(1)},\mathbf{i}^{(2)})\in S(\mathbf{i}^{(1)},{\mathbf{i}}^{(2)},1)$. Suppose, without loss of generality, that index $i\in \{\mathbf{i}^{(1)}\}$  but $i \not \in \{\mathbf{i}^{(2)}\}$. Then under $H_0$, 
\begin{eqnarray}
&& \mathrm{E}\{\mathbb{X}( k,\mathbf{i}^{(l)}, j_{2l-1},j_{2l}: l=1,2,3,4)\}\label{eq:exptermzeroclt1} \\
&=&\mathrm{E}(x_{i,j_1}x_{i,j_2}) \times \mathrm{E}(\text{other terms})=0.	 \notag
\end{eqnarray}
Therefore $\mathrm{E}(\mathbb{T}_{k,a_{1},a_{2},1})=0$ and $\mathrm{var}(\mathbb{T}_{k,a_{1},a_{2},1})=\mathrm{E}(\mathbb{T}_{k,a_{1},a_{2},1}^2)$.

By \eqref{eq:tbbavsq}, we have
\begin{align*}
	\mathbb{T}_{k,a_{1},a_{2},1}^2=\sum_{\substack{ S(\mathbf{i}^{(1)},\mathbf{i}^{(2)},1,1); \\  (j_1,j_2), (j_3,j_4), (j_5,j_6), (j_7,j_8)\in \mathcal{J} }  }\prod_{l=1}^2c(n,a_l) \notag \\ \times  \mathbb{X}( k,\mathbf{i}^{(l)}, j_{2l-1},j_{2l}: l=1,2,3,4).
\end{align*}
To prove $\mathrm{var}(\mathbb{T}_{k,a_{1},a_{2},1})=O(n^{-2}p^{-1}\log^3p)$, we will next show that for given $(j_1,j_2), (j_3,j_4), (j_5,j_6), (j_7,j_8)\in \mathcal{J}$, 
\begin{align}
\mathrm{E}\Big\{ \sum_{S(\mathbf{i}^{(1)},\mathbf{i}^{(2)},1,1)} \mathbb{X}( k,\mathbf{i}^{(l)}, j_{2l-1},j_{2l}: l=1,2,3,4)\Big\}=O(n^{a_1+a_2-2}); \label{eq:expcltp1nor}
\end{align} and for given $(\mathbf{i}^{(1)},\mathbf{i}^{(2)},\mathbf{i}^{(3)},\mathbf{i}^{(4)})\in S(\mathbf{i}^{(1)},\mathbf{i}^{(2)},1,1)$,
\begin{eqnarray}
\quad\quad \ &&\mathrm{E}\Big\{  \sum_{\substack{(j_1,j_2), (j_3,j_4),\\ (j_5,j_6), (j_7,j_8)\in \mathcal{J}}} 	 \mathbb{X}( k,\mathbf{i}^{(l)}, j_{2l-1},j_{2l}: l=1,2,3,4)\Big\} =O(p^{3}\log^3p).\label{eq:expcltppor2}
\end{eqnarray}Given \eqref{eq:expcltp1nor} and \eqref{eq:expcltppor2}, since $c(n,a_l)=\Theta(p^{-2}n^{-a_l})$, we can  obtain $\mathrm{E}(\mathbb{T}_{k,a_{1},a_{2},1}^2)=O(n^{-2}p^{-1}\log^3p)$. Thus to finish the proof, it remains to prove \eqref{eq:expcltp1nor} and \eqref{eq:expcltppor2}.

To prove \eqref{eq:expcltp1nor}, we claim that $\mathrm{E}\{  \mathbb{X}( k,\mathbf{i}^{(l)}, j_{2l-1},j_{2l}: l=1,2,3,4)\}= 0$ when  $|\cup_{l=1}^4 \{\mathbf{i}^{(l)}\}| > a_1+a_2-2$,  i.e., there exists one index only appears once in the four index sets $\{\mathbf{i}^{(l)}\}$, $l=1,\ldots,4$. Too see this, suppose an index $i\in \{\mathbf{i}^{(1)}\}$ but $i \not \in \{\mathbf{i}^{(2)}\}$, $i \not \in \{\mathbf{i}^{(3)}\}$ and $i \not \in \{\mathbf{i}^{(4)}\}$, then \eqref{eq:exptermzeroclt1} holds.  Therefore, $\mathrm{E}\{  \mathbb{X}( k,\mathbf{i}^{(l)}, j_{2l-1},j_{2l}: l=1,2,3,4)\}\neq  0$ only when
\begin{align}
	\Big|\cup_{l=1}^4 \{\mathbf{i}^{(l)} \}\Big|\leq a_1+a_2-2. \label{eq:totalsizeindesibound}
\end{align}
 By the boundedness of moments from Condition \ref{cond:finitemomt}, we know \eqref{eq:expcltp1nor} holds.

We next prove \eqref{eq:expcltppor2}. For given $(\mathbf{i}^{(1)},\mathbf{i}^{(2)},\mathbf{i}^{(3)},\mathbf{i}^{(4)})\in S(\mathbf{i}^{(1)},\mathbf{i}^{(2)},1,1)$, we know $\{\mathbf{i}^{(1)}\}\neq \{\mathbf{i}^{(2)}\}$ and $\{\mathbf{i}^{(3)}\}\neq \{\mathbf{i}^{(4)}\}$. Suppose, without loss of generality, there exists an index  $i\in \{\mathbf{i}^{(3)}\}$ and $i\not \in \{\mathbf{i}^{(4)}\}$. If $i\not \in \{\mathbf{i}^{(1)}\}$ and $i \not \in \{\mathbf{i}^{(2)}\}$,  similarly, \eqref{eq:exptermzeroclt1} holds. Then we consider $i \in \{\mathbf{i}^{(1)}\}$ or $i  \in \{\mathbf{i}^{(2)}\}$ in the following three cases.

\smallskip


\textbf{Case 1:} When $i \in \{ \mathbf{i}^{(1)}\}$ and $i  \not \in \{\mathbf{i}^{(2)}\}$,  we know
\begin{eqnarray}
	\quad \ & & \mathrm{E}\Big\{ \mathbb{X}( k,\mathbf{i}^{(l)}, j_{2l-1},j_{2l}: l=1,2,3,4) \Big\} \label{eq:step2case1onlyinone} \\
	\quad \ &=&\mathrm{E}\Big(\prod_{t=1}^4 x_{k,j_t}\Big) \times \mathrm{E}\Big(\prod_{t=5}^8 x_{k,j_t}\Big)\times \mathrm{E}\Big(\prod_{t=1,2,5,6}x_{i,j_t}\Big) \times \mathrm{E}(\mathrm{other\ terms}). \notag
\end{eqnarray} 
If $x_{i,j}$'s are independent as in Section \ref{par:proftaaindpcond}, 
we know $\eqref{eq:step2case1onlyinone}\neq 0$ only when $\{j_1,j_2\}=\{j_3,j_4\}=\{j_5,j_6\}=\{j_7,j_8\},$ which induces $|\{j_1,\ldots,j_8\}|=2$ and  $  \sum_{\substack{(j_1,j_2), (j_3,j_4), (j_5,j_6), (j_7,j_8)\in \mathcal{J}}}\allowbreak \mathrm{E}\{ \mathbb{X}( k,\mathbf{i}^{(l)}, j_{2l-1},j_{2l}: l=1,2,3,4)\}=O(p^2),$  i.e.,   \eqref{eq:expcltppor2} is obtained.   
 Under Condition  \ref{cond:alphamixing},  $x_{i,j}$'s may be no longer  independent, but as discussed at the beginning of Section \ref{sec:pfvartaamigixing},  we can still prove \eqref{eq:expcltppor2} similarly to the independence case.  
In particular, based on $D_0$ in \eqref{eq:thresholdd}, we evaluate \eqref{eq:step2case1onlyinone} by discussing the following three sub-cases (a)--(c).  
	\begin{enumerate}
	   \item[(a)] When both $(j_1,j_2,j_3,j_4)$ and  $(j_5,j_6,j_7,j_8)$ contain only two distinct indexes within   each tuple,  i.e., $|\{j_1,j_2,j_3,j_4\}|=|\{j_5,j_6,j_7,j_8\}|=2$, we consider without loss of generality that $j_1=j_3$, $j_2=j_4$, $j_5=j_7$, and $j_6=j_8$. Then
	   \begin{align*}
	   	\eqref{eq:step2case1onlyinone}=\mathrm{E}(x_{k,j_1}^2x_{k,j_2}^2 )\mathrm{E}(x_{k,j_5}^2x_{k,j_6}^2 )\mathrm{E}(x_{k,j_1}x_{k,j_2}x_{k,j_5} x_{k,j_6} )   \mathrm{E}(\mathrm{other\ terms}).
	   \end{align*}  
	      (a.1) If $(j_1,j_2,j_5,j_6)$    contains  two distinct indexes, i.e., $|\{j_1,j_2,j_5,j_6\}|=2$, we assume   without loss of generality that $j_1=j_5$ and $j_2=j_6 $. Then   $|\{j_1,\ldots,j_8\}|=2$ and in this case, the total number of distinct $j$ indexes is $O(p^2)$.  
	      \vspace{0.5em}
	      
	      (a.2) If $(j_1,j_2,j_5,j_6)$ contains at least three distinct indexes, that is,  $|\{j_1,j_2,j_5,j_6\}|\geq 3,$ we have $|\{\tilde{j}_1,\tilde{j}_2,\tilde{j}_5,\tilde{j}_6\}|\geq 3$, where  
	      $(\tilde{j}_1,\tilde{j}_2,\tilde{j}_5,\tilde{j}_6)$ denotes the ordered version of $(j_1,j_2,j_5,j_6)$ following the notation in \eqref{eq:orderversion}. Then we have $\mathrm{E}(x_{k,\tilde{j}_1}x_{k,\tilde{j}_2})\mathrm{E}(x_{k,\tilde{j}_5}x_{k,\tilde{j}_6} )=0$. 
	      Together with $\mathrm{E}(\mathbf{x})=\mathbf{0}$, we can write 
	      \begin{align}
	             	|\mathrm{E}(x_{1,j_1}x_{1,j_2}x_{1,j_5}x_{1,j_6}) | =&~ | \mathrm{cov} (x_{k,\tilde{j}_1}x_{k,\tilde{j}_2}\ , \ x_{k,\tilde{j}_5}x_{k,\tilde{j}_6} ) | \label{eq:e1256equivalentform} \\
	             	=&~| \mathrm{cov}(x_{k,\tilde{j}_1} \ , \  x_{k,\tilde{j}_2}x_{k,\tilde{j}_5}x_{k,\tilde{j}_6}) | \notag \\
	             	=&~| \mathrm{cov}(x_{k,\tilde{j}_1} x_{k,\tilde{j}_2}x_{k,\tilde{j}_5} \ , \  x_{k,\tilde{j}_6}) |.  \notag
	             \end{align}
	             Recall that $\kappa_3$ in  \eqref{eq:distancedef} represents the maximum distance between $(j_1,j_2,j_5,j_6).$
	             If $\kappa_3>D_0$, by Conditions \ref{cond:finitemomt} and    \ref{cond:alphamixing}, and the $\alpha$-mixing inequality in Lemma \ref{lm:mixingineq}, we know 
	      \begin{eqnarray*}
	          	  |\eqref{eq:step2case1onlyinone}|\leq C\times \eqref{eq:e1256equivalentform} \leq C \delta^{\frac{D_0 \epsilon}{2+\epsilon}} =O(p^{-(8+\mu)}).
	          \end{eqnarray*} If $\kappa_3\leq D_0$, the total number of distinct $j$ indexes is $O(pD_0^3)$. 
	      
	       \vspace{0.5em}

	    \item[(b)]   When both   $(j_1,j_2,j_3,j_4)$ and $(j_5,j_6,j_7,j_8)$ have at least 3 distinct elements, i.e., $|\{j_1,j_2,j_3,j_4\}|\geq 3$ and $|\{j_5,j_6,j_7,j_8\}|\geq 3$, following the notation in \eqref{eq:orderversion}, similarly to \eqref{eq:e1256equivalentform}, we can write
	    \begin{align}
	    	|\mathrm{E}(x_{k,j_1} x_{k,j_2} x_{k,j_3} x_{k,j_4})| =&~ | \mathrm{cov} (x_{k,\tilde{j}_1}x_{k,\tilde{j}_2}\ , \ x_{k,\tilde{j}_3}x_{k,\tilde{j}_4} ) | \label{eq:e1234equivalentform} \\
	             	=&~ | \mathrm{cov}(x_{k,\tilde{j}_1} \ , \  x_{k,\tilde{j}_2}x_{k,\tilde{j}_3}x_{k,\tilde{j}_4}) | \notag \\
	             	=&~ | \mathrm{cov}(x_{k,\tilde{j}_1} x_{k,\tilde{j}_2}x_{k,\tilde{j}_3} \ , \  x_{k,\tilde{j}_4}) |,\notag
	    \end{align} and
	    \begin{align}
	    	|\mathrm{E}(x_{k,j_5} x_{k,j_6} x_{k,j_7} x_{k,j_8})| =&~ | \mathrm{cov} (x_{k,\tilde{j}_5}x_{k,\tilde{j}_6}\ , \ x_{k,\tilde{j}_7}x_{k,\tilde{j}_8} ) | \label{eq:e5678equivalentform} \\
	             	=&~ | \mathrm{cov}(x_{k,\tilde{j}_5} \ , \  x_{k,\tilde{j}_6}x_{k,\tilde{j}_7}x_{k,\tilde{j}_8}) | \notag \\
	             	=&~ | \mathrm{cov}(x_{k,\tilde{j}_5} x_{k,\tilde{j}_6}x_{k,\tilde{j}_7} \ , \  x_{k,\tilde{j}_8}) |.\notag
	    \end{align}
	    When $\max\{\kappa_1,\kappa_2\}>D_0$  in this case, by Conditions \ref{cond:finitemomt} and    \ref{cond:alphamixing}, and the $\alpha$-mixing inequality,
	    \begin{eqnarray}
	    	&&\quad \quad |\eqref{eq:step2case1onlyinone}|\leq C\times \eqref{eq:e1234equivalentform}\times  \eqref{eq:e5678equivalentform} \leq C \delta^{\frac{D_0 \epsilon}{2+\epsilon}} =O(p^{-(8+\mu)}). \label{eq:vtaacase1b}
	    \end{eqnarray}
	    When $\max\{\kappa_1,\kappa_2\} \leq D_0$, by the definitions in \eqref{eq:distancedef}, we know   under this case, the indexes in $(j_1,j_2,j_3,j_4)$ are close to each other within the distance $D_0$, and  the indexes in $(j_5,j_6,j_7,j_8)$ are also close to each other  within the distance $D_0$. Then the total number of distinct indexes is $O(pD_0^3\times p D_0^3)=O(p^2D_0^6)$.
    \item[(c)] If only one of  $(j_1,j_2,j_3,j_4)$ and $(j_5,j_6,j_7,j_8)$ contains at least 3  distinct indexes, without loss of generality, we assume $|\{j_1,j_2,j_3,j_4\}|\geq 3$ and $|\{j_5,j_6,j_7,j_8\}|=2$. 
      When $\kappa_1\leq D_0$, the indexes in $(j_1,j_2,j_3,j_4)$  are close within distance $D_0$. As $(j_5,j_6,j_7,j_8)$ only contains 2 distinct indexes, the total number of distinct $j$ indexes is $O(p^3D_0^3)$. When $\kappa_1>D_0$, by Conditions \ref{cond:finitemomt} and    \ref{cond:alphamixing}, and the $\alpha$-mixing inequality, we know
    \begin{align}
	    	 |\eqref{eq:step2case1onlyinone}|\leq C\times \eqref{eq:e1234equivalentform} \leq C \delta^{\frac{D_0 \epsilon}{2+\epsilon}} =O(p^{-(8+\mu)}).\label{eq:vtaacase1c}
	    \end{align} 
	\end{enumerate}
	

\textbf{Case 2:} When $i \not \in \{ \mathbf{i}^{(1)}\}$ and $i \in \{\mathbf{i}^{(2)}\}$, we know similar conclusion holds by symmetricity.


\textbf{Case 3:} When $i  \in \{ \mathbf{i}^{(1)}\}$ and $i \in \{\mathbf{i}^{(2)}\}$, we have 
\begin{eqnarray}
	& &\mathrm{E}\Big\{ \mathbb{X}( k,\mathbf{i}^{(l)}, j_{2l-1},j_{2l}: l=1,2,3,4) \Big\} \label{eq:step2case2bothin} \\
	&=&\mathrm{E}\Big(\prod_{t=1}^4 x_{k,j_t} \Big)\times \mathrm{E}\Big( \prod_{t=5}^8 x_{k,j_t}\Big)\times \mathrm{E}\Big(\prod_{t=1}^6 x_{k,j_t} \Big)\times \mathrm{E}(\text{other terms}) \notag 
\end{eqnarray} 
Similarly to \textbf{Case 1} above, to evaluate \eqref              {eq:step2case2bothin}, we next discuss two sub-cases with $D_0$ in \eqref{eq:thresholdd}.

	\begin{enumerate}
	   \item[(a)] When both  $(j_1,j_2,j_3,j_4)$ and $(j_5,j_6,j_7,j_8)$ only contain 2 distinct indexes within each tuple, i.e., $|\{j_1,j_2,j_3,j_4\}|=|\{j_5,j_6,j_7,j_8\}|=2$, we assume  $j_1=j_3$, $j_2=j_4$, $j_5=j_7$ and $j_6=j_8$ without loss of generality. Then
	    \begin{align*}
	   	\eqref{eq:step2case2bothin}=\mathrm{E}(x_{k,j_1}^2x_{k,j_2}^2)\mathrm{E}(x_{k,j_5}^2x_{k,j_6}^2 )\mathrm{E}(x_{i,j_1}^2x_{i,j_2}^2x_{i,j_5}x_{i,j_6})  \mathrm{E}(\mathrm{other\ terms}).
	   \end{align*} 
	     Following the notation in \eqref{eq:orderversion}, when $\tilde{k}_3^*:=\min\{\tilde{j}_2-\tilde{j}_1, \tilde{j}_5-\tilde{j}_2, \tilde{j}_6-\tilde{j}_5 \}<D_0$,  the total number of distinct $j$   indexes is $O(p^3D_0)$.
	     When $\tilde{k}_3^*>D_0$, by Conditions \ref{cond:finitemomt},   \ref{cond:alphamixing}, and the $\alpha$-mixing inequality,	     
	         \begin{align*}
	       	& ~|\mathrm{E}(x_{1,\tilde{j}_1}^2 x_{1,\tilde{j}_2}^2 x_{1,\tilde{j}_5}x_{1,\tilde{j}_6})| \\
	       	=& ~|\mathrm{cov}(x_{1,\tilde{j}_1}^2\ ,\  x_{1,\tilde{j}_2}^2 x_{1,\tilde{j}_5}x_{1,\tilde{j}_6}) + \mathrm{E}(x_{1,\tilde{j}_1}^2) \mathrm{cov} (x_{1,\tilde{j}_2}^2 \ ,\ x_{1,\tilde{j}_5}x_{1,\tilde{j}_6}) \notag \\
	       	& +[\mathrm{E}(x_{1,\tilde{j}_1}^2)]^2  \mathrm{cov}(x_{1,\tilde{j}_5}\ , \ x_{1,\tilde{j}_6})|\\
	       	\leq & ~ C \delta^{\frac{D_0 \epsilon}{2+\epsilon}}=O(p^{-(8+\mu)}).
				\end{align*} 
				
            \item[(b)] If at least one of $(j_1,j_2,j_3,j_4)$ and $(j_5,j_6,j_7,j_8)$ has at least 3 distinct indexes within the tuple,  it means that $|\{j_1,j_2,j_3,j_4\}|\geq 3$ or $|\{j_5,j_6,j_7,j_8\}|\geq 3$.  Similarly to  \eqref{eq:vtaacase1b} and \eqref{eq:vtaacase1c}, we know that when $\max\{\kappa_1,\kappa_2\}>D_0$, $| 	\eqref{eq:step2case2bothin}|=O(p^{-(8+\mu)})$; when $\max\{\kappa_1,\kappa_2\}\leq D_0$, the total number of distinct $j$ indexes is $O(p^3D_0^3)$.
           \end{enumerate}

Combining Cases 1--3 discussed above, we obtain
\begin{align*}
	&~\mathrm{E}\Big\{  \sum_{\substack{(j_1,j_2), (j_3,j_4),\\ (j_5,j_6), (j_7,j_8)\in \mathcal{J}}} 	 \mathbb{X}( k,\mathbf{i}^{(l)}, j_{2l-1},j_{2l}: l=1,2,3,4)\Big\} \notag \\
	=&~O(p^3 D_0^3)+\sum_{\substack{(j_1,j_2), (j_3,j_4), (j_5,j_6), (j_7,j_8)\in \mathcal{J}}}O(p^{-(8+\mu)})\notag \\
	=&~O(p^3 \log ^3 p)+ p^8 O(p^{-(8+\mu)})= O(p^3 \log ^3 p),
\end{align*} where we use $\mu>0$ and $D_0=O(\log p)$ by \eqref{eq:thresholdd}. Thus \eqref{eq:expcltppor2} is proved. 




\medskip

\subparagraph*{Step II: $\mathrm{var}(\mathbb{T}_{k,a_{1},a_{2},2})=O(n^{-2}p^{-1}\log^3p)$}
Recall that $\mathbb{T}_{k,a_{1},a_{2},2}$ is constructed from $(\mathbf{i}^{(1)},\mathbf{i}^{(2)})\in S(\mathbf{i}^{(1)},{\mathbf{i}}^{(2)},2)$, where 
 $\{\mathbf{i}^{(1)}\} = \{\mathbf{i}^{(2)}\}$. 
As $\{\mathbf{i}^{(1)}\} = \{\mathbf{i}^{(2)}\}$ happens only when $a_1=a_2$, so it remains to consider $a_1=a_2=a$ for some integer $a$ below. It follows that 
$\mathrm{E}\{\mathbb{X}( k,\mathbf{i}^{(l)}, j_{2l-1},j_{2l}: l=1,2) \}=\{\mathrm{E}( \prod_{t=1}^4 x_{1,j_t}) \}^a,
$ then
\begin{align*}
\mathrm{E}(\mathbb{T}_{k,a_{1},a_{2},2})=\sum_{\substack{ S(\mathbf{i}^{(1)},\mathbf{i}^{(2)},2); \\  (j_1,j_2), (j_3,j_4)\in \mathcal{J} }  }\Big\{\prod_{l=1}^2c(n,a_l)\Big\}^{1/2}\times   \Big\{\mathrm{E}\Big( \prod_{t=1}^4 x_{1,j_t}\Big) \Big\}^a, 
\end{align*}and 
\begin{align*}
	\{\mathrm{E}(\mathbb{T}_{k,a_{1},a_{2},2})\}^2=\sum_{\substack{ S(\mathbf{i}^{(1)},\mathbf{i}^{(2)},\mathbf{i}^{(3)},\mathbf{i}^{(4)},2,2); \\  (j_1,j_2), (j_3,j_4),(j_5,j_6), (j_7,j_8)\in \mathcal{J} }  }\prod_{l=1}^2c(n,a_l)   \Big\{\mathrm{E}\Big( \prod_{t=1}^4 x_{1,j_t}\Big)\mathrm{E}\Big( \prod_{t=5}^8 x_{1,j_t}\Big) \Big\}^a.
\end{align*}
Moreover, by \eqref{eq:tbbavsq}, we know  $\mathbb{T}_{k,a_{1},a_{2},2}^2$ is a summation over $(\mathbf{i}^{(1)},\mathbf{i}^{(2)},\mathbf{i}^{(3)},\mathbf{i}^{4)}) \in S(\mathbf{i}^{(1)},\mathbf{i}^{(2)},\mathbf{i}^{(3)},\mathbf{i}^{(4)},2,2)$, where $\{\mathbf{i}^{(1)}\}=\{\mathbf{i}^{(2)}\}$ and $\{\mathbf{i}^{(3)}\}=\{\mathbf{i}^{(4)}\}$ by the construction. 
We further define $S(\mathbf{i}^{(1)},\mathbf{i}^{(2)},\mathbf{i}^{(3)},\mathbf{i}^{(4)},2,2,q)$ to be the collection of tuples $(\mathbf{i}^{(1)},\mathbf{i}^{(2)},\mathbf{i}^{(3)},\mathbf{i}^{4)})$ such that $|\{\mathbf{i}^{(1)}\} \cap \{\mathbf{i}^{(3)}\}|=q$, where $0\leq q \leq a-1$. Then we write  $\mathbb{T}_{k,a_{1},a_{2},2}^2=\sum_{q=0}^{a-1}\mathbb{T}_{k,a_{1},a_{2},2, (q)}^2$, where we define
\begin{align*}
	\mathbb{T}_{k,a_{1},a_{2},2, (q)}^2= \sum_{\substack{ S(\mathbf{i}^{(1)},\mathbf{i}^{(2)},\mathbf{i}^{(3)},\mathbf{i}^{(4)},2,2,q); \\  (j_1,j_2), (j_3,j_4),(j_5,j_6), (j_7,j_8)\in \mathcal{J} }  }\prod_{l=1}^2c(n,a_l)\notag \\
	\times  \mathbb{X}( k,\mathbf{i}^{(l)}, j_{2l-1},j_{2l}: l=1,2,3,4).
\end{align*} 
In particular, when $|\{\mathbf{i}^{(1)}\} \cap \{\mathbf{i}^{(3)}\}|=q$, 
\begin{align*}
	\mathrm{E}\Big\{  \mathbb{X}( k,\mathbf{i}^{(l)}, j_{2l-1},j_{2l}: l=1,2,3,4) \Big\}=\Big\{\mathrm{E}\Big( \prod_{t=1}^4 x_{1,j_t}\Big)\mathrm{E}\Big( \prod_{t=5}^8 x_{1,j_t}\Big) \Big\}^{a-q} \Big\{ \prod_{t=1}^8 x_{1,j_t}\Big\}^q.
\end{align*}
Therefore, for $a_1=a_2=a$, 
\begin{eqnarray*}
\mathrm{var}(\mathbb{T}_{k,a_{1},a_{2},2})&=&\mathrm{E}(\mathbb{T}_{k,a_{1},a_{2},2}^2)-\{\mathrm{E}(\mathbb{T}_{k,a_{1},a_{2},2})\}^2 \notag \\
&=& \sum_{q=0}^{a-1} \mathrm{E}(\mathbb{T}_{k,a_{1},a_{2},2, (q)}^2)-\{\mathrm{E}(\mathbb{T}_{k,a_{1},a_{2},2})\}^2 \notag \\
&=&  \sum_{q=1}^{a-1}\sum_{\substack{ S(\mathbf{i}^{(1)},\mathbf{i}^{(2)},\mathbf{i}^{(3)},\mathbf{i}^{(4)},2,2,q)}  } \prod_{l=1}^2c(n,a_l)\times \mathbb{D}_{k,a,a,2,q}, \notag 
\end{eqnarray*}
where we define
\begin{align*}
	\mathbb{D}_{k,a,a,2,q}=&\sum_{ \substack{ (j_1,j_2), (j_3,j_4),\\(j_5,j_6), (j_7,j_8)\in \mathcal{J}} }\Big\{\mathrm{E}\Big( \prod_{t=1}^4 x_{1,j_t}\Big)\mathrm{E}\Big( \prod_{t=5}^8 x_{1,j_t}\Big) \Big\}^{a-q} \notag \\
	 & \times \Biggr[  \Big\{ \mathrm{E}\Big(\prod_{t=1}^8 x_{1,j_t}\Big)\Big\}^q - \Big\{\mathrm{E}\Big( \prod_{t=1}^4 x_{1,j_t}\Big)\mathrm{E}\Big( \prod_{t=5}^8 x_{1,j_t}\Big) \Big\}^q \Biggr],
\end{align*} and use $\mathbb{D}_{k,a,a,2,q}=0$ when $q=0$. 
By the construction,  we know 
the total number of  tuples in the collection $S(\mathbf{i}^{(1)},\mathbf{i}^{(2)},\mathbf{i}^{(3)},\mathbf{i}^{(4)},2,2,q)$ is bounded by $Cn^{2(a-1)-q}$, that is, 
for some constant $C$, 
\begin{align}
	\sum_{S(\mathbf{i}^{(1)},\mathbf{i}^{(2)},\mathbf{i}^{(3)},\mathbf{i}^{(4)},2,2,q) } 1\leq Cn^{2(a-1)-q}.  \label{eq:snumberbd} 
\end{align} 
Since $c(n,a)=\Theta(p^{-2}n^{-a})$, to prove $\mathrm{var}(\mathbb{T}_{k,a_{1},a_{2},2}^2)=O(n^{-2}p^{-1}\log^3p)$, it suffices to show for given tuple $(\mathbf{i}^{(1)},\mathbf{i}^{(2)},\mathbf{i}^{(3)},\mathbf{i}^{(4)})$, $\mathbb{D}_{k,a_1,a_2,2,q}=O(p^3\log^3 p)$ for $1\leq q\leq a-1$.

By Condition \ref{cond:finitemomt} and Lemma \ref{lemma:prodcutcom} (on Page  \pageref{lemma:prodcutcom}), for $1\leq q\leq a-1$, 

\begin{align*}
	|\mathbb{D}_{k,a,a,2,q}|\leq & ~C\sum_{\substack{ (j_1,j_2),(j_3,j_4),\\  (j_5,j_6),(j_7,j_8)\in \mathcal{J} } } \Biggr| \mathrm{E}\Big(\prod_{t=1}^4 x_{1,j_t}\Big)\Biggr|  \times \Biggr| \mathrm{E}\Big(\prod_{t=5}^8x_{1,j_t}\Big)\Biggr|\notag \\
	&~\times \Biggr| \mathrm{E}\Big( \prod_{t=1}^4x_{1,j_t}\Big)\times \mathrm{E}\Big( \prod_{t=5}^8x_{1,j_t}\Big)-  \mathrm{E}\Big(\prod_{t=1}^8 x_{1,j_t}\Big)\Biggr|.
\end{align*}
 To evaluate $\mathbb{D}_{k,a,a,2,q}$, we next  discuss several cases, based on the notation $\kappa_1,\ldots,\kappa_4$ in \eqref{eq:distancedef},
  and $D_0$ in \eqref{eq:thresholdd}.
	    \begin{enumerate}   
	     \item[(a)] When both tuples $(j_1,j_2,j_3,j_4)$ and $(j_5,j_6,j_7,j_8)$ contain only  two distinct indexes, i.e., $|\{j_1,j_2,j_3,j_4\}|=|\{j_5,j_6,j_7,j_8\}|=2$, we assume without loss of generality that   $j_1=j_3$, $j_2=j_4$, $j_5=j_7$ and $j_6=j_8$. Then $\mathrm{E}(\prod_{t=1}^4 x_{1,{j}_t} ) = \mathrm{E}(x_{1,{j}_1}^2x_{1,{j}_2}^2),$	 $ \mathrm{E}(\prod_{t=5}^8 x_{1,{j}_t})= \mathrm{E}(x_{1,{j}_5}^2 x_{1,{j}_6}^2)$ and $\mathrm{E}(\prod_{t=1}^8  x_{1,j_t})   =\mathrm{E}(x_{1,j_1}^2 x_{1,j_2}^2 x_{1,j_5}^2x_{1,j_6}^2).$
	     Following the notation in \eqref{eq:orderversion}, let  $(\tilde{j}_1 \leq \tilde{j}_2 \leq \tilde{j}_5 \leq \tilde{j}_6)$ be the ordered version of $(j_1,j_2,j_5,j_6)$.  When $\min\{\tilde{j}_2-\tilde{j}_1,\tilde{j}_5-\tilde{j}_2, \tilde{j}_6-\tilde{j}_5 \}\leq D_0$, the total number of distinct $j$ indexes is $O(p^3D_0)$. When $\min\{\tilde{j}_2-\tilde{j}_1,\tilde{j}_5-\tilde{j}_2, \tilde{j}_6-\tilde{j}_5 \}> D_0$, by  Conditions \ref{cond:finitemomt} and    \ref{cond:alphamixing}, and the $\alpha$-mixing inequality in Lemma  \ref{lm:mixingineq},  
	     \begin{align*}
	     		&~\Big|	\mathrm{E}\Big(\prod_{t=1}^4 x_{1,{j}_t} \Big) 	\mathrm{E}\Big(\prod_{t=5}^8 x_{1,{j}_t} \Big)-	\mathrm{E}\Big(\prod_{t=1}^8 x_{1,{j}_t} \Big)  \Big| \notag \\
	     		=&~\Big| \mathrm{E}(x_{1,{j}_1}^2x_{1,{j}_2}^2)\mathrm{E}(x_{1,{j}_5}^2 x_{1,{j}_6}^2)-\mathrm{E}(x_{1,j_1}^2 x_{1,j_2}^2 x_{1,j_5}^2x_{1,j_6}^2)\Big|\notag \\
	     		\leq &~ C\delta^{\frac{D_0 \epsilon}{2+\epsilon}}=O(p^{-(8+\mu)}).
	     	\end{align*}

    \item[(b)] When both $(j_1,j_2,j_3,j_4)$ and $(j_5,j_6,j_7,j_8)$ contain at least 3 distinct indexes, i.e., $|\{j_1,j_2,j_3,j_4\}|\geq 3$ and $|\{j_5,j_6,j_7,j_8\}|\geq 3$,  we know  similarly \eqref{eq:e1234equivalentform} and \eqref{eq:e5678equivalentform} hold. 
    	    When $\max\{\kappa_1,\kappa_2\}>D_0$,  by Conditions \ref{cond:finitemomt} and    \ref{cond:alphamixing}, and the $\alpha$-mixing inequality in Lemma \ref{lm:mixingineq}, we obtain
    	    \begin{align*}
    	    	|\mathbb{D}_{k,a_1,a_2,2,q}|\leq C \eqref{eq:e1234equivalentform} \times \eqref{eq:e5678equivalentform} \leq  C\delta^{\frac{D_0 \epsilon}{2+\epsilon}}=O\{p^{-(8+\mu)}\}.
    	    \end{align*}
    	    When $\max\{\kappa_1,\kappa_2\} \leq D_0$, by the definitions in \eqref{eq:distancedef}, we know   under this case the indexes in  $(j_1,j_2,j_3,j_4)$ are close to each other within the distance $D_0$, and the indexes in $(j_5,j_6,j_7,j_8)$ are also close  to each other within the distance $D_0$. Then the total number of distinct $j$ indexes is $O(pD_0^3\times p D_0^3)=O(p^2D_0^6)$.
    	    \item[(c)] When only one of  $(j_1,j_2,j_3,j_4)$ and $(j_5,j_6,j_7,j_8)$ contains at least 3  distinct indexes, without loss of generality, we assume $|\{j_1,j_2,j_3,j_4\}|\geq 3$ and $|\{j_5,j_6,j_7,j_8\}|=2$.
    	     Recall $\kappa_1$ defined in \eqref{eq:distancedef}. When $\kappa_1\leq D_0$, the indexes in $(j_1,j_2,j_3,j_4)$  are close within distance $D_0$. As $(j_5,j_6,j_7,j_8)$ only contains 2 distinct indexes, the total number of distinct $j$ indexes is $O(p^3D_0^3)$. When $\kappa_1>D_0$, by Conditions \ref{cond:finitemomt} and    \ref{cond:alphamixing}, and the $\alpha$-mixing inequality in Lemma \ref{lm:mixingineq}, we know similarly   \eqref{eq:e1234equivalentform} holds, and
    	     \begin{align*}
    	    	|\mathbb{D}_{k,a_1,a_2,2,q}|\leq C \eqref{eq:e1234equivalentform} \leq  C\delta^{\frac{D_0 \epsilon}{2+\epsilon}}=O(p^{-(8+\mu)}).
    	    \end{align*}
    \end{enumerate} 	    
In summary,
\begin{align}
	|\mathbb{D}_{k,a_1,a_2,2,q}|=p^8\times O(p^{-(8+\mu)})+O(p^3D_0^3)=O(p^3\log^3p). \label{eq:orderpdttaa}
\end{align} 
Thus we obtain that for given $(\mathbf{i}^{(1)},\mathbf{i}^{(2)},\mathbf{i}^{(3)},\mathbf{i}^{(4)})$, $\mathbb{D}_{k,a_1,a_2,2,q}=O(p^3\log^3 p)$. Combined with \eqref{eq:snumberbd},    $\mathrm{var}(\mathbb{T}_{k,a_{1},a_{2},2}^2)=O(n^{-2}p^{-1}\log^3p)$ follows. 

Combining the results in \textit{Step I}  and \textit{Step II}  above, we obtain $\mathrm{var}(\mathbb{T}_{k,a_{1},a_{2}})=O(n^{-2}p^{-1}\log^3p)$, and thus Lemma \ref{lm:targetorder} is proved under Condition \ref{cond:alphamixing}. 



\smallskip

\paragraph{Proof under Condition \ref{cond:highordmominter}}\label{par:pfmomentvartaa}
In this section, we prove Lemma \ref{lm:targetorder}  by substituting Condition \ref{cond:alphamixing} with Condition  \ref{cond:highordmominter}. 
 Note that although the independence between $x_{i,j}$'s is assumed in Section \ref{par:proftaaindpcond}, it is only used to specify certain joint moments of $x_{i,j}$'s. Alternatively, Condition \ref{cond:ellpmoment} is assumed to obtain similar properties on the joint  moments, 
 and the proof follows similarly to that in  Section   \ref{par:proftaaindpcond}.

In particular,  we will prove that  $\mathrm{var}(\mathbb{T}_{k,a_{1},a_{2}})= O(n^{-3}+n^{-2}p^{-2})$ for two given finite integers $a_1$ and $a_2$ below.   
 Under $H_0$ and given  Condition \ref{cond:highordmominter}, as $j_1\neq j_2$ and $j_3\neq j_4$, we have $\mathrm{E}(x_{1,j_1} x_{1,j_2} x_{1,j_3} x_{1,j_4}) \neq 0$ only when $\{j_1, j_2\}=\{j_3, j_4\}$, and then  $\mathrm{E}(x_{1,j_1} x_{1,j_2} x_{1,j_3} x_{1,j_4})=\kappa_1\mathrm{E}(x_{1,j_1}^2)\mathrm{E}(x_{1,j_2}^2)$.  It follows that $\mathbb{T}_{k,a_1,a_2}=2c(n,a)\times \tilde{T}_{k,a_1,a_2}$, where   
$\tilde{T}_{k,a,a}=\kappa_1T_{k,a,a}$ with $T_{k,a,a}$ defined  in Section  \ref{par:proftaaindpcond}. To prove $\mathrm{var}(\mathbb{T}_{k,a,a})=o(n^{-2})$, it suffices to show that $\mathrm{var}(\tilde{T}_{k,a_1,a_2})=n^{a_1+a_2-2}p^4O(n^{-1}+p^{-2})$ as argued in Section  \ref{par:proftaaindpcond}.



Similarly to Section \ref{par:proftaaindpcond}, to show $\mathrm{var}(\tilde{T}_{k,a_1,a_2})=n^{a_1+a_2-2}p^4O(n^{-1}+p^{-2})$, we examine $\{ \mathrm{E}(\tilde{T}_{k,a_1,a_2})\}^2 $ and $\mathrm{E}(\tilde{T}_{k,a_1,a_2}^2)$ respectively.  
For $\mathrm{E}(\tilde{T}_{k,a_1,a_2})$, under Condition \ref{cond:highordmominter}, 
 similarly to \eqref{eq:etaaksq}, we know  
$\mathrm{E}\{  (\prod_{t=1}^{a_1-1} x_{i_t,j_1} x_{i_t,j_2})\times (\prod_{{t}=1}^{a_2-1}   x_{\tilde{i}_{{t}},j_1} x_{\tilde{i}_{{t}},j_2}) \} \neq 0$ only when  $\{\mathbf{i}\}=\{\tilde{\mathbf{i}}\}$. When $\{\mathbf{i}\}=\{\tilde{\mathbf{i}}\}$, we write $a_1=a_2=a$ for some $a$ and then $\mathrm{E}\{  (\prod_{t=1}^{a_1-1} x_{i_t,j_1} x_{i_t,j_2})\times (\prod_{{t}=1}^{a_2-1}   x_{\tilde{i}_{{t}},j_1} x_{\tilde{i}_{{t}},j_2}) \}=\{ \kappa_1\mathrm{E}(x_{1,j_1}^2)\mathrm{E}(x_{1,j_2}^2) \}^{a-1}.$ 
We thus have $\{\mathrm{E}(\tilde{T}_{k,a_1,a_2})\}^2=\{\kappa_1^{a}\mathrm{E}({T}_{k,a_1,a_2})\}^2$ with $T_{k,a_1,a_2}$ defined  in Section  \ref{par:proftaaindpcond}. Moreover, following \eqref{eq:taakindpsqexp} in Section \ref{par:proftaaindpcond}, we have 
\begin{align*}
	\mathrm{E}(\tilde{T}_{k,a_1,a_2}^2)=&\sum_{\substack{ \mathbf{i},\, \mathbf{m} \in \mathcal{P}(k-1,a_1-1); \\ \tilde{\mathbf{i}},\, \tilde{\mathbf{m}} \in \mathcal{P}(k-1,a_2-1) }} \, \sum_{\substack{1\leq j_1\neq j_2\leq p;\\ 1\leq j_3\neq j_4\leq p}} \tilde{Q}(\mathbf{i},\tilde{\mathbf{i}},\mathbf{m},\tilde{\mathbf{m}},\mathbf{j}). 
\end{align*} 
We further decompose $\mathrm{E}(\tilde{T}_{k,a_1,a_2}^2)=\mathrm{E}(\tilde{T}_{k,a_1,a_2}^2)_{(1)}+\mathrm{E}(\tilde{T}_{k,a_1,a_2}^2)_{(2)}$, where $\mathrm{E}(\tilde{T}_{k,a_1,a_2}^2)_{(1)}$ and $\mathrm{E}(\tilde{T}_{k,a_1,a_2}^2)_{(2)}$ are defined with  the same forms as $\mathrm{E}(T_{k,a_1,a_2}^2)_{(1)}$ and $\mathrm{E}(T_{k,a_1,a_2}^2)_{(2)}$ in Section \ref{par:proftaaindpcond}, respectively.  To prove $\mathrm{var}(\tilde{T}_{k,a_1,a_2})=n^{a_1+a_2-2}p^4O(n^{-1}+p^{-2})$, similarly to Section \ref{par:proftaaindpcond}, we derive $|\mathrm{E}(\tilde{T}_{k,a_1,a_2}^2)_{(1)}-\{\mathrm{E}(\tilde{T}_{k,a_1,a_2})\}^2|$ and $\mathrm{E}(T_{k,a_1,a_2}^2)_{(2)}$ respectively. 

\vspace{0.5em}
\subparagraph*{Step I: $|\mathrm{E}(\tilde{T}_{k,a_1,a_2}^2)_{(1)}-\{\mathrm{E}(\tilde{T}_{k,a_1,a_2})\}^2|$} 
By the forms of  $\mathrm{E}(\tilde{T}_{k,a_1,a_2}^2)_{(1)}$ and $\mathrm{E}(\tilde{T}_{k,a_1,a_2})$, we consider $ \{\mathbf{i}\}=\{\tilde{\mathbf{i}}\}$ and  $ \{\mathbf{m}\}=\{\tilde{\mathbf{m}}\}$ below. 
If $\{\mathbf{i}\}\cap \{\mathbf{m}\}=\emptyset$, $| \mathrm{E}\{\tilde{Q}(\mathbf{i},\tilde{\mathbf{i}},\mathbf{m},\tilde{\mathbf{m}},\mathbf{j})\} - \kappa_1^{2a}\prod_{t=1}^4\{\mathrm{E}(x_{1,j_t}^2)\}^a |= 0$ by Condition \ref{cond:highordmominter}; if $\{\mathbf{i}\}\cap \{\mathbf{m}\}\neq \emptyset$,  $|\{\mathbf{i}\}\cup \{\mathbf{m}\} |\leq a_1+a_2-2-1$, thus $|\mathrm{E}(\tilde{T}_{k,a_1,a_2}^2)_{(1)}-\{\mathrm{E}(\tilde{T}_{k,a_1,a_2})\}^2|=O(n^{a_1+a_2-3}p^4)$ by Condition \ref{cond:finitemomt}.  

\vspace{0.5em}
\subparagraph*{Step II: $\mathrm{E}(T_{k,a_1,a_2}^2)_{(2)}$}
We note that for $j_1\neq j_2$, $\mathrm{E}(x_{1,j_1}x_{1,j_2})=0$; and for any additional index $j_3$, we have $\mathrm{E}(x_{1,j_1}x_{1,j_2}x_{1,j_3})=0$ under Condition \ref{cond:highordmominter}. 
Thus \eqref{eq:jindxorderindep} and \eqref{eq:taasecondindpcond} still hold here, and we obtain $\mathrm{E}(T_{k,a_1,a_2}^2)_{(2)}=O(n^{a_1+a_2-2}p^2)$. 

\vspace{0.5em}

In summary, 
\begin{align*}
|\mathrm{var}(T_{k,a_1,a_2})|\leq &~|\mathrm{E}(\tilde{T}_{k,a_1,a_2}^2)_{(1)}-\{\mathrm{E}(\tilde{T}_{k,a_1,a_2})\}^2|+ |\mathrm{E}(T_{k,a_1,a_2}^2)_{(2)}| \notag \\
=&~n^{a_1+a_2-2}p^4O(n^{-1}+p^{-2}).	
\end{align*}
It follows that $\mathrm{var}(\sum_{k=1}^n \pi_{n,k}^2)=O(n^{-1}+p^{-2})$ by the argument at the beginning of Section \ref{sec:prooftargetorder}.  Therefore Lemma \ref{lm:targetorder} is proved.

\subsubsection{Proof of Lemma \ref{lm:secondgoaljointnormal} (on Page \pageref{lm:secondgoaljointnormal}, Section  \ref{sec:detailofjointnormal})} \label{sec:proofsecondgoaljointnormal}

By Lemma \ref{lm:cltabnkform}, 
\begin{align}
 	\sum_{k=1}^n\mathrm{E}(D_{n,k}^4)=\sum_{k=1}^n \sum_{{1\leq r_1,r_2,r_3,r_4\leq m}}\prod_{l=1}^4 t_{r_l}\times \mathrm{E}\Big(\prod_{l=1}^4 A_{n,k,a_{r_l}}\Big). \label{eq:dnk4all}
 \end{align} To prove  Lemma \ref{lm:secondgoaljointnormal}, it suffices to show that for given $1\leq k \leq n$ and $1\leq r_1,r_2,r_3,r_4\leq m$, we have $\mathrm{E}(\prod_{l=1}^4 A_{n,k,a_{r_l}})=O(n^{-2})$.

Similarly to Sections \ref{sec:proofvarianceorder} and \ref{sec:prooftargetorder} above, we first illustrate the proof of Lemma  \ref{lm:secondgoaljointnormal},  
when $x_{i,j}$'s are independent. 
Then in Section  \ref{pf:dnk4cond21pfcompl}, we prove Lemma  \ref{lm:secondgoaljointnormal} under Condition \ref{cond:alphamixing}. Last in Section \ref{par:ednkpfell},  we prove Lemma  \ref{lm:secondgoaljointnormal} under Condition \ref{cond:ellpmoment}. 

\paragraph{Proof illustration} \label{sec:pfindpdnk4}


In this section, we assume that $x_{i,j}$'s are independent and prove $\mathrm{E}(\prod_{l=1}^4 A_{n,k,a_{l}})=O(n^{-2})$ for  given integers $a_l$,   $l=1,\ldots,4$. 
 By Lemma  \ref{lm:cltabnkform}, when $k<a_l$,  $A_{n,k,a_l}=0$. We next focus on $\max_{1\leq l\leq 4}a_l\leq k\leq n$. By Lemma  \ref{lm:cltabnkform}, we have 
\begin{align}
\mathrm{E}\Big(\prod_{l=1}^4 A_{n,k,a_{l}}\Big)=&~ \Big\{\prod_{l=1}^4c(n,a_l)\Big\}^{1/2}	\sum_{ \substack{ \mathbf{i}^{(l)} \in \mathcal{P}(k-1,a_l-1),\, l=1,\ldots,4; \\ (j_1,j_2),(j_3,j_4), (j_5,j_6),(j_7,j_8)\in \mathcal{J} } }\label{eq:expank4indpill} \\
&~ \ Q^*( \mathbf{i}^{(1)}, \mathbf{i}^{(2)},\mathbf{i}^{(3)},\mathbf{i}^{(4)},\mathbf{j}_8),\notag 
\end{align}
where $\mathbf{i}^{(l)}=(i_1^{(l)},\ldots, i_{a_l-1}^{(l)})$, $l=1,\ldots,4$  represent the tuples satisfying $1\leq i_1^{(l)}\neq \ldots \neq i_{a_l-1}^{(l)}\leq n$; $\mathcal{J}=\{(j_1,j_2): 1\leq j_1\neq j_2 \leq p)$;    $\mathbf{j}_8$  represents the tuple $(j_1,j_2,j_3,j_4,j_5,j_6,j_7,j_8)$; and we define
\begin{align*}
Q^*( \mathbf{i}^{(1)}, \mathbf{i}^{(2)},\mathbf{i}^{(3)},\mathbf{i}^{(4)},\mathbf{j}_8)=\mathrm{E}\Big( \prod_{r=1}^8 x_{k,j_r}\Big)\,\mathrm{E}\Big( \prod_{t=1}^{a_l-1}\prod_{l=1}^4 x_{i_t^{(l)},j_{2l-1}} x_{i_t^{(l)},j_{2l}}\Big).
\end{align*}

We claim that $\mathrm{E}( \prod_{r=1}^8 x_{k,j_r})\neq 0$ only when 
\begin{align}
	|\{j_t: t=1,\ldots,8 \}|\leq 4.  \label{eq:eightjindpcond}
\end{align} If $|\{j_t: t=1,\ldots,8\}|\geq 5$, it implies that one of the $j$ index in  $\{j_t: t=1,\ldots,8\}$ only appears once. We assume without loss of generality that $j_1$ only appears once, i.e., $j_1\not \in \{j_t: t=2,\ldots,8\}$. Since $x_{k,j}$'s are independent, 
$\mathrm{E}( \prod_{r=1}^8 x_{k,j_r})=\mathrm{E}( x_{k,j_1})\mathrm{E}(\text{all the remaining terms}) =0. $ Thus \eqref{eq:eightjindpcond} is proved. 
Similarly to \eqref{eq:qtildezero} and \eqref{eq:sizecontroltaaindpcond}, we further know
$Q^*( \mathbf{i}^{(1)}, \mathbf{i}^{(2)},\mathbf{i}^{(3)},\mathbf{i}^{(4)},\mathbf{j}_8)\neq 0$ 
only when 
\begin{align}
	\Big|\bigcup_{l=1}^4 \{\mathbf{i}^{(l)}\} \Big|\leq \sum_{l=1}^4(a_l-1)/2 \label{eq:expank4indpcond}. 
\end{align}

In summary, combining \eqref{eq:eightjindpcond} and \eqref{eq:expank4indpcond}, we have
\begin{align*}
	\mathrm{E}\Big(\prod_{l=1}^4 A_{n,k,a_{l}}\Big)=O(p^{-4}n^{-\frac{1}{2}\sum_{l=1}^4a_l}n^{\frac{1}{2}\sum_{l=1}^4(a_l-1)}p^4)=O(n^{-2}). 
\end{align*}

\paragraph{Proof under Condition \ref{cond:alphamixing}}  \label{pf:dnk4cond21pfcompl}
Section \ref{sec:pfindpdnk4} proves Lemma \ref{lm:secondgoaljointnormal} when $x_{i,j}$'s are independent. In this section, we further prove Lemma \ref{lm:secondgoaljointnormal} under Condition \ref{cond:alphamixing}. 
We first illustrate the proof idea intuitively, which is similar to Sections \ref{par:complpflm1} and \ref{sec:pfvartaamigixing}. 
		Under Condition \ref{cond:alphamixing}, $x_{i,j}$'s may be  no longer independent, but the dependence between $x_{i,j_1}$ and $x_{i,j_2}$ degenerates exponentially with their distance $|j_1-j_2|$. To quantitatively examine $|j_1-j_2|$, we use the threshold of distance $D_0$ defined in \eqref{eq:thresholdd}. Intuitively, when $|j_1-j_2|>D_0$, $x_{i,j_1}$ and $x_{i,j_2}$ are ``asymptotically independent'' with similar properties to those under the independence case in Section \ref{sec:pfindpdnk4}.  The following proof  will provide comprehensive discussions based on  $D_0$.

We next present the detailed proof of Lemma \ref{lm:secondgoaljointnormal}. 
Note that to prove Lemma \ref{lm:secondgoaljointnormal}, by the analysis at the beginning of Section  \ref{sec:proofsecondgoaljointnormal}, 
it suffices to show $\mathrm{E}(\prod_{l=1}^4 A_{n,k,a_{l}})=O(n^{-2})$. Recall that  we can write   \eqref{eq:expank4indpill} and we have  
 $\prod_{l=1}^4c^{1/2}(n,a_l)=\Theta(p^{-4}n^{-\frac{1}{2}\sum_{l=1}^4a_l})$. It remains to show
\begin{eqnarray}
	&&   \sum_{ \substack{ \mathbf{i}^{(l)} \in \mathcal{P}(k-1,a_l-1),\, l=1,\ldots,4; \\ (j_1,j_2),(j_3,j_4), (j_5,j_6),(j_7,j_8)\in \mathcal{J} } } Q^*( \mathbf{i}^{(1)}, \mathbf{i}^{(2)},\mathbf{i}^{(3)},\mathbf{i}^{(4)},\mathbf{j}_8)=O(p^4n^{\frac{1}{2}\sum_{l=1}^4(a_l-1)}). \label{eq:secondgoalakn4}
\end{eqnarray}
To prove \eqref{eq:secondgoalakn4}, we show the order of \eqref{eq:secondgoalakn4} in $n$ and $p$ respectively in the following two steps.  



\smallskip

\subparagraph*{Step I: order of $n$}

We show for any fixed $\mathbf{j}_8=(j_1,j_2,j_3,j_4,j_5,j_6,j_7,j_8)$,
\begin{eqnarray}
	&&\quad \quad \Biggr|\sum_{ \substack{ \mathbf{i}^{(l)} \in \mathcal{P}(k-1,a_l-1),\, l=1,\ldots,4 } } Q^*( \mathbf{i}^{(1)}, \mathbf{i}^{(2)},\mathbf{i}^{(3)},\mathbf{i}^{(4)},\mathbf{j}_8)\Biggr|=O(n^{\frac{1}{2}\sum_{l=1}^4(a_l-1)}). \label{eq:n2amin1order}
\end{eqnarray} 
We note that $Q^*( \mathbf{i}^{(1)}, \mathbf{i}^{(2)},\mathbf{i}^{(3)},\mathbf{i}^{(4)},\mathbf{j}_8) \neq 0$ only if \eqref{eq:expank4indpcond} holds. Too see this, suppose one index $i_1$ only appears once in the four sets $\{ \mathbf{i}^{(1)}\},\{ \mathbf{i}^{(2)} \},\{ \mathbf{i}^{(3)}\},\{ \mathbf{i}^{(4)}\} $. For example $i_1\in \{ \mathbf{i}^{(1)}\}$, but $i_1\not \in \cup_{l=2}^4\{ \mathbf{i}^{(l)}\}$. 
Then 
\begin{eqnarray}
Q^*( \mathbf{i}^{(1)}, \mathbf{i}^{(2)},\mathbf{i}^{(3)},\mathbf{i}^{(4)},\mathbf{j}_8)  =\mathrm{E}(x_{i_1,j_1}x_{i_1,j_2})\times \mathrm{E}(\text{the  remaining terms}) =0, \label{eq:ankfourexpzero} \notag
\end{eqnarray} 
Therefore by \eqref{eq:expank4indpcond}  and Condition \ref{cond:finitemomt}, 
\begin{align}
\eqref{eq:n2amin1order}=O(n^{\frac{1}{2}\sum_{l=1}^4(a_l-1)}). \label{eq:ordernmixingd4}
\end{align} 

\smallskip

\subparagraph*{Step II: order of $p$}
To prove \eqref{eq:secondgoalakn4}, it remains to show that for  given $(\mathbf{i}^{(1)}, \mathbf{i}^{(2)},\mathbf{i}^{(3)},\mathbf{i}^{(4)})$, 
\begin{align}
\sum_{\substack{ (j_1,j_2),(j_3,j_4),  (j_5,j_6),(j_7,j_8)\in \mathcal{J} } } Q^*( \mathbf{i}^{(1)}, \mathbf{i}^{(2)},\mathbf{i}^{(3)},\mathbf{i}^{(4)},\mathbf{j}_8)=O(p^4).\label{eq:secondgoalp4}
\end{align}
Let $\mu$ be a positive constant same as in \eqref{eq:thresholdd}. Define an event
$B_J^c=\{ Q^*( \mathbf{i}^{(1)}, \mathbf{i}^{(2)},\mathbf{i}^{(3)},\mathbf{i}^{(4)},\mathbf{j}_8) =O(p^{-(8+\mu)})   \}$ and let $B_J$ represent the complement set of $B_J^c$ correspondingly. 
Note that
\begin{align*}
\sum_{\substack{ (j_1,j_2),(j_3,j_4),  (j_5,j_6),(j_7,j_8)\in \mathcal{J} } }Q^*( \mathbf{i}^{(1)}, \mathbf{i}^{(2)},\mathbf{i}^{(3)},\mathbf{i}^{(4)},\mathbf{j}_8)\times \mathbf{1}_{B_J^c}=O(p^8 p^{-(8+\mu)})=o(1).
\end{align*} Moreover by Condition \ref{cond:finitemomt},   $Q^*( \mathbf{i}^{(1)}, \mathbf{i}^{(2)},\mathbf{i}^{(3)},\mathbf{i}^{(4)},\mathbf{j}_8)=O(1)$ always holds. Thus to prove \eqref{eq:secondgoalp4}, it remains to show
\begin{eqnarray}
\sum_{\substack{ (j_1,j_2),(j_3,j_4),  (j_5,j_6),(j_7,j_8)\in \mathcal{J} } } \mathbf{1}_{B_J}=O(p^4). \label{eq:secondgoalp4suff}
\end{eqnarray} We write the ordered version of  $\mathbf{j}_8=(j_1,j_2,j_3,j_4,j_5,j_6,j_7,j_8)$ as  $\tilde{\mathbf{j}}_8=(\tilde{j}_1, \tilde{j}_2,\allowbreak \tilde{j}_3, \tilde{j}_4,  \tilde{j}_5,\tilde{j}_6, \tilde{j}_7,\tilde{j}_8)$, which     satisfies $\tilde{j}_1\leq \tilde{j}_2 \leq \tilde{j}_3 \leq \tilde{j}_4 \leq \tilde{j}_5 \leq \tilde{j}_6 \leq \tilde{j}_7 \leq \tilde{j}_8$.  To facilitate the proof, we first introduce three claims below, which will be proved later. In particular, for given $\mathbf{j}_8$, if $\mathbf{1}_{B_J}=1$, the corresponding ordered tuple $\tilde{\mathbf{j}}_8$ of $\mathbf{j}_8$ satisfies the following three claims with $D_0$ defined in \eqref{eq:thresholdd}. 


\begin{enumerate}
\setlength\itemindent{1.3cm}
	\item [\textbf{Claim 1}]: For any index $\tilde{j}_k\in \tilde{\mathbf{j}}_8$,  if it has two neighbors $\tilde{j}_{k-1}$ and $\tilde{j}_{k+1}$, its  distances with the two neighbors $\tilde{j}_{k-1}$ and $\tilde{j}_{k+1}$ can not be bigger than $D_0$ together.  That is, at least one of $|\tilde{j}_{k-1}-\tilde{j}_{k}| \leq D_0$ and $|\tilde{j}_{k}-\tilde{j}_{k+1}|\leq D_0$ is true. 
	For $\tilde{j}_1$ and $\tilde{j}_8$ with only one neighbor,  they satisfy $|\tilde{j}_1-\tilde{j}_2|\leq D_0$ and $|\tilde{j}_7-\tilde{j}_8|\leq D_0$.
	\item [\textbf{Claim 2}]: For a pair of indexes $(\tilde{j}_{k-1}, \tilde{j}_{k})$ in $\tilde{\mathbf{j}}_8$, when $\tilde{j}_{k-1} \neq \tilde{j}_{k}$, if it has  two neighbors $\tilde{j}_{k-2}$ and $\tilde{j}_{k+1}$, the distances of the pair with the two neighbors can not be bigger than $D_0$ together. That is, at least one of $|\tilde{j}_{k-2}-\tilde{j}_{k-1}| \leq D_0$ and $|\tilde{j}_{k}-\tilde{j}_{k+1}|\leq D_0$  holds. For the pairs $(\tilde{j}_{1}, \tilde{j}_{2})$ and $(\tilde{j}_{7}, \tilde{j}_{8})$ with only one neighbor, when $\tilde{j}_{1}\neq  \tilde{j}_{2}$ and $\tilde{j}_{7}\neq  \tilde{j}_{8}$, they satisfy $|\tilde{j}_{2}-\tilde{j}_{3}|\leq D_0$ and $|\tilde{j}_{6}-\tilde{j}_{7}|\leq D_0$. 
	\item [\textbf{Claim 3}]: \begin{enumerate}
	\item For given $\{\tilde{j}_4,\tilde{j}_5 ,\tilde{j}_6,\tilde{j}_7 ,\tilde{j}_8\}$, \begin{align*}
   \sum_{\tilde{j}_1, \tilde{j}_2, \tilde{j}_3}\mathbf{1}_{B_J\cap \{ \tilde{j}_1=\tilde{j}_2 \}}=O(p^2), \quad \sum_{\tilde{j}_1, \tilde{j}_2, \tilde{j}_3}\mathbf{1}_{B_J\cap \{ \tilde{j}_1\neq \tilde{j}_2 \}}=O(pD_0^2).	
	\end{align*} 
	\item For given $\{\tilde{j}_1,\tilde{j}_2 ,\tilde{j}_3,\tilde{j}_4 ,\tilde{j}_5\}$,
	\begin{align*}
		  \sum_{\tilde{j}_6, \tilde{j}_7, \tilde{j}_8}\mathbf{1}_{B_J\cap \{ \tilde{j}_7=\tilde{j}_8 \}}=O(p^2), \quad \sum_{\tilde{j}_6, \tilde{j}_7, \tilde{j}_8}\mathbf{1}_{B_J\cap \{ \tilde{j}_7\neq \tilde{j}_8 \}}=O(pD_0^2).
	\end{align*}
	\end{enumerate}
\end{enumerate}


Given three claims above, we show  \eqref{eq:secondgoalp4suff} by  discussing different cases. 
\begin{enumerate}
	\item When both  $\tilde{j}_1 \neq \tilde{j}_2$ and $\tilde{j}_7 \neq \tilde{j}_8$, by Claim 3, we know the summation over indexes $(\tilde{j}_1, \tilde{j}_2, \tilde{j}_3)$ is of order $pD_0^2$ and the summation over indexes $(\tilde{j}_6, \tilde{j}_7, \tilde{j}_8)$ is also of order $pD_0^2$. Then we consider $(\tilde{j}_4, \tilde{j}_5)$.  
	   When $|\tilde{j}_4- \tilde{j}_5| \leq D_0$, the summation is of order $(pD_0^2)\times pD_0 \times pD_0^2=p^3 D_0^5=p^4$. When $|\tilde{j}_4- \tilde{j}_5|> D_0$, applying Claim 1 on $\tilde{j}_4$ and $\tilde{j}_5$ respectively, we know $|\tilde{j}_3- \tilde{j}_4| \leq D_0$ and $|\tilde{j}_5- \tilde{j}_6| \leq D_0 $ hold. Therefore, the summation is of order $pD_0^2 \times D_0 \times p \times D_0 \times pD_0^2 =p^3D_0^6=p^4$. In summary,
	   \begin{align*}
	   \sum_{\substack{ (j_1,j_2),(j_3,j_4),  (j_5,j_6),(j_7,j_8)\in \mathcal{J} } }\mathbf{1}_{B_J\cap \{ \tilde{j}_1 \neq \tilde{j}_2, \tilde{j}_7 \neq \tilde{j}_8 \}}=O(p^4).
	   \end{align*}
    \item When only one of $\tilde{j}_1 \neq \tilde{j}_2$ and $\tilde{j}_7 \neq \tilde{j}_8$ holds, without loss of generality, we consider $\tilde{j}_1 = \tilde{j}_2$ and $\tilde{j}_7 \neq \tilde{j}_8$. 
      \begin{enumerate}
       \item When $|\tilde{j}_2 - \tilde{j}_3|>D_0$, applying Claim 1 on $\tilde{j}_3$, we know $|\tilde{j}_3-\tilde{j}_4| \leq D_0$.  Then consider the pair $(\tilde{j}_3, \tilde{j}_4)$. If $\tilde{j}_3 =  \tilde{j}_4$,  by Claim 1, $|\tilde{j}_5-\tilde{j}_4|\leq D_0$ or $|\tilde{j}_5-\tilde{j}_6|\leq D_0$ holds. As $\tilde{j}_7 \neq \tilde{j}_8$,  by Claim 3,   the summation  over $(\tilde{j}_6, \tilde{j}_7, \tilde{j}_8)$ is of order $pD_0^2$.   Therefore, the total summation order is $O(p\times p \times D_0 \times p D_0^2)=O(p^4)$. If $\tilde{j}_3 \neq  \tilde{j}_4$,   applying Claim 2 on the pair $(\tilde{j}_3, \tilde{j}_4)$,  we know $|\tilde{j}_4 - \tilde{j}_5|\leq D_0 $ as we discuss $|\tilde{j}_2 - \tilde{j}_3|>D_0$. Also, as $\tilde{j}_7 \neq \tilde{j}_8$, by Claim 3, the summation order over $(\tilde{j}_6, \tilde{j}_7, \tilde{j}_8)$ is $O(pD_0^2)$.  Thus the total order of summation is $O(pD_0 pD_0^2 p D_0^2)=O(p^4)$. In summary,
       \begin{align*}
	   	\sum_{\substack{ (j_1,j_2),(j_3,j_4),\\  (j_5,j_6),(j_7,j_8)\in \mathcal{J} } }\mathbf{1}_{\left\{B_J\cap \{ \mathrm{one\, of\, } \tilde{j}_1 \neq \tilde{j}_2 \mathrm{\, or \, } \tilde{j}_7 \neq \tilde{j}_8,\, |\tilde{j}_2 - \tilde{j}_3|>D_0\}\right\}}=O(p^4).
	   \end{align*}

       
       \item  When $|\tilde{j}_2 - \tilde{j}_3|\leq D_0$, the summation over $\tilde{j}_1,\tilde{j}_2, \tilde{j}_3$ is of order  $pD_0$. Then we consider $\tilde{j}_4, \tilde{j}_5$.  If  $|\tilde{j}_4-\tilde{j}_5|\leq D_0$, the summation over $\tilde{j}_1, \tilde{j}_2, \tilde{j}_3, \tilde{j}_4, \tilde{j}_5$ is of order $p D_0 pD_0=p^2 D_0^2$. As  $\tilde{j}_7 \neq \tilde{j}_8$, by Claim 3, we know the summation order of $\tilde{j}_6, \tilde{j}_7, \tilde{j}_8$ is $pD_0^2$. Then the total summation order of this case is $O(1)p^2 D_0^2 pD_0^2=O(p^4)$. If $|\tilde{j}_4-\tilde{j}_5|> D_0$, applying Claim 1 on $\tilde{j}_4$ and $\tilde{j}_5$ respectively, we have $|\tilde{j}_3 -\tilde{j}_4 | \leq D_0$ and $|\tilde{j}_5- \tilde{j}_6| \leq D_0$. Also, as  $\tilde{j}_7 \neq \tilde{j}_8$, by Claim 3, we know the summation order of $\tilde{j}_6, \tilde{j}_7, \tilde{j}_8$ is $O(pD_0^2)$. Then the total summation order is $O(1)pD_0\times D_0 p D_0 \times pD_0^2=O(p^4)$. In summary,
       \begin{align*}
	   \sum_{\substack{ (j_1,j_2),(j_3,j_4),\\  (j_5,j_6),(j_7,j_8)\in \mathcal{J} } }\mathbf{1}_{\left\{ B_J \cap \{ \mathrm{one\, of\, } \tilde{j}_1 \neq \tilde{j}_2 \mathrm{\, or \, } \tilde{j}_7 \neq \tilde{j}_8, |\tilde{j}_2 - \tilde{j}_3|\leq D_0\}\right\}}=O(p^4).
	   \end{align*}
      \end{enumerate}
	\item When both  $\tilde{j}_1 = \tilde{j}_2$ and $\tilde{j}_7 = \tilde{j}_8$, then we consider $(\tilde{j}_3, \tilde{j}_4, \tilde{j}_5, \tilde{j}_6)$. 
	 \begin{enumerate}
	  \item If the number of distinct elements  in $\{\tilde{j}_3, \tilde{j}_4, \tilde{j}_5, \tilde{j}_6\}$   is smaller and equal to 2, the order of  summation over $\tilde{j}_3, \tilde{j}_4, \tilde{j}_5, \tilde{j}_6$ is $O(p^2)$. We use $|\{\tilde{j}_3, \tilde{j}_4, \tilde{j}_5, \tilde{j}_6\}|\leq 2$ to represent this case, then
	  \begin{align*}
	   \sum_{\substack{ (j_1,j_2),(j_3,j_4),\\  (j_5,j_6),(j_7,j_8)\in \mathcal{J} } } \mathbf{1}_{\left\{B_J\cap \{ \tilde{j}_1=\tilde{j}_2,\,  \tilde{j}_7 = \tilde{j}_8, \, |\{\tilde{j}_3, \tilde{j}_4, \tilde{j}_5, \tilde{j}_6\}|\leq 2\} \right\}}=O(p^4).
	   \end{align*}
	  	  
	  
	   \item If the number of distinct elements  in $\{\tilde{j}_3, \tilde{j}_4, \tilde{j}_5, \tilde{j}_6\}$ is  3,  we use $|\{\tilde{j}_3, \tilde{j}_4, \tilde{j}_5, \tilde{j}_6\}|= 3$ to represent this case. Then  two of $\tilde{j}_3\neq \tilde{j}_4$, $\tilde{j}_4\neq \tilde{j}_5$ and $\tilde{j}_5\neq \tilde{j}_6$ hold. We consider without loss of generality $\tilde{j}_3\neq \tilde{j}_4$, $\tilde{j}_4\neq \tilde{j}_5$ and  $\tilde{j}_5=\tilde{j}_6$. We apply  Claim 2 on the pair $(\tilde{j}_3,\tilde{j}_4)$ and Claim 1 on $\tilde{j}_3$. Then at least two of $|\tilde{j}_2-\tilde{j}_3 |\leq D_0$, $|\tilde{j}_3-\tilde{j}_4 |\leq D_0$  and $|\tilde{j}_4-\tilde{j}_5 |\leq D_0$ holds. Thus the summation order is $O(pD_0^2p^2)=O(p^4)$. In summary,
	    \begin{align*}
	   \sum_{\substack{ (j_1,j_2),(j_3,j_4),\\ (j_5,j_6),(j_7,j_8)\in \mathcal{J} } } \mathbf{1}_{\left\{B_J\cap \{ \tilde{j}_1=\tilde{j}_2,\,  \tilde{j}_7 = \tilde{j}_8,\, |\{\tilde{j}_3, \tilde{j}_4, \tilde{j}_5, \tilde{j}_6\}|= 3\}\right\}}=O(p^4).
	   \end{align*}
	  \item If the number of distinct elements  in $\{\tilde{j}_3, \tilde{j}_4, \tilde{j}_5, \tilde{j}_6\}$ is 4, we use $|\{\tilde{j}_3, \tilde{j}_4, \tilde{j}_5, \tilde{j}_6\}|=4$ to represent this case, and we know $\tilde{j}_3\neq \tilde{j}_4$, $\tilde{j}_4\neq \tilde{j}_5$ and $\tilde{j}_5\neq \tilde{j}_6$.  Applying Claim 2 on the pair $(\tilde{j}_3, \tilde{j}_4)$, and  applying Claim 1 on the two single indexes $\tilde{j}_3$ and  $\tilde{j}_4$ respectively, we know at least two of $|\tilde{j}_2-\tilde{j}_3 |\leq D_0$, $|\tilde{j}_3-\tilde{j}_4 |\leq D_0$ and $|\tilde{j}_4-\tilde{j}_5 |\leq D_0$ hold. Therefore  the summation over $(\tilde{j}_1,  \tilde{j}_2, \tilde{j}_3, \tilde{j}_4, \tilde{j}_5)$ is of order $O(p\times pD_0^2)=O(p^2 D_0^2)$. Then applying Claim 1 on  $\tilde{j}_6$,  we know at least one of $|\tilde{j}_5-\tilde{j}_6|\leq D_0$ and $|\tilde{j}_6-\tilde{j}_7|\leq D_0$ holds. Then the total order of summation for this part is $O(p^2D_0^2 \times pD_0)=O(p^4)$, that is,
	  \begin{align*}
	   \sum_{\substack{ (j_1,j_2),(j_3,j_4),\\  (j_5,j_6),(j_7,j_8)\in \mathcal{J} } } \mathbf{1}_{B_J\cap \{ \tilde{j}_1=\tilde{j}_2,\,  \tilde{j}_7 = \tilde{j}_8,\, |\{\tilde{j}_3, \tilde{j}_4, \tilde{j}_5, \tilde{j}_6\}|= 4\}}=O(p^4).
	   \end{align*}
	 \end{enumerate}
\end{enumerate}
Combining the results obtained, we know \eqref{eq:secondgoalp4suff} is proved. Thus to prove \eqref{eq:secondgoalp4}, it remains to  prove the three claims above. 

\vspace{0.5em}

By the definition of $Q^*( \mathbf{i}^{(1)}, \mathbf{i}^{(2)},\mathbf{i}^{(3)},\mathbf{i}^{(4)},\mathbf{j}_8)$  in Section \ref{sec:pfindpdnk4}, 
\begin{eqnarray*}
\Big|Q^*( \mathbf{i}^{(1)}, \mathbf{i}^{(2)},\mathbf{i}^{(3)},\mathbf{i}^{(4)},\mathbf{j}_8) \Big|\leq 	C\Big|\mathrm{E}\Big( \prod_{t=1}^8 x_{k, \tilde{j}_t} \Big)\Big|.  
\end{eqnarray*}
Then it is sufficient to show that for given $\mathbf{j}_8$, when the ordered version $\tilde{\mathbf{j}}_8$ of $\mathbf{j}_8$ does not follow the three claims,
\begin{eqnarray}
	\Big|\mathrm{E}\Big( \prod_{t=1}^8 x_{k, \tilde{j}_t} \Big)\Big|= O(p^{-(8+\mu)}). \label{eq:suffgoalp4order}
\end{eqnarray}

\subparagraph*{Proof of Claim 1}  \quad

\vspace{0.4em}

(1) When the index $\tilde{j}_k$  has  two neighbors, we give the proof by an example of $k=3$. All the other cases can be obtained following similar analysis without loss of generality.  Suppose $ \tilde{j}_3$'s distances between its neighbors  $ \tilde{j}_2$ and $ \tilde{j}_4$ are both bigger than $D_0$, i.e., $|\tilde{j}_2-\tilde{j}_3|>D_0 $ and $|\tilde{j}_3-\tilde{j}_4|>D_0 $. Then by Conditions \ref{cond:finitemomt}, \ref{cond:alphamixing}, and the $\alpha$-mixing inequality in Lemma \ref{lm:mixingineq},
\begin{eqnarray*}
&& \Big|\mathrm{E}\Big( \prod_{t=1}^8 x_{k, \tilde{j}_t} \Big)\Big| \notag \\
&=&\Big|\mathrm{cov}\Big( \prod_{t=1}^3 x_{k, \tilde{j}_t}\, , \, \prod_{t=4}^8 x_{k, \tilde{j}_t} \Big)+ \mathrm{E}\Big(  \prod_{t=1}^3 x_{k, \tilde{j}_t}\Big)\times  \mathrm{E}\Big(\prod_{t=4}^8 x_{k, \tilde{j}_t}  \Big) \Big| \notag \\
&\leq &	 C \delta^{\frac{D_0 \epsilon}{2+\epsilon}} + C \times | \mathrm{cov}( x_{k,\tilde{j}_1}x_{k,\tilde{j}_2}\ ,\ x_{k,\tilde{j}_3} ) +  \mathrm{E}( x_{k,\tilde{j}_1}x_{k,\tilde{j}_2})  \mathrm{E}(x_{k,\tilde{j}_3})  | \\
	&=& O(p^{-(8+\mu)})  + C \times | \mathrm{cov}( x_{k,\tilde{j}_1}x_{k,\tilde{j}_2}\ ,\  x_{k,\tilde{j}_3} )|  \notag \\
	&=& O( p^{-(8+\mu)}).  
\end{eqnarray*}Thus \eqref{eq:suffgoalp4order} holds.


(2) For $\tilde{j}_1$ and $\tilde{j}_8$ with only one neighbor,  we give the proof on $\tilde{j}_1$, while  $\tilde{j}_8$ can be proved similarly. By Conditions \ref{cond:finitemomt},  \ref{cond:alphamixing}, and Lemma \ref{lm:mixingineq},
\begin{eqnarray*}
	\Big|\mathrm{E}\Big( \prod_{t=1}^8 x_{k, \tilde{j}_t} \Big)\Big| 
	&=& \Big|\mathrm{cov}\Big(x_{k,\tilde{j}_1} \, ,\, \prod_{t=2}^8 x_{k,\tilde{j}_t} \Big) +\mathrm{E}(x_{k,\tilde{j}_1})\times\mathrm{E}\Big( \prod_{t=2}^8 x_{k,\tilde{j}_t} \Big) \Big|\notag \\
	&\leq & C \delta^{\frac{D_0 \epsilon}{2+\epsilon}} + 0 \quad \quad (\ \mathrm{E}(x_{k,\tilde{j}_1})=0\ ) \\
	&= & O(p^{-(8+\mu)}) . 
\end{eqnarray*}Thus \eqref{eq:suffgoalp4order} also holds.
\smallskip

\vspace{0.4em}

\subparagraph*{Proof of Claim 2:} \quad 

\vspace{0.4em}

(1) When the pair $(\tilde{j}_{k-1}, \tilde{j}_{k})$ has two neighbors, we give the proof by the example when $k=5$, i.e., we consider the pair $(\tilde{j}_4, \tilde{j}_5)$.    The other cases can be proved similarly without loss of generality. Suppose $\tilde{j}_4 \neq \tilde{j}_5 $ with $|\tilde{j}_3-\tilde{j}_4|>D_0$ and $|\tilde{j}_5-\tilde{j}_6|>D_0$. As $ \mathrm{E}(x_{k,\tilde{j}_4}x_{k,\tilde{j}_5})=0$ under $H_0$, by Conditions \ref{cond:finitemomt} and   \ref{cond:alphamixing}, and  Lemma \ref{lm:mixingineq}, we have
\begin{eqnarray*}
&& \Big|\mathrm{E}\Big( \prod_{t=1}^8 x_{k, \tilde{j}_t} \Big)\Big| \notag \\
&=&\Big|\mathrm{cov}\Big( \prod_{t=1}^3 x_{k, \tilde{j}_t}\, , \, \prod_{t=4}^8 x_{k, \tilde{j}_t} \Big)+ \mathrm{E}\Big(  \prod_{t=1}^3 x_{k, \tilde{j}_t}\Big)\times  \mathrm{E}\Big(\prod_{t=4}^8 x_{k, \tilde{j}_t}  \Big) \Big| \notag \\
&\leq &	C \delta^{\frac{D_0 \epsilon}{2+\epsilon}} + \Big|  \mathrm{E}\Big(\prod_{t=1}^3 x_{k,\tilde{j}_t}\Big) \times \Big\{ \mathrm{cov}\Big( \prod_{t=4}^5 x_{k,\tilde{j}_t} \ ,\  \prod_{t=6}^8 x_{k,\tilde{j}_t}\Big) + \mathrm{E}\Big(\prod_{t=4}^5 x_{\tilde{j}_t}\Big)\mathrm{E}\Big(\prod_{t=6}^8 x_{k,\tilde{j}_t}\Big) \Big\}\Big| \notag \\
&=& C \delta^{\frac{D_0 \epsilon}{2+\epsilon}} + \Big|\mathrm{E}(x_{k,\tilde{j}_1}x_{k,\tilde{j}_2}x_{\tilde{j}_3}) \times \{ \mathrm{cov}(x_{k,\tilde{j}_4}x_{k,\tilde{j}_5} \ ,\  x_{k,\tilde{j}_6}x_{k,\tilde{j}_7}x_{k,\tilde{j}_8})+0\}\Big| \notag \\
	&\leq & C \delta^{\frac{D_0 \epsilon}{2+\epsilon}}=O(  p^{-(8+\mu)} ). 
\end{eqnarray*}
Thus \eqref{eq:suffgoalp4order} holds.

(2) For the pairs $(\tilde{j}_{1}, \tilde{j}_{2})$ and $(\tilde{j}_{7}, \tilde{j}_{8})$ with only one neighbor, we give the proof on $(\tilde{j}_{1}, \tilde{j}_{2})$,  while the proof on $(\tilde{j}_{7}, \tilde{j}_{8})$ can be obtained similarly.   
If $\tilde{j}_1\neq \tilde{j}_2$ and $|\tilde{j}_2-\tilde{j}_3|>D_0 $, as  $ \mathrm{E}(x_{k,\tilde{j}_1}x_{k,\tilde{j}_2}) =0 $ under $H_0$,  by Conditions \ref{cond:finitemomt} and  \ref{cond:alphamixing}, and the $\alpha$-mixing inequality in Lemma \ref{lm:mixingineq}, we have
\begin{eqnarray*}
 \Big|\mathrm{E}\Big( \prod_{t=1}^8 x_{k, \tilde{j}_t} \Big)\Big| 
	&=&\Big| \mathrm{cov}\Big( \prod_{t=1}^2x_{k,\tilde{j}_t}\ , \ \prod_{t=3}^8 x_{ \tilde{j}_t}\Big) + \mathrm{E}\Big( \prod_{t=1}^2x_{k,\tilde{j}_t}\Big) \mathrm{E}\Big(\prod_{t=3}^8 x_{ \tilde{j}_t}\Big)\Big| \\
	&\leq & C\delta^{\frac{D_0 \epsilon}{2+\epsilon}}=O(  Cp^{-(8+\mu)}).
\end{eqnarray*} Thus \eqref{eq:suffgoalp4order} holds.


\smallskip

\subparagraph*{Proof of Claim 3:}

The Claim 3 (a) is obtained by applying Claim 1 on the  $\tilde{j}_1$ and Claim 2 on the pair $(\tilde{j}_1, \tilde{j}_2 )$ when $\tilde{j}_1 \neq \tilde{j}_2 $. The Claim 3 (b) is also obtained similarly.

\paragraph{Proof under Condition \ref{cond:highordmominter}} \label{par:ednkpfell}

In this section, we prove Lemma \ref{lm:secondgoaljointnormal} by substituting Condition \ref{cond:alphamixing} with Condition  \ref{cond:highordmominter}.
Similarly to Section \ref{par:pfmomentvartaa}, the proof under Condition  \ref{cond:highordmominter} follows similarly to the proof under the independence case  in Section \ref{sec:pfindpdnk4}. 
 In particular, we note that Condition  \ref{cond:highordmominter} implies that if one of the indexes in $\{j_1,\ldots ,j_8\}$ only appears once,  $\mathrm{E}( \prod_{r=1}^8 x_{k,j_r})= 0$. Therefore when $\mathrm{E}( \prod_{r=1}^8 x_{k,j_r})\neq 0$, 
\eqref{eq:eightjindpcond} holds. Also following similar analysis,  we know \eqref{eq:expank4indpcond} holds by  Condition  \ref{cond:highordmominter} and $\mathrm{E}(x_{1,j_1}x_{1,j_2})=0$ for $j_1\neq j_2$. Combining \eqref{eq:eightjindpcond} and \eqref{eq:expank4indpcond}, Lemma \ref{lm:secondgoaljointnormal} is proved.

\subsection{Lemmas for the proof of Theorem \ref{thm:asymindpt}}
\subsubsection{Proof of Lemma \ref{lm:firstsetpthm3proof} (on Page \pageref{lm:firstsetpthm3proof}, Section \ref{sec:asymindptproofsec})} \label{sec:lemma9pproof}

For easy illustration, we first prove Lemma \ref{lm:firstsetpthm3proof}   when $m=1$ in Section \ref{par:indtwopfm1}, and next  present the proof for $m>1$ in Section \ref{par:indtwopfmlarger1}. 

\paragraph{Proof for $m=1$} \label{par:indtwopfm1}
Specifically, in this section, we prove  
  \begin{eqnarray*}
  	\Big| P\Big( \frac{\hat{M}_n}{n} > y_p, \frac{\tilde{\mathcal{U}}(a)  }{\sigma(a)} \leq 2z \Big) - P\Big(\frac{\hat{M}_n}{n} > y_p \Big) P\Big(\frac{\tilde{\mathcal{U}}(a)  }{\sigma(a)} \leq 2z \Big) \Big| \rightarrow 0. 
\end{eqnarray*} 
Note that by definitions in \eqref{eq:vladefinition} and  \eqref{eq:indpalldefinewvar},
\begin{eqnarray}
	&& P\Big( \frac{ \hat{M}_n}{n} > y_p, \frac{\tilde{\mathcal{U}}(a)  }{\sigma(a)} \leq 2z \Big) \label{eq:asymindptfirstpartprob}  \\
	&=& P\Big( \max_{1 \leq l \leq q}\ ( \hat{G}_l )^2 > ny_p  ,  (\sigma(a) P_a^n)^{-1} \sum_{m=1}^q U_m^a  \leq z  \Big) \notag  \\
	&=& P\Big(\Big\{\cup_{l=1}^q \{( \hat{G}_l )^2 > ny_p\} \Big\} \cap \Big\{ (\sigma(a) P_a^n)^{-1} \sum_{m=1}^q U_m^a  \leq z \Big\} \Big).   \notag
\end{eqnarray} 
Define the  events
$
	E_l=\{ (\hat{G}_l )^2 > n y_p  \} \cap \{(\sigma(a) P_a^n)^{-1} \sum_{m=1}^q U_m^a  \leq z \}, 
$ then we have
\begin{eqnarray}
	\eqref{eq:asymindptfirstpartprob}=P(\cup_{l=1}^q E_{l} ) \label{eq:jointform}.	
\end{eqnarray}
We next examine the upper and lower bounds of  \eqref{eq:jointform}. Particularly, using the Bonferroni's inequality, for any even number $d<[q/2]$, we obtain
\begin{align}
	\sum_{s=1}^d (-1)^{s-1} \sum_{1\leq l_1<\ldots<l_s \leq q} P(\cap_{t=1}^s E_{l_t} ) \leq P(\ \cup_{l=1}^q E_{l} \ )  \label{eq:jointbound} \\ \leq ~\sum_{s=1}^{d-1} (-1)^{s-1} \sum_{1\leq l_1 < \ldots < l_s\leq q} P(\cap_{t=1}^s E_{l_t} ). \notag
\end{align} We consider $d=O(\log^{1/5} p)$ below. The following proof proceeds by examining the upper and lower bounds of $ P(\cap_{t=1}^s E_{l_t} )$ first and combining them based on \eqref{eq:jointbound}.

To facilitate the discussion, we define some notation.  Let
\begin{eqnarray*}
	H_d=\sum_{s=1}^d (-1)^{s-1}\sum_{1 \leq l_1 < \ldots <l_s \leq q} P(\cap_{t=1}^s \{ (\hat{G}_{l_t} )^2 > n y_p \} ).
\end{eqnarray*} By the  Bonferroni's inequality, we have 
\begin{eqnarray}
	H_d \leq P(\ \cup_{l=1}^q \{ ( \hat{G}_l )^2 > n y_p \}\  )    \leq H_{d-1}. \label{eq:hdiffbound}
\end{eqnarray} Given $l_1,\ldots,l_s$, we  define two index sets: $I_s=\{ (j^1_{l_t} ,j^2_{l_t}), 1\leq t\leq s \}$ and correspondingly
\begin{align}
	L_{I_s}=\{ (j_1,j_2):\  (j_1,j_2)\cap (u,t) \neq \emptyset, (u,t) \in I_s \ \mathrm{and} \ (j_1,j_2)\in L \}, \label{eq:defwidset}
\end{align} where  $L$ is defined in \eqref{eq:wdef}. \eqref{eq:defwidset} suggests that  $L_{I_s}$ contains all the index pairs that have overlap with the  index pairs in $I_s$. Note that the definitions of $I_s$ and $L_{I_s}$ depend on the given indexes $l_1,\ldots,l_s$; for the simplicity of notation, we write $I_s$ and $L_{I_s}$ in this proof without ambiguity.  It follows that
\begin{eqnarray}\label{eq:usumldef}
	\sum_{m=1}^q U_m^a = \sum_{(j^1_l, j^2_l) \in  L_{I_s} } U_l^a+ \sum_{(j^1_l, j^2_l) \in L \backslash L_{I_s} } U_l^a.
\end{eqnarray}
The cardinality of $L_{I_s}$ is no greater than $2ps$ by construction. Furthermore, $2ps\leq 2pd$ as $s\leq d$.  Note that the indexes in $I_s$ and $L \backslash L_{I_s}$ have no intersection. By this construction and the independence assumption in  Condition \ref{cond:maxiidcolumn}, for any finite integers $a_1, a_2 \geq 1$, we know
\begin{eqnarray*}
	\{  U_l^{a_1}, \  (j^1_l,j^2_l) \in I_s \} \quad \mathrm{and} \quad \{U_l^{a_2}, \  (j^1_l,j^2_l) \in L\backslash L_{I_s}  \}
\end{eqnarray*} are independent. 

We next examine the upper bound of  $P(\cap_{t=1}^s E_{l_t})$. 
By  the definition of $E_l$ and \eqref{eq:usumldef}, 
\begin{align}
	&~ P(\cap_{t=1}^s E_{l_t}) \label{eq:intersect}  \\
	=  &~ P\biggr(\bigcap_{t=1}^s \biggr\{  \Big\{(\sigma(a) P_a^n)^{-1} \sum_{m=1}^q U_{m}^a  \leq z \Big\} \bigcap \{ (\hat{G}_{l_t} )^2 > n y_p  \} \biggr\} \biggr)  \notag \\
	=&~  P\biggr(\bigcap_{t=1}^s \biggr\{ \Big\{(\sigma(a) P_a^n)^{-1} \Big[ \sum_{(j^1_l, j^2_l) \in  L_{I_s} } U_l^a+ \sum_{(j^1_l, j^2_l) \in L \backslash L_{I_s} } U_l^a   \Big] \leq z \Big\}   \notag \\
	&~ \quad \quad \quad \quad \quad  \bigcap \{ (\hat{G}_{l_t} )^2 > n y_p  \}\biggr\} \biggr). \notag 
\end{align}
Let $\Gamma_p$ represent a number of order $ \Theta \{(\log p)^{-1/2}\}$ and we have 
\begin{align*}
	&~ \Big \{(\sigma(a) P_a^n)^{-1} \Big( \sum_{(j^1_l, j^2_l) \in  L_{I_s} } U_l^a+ \sum_{(j^1_l, j^2_l) \in L \backslash L_{I_s} } U_l^a   \Big) \leq z \Big\} \\
	\subseteq &~  \Big \{ (\sigma(a) P_a^n)^{-1} \Big|  \sum_{(j^1_l, j^2_l) \in  L_{I_s} } U_l^a \Big| \geq \Gamma_p \Big\} \bigcup  \Big \{ (\sigma(a) P_a^n)^{-1} \sum_{(j^1_l, j^2_l) \in L \backslash L_{I_s} } U_l^a \leq \Gamma_p+z  \Big\}.\notag
\end{align*} Thus \eqref{eq:intersect} has the following upper bound,
\begin{align*} 
	\eqref{eq:intersect} \leq & ~ P\Big(\Big\{ \cap_{t=1}^s  \{ (\hat{G}_{l_t} )^2 > n y_p  \} \Big\} \bigcap \Big\{   (\sigma(a) P_a^n)^{-1} \Big|  \sum_{(j^1_l, j^2_l) \in  L_{I_s} } U_l^a \Big| \geq \Gamma_p  \Big\}  \Big) \notag\\
	&~+ P \Big( \Big\{ \cap_{t=1}^s  \{ ( \hat{G}_{l_t} )^2 > n y_p  \} \Big\} \bigcap  \Big \{ (\sigma(a) P_a^n)^{-1} \sum_{(j^1_l, j^2_l) \in L \backslash L_{I_s} } U_l^a \leq \Gamma_p+z  \Big\}   \Big). \notag \end{align*}
In addition, we note that $
	\{  \hat{G}_l, \  (j^1_l,j^2_l) \in I_s \}$ and $\{U_l^a, \  (j^1_l,j^2_l) \in L\backslash L_{I_s}  \}
$ are independent, because of $I_s \cap (L\backslash L_{I_s})=\emptyset$ by the construction and the independence assumption   in Condition \ref{cond:maxiidcolumn}. It follows that
\begin{align}\label{eq:diffbound1}
	\eqref{eq:intersect}  \leq     P_s + P_{ys} P_{+z}, 
\end{align}
 where for simplicity we define
\begin{eqnarray} \label{eq:multiplepsdefs}
    P_s &=&P\Big(\Big\{   (\sigma(a) P_a^n)^{-1} \Big|  \sum_{(j^1_l, j^2_l) \in  L_{I_s} } U_l^a \Big| \geq \Gamma_p  \Big\}  \Big), \\
P_{ys} &=& P\Big(  \bigcap_{t=1}^s  \{ (\hat{G}_{l_t} )^2 > n y_p  \} \Big),  \notag \\
	P_{+z}&=& P\Big( \Big \{ (\sigma(a) P_a^n)^{-1} \sum_{(j^1_l, j^2_l) \in L \backslash L_{I_s} } U_l^a \leq \Gamma_p+z  \Big\} \Big). \notag 
\end{eqnarray} 
 Note that although the notation  $P_{ys}$, $P_{+z}$ and $P_s$  in \eqref{eq:multiplepsdefs} suppress their dependence on the specific choice of $(l_1,\ldots, l_s)$, this will not influence the proof due to the  i.i.d. assumption   in Condition \ref{cond:maxiidcolumn}.    



Similarly we examine the lower bound of  $P(\cap_{t=1}^s E_{l_t})$. In particular, 
\begin{eqnarray*}
	&& \Big \{ (\sigma(a) P_a^n)^{-1} \sum_{(j^1_l, j^2_l) \in L \backslash L_{I_s} } U_l^a \leq z-\Gamma_p  \Big\} \\
	&\subseteq & \Big \{ (\sigma(a) P_a^n)^{-1} \Big|  \sum_{(j^1_l, j^2_l) \in  L_{I_s} } U_l^a \Big| \geq \Gamma_p \Big\} \bigcup \Big\{ (\sigma(a) P_a^n)^{-1} \sum_{m=1}^q U_m^a \leq z   \Big\}.
\end{eqnarray*} Then \eqref{eq:intersect} has the following lower bound,
\begin{align*}
	\eqref{eq:intersect} \geq &~- P\Big(\Big\{ \cap_{t=1}^s  \{ (\hat{G}_{l_t} )^2 > n y_p  \} \Big\} \bigcap \Big\{   (\sigma(a) P_a^n)^{-1} \Big|  \sum_{(j^1_l, j^2_l) \in  L_{I_s} } U_l^a \Big| \geq \Gamma_p  \Big\}  \Big) \notag\\
	&~+ P \Big( \Big\{ \cap_{t=1}^s  \{ ( \hat{G}_{l_t} )^2 > n y_p  \} \Big\} \bigcap  \Big \{ (\sigma(a) P_a^n)^{-1} \sum_{(j^1_l, j^2_l) \in L \backslash L_{I_s} } U_l^a \leq z-\Gamma_p  \Big\}   \Big). \notag
\end{align*}
Similarly to \eqref{eq:diffbound1}, by the independence between $\{  \hat{G}_l, \  (j^1_l,j^2_l) \in I_s \}$ and $\{U_l^a, \  (j^1_l,j^2_l) \in L\backslash L_{I_s}  \}$, we obtain
\begin{align}
	\eqref{eq:intersect} \geq P_{ys} \times P_{-z} - P_s, \label{eq:diffbound2}
\end{align} where $P_{ys}$ and $P_s$ are defined same as in \eqref{eq:multiplepsdefs}, and we define
\begin{eqnarray*}
	P_{-z}=P\Big(  (\sigma(a) P_a^n)^{-1} \sum_{(j^1_l, j^2_l) \in L \backslash L_{I_s} } U_l^a \leq z-\Gamma_p \Big).
\end{eqnarray*} 

We have obtained the upper and lower bounds of $P(\cap_{t=1}^s E_{l_t})$ in \eqref{eq:diffbound1} and  \eqref{eq:diffbound2}   respectively. 
 We next prove that $P_{+z}$  in \eqref{eq:diffbound1} and $P_{-z}$ in   \eqref{eq:diffbound2} are close in the sense that there exists some constant $C>0$,
\begin{eqnarray}
 | P_{+z}- P_z | \leq C\times \Gamma_p \quad \text{and} \quad | P_{-z}- P_z | \leq C\times \Gamma_p, \label{eq:zunifapproxbound}
\end{eqnarray}  where we define
$
	P_z=P( (\sigma(a) P_a^n)^{-1} \sum_{m=1}^q U_m^a \leq z  ).
$ To obtain \eqref{eq:zunifapproxbound}, we note that $\sum_{(j^1_l, j^2_l) \in L_{I_s} } U_l^a$ is a summation over index pairs in $L_{I_s}$, and $L_{I_s}$ is of size $2ps$, which is $o(p^2)$ as $s\leq d$ and $d=O(\log^5 p)$. Following similar analysis of $\tilde{\mathcal{U}}^*(a)/\sigma(a)\xrightarrow{P}0$ in  Lemma \ref{lm:varianceorder},
 we know $(\sigma(a) P_a^n)^{-1}\sum_{(j^1_l, j^2_l) \in L_{I_s} } U_l^a\xrightarrow{P} 0$. Moreover, by $\tilde{\mathcal{U}}(a)=2(P^n_a)^{-1}\sum_{l=1}^q U_l^a $  in \eqref{eq:vladefinition}, $\Gamma_p=\Theta(\log^{-1/2}p)$ and the convergence result in \eqref{eq:thm1berryessenbound}, we have for  given $z$, 
$$
 	|P_{+z}-\Phi(2z+2\Gamma_p)|\leq C\Gamma_p, \ |P_{-z}-\Phi(2z-2\Gamma_p)| \leq C\Gamma_p, \ |P_{z}-\Phi(2z)| \leq C\Gamma_p. 
$$ 
As $|\Phi(2z+ 2\Gamma_p)-\Phi(2z)|\leq C\Gamma_p$ for given $z$,  $| P_{+ z}- P_z | \leq | P_{+ z} - \Phi(2z+ 2\Gamma_p) |+|\Phi(2z+ 2\Gamma_p)-\Phi(2z)| +|P_z - \Phi(2z)| \leq C\Gamma_p.$ Similarly, as $|\Phi(2z- 2\Gamma_p)-\Phi(2z)|\leq C\Gamma_p$, $| P_{-z}- P_z | \leq C\Gamma_p$. Therefore \eqref{eq:zunifapproxbound} is obtained.

In summary, given  \eqref{eq:diffbound1},   \eqref{eq:diffbound2} and \eqref{eq:zunifapproxbound},   we have \begin{eqnarray*}
	 |P(\cap_{t=1}^s E_{l_t})- P_{ys} \times P_z | 
	 \leq  P_s +C\times \Gamma_p\times P_{ys}. \label{eq:diffbound3}
\end{eqnarray*} 
Given the above property of  $P(\cap_{t=1}^s E_{l_t})$,  we next derive an upper bound of \eqref{eq:jointform} based on  the relationship in \eqref{eq:jointbound}. Specifically,
\begin{align}
\quad &~P( \cup_{l=1}^q E_{l} ) \notag \\
	 \leq & ~  \sum_{s=1}^{d-1} (-1)^{s-1} \sum_{1\leq l_1 < \ldots < l_s\leq q} P(\cap_{t=1}^s E_{l_t} ) \notag \\
	\leq & ~ \sum_{s=1}^{d-1} (-1)^{s-1} \sum_{1\leq l_1 < \ldots < l_s\leq q} \{  P_{ys}  P_z    + (-1)^{s-1} \times[ C\Gamma_p \times P_{ys} + P_s ] \} \notag  \\
\leq & ~ H_{d-1} \times P_z + \sum_{s=1}^{d-1} \sum_{1\leq l_1 < \ldots < l_s\leq q}   ( C\times \Gamma_p \times P_{ys} +P_s), \label{eq:unionblbound1} 
\end{align}  where the last inequality  uses the notation  in \eqref{eq:hdiffbound}, i.e.,
\begin{eqnarray}
	H_{d-1}=\sum_{s=1}^{d-1} (-1)^{s-1} \sum_{1\leq l_1 < \ldots < l_s\leq q}  P_{ys}, \label{eq:hdminus1def}
\end{eqnarray} 
and the fact that $P_z$ does not depend on $l_1,\ldots, l_s$ in summation.  
From \eqref{eq:hdiffbound}, we know $H_{d-1}\leq P_y+|H_{d-1}-H_d|$,  where we define
\begin{eqnarray}
	P_y=P\Big(\ \bigcup_{l=1}^q \{ (\hat{G}_l )^2 > n y_p \}\, \Big). \label{eq:pydef} 
\end{eqnarray} 
As a result, we have
\begin{align*}
	\eqref{eq:unionblbound1} \leq P_y\times P_z + |H_{d-1}-H_d|\times P_z + \sum_{s=1}^{d-1} \sum_{1\leq l_1 < \ldots < l_s\leq q}  (C\Gamma_p P_{ys}+P_s).
\end{align*} 
Next we prove $|H_{d-1}-H_d|\times P_z \to 0,$ $  \sum_{s=1}^{d-1} \sum_{1\leq l_1 < \ldots < l_s\leq q}  \Gamma_p\times P_{ys}\to 0$ and $ \sum_{s=1}^{d-1} \sum_{1\leq l_1 < \ldots < l_s\leq q}P_s\to 0$ by  the following three  Lemmas \ref{lm:maxupperlowerbd}--\ref{lm:expdecylm}, respectively.

\begin{lemma} \label{lm:maxupperlowerbd} 
Under the conditions of Theorem \ref{thm:asymindpt}, when $s=O(\log^{1/5} p)$,
	\begin{eqnarray*}
		&& \sum_{1\leq l_1<\ldots<l_s \leq q } P\Big(\bigcap_{t=1}^s \{ ( \hat{G}_{l_t})^2/n \geq 4\log p - \log \log p +y \} \Big) \notag \\
		&=&  \frac{1}{s!} \Big( \frac{1}{2\sqrt{2\pi}} e^{-\frac{y}{2}} \Big)^s (1+o(1)) + o(1).
	\end{eqnarray*}
\end{lemma}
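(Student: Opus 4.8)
The plan is to prove a Poisson-type factorization for the exceedance events $E_l:=\{(\hat G_l)^2/n\ge y_p\}$, with $y_p=4\log p-\log\log p+y$ as in \eqref{eq:ypdefition}, by pairing a \emph{uniform} Cram\'er-type moderate deviation estimate for each single $\hat G_l$ with the independence structure supplied by Condition \ref{cond:maxiidcolumn}. First I would record that, by \eqref{eq:indpalldefinewvar}, $\hat G_l=\sum_{i=1}^n(\eta_{i,l}-\mathrm{E}\,\eta_{i,l})$ with $\eta_{i,l}=(x_{i,j^1_l}/\sqrt{\sigma_{j^1_l,j^1_l}})(x_{i,j^2_l}/\sqrt{\sigma_{j^2_l,j^2_l}})\mathbf{1}\{|\cdot|\le\tau_n\}$; these are i.i.d.\ in $i$, bounded by $\tau_n=\tau\log(p+n)$, mean zero after centering, and (under $H_0$ and Condition \ref{cond:maxiidcolumn}, using $\varsigma=2$ so that $x_{i,j^1_l}x_{i,j^2_l}$ has sub-exponential tails) have variance $1-O((p+n)^{-c\tau})$ uniformly in $l$. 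Applying Lemma~6.8 of \citet{cai2011} to $\pm\hat G_l/\sqrt n$ at deviation level $\sqrt{y_p}\sim 2\sqrt{\log p}$—whose hypotheses hold because the summands are bounded by $\tau_n$ and $\log p=o(n^{1/7})$—and using $e^{-y_p/2}=p^{-2}(\log p)^{1/2}e^{-y/2}$ yields, uniformly over $1\le l\le q$,
\[
P(E_l)=P\bigl(|\hat G_l|\ge\sqrt{n y_p}\bigr)=\frac{1}{\sqrt{2\pi}}\,p^{-2}e^{-y/2}\,(1+o(1)),
\]
the factor $1/\sqrt{2\pi}$ (rather than $1/(2\sqrt{2\pi})$) coming from the two disjoint tails.

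Next I would split the $s$-subsets $1\le l_1<\cdots<l_s\le q$ into the collection $\mathcal D_s$ of ``disjoint'' subsets—those for which the $2s$ coordinates $j^1_{l_1},j^2_{l_1},\dots,j^1_{l_s},j^2_{l_s}$ are all distinct—and its complement $\mathcal B_s$. For $(l_1,\dots,l_s)\in\mathcal D_s$, Condition \ref{cond:maxiidcolumn} (i.i.d.\ standardized columns, i.i.d.\ rows) makes $\hat G_{l_1},\dots,\hat G_{l_s}$ mutually independent, so $P(\cap_{t=1}^s E_{l_t})=\prod_{t=1}^s P(E_{l_t})$. A counting step gives $|\mathcal D_s|=\binom{p}{2}\binom{p-2}{2}\cdots\binom{p-2s+2}{2}/s!=\binom{q}{s}(1+o(1))$ when $s=O(\log^{1/5}p)=o(\sqrt p)$, together with $q/p^2\to 1/2$, and $(1+o(1))^s=1+o(1)$ (the per-term relative error in the display above going to $0$ faster than $(\log p)^{-1/5}$, which is why the truncation level $\tau_n$ and the hypothesis $\log p=o(n^{1/7})$ matter). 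Hence
\[
\sum_{(l_1,\dots,l_s)\in\mathcal D_s}\ \prod_{t=1}^sP(E_{l_t})
=\frac{q^s}{s!}\Bigl(\tfrac{1}{\sqrt{2\pi}}p^{-2}e^{-y/2}\Bigr)^s(1+o(1))
=\frac{1}{s!}\Bigl(\tfrac{1}{2\sqrt{2\pi}}e^{-y/2}\Bigr)^s(1+o(1)),
\]
which is the asserted leading term.

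It then remains to show $\sum_{\mathcal B_s}P(\cap_{t=1}^sE_{l_t})=o(1)$. A crude enumeration (choose a shared coordinate $j$, the two pairs through $j$, then $s-2$ further pairs) gives $|\mathcal B_s|\le Cp^3\binom{q}{s-2}$. For a bad tuple with a colliding pair $(l_1,l_2)$, say with coordinate sets $\{j,a\}$ and $\{j,b\}$, $j,a,b$ distinct, I would bound $P(\cap_{t=1}^sE_{l_t})\le P(E_{l_1}\cap E_{l_2})(\max_l P(E_l))^{s-2}$ and control $P(E_{l_1}\cap E_{l_2})$ by conditioning on column $j$: given column $j$, $\hat G_{l_1}$ and $\hat G_{l_2}$ are independent, each still a centered bounded sum whose conditional variance is $1-o(1)$ off a negligible set of column-$j$ realizations, so the conditional analogue of the single-pair estimate gives $P(E_{l_1}\cap E_{l_2})=O(p^{-4}(\log p)^{C})$. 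Combining, $\sum_{\mathcal B_s}P(\cap_{t=1}^sE_{l_t})\le Cp^3\binom{q}{s-2}\cdot O(p^{-4}(\log p)^C)\cdot(Cp^{-2}e^{-y/2})^{s-2}=O(p^{-1}(\log p)^C)\cdot\frac{(C/2)^{s-2}e^{-y(s-2)/2}}{(s-2)!}=o(1)$, uniformly for $s=O(\log^{1/5}p)$.

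The main obstacle will be making Step~1—and its conditional variant for the colliding pairs—uniform over all $\binom{p}{2}$ coordinate pairs with a controlled relative error, so that $(1+o(1))^s=1+o(1)$ survives for $s$ growing like $\log^{1/5}p$; this forces careful bookkeeping of the truncation corrections and of the dependence of the Cram\'er-type estimate on the slightly sub-unit variances $\mathrm{var}(\hat G_l)$, and it is exactly where the truncation at $\tau_n=\tau\log(p+n)$ with $\tau$ large and the hypothesis $\log p=o(n^{1/7})$ are used.
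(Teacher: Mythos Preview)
Your strategy diverges from the paper's. Rather than splitting into coordinate-disjoint and overlapping $s$-tuples, the paper applies Za\u{\i}tsev's multivariate Gaussian coupling (Theorem~1.1 of \cite{zaitsev1987gaussian}) directly to the vector $(\hat G_{l_1},\ldots,\hat G_{l_s})$ for \emph{every} tuple, good or bad, replacing it by a Gaussian $\mathbf N_s$ with covariance $n\,\mathbf U_s$; the coupling error decays like $\exp(-cn^{1/2}/(s^{5/2}\tau_n\sqrt{\log p}))$ and so is summable even over $q^s$ tuples. The key observation (Lemma~B.12) is that under $H_0$ and Condition~\ref{cond:maxiidcolumn} the off-diagonal entries of $\mathbf U_s$ are $O((p+n)^{-c_0\tau})$ even when two pairs share a coordinate, because $\mathrm{cov}(x_jx_a,x_jx_b)=\mathrm E[x_j^2]\mathrm E[x_a]\mathrm E[x_b]=0$. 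Hence the Gaussian min-probability factors as $(\tfrac{1}{\sqrt{2\pi}}p^{-2}e^{-y/2})^s(1+o(1))$ for every tuple, and summing over $\binom{q}{s}$ gives the lemma with no good/bad split at all.

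Your disjoint-tuple step is fine, but the bad-tuple bound
\[
P\Bigl(\bigcap_{t=1}^s E_{l_t}\Bigr)\ \le\ P(E_{l_1}\cap E_{l_2})\,\bigl(\max_l P(E_l)\bigr)^{s-2}
\]
is not justified: it would require $E_{l_3},\ldots,E_{l_s}$ to be independent of one another and of $(E_{l_1},E_{l_2})$, which fails whenever additional coordinate collisions occur among $l_3,\ldots,l_s$ or between them and $\{l_1,l_2\}$. Without that factor the crude bound $P(\cap_t E_{l_t})\le P(E_{l_1}\cap E_{l_2})$ leaves you with $Cp^{-1}(\log p)^C\binom{q}{s-2}$, which explodes. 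One can rescue the argument by stratifying $\mathcal B_s$ by the number of distinct coordinates and iterating your conditioning idea, but this becomes delicate for growing $s$ (e.g.\ star configurations force conditioning on a single column for all $s$ events at once). The paper's route is cleaner precisely because it replaces ``independence'' by ``near-zero correlation,'' which survives overlaps under $H_0$ and is all the Gaussian approximation needs.
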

\begin{proof}
	\textit{See Section \ref{sec:lemma17proof} on Page \pageref{sec:lemma17proof}.}
\end{proof}
\begin{lemma} \label{lm:maxupperlowerbd1} 
Under the conditions of Theorem \ref{thm:asymindpt}, when $d=O(\log^{1/5} p)$,
\begin{eqnarray*}
	&& \sum_{s=1}^{d-1} \sum_{1\leq l_1<\ldots<l_s \leq q } P\Big(\bigcap_{t=1}^s \{ ( \hat{G}_{l_t})^2/n \geq 4\log p - \log \log p +y \} \Big) \\
	&= &  \sum_{s=1}^{d -1}  \frac{1}{s!} \Big( \frac{1}{2\sqrt{2\pi}} e^{-y/2} \Big)^s\{1+o(1)\}  + o(1).
\end{eqnarray*}
\end{lemma}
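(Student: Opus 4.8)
The statement is the summed version of Lemma \ref{lm:maxupperlowerbd} over $s=1,\ldots,d-1$ with $d=O(\log^{1/5}p)$, so the plan is essentially to apply Lemma \ref{lm:maxupperlowerbd} term by term and then control the tail of the resulting partial sum of the exponential series. First I would write
\[
\Sigma_d := \sum_{s=1}^{d-1}\sum_{1\le l_1<\cdots<l_s\le q}P\Big(\bigcap_{t=1}^s\{(\hat G_{l_t})^2/n\ge 4\log p-\log\log p+y\}\Big),
\]
and apply Lemma \ref{lm:maxupperlowerbd} to each inner double sum, which is legitimate because $s\le d-1=O(\log^{1/5}p)$ satisfies the hypothesis $s=O(\log^{1/5}p)$ of that lemma. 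This gives, for each fixed $s$,
\[
\sum_{1\le l_1<\cdots<l_s\le q}P(\cdots)=\frac{1}{s!}\Big(\frac{1}{2\sqrt{2\pi}}e^{-y/2}\Big)^s(1+o(1))+o(1),
\]
so that $\Sigma_d=\sum_{s=1}^{d-1}\frac{1}{s!}(\frac{1}{2\sqrt{2\pi}}e^{-y/2})^s+\text{(error)}$, where the error is a sum of $d-1$ terms, each of which is $o(1)$ times either $\frac{1}{s!}(\frac{1}{2\sqrt{2\pi}}e^{-y/2})^s$ or $1$.

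The main technical point — and the only place requiring care — is that the $o(1)$ quantities in Lemma \ref{lm:maxupperlowerbd} are not a priori uniform in $s$, and here we are summing $d-1\to\infty$ of them. So I would revisit the proof of Lemma \ref{lm:maxupperlowerbd} (in Section \ref{sec:lemma17proof}) to extract a quantitative, $s$-uniform bound for the error, of the form: the relative error in the multiplicative term is $O(\rho_p)$ for some $\rho_p\to 0$ not depending on $s$ (as long as $s\le d$), and the additive error is bounded by $O(\eta_p)\cdot A^s/s!$ for constants $A,\eta_p$ with $\eta_p\to 0$. Then
\[
|\Sigma_d-S_{d-1}|\le \rho_p\sum_{s=1}^{d-1}\frac{1}{s!}\Big(\frac{e^{-y/2}}{2\sqrt{2\pi}}\Big)^s+\eta_p\sum_{s=1}^{d-1}\frac{A^s}{s!}\le \rho_p\, e^{\frac{e^{-y/2}}{2\sqrt{2\pi}}}+\eta_p\, e^{A}=o(1),
\]
where $S_{d-1}:=\sum_{s=1}^{d-1}\frac{1}{s!}(\frac{1}{2\sqrt{2\pi}}e^{-y/2})^s$ is the claimed main term, and I have used the convergence of the full exponential series to bound the truncated sums uniformly in $d$. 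This yields exactly $\Sigma_d=S_{d-1}\{1+o(1)\}+o(1)$, which is the assertion.

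The hard part will be establishing that $s$-uniform error bound: I expect the proof of Lemma \ref{lm:maxupperlowerbd} to handle the truncated summands $\hat G_{l_t}$ via a multivariate moderate-deviation / Gaussian-approximation argument whose error is controlled by a Berry–Esseen-type rate to a power depending on $s$, plus a combinatorial count of overlapping index pairs; tracking that the rate degrades at most geometrically in $s$ — so that a factor $C^s$ is absorbed by the $1/s!$ — is the crux. Since $s\le d=O(\log^{1/5}p)$, a bound like $C^s=e^{O(\log^{1/5}p)}=p^{o(1)}$ is more than enough to be killed by any fixed polynomial gain $p^{-\delta}$ coming from the truncation level $\tau_n=\tau\log(p+n)$ and the dependence decay (or the independence assumption in Condition \ref{cond:maxiidcolumn}), so the bookkeeping should close; I would simply need to state the intermediate estimates with explicit $s$-dependence rather than the $o(1)$ shorthand used for the single-$s$ statement.
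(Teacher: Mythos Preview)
Your proposal is correct and takes essentially the same approach as the paper: revisit the proof of Lemma~\ref{lm:maxupperlowerbd} to extract an $s$-uniform quantitative error, then sum over $s$ and use absolute convergence of the exponential series. The one small discrepancy is in the form of the additive error you anticipate: the paper obtains $O(1)e^{-Mn^{3/14}+2s\log p}$ (coming from the Zaitsev Gaussian-approximation step, cf.~\eqref{eq:requirementdincrease}), which actually \emph{grows} in $s$ rather than being of the form $\eta_p A^s/s!$, but since $s\le d=O(\log^{1/5}p)$ and $\log p=o(n^{1/7})$ the total $\sum_{s=1}^{d-1}e^{-Mn^{3/14}+2s\log p}\le d\,e^{-Mn^{3/14}+2d\log p}=o(1)$ all the same; your bookkeeping plan would simply adapt once you see the actual bound.
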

\begin{proof}
	\textit{See Section \ref{sec:lemma18proof} on Page \pageref{sec:lemma18proof}.}
\end{proof}
\begin{lemma} \label{lm:expdecylm}
Under the conditions of Theorem \ref{thm:asymindpt},
\begin{eqnarray*}
	\sum_{s=1}^{d-1}  \sum_{1\leq l_1 < \ldots < l_s\leq q} P\Big(\Big\{   (\sigma(a) P_a^n)^{-1} \Big|  \sum_{(j^1_l, j^2_l) \in  L_{I_s} } U_l^a \Big| \geq \Gamma_p  \Big\} \Big) \to 0, 
\end{eqnarray*}	where $L_{I_s}$ is defined in \eqref{eq:defwidset},  $d=O(\log^{1/5} p)$,  $q=\binom{p}{2}$ and $\Gamma_p=\Theta( \log^{-1/2} p)$.
\end{lemma}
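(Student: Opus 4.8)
The statement to prove is Lemma~\ref{lm:expdecylm}, which controls the ``nuisance'' term arising from the index pairs $L_{I_s}$ that overlap with the selected indices $(l_1,\ldots,l_s)$.

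\medskip

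\textbf{Overall approach.} The plan is to bound the probability in question by a second moment (Chebyshev) estimate and then sum the resulting bounds over all choices of $1\le l_1<\cdots<l_s\le q$ and over $s=1,\ldots,d-1$. First I would fix $s$ and a choice of $(l_1,\ldots,l_s)$, and observe that $W_s := (\sigma(a)P^n_a)^{-1}\sum_{(j^1_l,j^2_l)\in L_{I_s}}U^a_l$ is, up to the normalization $\sigma(a)P^n_a$, a sum of U-statistic-type terms over an index set $L_{I_s}$ of cardinality at most $2ps$. Under $H_0$ and the centering assumption $\mathrm{E}(\mathbf{x})=\mathbf{0}$, each $U^a_l$ is degenerate with mean zero (cf.\ the arguments in Section~\ref{sec:detailofjointnormal}), so $\mathrm{E}(W_s)=0$ and $P(|W_s|\ge\Gamma_p)\le \mathrm{var}(W_s)/\Gamma_p^2$. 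The key quantitative input is that $\mathrm{var}\{\sum_{(j^1_l,j^2_l)\in L_{I_s}}U^a_l\}$ scales like the cardinality of $L_{I_s}$ times $n$-powers, in the same way the full variance in Lemma~\ref{lm:varianceorder} scales like $p^2$; concretely, following the same computation as in Section~\ref{sec:proofvarianceorder} but restricting the outer $j$-summation to $L_{I_s}$ instead of all of $L$, one gets $\mathrm{var}\{\sum_{(j^1_l,j^2_l)\in L_{I_s}}U^a_l\}=O\big((ps)\cdot n^{-a}(P^n_a)^2\big)$ (the extra $(P^n_a)^2$ because $U^a_l$ is unnormalized). Hence $\mathrm{var}(W_s)=O\big(ps/(\sigma^2(a)n^{a})\big)=O(ps/p^2)=O(s/p)$ using $\sigma^2(a)=\Theta(p^2n^{-a})$ from Lemma~\ref{lm:varianceorder}.

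\medskip

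\textbf{Summation over indices.} With $P(|W_s|\ge\Gamma_p)=O\big(s/(p\Gamma_p^2)\big)=O\big(s\log p/p\big)$ since $\Gamma_p=\Theta(\log^{-1/2}p)$, I would then sum over the $\binom{q}{s}\le q^s/s!=O(p^{2s}/s!)$ choices of $(l_1,\ldots,l_s)$ — and here is the crucial point: the bound $O(s\log p/p)$ is \emph{not} summable against $p^{2s}$. So a cruder union bound will not work; instead one must exploit that $W_s$ restricted to $L_{I_s}$ only ``sees'' $O(ps)$ index pairs, and more importantly that the event $\{|W_s|\ge\Gamma_p\}$ is nontrivial only when the index pairs in $L_{I_s}$ actually contribute, forcing coincidences among the coordinates $j^1_{l_t},j^2_{l_t}$. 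The plan is therefore to refine the second-moment estimate: expand $\mathrm{var}(W_s)$ as a sum over pairs $(l,l')\in L_{I_s}\times L_{I_s}$, note that a nonzero covariance $\mathrm{cov}(U^a_l,U^a_{l'})$ forces $\{j^1_l,j^2_l\}=\{j^1_{l'},j^2_{l'}\}$ (by the degeneracy/moment arguments of Lemma~\ref{lm:varianceorder}), and then carefully count how many $(l_1,\ldots,l_s)$ tuples can produce a given amount of overlap. This yields a bound of the form $\sum_{1\le l_1<\cdots<l_s\le q}P(|W_s|\ge\Gamma_p)=O\big(p^{2s-1}\cdot\mathrm{poly}(\log p)\cdot s^{c}/s!\big)$ for some constant $c$, i.e.\ a gain of a factor $p^{-1}$ relative to the ``full'' count $p^{2s}/s!$; then summing over $s=1,\ldots,d-1$ with $d=O(\log^{1/5}p)$ gives a total of order $p^{-1}\,\mathrm{poly}(\log p)\cdot e^{O(1)}\to 0$.

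\medskip

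\textbf{Main obstacle.} The routine part is the second-moment computation for a single $W_s$, which is a direct restriction of the variance calculation already carried out in Section~\ref{sec:proofvarianceorder} (under either Condition~\ref{cond:alphamixing} or Condition~\ref{cond:highordmominter}, via the same threshold-$K_0$ bookkeeping). The genuine difficulty, and where I expect the proof to require care, is the combinatorial counting over the tuples $(l_1,\ldots,l_s)$: one must show that the naive $O(p^{2s}/s!)$ count can be reduced by at least one power of $p$ by tracking how the overlap structure of $L_{I_s}$ with itself (the coincidences $\{j^1_l,j^2_l\}=\{j^1_{l'},j^2_{l'}\}$ needed for a nonvanishing covariance, and the overlaps $(j_1,j_2)\cap(u,t)\neq\emptyset$ defining $L_{I_s}$ in \eqref{eq:defwidset}) constrains the number of free indices, while simultaneously keeping the $s$-dependence under control so that the sum over $s\le d$ converges (using $\sum_s (C/s!)<\infty$ and $d=O(\log^{1/5}p)$). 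Getting the powers of $\log p$ and the factorial weights to line up so that the final sum is genuinely $o(1)$ is the crux.
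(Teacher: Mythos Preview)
Your second-moment approach has a fundamental gap that cannot be repaired by combinatorial counting. The variance bound $\mathrm{var}(W_s)=O(s/p)$ is correct, and Chebyshev indeed gives $P(|W_s|\ge\Gamma_p)=O(s\log p/p)$. But this bound is \emph{sharp} for a generic tuple $(l_1,\ldots,l_s)$: when the $2s$ coordinates $j^1_{l_t},j^2_{l_t}$ are all distinct, the set $L_{I_s}$ has exactly $\Theta(ps)$ elements and the covariances $\mathrm{cov}(U^a_l,U^a_{l'})$ vanish for $l\ne l'$, so the variance really is $\Theta(s/p)$. There are $\Theta(p^{2s}/s!)$ such generic tuples, and their contribution to the sum is already $\Theta(p^{2s}/s!)\cdot\Theta(s\log p/p)$; for $s=1$ alone this is $\Theta(p\log p)\to\infty$. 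Your final arithmetic is simply wrong: $\sum_{s\ge1}p^{2s-1}s^c/s!=p^{-1}\sum_{s\ge1}(p^2)^s s^c/s!$ is of order $p^{-1}e^{p^2}$, not $p^{-1}e^{O(1)}$. No amount of overlap counting can rescue this, because the dominant contribution comes from tuples with \emph{no} overlap.

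What is actually needed is an \emph{exponential} tail bound $P(|W_s|\ge\Gamma_p)=o(d^{-1}p^{-2d})$, uniformly in the choice of tuple, so that multiplying by the crude count $dp^{2d}$ still gives $o(1)$. The paper achieves this by exploiting the row structure of $L_{I_s}$: it decomposes $\sum_{l\in L_{I_s}}U^a_l$ into at most $2s$ sums of the form $\sum_{m>k}U^a_{(k,m)}$ for fixed $k$, then \emph{conditions} on the column $\{x_{1,k},\ldots,x_{n,k}\}$. Under Condition~\ref{cond:maxiidcolumn} (i.i.d.\ entries) the summands $U^a_{(k,m)}$ become conditionally independent across $m$, and after a truncation at level $h_p\asymp(p/\log^2 p)^{a/(a+1)}$ one can apply Bernstein's inequality to get $P(|W_s|\ge\Gamma_p)\le C\exp\{-Cp\Gamma_p/(sh_p)\}+\text{(truncation errors)}$, with all pieces of order $\exp(-Cp^{\alpha})$ for some $\alpha>0$. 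Since $dp^{2d}=\exp\{O(\log^{6/5}p)\}$ while $\exp(-Cp^{\alpha})$ decays much faster, the product tends to zero. The sub-Gaussian assumption in Condition~\ref{cond:maxiidcolumn} (with $\varsigma=2$) and the constraint $\log p=o(n^{1/7})$ are used precisely to control the truncation errors and the conditioning event. Chebyshev alone cannot produce super-polynomial decay; you need concentration.
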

\begin{proof}
	\textit{See Section \ref{sec:lemma19proof} on Page \pageref{sec:lemma19proof}.}
\end{proof}

First, we show $|H_{d-1}-H_d|\times P_z \to 0$. By Lemma \ref{lm:maxupperlowerbd}, when $d\to \infty$,
\begin{eqnarray*}
	 |H_{d-1} -H_d |&=& \sum_{1\leq l_1 < \ldots < l_d\leq q}   P \Big( \bigcap_{t=1}^d \{ (\hat{G}_{l_t} )^2 > n y_p \} \Big)\\
	 &\leq & C \frac{1}{d!} \Big(\frac{1}{2\sqrt{2\pi}} e^{-y/2} \Big)^d \leq   
	 C e \times\Big( \frac{ e^{1-y/2}}{2\sqrt{2\pi} d} \Big)^d \rightarrow 0, 
\end{eqnarray*} where the last inequality follows from $d!\geq e(d/e)^d$.  Second, we show that  $  \sum_{s=1}^{d-1} \sum_{1\leq l_1 < \ldots < l_s\leq q}  \Gamma_p P_{ys}\to 0$. By  the definition of $P_{ys}$ in \eqref{eq:multiplepsdefs}, and Lemma  \ref{lm:maxupperlowerbd}, $\sum_{s=1}^{d-1} \sum_{1\leq l_1 < \ldots < l_s\leq q} \Gamma_p P_{ys}=\Gamma_p \sum_{s=1}^{d-1} \frac{1}{s!} ( \frac{1}{2\sqrt{2\pi}} e^{-y/2} )^s + o(1)\to 0,$ where we use $\Gamma_p =\Theta(\log^{-1/2} p) \rightarrow 0$ and $\sum_{s=1}^{d-1} \frac{1}{s!} ( \frac{1}{2\sqrt{2\pi}} e^{-y/2} )^s < \infty$ from $s! \geq e(s/e)^s$. Third, we obtain $
	\sum_{s=1}^{d-1}  \sum_{1\leq l_1 < \ldots < l_s\leq q} P_s \to 0
$ directly from Lemma \ref{lm:expdecylm} following the notation $P_s$ in \eqref{eq:multiplepsdefs}.

 

In summary, the analysis above shows that $P( \cup_{l=1}^q E_{l} ) \leq  P_y \times P_z + o(1).$ On the other hand,  following similar arguments, we can obtain	$P( \cup_{l=1}^q E_{l} ) \geq  P_y \times P_z + o(1).$ Therefore, $|P( \cup_{l=1}^q E_{l} )-P_y \times P_z|\rightarrow 0$ is obtained, that is,
\begin{align*}
	\Big| P( \cup_{l=1}^q E_{l} ) - P(\ \cup_{l=1}^q \{ (\hat{G}_l )^2 > n y_p \}\  )  P\Big( \Big \{ (\sigma(a) P_a^n)^{-1} \sum_{m=1}^q U_m^a \leq z  \Big\} \Big) \Big| \rightarrow 0.
\end{align*}
Recall the notation in \eqref{eq:asymindptfirstpartprob} and \eqref{eq:jointform}. We then  know Lemma \ref{lm:firstsetpthm3proof} is proved for $m=1$. 

\paragraph{Proof for $m>1$} \label{par:indtwopfmlarger1}

We still use the notation defined in Section \ref{sec:asymindptproofsec}, where $U_l^{a_r}$ and $\tilde{\mathcal{U}}(a_r)$   for $r=1,\ldots,m$ follow the definitions in \eqref{eq:vladefinition} and \eqref{eq:originleadingterm} respectively.  
 To prove Lemma \ref{lm:firstsetpthm3proof} for $m>1$, we 
  note that similarly to \eqref{eq:jointform}, we can write
\begin{eqnarray}
	&& P\Big( \frac{ \hat{M}_n}{n} > y_p, \frac{\tilde{\mathcal{U}}(a_1)  }{\sigma(a_1)} \leq 2z_1,\ldots,  \frac{\tilde{\mathcal{U}}(a_m)  }{\sigma(a_m)} \leq 2z_m\Big) = P(\cup_{l=1}^q E_l),\label{eq:prodprobjoint}  
\end{eqnarray}	
where we redefine the events 
\begin{eqnarray*}
	E_l=\bigcap_{r=1}^m \Big\{ (\sigma(a_r) P_{a_r}^n)^{-1} \sum_{v=1}^q U_v^{a_r}  \leq z_r \Big\} \cap\{ (\hat{G}_l )^2 > n y_p  \}. 
\end{eqnarray*} 
It follows that  \eqref{eq:jointbound} and \eqref{eq:hdiffbound} still hold. For given $l_1,\ldots,l_s$, we define $I_s$ and $L_{I_s}$ same as in  \eqref{eq:defwidset}. Then for $r=1,\ldots, m$, we write
\begin{eqnarray*}
\sum_{v=1}^q U_v^{a_r} = \sum_{(j^1_l, j^2_l) \in  L_{I_s} } U_l^{a_r}+ \sum_{(j^1_l, j^2_l) \in L \backslash L_{I_s} } U_l^{a_r}.
\end{eqnarray*} 
By the construction of $L_{I_s}$ and the independence assumption in  Condition \ref{cond:maxiidcolumn}, we know
\begin{eqnarray*}
	\cup_{r=1}^m \{  U_l^{a_{r}}, \  (j^1_l,j^2_l) \in I_s \} \quad \mathrm{and} \quad  \cup_{r=1}^m \{U_l^{a_{r}}, \  (j^1_l,j^2_l) \in L\backslash L_{I_s}  \}
\end{eqnarray*}
 are independent. 

Similarly to \eqref{eq:intersect}, given $l_1,\ldots, l_s$,  
we have 
\begin{align}
& ~P(\cap_{t=1}^s E_{l_t}) \label{eq:newintersect}  \\
	=&~  P\biggr( \bigcap_{r=1}^m \Big\{(\sigma(a_r) P_{a_r}^n)^{-1} \Big[ \sum_{(j^1_l, j^2_l) \in  L_{I_s} } U_l^{a_r}+ \sum_{(j^1_l, j^2_l) \in L \backslash L_{I_s} } U_l^{a_r}   \Big] \leq z_r \Big\}   \notag \\
	&~ \quad \quad \quad \quad \quad  \cap \{ \cap_{t=1}^s  \{ (\hat{G}_{l_t} )^2 > n y_p  \} \} \biggr).\notag
\end{align}
We take $\Gamma_p$ same as in Section \ref{par:indtwopfm1} with $\Gamma_p= \Theta \{(\log p)^{-1/2}\}$. Then for each $r=1,\ldots,m$, we have
\begin{align*}
	&~ \Big \{(\sigma(a_r) P_{a_r}^n)^{-1} \Big[ \sum_{(j^1_l, j^2_l) \in  L_{I_s} } U_l^{a_r}+ \sum_{(j^1_l, j^2_l) \in L \backslash L_{I_s} } U_l^{a_r}   \Big] \leq z_r \Big\} \\
	\subseteq &~  \Big \{ (\sigma(a_r) P_{a_r}^n)^{-1} \Big|  \sum_{(j^1_l, j^2_l) \in  L_{I_s} } U_l^{a_r} \Big| \geq \Gamma_p \Big\} \bigcup  \Big \{ (\sigma(a_r) P_{a_r}^n)^{-1} \sum_{(j^1_l, j^2_l) \in L \backslash L_{I_s} } U_l^{a_r} \leq \Gamma_p+z_r  \Big\},\notag
\end{align*} and
\begin{align*}
	& ~\Big \{ (\sigma(a_r) P_{a_r}^n)^{-1} \sum_{(j^1_l, j^2_l) \in L \backslash L_{I_s} } U_l^{a_r} \leq z_r-\Gamma_p  \Big\} \\
	\subseteq &~ \Big \{ (\sigma(a_r) P_{a_r}^n)^{-1} \Big|  \sum_{(j^1_l, j^2_l) \in  L_{I_s} } U_l^{a_r} \Big| \geq \Gamma_p \Big\} \bigcup \Big\{ (\sigma(a_r) P_{a_r}^n)^{-1} \sum_{v=1}^q U_v^{a_r} \leq z_r   \Big\}.	
\end{align*}
Therefore similarly to \eqref{eq:diffbound1} and \eqref{eq:diffbound2}, we know 
\begin{align}
	\eqref{eq:newintersect}\leq  P_{ys}  P_{+z}+\sum_{r=1}^m P_{s_r}, \quad
	\eqref{eq:newintersect} \geq  P_{ys} P_{-z}-\Big( \sum_{r=1}^m P_{s_r} \Big), \label{eq:c43twobounds}
\end{align} where $P_{ys}$ is defined in  \eqref{eq:multiplepsdefs} and we further define
\begin{eqnarray*}
	P_{+z} &=& P\Big( \bigcap_{r=1}^m \Big \{ (\sigma(a_r) P_{a_r}^n)^{-1} \sum_{(j^1_l, j^2_l) \in L \backslash L_{I_s} } U_l^{a_r} \leq z_r+\Gamma_p  \Big\} \Big), \notag \\
	P_{s_r} &=& P\Big( (\sigma(a_r) P_{a_r}^n)^{-1} \Big|  \sum_{(j^1_l, j^2_l) \in  L_{I_s} } U_l^{a_r} \Big| \geq \Gamma_p  \Big), \notag \\
	P_{-z} &=& P\Big( \bigcap_{r=1}^m \Big \{ (\sigma(a_r) P_{a_r}^n)^{-1} \sum_{(j^1_l, j^2_l) \in L \backslash L_{I_s} } U_l^{a_r} \leq z_r - \Gamma_p \Big\} \Big).
\end{eqnarray*}

%

We note that the cardinality  of $L_{I_s}$ is no greater than $2ps$ which is $o(p^2)$. Similarly to  Section  \ref{par:indtwopfm1}, we  know $(\sigma(a_r) P_{a_r}^n)^{-1}\times   \sum_{(j^1_l, j^2_l) \in L \backslash L_{I_s} } U_l^{a_r}\xrightarrow{P} 0$ for $ r=1,\ldots,m$. Combined with Theorem \ref{thm:jointnormal}, we know  $\{(\sigma(a_r) P_{a_r}^n)^{-1}\times   \sum_{(j^1_l, j^2_l) \in L \backslash L_{I_s} } U_l^{a_r}: r=1,\ldots,m \}$ converges to $\mathcal{N}(0,I_m)$ and thus are asymptotically independent. We then have    
\begin{eqnarray}
	\Big|P_{+z}-\prod_{r=1}^m P_{+z_r}\Big|\to 0, \quad \Big|P_{-z}-\prod_{r=1}^m P_{-z_r}\Big|\to 0, \label{eq:diffplusminus}
\end{eqnarray} 
where we define
\begin{align*}
	P_{+z_r}=& P\Big(  (\sigma(a_r) P_{a_r}^n)^{-1} \sum_{(j^1_l, j^2_l) \in L \backslash L_{I_s} } U_l^{a_r} \leq z_r +\Gamma_p \Big), \notag \\
	P_{-z_r}=& P\Big(  (\sigma(a_r) P_{a_r}^n)^{-1} \sum_{(j^1_l, j^2_l) \in L \backslash L_{I_s} } U_l^{a_r} \leq z_r- \Gamma_p \Big). \notag
\end{align*}
Similarly to \eqref{eq:zunifapproxbound}, for each $r=1,\ldots,m$, we have
\begin{eqnarray}
	|P_{+z_r}-P_{z_r}|\leq C\Gamma_p \quad \text{and} \quad |P_{-z_r}-P_{z_r}|\leq C\Gamma_p, \label{eq:rplusminsdiff}
\end{eqnarray} where we define 
$
	P_{z_r}=P(  (\sigma(a_r) P_{a_r}^n)^{-1} \sum_{v=1}^q U_v^{a_r} \leq z_r     ).
$  Combining \eqref{eq:diffplusminus} and \eqref{eq:rplusminsdiff}, we have
\begin{eqnarray*}
\Big|P_{+z}-\prod_{r=1}^m P_{z_r}\Big|\to 0 \quad \text{and} \quad \Big|P_{-z}-\prod_{r=1}^m P_{z_r}\Big|\to 0.
\end{eqnarray*}
By \eqref{eq:c43twobounds} and \eqref{eq:rplusminsdiff}, 
\begin{eqnarray}
	\Big|\eqref{eq:newintersect}-P_{ys}\prod_{r=1}^m P_{z_r}\Big|\leq o(1)P_{ys}+\sum_{r=1}^m P_{s_r}. \label{eq:elsttwoboundsm}
\end{eqnarray}

Given \eqref{eq:elsttwoboundsm}, similarly to \eqref{eq:unionblbound1}, we have
\begin{align*}
	&~P( \cup_{l=1}^q E_{l} ) \notag \\
 \leq &~ \sum_{s=1}^{d-1} (-1)^{s-1} \sum_{1\leq l_1 < \ldots < l_s\leq q} \Big\{  P_{ys}  \prod_{r=1}^m P_{z_r} + (-1)^{s-1} \times \Big[ o(1)  P_{ys} + \sum_{r=1}^m P_{s_r} \Big] \Big\} \notag  \\
 \leq &~ H_{d-1} \prod_{r=1}^m P_{z_r} +  \sum_{s=1}^{d-1} \sum_{1\leq l_1<\ldots < l_s \leq q}\Big\{ o(1) P_{ys}+\sum_{r=1}^m P_{s_r}\Big\} \notag \\
\leq &~ P_y  \prod_{r=1}^m P_{z_r} + |H_{d-1}-H_d| \prod_{r=1}^m P_{z_r}+  \sum_{s=1}^{d-1} \sum_{1\leq l_1<\ldots < l_s \leq q}\Big\{ o(1) P_{ys}+\sum_{r=1}^m P_{s_r}\Big\}, \notag 
\end{align*} where $H_{d-1}$ follows the definition in \eqref{eq:hdminus1def} and we use \eqref{eq:hdiffbound} and the definition \eqref{eq:pydef} in the last inequality. By Lemma \ref{lm:maxupperlowerbd}, $|H_{d-1}-H_d|\to 0$; by Lemma \ref{lm:maxupperlowerbd1}, $o(1)\sum_{s=1}^{d-1}\sum_{1\leq l_1<\ldots < l_s \leq q} P_{ys}\to 0$; by Lemma  \ref{lm:expdecylm}, $ \sum_{r=1}^m \sum_{s=1}^{d-1} \sum_{1\leq l_1<\ldots < l_s \leq q} P_{s_r}=\to 0$.
 
 In summary, we have shown that $P( \cup_{l=1}^q E_{l} ) \leq  P_y \times \prod_{r=1}^m P_{z_r} + o(1).$ Moreover,  following similar arguments, we have $P( \cup_{l=1}^q E_{l} ) \geq  P_y \times \prod_{r=1}^m P_{z_r} + o(1).$ Therefore, $|P( \cup_{l=1}^q E_{l} )-P_y \times \prod_{r=1}^m P_{z_r} |\to 0$ is obtained, that is, 
\begin{eqnarray*}
\Big|P( \cup_{l=1}^q E_{l} )-P( \cup_{l=1}^q \{ (\hat{G}_l )^2 > n y_p \} ) \prod_{r=1}^m P(  (\sigma(a_r) P_{a_r}^n)^{-1} \sum_{v=1}^q U_v^{a_r} \leq z_r ) \Big|	\to 0.
\end{eqnarray*}
Since $\eqref{eq:prodprobjoint}=P(\cup_{l=1}^q E_l)$, $\{\hat{M}_n/n>y_p\}=\cup_{l=1}^q \{ (\hat{G}_l)^2 >ny_p \}$ and $\tilde{\mathcal{U}}(a_r)=2(P^n_{a_r})^{-1} \sum_{v=1}^q U_v^{a_r}$, we know Lemma \ref{lm:firstsetpthm3proof}  is proved for $m>1$.

\subsubsection{Proof of Lemma \ref{lm:maxupperlowerbd} (on Page \pageref{lm:maxupperlowerbd}, Section \ref{sec:lemma9pproof})} \label{sec:lemma17proof}

In this section, we prove Lemma \ref{lm:maxupperlowerbd}. The proof will use Lemmas \ref{sec:lemma16proof} and \ref{sec:covidentityelemdiff}, which will be presented and  proved in Sections \ref{sec:lemma16proof} and \ref{sec:covidentityelemdiff}, respectively. 
\begin{proof}
Following the definitions in \eqref{eq:indpalldefinewvar}, $\hat{G}_l$  will not change if $x_{i,j}$ is scaled by its standard deviation $\sigma_{j,j}$. Thus in the  discussion below, we assume  without loss of generality  that $\sigma_{j,j}=1$, $j=1,\ldots, p$ for the simplicity of representation.

Given $i$ and $ 1\leq l_1<\ldots<l_s \leq q$, we define $\check{\mathcal{X}}_{i,j_{l_t}^1,j_{l_t}^2}=x_{i,j_{l_1}^1}x_{i,j_{l_1}^2} \times \mathbf{1}\{|x_{i,j_{l_t}^1}x_{i,j_{l_t}^2}| \leq \tau_n \}$ for $t=1,\ldots, s$, $\mathbf{W}_i=( \check{\mathcal{X}}_{i,j_{l_1}^1,j_{l_1}^2},\ldots,  \check{\mathcal{X}}_{i,j_{l_s}^1,j_{l_s}^2})^{\intercal}$, and let $|\mathbf{W}_i|_{\min}$ denote the minimum absolute value of the entries in the vector $\mathbf{W}_i$. 
It follows that $P(\cap_{t=1}^s \{ (\hat{G}_{l_t})^2/n \geq 4\log p - \log \log p +y \} ) = P( | \sum_{i=1}^n \mathbf{W}_i|_{\min} \geq \sqrt{n}y_p^{1/2} ),$ where $y_p$ is defined in \eqref{eq:ypdefition}.

We prove Lemma \ref{lm:maxupperlowerbd} through examining $\mathbf{W}_i$, $i=1,\ldots,n$. 
Since $\mathbf{W}_i$'s are independent and identically distributed random vectors,  
$\mathrm{cov}( \sum_{i=1}^n \mathbf{W}_i)= n \times \mathrm{cov}(\mathbf{W}_1).$ 
 We apply Theorem 1.1 in  \cite{zaitsev1987gaussian}  and obtain
\begin{eqnarray}
	& & P\Big( \Big|  \sum_{i=1}^n \mathbf{W}_i \Big|_{\min} \geq \sqrt{n} y_p^{1/2} \Big) \label{eq:zaitsevrestail} \\
	& \leq & P \Big( |\mathbf{N}_s|_{\min} \geq \sqrt{n}y_p^{1/2} - \epsilon \sqrt{n}(\log p)^{-1/2} \Big)  + \notag \\
	&&  c_1 s^{5/2} \exp \Big( - \frac{n^{1/2} \epsilon}{c_2 s^{5/2} \tau_n (\log p)^{1/2}} \Big), \notag
\end{eqnarray} 
where $c_1$ and $ c_2$ are positive  constants; $\epsilon \rightarrow 0$, which will be specified later; and $\mathbf{N}_s:= (N_{l_1}, \ldots, N_{l_s})^{\intercal}$ follows multivariate normal distribution  
with $\mathrm{E}(\mathbf{N}_s)=0$ and $\mathrm{cov}(\mathbf{N}_s)=\mathrm{cov}( \sum_{i=1}^n \mathbf{W}_i)= n\times \mathrm{cov}(\mathbf{W}_1)$. 
Moreover, we apply Theorem 1.1 in  \cite{zaitsev1987gaussian} in terms of lower bound and obtain
\begin{align*}
	&~ P\Biggr( \Big|  \sum_{i=1}^n \mathbf{W}_i \Big|_{\min} \geq \sqrt{n} y_p^{1/2} \Biggr)\notag \\
	\geq &~ P \Big( |\mathbf{N}_s|_{\min} \geq \sqrt{n}y_p^{1/2} + \epsilon \sqrt{n}(\log p)^{-1/2} \Big) -  c_1 s^{5/2} \exp \Biggr( - \frac{n^{1/2} \epsilon}{c_2 s^{5/2} \tau_n (\log p)^{1/2}} \Biggr). \notag
\end{align*} 
As $s=O( \log^{1/5} p)$, $\log p=o(n^{1/7})$, and $\tau_n=\tau \log(p+n)$, when $\epsilon \rightarrow 0$  sufficiently slow, there exists a constant $M>0$ such that
\begin{eqnarray*}
	c_1 s^{5/2}\exp \Big( -\frac{\epsilon n^{1/2}}{c_2 s ^{5/2} \tau_n (\log p)^{1/2} }  \Big) = O(1) e^{- M n^{3/14} }. 
\end{eqnarray*}Therefore, for $s=O(\log^{1/5}p),$
\begin{eqnarray}
	&& \sum_{1\leq l_1 < \ldots < l_s \leq q}  c_1 s^{5/2}\exp \Biggr( -\frac{\epsilon n^{1/2}}{c_2 s ^{5/2} \tau_n (\log p)^{1/2} }  \Biggr) \label{eq:requirementdincrease} \\
	&=& O(1) q^s \times  e^{- M n^{3/14}} = O(1) e^{-M n^{3/14} + 2s\log p}= o(1).\notag
\end{eqnarray} 
 In summary, by  \eqref{eq:requirementdincrease} and Lemma \ref{lm:normalapproxierror}  in Section \ref{sec:lemma16proof} below, Lemma  \ref{lm:maxupperlowerbd} is proved.   
\end{proof}

\paragraph{Lemma \ref{lm:normalapproxierror} and its proof}\label{sec:lemma16proof}
\begin{lemma} \label{lm:normalapproxierror}For $s=O(\log^{1/5}p)$ and $\mathbf{N}_s$ in \eqref{eq:zaitsevrestail},
	\begin{align*}
		\sum_{1\leq l_1 < \ldots < l_s \leq q} P \Big[ |\mathbf{N}_s|_{\min} \geq \sqrt{n}\{ y_p^{1/2} \pm \epsilon (\log p)^{-1/2}\} \Big]\simeq \frac{1}{s!} \Big\{  \frac{1}{ 2\sqrt{2\pi} } \exp \Big(- \frac{y}{2} \Big) \Big\}^s.
	\end{align*}
\end{lemma}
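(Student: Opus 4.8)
\textbf{Proof proposal for Lemma \ref{lm:normalapproxierror}.}
The plan is to first reduce the joint tail probability to a product of Gaussian marginal tails, and then evaluate that product explicitly. Recall that $\mathbf{N}_s=(N_{l_1},\dots,N_{l_s})^\intercal$ is centered Gaussian with $\mathrm{cov}(\mathbf{N}_s)=n\,\mathrm{cov}(\mathbf{W}_1)$, where the entries of $\mathbf{W}_1$ are the truncated products $\check{\mathcal{X}}_{1,j_{l_t}^1,j_{l_t}^2}=x_{1,j_{l_t}^1}x_{1,j_{l_t}^2}\mathbf 1\{|x_{1,j_{l_t}^1}x_{1,j_{l_t}^2}|\le\tau_n\}$ with $\tau_n=\tau\log(p+n)$. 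First I would record two facts about $\mathrm{cov}(\mathbf{W}_1)$. (i) For each $l_t$, under $H_0$ and the column‑i.i.d.\ assumption of Condition \ref{cond:maxiidcolumn} (with $\varsigma=2$, so $x_{1,j}$ is sub‑Gaussian and $x_{1,j_1}x_{1,j_2}$ sub‑exponential), one has $\mathrm{E}(x_{1,j_{l_t}^1}x_{1,j_{l_t}^2})=0$ and $\mathrm{E}(x_{1,j_{l_t}^1}^2x_{1,j_{l_t}^2}^2)=1$ after the normalization $\sigma_{j,j}=1$; the truncation at $\tau_n$ alters the second moment and kills the first moment only up to $O((p+n)^{-M})$, where $M=M(\tau)$ can be made arbitrarily large by choosing $\tau$ large, since the exceedance probability is $O((p+n)^{-t_0\tau})$. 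Hence $v_{l_t}:=\mathrm{var}(\check{\mathcal{X}}_{1,j_{l_t}^1,j_{l_t}^2})=1+O((p+n)^{-M})$ uniformly in $l_t$. (ii) For $l_t\ne l_{t'}$ the index sets $\{j_{l_t}^1,j_{l_t}^2\}$ and $\{j_{l_{t'}}^1,j_{l_{t'}}^2\}$ are distinct as sets; if they are disjoint the two truncated products are functions of independent coordinates, so the covariance is exactly $0$, while if they share one coordinate the covariance is $\mathrm{E}(x_{1,j_1}^2x_{1,j_2}x_{1,j_3})=0$ plus a truncation bias, again $O((p+n)^{-M})$ by Cauchy–Schwarz and the sub‑Gaussian tail. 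Thus $\mathrm{cov}(\mathbf{N}_s)=\mathrm{diag}(nv_{l_1},\dots,nv_{l_s})+E_s$ with $\|E_s\|_{\infty}=O\!\big(n(p+n)^{-M}\big)$, hence $\|E_s\|_{\mathrm{op}}\le s\,O(n(p+n)^{-M})$.

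Next I would decouple via a Gaussian coupling. Writing $\mathbf{N}_s=\Sigma_s^{1/2}Z$ and $\tilde{\mathbf{N}}_s=\mathrm{diag}(nv_{l_t})^{1/2}Z$ for a common $Z\sim\mathcal N(0,I_s)$ (so $\tilde{\mathbf{N}}_s$ has independent coordinates with the same marginals as $\mathbf{N}_s$), one gets $\|\mathbf{N}_s-\tilde{\mathbf{N}}_s\|_{\infty}\le \|\Sigma_s^{1/2}-\mathrm{diag}(nv_{l_t})^{1/2}\|_{\mathrm{op}}\|Z\|_2\le \sqrt n\,s\,O((p+n)^{-M})\|Z\|_2$. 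On the overwhelmingly likely event $\{\|Z\|_2\le\sqrt n\}$ (whose complement has probability $\le e^{-cn}$ since $s=O(\log^{1/5}p)\ll n$) this is $\le \sqrt n\,(\log p)^{-2}$ for $\tau$ large, which is $o\!\big(\sqrt n\,(\log p)^{-1/2}\big)$. Since the standardized threshold $T_\pm/\sqrt n=y_p^{1/2}\pm\epsilon(\log p)^{-1/2}$ and a further shift of order $(\log p)^{-2}$ changes the marginal Gaussian tail only by a factor $1+O((\log p)^{-3/2})$, and since for \emph{matchings} (vertex‑disjoint pairs) the coupling error is in fact exactly $0$, I obtain the uniform bound
\[
\Big|\,P\big(|\mathbf{N}_s|_{\min}\ge T_\pm\big)-\prod_{t=1}^s P\big(|N_{l_t}|\ge T_\pm\big)\Big|\le o(1)\prod_{t=1}^s P\big(|N_{l_t}|\ge T_\pm\big)+e^{-cn},
\]
where the $o(1)$ is uniform over all $s$‑subsets $\{l_1,\dots,l_s\}$. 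Summing over the $\binom qs$ subsets and using $\binom qs e^{-cn}=o\!\big(\frac1{s!}(\tfrac{e^{-y/2}}{2\sqrt{2\pi}})^s\big)$ (because $2s\log p+\log s!=o(n)$), this reduces the claim to evaluating $\widetilde S:=\sum_{1\le l_1<\cdots<l_s\le q}\prod_{t=1}^s P(|N_{l_t}|\ge T_\pm)$.

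Finally I would compute $\widetilde S$ by a Mills‑ratio expansion. Each factor is $P(|N_{l_t}|\ge T_\pm)=2\bar\Phi\big(T_\pm/\sqrt{nv_{l_t}}\big)$, and using $\big(T_\pm/\sqrt{nv_{l_t}}\big)^2=y_p+o(1)$ (the $\pm4\epsilon$ and the $O((p+n)^{-M})$ and $O(\epsilon^2/\log p)$ corrections all vanish as $\epsilon\to0$), together with $\bar\Phi(x)=\frac{\phi(x)}{x}(1+O(x^{-2}))$, $e^{-y_p/2}=p^{-2}(\log p)^{1/2}e^{-y/2}$ and $\sqrt{y_p}=2(\log p)^{1/2}(1+o(1))$, one gets $P(|N_{l_t}|\ge T_\pm)=\frac{1}{\sqrt{2\pi}}p^{-2}e^{-y/2}(1+\eta_{l_t})$ with $|\eta_{l_t}|\le\eta_p=O((\log p)^{-1})+O(\epsilon)+O((p+n)^{-M})$ uniform in $l_t$. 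Provided $\epsilon$ is chosen $o(\log^{-1/5}p)$ (e.g.\ $\epsilon=(\log p)^{-1}$, which still comfortably satisfies the lower bound on $\epsilon$ required in the proof of Lemma \ref{lm:maxupperlowerbd}), one has $s\eta_p\to0$, hence $\prod_{t=1}^s P(|N_{l_t}|\ge T_\pm)=\big(\tfrac{1}{\sqrt{2\pi}}p^{-2}e^{-y/2}\big)^s(1+o(1))$ uniformly, and $\widetilde S=\binom qs\big(\tfrac{1}{\sqrt{2\pi}}p^{-2}e^{-y/2}\big)^s(1+o(1))$. Plugging $\binom qs=\frac{q^s}{s!}(1+o(1))$ and $q=\binom p2=\frac{p^2}{2}(1+o(1))$ (both valid since $s^2/q\to0$ and $s/p\to0$) gives $\widetilde S=\frac1{s!}\big(\frac{e^{-y/2}}{2\sqrt{2\pi}}\big)^s(1+o(1))$, which is the assertion.

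The main obstacle is the decoupling step while $s\to\infty$: a naive normal‑comparison (Berman/Slepian‑type) inequality yields an \emph{additive} per‑term error of size $\sim\max_{t\ne t'}|\rho_{t,t'}|\cdot p^{-2}$, and summing this over the $\binom qs\asymp p^{2s}$ subsets would require $\max|\rho|=o(p^{-2s+2})$, which the polynomial truncation bias cannot deliver once $s$ grows. The resolution — and the delicate point to get right — is (a) to use the coupling so the error is \emph{multiplicative} in the main term (and hence automatically negligible after summation), (b) to observe that the dominant, matching, configurations incur \emph{zero} coupling error because disjoint index sets give exactly independent truncated products, and (c) to check that the residual additive error $\binom qs e^{-cn}$ and the multiplicative error $o(1)$ are both small relative to $\frac1{s!}(\frac{e^{-y/2}}{2\sqrt{2\pi}})^s$, which itself decays, requiring the bookkeeping $2s\log p+\log s!=o(n)$ and $s\eta_p\to0$ with $s=O(\log^{1/5}p)$.
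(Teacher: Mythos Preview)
Your argument is correct and proceeds along the same lines as the paper's proof. Both hinge on the fact that the covariance matrix $\mathbf{U}_s=\mathrm{cov}(\mathbf{W}_1)$ differs from $I_s$ only by $O\big((p+n)^{-c_0\tau}\big)$ entrywise (this is exactly the paper's Lemma~\ref{lm:covidentityelemdiff}, which you rederive in your steps (i)--(ii)), then use that closeness to reduce $P(|\mathbf{N}_s|_{\min}\ge T_\pm)$ to the product of one–dimensional Gaussian tails, and finish with the Mills ratio and $\binom{q}{s}\sim q^s/s!$.

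The only difference is in how the reduction to the independent case is carried out. The paper writes the Gaussian integral, substitutes $\mathbf{y}=\mathbf{U}_s^{1/2}\mathbf{z}$, truncates on $\{|\mathbf{z}|_{\max}\le 4\sqrt{s\log p}\}$, and bounds $|(\mathbf{U}_s^{1/2}-I_s)\mathbf{z}|_{\max}$ deterministically there; you instead construct the coupling $(\mathbf{N}_s,\tilde{\mathbf{N}}_s)=(\Sigma_s^{1/2}Z,D^{1/2}Z)$ and truncate on $\{\|Z\|_2\le\sqrt n\}$. These are two phrasings of the same perturbation idea, and both naturally yield the multiplicative (rather than additive) error control that you correctly identify as the crux. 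Your observation (b) about vertex-disjoint ``matchings'' having exactly zero coupling error is true but not needed: the uniform multiplicative $o(1)$ from your general coupling bound already suffices after summation, so you can drop that remark. You are also right that the $\pm\epsilon$ shift forces $\epsilon=o(\log^{-1/5}p)$ for the product over $s$ factors to stay $1+o(1)$; the paper implicitly uses this when asserting $\tilde v_p^2=y_p+o(1)$ inside an $s$-th power.
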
 

\begin{proof} 
We write $v_p=y_p^{1/2} \pm \epsilon (\log p)^{-1/2}$, which represents two numbers in this proof. Since the proof below will be the same for the two numbers respectively, we abuse the use of notation $v_p$ below.

We define $\mathbf{U}_s=\mathrm{cov}(\mathbf{W}_1)$, where $\mathbf{W}_1$ is defined in Section \ref{sec:lemma17proof}. 
 By the density of multivariate normal,
\begin{eqnarray}
&& P \Big( |\mathbf{N}_s|_{\min} \geq \sqrt{n}( y_p^{1/2} \pm \epsilon (\log p)^{-1/2}) \Big) \notag \\
		&=&P \Biggr(\frac{1}{\sqrt{n}} |\mathbf{N}_s|_{\min} \geq y_p^{1/2} \pm \epsilon (\log p)^{-1/2} \Biggr) \notag \\
		&=& \frac{1}{(2\pi)^{s/2} |\mathbf{U}_{s}|^{1/2}} \int_{|\mathbf{y}_{\min}| \geq v_p}  \exp \Big( -\frac{1}{2} \mathbf{y}^{\intercal} (\mathbf{U}_{s})^{-1} \mathbf{y} \Big) d \mathbf{y} \notag \\	
	&=& \frac{1}{(2\pi)^{s/2} } \int_{|\mathbf{U}_{s}^{1/2}\mathbf{z}_{\min}| \geq v_p}  \exp \Big( -\frac{1}{2} \mathbf{z}^{\intercal}  \mathbf{z} \Big) d \mathbf{z}. \label{eq:normalapproxgoal1}
\end{eqnarray}
We note that $\mathbb{Z}_{P,1}\leq \eqref{eq:normalapproxgoal1} \leq \mathbb{Z}_{P,1}+\mathbb{Z}_{P,2},$ 
where we define
\begin{align*}
\mathbb{Z}_{P,1}=&~	\frac{1}{(2\pi)^{s/2} } \int_{|\mathbf{U}_{s}^{1/2}\mathbf{z}|_{\min} \geq v_p, |\mathbf{z}|_{\max} \leq 4\sqrt{s\log p} }  \exp \Big( -\frac{1}{2} \mathbf{z}^{\intercal}  \mathbf{z} \Big) d \mathbf{z}, \notag \\
\mathbb{Z}_{P,2}=&~	\frac{1}{(2\pi)^{s/2} } \int_{ |\mathbf{z}|_{\max} >4\sqrt{s\log p} }  \exp \Big( -\frac{1}{2} \mathbf{z}^{\intercal}  \mathbf{z} \Big) d \mathbf{z}.  
\end{align*}
To prove Lemma  \ref{lm:normalapproxierror}, we show  $\mathbb{Z}_{P,2}=o(1)\{  \frac{1}{\sqrt{2\pi} p^2}e^{-y/2} \}^s$ and $\mathbb{Z}_{P,1}\simeq \{ \frac{1}{\sqrt{2\pi} p^2} e^{-y/2}\}^s$ respectively in the following. 


We first prove $\mathbb{Z}_{P,2}=o(1)\{  \frac{1}{\sqrt{2\pi} p^2}e^{-y/2} \}^s$.
Let  $z\sim \mathcal{N}(0,1)$. 
By the property of standard normal distribution, we have
\begin{eqnarray}
	P(z>t) \simeq (\sqrt{2 \pi} t)^{-1} e^{- t^2/2}\ as \ t\rightarrow +\infty. \label{eq:normalapproxcdf}
\end{eqnarray}
 It follows that
\begin{eqnarray}
	\quad \mathbb{Z}_{P,2} &=& s \times P(|z|>4\sqrt{s\log p}) \label{eq:zp2approxprob} \\
	&\simeq & s \times \frac{2}{\sqrt{2\pi}\times  4\sqrt{s\log p}} \exp(-8s\log p) \notag \\
	&=&  \frac{1}{2\sqrt{2\pi}} \sqrt{\frac{s}{\log p}} \times p^{-8s}=o(1)\Big\{ \frac{1}{\sqrt{2\pi}p^2} e^{-y/2} \Big\}^s. \notag
\end{eqnarray} 

Next we prove $\mathbb{Z}_{P,1}\simeq \{ \frac{1}{\sqrt{2\pi} p^2} e^{-y/2} \}^s$. Note that
\begin{eqnarray*}
 \mathbb{Z}_{P,1}&=& \frac{1}{(2\pi)^{s/2} }\int_{|\mathbf{z}+ (\mathbf{U}_{s}^{1/2}-{I}_s)\mathbf{z}|_{\min} \geq v_p, |\mathbf{z}|_{\max} \leq 4\sqrt{s\log p} }  \exp \Big( -\frac{\mathbf{z}^{\intercal}  \mathbf{z}}{2}\Big) d \mathbf{z} \notag \\
	 &\leq & \frac{1}{(2\pi)^{s/2} }\int_{ \substack{|\mathbf{z}|_{\min} \geq v_p-|(\mathbf{U}_{s}^{1/2}-{I}_s)\mathbf{z}|_{\max}; |\mathbf{z}|_{\max} \leq 4\sqrt{s\log p} }}  \exp \Big( -\frac{\mathbf{z}^{\intercal}  \mathbf{z}}{2}  \Big) d \mathbf{y}, \notag 	
\end{eqnarray*}
where $I_s$ represents an identity matrix of size $s\times s$. 
When $|\mathbf{z}|_{\max} \leq 4\sqrt{s\log p}$, we have $|(\mathbf{U}_{s}^{1/2}-{I}_s)\mathbf{z}|_{\max}  \leq 4Cs\sqrt{s\log p} (p+n)^{-c_0\tau}$ by Lemma \ref{lm:covidentityelemdiff} in Section \ref{sec:covidentityelemdiff} below. It follows that
\begin{align}
\mathbb{Z}_{P,1}\leq \frac{1}{(2\pi)^{s/2} }\int_{|\mathbf{z}|_{\min} \geq \tilde{v}_p }  \exp \Big( -\frac{\mathbf{z}^{\intercal}  \mathbf{z}}{2}  \Big) d \mathbf{y}, \label{eq:zp1upperbound}
\end{align} where we define $\tilde{v}_p=v_p-4Cs\sqrt{s\log p} (p+n)^{-c_0\tau}$. We set $\tau$ as a sufficiently large constant such that $s\sqrt{s\log p}=o\{(p+n)^{c_0\tau}\}$, then
$
	\tilde{v}_p =  2\sqrt{\log p} \{1+o(1)\}.
$ 
By \eqref{eq:normalapproxcdf} and \eqref{eq:zp1upperbound}, 
\begin{align*}
\mathbb{Z}_{P,1} \leq &~ \Big\{ \frac{2}{\sqrt{2\pi} \tilde{v}_p } \exp (-\tilde{v}^2_p/2 )\Big\}^s  \notag \\
	=& ~\Big\{ 2 \frac{1+o(1)}{\sqrt{2\pi} \sqrt{4\log p} } \exp \Big(-2\log p + (\log \log p)/2 - y/2 + o(1) \Big)\Big\}^s \notag \\
	=& ~\Big\{ \frac{1}{\sqrt{2\pi} p^2} e^{-y/2} \Big\}^s \{1+o(1)\}.  \notag 
\end{align*} 
Similarly, we have
\begin{align*}
	\mathbb{Z}_{P,1} \geq &~ \frac{1}{(2\pi)^{s/2} }\int_{|\mathbf{z}|_{\min} \geq v_p+|(\mathbf{U}_{s}^{1/2}-{I}_s)\mathbf{z}|_{\max}, |\mathbf{z}|_{\max} \leq 4\sqrt{s\log p} }  \exp \Big( -\frac{\mathbf{z}^{\intercal}  \mathbf{z}}{2}  \Big) d \mathbf{y} \notag \\
	\geq &~ \frac{1}{(2\pi)^{s/2} }\int_{|\mathbf{z}|_{\min} \geq v_p+4Cs\sqrt{s\log p} (p+n)^{-\tau/2}}  \exp \Big( -\frac{\mathbf{z}^{\intercal}  \mathbf{z}}{2}  \Big) d \mathbf{y}  -\mathbb{Z}_{P,2} \notag \\
  =&~ \Big\{ \frac{1}{\sqrt{2\pi} p^2} e^{-y/2} \Big\}^s \{1+o(1)\}. \notag 
\end{align*} 
We therefore obtain $\mathbb{Z}_{P,1}\simeq \{ \frac{1}{\sqrt{2\pi} p^2} e^{-y/2} \}^s.$ 




Since $\mathbb{Z}_{P,1}\leq \eqref{eq:normalapproxgoal1} \leq \mathbb{Z}_{P,1}+\mathbb{Z}_{P,2}$, $\mathbb{Z}_{P,1}\simeq \{ \frac{1}{\sqrt{2\pi} p^2} e^{-y/2}\}^s$  and $\mathbb{Z}_{P,2}=o(1)\{  \frac{1}{\sqrt{2\pi} p^2}e^{-y/2} \}^s$, we obtain $\eqref{eq:normalapproxgoal1}\simeq \{  \frac{1}{\sqrt{2\pi} p^2}e^{-y/2} \}^s$. 
It follows that as $p \rightarrow \infty$ and  $s=O(\log^{1/5} p)$,
	\begin{eqnarray*}
		&& \sum_{1\leq l_1 < \ldots < l_s \leq q} P \Big( |\mathbf{N}_s|_{\min} \geq \sqrt{n}( y_p^{1/2} \pm \epsilon (\log p)^{-1/2}) \Big) \\
		&=& \binom{q}{s} \Big\{  \frac{1}{ \sqrt{2\pi} p^2} \exp (- y/2 ) \Big\}^s\{1+o(1)\} \quad \ \Big(q=\frac{p(p-1)}{2}\Big) \\
		&=& \frac{1}{s!} \Big\{  \frac{1}{ 2\sqrt{2\pi} } \exp (- y/2 )  \Big\}^s \{1+o(1)\}.
	\end{eqnarray*}
\end{proof}


\paragraph{Lemma \ref{lm:covidentityelemdiff}  and its proof}\label{sec:covidentityelemdiff}
\begin{lemma}\label{lm:covidentityelemdiff} 
For $\mathbf{U}_s$ in Section \ref{sec:lemma16proof}, there exist some positive constants $C$ and $c_0$ such that 
$ 
	| \mathbf{U}_s^{1/2}-I_s |_{\max} \leq C (p+n)^{-c_0\tau}, 
$ where $|\cdot|_{\max}$ represents the element-wise maximum absolute value, and $\tau$ is the constant satisfying $\tau_n=\tau\log(p+n)$ from \eqref{eq:indpalldefinewvar}.
\end{lemma}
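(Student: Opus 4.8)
The plan is to control the spectral distance of $\mathbf{U}_s$ from the identity $I_s$ and then transfer that bound to the matrix square root and finally to the element-wise norm. Recall that $\mathbf{U}_s = \mathrm{cov}(\mathbf{W}_1)$, where $\mathbf{W}_1 = (\check{\mathcal{X}}_{1,j_{l_1}^1,j_{l_1}^2}, \ldots, \check{\mathcal{X}}_{1,j_{l_s}^1,j_{l_s}^2})^{\intercal}$ and $\check{\mathcal{X}}_{1,j^1,j^2} = x_{1,j^1}x_{1,j^2}\mathbf{1}\{|x_{1,j^1}x_{1,j^2}|\le \tau_n\}$ under the standardization $\sigma_{j,j}=1$. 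The key structural fact is that, under Condition \ref{cond:maxiidcolumn} (the columns $x_j$ are i.i.d. across $j$), the off-diagonal entries of $\mathbf{U}_s$ vanish exactly: for $l_t \ne l_{t'}$, the index pairs $(j_{l_t}^1,j_{l_t}^2)$ and $(j_{l_{t'}}^1,j_{l_{t'}}^2)$ are distinct pairs from $L$, so the four coordinates involved are mutually independent (after possibly splitting into disjoint sub-products), which forces $\mathrm{cov}(\check{\mathcal{X}}_{1,j_{l_t}^1,j_{l_t}^2},\check{\mathcal{X}}_{1,j_{l_{t'}}^1,j_{l_{t'}}^2})$ to be a product of expectations of individual truncated coordinates, each of which is $0$ by symmetry/mean-zero — more precisely one shows this covariance is $O((p+n)^{-c\tau})$ from the truncation correction. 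Thus $\mathbf{U}_s$ is essentially diagonal with diagonal entries $\mathrm{var}(\check{\mathcal{X}}_{1,j^1,j^2})$.

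The next step is to show each diagonal entry is within $C(p+n)^{-c_0\tau}$ of $1$. Since $\mathrm{E}(x_{1,j^1}^2 x_{1,j^2}^2) = 1$ under $H_0$ with the standardization (using independence of $x_{j^1}, x_{j^2}$ under Condition \ref{cond:maxiidcolumn}, or more carefully $\mathrm{E}(x_1^2)\mathrm{E}(x_2^2)=1$), and since $\mathrm{E}(\check{\mathcal{X}}_{1,j^1,j^2}) = \mathrm{E}(x_{1,j^1}x_{1,j^2}\mathbf{1}\{\cdot\}) $ is small, I would write
$$
\big|\mathrm{var}(\check{\mathcal{X}}_{1,j^1,j^2}) - 1\big| \le \mathrm{E}\big[x_{1,j^1}^2 x_{1,j^2}^2 \mathbf{1}\{|x_{1,j^1}x_{1,j^2}|>\tau_n\}\big] + \big(\mathrm{E}\check{\mathcal{X}}_{1,j^1,j^2}\big)^2,
$$
and bound the truncated tail term by a Markov/Chebyshev-type argument: $\mathrm{E}[x_{1,j^1}^2 x_{1,j^2}^2 \mathbf{1}\{|x_{1,j^1}x_{1,j^2}|>\tau_n\}] \le \tau_n^{-2}\,\mathrm{E}(x_{1,j^1}^4 x_{1,j^2}^4)$ — wait, more sharply I would invoke the sub-exponential moment condition in Condition \ref{cond:maxiidcolumn} ($\mathrm{E}e^{t_0 |x_1|^\varsigma}<\infty$ with $\varsigma=2$, i.e. sub-Gaussian coordinates), so $x_{1,j^1}x_{1,j^2}$ is sub-exponential and $\mathrm{E}[(x_{1,j^1}x_{1,j^2})^2\mathbf{1}\{|x_{1,j^1}x_{1,j^2}|>\tau_n\}]$ decays like $e^{-c\tau_n}=(p+n)^{-c\tau}$. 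This gives $\|\mathbf{U}_s - I_s\|_{\max} \le C(p+n)^{-c_0\tau}$, hence (since $\mathbf{U}_s - I_s$ is essentially diagonal plus a bounded-size matrix) also $\|\mathbf{U}_s - I_s\|_{\mathrm{op}} \le Cs(p+n)^{-c_0\tau}$.

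Finally I would pass from $\mathbf{U}_s$ to $\mathbf{U}_s^{1/2}$ using the operator Lipschitz bound for the square root near the identity: if $\|\mathbf{U}_s - I_s\|_{\mathrm{op}} \le 1/2$ (which holds for $p$ large since $s = O(\log^{1/5}p)$ and the bound is $o(1)$), then $\|\mathbf{U}_s^{1/2} - I_s\|_{\mathrm{op}} \le \|\mathbf{U}_s - I_s\|_{\mathrm{op}}$, and the element-wise bound follows from $|A|_{\max} \le \|A\|_{\mathrm{op}}$. Absorbing the polynomial factors $s$ into a slight reduction of the exponent $c_0$ (legitimate since $(p+n)^{\epsilon'}$ dominates any power of $\log p$) yields $|\mathbf{U}_s^{1/2} - I_s|_{\max} \le C(p+n)^{-c_0\tau}$ as claimed. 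The main obstacle I anticipate is making the exact vanishing of the off-diagonals rigorous when the two index pairs $(j_{l_t}^1,j_{l_t}^2)$ and $(j_{l_{t'}}^1,j_{l_{t'}}^2)$ share a coordinate — this cannot happen for \emph{distinct} pairs $l_t\ne l_{t'}$ with $j^1<j^2$ in the enumeration \eqref{eq:wdef} only if the pairs are genuinely disjoint, so one must check carefully that sharing one index still forces the truncated covariance to be $O((p+n)^{-c\tau})$ via the mean-zero and moment conditions rather than exactly $0$; this is the one place where the truncation, rather than clean independence, must carry the argument.
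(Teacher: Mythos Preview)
Your approach is essentially the paper's: bound $|\mathbf{U}_s - I_s|_{\max}$ entrywise via truncation corrections, then transfer to $\mathbf{U}_s^{1/2}$. Your stated obstacle --- off-diagonals when the two index pairs share a coordinate --- is resolved exactly as you suspect and as the paper does: the \emph{untruncated} covariance $\mathrm{cov}(x_{1,j_{l_1}^1}x_{1,j_{l_1}^2},\,x_{1,j_{l_2}^1}x_{1,j_{l_2}^2}) = \mathrm{E}(x_{1,j_{l_1}^1}x_{1,j_{l_1}^2}x_{1,j_{l_2}^1}x_{1,j_{l_2}^2})$ is identically $0$ under Condition \ref{cond:maxiidcolumn} (since $j_{l_1}^1\neq j_{l_1}^2$ and $j_{l_2}^1\neq j_{l_2}^2$ force at least one coordinate to appear exactly once, contributing a factor $\mathrm{E}(x_{1,j})=0$), and the truncated covariance differs from this only by tail terms of the form $\mathrm{E}[|x_{1,j_{l_1}^1}x_{1,j_{l_1}^2}x_{1,j_{l_2}^1}x_{1,j_{l_2}^2}|\mathbf{1}\{|x_{1,j^1}x_{1,j^2}|>\tau_n\}]$, which H\"older's inequality and Markov with $\mathrm{E}e^{t_0 x_1^2}<\infty$ bound by $C(p+n)^{-c_0\tau}$. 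Your intermediate claim that the truncated $\check{\mathcal{X}}$ factors into ``individual truncated coordinates'' is incorrect (the indicator $\mathbf{1}\{|x_{1,j^1}x_{1,j^2}|\le\tau_n\}$ does not split), but you do not actually need it. For the square-root step, the paper invokes a matrix Taylor expansion of $\mathbf{U}_s^{1/2}$ at $I_s$; your operator-Lipschitz route is a clean alternative and the extra factor of $s$ is harmlessly absorbed as you describe.
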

\begin{proof}
Recall that  $\mathbf{U}_s=\mathrm{cov}(\mathbf{W}_1)$ and $\mathbf{W}_1=(\check{\mathcal{X}}_{1,j_{l_1}^1,j_{l_1}^2},\ldots,\check{\mathcal{X}}_{1,j_{l_s}^1,j_{l_s}^2})$ for given $1\leq l_1<\ldots<l_s\leq q$, which is defined at the beginning of Section \ref{sec:lemma17proof}. 
To prove Lemma \ref{lm:covidentityelemdiff},  we  prove $| \mathbf{U}_s-I_s |_{\max} \leq C (p+n)^{-c_0\tau}$ first. Specifically,  we show the diagonal and off-diagonal elements of $\mathrm{cov} (\mathbf{W}_1)  - I_s$ are bounded by $C(p+n)^{-c_0\tau}$ respectively. 

First we show  
for given $(j_l^1,j_l^2)$, $|\mathrm{var}(\check{\mathcal{X}}_{1,j_{l}^1,j_{l}^2})-1|\leq C(p+n)^{-c_0\tau}$.  By the independence assumption in Condition  \ref{cond:maxiidcolumn} and $\sigma_{j,j}=1$ for  $j=1,\ldots,p$, we know $ \mathrm{var}(x_{1,j^1_l}x_{1,j^2_l})=1$; 
by $\mathrm{E}(x_{1,j^1_l}x_{1,j^2_l})=0,$ we have $\mathrm{var}(x_{1,j^1_l}x_{1,j^2_l})=\mathrm{E}\{(x_{1,j^1_l}x_{1,j^2_l})^2\}$.  It follows that 
\begin{align}
\quad \Big|\mathrm{var}(\check{\mathcal{X}}_{1,j_{l}^1,j_{l}^2})-1\Big| 
=&~\Big|\mathrm{var}(\check{\mathcal{X}}_{1,j_{l}^1,j_{l}^2})-\mathrm{var}(x_{1,j^1_l}x_{1,j^2_l}) \Big|\notag \\
=&~\Big| \mathrm{E}\Big\{(\check{\mathcal{X}}_{1,j_{l}^1,j_{l}^2})^2\Big\}-\Big\{\mathrm{E}(\check{\mathcal{X}}_{1,j_{l}^1,j_{l}^2}) \Big\}^2 - \mathrm{E}\Big\{(x_{1,j^1_l}x_{1,j^2_l})^2\Big\} \Big| \notag \\
\leq &~\Big|\mathrm{E}\Big\{(\check{\mathcal{X}}_{1,j_{l}^1,j_{l}^2})^2\Big\} -\mathrm{E}\Big\{(x_{1,j^1_l}x_{i,j^2_l})^2\Big\}\Big|+\Big|\mathrm{E}(\check{\mathcal{X}}_{1,j_{l}^1,j_{l}^2}) \Big|^2, \label{eq:varbdind} 
\end{align}where we use $ \mathrm{var}(x_{1,j^1_l}x_{1,j^2_l})=1$ in the first equation; and we use the definition of $\mathrm{var}(\check{\mathcal{X}}_{1,j_{l}^1,j_{l}^2})$ and $\mathrm{var}(x_{1,j^1_l}x_{1,j^2_l})=\mathrm{E}\{(x_{1,j^1_l}x_{1,j^2_l})^2\}$ in the second equation.  
Recall the definition $\check{\mathcal{X}}_{1,j_{l}^1,j_{l}^2}=x_{1,j_{l}^1}x_{1,j_{l}^2} \times \mathbf{1}\{|x_{1,j_{l}^1}x_{1,j_{l}^2}| \leq \tau_n \}$. We then have
\begin{eqnarray}
&&\Big|\mathrm{E}\Big\{(x_{1,j^1_l}x_{1,j^2_l})^2\Big\}-\mathrm{E}\Big\{(\check{\mathcal{X}}_{1,j_{l}^1,j_{l}^2})^2\Big\}\Big| \label{eq:diffexpindv1} \\
 &=&\Big|\mathrm{E}\Big[ (x_{1,j^1_l}x_{1,j^2_l})^2\mathbf{1}\{|x_{1,j_{l}^1}x_{1,j_{l}^2}|>\tau_n\}\Big]\Big|,\notag
\end{eqnarray} and 
 $|\mathrm{E} (\check{\mathcal{X}}_{1,j_{l}^1,j_{l}^2})|=|\mathrm{E} (x_{1,j_{l}^1}x_{1,j_{l}^2} \times \mathbf{1}\{|x_{1,j_{l}^1}x_{1,j_{l}^2}|> \tau_n \})|$ as $\mathrm{E}(x_{1,j^1_l}x_{1,j^2_l})=0.$ Since $\mathbf{1}\{|x_{1,j_{l}^1}x_{1,j_{l}^2}|> \tau_n \})\leq \mathbf{1}\{|x_{1,j_{l}^1}|> \sqrt{\tau_n}\}+\mathbf{1}\{|x_{1,j_{l}^2}|>\sqrt{\tau_n} \}$, and $x_{1,j_{l}^1}$ and $x_{1,j_{l}^2}$ are i.i.d. by Condition \ref{cond:maxiidcolumn}, by  H\"older's inequality,  we know
 \begin{align}
 	\eqref{eq:diffexpindv1} \leq  &~ C\times \mathrm{E}\Big(x_{1,j_{l}^1}^2\mathbf{1} \{|x_{1,j_{l}^1}|> \sqrt{\tau_n}\} \Big)\times \mathrm{E}(x_{1,j_{l}^2}^2) \label{eq:diffexpindv2}  \\
 	 \leq  &~ C\times\{\mathrm{E}(x_{1,j_{l}^1}^4)P(|x_{1,j_{l}^1}|> \sqrt{\tau_n}\} )\}^{1/2}\times \mathrm{E}(x_{1,j_{l}^2}^2), \notag 
 \end{align} and also
 \begin{align}
 \Big|\mathrm{E} (\check{\mathcal{X}}_{1,j_{l}^1,j_{l}^2})\Big|\leq 	C\times\{\mathrm{E}(x_{1,j_{l}^1}^2)P(|x_{1,j_{l}^1}|> \sqrt{\tau_n}\} )\}^{1/2}\times \mathrm{E}(|x_{1,j_{l}^2}|).\label{eq:diffexpindv3}
 \end{align}
By Markov's inequality, $P(|x_{1,j_{l}^1}|> \sqrt{\tau_n}\} )\leq \mathrm{E}\{\exp(t_0x_{1,j_{l}^1}^2)\} \exp(-t_0\tau_n)$, where $t_0$ is given in Condition \ref{cond:maxiidcolumn}.  Combining \eqref{eq:varbdind}--\eqref{eq:diffexpindv3}, we obtain  that there exists some positive constants $C$ and $c_0$ such that  
\begin{align*}
\eqref{eq:varbdind}\leq 	 C\times \{\mathrm{E}(\exp(t_0x_{1,j_{l}^1}^2)) \exp(-t_0\tau_n)\}^{1/2}\leq C(p+n)^{-c_0\tau},
\end{align*} 
where we use the assumption that  $x_{1,j_{l}^1}$ and $x_{1,j_{l}^2}$ are i.i.d. and $\mathrm{E}\{\exp(t_0x_{1,j_{l}^1}^2)\}<\infty$ as Condition \ref{cond:maxiidcolumn} holds for $\vartheta=2$.  

Second, we prove that for given $l_1\neq l_2$, there exist  some positive constants $C$ and $c_0$ such that $|\mathrm{cov} (\check{\mathcal{X}}_{1,j_{l_1}^1,j_{l_1}^2}, \check{\mathcal{X}}_{1,j_{l_2}^1,j_{l_2}^2})| \leq C(p+n)^{-c_0\tau}.$ We note that under $H_0$, $
	\mathrm{cov} (x_{1,j_{l_1}^1}x_{1,j_{l_1}^2}, x_{1,j_{l_2}^1}x_{1,j_{l_2}^2} )= \mathrm{E} ( x_{1,j_{l_1}^1}x_{1,j_{l_1}^2} x_{1,j_{l_2}^1}x_{1,j_{l_2}^2}  ) = 0$ as $j_{l_1}^1\neq j_{l_1}^2$ and $j_{l_2}^1\neq j_{l_2}^2$. It follows that
\begin{align*}
&~\Big|\mathrm{cov} (\check{\mathcal{X}}_{1,j_{l_1}^1,j_{l_1}^2}, \check{\mathcal{X}}_{1,j_{l_2}^1,j_{l_2}^2})\Big|\notag \\
=&~\Big|\mathrm{cov} (\check{\mathcal{X}}_{1,j_{l_1}^1,j_{l_1}^2}, \check{\mathcal{X}}_{1,j_{l_2}^1,j_{l_2}^2})-\mathrm{E} ( x_{1,j_{l_1}^1}x_{1,j_{l_1}^2} x_{1,j_{l_2}^1}x_{1,j_{l_2}^2}  )\Big| \notag \\
\leq &~ \Big|\mathrm{E} (\check{\mathcal{X}}_{1,j_{l_1}^1,j_{l_1}^2} \check{\mathcal{X}}_{1,j_{l_2}^1,j_{l_2}^2})-\mathrm{E} ( x_{1,j_{l_1}^1}x_{1,j_{l_1}^2} x_{1,j_{l_2}^1}x_{1,j_{l_2}^2}  )\Big|+\Big|\mathrm{E} (\check{\mathcal{X}}_{1,j_{l_1}^1,j_{l_1}^2})\times \mathrm{E}  (\check{\mathcal{X}}_{1,j_{l_2}^1,j_{l_2}^2})\Big|. 
\end{align*} By the definition of $\check{\mathcal{X}}_{1,j_{l_2}^1,j_{l_2}^2}$, 
\begin{align*}
	&~\Big|\mathrm{E} (\check{\mathcal{X}}_{1,j_{l_1}^1,j_{l_1}^2} \check{\mathcal{X}}_{1,j_{l_2}^1,j_{l_2}^2})-\mathrm{E} ( x_{1,j_{l_1}^1}x_{1,j_{l_1}^2} x_{1,j_{l_2}^1}x_{1,j_{l_2}^2}  )\Big| \notag \\
\leq &~\Big| \mathrm{E}\Big[|x_{1,j_{l_1}^1}x_{1,j_{l_1}^2} x_{1,j_{l_2}^1}x_{1,j_{l_2}^2}| \Big( \mathbf{1}\{ |x_{1,j^1_{l_1}}x_{1,j^2_{l_1}}  | > \tau_n \}  +\mathbf{1}\{  |x_{1,j^1_{l_2}}x_{1,j^2_{l_2}}|> \tau_n \} \Big) \Big] \Big|.
\end{align*} 
Similarly to \eqref{eq:diffexpindv2} and  \eqref{eq:diffexpindv3}, by H\"older's inequality, we know that there exist some positive constants $C$ and $c_0$ such that
\begin{align*}
	\Big|\mathrm{E} (\check{\mathcal{X}}_{1,j_{l_1}^1,j_{l_1}^2} \check{\mathcal{X}}_{1,j_{l_2}^1,j_{l_2}^2})-\mathrm{E} ( x_{1,j_{l_1}^1}x_{1,j_{l_1}^2} x_{1,j_{l_2}^1}x_{1,j_{l_2}^2}  )\Big| \leq &~ C(p+n)^{-c_0\tau}, \notag \\
	\Big|\mathrm{E} (\check{\mathcal{X}}_{1,j_{l_1}^1,j_{l_1}^2})\times \mathrm{E}  (\check{\mathcal{X}}_{1,j_{l_2}^1,j_{l_2}^2})\Big|\leq &~ C(p+n)^{-c_0\tau}.  
\end{align*}
It follows that $|\mathrm{cov} (\check{\mathcal{X}}_{1,j_{l_1}^1,j_{l_1}^2}, \check{\mathcal{X}}_{1,j_{l_2}^1,j_{l_2}^2})| \leq C(p+n)^{-c_0\tau}.$

In summary, $| \mathbf{U}_s-I_s |_{\max} \leq C (p+n)^{-c_0\tau}$ is obtained. By the matrix version taylor expansion of $\mathbf{U}_s^{1/2}$ at ${I}_s$   \cite[see, e.g.,][]{higham2008functions}, the element wise differences between $\mathbf{U}_s^{1/2}$ and $I_s$ are also bounded by $C (p+n)^{-c_0\tau}$.
\end{proof}

\subsubsection{Proof of Lemma \ref{lm:maxupperlowerbd1} (on Page \pageref{lm:maxupperlowerbd1}, Section \ref{sec:lemma9pproof})} \label{sec:lemma18proof}
By the proof of Lemma \ref{lm:maxupperlowerbd} in Section \ref{sec:lemma17proof}, we have
	\begin{eqnarray*}
	&&	\sum_{s=1}^{d-1} \sum_{1\leq l_1<\ldots<l_s \leq q } P\Big[\bigcap_{t=1}^s \Big\{ (\hat{G}_{l_t})^2/n \geq 4\log p - \log \log p +y \Big\} \Big] \\
	& = & \sum_{s=1}^{d-1} \Biggr[ \frac{1}{s!} \Big( \frac{1}{2\sqrt{2\pi}} e^{-y/2} \Big)^s \{1+ o(1)\} + O(1) e^{-M n^{3/14} + 2s\log p} \Biggr].
	\end{eqnarray*} Since $\log p = o(1) n^{1/7}$ and $d=O(\log ^{1/5} p)$, we know  $ Mn^{3/14} -2d\log p - \log d \rightarrow \infty$ and
$
	 \sum_{s=1}^{d-1} O(1)e^{-M n^{3/14} + 2s\log p} \leq O(1)e^{ - Mn^{3/14} + 2d\log p + \log d }=o(1).
$	It follows that
\begin{eqnarray*}
	&&	\sum_{s=1}^{d-1} \sum_{1\leq l_1<\ldots<l_s \leq q } P(\cap_{t=1}^s \{ (\hat{G}_{l_t})^2/n \geq 4\log p - \log \log p +y \} ) \\
    &=&  \sum_{s=1}^{d-1}  \frac{1}{s!} \Big( \frac{1}{2\sqrt{2\pi}} e^{-y/2} \Big)^s \{1+o(1)\}+ o(1).
\end{eqnarray*}

\subsubsection{Proof of Lemma \ref{lm:expdecylm} (on Page \pageref{lm:expdecylm}, Section \ref{sec:lemma9pproof})} \label{sec:lemma19proof}
\begin{proof}
Recall the definition of $U_l^a$ in \eqref{eq:vladefinition}, and we write 
$
	U_{(j_l^1, j_l^2)}^a=U_l^a. 
$
By Lemma \ref{lm:varianceorder}, we know $\sigma(a)P^n_a =\Theta(pn^{a/2})$. Then for given $l_1,\ldots,l_s$, 
\begin{align}
	&~ P\Big\{    (\sigma(a) P_a^n)^{-1} \Big| \sum_{(j^1_l, j^2_l) \in  L_{I_s} }  U_l^a  \Big| \geq C \Gamma_p  \Big\} \label{eq:firstfirstgeqboundv} \\
\leq & ~ P \Big\{  \Big|n^{-a/2}   \sum_{(j^1_l, j^2_l) \in  L_{I_s} } U_{(j_l^1, j_l^2)}^a\Big|  \geq  Cp\Gamma_p \Big\} \leq P_{U,+}+P_{U,-}, \notag
\end{align}  
where we define
\begin{align*}
P_{U,+}	=&~P \Big(  n^{-a/2}   \sum_{(j^1_l, j^2_l) \in  L_{I_s} } U_{(j_l^1, j_l^2)}^a  \geq  Cp\Gamma_p \Big), \notag \\
 P_{U,-}=&~P \Big(  n^{-a/2}   \sum_{(j^1_l, j^2_l) \in  L_{I_s} } U_{(j_l^1, j_l^2)}^a  \leq  -Cp\Gamma_p  \Big).
\end{align*}
By $q=\binom{p}{2}$,
\begin{eqnarray}
	&&\sum_{s=1}^{d-1}  \sum_{1\leq l_1 < \ldots < l_s\leq q} P\Big(\Big\{   (\sigma(a) P_a^n)^{-1} \Big|  \sum_{(j^1_l, j^2_l) \in  L_{I_s} } U_l^a \Big| \geq \Gamma_p  \Big\} \Big)\label{eq:eachsummedexptwofirst}  \\
	&\leq & dp^{2d} \max_{1\leq s \leq d-1;\, 1\leq l_1 < \ldots < l_s\leq q} (P_{U,+}+P_{U,-}).  \notag
\end{eqnarray}
To prove Lemma \ref{lm:expdecylm}, it suffices to prove  that $P_{U,+}$ and $P_{U,-}$ are $o(d^{-1}p^{-2d})$ for each given $s$ and $l_1,\ldots,l_s$. 

We show  $P_{U,+}=o(d^{-1}p^{-2d})$ in the following and  the same conclusion holds for $P_{U,-}$ by applying similar analysis.  
By the  construction of $L_{I_s}$ in \eqref{eq:defwidset} and the i.i.d. assumption in Condition \ref{cond:maxiidcolumn}, we know that there exists an integer $D\leq 2s$ such that
\begin{align}
P_{U,+}		\leq &~\sum_{k=1}^D P \Big(   \sum_{m=k+1}^p   n^{-a/2} U_{(k, m)}^a \geq {Cp\Gamma_n}/{D} \Big)\label{eq:firstgeqboundv}\\
\leq &~D\max_{1\leq k\leq D} \mathrm{E}\Big[ P_{k} \Big(\sum_{m=k+1}^p   n^{-a/2}  U_{(k,m)}^a \geq  {Cp\Gamma_p }/{D}  \Big)   \Big], \notag	
\end{align} 
where $P_k$ represents the  probability measure conditioning on $\{x_{1,k},\ldots, x_{n,k}\}$ with $k \in \{1,\ldots, p\}$. 
To prove $P_{U,+}=o(d^{-1}p^{-2d})$,  in the following we show that $\mathrm{E}[ P_{k} (\sum_{m=k+1}^p   n^{-a/2}  U_{(k,m)}^a \geq  {C\times p\Gamma_p }/{D})]=o(D^{-1}d^{-1}p^{-2d})$  for $k=1$; and the same conclusion holds for $k\geq 2$ by similar analysis given the i.i.d. assumption in  Condition \ref{cond:maxiidcolumn} and $k\leq D=O(\log^{1/5}p)$.  Specifically, we next prove that $ \mathrm{E}[ P_{1} ( \{  \sum_{m=2}^p    n^{-a/2} U_{(1,m)}^a  \geq  {Cp\Gamma_p }/{D} \} ) ]=o(D^{-1}d^{-1}p^{-2d}).$


Define 
\begin{eqnarray}
	\bar{U}_{x}=n^{-a} \sum_{1 \leq i_1 \neq \ldots \neq i_{a}\leq n}x_{i_1,1}^2\ldots x_{i_{a},1}^2,\label{eq:ubarxdef}
\end{eqnarray} then $\mathrm{E}(\bar{U}_{x})\leq \{\mathrm{E}(x_{11}^2)\}^{a}=\Theta(1)$. Given a constant $t>0$, we define an event 
$
	T_{t,1}=\{  |\bar{U}_{x}-\mathrm{E}(\bar{U}_{x}) |\leq t \},
$ and let $\mathbf{1}_{T_{t,1}}$ denote the indicator function of the event $T_{t,1}$. It follows that
\begin{eqnarray}
	&& \mathrm{E}\Big[ P_{1} \Big( \Big\{  \sum_{m=2}^p    n^{-a/2} U_{(1,m)}^a  \geq  {Cp\Gamma_p }/{D} \Big\} \Big)   \Big] \label{eq:eachsummedexptwo} \\
	&=& \mathrm{E}\Big[ P_{1} \Big( \Big\{  \sum_{m=2}^p    n^{-a/2} U_{(1,m)}^a  \geq  {Cp\Gamma_p }/{D} \Big\} \Big) \times( \mathbf{1}_{T_{t,1}} +\mathbf{1}_{T_{t,1}^c} )  \Big] \notag \\
  &\leq & \mathrm{E}(P_{T_{t,1}})+ P(T_{t,1}^c), \notag
\end{eqnarray} where $\mathbf{1}_{T_{t,1}^c}=1-\mathbf{1}_{T_{t,1}}$; $T_{t,1}^c$ denotes the complement set of the event $T_{t,1}$; and
$
	P_{T_{t,1}}= P_{1} \{  \sum_{m=2}^p    n^{-a/2} U_{(1,m)}^a  \geq  {Cp\Gamma_p }/{D} \}  \times \mathbf{1}_{T_{t,1}}. 
$ It remains to prove that $\mathrm{E}(P_{T_{t,1}})$ and $P(T_{t,1}^c)$ are $o(D^{-1}d^{-1}p^{-2d})$ respectively. 

\smallskip

\textbf{Part 1: $\boldsymbol{\mathrm{E}(P_{T_{t,1}})}$} Given an integer $a$, define $h_p=C(p/\log^{2} p)^{a/(a+1)}$. 
For easy presentation, we let $\mathbf{1}_{H}$ denote an indicator function of the event  $\{|n^{-a/2}U_{(1,m)}^a|\leq h_p\}$. We next  decompose $n^{-a/2}U_{(1,m)}=z_{m,1}+z_{m,2}$,  where  
\begin{align}
  \quad \quad z_{m,1}=&~n^{-a/2} \Big[U_{(1,m)}^a\mathbf{1}_{H}-\mathrm{E}_1\{U_{(1,m)}^a\mathbf{1}_{H}\}\Big], \label{eq:twozdefinitions} \\
    z_{m,2}=&~n^{-a/2} \Big[\mathrm{E}_1\{U_{(1,m)}^a\mathbf{1}_{H}\}+U_{(1,m)}^a(1-\mathbf{1}_{H})\Big]\notag \\
    =&~n^{-a/2} \Big[-\mathrm{E}_1\{U_{(1,m)}^a(1-\mathbf{1}_{H})\}+U_{(1,m)}^a(1-\mathbf{1}_{H})\Big];  \notag
\end{align} in \eqref{eq:twozdefinitions},  $\mathrm{E}_1$ denotes the expectation conditioning on  $\{x_{1,1},\ldots, x_{n,1}\}$, and we use $\mathrm{E}_1\{U_{(1,m)}^a\mathbf{1}_H\}=-\mathrm{E}_1\{U_{(1,m)}^a(1-\mathbf{1}_H)\}$ as $\mathrm{E}_1\{U_{(1,m)}^a\}=0$. Given $n^{-a/2}U_{(1,m)}^a=z_{m,1}+z_{m,2}$, we have $P_{T_{t,1}}\leq P_{z,1}+P_{z,2},$ where we define 
\begin{align*}
	P_{z,1}=P_{1} \Big(  \sum_{m=2}^p z_{m,1} \geq  {Cp\Gamma_p }/{D} \Big)\mathbf{1}_{T_{t,1}}, \   
	P_{z,2}= P_{1} \Big(  \sum_{m=2}^p z_{m,2} \geq  {Cp\Gamma_p }/{D}  \Big)\mathbf{1}_{T_{t,1}}.
\end{align*}To evaluate $\mathrm{E}(P_{T_1})$, we examine $\mathrm{E}(P_{z,1})$ and $ \mathrm{E}(P_{z,2})$ respectively below. 

\smallskip

\textbf{Part 1.1: $\boldsymbol{\mathrm{E}(P_{z,1})}$}
When conditioning on $\{x_{1,1},\ldots, x_{n,1}\}$, since $z_{m,1}$'s are independent and bounded random variables, by Bernstein inequality, 
\begin{eqnarray}
	\quad P_{z,1} \leq C \exp \Big( - \frac{Cp^2 \Gamma_p^2/D^2 }{ \sum_{m=2}^p \mathrm{E}_1 (z_{m,1}^2) +Ch_p p \Gamma_p/D }  \Big)\mathbf{1}_{T_1}. \label{eq:pz1probbound11}
\end{eqnarray} 
Note that $0\leq \mathrm{E}_1 (z_{m,1}^2)   
	 \leq  \mathrm{E}_1 [\{n^{-a/2}U_{(1,m)}^a\}^2]$ and
\begin{align*}
	\mathrm{E}_1 \Big[\{n^{-a/2}U_{(1,m)}^a\}^2\Big] =&~ n^{-a } \sum_{\substack{1\leq i_1\neq \ldots \neq i_a \leq n; \\ 1\leq \tilde{i}_1\neq \ldots \neq \tilde{i}_a \leq n}}  \Big(\prod_{r=1}^ax_{i_r,1} x_{\tilde{i}_r,1}\Big)  \times \mathrm{E}\Big(\prod_{r=1}^ax_{i_r,m}x_{\tilde{i}_r,m}\Big) \notag \\
	=&~ a! n^{-a} \sum_{1\leq i_1\neq \ldots \neq i_a \leq n}\Big(\prod_{r=1}^a x_{i_r,1}^2\Big) \times \{\mathrm{E}(x_{1,m}^2)\}^a \notag \\
	=&~ a! \bar{U}_x \times \{\mathrm{E}(x_{1,m}^2)\}^a , \notag 
\end{align*} 
where from the first equation to the second equation, we use the fact that $\mathrm{E}(\prod_{r=1}^ax_{i_r,m}x_{\tilde{i}_r,m})\neq 0$ 
only when $\{i_1,\ldots , i_a \}=\{\tilde{i}_1,\ldots , \tilde{i}_a \}$. It follows that $\mathrm{E}_1 (z_{m,1}^2) \leq C\times \bar{U}_x$. As $\mathbf{1}_{T_{t,1}}$ indicates the event $\{  |\bar{U}_{x}-\mathrm{E}(\bar{U}_{x}) |\leq t \}$ and $\mathrm{E}(\bar{U}_{x}) =\Theta(1)$, it suffices to consider $\mathrm{E}_1(z_{m,1}^2)=\Theta(1)$ in \eqref{eq:pz1probbound11} and  then 
\begin{eqnarray}
	\mathrm{E}(P_{z,1}) \leq \exp\{-Cp\Gamma_p/(Dh_p)\}. \label{eq:pz1probbound}
\end{eqnarray}


\textbf{Part 1.2: $\boldsymbol{\mathrm{E}(P_{z,2})}$}  By the definition of $z_{m,2}$ in \eqref{eq:twozdefinitions}, 
\begin{eqnarray}
	\quad \quad &&\mathrm{E}(P_{z,2} )\leq  P \Big( \max_{2 \leq  m \leq p} |n^{-a/2}U_{(1,m)}^a|> h_p \Big) 	\leq  p P(|n^{-a/2}U_{(1,2)}^a|>h_p), \label{eq:epz2updef}
\end{eqnarray} where the last inequality follows from the i.i.d. assumption in Condition \ref{cond:maxiidcolumn}. By the result in Section \ref{sec:formulaevu}, we know
$
U_{(1,2)}^a=\sum_{1 \leq i_1 \neq \ldots \neq i_a \leq n}\prod_{k=1}^a x_{i_k, 1} x_{i_k, 2}
$ can be written as  a linear combination of 
$
	\prod_{k=1}^{\iota}\{  \sum_{i=1}^n  (x_{i,1}x_{i,2})^{a_k} \},
$ where $a_1,\ldots, a_{\iota}$ are positive integers such that $a_1+\ldots+a_{\iota}=a$. It follows that for finite integer $a$,
\begin{align*}
	& P( |n^{-a/2}U_{(1,2)}^a | > h_p )  \notag \\
	\leq  & \sum_{a_1+\ldots+a_{\iota}=a}  P\Big(   n^{-a/2}  \prod_{k=1}^{\iota}\Big| \sum_{i=1}^n  (x_{i,1}x_{i,2})^{a_k}\Big| > C h_p  \Big).  \notag \\
	\leq & \sum_{a_1+\ldots+a_{\iota}=a} \sum_{k=1}^{\iota} P\Big( \sum_{i=1}^n |x_{i,1}x_{i,2}/\sqrt{n}|^{a_k} >Ch_p^{a_k/a} \Big). \notag 
\end{align*} 
\textit{Case 1:} If $a_k=1$, since  Condition \ref{cond:maxiidcolumn} holds for $\varsigma=2$ in Theorem \ref{thm:extlimit}, we know $x_{i,1}x_{i,2}$, $i=1,\ldots, n$, are i.i.d. sub-exponential random variables. By the  Bernstein-type inequality of sub-exponential random variables, we have
\begin{eqnarray}
	\quad \quad  && P\Big( \sum_{i=1}^n |x_{i,1}x_{i,2}| >C\sqrt{n}h_p^{1/a} \Big) \label{eq:sboundexp11} 	\leq  C\exp ( -C\min \{ Ch_p^{2/a}, C\sqrt{n}h_p^{1/a} \} ). 
\end{eqnarray} 
\textit{Case 2:} If $2 \leq a_k \leq a$, we let $B_p=Cn^{-1/6} h_p^{2/(3a)}$. We then decompose $|{x_{i,1}x_{i,2}}/{\sqrt{n} }|^{a_k}=s_i+t_i$, where we define
\begin{align*}
&s_i=|{x_{i,1}x_{i,2}}/{\sqrt{n} } |^{a_k} \mathbf{1}_{H_{B_p}}-\mu_i, \quad t_i=|{x_{i,1}x_{i,2}}/{\sqrt{n} } |^{a_k} (1-\mathbf{1}_{H_{B_p}})+\mu_i, \notag \\
&\mathbf{1}_{H_{B_p}}=\mathbf{1}_{\{ |   {x_{i,1}x_{i,2}}/{\sqrt{n} } | \leq B_p\} }, \quad \mu_i = \mathrm{E}\{|{x_{i,1}x_{i,2}}/{\sqrt{n} } |^{a_k} \mathbf{1}_{H_{B_p}}\}. 	\notag 
\end{align*}
It follows that  
\begin{align*}
 P\Big( \sum_{i=1}^n |x_{i,1}x_{i,2}/\sqrt{n}|^{a_k} >Ch_p^{a_k/a} \Big)
	\leq  P\Big( \sum_{i=1}^n s_i > Ch_p^{a_k/a} \Big)+P\Big( \sum_{i=1}^n t_i > Ch_p^{a_k/a} \Big).
\end{align*} Since $|s_i| \leq C\times B_p^{a_k}$ from construction, by Bernstein inequality,
\begin{eqnarray}
	\quad \quad \quad \quad P\Big( \sum_{i=1}^n s_i > Ch_p^{a_k/a} \Big)  \leq  C\exp\Big(-\frac{C h_p^{2a_k/a}}{ \sum_{i=1}^n \mathrm{E}(s_i^2)  +CB_p^{a_k}h_p^{a_k/a}}  \Big). \label{eq:sboundexp}
\end{eqnarray}
As $2 \leq a_k \leq a$, by Condition \ref{cond:maxiidcolumn}, we have
\begin{align*}
	\sum_{i=1}^n \mathrm{E}(s_i^2) \leq  \sum_{i=1}^n \mathrm{E} \Big\{\Big( \frac{x_{i,1}x_{i,2}}{\sqrt{n} } \Big)^{2a_k}  \Big\}\leq   \frac{\mathrm{E}\{(x_{1,1}x_{1,2})^{2a_k}\}}{n^{a_k-1}} \leq  \mathrm{E}[(x_{1,1}x_{1,2})^{2a_k}] < \infty .
\end{align*} Since $h_p^{1/a}/B_p \to \infty$,  from \eqref{eq:sboundexp}, we have
\begin{eqnarray}
	P\Big( \sum_{i=1}^n s_i > Ch_p^{a_k/a} \Big) \leq  \exp ( - C{  h_p^{2/a} }/B_p^{2}). \label{eq:sboundexp2}
\end{eqnarray}  
In addition, by the definition of $t_i$, 
\begin{align*}
P\Big( \sum_{i=1}^n t_i > Ch_p^{a_k/a} \Big)\leq 	P\Big\{ \sum_{i=1}^n |{x_{i,1}x_{i,2}}/{\sqrt{n} } |^{a_k} (1-\mathbf{1}_{H_{B_p}})> Ch_p^{a_k/a}-\Big|\sum_{i=1}^n\mu_i\Big| \Big\}.
\end{align*}We note that $\sum_{i=1}^n \mu_i\leq n^{-1} \times \sum_{i=1}^n [\mathrm{E}\{(x_{1,1}x_{1,2} )^{2a_k}\}]^{1/2}<\infty$ by H\"older's inequality and Condition \ref{cond:maxiidcolumn}. 
As $h_p\to \infty$,  $Ch_p^{a_k/a}-|\sum_{i=1}^n\mu_i|>0$ when $n$ and $p$ are sufficiently large. Since $1-\mathbf{1}_{H_{B_p}}$ indicates $|{x_{i,1}x_{i,2}}/{\sqrt{n} } |>B_p$, 
\begin{align}
P\Big( \sum_{i=1}^n t_i > Ch_p^{a_k/a} \Big) \leq & ~P\Big( \max_{1\leq i \leq n}  |{x_{i,1}x_{i,2}}/{\sqrt{n} } |^{a_k}  > B_p^{a_k}  \Big)\label{eq:sboundexp3}	\\
\leq &~ n \times P (  |{x_{i,1}x_{i,2}}/{\sqrt{n} } |>B_p) \notag \\
\leq &~ n \times \mathrm{E}\{ \exp (t_0 | x_{1,1}x_{1,2}| )\}/ \exp\{t_0 (\sqrt{n} B_p)\}  \notag \\
\leq &~  \exp (-C \sqrt{n}B_p  + \log n), \notag
\end{align}
where we use $\mathrm{E}\{ \exp (t_0 | x_{1,1}x_{1,2}|)\} \leq \mathrm{E}\{ \exp (t_0 (x_{1,1}^2+x_{1,2}^2)/2)\} <\infty $ as Condition \ref{cond:maxiidcolumn} holds for $\varsigma=2$. By  \eqref{eq:epz2updef},  \eqref{eq:sboundexp11}, \eqref{eq:sboundexp2} and \eqref{eq:sboundexp3}, 
\begin{align}
\quad	\mathrm{E}(P_{z,2}) \leq &~  Cp \times \Big[ \exp \Big( -C\min \{ Ch_p^{2/a}, C\sqrt{n}h_p^{1/a} \} \Big) \label{eq:pz2upperbound} \\
	&~+\exp(-C {  h_p^{2/a} }/B_p^{2})+ \exp (-C \sqrt{n}B_p  + \log n)\Big]. \notag 
\end{align}

\smallskip

\textbf{Part 2: $\boldsymbol{P(T_{t,1}^c)}$}
By the definition in  \eqref{eq:eachsummedexptwo},  $P(T_{t,1}^c)=P(|\bar{U}_{x}-\mathrm{E}(\bar{U}_{x})|>t)$.
Moreover, by the definition in \eqref{eq:ubarxdef}, $\mathrm{E}(\bar{U}_{x})=\Theta(1)$ and $\bar{U}_x \geq 0$. Therefore we know there exist large positive constants $C$ and $t$ such that
$
	\{ |\bar{U}_{x}-\mathrm{E}(\bar{U}_{x})|>t\} \subseteq \{ \bar{U}_x > Ct \}
$  and $P(T_{t,1}^c)\leq P(\bar{U}_x > Ct).$
Since $	\bar{U}_x \leq (\sum_{i=1}^n x_{i,1}^2 /n)^a$ and $x_{i,1}^2$ are i.i.d.  sub-exponential random variables, we have
\begin{align}
P(T_{t,1}^c)\leq &~P\Big\{ \Big(\sum_{i=1}^n x_{i1}^2/n \Big)^a  \geq Ct \Big\} =P\Big( \sum_{i=1}^n x_{i1}^2/n  \geq Ct^{1/a} \Big) \label{eq:ptcprobupbound} 
 \\
	\leq &~ C \exp( - C n ), \notag	
\end{align}
where the last inequality is obtained by the Bernstein-type inequality of sub-exponential random variables.

By the analysis above, $\eqref{eq:eachsummedexptwo} \leq \mathrm{E}(P_{z,1})+\mathrm{E}(P_{z,2})+P(T_1^c)$. 
Recall that $h_p=C(p/\log^2 p)^{a/(a+1)}$, $\log p=o(n^{1/7})$, $\Gamma_p=\Theta(\log^{-1/2}p)$, $D=O(\log^{1/5}p)$ and $B_p=Cn^{-1/6}h_p^{2/(3a)}$. 
Then combining  \eqref{eq:pz1probbound}, \eqref{eq:pz2upperbound} and  \eqref{eq:ptcprobupbound}, we have $\eqref{eq:eachsummedexptwo}=o(D^{-1}d^{-1}p^{-2d}).$ Therefore Lemma \ref{lm:expdecylm} is proved.\end{proof}

\subsubsection{Proof of Lemma \ref{lm:boundedsmalldiff} (on Page \pageref{lm:boundedsmalldiff}, Section  \ref{sec:asymindptproofsec})} \label{sec:lemma10proof}

Similarly to Section \ref{sec:lemma9pproof}, we first prove Lemma \ref{lm:boundedsmalldiff} for $m=1$ in Section \ref{par:m1lmma32} and then for $m>1$ in Section \ref{par:mlarger1lmma32}. 

\paragraph{Proof for $m=1$} \label{par:m1lmma32}
Specifically, in this section, we prove for finite integer $a$, 
\begin{equation}
	\begin{aligned}
		\Biggr| P\Big( \frac{{M}_n}{n} > y_p, \frac{\tilde{\mathcal{U}}(a)  }{\sigma(a)} \leq z \Big) - P\Big( \frac{{M}_n}{n} > y_p \Big) P\Big(\frac{\tilde{\mathcal{U}}(a)  }{\sigma(a)} \leq z \Big) \Biggr| \rightarrow 0.  \label{eq:asymindptgoal22}
	\end{aligned}
\end{equation}

To prove \eqref{eq:asymindptgoal22}, we start by proving the following two conclusions \eqref{eq:mnmnhatdif} and \eqref{eq:smalldifmmhat2}, which suggest that $M_n$ and $\hat{M}_n$ have small difference in probability.
   To be specific,  as $n,p\to \infty$, 
\begin{eqnarray}
	|P(M_n/n>y_p)-P(\hat{M}_n/n>y_p)|\to 0, \label{eq:mnmnhatdif}
\end{eqnarray} and 
\begin{align}
	& |P(M_n/n>y_p\, , \, {\tilde{\mathcal{U}}(a)  }/{\sigma(a)}  \leq z )\label{eq:smalldifmmhat2} \\
	&\ -P(\hat{M}_n/n>y_p \, , \, {\tilde{\mathcal{U}}(a)  }/{\sigma(a)}  \leq z )|\to 0. \notag
\end{align}
To prove \eqref{eq:mnmnhatdif} and \eqref{eq:smalldifmmhat2}, recall that in  \eqref{eq:indpalldefinewvar}, $M_n$ and $\hat{M}_n$ are defined using $\tilde{G}_l$ and $\hat{G}_l$ respectively. We next focus on the difference between  $\tilde{G}_l$ and $\hat{G}_l$.  
Since  $\tilde{G}_l$ and $\hat{G}_l$ will not change if the data $x_{i,j}$ is scaled by its standard deviation, then  we assume, without loss of generality,  $\sigma_{j,j}=1$, $j=1,\ldots, p$ in the following discussion. 

By the definitions in \eqref{eq:indpalldefinewvar}, we have 
\begin{align*}
P\Big( \max_{1\leq l \leq q} | \tilde{G}_l-\hat{G}_l | \geq (\log p)^{-1} \Big) 
	\leq  P \Big( \max_{1\leq l \leq q} \max_{1 \leq i \leq n} | x_{i,j^1_l}x_{i,j^2_l}| \geq \tau_n    \Big).  	
\end{align*} Note that $ | x_{i,j^1_l}x_{i,j^2_l}| \leq (  x_{i,j^1_l}^2+ x_{i,j^2_l}^2 )/2$. Then 
\begin{align}
	&~ P \Big( \max_{1\leq l \leq q} \max_{1 \leq i \leq n} | x_{i,j^1_l}x_{i,j^2_l}| \geq \tau_n    \Big) \notag \\
 \leq & ~ P \Big( \max_{1\leq l \leq q} \max_{1 \leq i \leq n}  (x_{i,j^1_l}^2+ x_{i,j^2_l}^2) \geq 2 \tau_n    \Big) \notag \\
  \leq &~ P \Big( \max_{1\leq l \leq q} \max_{1 \leq i \leq n}  x_{i,j^1_l}^2 \geq \tau_n    \Big) + P \Big( \max_{1\leq l \leq q} \max_{1 \leq i \leq n}  x_{i,j^2_l}^2 \geq \tau_n    \Big) \label{eq:maxbound1taun}\\
 \leq & ~ 2 P \Big( \max_{1\leq j \leq p} \max_{1 \leq i \leq n}  x_{i,j}^2 \geq \tau_n    \Big)  \label{eq:maxbound2taun} \\
 \leq & ~ 2 np \max_{1\leq j\leq p} P(|x_{1,j}^2|\geq \tau_n). \notag
\end{align}
From \eqref{eq:maxbound1taun} to \eqref{eq:maxbound2taun}, we use $\max_{1\leq l\leq q}x_{i,j_l^k}^2=\max_{1\leq j\leq p} x_{i,j}^2$ for each $i$ and $k=1,2$. To see this, recall the notation  defined in  Section \ref{sec:asymindptproofsec} (on Page \pageref{sec:asymindptproofsec}). In particular, subscript $l$ is defined to indicate a pair of indexes $(j_l^1,j_l^2)$ with $1\leq j_l^1<j_l^2\leq p$. Since $j_l^1$ and $j_l^1$  only take values from the range $\{1,\ldots,p\}$, we know $\{j_l^k: 1\leq l\leq q\}\subseteq \{1,\ldots,p\}$ for $k=1,2$, and  then $\max_{1\leq l\leq q}x_{i,j_l^1}^2=\max_{1\leq j\leq p} x_{i,j}^2$. 
Moreover, by Condition \ref{cond:maxiidcolumn} with $\varsigma=2$, 
 \begin{align*}
 	 np \max_{1\leq j\leq p} P(|x_{1,j}^2|\geq \tau_n)\leq C np  (n+p)^{-\tau} \mathrm{E} \exp(x_{1,1}^2) \rightarrow 0.
 \end{align*}
 It follows that $P( \max_{1\leq l \leq q} | \tilde{G}_l-\hat{G}_l | \geq (\log p)^{-1}) \to 0$.
Conditioning on $\max_{1\leq l \leq q} | \tilde{G}_l-\hat{G}_l |\leq (\log p)^{-1}$, by Lemma \ref{lm:maxdiffbound} and $|\hat{G}_l|\leq \tau_n$, 
\begin{eqnarray*}
	|M_n-\hat{M}_n|&=&\Big| \max_{1\leq l \leq q} ( \tilde{G}_l)^2 -  \max_{1\leq l \leq q} ( \hat{G}_l)^2  \Big| \notag \\
	&\leq & 2 \max_{1\leq l \leq q} | \hat{G}_l| \max_{1\leq l \leq q} |  \tilde{G}_l- \hat{G}_l |+ \max_{1\leq l \leq q}  |  \tilde{G}_l- \hat{G}_l |^2 \notag \\
	&\leq & 2\tau_n /\log p  + (\log p)^{-2}. 
\end{eqnarray*} 
Recall that  $\tau_n=O(\log(p+n))$, then $|M_n/n-\hat{M}_n/n|\xrightarrow{P} 0$. Therefore \eqref{eq:mnmnhatdif} and \eqref{eq:smalldifmmhat2}  are obtained.



 


Given \eqref{eq:mnmnhatdif} and \eqref{eq:smalldifmmhat2}, we next prove \eqref{eq:asymindptgoal22}. In particular, we write
\begin{align*}
&~  P\Big( \frac{{M}_n}{n} > y_p, \frac{\tilde{\mathcal{U}}(a)  }{\sigma(a)} \leq z \Big) -P\Big( \frac{{M}_n}{n} > y_p \Big) P\Big(\frac{\tilde{\mathcal{U}}(a)  }{\sigma(a)} \leq z \Big) =\Delta_{P,1}+ \Delta_{P,2}+\Delta_{P,3},	
\end{align*} where we define
\begin{align*}
\Delta_{P,1}=&~P \Big( {{M}_n}/{n} > y_p,\, {\tilde{\mathcal{U}}(a)  }/{\sigma(a)} \leq z \Big)   -  P\Big( {\hat{M}_n}/{n} > y_p,\,  {\tilde{\mathcal{U}}(a)  }/{\sigma(a)} \leq z \Big), \notag \\
\Delta_{P,2}=&~ P\Big( {\hat{M}_n}/{n} > y_p,\,  {\tilde{\mathcal{U}}(a)  }/{\sigma(a)} \leq z \Big) -P\Big( {\hat{M}_n}/{n} > y_p)\times P({\tilde{\mathcal{U}}(a)  }/{\sigma(a)} \leq z \Big), \notag \\
\Delta_{P,3}=&~P\Big( {\hat{M}_n}/{n} > y_p\Big)\times P\Big({\tilde{\mathcal{U}}(a)  }/{\sigma(a)} \leq z \Big) \notag \\
&~\ - P\Big( {{M}_n}/{n} > y_p\Big)\times P\Big({\tilde{\mathcal{U}}(a)  }/{\sigma(a)} \leq z \Big).  
\end{align*}
Note that the left hand side of $\eqref{eq:asymindptgoal22} \leq |\Delta_{p,1}|+ |\Delta_{p,2}|+|\Delta_{p,3}|$. 
By Lemma  \ref{lm:firstsetpthm3proof}, $|\Delta_{p,2}|\to 0$;  by \eqref{eq:smalldifmmhat2}, $|\Delta_{p,1}|\to 0$; by $|\Delta_{p,3}|\leq | P( {\hat{M}_n}/{n} > y_p ) - P( {{M}_n}/{n} > y_p)|$ and \eqref{eq:mnmnhatdif},  $|\Delta_{p,3}|\to 0$. In summary, \eqref{eq:asymindptgoal22} is proved.

\paragraph{Proof for $m>1$} \label{par:mlarger1lmma32}
Following the proof in Section  \ref{par:m1lmma32}, we know that \eqref{eq:mnmnhatdif} still holds and similarly to  \eqref{eq:smalldifmmhat2}, 
\begin{align}
	&|P(M_n/n>y_p,\, \tilde{\mathcal{U}}(a_1)/\sigma(a_1)\leq z_1, \, \ldots, \, \tilde{\mathcal{U}}(a_m)/\sigma(a_m)\leq z_m ) \notag \\ 
	&\, - P(\hat{M}_n/n>y_p,\,  \tilde{\mathcal{U}}(a_1)/\sigma(a_1)\leq z_1, \, \ldots, \, \tilde{\mathcal{U}}(a_m)/\sigma(a_m)\leq z_m) |\to 0. \notag
\end{align} 
Given these results and Lemma \ref{lm:firstsetpthm3proof}, we know that Lemma \ref{lm:boundedsmalldiff} holds for $m>1$,  following the arguments in Section \ref{par:m1lmma32} similarly. 

\smallskip

\subsubsection{Proof of Lemma \ref{lm:addprop0termtrue} (on Page \pageref{lm:addprop0termtrue}, Section \ref{sec:asymindptproofsec})}\label{sec:lemma11proof}

Similarly to Section \ref{sec:lemma10proof}, we first prove Lemma \ref{lm:addprop0termtrue}  for $m=1$ in Section \ref{par:pfm1lemma33}, and then discuss the case for $m>1$ in Section \ref{par:pfmlarger1lemma33}. 
\paragraph{Proof for $m=1$}  \label{par:pfm1lemma33}
Specifically, in this section, we prove for finite integer $a$ and given $z$, 
\begin{align}
\Big| &P\Big(  \frac{{\mathcal{U}}(a)  }{\sigma(a)} \leq z,\, n  \mathcal{U}^2(\infty) > y_p \Big) \label{eq:mnyptildem2}  \\
& -   P \Big( \frac{{\mathcal{U}}(a)  }{\sigma(a)} \leq z \Big)P\Big( n  \mathcal{U}^2(\infty) > y_p \Big)\Big| \to 0. \notag
\end{align}
To prove this,  we use $M_n/n$ as an intermediate variable and first show 
\begin{equation}
\begin{aligned}
	\Big|P \Big( \frac{\mathcal{U}(a)}{\sigma(a)} >z, \frac{M_n}{n} > y_p  \Big)-P \Big( \frac{\mathcal{U}(a)}{\sigma(a)} >z \Big) P\Big(\frac{M_n}{n} > y_p\Big) \Big| \rightarrow 0. \label{eq:indfirststep}
\end{aligned} 	
\end{equation} 
To facilitate the proof, we define some notation. Given small constant $\epsilon>0$, 
\begin{align*}
	& P_{uz} =P \Big( \frac{\mathcal{U}(a)}{\sigma(a)} >z \Big), \quad P_{zy}= P \Big( \frac{\mathcal{U}(a)}{\sigma(a)} >z, \frac{M_n}{n} > y_p  \Big), \notag \\
	& P_{uz+\epsilon}= P\Big( \frac{\tilde{\mathcal{U}}(a)}{\sigma(a)} > z +\epsilon \Big), \quad P_{z+\epsilon}= P\Big( \frac{\tilde{\mathcal{U}}(a)}{\sigma(a)} > z +\epsilon , \frac{M_n}{n} > y_p \Big), \notag \\
& P_{uz-\epsilon}= P\Big( \frac{\tilde{\mathcal{U}}(a)}{\sigma(a)} > z -\epsilon \Big), \quad
P_{z-\epsilon}= P\Big( \frac{\tilde{\mathcal{U}}(a)}{\sigma(a)} > z -\epsilon , \frac{M_n}{n} > y_p \Big), \notag \\
& P_{y_p}=P\Big(\frac{M_n}{n}  >y_p\Big),
\end{align*}  $\Phi(\cdot)$ is the cumulative distribution function of standard normal distribution, and $\Bar{\Phi}(\cdot)=1-\Phi(\cdot)$. Then
\begin{align*}
	\eqref{eq:indfirststep} =&~ |P_{zy}-P_{uz}\times P_{y_p}| \notag \\
	\leq &~ |P_{zy}-P_{z+\epsilon}|+|P_{z+\epsilon}-P_{uz+\epsilon}P_{y_p}|+ |P_{uz+\epsilon}P_{y_p}-P_{uz}P_{y_p}|. 
\end{align*} 
We next show $\eqref{eq:indfirststep}\to 0$ by proving the three parts above all converges to $0$ respectively. 

First we show $|P_{zy}-P_{z+\epsilon}|\to 0$. Note that $P_{z+\epsilon} \leq P_{zy} \leq P_{z-\epsilon}$, then $|P_{zy}-P_{z+\epsilon}|\leq |P_{z-\epsilon}-P_{z+\epsilon}|$. In addition, 
\begin{align*}
&~|P_{z-\epsilon}-P_{z+\epsilon}| \notag \\
\leq &~ |P_{z-\epsilon}-P_{uz-\epsilon}\times P_{y_p}|	+ |P_{uz-\epsilon}\times P_{y_p} -P_{uz+\epsilon} \times P_{y_p} | +|P_{uz+\epsilon}\times P_{y_p}-P_{z+\epsilon}| \notag \\
\leq &~ o(1)+|P_{uz+\epsilon} -P_{uz-\epsilon} |,
\end{align*} where we use  \eqref{eq:asymindptgoal22} in the last inequality.  
Moreover, by the proof of Theorem \ref{thm:jointnormal} in Section \ref{sec:detailofjointnormal}, we know $\tilde{\mathcal{U}}(a)/\sigma(a)\xrightarrow{D}\mathcal{N}(0,1)$. Thus when $n,p\rightarrow \infty$ and $\epsilon \rightarrow 0$,
\begin{align*}
	& ~|P_{uz+\epsilon} -P_{uz-\epsilon}| \notag \\
	\leq &~ |P_{uz+\epsilon}-\bar{\Phi}(z+\epsilon)|+ |\bar{\Phi}(z+\epsilon)-\bar{\Phi}(z-\epsilon)|+|P_{uz-\epsilon}-\bar{\Phi}(z-\epsilon)|+o(1) \notag \\
	\to & ~0.
\end{align*}
Second, we know $|P_{z+\epsilon}-P_{uz+\epsilon}P_{y_p}|\to 0$ by \eqref{eq:asymindptgoal22}. Last, we show $|P_{uz+\epsilon}P_{y_p}-P_{uz}P_{y_p}|\to 0.$  
By the proof of Theorem \ref{thm:jointnormal} in Section \ref{sec:detailofjointnormal}, we know $\tilde{\mathcal{U}}(a)/\sigma(a)\xrightarrow{D}\mathcal{N}(0,1)$, $ \{ \mathcal{U}(a)-\tilde{\mathcal{U}}(a)/\sigma(a)\}\xrightarrow{P} 0$, 
and ${\mathcal{U}}(a)/\sigma(a)\xrightarrow{D}\mathcal{N}(0,1)$. 
Thus when $n,p\rightarrow \infty$ and $\epsilon \rightarrow 0$,
\begin{eqnarray*}
	&&|P_{uz+\epsilon}P_{y_p}-P_{uz}P_{y_p} |\notag \\
	&\leq &|P_{uz+\epsilon}-P_{uz}| \notag \\
	&\leq &|P_{uz+\epsilon}-\bar{\Phi}(z+\epsilon)|+ |\bar{\Phi}(z+\epsilon)-\bar{\Phi}(z)|+|P_{uz}-\bar{\Phi}(z)|+o(1)\notag \\
& \to &0.
\end{eqnarray*}
In summary \eqref{eq:indfirststep} is proved.

We next prove \eqref{eq:mnyptildem2} similarly to the proof of  \eqref{eq:indfirststep}. 
 Specifically, we write
 \begin{align*}
 	& ~\Big| P\Big( n  \mathcal{U}^2(\infty) > y_p, \frac{{\mathcal{U}}(a)  }{\sigma(a)} \leq z \Big) -  P\Big( n  \mathcal{U}^2(\infty) > y_p \Big) P \Big( \frac{{\mathcal{U}}(a)  }{\sigma(a)} \leq z \Big)\Big|\notag \\
	=&~|P_{z0}-P_{y0}\times P_{uz}|, 
\end{align*}where we define $P_{z0}=P ( n  \mathcal{U}^2(\infty) > y_p ,\frac{\mathcal{U}(a)}{\sigma(a)} >z )$ and $P_{y0}=P(n  \mathcal{U}^2(\infty) > y_p ).$
 Note that
\begin{align*}
 |P_{z0}-P_{y0}P_{uz}| \leq  |P_{z0}-P_{zy-\epsilon}|+ |P_{zy-\epsilon}-P_{y-\epsilon}P_{uz}| + | P_{y-\epsilon} P_{uz}-P_{y0}P_{uz} |,
\end{align*} 
where 
\begin{align*}
	& P_{zy-\epsilon}=P \Big(\frac{M_n}{n} > y_p -\epsilon, \, \frac{\mathcal{U}(a)}{\sigma(a)} >z \Big), \quad P_{y-\epsilon}=P\Big( \frac{M_n}{n} > y_p -\epsilon   \Big), \notag \\
	& P_{zy+\epsilon}=P \Big(\frac{M_n}{n} > y_p +\epsilon , \, \frac{\mathcal{U}(a)}{\sigma(a)} >z\Big), \quad P_{y+\epsilon}=P\Big( \frac{M_n}{n} > y_p +\epsilon   \Big).	
\end{align*}
To prove \eqref{eq:mnyptildem2}, we will show  $|P_{z0}-P_{zy-\epsilon}|, |P_{zy-\epsilon}-P_{y-\epsilon}P_{uz}|,$ and $ | P_{y-\epsilon} P_{uz}-P_{y0}P_{uz} |$ all converge to 0 respectively.

First we show $|P_{z0}-P_{zy-\epsilon}|\to 0$. Note that   $W_n\xrightarrow{P}0$ where $W_n=(n^2\mathcal{U}^2(\infty)-M_n)/n$ by the proof of Theorem 3 in \cite{cai2011}. Then for any $ \epsilon >0$,  $P(|W_n| >\epsilon ) \rightarrow 0$. Since $P_{zy+\epsilon}-P( |W_n|>\epsilon ) \leq P_{z0} \leq P_{zy-\epsilon}+P( |W_n|>\epsilon )$, we have $|P_{z0}-P_{zy-\epsilon}| \leq  |P_{zy-\epsilon}-P_{zy+\epsilon}| + o(1)$. Furthermore,
\begin{align*}
&~|P_{zy-\epsilon}-P_{zy+\epsilon}| \notag \\
\leq &~|P_{zy-\epsilon}-P_{y-\epsilon}P_{uz}|+|P_{y-\epsilon}P_{uz}- P_{y+\epsilon}P_{uz}| +|P_{y+\epsilon}P_{uz}-P_{zy+\epsilon}| \to 0, 
\end{align*}where the last equation follows from \eqref{eq:indfirststep} and $|P_{y-\epsilon}-P_{y+\epsilon}|\to 0$ when $\epsilon \rightarrow 0$.
Second we know $|P_{zy-\epsilon}-P_{y-\epsilon}P_{uz}|\to 0$ by \eqref{eq:indfirststep}. 
Last we show $ | P_{y-\epsilon} P_{uz}-P_{y0}P_{uz} |\to 0$. In particular, as $P_{y+\epsilon}- P( |W_n|>\epsilon ) \leq P_{y0} \leq P_{y-\epsilon}+ P( |W_n|>\epsilon )$ and $P( |W_n|>\epsilon )\to 0$, we have
\begin{align*}
| P_{y-\epsilon} P_{uz}-P_{y0}P_{uz} | \leq |P_{y-\epsilon}-P_{y0}| \leq | P_{y-\epsilon}-P_{y+\epsilon} |+o(1)\to 0.
\end{align*}
In summary, Lemma \ref{lm:addprop0termtrue} is proved.

\paragraph{Proof for $m>1$}  \label{par:pfmlarger1lemma33}

Note that  $W_n=\{{n^2 \mathcal{U}^2(\infty)-M_n}\}/{n}\xrightarrow{P}0$ and $\tilde{\mathcal{U}}^*(a_r)=\mathcal{U}(a_r)-\tilde{\mathcal{U}}(a_r)\xrightarrow{P}0$ for each $r=1,\ldots,m$ as argued in Section \ref{sec:asymindptproofsec}. Therefore when $m$ is finite,  the arguments above can be applied to prove Lemma \ref{lm:addprop0termtrue} for $m>1$ similarly. 

\subsection{Lemmas for the proof of Theorem \ref{thm:computation}} \label{lm:pfthm24lm}

\subsubsection{Proof of Lemma \ref{lm:varestcong2} (on Page \pageref{lm:varestcong2}, Section \ref{sec:proofthm24var})} \label{sec:lma41}

We first prove ${ \mathbb{V}_{u,1}(a) }/{ \mathrm{E}\{\mathbb{V}_{u,1}(a)\} } \xrightarrow{P} 1,$ and it suffices to prove
$
	{ \mathrm{var}\{\mathbb{V}_{u,1}(a)\}  }/{ \mathrm{E}^2\{\mathbb{V}_{u,1}(a) \} } \to 0.
$ 
By the notation defined at the beginning of Section \ref{par:notationindpcond}, we have
\begin{align*}
	&~\mathrm{var}\{\mathbb{V}_{u,1}(a)\} \notag \\
	=&~\mathrm{E}\{\mathbb{V}_{u,1}^2(a)\}-\mathrm{E}^2\{\mathbb{V}_{u,1}(a)\} \notag \\
=&~\frac{(2a!)^2}{(P^n_a)^4} \sum_{\substack{ \mathbf{i},\,\tilde{\mathbf{i}}\in \mathcal{P}(n,a) ;\\1\leq j_1\neq j_2\leq p,\\ 1\leq j_3\neq j_4\leq p} }\Big[ \mathrm{E}\Big(\prod_{t=1}^a x_{i_t,j_1}^2x_{i_t,j_2}^2 x_{\tilde{i}_t,j_3}^2 x_{\tilde{i}_t,j_4}^2\Big)-\Big\{\mathrm{E}(x_{1,j_1}^2 x_{1,j_2}^2)\mathrm{E}(x_{1,j_3}^2 x_{1,j_4}^2)\Big\}^a \Big].
\end{align*} 
To evaluate $\mathrm{var}\{\mathbb{V}_{u,1}(a)\}$, we consider the summed term in $\mathrm{var}\{\mathbb{V}_{u,1}(a)\}$, that is, 
\begin{align}
\mathrm{E}\Big(\prod_{t=1}^a x_{i_t,j_1}^2x_{i_t,j_2}^2 x_{\tilde{i}_t,j_3}^2 x_{\tilde{i}_t,j_4}^2\Big) - 	\{\mathrm{E}(x_{1,j_1}^2 x_{1,j_2}^2)\}^a \{\mathrm{E}(x_{1,j_3}^2 x_{1,j_4}^2)\}^a.   \label{eq:varestlmv1}
\end{align} 
When $\{\mathbf{i}\}\cap\{ \tilde{\mathbf{i}}\} =\emptyset$, $\eqref{eq:varestlmv1}=0$. We then know that $ \eqref{eq:varestlmv1} \neq 0$ only when $|\{\mathbf{i}\}\cup \{ \tilde{\mathbf{i}}\}| \leq 2a-1$. Along with   Condition \ref{cond:finitemomt}, we have
 \begin{align*}
 	|\mathrm{var}\{\mathbb{V}_{u,1}(a)\}|\leq Cp^4n^{-4a} n^{2a-1}, 
 \end{align*} which induces  $\mathrm{var}\{\mathbb{V}_{u,1}(a)\} = O(p^4 n^{-2a-1}) $.  
By \eqref{eq:sum4jordervar} and \eqref{eq:varordp4top2}, we know $\mathrm{E}\{\mathbb{V}_{u,1}(a)\} =\Theta(p^2n^{-a})$. It follows that $ \mathrm{var}\{\mathbb{V}_{c,1}(a)\}/\mathrm{E}^2\{\mathbb{V}_{c,1}(a)\} \to 0$ as $n\to \infty$.


We next prove ${ \mathbb{V}_{u,2}(a) }/{ \mathrm{E}\{\mathbb{V}_{u,1}(a)\} } \xrightarrow{P} 0.$
By the Markov's inequality, it  suffices to prove $\mathrm{E}\{\mathbb{V}^2_{u,2}(a)\}=o(1)[\mathrm{E}\{\mathbb{V}_{u,1}(a)\}]^2$. As 
$\mathrm{E}\{\mathbb{V}_{u,1}(a)\} =\Theta(p^2n^{-a})$, it is sufficient to prove  $\mathrm{E}\{\mathbb{V}^2_{u,2}(a)\}=o(p^4n^{-2a})$ below. 

We first derive the form of  $\mathbb{V}_{u,2}(a)$. In particular, when $a=1$, 
\begin{eqnarray*}
	\mathbb{V}_{u,2}(1)&=&\mathbb{V}_{u}(1)-\mathbb{V}_{u,1}(1) \notag \\
	&=& \frac{1}{n^2}\sum_{1\leq j_1\neq j_2\leq p}\sum_{i \in \mathcal{P}(n,1)} \Big\{(x_{i,j_1}-\bar{x}_{j_1})^2(x_{i,j_2}-\bar{x}_{j_2})^2 - x_{i,j_1}^2x_{i,j_2}^2\Big\} \notag \\
	&=& \frac{1}{n^2}\sum_{1\leq j_1\neq j_2\leq p}\sum_{1\leq i\leq n} \sum_{\substack{s_1+r_1=1,\\s_2+r_2=1}}C_{s_1,r_1,s_2,r_2}\prod_{k=1}^2 \Big\{(-x_{i,j_k}\bar{x}_{j_k})^{s_k}(\bar{x}_{j_k}^2)^{r_k}\Big\},
	\end{eqnarray*} where $C_{s_1,r_1,s_2,r_2}$ is some constant and we use 
\begin{eqnarray*}
	&& (x_{i,j_1}-\bar{x}_{i,j_1})^2(x_{i,j_2}-\bar{x}_{i,j_2})^2 - x_{i,j_1}^2x_{i,j_2}^2  \notag \\
	&=& (x_{i,j_1}^2-2x_{i,j_1}\bar{x}_{j_1}+\bar{x}_{j_1}^2)( x_{i,j_2}^2-2x_{i,j_2}\bar{x}_{j_2}+\bar{x}_{j_2}^2)- x_{i,j_1}^2x_{i,j_2}^2\notag \\
	&=& \sum_{\substack{s_1+r_1=1,\,s_2+r_2=1}}\Big\{(-2x_{i,j_1}\bar{x}_{j_1})^{s_1}(\bar{x}_{j_1}^2)^{r_1}\Big\}\times \Big\{(-2x_{i,j_2}\bar{x}_{j_2})^{s_2}(\bar{x}_{j_2}^2)^{r_2}\Big\}.  \notag 
\end{eqnarray*}
Following this example, we similarly give the form of $\mathbb{V}_{u,2}(a)$ for general $a\geq 1$. 
Given tuple $\mathbf{i}\in \mathcal{P}(n,a)$, for $k=1,2$, let $\mathbf{i}_{(a-r_k)}^{(k)}$ represent a sub-tuple of $\mathbf{i}$ with length $a-r_k$, and define  $\mathcal{S}(\mathbf{i},a-r_k)$ to be the collection of sub-tuples of $\mathbf{i}$ with length $a-r_k$. Then for $a\geq 1$, we write $
	\mathbb{V}_{u,2}(a)=\sum_{1\leq s_1+{r}_1\leq a, 1\leq s_2+r_2\leq a} T_{s_1,r_1,s_2,r_2},
$ where 
\begin{eqnarray*}
	T_{s_1, r_1,s_2,r_2} &=& \frac{a!}{(P^n_a)^2} \sum_{\substack{1\leq j_1\neq j_2\leq p} }  \ \sum_{\substack{\mathbf{i}\in \mathcal{P}(n,a);\\ \mathbf{i}_{(a-r_k)}^{(k)}\in \mathcal{S}(\mathbf{i},a-r_k): \, k=1,2} }C_{s_1,r_1,s_2,r_2}  \notag \\
	&&\times \prod_{k=1}^2\Big\{ (-\bar{x}_{j_k})^{s_k+2r_k} \prod_{t_k=1}^{{s}_k}x_{i^{(k)}_{t_k},j_k} \prod_{t_k=s_k+1}^{a-r_k}(x_{i^{(k)}_{t_k},j_k} )^2  \Big\} \notag.
\end{eqnarray*} 


When $a$ is finite, it  suffices to prove $\mathrm{E}(T_{s_1, r_1,s_2,r_2}^2)=o(p^4n^{-2a})$. Note that
\begin{eqnarray*}
&&\mathrm{E}(T_{s_1, r_1,s_2,r_2}^2)\notag \\	
&=&\frac{(a!)^2}{(P^n_a)^4} \sum_{\substack{1\leq j_1\neq j_2\leq p   \\  1\leq \tilde{j}_1\neq \tilde{j}_2\leq p } }\ \sum_{ \substack{\mathbf{i}, \tilde{\mathbf{i}}\in \mathcal{P}(n,a);\\\mathbf{i}_{(a-r_k)}^{(k)}\in \mathcal{S}(\mathbf{i},a-r_k): \, k=1,2;\\  \tilde{\mathbf{i}}_{(a-r_k)}^{(k)}\in \mathcal{S}(\tilde{\mathbf{i}},a-r_k): \, k=1,2 }} C_{s_1,r_1,s_2,r_2}^2   \notag \\ 
&&\times \mathrm{E}\Big\{ \prod_{k=1}^2 (\bar{x}_{j_k}\bar{x}_{\tilde{j}_k})^{s_k+2r_k}  \prod_{t_k=1}^{{s}_k} (x_{i^{(k)}_{t_k},{j}_k}x_{\tilde{i}^{(k)}_{{t}_k},\tilde{j}_k}) \prod_{t_k=s_k+1}^{a-r_k}(x_{i^{(k)}_{t_k},{j}_k} x_{\tilde{i}^{(k)}_{{t}_k},\tilde{j}_k} )^2\Big\}. \notag 
\end{eqnarray*}Recall that $\bar{x}_j=\sum_{i=1}^nx_{i,j}/n$. We have 
\begin{eqnarray*}
	&&\mathrm{E}(T_{s_1, r_1,s_2,r_2}^2)\notag \\	
&=& \frac{(a!)^2}{(P^n_a)^4n^{\sum_{k=1}^2 (2s_k+4r_k)}} \sum_{\substack{1\leq j_1\neq j_2\leq p   \\  1\leq \tilde{j}_1\neq \tilde{j}_2\leq p } }\ \sum_{ \substack{\mathbf{i}, \tilde{\mathbf{i}}\in \mathcal{P}(n,a);\\\mathbf{i}_{(a-r_k)}^{(k)}\in \mathcal{S}(\mathbf{i},a-r_k): \, k=1,2;\\  \tilde{\mathbf{i}}_{(a-r_k)}^{(k)}\in \mathcal{S}(\tilde{\mathbf{i}},a-r_k): \, k=1,2 }} C_{s_1,r_1,s_2,r_2}^2 \notag \\
&& \times \sum_{\substack{\mathbf{m}^{(k)},\tilde{\mathbf{m}}^{(k)} \in \mathcal{C}(n,s_k+2r_k);\, k=1,2}} T\{ \mathbf{i}_{(a-r_k)}^{(k)}, \tilde{\mathbf{i}}_{(a-r_k)}^{(k)},\mathbf{m}^{(k)},\tilde{\mathbf{m}}^{(k)}; k=1,2\}, 
\end{eqnarray*} where $\mathcal{C}(n,s_k+2r_k)$ follows the notation at the beginning of Section \ref{par:notationindpcond} and 
\begin{eqnarray*}
	&&T\{ \mathbf{i}_{(a-r_k)}^{(k)}, \tilde{\mathbf{i}}_{(a-r_k)}^{(k)},\mathbf{m}^{(k)},\tilde{\mathbf{m}}^{(k)};k=1,2 \}\notag \\
	&=&\mathrm{E}\Big\{ \prod_{k=1}^2 \prod_{\tilde{t}_k=1}^{s_k+2r_k} ({x}_{m_{\tilde{t}_k}, j_k}{x}_{\tilde{m}_{\tilde{t}_k}, \tilde{j}_k})  \prod_{t_k=1}^{{s}_k} (x_{i^{(k)}_{t_k},j_k}x_{ \tilde{i}^{(k)}_{{t}_k},\tilde{j}_k}) \prod_{t_k=s_k+1}^{a-r_k}(x_{i^{(k)}_{t_k},j_k} x_{\tilde{i}^{(k)}_{{t}_k},\tilde{j}_k} )^2\Big\}.
\end{eqnarray*}
Since $\mathrm{E}(x_{i,j})=0$, $T\{ \mathbf{i}_{(a-r_k)}^{(k)}, \tilde{\mathbf{i}}_{(a-r_k)}^{(k)},\mathbf{m}^{(k)},\tilde{\mathbf{m}}^{(k)};k=1,2 \}\neq 0$ only when 
\begin{align*}
&~\Biggr|\bigcup_{k=1}^2\{\mathbf{m}^{(k)}\}\cup \tilde{\{\mathbf{m}}^{(k)}\}\cup \{\mathbf{i}^{(k)}_{(a-r_k)}\}\cup \tilde{\{\mathbf{i}}^{(k)}_{(a-r_k)}\} \Biggr|-\Biggr|\bigcup_{k=1}^2 \{\mathbf{i}^{(k)}_{(a-r_k)}\}\cup \tilde{\{\mathbf{i}}^{(k)}_{(a-r_k)}\}\Biggr|\notag \\
\leq &~\sum_{k=1}^2(s_k+2r_k). 
\end{align*}
Since $\mathbf{i}^{(k)}_{(a-r_k)}$ and $\tilde{\mathbf{i}}^{(k)}_{(a-r_k)}$ are sub-tuples of $\mathbf{i}$ and $\tilde{\mathbf{i}} \in \mathcal{P}(n,a)$,  $|\cup_{k=1}^2 \{\mathbf{i}^{(k)}_{(a-r_k)}\}\cup \tilde{\{\mathbf{i}}^{(k)}_{(a-r_k)}\}|\leq |\{\mathbf{i}\}\cup \{\tilde{\mathbf{i}}\}|\leq 2a$. Therefore,
\begin{eqnarray}
	&&\quad  \Biggr|\bigcup_{k=1}^2\{\mathbf{m}^{(k)}\}\cup \tilde{\{\mathbf{m}}^{(k)}\}\cup \{\mathbf{i}^{(k)}_{(a-r_k)}\}\cup \tilde{\{\mathbf{i}}^{(k)}_{(a-r_k)}\} \Biggr|\leq 2a+\sum_{k=1}^2(s_k+2r_k). \label{eq:thm24bddnsumnum}
\end{eqnarray}
By \eqref{eq:thm24bddnsumnum} and the boundedness of moments in Condition \ref{cond:higherordermomentvarest}, we have
\begin{eqnarray*}
\mathrm{E}(T_{s_1, r_1,s_2,r_2}^2)&=&  O\Big( p^4n^{-4a-\sum_{k=1}^2(2s_k+4r_k)+2a+\sum_{k=1}^2(s_k+2r_k) }  \Big)	\notag \\
&=&O(p^4n^{-2a-\sum_{k=1}^2(s_k+2r_k)})=o(p^4n^{-2a}),
\end{eqnarray*} where we use $\sum_{k=1}^2(s_k+2r_k)\geq 1.$





\subsection{Lemmas for the proof of Theorem \ref{thm:cltalternative}}
\subsubsection{Proof of Lemma \ref{lm:pfaltvarlm1} (on Page \pageref{lm:pfaltvarlm1},  Section \ref{sec:updatepoweronesampf})} \label{sec:pfaltvarlm1} 
To show $\mathrm{var}\{\mathcal{U}(a)\}\simeq \mathrm{var}(T_{U,a,1,1})$, it suffices to prove $\mathrm{var}(T_{U,a,1,1})=\Theta(p^2n^{-a})$, $\mathrm{var}(T_{U,a,1,2})=o(p^2n^{-a})$ and  $\mathrm{var}(T_{U,a,2})=o(p^2n^{-a}).$  The following three sections \ref{sec:pfaltvarlmvartuaa1}--   \ref{sec:pfaltvarlm4} prove the three results respectively.

\paragraph{$\mathrm{var}(T_{U,a,1,1})=\Theta(p^2n^{-a})$} \label{sec:pfaltvarlmvartuaa1}
As $\mathrm{E}(T_{U,a,1,1})=0$, $\mathrm{var}(T_{U,a,1,1})=\mathrm{E}(T_{U,a,1,1}^2)$, and we have
\begin{eqnarray*}
\mathrm{var}(T_{U,a,1,1})=	\sum_{(j_1,j_2),(j_3,j_4)\in J_A^c}(P^n_a)^{-2}\sum_{\mathbf{i},\tilde{\mathbf{i}}\in \mathcal{P}(n,a)}\mathrm{E}\Big( \prod_{k=1}^ax_{i_k,j_1}x_{i_k,j_2}x_{\tilde{i}_k,j_3}x_{\tilde{i}_k,j_4} \Big). 
\end{eqnarray*}
Similarly to Section \ref{sec:proofvarianceorder}, $\mathrm{E}( \prod_{k=1}^ax_{i_k,j_1}x_{i_k,j_2}x_{\tilde{i}_k,j_3}x_{\tilde{i}_k,j_4} )\neq 0$ only when $\{\mathbf{i}\}=\{\tilde{\mathbf{i}}\}$. Therefore,
\begin{eqnarray*}
	\mathrm{var}(T_{U,a,1,1})=	\sum_{(j_1,j_2),(j_3,j_4)\in J_A^c}(P^n_a)^{-1}a! \times \Big\{ \mathrm{E}\Big(\prod_{t=1}^4x_{1,j_t} \Big) \Big\}^a.
\end{eqnarray*} By Condition \ref{cond:altonecovellip}, as $(j_1,j_2),(j_3,j_4)\in J_A^c$,
\begin{align}
	\mathrm{E}(x_{1,j_1}x_{1,j_2}x_{1,j_3}x_{1,j_4})=\kappa_1( \sigma_{j_1,j_3}\sigma_{j_2,j_4}+\sigma_{j_1,j_4}\sigma_{j_2,j_3}).  \label{eq:4thmomentfirstorderalt}
\end{align}
We next evaluate  \eqref{eq:4thmomentfirstorderalt} by discussing three cases on $(j_1,j_2,j_3,j_4)$. 
First, if $|\{j_1,j_2\}\cap\{j_3,j_4\}|=2$, $\eqref{eq:4thmomentfirstorderalt}=\kappa_1 \sigma_{j_1,j_1}\sigma_{j_2,j_2}=\Theta(1)$ by Condition \ref{cond:finitemomt}.  
\begin{eqnarray*}
	\sum_{ \substack{(j_1,j_2),\\(j_3,j_4)\in J_A^c}} \Big\{ \mathrm{E}\Big(\prod_{t=1}^4x_{1,j_t} \Big) \Big\}^a \times \mathbf{1}_{ \{ |\{j_1,j_2\}\cap\{j_3,j_4\}|=2 \} } = 2 \sum_{(j_1,j_2)\in J_A^c} (\kappa_1\sigma_{j_1,j_1}\sigma_{j_2,j_2})^a.  
\end{eqnarray*}
Second, if $|\{j_1,j_2\}\cap\{j_3,j_4\}|=1$, we assume without loss of generality $j_1=j_3$ and $j_2\neq j_4$, $\eqref{eq:4thmomentfirstorderalt}=\kappa_1\sigma_{j_1,j_1}\sigma_{j_2,j_4},$ which is nonzero only when $(j_2,j_4)\in J_A,$ and  then $\eqref{eq:4thmomentfirstorderalt}=O(\rho^a)$. By the symmetricity of the indexes, 
\begin{eqnarray*}
	&& \sum_{(j_1,j_2),(j_3,j_4)\in J_A^c} \Big\{ \mathrm{E}\Big(\prod_{t=1}^4x_{1,j_t} \Big) \Big\}^a \times \mathbf{1}_{ \{ |\{j_1,j_2\}\cap\{j_3,j_4\}|=1 \} } \notag \\
	&\leq &C \sum_{\substack{1\leq j\leq p;\, (j_2,j_4)\in J_A}} \rho^a= O(1) p|J_A|\rho^a.   
\end{eqnarray*}
Third, if $|\{j_1,j_2\}\cap\{j_3,j_4\}|=0$, we know $j_1\neq j_2 \neq j_3 \neq j_4$,  and  $\eqref{eq:4thmomentfirstorderalt}\neq 0$ only if $(j_1,j_3), (j_2,j_4)\in J_A$ or $(j_1,j_4), (j_2,j_3)\in J_A.$ Then $\eqref{eq:4thmomentfirstorderalt}=O(\rho^{2a})$. By the symmetricity of the indexes, 
\begin{eqnarray*}
		&& \sum_{(j_1,j_2),(j_3,j_4)\in J_A^c} \Big\{ \mathrm{E}\Big(\prod_{t=1}^4x_{1,j_t} \Big) \Big\}^a \times \mathbf{1}_{ \{ |\{j_1,j_2\}\cap\{j_3,j_4\}|=0 \} } \notag \\
	&\leq & C\sum_{(j_1,j_3), (j_2,j_4)\in J_A} \rho^{2a} = O(1)|J_A|^2 \rho^{2a}.
\end{eqnarray*}

In summary, we know
\begin{eqnarray*}
\mathrm{var}(T_{U,a,1,1}) &=&2a!\kappa_1^a(P^n_a)^{-1} \sum_{(j_1,j_2)\in J_A^c} \sigma_{j_1,j_1}^a\sigma_{j_2,j_2}^a \notag \\
&&+\, O(1)p|J_A|\rho^a n^{-a} + O(1) |J_A|^2\rho^{2a} n^{-a}.
\end{eqnarray*} Since we assume $|J_A|\rho^a=O(pn^{-a/2})$, $|J_A|=o(p^2)$ and $|J_A^c|=\Theta(p^2),$  
\begin{align*}
	\mathrm{var}(T_{U,a,1,1})\simeq  2a!\kappa_1^a(P^n_a)^{-1}\sum_{1\leq j_1\neq j_2\leq p}  (\sigma_{j_1,j_1}\sigma_{j_2,j_2})^a,
\end{align*} which is of order $\Theta(p^2n^{-a}).$



\paragraph{$\mathrm{var}(T_{U,a,1,2})=o(p^2n^{-a})$} \label{sec:pfaltvarlm3}

 In this section, we prove $\mathrm{var}(T_{U,a,1,2})=o(p^2n^{-a})$. 
As $T_{U,a,1,2}=\sum_{(j_1,j_2)\in J_A^c}\sum_{c=1}^aK(c,j_1,j_2)$, by the Cauchy-Schwarz inequality, 
\begin{eqnarray*}
	\mathrm{var}(T_{U,a,1,2})\leq C\times \sum_{c=1}^a \mathrm{var}\Big\{\sum_{(j_1,j_2)\in J_A^c}K(c,j_1,j_2) \Big\},
\end{eqnarray*}
where $C$ is some constant. As $a$ is finite, to prove $\mathrm{var}(T_{U,a,1,2})=o(p^2n^{-a})$, it suffices to prove $\mathrm{var}\{\sum_{(j_1,j_2)\in J_A^c}K(c,j_1,j_2)\}=o(p^2n^{-a})$, for each $1\leq c\leq a$. Note that $\mathrm{E}\{K(c,j_1,j_2)\}=0$ and then 
\begin{align*}
\mathrm{var}\Big\{\sum_{(j_1,j_2)\in J_A^c}K(c,j_1,j_2)\Biggr\} 
=& ~\mathrm{E}\Big[\Big\{\sum_{(j_1,j_2)\in J_A^c}K(c,j_1,j_2)\Big\}^2\Big] \notag \\
=& ~ F^2(a,c)\sum_{\substack{  \mathbf{i},\tilde{\mathbf{i}}\in \mathcal{P}(n,a+c); \\ (j_1,j_2),(j_3,j_4)\in J_A^c }}Q_c(\mathbf{i}, j_1,j_2, \tilde{\mathbf{i}},j_3,j_4),
\end{align*} where we define 
\begin{align*}
Q_c(\mathbf{i}, j_1,j_2, \tilde{\mathbf{i}},j_3,j_4)=&~ \mathrm{E} \Big[\prod_{t=1}^{a-c} x_{i_t,j_1} x_{i_t,j_2}\prod_{t=a-c+1}^{a} x_{i_{t},j_1}  \prod_{t=a+1}^{a+c}x_{i_{t},j_2}   \notag \\
&~\quad \ \times  \prod_{\tilde{t}=1}^{a-c}  x_{\tilde{i}_{\tilde{t}},j_3} x_{\tilde{i}_{\tilde{t}},j_4} \prod_{\tilde{t}=a-c+1}^a x_{\tilde{i}_{\tilde{t}},j_3} \prod_{\tilde{t}=a+1}^{a+c_2}x_{\tilde{i}_{\tilde{t}},j_4}  \Big].\notag 
\end{align*}
As  $F^2(a,c)=O(n^{-2(a+c)})$, to finish the proof, it remains to prove 
\begin{align}
	\sum_{\substack{  \mathbf{i},\tilde{\mathbf{i}}\in \mathcal{P}(n,a+c);\\  (j_1,j_2),(j_3,j_4)\in J_A^c }}Q_c(\mathbf{i}, j_1,j_2, \tilde{\mathbf{i}},j_3,j_4)=o(n^{2(a+c)-a}p^2).\label{eq:sumqgoal}
\end{align}

We note that $\mathrm{E}(x_{1,j})=0$ and $\mathrm{E}(x_{1,j_1}x_{1,j_2})=\mathrm{E}(x_{1,j_3}x_{1,j_4})=0$ for $(j_1,j_2),(j_3,j_4)\in J_A^c$.  Similarly to Section \ref{sec:pfaltvarlmvartuaa1},    $Q_c(\mathbf{i}, j_1,j_2, \tilde{\mathbf{i}},j_3,j_4)=0$ if  $\{\mathbf{i}\}\neq \{\mathbf{\tilde{i}}\}$, and 
\begin{eqnarray}
	&&\ \sum_{ \mathbf{i},\tilde{\mathbf{i}}\in \mathcal{P}(n,a+c)} \mathbf{1}_{\{Q_c(\mathbf{i}, j_1,j_2, \tilde{\mathbf{i}},j_3,j_4)\neq 0\} }=\sum_{ \mathbf{i},\tilde{\mathbf{i}}\in \mathcal{P}(n,a+c)}  \mathbf{1}_{\{\{\mathbf{i}\}=\{\mathbf{\tilde{i}}\} \}}=O (n^{a+c}). \label{eq:nordervaralt}
\end{eqnarray}
To prove \eqref{eq:sumqgoal}, it remains to prove for given $\mathbf{i}, \tilde{\mathbf{i}}\in \mathcal{P}(n,a+c)$,
\begin{eqnarray}
	&&\Big|\sum_{(j_1,j_2),(j_3,j_4)\in J_A^c} Q_c(\mathbf{i}, j_1,j_2, \tilde{\mathbf{i}},j_3,j_4)\Big|=O(p^2).\label{eq:pordervaralt}
\end{eqnarray}

We next prove \eqref{eq:pordervaralt} by discussing the value of $Q_c(\mathbf{i}, j_1,j_2, \tilde{\mathbf{i}},j_3,j_4)$. 
To facilitate the discussion,  for given $\mathbf{i},\tilde{\mathbf{i}}\in \mathcal{P}(n,a+c)$, we decompose the sets $\{\mathbf{i}\}$ and $\{\tilde{\mathbf{i}}\}$ into three disjoint sets  respectively, defined as
\begin{align*}
\{\mathbf{i}\}_{(1)}=\{i_1,\ldots,i_{a-c}\}, \ 	\{\mathbf{i}\}_{(2)}=\{i_{a-c+1},\ldots, i_a\},\ \{\mathbf{i}\}_{(3)}=\{i_{a+1},\ldots, i_{a+c}\}, \notag \\
\tilde{\{\mathbf{i}\}}_{(1)}=\{\tilde{i}_1,\ldots,\tilde{i}_{a-c}\}, \ 	\tilde{\{\mathbf{i}\}}_{(2)}=\{\tilde{i}_{a-c+1},\ldots, \tilde{i}_a\},\ \tilde{\{\mathbf{i}\}}_{(3)}=\{\tilde{i}_{a+1},\ldots, \tilde{i}_{a+c}\},
\end{align*} which satisfy that $\{\mathbf{i}\}=\cup_{l=1}^3 \{\mathbf{i}\}_{(l)}$ and $\tilde{\{\mathbf{i}\}}=\cup_{l=1}^3 \{\tilde{\mathbf{i}}\}_{(l)}$. 


When $c\leq a-1$, $\{\mathbf{i}\}_{(1)}\neq \emptyset$. We consider  an index $i\in \{\mathbf{i}\}_{(1)}$,  and  discuss four different cases.
First, if $i \not \in \tilde{\{\mathbf{i}\}}$,
\begin{eqnarray*}
	&& Q_c(\mathbf{i}, j_1,j_2, \tilde{\mathbf{i}},j_3,j_4)=\mathrm{E}(x_{i,j_1}x_{i,j_2})\mathrm{E}(\text{other terms})=0,
\end{eqnarray*} 
where the last equation follows from $\mathrm{E}(x_{i,j_1}x_{i,j_2})=0$ when $(j_1,j_2)\in J_A$.  Second, if   $i\in \tilde{\{\mathbf{i}\}}_{(2)}$,  
\begin{eqnarray*}
	&& Q_c(\mathbf{i}, j_1,j_2, \tilde{\mathbf{i}},j_3,j_4)=\mathrm{E}(x_{i,j_1}x_{i,j_2}x_{i,j_3}) \mathrm{E}(\text{other terms})=0 
\end{eqnarray*}	where the last equation is obtained by  Condition \ref{cond:altonecovellip}. Third, if $i\in \tilde{\{\mathbf{i}\}}_{(3)}$, similarly by  Condition \ref{cond:altonecovellip}, we also know
\begin{eqnarray}\label{eq:altcov12threeintertwo}
	&& Q_c(\mathbf{i}, j_1,j_2, \tilde{\mathbf{i}},j_3,j_4)=\mathrm{E}(x_{i,j_1}x_{i,j_2}x_{i,j_4}) \mathrm{E}(\text{other terms})=0.
\end{eqnarray} Fourth, if $i\in \tilde{\{\mathbf{i}\}}_{(1)}$,
\begin{eqnarray}
	&&Q_c(\mathbf{i}, j_1,j_2, \tilde{\mathbf{i}},j_3,j_4)=\mathrm{E}(x_{i,j_1}x_{i,j_2}x_{i,j_3}x_{i,j_4}) \mathrm{E}(\text{other terms}).\label{eq:altcov12fourinter} 
\end{eqnarray} Under Condition \ref{cond:altonecovellip}, as $\mathrm{E}(x_{i,j_1}x_{i,j_2})=\mathrm{E}(x_{i,j_3}x_{i,j_4})=0$ when $(j_1,j_2)$ and $(j_3,j_4) \in J_A^c$,
\begin{align*}
 \mathrm{E}\Big(\prod_{t=1}^4x_{1,j_t}\Big)=\kappa_1\Big\{\mathrm{E}(x_{i,j_1} x_{i,j_3})\mathrm{E}(x_{i,j_2} x_{i,j_4}) +\mathrm{E}(x_{i,j_1} x_{i,j_4})\mathrm{E}(x_{i,j_2} x_{i,j_3})\Big\}. 
\end{align*}
In addition, when  $c=a$, $\{\mathbf{i}\}_{(1)}=\emptyset$ but $\{\mathbf{i}\}_{(1)}$ and $\{\mathbf{i}\}_{(3)}\neq \emptyset$.  We next consider an index $i\in \{\mathbf{i}\}_{(1)}$ without loss of generality. Following similar analysis, we know $Q_c(\mathbf{i}, j_1,j_2, \tilde{\mathbf{i}},j_3,j_4)=0$ when $i \not \in \{\tilde{\mathbf{i}}\}$. 

By symmetrically analyzing the indexes in $\mathbf{i}$ and $\tilde{\mathbf{i}}$ similarly as above, we know that $Q_c(\mathbf{i}, j_1,j_2, \tilde{\mathbf{i}},j_3,j_4)\neq 0$ only when $\{\mathbf{i}\}_{(1)}=\{\tilde{\mathbf{i}}\}_{(1)}$ and $\{\mathbf{i}\}_{(2)}\cup \{\mathbf{i}\}_{(3)}=\{\tilde{\mathbf{i}}\}_{(2)}\cup \{\tilde{\mathbf{i}}\}_{(3)}$. 
When $Q_c(\mathbf{i}, j_1,j_2, \tilde{\mathbf{i}},j_3,j_4)\neq 0$, suppose $r=|\{\mathbf{i}\}_{(2)}\cap \{\tilde{\mathbf{i}}\}_{(2)} |$ then $|\{\mathbf{i}\}_{(2)}\cap \{\tilde{\mathbf{i}}\}_{(3)}|=c-r$, $|\{\mathbf{i}\}_{(3)}\cap \{\tilde{\mathbf{i}}\}_{(2)}|=c-r$, and $|\{\mathbf{i}\}_{(3)}\cap \{\tilde{\mathbf{i}}\}_{(3)}|=r$. 
It follows that
\begin{eqnarray}
	&&  Q_c(\mathbf{i}, j_1,j_2, \tilde{\mathbf{i}},j_3,j_4)\label{eq:qvalueijdef} \\
	&=&\Big\{\mathrm{E}\Big(\prod_{t=1}^4x_{1,j_t}\Big) \Big\}^{a-c} \{\mathrm{E}(x_{1,j_1}x_{1,j_3})\mathrm{E}(x_{1,j_2}x_{1,j_4}) \}^{r} \notag \\
	&& \times \{\mathrm{E}(x_{1,j_1}x_{1,j_4})\mathrm{E}(x_{1,j_2}x_{1,j_3}) \}^{c-r} \notag \\
	&=& \Big\{\mathrm{E}\Big(\prod_{t=1}^4x_{1,j_t}\Big) \Big\}^{a-c} (\sigma_{j_1,j_3}\sigma_{j_2,j_4})^r(\sigma_{j_1,j_4}\sigma_{j_2,j_3})^{c-r}. \notag
\end{eqnarray}
To prove \eqref{eq:pordervaralt}, 
we next examine the value of  \eqref{eq:qvalueijdef} with respect to three different cases of $(j_1,j_2,j_3,j_4).$


\medskip

\noindent \textbf{Case (1)}\ If $|\{j_1,j_2\}|\cap |\{j_3,j_4\}|=2,$ it means that $\{j_1,j_2\}=\{j_3,j_4\}$.  
Assume, without loss of generality, that $j_1=j_3$ and $j_2=j_4$. Then
$
\eqref{eq:qvalueijdef}=O(1)(\sigma_{j_1,j_1}\sigma_{j_2,j_2})^{a-c+r}(\sigma_{j_1,j_2}^2)^{c-r},  	
$ which is nonzero only when $r=c$ as $\sigma_{j_1,j_2}=0$. 
By the symmetricity of $j$ indexes and the boundedness of moments in Condition \ref{cond:finitemomt}, 
\begin{eqnarray*}
	&& \Biggr|\sum_{\substack{   (j_1,j_2),(j_3,j_4)\in J_A^c }} Q_c(\mathbf{i}, j_1,j_2, \tilde{\mathbf{i}},j_3,j_4) \times \mathbf{1}_{\{ |\{j_1,j_2\}|\cap |\{j_3,j_4\}|=2\}}\Biggr| \leq Cp^2.
\end{eqnarray*}

\smallskip
\noindent \textbf{Case (2)} If $|\{j_1,j_2\}|\cap |\{j_3,j_4\}|=1,$ we assume without loss of generality that $j_1=j_3$ but  $j_2\neq j_4.$ Then
$
	\eqref{eq:qvalueijdef}=O(1) (\sigma_{j_1,j_1}\sigma_{j_2,j_4})^{a-c+r} (\sigma_{j_1,j_4}\sigma_{j_1,j_2})^{c-r},
$ which is also nonzero only when $r=c$. By the symmetricity of $j$ indexes and  Condition \ref{cond:finitemomt}, we have
\begin{eqnarray*}
	&& \Biggr|\sum_{\substack{   (j_1,j_2),(j_3,j_4)\in J_A^c }} Q_c(\mathbf{i}, j_1,j_2, \tilde{\mathbf{i}},j_3,j_4) \times \mathbf{1}_{\{ |\{j_1,j_2\}|\cap |\{j_3,j_4\}|=1\}} \Biggr|\notag \\
	&\leq & C\Big|\sum_{\substack{   (j_1,j_2),(j_3,j_4)\in J_A^c }}(\sigma_{j_1,j_1}\sigma_{j_2,j_4})^{a}  \Big|\leq \sum_{\substack{1\leq j\leq p,\, (j_2,j_4)\in J_A}} O(\rho^a)=O(p|J_A|\rho^a),
\end{eqnarray*} where we use Condition \ref{cond:rhoijconst} that $\sigma_{j_2,j_4}=\rho$ when $(j_2,j_4)\in J_A$ and $\sigma_{j_2,j_4}= 0$ when $(j_2,j_4)\not \in J_A$. 

\smallskip


\smallskip

\noindent \textbf{Case (3)} If $|\{j_1,j_2\}|\cap |\{j_3,j_4\}|=0,$ it means that $j_1\neq j_2\neq j_3 \neq j_4$. Then
\begin{align*}
\eqref{eq:qvalueijdef}=O(1)(\sigma_{j_1,j_3}\sigma_{j_2,j_4}+ \sigma_{j_1,j_4}\sigma_{j_2,j_3})^{a-c}	(\sigma_{j_1,j_3}\sigma_{j_2,j_4})^r(\sigma_{j_1,j_4}\sigma_{j_2,j_3})^{c-r}, 
\end{align*} which nonzero only when $(j_1,j_3),(j_2,j_4)\in J_A^c$ or $(j_1,j_4),(j_2,j_3)\in J_A^c$. By the symmetricity of $j$ indexes,  Condition \ref{cond:finitemomt} and Condition \ref{cond:rhoijconst},    
\begin{eqnarray*}
	&& \Biggr|\sum_{\substack{   (j_1,j_2),(j_3,j_4)\in J_A^c }} Q_c(\mathbf{i}, j_1,j_2, \tilde{\mathbf{i}},j_3,j_4) \times \mathbf{1}_{\{ |\{j_1,j_2\}|\cap |\{j_3,j_4\}|=0\}} \Biggr|\notag \\
	&\leq & C\sum_{(j_1,j_3),(j_2,j_4)\in J_A^c} \rho^{2a}=O(|J_A|^2\rho^{2a}). 
\end{eqnarray*}
In summary,
\begin{align*}
	\Big|\sum_{\substack{   (j_1,j_2),(j_3,j_4)\in J_A^c }} Q_c(\mathbf{i}, j_1,j_2, \tilde{\mathbf{i}},j_3,j_4) \Big|=O(p^2+p|J_A|\rho^a+|J_A|^2\rho^{2a})=o(p^2), 
\end{align*} as we assume $|J_A|\rho^a=O(pn^{-a/2}).$

\smallskip

\paragraph{$\mathrm{var}(T_{U,a,2})=o(p^2n^{-a})$}\label{sec:pfaltvarlm4}

Similarly to Section \ref{sec:pfaltvarlm3}, by the Cauchy-Schwarz inequality, 
\begin{eqnarray}
	\mathrm{var}(T_{U,a,2})\leq C\sum_{c=0}^a \mathrm{var}(T_{U,a,2,c}),  \label{eq:cauchyboundtua2}
\end{eqnarray} where $T_{U,a,2,c}=\sum_{(j_1,j_2)\in J_A}K(c,j_1,j_2).$ 
To prove $\mathrm{var}(T_{U,a,2})=o(p^2n^{-a})$, it suffices to prove 
 $\mathrm{var}(T_{U,a,2,c})=o(p^2n^{-a})$ for  $0\leq c\leq a$.  
Following the notation in Section \ref{sec:pfaltvarlm3},   we have
\begin{eqnarray*}
\mathrm{E}(T_{U,a,2,c}^2)=F^2(a,c)\sum_{\substack{  \mathbf{i},\tilde{\mathbf{i}}\in \mathcal{P}(n,a+c); \\ (j_1,j_2),(j_3,j_4)\in J_A }}Q_c(\mathbf{i}, j_1,j_2, \tilde{\mathbf{i}},j_3,j_4). 	
\end{eqnarray*} When $1\leq c\leq a$, $\mathrm{E}(T_{U,a,2,c})= 0$;  when $c=0$, $\mathrm{E}(T_{U,a,2,0})= \sum_{(j_1,j_2)\in J_A}\sigma_{j_1,j_2}^a$. Then 
\begin{eqnarray}
&& \mathrm{var}(T_{U,a,2,c})=	F^2(a,c)\sum_{\substack{  \mathbf{i},\tilde{\mathbf{i}}\in \mathcal{P}(n,a+c); \\ (j_1,j_2),(j_3,j_4)\in J_A }}\tilde{Q}_c(\mathbf{i}, j_1,j_2, \tilde{\mathbf{i}},j_3,j_4), \label{eq:vartua2ceach}
\end{eqnarray} where we define $\tilde{Q}_c(\mathbf{i}, j_1,j_2, \tilde{\mathbf{i}},j_3,j_4)=Q_c(\mathbf{i}, j_1,j_2, \tilde{\mathbf{i}},j_3,j_4)$ when $1\leq c\leq a$; and $\tilde{Q}_c(\mathbf{i}, j_1,j_2, \tilde{\mathbf{i}},j_3,j_4)=Q_c(\mathbf{i}, j_1,j_2, \tilde{\mathbf{i}},j_3,j_4)-(\sigma_{j_1,j_2}\sigma_{j_3,j_4})^{a}$ when $c=0$. 

To prove $\mathrm{var}(T_{U,a,2,c})=o(p^2n^{-a})$ for  $1\leq c\leq a$, we next examine the value of $Q_c(\mathbf{i}, j_1,j_2, \tilde{\mathbf{i}},j_3,j_4)$. For given $\mathbf{i},\tilde{\mathbf{i}}\in \mathcal{P}(n,a+c)$, we define  $\{\mathbf{i}\}_{(l)}$ and $\{\tilde{\mathbf{i}}\}_{(l)}$ for $l=1,2,3$ same as in Section \ref{sec:pfaltvarlm3}.   
Consider an index $i\in \{\mathbf{i}\}_{(2)}$. If $i\not \in \{\tilde{\mathbf{i}}\}$, $$Q_c(\mathbf{i}, j_1,j_2, \tilde{\mathbf{i}},j_3,j_4)=\mathrm{E}(x_{i,j_1})\mathrm{E}(\text{other terms})=0.$$ If $i\in \{\tilde{\mathbf{i}}\}_{(1)}$, by Condition \ref{cond:altonecovellip}, $$Q_c(\mathbf{i}, j_1,j_2, \tilde{\mathbf{i}},j_3,j_4)=\mathrm{E}(x_{i,j_1}x_{i,j_3}x_{i,j_4})\mathrm{E}(\text{other terms})=0.$$   Similarly, for an index $i\in \{\mathbf{i}\}_{(3)}$, we have $Q_c(\mathbf{i}, j_1,j_2, \tilde{\mathbf{i}},j_3,j_4)=0$ if $i\not \in  \{\tilde{\mathbf{i}}\}$ or $i\in \{\tilde{\mathbf{i}}\}_{(1)}$. Analyzing the indexes in $\{\tilde{\mathbf{i}}\}$ symmetrically, we know that $Q_c(\mathbf{i}, j_1,j_2, \tilde{\mathbf{i}},j_3,j_4)\neq 0$ only when $\{\mathbf{i}\}_{(2)}\cup\{\mathbf{i}\}_{(3)}=\{\tilde{\mathbf{i}}\}_{(2)}\cup \{\tilde{\mathbf{i}}\}_{(3)}$. Suppose $|\{\mathbf{i}\}_{(2)}\cap  \{\tilde{\mathbf{i}}\}_{(2)}|=r$, then $|\{\mathbf{i}\}_{(2)}\cap \{\tilde{\mathbf{i}}\}_{(3)}|=c-r$, $|\{\mathbf{i}\}_{(3)}\cap \{\tilde{\mathbf{i}}\}_{(2)}|=c-r$, and $|\{\mathbf{i}\}_{(3)}\cap \{\tilde{\mathbf{i}}\}_{(3)}|=r$. Moreover, we let $|\{\mathbf{i}\}_{(1)}\cap \{\tilde{\mathbf{i}}\}_{(1)}|=t_c$ then $0\leq t_c\leq a-c$. It follows that 
\begin{eqnarray}
&&Q_c(\mathbf{i}, j_1,j_2, \tilde{\mathbf{i}},j_3,j_4) \label{eq:qdefijpart2} \label{eq:vart12qdef} \\
&=&\Big\{\mathrm{E}\Big( \prod_{t=1}^4x_{i,j_t}\Big)\Big\}^{t_c}\Big\{ \mathrm{E}(x_{i,j_1}x_{i,j_2})\mathrm{E}(x_{i,j_3}x_{i,j_4})\Big\}^{a-c-t_c} \notag \\
&& \times \{\mathrm{E}(x_{i,j_1}x_{i,j_3}) \mathrm{E}(x_{i,j_2}x_{i,j_4})\}^{r} \{\mathrm{E}(x_{i,j_1}x_{i,j_4}) \mathrm{E}(x_{i,j_2}x_{i,j_3})\}^{c-r}. \notag  
\end{eqnarray}
To examine \eqref{eq:sumqgoal}, we next analyze \eqref{eq:qdefijpart2}  with respect to different $c$ and $t_c$ values, where $0\leq c\leq a$, $0\leq r\leq c$ and $0\leq t_c\leq a-c$. 

When $c=0$ and $t_c=t_0=0$, it means that $\{\mathbf{i}\}=\{\mathbf{i}\}_{(1)}$, $ \{\tilde{\mathbf{i}}\}=\{\tilde{\mathbf{i}}\}_{(1)}$, $\{\mathbf{i}\}\cap \{\tilde{\mathbf{i}}\}=\emptyset$ and $Q_0(\mathbf{i}, j_1,j_2, \tilde{\mathbf{i}},j_3,j_4)=(\sigma_{j_1,j_2}\sigma_{j_3,j_4})^a$. Then 
\begin{align}
	&~\sum_{\substack{  \mathbf{i},\tilde{\mathbf{i}}\in \mathcal{P}(n,a);\\  (j_1,j_2),(j_3,j_4)\in J_A }}\tilde{Q}_0(\mathbf{i}, j_1,j_2, \tilde{\mathbf{i}},j_3,j_4) \times \mathbf{1}_{\{t_0=0\}} \label{eq:exptaasq100}\\
	=&~\sum_{\substack{  \mathbf{i},\tilde{\mathbf{i}}\in \mathcal{P}(n,a);\\  (j_1,j_2),(j_3,j_4)\in J_A }}\Big\{{Q}_0(\mathbf{i}, j_1,j_2, \tilde{\mathbf{i}},j_3,j_4)-(\sigma_{j_1,j_2}\sigma_{j_3,j_4})^a\Big\}  \mathbf{1}_{\{t_0=0\}} =0.\notag 
\end{align}

In the following, it remains to consider the cases when $c\geq 1$ or $t_c \geq 1$ in \eqref{eq:qdefijpart2}, which are examined by discussing three cases $(j_1,j_2,j_3,j_4)$ below.

\smallskip
\noindent \textbf{Case (1)}\  If $|\{j_1,j_2\}\cap \{j_3,j_4\}|=2,$ we assume without loss of generality that $j_1=j_3$ and $j_2=j_4$. Then by Condition \ref{cond:altonecovellip}, $\mathrm{E}(x_{1,j_1}x_{1,j_2}x_{1,j_3}x_{1,j_4} )=\kappa_1 (2\sigma_{j_1,j_2}^2+ \sigma_{j_1,j_1}\sigma_{j_2,j_2}),$
and
\begin{align*}
\eqref{eq:qdefijpart2}= \{\kappa_1( 2\sigma_{j_1,j_2}^2 + \sigma_{j_1,j_1}\sigma_{j_2,j_2})\}^{t_c}\sigma_{j_1,j_2}^{2(a-c- t_c)}  (\sigma_{j_1,j_1}\sigma_{j_2,j_2})^r  (\sigma_{j_1,j_2})^{2(c-r)}.	
\end{align*}
\smallskip
\textbf{Case (1.1)}\  For $c=0$ and $1\leq t_c =t_0\leq a$, we have
$
	|\{\mathbf{i}\}\cup \{\tilde{\mathbf{i}}\}|\leq 2a-t_0,
$ and 
\begin{eqnarray} \label{eq:diff2varfirstpart}
&&\quad \Biggr|\sum_{\substack{  \mathbf{i},\tilde{\mathbf{i}}\in \mathcal{P}(n,a);\\  (j_1,j_2),(j_3,j_4)\in J_A }}\tilde{Q}_0(\mathbf{i}, j_1,j_2, \tilde{\mathbf{i}},j_3,j_4)  \mathbf{1}_{\{c=0,1\leq t_0\leq a,|\{j_1,j_2\}\cap \{j_3,j_4\}|=2\}}\Biggr| \\
	&\leq &C\sum_{t_0=1}^a n^{2a-t_0}\sum_{(j_1,j_2)\in J_A} |\sigma_{j_1,j_2}|^{2(a- t_0)}| 2\sigma_{j_1,j_2}^2 + \sigma_{j_1,j_1}\sigma_{j_2,j_2}|^{t_0} +|\sigma_{j_1,j_2}|^{2a}  \notag \\
	&= & \sum_{t_0=1}^a  O(1)n^{2a-t_0} |J_A|\times (\rho^{2a}+\rho^{2(a-t_0)}), \notag
\end{eqnarray}where we use Condition \ref{cond:rhoijconst}.  

\smallskip
\textbf{Case (1.2)}\  For $1\leq c \leq a$ and $0\leq t_c \leq a-c$, we have $
	|\{\mathbf{i}\}\cup \{\tilde{\mathbf{i}}\}|\leq 2a-t_c,
$ and for each $c$ given,
\begin{eqnarray}\label{eq:diff2varsecondpart}
	&&\quad \Biggr|\sum_{\substack{  \mathbf{i},\tilde{\mathbf{i}}\in \mathcal{P}(n,a);\\  (j_1,j_2),(j_3,j_4)\in J_A }}\tilde{Q}_c(\mathbf{i}, j_1,j_2, \tilde{\mathbf{i}},j_3,j_4)  \mathbf{1}_{\{1\leq t_c\leq a-c,|\{j_1,j_2\}\cap \{j_3,j_4\}|=2\}}\Biggr| \\
	&\leq & C\sum_{ \substack{  0\leq r\leq c;\\ 0 \leq t_c \leq a-c}} n^{2a-t_c} \sum_{(j_1,j_2)\in J_A}  |\sigma_{j_1,j_2}|^{2(a-c- t_c)}\notag \\
	&&\quad \ \times | 2\sigma_{j_1,j_2}^2 + \sigma_{j_1,j_1}\sigma_{j_2,j_2}|^{t_c}  |\sigma_{j_1,j_1}\sigma_{j_2,j_2}|^r |\sigma_{j_1,j_2}|^{2(c-r)}  \notag \\
	&=&\sum_{\substack{0\leq r\leq c;\\ 0 \leq t_c \leq a-c}}   O(1) n^{2a-t_c}|J_A|\{\rho^{2(a-r)}+ \rho^{2(a-t_c-r)} \}. \notag
\end{eqnarray}

\smallskip
\noindent \textbf{Case (2)}\ If $|\{j_1,j_2\}\cap \{j_3,j_4\}|=1,$ we assume without loss of generality that $j_1=j_3$ and $j_2\neq j_4$. Then by Condition \ref{cond:altonecovellip}, 
$
	\mathrm{E}(x_{1,j_1}x_{1,j_2}x_{1,j_3}x_{1,j_4} ) \notag 
	= \kappa_1(2\sigma_{j_1,j_2}\sigma_{j_1,j_4} + \sigma_{j_1,j_1}\sigma_{j_2,j_4} ).$
We then know
\begin{eqnarray*}
	\eqref{eq:vart12qdef}&=& \{\kappa_1(2\sigma_{j_1,j_2}\sigma_{j_1,j_4} + \sigma_{j_1,j_1}\sigma_{j_2,j_4} )\}^{t_c} (\sigma_{j_1,j_2}\sigma_{j_1,j_4})^{a-c-t_c} \notag \\
	&& \times (\sigma_{j_1,j_1}\sigma_{j_2,j_4})^r (\sigma_{j_1,j_4}\sigma_{j_1,j_2} )^{c-r}.
\end{eqnarray*} 

\textbf{Case (2.1)}\, For $c=0$ and  $1\leq t_c=t_0 \leq a$, we have $
	|\{\mathbf{i}\}\cup \{\tilde{\mathbf{i}}\}|\leq 2a-t_0,
$ and $\eqref{eq:vart12qdef}\neq 0$ at least when  $(j_1,j_2),(j_1,j_4)\in J_A$. Then
\begin{eqnarray}\label{eq:diff3firstpart}
	&&\quad \Biggr|\sum_{\substack{  \mathbf{i},\tilde{\mathbf{i}}\in \mathcal{P}(n,a);\\  (j_1,j_2),(j_3,j_4)\in J_A}}\tilde{Q}_0(\mathbf{i}, j_1,j_2, \tilde{\mathbf{i}},j_3,j_4)  \mathbf{1}_{\{c=0,1\leq t_0\leq a,|\{j_1,j_2\}\cap \{j_3,j_4\}|=1\}}\Biggr| \\
	&\leq & C\sum_{t_0=1}^a n^{2a-t_0}\sum_{(j_1,j_2), (j_1,j_4)\in J_A} \Big(|\sigma_{j_1,j_2}\sigma_{j_1,j_4}|^{a}+|\sigma_{j_2,j_4}|^{t_0}|\sigma_{j_1,j_2}\sigma_{j_1,j_4}|^{a-t_0} \Big) \notag \\
	&=&  \sum_{t_0=1}^a O(1)n^{2a-t_0}\max_{1\leq j_1\leq p}|J_{j_1}|\times |J_A|(\rho^{2a}+\rho^{2a-t_0}). \notag 
\end{eqnarray}

\textbf{Case (2.2)}\,  For $c \geq 1$ and  $0\leq t_c \leq a-c$, we have $
	|\{\mathbf{i}\}\cup \{\tilde{\mathbf{i}}\}|\leq 2a-t_c.
$ $\eqref{eq:vart12qdef}\neq 0$ when $(j_1,j_2), (j_1,j_4)\in J_A$ or $(j_2,j_4)\in J_A$. For given $c$, the range of $\eqref{eq:vart12qdef}$ is between $O(\rho^{2a-t_c-r})$ and $O(\rho^{2a-r})$. 
\begin{eqnarray}\label{eq:diff3secondpart}
	&&\quad \Biggr|\sum_{\substack{  \mathbf{i},\tilde{\mathbf{i}}\in \mathcal{P}(n,a);\\  (j_1,j_2),(j_3,j_4)\in J_A}}\tilde{Q}_c(\mathbf{i}, j_1,j_2, \tilde{\mathbf{i}},j_3,j_4)  \mathbf{1}_{\{0\leq t_c\leq a-c,|\{j_1,j_2\}\cap \{j_3,j_4\}|=1\}}\Biggr| \\
	&=&\sum_{\substack{0\leq r\leq c;\\ 0 \leq t_c \leq a-c}} O(1)n^{2a-t_c}\max_{1\leq j_1\leq p}|J_{j_1}|\times |J_A| (\rho^{2a-t_c-r}+\rho^{2a-r})  \notag. 
\end{eqnarray}


\smallskip

\noindent \textbf{Case (3)}\  If $|\{j_1,j_2\}\cap \{j_3,j_4\}|=0,$ we know $j_1\neq j_2\neq j_3\neq j_4.$ Then by Condition \ref{cond:altonecovellip} and \ref{cond:rhoijconst},
$
	\mathrm{E}(x_{1,j_1}x_{1,j_2}x_{1,j_3}x_{1,j_4} ) \notag 
	= \kappa_1(\sigma_{j_1,j_2}\sigma_{j_3,j_4} + \sigma_{j_1,j_3}\sigma_{j_2,j_4}+\sigma_{j_1,j_4}\sigma_{j_2,j_3} )=O(\rho^2).  	
$
Therefore, 
$\eqref{eq:vart12qdef}=O( \rho^{2a}).$ 
\smallskip

\textbf{Case (3.1)}\ For $c=0$ and $1\leq t_c=t_0 \leq a$, we have $
	|\{\mathbf{i}\}\cup \{\tilde{\mathbf{i}}\}|\leq 2a-t_0.
$
\begin{eqnarray}\label{eq:varorder4highorder} 
	&&\quad \Biggr|\sum_{\substack{  \mathbf{i},\tilde{\mathbf{i}}\in \mathcal{P}(n,a);\\  (j_1,j_2),(j_3,j_4)\in J_A}}\tilde{Q}_0(\mathbf{i}, j_1,j_2, \tilde{\mathbf{i}},j_3,j_4)  \mathbf{1}_{\{c=0,1\leq t_0\leq a,|\{j_1,j_2\}\cap \{j_3,j_4\}|=0\}}\Biggr| \\
	&\leq & C \sum_{t_0=1}^a  \sum_{(j_1,j_2),(j_3,j_4)\in J_A}|\sigma_{j_1,j_2}\sigma_{j_3,j_4} |^a = \sum_{t_0=1}^a n^{2a-t_0}|J_A|^2O(\rho^{2a}).  \notag 
\end{eqnarray}


\smallskip
\textbf{Case (3.2)}\ For $1\leq c \leq a$ and $0\leq t_c \leq a$, we have  $
	|\{\mathbf{i}\}\cup \{\tilde{\mathbf{i}}\}|\leq 2a-t_c.
$ Then for given $c\geq 1$,
\begin{eqnarray}\label{eq:smallordervaradd}
&&\quad \Biggr|\sum_{\substack{  \mathbf{i},\tilde{\mathbf{i}}\in \mathcal{P}(n,a);\\  (j_1,j_2),(j_3,j_4)\in J_A}}\tilde{Q}_c(\mathbf{i}, j_1,j_2, \tilde{\mathbf{i}},j_3,j_4)  \mathbf{1}_{\{0\leq t_c\leq a-c,|\{j_1,j_2\}\cap \{j_3,j_4\}|=0\}}\Biggr| \\
&\leq &C \sum_{\substack{0\leq r\leq c;\\ 0 \leq t_c \leq a-c}} n^{2a-t_c}\sum_{(j_1,j_2),(j_3,j_4)\in J_A}|\sigma_{j_1,j_2}\sigma_{j_3,j_4} |^a \notag \\
& =& \sum_{t_c=0}^{a-c} n^{2a-t_c}|J_A|^2O(\rho^{2a}), \notag	
\end{eqnarray} where we use the symmetricity of indexes. 



Combining  \eqref{eq:diff2varfirstpart}--\eqref{eq:smallordervaradd} 
 above, and by \eqref{eq:cauchyboundtua2} and \eqref{eq:vartua2ceach} and $F(a,c)=O(n^{-(a+c)})$, we know
\begin{eqnarray}
	&& \mathrm{var}(T_{1,a,2}) \label{eq:varsecondremain}\\
	&=&\sum_{t_0=1}^a  O(1)\frac{1}{n^{t_0}} |J_A|\times \{\rho^{2a}+\rho^{2(a-t_0)}\} \notag \\
	&& +\sum_{c=1}^a \sum_{t_c=0}^{a-c} \sum_{r=0}^c O(1)|J_A|\frac{1}{n^{2c+t_c}} \{\rho^{2(a-r)}+ \rho^{2(a-t_c-r)} \}  \notag \\
	&&+\sum_{t_0=1}^a O(1)\frac{1}{n^{t_0}}  \max_{1\leq j_1\leq p}|J_{j_1}|\times |J_A| (\rho^{2a}+\rho^{2a-t_0}) \notag \\
	&& +\sum_{c=1}^a  \sum_{t_c=0}^{a-c}\sum_{r=0}^c O(1) \frac{1}{n^{2c+t_c}}\max_{1\leq j_1\leq p}|J_{j_1}||J_A|(\rho^{2a-t_c-r}+\rho^{2a-r})  \notag \\
	&&+ \sum_{t_0=1}^a O(1)\frac{1}{n^{t_0}} |J_A|^2\rho^{2a}+ \sum_{c=1}^a \sum_{t_c=0}^{a-c} O(1) \frac{1}{n^{2c+t_c}}|J_A|^2\rho^{2a }. \notag
\end{eqnarray}

We then examine the six summed terms in the right hand side of \eqref{eq:varsecondremain} and show that they are $o(p^2n^{-a})$ respectively. 

\smallskip
\textbf{(1)} For the first term in  \eqref{eq:varsecondremain}, as $|J_A|\rho^a=O(pn^{-a/2}),$ 
\begin{align*}
	n^{-t_0}|J_A|\rho^{2a}=n^{-t_0}|J_A|^{-1}|J_A|^2\rho^{2a}=o( p^2n^{-a} ),
\end{align*} and
\begin{align*}
n^{-t_0}|J_A|\rho^{2(a-t_0)}=&n^{-t_0}|J_A|^{1-2(a-t_0)/a} (|J_A|\rho^a)^{ 2(a-t_0)/a }\notag \\
=& O(1)n^{-t_0} |J_A|^{-1+2t_0/a} ( pn^{-a/2})^{ 2(a-t_0)/a }\notag \\
=& O(1)p^2n^{-a}|J_A|^{-1+t_0/a} (|J_A|/p^2)^{t_0/a} = o(p^2n^{-a}),
\end{align*}where we use $1\leq t_0\leq a$ and $|J_A|=o(p^2)$ in the last equation.

\smallskip
\textbf{(2)} For the second term in  \eqref{eq:varsecondremain}, as $ r\leq c\leq a$ and $|J_A|=o(p^2)$,
\begin{align*}
n^{-(2c+t_c)}|J_A|\rho^{2(a-r)}=& n^{-(2c+t_c)} |J_A|^{1-2(a-r)/a} ( |J_A|\rho^a )^{2(a-r)/a}\notag \\
=& O(1) p^2n^{-a+r-2c-t_c} |J_A|^{-1+r/a} (|J_A|/p^2)^{r/a}\notag \\
 =& o(p^2n^{-a}),
\end{align*} 
and similarly as $ r\leq c\leq a$, $t_c+r\leq a$ and $c\geq 1,$
\begin{align*}
	& n^{-(2c+t_c)}|J_A|\rho^{2(a-t_c-r)}\notag \\
=& O(1) p^2 n^{-a+t_c+r-2c-t_c}|J_A|^{-1+(t_c+r)/a}(|J_A|/p^2)^{(t_c+r)/a}  \notag \\
=& o(p^2n^{-a}).\notag 
\end{align*} 

\vspace{5px}
\textbf{(3)} For the third term in  \eqref{eq:varsecondremain}, as $1\leq t_0\leq a,$ and $|J_A|\rho^a=O(pn^{-a/2})$,
\begin{align*}
	n^{-t_0}\max_{1\leq j_1\leq p}|J_{j_1}|\times |J_A| \rho^{2a}=\frac{\max_{1\leq j_1\leq p}|J_{j_1}|}{n^{t_0}|J_A|}|J_A|^2 \rho^{2a} = o(p^2n^{-a}),
\end{align*} and 
\begin{align*}
&~n^{-t_0}\max_{1\leq j_1\leq p}|J_{j_1}|\times |J_A| \rho^{2a-t_0} \notag \\
=&~	n^{-t_0}\max_{1\leq j_1\leq p}|J_{j_1}|\times |J_A|^{1-(2a-t_0)/a}(|J_A|\rho^a)^{(2a-t_0)/a}\notag \\
=&~O(1)p^2n^{-a-t_0/2} \max_{1\leq j_1\leq p}|J_{j_1}|\times |J_A|^{-1+t_0/(a)} p^{-t_0/a}\notag \\
=&~O(1)\frac{p^2}{n^{a+t_0/2}} \frac{\max_{1\leq j_1\leq p}|J_{j_1}|}{ |J_A|}\Big(\frac{|J_A|}{\max_{1\leq j_1\leq p}|J_{j_1}|}\frac{\max_{1\leq j_1\leq p}|J_{j_1}|}{p}\Big)^{t_0/a} \notag \\
=&~O(1)\frac{p^2}{n^{a+t_0/2}} \Big(\frac{\max_{1\leq j_1\leq p}|J_{j_1}|}{ |J_A|}\Big)^{1-t_0/a} \Big( \frac{\max_{1\leq j_1\leq p}|J_{j_1}|}{p}  \Big)^{t_0/a}=o(p^2n^{-a}),
\end{align*} where in the last equation, we use $1\leq t_0\leq a$, $\max_{1\leq j_1\leq p}|J_{j_1}|\leq |J_A|$ and $\max_{1\leq j_1\leq p}|J_{j_1}|\leq p.$

\smallskip

\textbf{(4)} For the fourth term in \eqref{eq:varsecondremain}, \begin{align*}
&~{n^{-(2c+t_c)}}\max_{1\leq j_1\leq p}|J_{j_1}||J_A|\rho^{2a-t_c-r}\notag \\
=&~{n^{-(2c+t_c)}}\max_{1\leq j_1\leq p}|J_{j_1}||J_A|^{1-(2a-t_c-r)/a} (|J_A|\rho^a)^{ (2a-t_c-r)/a} \notag \\
=&~ O(1)\frac{p^2}{n^a}\frac{1}{n^{2c+t_c/2-r/2}} \frac{\max_{1\leq j_1\leq p}|J_{j_1}|}{|J_A|}\Big( \frac{|J_A|}{p}  \Big)^{(t_c+r)/a} \notag \\
=&~ O(1)\frac{p^2}{n^a}\frac{1}{n^{2c+t_c/2-r/2}}\Big(\frac{\max_{1\leq j_1\leq p}|J_{j_1}|}{|J_A|}\Big)^{1-(t_c+r)/a}\Big( \frac{\max_{1\leq j_1\leq p}|J_{j_1}|}{p}  \Big)^{(t_c+r)/a}\notag \\
=&~ o(p^2n^{-a}),
\end{align*} where we obtain the last equation by noting that $t_c+r\leq a,$ $r\leq c$ and $c\geq 1$. 
Similarly, we have
\begin{align*}
	&{n^{-(2c+t_c)}}\max_{1\leq j_1\leq p}|J_{j_1}||J_A|\rho^{2a-r}\notag \\
=& O(1)\frac{p^2}{n^a}\frac{1}{n^{2c+t_c-r/2}}\Big(\frac{\max_{1\leq j_1\leq p}|J_{j_1}|}{|J_A|}  \Big)^{1-r/a}\Big( \frac{\max_{1\leq j_1\leq p}|J_{j_1}|}{p}  \Big)^{r/a}\notag \\
=& o(p^2n^{-a}).
\end{align*}

\smallskip
\textbf{(5)} For the fifth and sixth terms in \eqref{eq:varsecondremain}, as $|J_A|\rho^a=O(pn^{-a/2}),$  $t_0\geq 1$ and $c\geq 1$, we know
\begin{align*}
\frac{1}{n^{t_0}} |J_A|^2\rho^{2a}= o(p^2n^{-a}),\, \mbox{ and }\, 	\frac{1}{n^{2c+t_c}}|J_A|^2\rho^{2a }=o(p^2n^{-a}).
\end{align*}
\vspace{1px}

\subsubsection{Proof of Lemma \ref{lm:pfaltvarlm2} (on Page \pageref{lm:pfaltvarlm2}, Section \ref{sec:updatepoweronesampf})}\label{sec:pfaltvarlm2}

The proof is similar to Section \ref{sec:proofcovariancezro}. In particular, 
Lemma \ref{lm:pfaltvarlm2} shows that $\mathrm{var}\{\mathcal{U}(a)\}\simeq \mathrm{var}(T_{U,a,1,1})$.   By the Cauchy-schwarz inequality, 
\begin{eqnarray*}
	\mathrm{cov}\{ {\mathcal{U}(a)}/{\sigma(a)}, {\mathcal{U}(b)}/{\sigma(b)}\}= \mathrm{E}\{T_{U,a,1,1} T_{U,b,1,1}\}/\{\sigma(a)\sigma(b)\}+o(1), 
\end{eqnarray*} where we use $\mathrm{E}(T_{U,a,1,1})=\mathrm{E}(T_{U,b,1,1})=0$. 
For two integers $a\neq b,$ we next prove $\mathrm{E}(T_{U,a,1,1}T_{U,b,1,1})$=0. Specifically,  
\begin{eqnarray*}
	&&\mathrm{E}(T_{U,a,1,1}T_{U,b,1,1}) \notag \\
	&=&(P^n_a P^n_b)^{-1} \sum_{\substack{\mathbf{i}\in \mathcal{P}(n,a),  \tilde{\mathbf{i}} \in \mathcal{P}(n,b); \\ (j_1,j_2), (j_3,j_4) \in J_A^c}} \mathrm{E}\Big( \prod_{k=1}^a x_{i_k,j_1}x_{i_k,j_2} \prod_{\tilde{k}=1}^b x_{\tilde{i}_{\tilde{k}},j_3}x_{\tilde{i}_{\tilde{k}},j_4}\Big).
\end{eqnarray*} 
Since $a\neq b$, $\{\mathbf{i}\}\neq \{\tilde{\mathbf{i}}\}$. Assume without loss of generality that $a<b$ and index $i\in  \{\tilde{\mathbf{i}}\}$ but $i\not \in \{\mathbf{i}\}$. Then 
\begin{eqnarray*}
\mathrm{E}\Big( \prod_{k=1}^a x_{i_k,j_1}x_{i_k,j_2} \prod_{\tilde{k}=1}^b x_{\tilde{i}_{\tilde{k}},j_3}x_{\tilde{i}_{\tilde{k}},j_4}\Big)=\mathrm{E}(x_{1,j_3}x_{1,j_4})\times \mathrm{E}(\text{other terms})=0,	
\end{eqnarray*} where we use the $\sigma_{j_1,j_2}=\sigma_{j_3,j_4}=0$ for $(j_1,j_2), (j_3,j_4)\in J_A^c.$ Therefore $\mathrm{cov}(T_{U,a,1,1},T_{U,b,1,1})=0$ and the lemma is proved.

\subsubsection{Proof of Lemma \ref{lm:altcltmomentgoal1} (on Page \pageref{lm:altcltmomentgoal1}, Section \ref{sec:updatepoweronesampf})} \label{sec:pfaltcltmomentgoal1}

We prove Lemma \ref{lm:altcltmomentgoal1} similarly as in Section \ref{sec:prooftargetorder}.
 By the Cauchy-Schwarz inequality, for some constant $C$, $$\mathrm{var}\Big(\sum_{k=1}^n \pi_{n,k}^2\Big)\leq Cn^2 \max_{1\leq k\leq n;\, 1\leq r_1,r_2\leq m} \mathrm{var}(\mathbb{T}_{k,a_{r_1},a_{r_2}}), $$
where $c(n,a)=[a\times \{ \sigma(a)P^n_{a}\}^{-1}]^{2}$ and for two finite integers $a_1$ and $a_2$,  $\mathbb{T}_{k,a_{1},a_{2}}=\mathrm{E}_{k-1}( A_{n,k,a_{1}} A_{n,k,a_{2}}).$ 
In particular, 
 when $k< \max\{a_1,a_2\}$,  $\mathbb{T}_{k,a_{1},a_{2}}=0$; when $k\geq \max\{a_1,a_2\}$,
\begin{align*}
\mathbb{T}_{k,a_{1},a_{2}}=&~\mathrm{E}_{k-1}( A_{n,k,a_{1}} A_{n,k,a_{2}}) \notag \\
=&~ \sum_{ \substack{ \mathbf{i}^{(l)} \in \mathcal{P}(k-1,a_{l}-1),\, l=1,2; \\(j_1,j_2),(j_3,j_4)\in J_A^c } }  \Big\{\prod_{l=1}^2c(n,a_l)\Big\}^{1/2}  \mathbb{X}( k,\mathbf{i}^{(l)}, j_{2l-1},j_{2l}: l=1,2) \notag 
\end{align*} with
\begin{eqnarray*}
\mathbb{X}( k,\mathbf{i}^{(l)}, j_{2l-1},j_{2l}: l=1,2)&=&\mathrm{E}\Big( \prod_{t=1}^4 x_{k,j_t}  \Big)\prod_{l=1}^2 \prod_{t=1}^{a_l-1} (x_{i_t^{(l)},\, j_{2l-1}} x_{i_t^{(l)},\, j_{2l}} ). \notag 
\end{eqnarray*} 
To prove $\mathrm{var}(\sum_{k=1}^n \pi_{n,k}^2)\to 0$, it suffices to prove $\mathrm{var}(\mathbb{T}_{k,a_{r_1},a_{r_2}})=o(n^{-2})$ for any $1\leq r_1,r_2\leq m$. Without loss of generality, we consider two finite integers $a_1$ and $a_2$, and prove $\mathrm{var}(\mathbb{T}_{k,a_{1},a_{2}})=o(n^{-2})$ when $\max\{a_1,a_2\}\leq k\leq n$. 

To prove  $\mathrm{var}(\mathbb{T}_{k,a_{1},a_{2}})=o(n^{-2})$, we decompose $\mathbb{T}_{k,a_{1},a_{2}}=\sum_{M=2}^4\mathbb{T}_{k,a_{1},a_{2},(M)}$, where
\begin{align*}
\mathbb{T}_{k,a_{1},a_{2},(M)}=&~ \sum_{ \substack{ \mathbf{i}^{(l)} \in \mathcal{P}(k-1,a_{l}-1),\, l=1,2; \\(j_1,j_2),(j_3,j_4)\in J_A^c } } \mathbf{1}_{\{ |\{j_1,j_2\}\cup \{j_3,j_4\} |=M\}}    \notag 	 \\
&~\times \Big\{\prod_{l=1}^2c(n,a_l)\Big\}^{1/2}\mathbb{X}( k,\mathbf{i}^{(l)}, j_{2l-1},j_{2l}: l=1,2).
\end{align*} 
Here $2\leq M\leq 4$ because $2\leq |\{j_1,j_2\}\cup \{j_3,j_4\} |\leq 4$ when $(j_1,j_2),  (j_3,j_4)\in J_A^c$.  By the Cauchy-Schwarz inequality, to prove  $\mathrm{var}(\mathbb{T}_{k,a_{1},a_{2}})=o(n^{-2})$, it suffices to prove $ \mathrm{var}(\mathbb{T}_{k,a_{1},a_{2},(M)})=o(n^{-2})$ for $M=2,3,4$. For easy presentation, we let $a_3=a_1$ and $a_4=a_2$, and then
\begin{eqnarray*}\label{eq:sqmomentalt}
 \mathbb{T}_{k,a_{1},a_{2},(M)}^2  &=& \sum_{ \substack{ \mathbf{i}^{(l)} \in \mathcal{P}(k-1,a_{l}-1),\, l=1,2,3,4; \\(j_{1},j_2),(j_3,j_4),(j_{5},j_6),(j_7,j_8)\in J_A^c } } 	\mathbf{1}_{\Big\{ \substack{ |\{j_1,j_2\}\cup \{j_3,j_4\}|=M, \\ |\{j_5,j_6\}\cup \{j_7,j_8\}|=M} \Big\}} \notag \\
 &&\times \Big\{\prod_{l=1}^2c(n,a_l)\Big\}\times \mathbb{X}( k,\mathbf{i}^{(l)}, j_{2l-1},j_{2l}: l=1,2,3,4),
\end{eqnarray*} where
\begin{align*}
&~\mathbb{X}( k,\mathbf{i}^{(l)}, j_{2l-1},j_{2l}: l=1,2,3,4) \notag \\
=&~	\mathrm{E}\Big(\prod_{t=1}^4 x_{k,j_t}\Big) \mathrm{E}\Big(\prod_{t=5}^8 x_{k,j_t} \Big) \Big(\prod_{l=1}^4\prod_{t=1}^{a_l	-1} x_{i_t^{(l)}, \, j_{2l-1}}x_{i_t^{(l)},\, j_{2l}}\Big). 
\end{align*}
By $\mathrm{var}\{ \mathbb{T}_{k,a_{1},a_{2},(M)} \}=\mathrm{E}\{ \mathbb{T}_{k,a_{1},a_{2},(M)}^2 \} -\{\mathrm{E}( \mathbb{T}_{k,a_{1},a_{2},(M)})\}^2,$ 
\begin{eqnarray*}
&&\mathrm{var}\{ \mathbb{T}_{k,a_{1},a_{2},(M)} \} \notag \\
&=&	\sum_{ \substack{ \mathbf{i}^{(l)} \in \mathcal{P}(k-1,a_{l}-1),\, l=1,2,3,4;\\  (j_{1},j_2),(j_3,j_4),(j_{5},j_6),(j_7,j_8)\in J_A^c } }   	\mathbf{1}_{\Big\{ \substack{ |\{j_1,j_2\}\cup \{j_3,j_4\}|=M, \\ |\{j_5,j_6\}\cup \{j_7,j_8\}|=M} \Big\}} \Big\{\prod_{l=1}^2c(n,a_l)\Big\}\notag \\
&&\times \Big[ \mathrm{E}\Big\{\mathbb{X}( k,\mathbf{i}^{(l)}, j_{2l-1},j_{2l}: l=1,2,3,4)  \Big\} \notag \\
&& \quad -  \mathrm{E}\Big\{\mathbb{X}( k,\mathbf{i}^{(l)}, j_{2l-1},j_{2l}: l=1,2)\Big\}\times \mathrm{E}\Big\{\mathbb{X}( k,\mathbf{i}^{(l)}, j_{2l-1},j_{2l}: l=3,4)\Big\}  \Big],
\end{eqnarray*} where we similarly define
\begin{eqnarray*}
\mathbb{X}( k,\mathbf{i}^{(l)}, j_{2l-1},j_{2l}: l=3,4)=\mathrm{E}\Big( \prod_{t=5}^8 x_{k,j_t}  \Big)\prod_{l=3}^4 \prod_{t=1}^{a_{l}-1} (x_{i_t^{(l)},\, j_{2l-1}} x_{i_t^{(l)},\, j_{2l}} ). \notag 
\end{eqnarray*}

To prove $ \mathrm{var}(\mathbb{T}_{k,a_{1},a_{2},(M)})=o(n^{-2})$, we   examine the value of
\begin{eqnarray}\label{eq:expaltvarpi1} 
&&\quad \quad \mathrm{E}\Big\{\mathbb{X}( k,\mathbf{i}^{(l)}, j_{2l-1},j_{2l}: l=1,2,3,4)  \Big\}  \\
&&\quad \quad -  \mathrm{E}\Big\{\mathbb{X}( k,\mathbf{i}^{(l)}, j_{2l-1},j_{2l}: l=1,2)\Big\} \mathrm{E}\Big\{\mathbb{X}( k,\mathbf{i}^{(l)}, j_{2l-1},j_{2l}: l=3,4)\Big\}.	 \notag
\end{eqnarray} 
We next show that when $\eqref{eq:expaltvarpi1}\neq 0$, the following two claims hold:
\begin{align}
	& \textit{Claim 1: \ } (\{ \mathbf{i}^{(1)}\} \cup \{ \mathbf{i}^{(2)}\})\cap (\{ \mathbf{i}^{(3)}\} \cup \{ \mathbf{i}^{(4)}\}) \neq  \emptyset, \label{eq:twoclaims43sec} \\
	&\textit{Claim 2:\ } |\cup_{l=1}^4\{\mathbf{i}^{(l)}\}|\leq a_1+a_2-2. \notag
\end{align}
\textit{Claim 1} can be straightforwardly seen from the definition \eqref{eq:expaltvarpi1}. We then prove \textit{Claim 2}. 
Note that $\mathrm{E}\{\mathbb{X}( k,\mathbf{i}^{(l)}, j_{2l-1},j_{2l}: l=1,2,3,4) \}\neq 0$ only when $|\cup_{l=1}^4\{\mathbf{i}^{(l)}\}|\leq a_1+a_2-2$ following similar analysis to Section \ref{sec:pfvartaamigixing}.   In addition, as  $\sigma_{j_1,j_2}=\sigma_{j_3,j_4}=0$ 
when $(j_1,j_2), (j_3,j_4)\in J_A^c,$ 
we know that $\mathrm{E}\{ \mathbb{X}( k,\mathbf{i}^{(l)}, j_{2l-1},j_{2l}: l=1,2)\}\neq 0$ only when $\{\mathbf{i}^{(1)}\}=\{\mathbf{i}^{(2)}\}$; as $\sigma_{j_5,j_6}=\sigma_{j_7,j_8}=0$, we similarly know that $\mathrm{E}\{  \mathbb{X}( k,\mathbf{i}^{(l)}, j_{2l-1},j_{2l}: l=3,4)\}\neq 0$ only when $\{\mathbf{i}^{(3)}\}=\{\mathbf{i}^{(4)}\}$. It follows that if $|\cup_{l=1}^4\{\mathbf{i}^{(l)}\}|> a_1+a_2-2$, $\eqref{eq:expaltvarpi1} = 0.$ Thus   to evaluate $\mathrm{var}\{ \mathbb{T}_{k,a_{1},a_{2},(M)} \}$, it remains to consider \eqref{eq:expaltvarpi1} under the cases when $(\{ \mathbf{i}^{(1)}\} \cup \{ \mathbf{i}^{(2)}\})\cap (\{ \mathbf{i}^{(3)}\} \cup \{ \mathbf{i}^{(4)}\}) \neq \emptyset$ and  $|\cup_{l=1}^4\{\mathbf{i}^{(l)}\}|\leq a_1+a_2-2$.

Given the two claims above, we examine $\mathrm{var}\{ \mathbb{T}_{k,a_{1},a_{2},(M)} \}$ for  $M=2,3,4$ respectively. To facilitate the discussion, we decompose  
$\mathrm{var}\{ \mathbb{T}_{k,a_{1},a_{2},(M)} \} =\mathrm{var}\{\mathbb{T}_{k,a_{1},a_{2},(M)} \}_{(1)}+\mathrm{var}\{ \mathbb{T}_{k,a_{1},a_{2},(M)} \}_{(2)},$ where 
\begin{align*}
	&\mathrm{var}\{ \mathbb{T}_{k,a_{1},a_{2},(M)}\}  _{(1)} \notag \\
=& \sum_{ \substack{ \mathbf{i}^{(l)} \in \mathcal{P}(k-1,a_{l}-1),\, l=1,2,3,4;\\  (j_{1},j_2),(j_3,j_4),(j_{5},j_6),(j_7,j_8)\in J_A^c } } \mathbf{1}_{\left\{ \substack{ |\cup_{l=1}^4\{\mathbf{i}^{(l)}\}|= a_1+a_2-2;\\ |\{j_1,j_2\}\cup \{j_3,j_4\}|=M; \\ |\{j_5,j_6\}\cup \{j_7,j_8\}|=M } \right\}} 	\prod_{l=1}^2c(n,a_l)\times  \eqref{eq:expaltvarpi1},
\end{align*} and 
\begin{align*}
	&\mathrm{var}\{ \mathbb{T}_{k,a_{1},a_{2},(M)}\}  _{(2)} \notag \\
=& \sum_{ \substack{ \mathbf{i}^{(l)} \in \mathcal{P}(k-1,a_{l}-1),\, l=1,2,3,4;\\  (j_{1},j_2),(j_3,j_4),(j_{5},j_6),(j_7,j_8)\in J_A^c } } \mathbf{1}_{\left\{ \substack{|\cup_{l=1}^4\{\mathbf{i}^{(l)}\}|< a_1+a_2-2;\\ |\{j_1,j_2\}\cup \{j_3,j_4\}|=M; \\ |\{j_5,j_6\}\cup \{j_7,j_8\}|=M } \right\}} 	\prod_{l=1}^2c(n,a_l) \times \eqref{eq:expaltvarpi1}.
\end{align*}
We next consider  $M=2,3,4$ in the following \textbf{Cases (1)--(3)},  respectively. We assume  without loss of generality that $a_1\leq a_2$ in the following.  

\vspace{0.6em}

\textbf{Case (1):} When $M=2$, by the definition of $ \mathbb{T}_{k,a_{1},a_{2},(M)}$, we know $\{j_1,j_2\}=\{j_3,j_4\}$, $\{j_5,j_6\}=\{j_7,j_8\}$, and $|\{j_t: t=1,\ldots,8\}|\leq 4$. 
It follows that  $\mathrm{var}\{  \mathbb{T}_{k,a_{1},a_{2},(M)}\}_{(2)}=O\{\prod_{l=1}^2c(n,a_l)p^4n^{a_1+a_2-3}\}=o(n^{-2})$ by the boundedness of moments in Condition \ref{cond:finitemomt} and the definition of $\mathrm{var}\{ \mathbb{T}_{k,a_{1},a_{2},(M)}\}_{(2)}$. 

We next prove $\mathrm{var}\{ \mathbb{T}_{k,a_{1},a_{2},(M)}\}_{(1)}=o(n^{-2}).$ 
Recall that we consider $|\cup_{l=1}^4\{\mathbf{i}^{(l)}\}|= a_1+a_2-2$ here by the construction of $\mathrm{var}\{ \mathbb{T}_{k,a_{1},a_{2},(M)}\}_{(1)}$. 
Suppose $|\{\mathbf{i}^{(1)}\}\cap\{\mathbf{i}^{(2)}\}|= s$, where $s\leq a_1-1$. Then symmetrically $|\{\mathbf{i}^{(3)}\}\cap\{\mathbf{i}^{(4)}\}|= s$. Further assume $|\{\mathbf{i}^{(1)}\}\cap\{\mathbf{i}^{(3)}\}|= s_1$, then $|\{\mathbf{i}^{(2)}\}\cap\{\mathbf{i}^{(3)}\}|= a_1-1-s-s_1$, $|\{\mathbf{i}^{(1)}\}\cap\{\mathbf{i}^{(4)}\}|= a_1-1-s-s_1$ and $|\{\mathbf{i}^{(2)}\}\cap\{\mathbf{i}^{(4)}\}|= a_2-a_1+s_1$. It follows that $| (\{ \mathbf{i}^{(1)}\} \cup \{ \mathbf{i}^{(2)}\})\cap (\{ \mathbf{i}^{(3)}\} \cup \{ \mathbf{i}^{(4)}\}) |=a_1+a_2-2-2s$. Note that $\eqref{eq:expaltvarpi1}=0$ if $a_1+a_2-2-2s=0$, which can  only be achieved when $a_1=a_2$ and $s=a_1-1$.   It remains to consider $a_1+a_2-2-2s\geq 1$, that is, $0\leq s\leq A_0$, where $A_0=(a_1+a_2-3)/2.$ 
Given $s$ and $s_1$, we have
\begin{align}
\eqref{eq:expaltvarpi1} =~& \Big\{\mathrm{E}\Big(\prod_{t=1,2,5,6}x_{1,j_t}\Big)\Big\}^{s_1}\Big\{\mathrm{E}\Big(\prod_{t=3,4,7,8}x_{1,j_t}\Big)	\Big\}^{a_2-a_1+s_1} \label{eq:2am1orderexpalt} \\
~ &\times \Big\{\mathrm{E}\Big(\prod_{t=3,4,5,6}x_{1,j_t}\Big) \mathrm{E}\Big(\prod_{t=1,2,7,8}x_{1,j_t}\Big)\Big\}^{a_1-1-s-s_1}\notag \\
~&\times \Big\{\mathrm{E}\Big(\prod_{t=1,2,3,4}x_{1,j_t}\Big)\mathrm{E}\Big(\prod_{t=5,6,7,8}x_{1,j_t}\Big)	\Big\}^{s+1}. \notag 
\end{align}
Under the considered \textbf{Case (1)}, $\{j_1,j_2\}=\{j_3,j_4\}$ and $\{j_5,j_6\}=\{j_7,j_8\}$.
If $|\{j_t: t=1,\ldots,8\}|\leq 3,$  we  know by Condition \ref{cond:finitemomt}, 
\begin{align}
 \Biggr|\sum_{\substack{(j_1,j_2),(j_3,j_4),\\(j_5,j_6),(j_7,j_8)\in J_A^c}} \eqref{eq:expaltvarpi1} \times \mathbf{1}_{|\{j_t: t=1,\ldots,8\}|\leq 3}	\Biggr|=O(p^3). \label{eq:p3orderalt} 
\end{align}
If $|\{j_t: t=1,\ldots,8\}|=4,$ $\{j_1,j_2\}\cap \{j_5,j_6\}=\emptyset.$
By Conditions \ref{cond:finitemomt},  \ref{cond:altonecovellip} and \ref{cond:rhoijconst}, we know $\mathrm{E}(\prod_{t=1}^4x_{1,j_t})=\kappa_1(\sigma_{j_1,j_1}+\sigma_{j_2,j_2})=O(1)$ and similarly $\mathrm{E}(\prod_{t=5}^8 x_{1,j_t})=O(1).$ 
By \eqref{eq:2am1orderexpalt},  $\eqref{eq:expaltvarpi1}\neq 0$ only if $\mathrm{E}(\prod_{t=1,2,5,6}x_{1,j_t})\neq 0$.  This induces $(j_1,j_5)$, $(j_2,j_6) \in J_A$ or $(j_1,j_6)$, $(j_2,j_5) \in J_A$, and then $\eqref{eq:expaltvarpi1}= O(\rho^{2(a_1+a_2-2s)})$.   
By the symmetricity of $j$ indexes, we have
\begin{align}
	& \Big|\sum_{(j_1,j_2),(j_3,j_4),(j_5,j_6),(j_7,j_8)\in J_A^c} \eqref{eq:expaltvarpi1}\times \mathbf{1}_{\{|\{j_t: t=1,\ldots,8\}|=4\}}	\Big| \label{eq:p3orderalt2} \\
	\leq & ~ C\sum_{(j_1,j_5),(j_2,j_6)\in J_A} \rho^{2(a_1+a_2-2-2s)} \leq C|J_A|^2 \rho^{2(a_1+a_2-2-2s)}. \notag 
\end{align} 
By \eqref{eq:p3orderalt} and \eqref{eq:p3orderalt2},
\begin{align*}
\mathrm{var}\{  \mathbb{T}_{k,a_{1},a_{2},(M)}\}_{(1)}=\sum_{s=0}^{A_0} O\Big\{p^3+|J_A|^2 \rho^{2(a_1+a_2-2-2s)}\Big\}  n^{a_1+a_2-2}  \prod_{l=1}^2c(n,a_l). 
\end{align*}
Note that $O(p^3n^{a_1+a_2-2})\prod_{l=1}^2c(n,a_l)=o(n^{-2})$, and  
\begin{align}
&~|J_A|^2 \rho^{2(a_1+a_2-2-2s)} n^{a_1+a_2-2}c(n,a_1)c(n,a_2) \label{eq:case1orderaltclt}\\
=&~O(1)p^{-4}n^{-2} |J_A|^{2-\frac{2(a_1+a_2-2-2s)}{a_1+a_2}}  (|J_A|	\rho^{a_1}\times|J_A|\rho^{a_2})^{\frac{2(a_1+a_2-2-2s)}{a_1+a_2}} \notag \\
=&~ O(1)|J_A|^{2-\frac{2(a_1+a_2-2-2s)}{a_1+a_2}}p^{\frac{2(a_1+a_2-2-2s)}{a_1+a_2}-4}n^{-(a_1+a_2-2-2s)-2} \notag \\
=&~O(1)|J_A|^{-\frac{a_1+a_2-2-2s}{a_1+a_2}} (|J_A|/p^2)^{2-\frac{a_1+a_2-2-2s}{a_1+a_2}} n^{-(a_1+a_2-2-2s)-2}\notag \\
=&~o(n^{-2}). \notag
\end{align}
Therefore $\mathrm{var}\{ \mathbb{T}_{k,a_{1},a_{2},(M)}\}_{(1)}=o(n^{-2}).$


\medskip

\textbf{Case (2):} When $M=3$, we assume without loss of generality that $j_1=j_3$ and $j_5=j_7$, then
\begin{align}
	\{j_1,j_2,j_3,j_4\}=\{j_1,j_2,j_4\} \text{ \ and \ }\{j_5,j_6,j_7,j_8\}=\{j_5,j_6,j_8\}.  \label{eq:assumm3casej}
\end{align}
It follows that $ \mathrm{E}( \prod_{t=1}^4 x_{1,j_t})=\kappa_1\sigma_{j_1,j_1}\sigma_{j_2,j_4}$ and $ \mathrm{E}(\prod_{t=5}^8 x_{1,j_t})=\kappa_1\sigma_{j_5,j_5}\sigma_{j_6,j_8},$
which are 0 when $(j_2,j_4) \text{ and } (j_6,j_8) \in J_A^c$; and are $O(\rho)$ when $(j_2,j_4)$  and  $(j_6,j_8) \in J_A$.  
This suggests that if $\eqref{eq:expaltvarpi1}\neq 0$, $(j_2,j_4)$  and  $(j_6,j_8) \in J_A$.

We first examine  $\mathrm{var}\{ \mathbb{T}_{k,a_{1},a_{2},(3)}\}  _{(1)}$, which is  the part of summation in $\mathrm{var}\{ \mathbb{T}_{k,a_{1},a_{2},(3)}\}$ when
  $|\cup_{l=1}^4\{\mathbf{i}^{(l)}\}|=a_1+a_2-2.$ Recall that the two claims in \eqref{eq:twoclaims43sec} also hold here.  Similarly to \textbf{Case (1)} above,  
  we still assume  $|\{\mathbf{i}^{(1)}\}\cap\{\mathbf{i}^{(2)}\}|= s$, and $|\{\mathbf{i}^{(1)}\}\cap\{\mathbf{i}^{(3)}\}|= s_1$, then  \eqref{eq:2am1orderexpalt} holds. 
  We next discuss several sub-cases based on the size of the set  $\{j_t:t=1,\ldots,8\}$.

\smallskip

\textbf{Case (2.1):} When $|\{j_t:t=1,\ldots,8\}|=6,$ 
we know $\{j_1,j_2,j_3,j_4\}\cap \{j_5,j_6,j_7,j_8\}=\emptyset$ by \eqref{eq:assumm3casej}.   
Then by \eqref{eq:2am1orderexpalt}, we know if $\eqref{eq:expaltvarpi1} \neq 0$, then $(j_2,j_4), (j_6,j_8), (j_1,j_5), (j_2,j_6)\in J_A$ or $(j_2,j_4), (j_6,j_8), (j_1,j_6), (j_2,j_5)\in J_A$. Thus by the symmetricity of the $j$ indexes, we have
\begin{align*}
	 \sum_{\substack{(j_1,j_2),(j_3,j_4),\\(j_5,j_6),(j_7,j_8)\in J_A^c}} \mathbf{1}_{ \eqref{eq:expaltvarpi1} \neq 0}\times \mathbf{1}_{|\{j_t:t=1,\ldots,8\}|=6}	\leq C\sum_{\substack{(j_1,j_5), (j_2,j_6),\\ (j_2,j_4), (j_6,j_8)\in J_A}}  1 \leq C|J_A|^3. \notag 
\end{align*} 

By Conditions \ref{cond:altonecovellip} and \ref{cond:rhoijconst},  $\eqref{eq:expaltvarpi1} =O(\rho^{\tilde{A}_1})$, where $\tilde{A}_1=2(a_1+a_2-2-2s) +2(s+1)=2(a_1+a_2)-2(s+1)$. Thus
\begin{align*}
 \Big|\sum_{(j_1,j_2),(j_3,j_4),(j_5,j_6),(j_7,j_8)\in J_A^c}\eqref{eq:expaltvarpi1}  \mathbf{1}_{|\{j_t:t=1,\ldots,8\}|=6}	\Big|=  O(|J_A|^3 \rho^{\tilde{A}_1}).	
\end{align*}
\smallskip


\textbf{Case (2.2):} When $|\{j_t:t=1,\ldots,8\}|=5,$
recall that we assume \eqref{eq:assumm3casej}, where $j_1=j_3$ and $j_5=j_7$ without loss of generality. 
If we further assume $j_1=j_5$,   $\{j_t:t=1,\ldots,8\}=\{j_1,j_2,j_4,j_6,j_8\}$. Then for $\eqref{eq:expaltvarpi1}\neq 0$,
$\mathrm{E}(\prod_{t=1,2,3,4}x_{1,j_t})\times \mathrm{E}(\prod_{t=5,6,7,8}x_{1,j_t})\neq 0$,  then  $(j_2,j_4), (j_6,j_8) \in J_A$ holds. 
In addition, under this case, $\eqref{eq:expaltvarpi1}=O\{\rho^{(a_1+a_2-2-2s)+2(s+1)}\}=O(\rho^{a_1+a_2})$, and we have
\begin{align*}
& \Big|\sum_{(j_1,j_2),(j_3,j_4),(j_5,j_6),(j_7,j_8)\in J_A^c}\mathbf{1}_{\eqref{eq:expaltvarpi1}=O(\rho^{a_1+a_2}),\, |\{j_t:t=1,\ldots,8\}|=5}	\Big| = O(	p|J_A|^2).
\end{align*} 
If given  $j_1=j_3$ and $j_5=j_7$, instead,  assume $j_1\neq j_5$. We have $j_1\neq j_2$, $j_1\neq j_4$ and $j_1\neq j_5$. Then for $\eqref{eq:expaltvarpi1}\neq 0$,  by  discussing different cases of $j$ indexes, we know that \eqref{eq:expaltvarpi1} achieves the order between $O(\rho^{ \tilde{A}_1 })$ and  $O(\rho^{ \tilde{A}_2 })$  where $\tilde{A}_1$ is defined as above and $ \tilde{A}_2 = 2(s+1)+ (1+2)\times (a_1+a_2-2s-2)/2=3(a_1+a_2)/2-(s+1)$. Moreover, we have
\begin{align*}
\Biggr|\sum_{\substack{(j_1,j_2),(j_3,j_4),\\(j_5,j_6),(j_7,j_8)\in J_A^c}}\mathbf{1}_{\{\eqref{eq:expaltvarpi1}=O(\rho^{u}),\, \tilde{A}_2\leq u\leq \tilde{A}_1, \, |\{j_t:t=1,\ldots,8\}|=5\}}	\Biggr|=O(D_{\max}|J_A|^2).
\end{align*}
In summary,
\begin{align*}
 & ~\Big|\sum_{{(j_1,j_2),(j_3,j_4),(j_5,j_6),(j_7,j_8)\in J_A^c}} \eqref{eq:expaltvarpi1} \times \mathbf{1}_{|\{j_t:t=1,\ldots,8\}|=5}	\Big| \notag \\
%
=&~ O (D_{\max}|J_A|^2 \rho^{\tilde{A}_1} )+O (D_{\max}|J_A|^2 \rho^{\tilde{A}_2} ) +O(p|J_A|^2\rho^{a_1+a_2}). \notag 
\end{align*}

\vspace{0.5em}

\textbf{Case (2.3):} When $|\{j_t:t=1,\ldots,8\}|=4,$ similarly as case (2.3), we can discuss $j_1=j_5$ and $j_1\neq j_5$ respectively. 
When $j_1=j_5$, we note that \eqref{eq:expaltvarpi1}  can achieve the orders between $O(\rho^{a_1+a_2})$ and $O(\rho^{\tilde{A}_3})$ 
with $\tilde{A}_3=(a_1+a_2-2-2s)/2+2(s+1)=(a_1+a_2)/2+s+1$. Moreover, 
\begin{align*}
& \Biggr|\sum_{\substack{(j_1,j_2),(j_3,j_4),\\(j_5,j_6),(j_7,j_8)\in J_A^c}}  \mathbf{1}_{\eqref{eq:expaltvarpi1}=O(\rho^{u}), \tilde{A}_3\leq u\leq a_1+a_2, \, |\{j_t:t=1,\ldots,8\}|=4}	\Biggr|=O(pD_{\max}|J_A|).
\end{align*} In addition, when $j_1\neq j_5$, we note that  \eqref{eq:expaltvarpi1} can achieve the order between $O(\rho^{a_1+a_2})$ and $O(\rho^{\tilde{A}_1})$. Under this case, 
\begin{align*}
\Biggr|\sum_{\substack{(j_1,j_2),(j_3,j_4),\\(j_5,j_6),(j_7,j_8)\in J_A^c}}  \mathbf{1}_{\eqref{eq:expaltvarpi1}=O(\rho^{u}), \tilde{A}_4\leq u\leq a_1+a_2, \, |\{j_t:t=1,\ldots,8\}|=4}	\Biggr|=	O(|J_A|^2).
\end{align*}
In summary, by $|J_A|\leq pD_{\max}$, 
\begin{align*}
&~ \Big|\sum_{(j_1,j_2),(j_3,j_4),(j_5,j_6),(j_7,j_8)\in J_A^c} \eqref{eq:expaltvarpi1}\times  \mathbf{1}_{|\{j_t:t=1,\ldots,8\}|=4}	\Big| \notag \\
=&~O(pD_{\max}|J_A|\rho^{\tilde{A}_3})+ O(pD_{\max}|J_A|\rho^{a_1+a_2})+O(|J_A|^2 \rho^{\tilde{A}_1}). 	
\end{align*}

\smallskip

\textbf{Case (2.4):} When $|\{j_t:t=1,\ldots,8\}|\leq 3,$  we know by Condition \ref{cond:finitemomt}, 
\begin{align*}
& \Big|\sum_{(j_1,j_2),(j_3,j_4),(j_5,j_6),(j_7,j_8)\in J_A^c} \eqref{eq:expaltvarpi1}\times \mathbf{1}_{|\{j_t:t=1,\ldots,8\}|\leq 3}	\Big|=O(p^3). \notag 
\end{align*}
\smallskip

In summary, combining Cases (2.1)--(2.4) above, 
we know 
\begin{align}
&\mathrm{var}\{ \mathbb{T}_{k,a_{1},a_{2},(3)}\}_{(1)}\label{eq:vart3ordersumalt}\\
= &~\prod_{l=1}^2c(n,a_l) n^{a_1+a_2-2} \sum_{s=0}^{A_0}\Big\{ O(p^3) + O(|J_A|^3 \rho^{\tilde{A}_1} ) \notag \\
&~+O (D_{\max}|J_A|^2 \rho^{\tilde{A}_1} )+O (D_{\max}|J_A|^2 \rho^{\tilde{A}_2} ) +O(p|J_A|^2\rho^{a_1+a_2})\notag \\
&~+O(pD_{\max}|J_A|\rho^{\tilde{A}_3})+ O(pD_{\max}|J_A|\rho^{a_1+a_2})+O(|J_A|^2 \rho^{\tilde{A}_1})\Big\}, \notag 
\end{align}
where  $\tilde{A}_1=2(a_1+a_2)-2(s+1)$, $ \tilde{A}_2 = 3(a_1+a_2)/2-(s+1)$ and $\tilde{A}_3=(a_1+a_2)/2+s+1$. 

Note that  
\begin{align}
&~\prod_{l=1}^2c(n,a_l)\times  n^{a_1+a_2-2} 	|J_A|^3 \rho^{\tilde{A}_1} \notag \\
=& ~ p^{-4}n^{-2}|J_A|^3\rho^{2(a_1+a_2-s-1)} \notag \\
=& ~ p^{-4}n^{-2}(|J_A|\rho^{a_1}\times |J_A|\rho^{a_2})^{\frac{2(a_1+a_2-s-1)}{a_1+a_2}}|J_A|^{3-\frac{4(a_1+a_2-s-1)}{a_1+a_2}} \label{eq:pluglat1} \\
=&~ O(1)n^{-2}p^{\frac{4(a_1+a_2-s-1)}{a_1+a_2} -4}n^{-(a_1+a_2-s-1)}|J_A|^{-1+\frac{4(s+1)}{a_1+a_2}} \label{eq:pluglat2} \\
=&~  O(1)n^{-2}p^{-\frac{4(s+1)}{a_1+a_2}}n^{-(a_1+a_2-s-1)}|J_A|^{-1+\frac{4(s+1)}{a_1+a_2}} \notag \\
=&~O(1)n^{-2}(|J_A|/p^2)^{\frac{2(s+1)}{a_1+a_2}}|J_A|^{-1+\frac{2(s+1)}{a_1+a_2}} \notag \\
=&~o(n^2),\notag 
\end{align} where from \eqref{eq:pluglat1} to \eqref{eq:pluglat2}, we use $|J_A|\rho^a=O(pn^{-a/2})$, and in the last equation, we use $2(s+1)\leq a_1+a_2-1$. 
Following similar analysis, we know that all the terms in \eqref{eq:vart3ordersumalt} are $o(n^{-2})$ and $\mathrm{var}\{ \mathbb{T}_{k,a_{1},a_{2},(3)}\}_{(1)}=o(n^{-2}).$

We next examine   $\mathrm{var}\{ \mathbb{T}_{k,a_{1},a_{2},(3)}\}_{(2)}.$ Note that if $\eqref{eq:expaltvarpi1}\neq 0$, $(j_2,j_4)$ and  $(j_6,j_8) \in J_A$.  We can discuss different cases of $\{j_1,\ldots,j_8\}$ similarly as above. Then by Conditions \ref{cond:rhoijconst} and \ref{cond:altonecovellip}, as $\rho=O(|J_A|^{-1/a_t}p^{1/a_t}n^{-1/2})$ for $t=1,2$, we have 
$
	\sum_{ (j_{1},j_2),(j_3,j_4),(j_{5},j_6),(j_7,j_8)\in J_A^c }\eqref{eq:expaltvarpi1}=O(p^4). 
$
Given that $|\cup_{l=1}^4\{\mathbf{i}^{(l)}\}|<a_1+a_2-2$ in $\mathrm{var}\{ \mathbb{T}_{k,a_{1},a_{2},(3)}\}_{(2)},$ we obtain $\mathrm{var}\{\mathbb{T}_{k,a_{1},a_{2},(3)}\}_{(2)}=\prod_{l=1}^2 c(n,a_l)\times O\{p^4 n^{a_1+a_2-3}\}= o(n^{-2}).
$ 

In summary, we have $\mathrm{var}\{\mathbb{T}_{k,a_{1},a_{2},(3)}\}=o(n^{-2}).$

\medskip

\textbf{Case (3):} When $M=4$, we consider $j_1\neq j_2\neq j_3\neq j_4$ and $j_5\neq j_6\neq j_7\neq j_8$ under this case. Since $\sigma_{j_1,j_2}=\sigma_{j_3,j_4}=\sigma_{j_5,j_6}=\sigma_{j_7,j_8}=0,$ 
\begin{align*}
	& \mathrm{E}(x_{1,j_1}x_{1,j_2}x_{1,j_3}x_{1,j_4})=\kappa_1(\sigma_{j_1,j_3}\sigma_{j_2,j_4}+ \sigma_{j_1,j_4}\sigma_{j_2,j_3}),\notag \\ 
	&\mathrm{E}(x_{1,j_5}x_{1,j_6}x_{1,j_7}x_{1,j_8})=\kappa_1(\sigma_{j_5,j_7}\sigma_{j_6,j_8}+ \sigma_{j_5,j_8}\sigma_{j_6,j_7}),
\end{align*} which are $O(\rho^2)$.
 Following similar analysis to \textbf{Case (2)},  we can examine the different cases when $|\{j_t:t=1,\ldots,8\}|$ is between 4 and 8, and  obtain,  
\begin{align}
&~ \mathrm{var}\{ \mathbb{T}_{k,a_{1},a_{2},(4)}\}_{(1)} \label{eq:var4termalt} \\
= &~ O(1)\prod_{l=1}^2c(n,a_l)\times n^{a_1+a_2-2}\sum_{s=0}^{A_0}\Big[ |J_A|^2 \rho^{4(s+1)} \notag \\
&~+D_{\max}|J_A|^2\rho^{4(s+1)}\Big(\rho^{ a_1-1-s} +\rho^{a_2-1-s} \Big)\notag\\
&~+ \max\{|J_A|,D_{\max}^2\} \times |J_A|^2 \rho^{4(s+1)}\Big(\rho^{2(a_1-1-s)} +\rho^{2(a_2-1-s)} \Big)\notag\\
&~+ D_{\max}|J_A|^3 \Big(\rho^{2(a_1+a_2)-(a_1-1-s)}+ \rho^{ 2(a_1+a_2)-(a_2-1-s)}\Big) \notag\\
&~+ |J_A|^4 \rho^{2(a_1+a_2)}. \notag
\end{align}
Note that $\prod_{l=1}^2 c(n,a_l) n^{a_1+a_2-2}|J_A|^4\rho^{2(a_1+a_2)}=O(1)p^{-4}n^{-2}p^4n^{-(a_1+a_2)}=o(n^{-2}).$ Moreover,
\begin{eqnarray}
&&\prod_{l=1}^2 c(n,a_l) \times n^{a_1+a_2-2}D_{\max}|J_A|^2\rho^{4(s+1)}\Big(\rho^{ a_1-1-s} +\rho^{a_2-1-s} \Big)\label{eq:varaltod42} \\
&=& p^{-4}n^{-2} D_{\max}|J_A|^2\Big( \rho^{a_1+3(s+1)}+\rho^{a_2+3(s+1)}\Big).  \notag 
\end{eqnarray} 
To show $\eqref{eq:varaltod42}=o(n^{-2})$ by symmetricity, it suffices to show for any integer $a_1$, $p^{-4} D_{\max}|J_A|^2 \rho^{a_1+3(s+1)}=o(1)$. 
\begin{eqnarray}
&&p^{-4} D_{\max}|J_A|^2 \rho^{a_1+3(s+1)} \notag \\
&=&p^{-4} D_{\max} (|J_A|\rho^{a_1})^{\frac{a_1+3(s+1)}{a_1}}|J_A|^{2-\frac{a_1+3(s+1)}{a_1}} \label{eq:altvar4ord1} \\
&=&O(1)p^{-4} D_{\max} (pn^{-a_1/2})^{\frac{a_1+3(s+1)}{a_1}} |J_A|^{2-\frac{a_1+3(s+1)}{a_1}}\label{eq:altvar4ord2}\\
&=& O(1)n^{-\frac{a_1+3(s+1)}{2}}({|J_A|}/{p^2})^{1-\frac{(s+1)}{a_1}}  \notag \\
&&\times ({D_{\max}}/{p})^{1-\frac{s+1}{a_1}}({D_{\max}}/{|J_A|})^{\frac{s+1}{a_1}}|J_A|^{-\frac{s+1}{a_1}},\notag \\
&=&o(1) ,\notag
\end{eqnarray}
where from \eqref{eq:altvar4ord1} to \eqref{eq:altvar4ord2}, we use $|J_A|\rho^{a_1}=O(pn^{-a_1/2})$, and in the last equation we use $|J_A|=o(p^2)$, $D_{\max}\leq p$ and $D_{\max}\leq |J_A|$.  For other terms in \eqref{eq:var4termalt}, similar analysis can be applied and we have $\mathrm{var}\{ \mathbb{T}_{k,a_{1},a_{2},(4)}\}_{(1)}=o(n^{-2}).$

In addition, similarly to the analysis of $\mathrm{var}\{ \mathbb{T}_{k,a_{1},a_{2},(3)}\}_{(2)},$ by Conditions \ref{cond:rhoijconst} and \ref{cond:altonecovellip}, we still have  $
	\sum_{ (j_{1},j_2),(j_3,j_4),(j_{5},j_6),(j_7,j_8)\in J_A^c }\eqref{eq:expaltvarpi1}=O(p^4). 
$  Since $|\cup_{l=1}^4\{\mathbf{i}^{(l)}\}|<a_1+a_2-2$ in $\mathrm{var}\{ \mathbb{T}_{k,a_{1},a_{2},(4)}\}_{(2)}$ by construction, we obtain $\mathrm{var}\{\mathbb{T}_{k,a_{1},a_{2},(4)}\}_{(2)}=\prod_{l=1}^2 c(n,a_l)\times O\{p^4 n^{a_1+a_2-3}\}= o(n^{-2}).
$ In summary, $\mathrm{var}\{\mathbb{T}_{k,a_{1},a_{2},(4)}\}= o(n^{-2})$ is proved.

\subsubsection{Proof of Lemma \ref{lm:altcltmomentgoal2} (on Page \pageref{lm:altcltmomentgoal2}, Section \ref{sec:updatepoweronesampf})}\label{sec:pfaltcltmomentgoal2}

Similarly to Section \ref{sec:proofsecondgoaljointnormal}, 
\begin{align*}
 	\sum_{k=1}^n\mathrm{E}(D_{n,k}^4)=\sum_{k=1}^n \sum_{{1\leq r_1,r_2,r_3,r_4\leq m}}\prod_{l=1}^4 t_{r_l}\times \mathrm{E}\Big(\prod_{l=1}^4 A_{n,k,a_{r_l}}\Big), 
 \end{align*} where we use the redefined notation in Section \ref{sec:updatepoweronesampf}.  To prove  Lemma \ref{lm:secondgoaljointnormal}, it suffices to show that for given $1\leq k \leq n$ and $1\leq r_1,r_2,r_3,r_4\leq m$, we have $\mathrm{E}(\prod_{l=1}^4 A_{n,k,a_{r_l}})=o(n^{-1})$. Moreover by the Cauchy-Schwarz inequality, it suffices to show $\mathrm{E}(A_{n,k,a}^4)=o(n^{-1})$ for  $a\in \{a_1,\ldots,a_m\}$. 
Following  \eqref{eq:expank4indpill}, we have $A_{n,k,a}=0$ when $k<a$; and when $k\geq a$,
\begin{align*}
\mathrm{E}( A_{n,k,a}^4)=&~ c^2(n,a)	\sum_{ \substack{ \mathbf{i}^{(l)} \in \mathcal{P}(k-1,a-1),\, l=1,2,3,4; \\ (j_1,j_2),(j_3,j_4), (j_5,j_6),(j_7,j_8)\in J_A^c } }Q^*( \mathbf{i}^{(1)}, \mathbf{i}^{(2)},\mathbf{i}^{(3)},\mathbf{i}^{(4)},\mathbf{j}_8),	
\end{align*}
where $\mathbf{i}^{(l)}=(i_1^{(l)},\ldots, i_{a}^{(l)})$ represents tuples $1\leq i_1^{(l)}\neq \ldots \neq i_{a}^{(l)}\leq n$, and
\begin{align*}
Q^*( \mathbf{i}^{(1)}, \mathbf{i}^{(2)},\mathbf{i}^{(3)},\mathbf{i}^{(4)},\mathbf{j}_8)=&~\mathrm{E}\Big( \prod_{r=1}^8 x_{k,j_r}\Big)\,\mathrm{E}\Big( \prod_{l=1}^4\prod_{t=1}^{a-1} x_{i_t^{(l)},j_{2l-1}} x_{i_t^{(l)},j_{2l}}\Big).\notag 
\end{align*}  
As $c(n,a)=\Theta(p^{-1}n^{-a/2})$, to prove $\mathrm{E}(A_{n,k,a}^4)=o(n^{-1})$, it suffices to show
\begin{align*}
\sum_{ \substack{ \mathbf{i}^{(l)} \in \mathcal{P}(k-1,a-1),\, l=1,2,3,4; \\ (j_1,j_2),(j_3,j_4), (j_5,j_6),(j_7,j_8)\in J_A^c } }Q^*( \mathbf{i}^{(1)}, \mathbf{i}^{(2)},\mathbf{i}^{(3)},\mathbf{i}^{(4)},\mathbf{j}_8)=o(p^4n^{2a-1}).	
\end{align*}

Since $\sigma_{j_1,j_2}=0$ if $(j_1,j_2)\in J_A^c$, 
then similarly to Section \ref{sec:proofsecondgoaljointnormal}, we have
$Q^*( \mathbf{i}^{(1)}, \mathbf{i}^{(2)},\mathbf{i}^{(3)},\mathbf{i}^{(4)},\mathbf{j}_8)\neq 0$ 
only when 
$
	|\bigcup_{l=1}^4 \{\mathbf{i}^{(l)}\} |\leq 2(a-1), 
$ and similarly to \eqref{eq:n2amin1order},
\begin{align*}
&&\quad \quad \sum_{ \substack{ \mathbf{i}^{(l)} \in \mathcal{P}(k-1,a-1),\, l=1,\ldots,4 } } Q^*( \mathbf{i}^{(1)}, \mathbf{i}^{(2)},\mathbf{i}^{(3)},\mathbf{i}^{(4)},\mathbf{j}_8)=O(n^{2a-2}). 	
\end{align*}
 It then remains to show  
\begin{align}
\sum_{\substack{ (j_1,j_2),(j_3,j_4),  (j_5,j_6),(j_7,j_8)\in J_A^c } } Q^*( \mathbf{i}^{(1)}, \mathbf{i}^{(2)},\mathbf{i}^{(3)},\mathbf{i}^{(4)},\mathbf{j}_8)=O(p^4).\label{eq:secondgoalp4alt}
\end{align}

We next prove by discussing  $|\{j_t: t=1,\ldots,8\}|$ and the corresponding value of $Q^*( \mathbf{i}^{(1)}, \mathbf{i}^{(2)},\mathbf{i}^{(3)},\mathbf{i}^{(4)},\mathbf{j}_8)$. By Condition  \ref{cond:altonecovellip},  $Q^*( \mathbf{i}^{(1)}, \mathbf{i}^{(2)},\mathbf{i}^{(3)},\mathbf{i}^{(4)},\mathbf{j}_8)$ can be written as certain linear combination of $\prod_{t=1}^{4a} (\sigma_{j_{g_{2t-1}},j_{g_{2t}}})$, where $g_{2t-1}\neq g_{2t}$ and $(g_1,\ldots,g_{8a})$ contain $a$ number of $1,\ldots,8$ respectively. 
If $|\{j_t: t=1,\ldots,8\}|\leq 4$, by Condition  \ref{cond:finitemomt}, 
\begin{align*}
\sum_{\substack{ (j_1,j_2),(j_3,j_4),  (j_5,j_6),(j_7,j_8)\in J_A^c } } Q^*( \mathbf{i}^{(1)}, \mathbf{i}^{(2)},\mathbf{i}^{(3)},\mathbf{i}^{(4)},\mathbf{j}_8)\times \mathbf{1}_{\{|\{j_t: t=1,\ldots,8\}|\leq 4\}}=O(p^4). 	
\end{align*}
If $|\{j_t: t=1,\ldots,8\}|=5$, note that for $j_1\neq j_2$, $\sigma_{j_1,j_2}\neq 0$ only when $(j_1,j_2)\in J_A$, then
\begin{align*}
&~ \Big|\sum_{\substack{ (j_1,j_2),(j_3,j_4),  (j_5,j_6),(j_7,j_8)\in J_A^c } } Q^*( \mathbf{i}^{(1)}, \mathbf{i}^{(2)},\mathbf{i}^{(3)},\mathbf{i}^{(4)},\mathbf{j}_8)\times \mathbf{1}_{\{|\{j_t: t=1,\ldots,8\}|=5 \}}\Big|	\notag \\
\leq &~C \sum_{\substack{1\leq j_1,j_2,j_5\leq p,\\(j_6,j_8)\in J_A}} \sigma_{j_1,j_1}^a\sigma_{j_2,j_2}^a\sigma_{j_5,j_5}^a\sigma_{j_6,j_8}^a=O(p^3|J_A|\rho^a)=o(p^4),
\end{align*}where in the last equation, we use $|J_A|\rho^a=O(pn^{-a/2})$. 
In addition, similarly, if $|\{j_t: t=1,\ldots,8\}|=6$, 
\begin{align*}
&~ \Big|\sum_{\substack{ (j_1,j_2),(j_3,j_4),  (j_5,j_6),(j_7,j_8)\in J_A^c } } Q^*( \mathbf{i}^{(1)}, \mathbf{i}^{(2)},\mathbf{i}^{(3)},\mathbf{i}^{(4)},\mathbf{j}_8)\times \mathbf{1}_{\{|\{j_t: t=1,\ldots,8\}|=6 \}}\Big|	\notag \\
\leq &~ C \sum_{\substack{1\leq j_1,j_2\leq p,\\(j_5,j_7),(j_6,j_8)\in J_A}} \sigma_{j_1,j_1}^a\sigma_{j_2,j_2}^a\sigma_{j_5,j_7}^a\sigma_{j_6,j_8}^a=O(p^2|J_A|^2\rho^{2a})=o(p^4).
\end{align*}
If $|\{j_t: t=1,\ldots,8\}|=7$, 
\begin{align*}
&~ \Big|\sum_{\substack{ (j_1,j_2),(j_3,j_4),  (j_5,j_6),(j_7,j_8)\in J_A^c } } Q^*( \mathbf{i}^{(1)}, \mathbf{i}^{(2)},\mathbf{i}^{(3)},\mathbf{i}^{(4)},\mathbf{j}_8)\times \mathbf{1}_{\{|\{j_t: t=1,\ldots,8\}|=7 \}}\Big|	\notag \\
\leq &~ C \sum_{\substack{1\leq j_1\leq p,\\(j_2,j_4), (j_5,j_7),(j_6,j_8)\in J_A}} \sigma_{j_1,j_1}^a\sigma_{j_2,j_4}^a\sigma_{j_5,j_7}^a\sigma_{j_6,j_8}^a=O(p|J_A|^3\rho^{3a})=o(p^4).
\end{align*}
If $|\{j_t: t=1,\ldots,8\}|=8$, 
\begin{align*}
&~ \Big|\sum_{\substack{ (j_1,j_2),(j_3,j_4),  (j_5,j_6),(j_7,j_8)\in J_A^c } } Q^*( \mathbf{i}^{(1)}, \mathbf{i}^{(2)},\mathbf{i}^{(3)},\mathbf{i}^{(4)},\mathbf{j}_8)\times \mathbf{1}_{\{|\{j_t: t=1,\ldots,8\}|=8 \}}\Big|	\notag \\
\leq &~ C \sum_{(j_1,j_3),(j_2,j_4), (j_5,j_7),(j_6,j_8)\in J_A}\sigma_{j_1,j_3}^a\sigma_{j_2,j_4}^a\sigma_{j_5,j_7}^a\sigma_{j_6,j_8}^a=O(|J_A|^4\rho^{4a})=o(p^4).
\end{align*}
In summary, \eqref{eq:secondgoalp4alt} is obtained and Lemma \ref{lm:altcltmomentgoal2} is proved.
\end{proof}

\subsection{Lemmas for the proof of Theorem \ref{thm:onesamplemean}} \label{sec:onesammeanlm}
In this section, we prove Lemma \ref{lm:cltonesammeanlm} on Page \pageref{lm:cltonesammeanlm}, where we prove  $\mathrm{var}(\sum_{k=1}^n\pi^2_{n,k})\to 0$ and $\sum_{k=1}^n\mathrm{E}(D_{n,k}^4)\to 0$ in the following Sections \ref{sec:onesamclt1pf} and \ref{sec:onesamclt2pf}, 
respectively. 

\subsubsection{Proof of Lemma \ref{lm:cltonesammeanlm} (on Page \pageref{lm:cltonesammeanlm}, Section \ref{sec:proof3132})}
\paragraph{Proof of $\mathrm{var}(\sum_{k=1}^n\pi^2_{n,k})\to 0$}\label{sec:onesamclt1pf}

Similarly to Section \ref{sec:prooftargetorder}, 
 $D_{n,k}=\sum_{r=1}^m t_{r}A_{n,k,a_r}$, and then 
$\pi_{n,k}^2=  \sum_{1\leq r_1,r_2\leq m} t_{r_1}t_{r_2}\mathrm{E}_{k-1}( A_{n,k,a_{r_1}} A_{n,k,a_{r_2}}).$
Note that by the Cauchy-Schwarz inequality, for some constant $C$, $$\mathrm{var}\Big(\sum_{k=1}^n \pi_{n,k}^2\Big)\leq Cn^2 \max_{1\leq k\leq n;\, 1\leq r_1,r_2\leq m} \mathrm{var}(\mathbb{T}_{k,a_{r_1},a_{r_2}}), $$
where $c(n,a)=[a\times \{ \sigma(a)P^n_{a}\}^{-1}]^{2}$ and for two integers $a_1$ and $a_2$ we still define $\mathbb{T}_{k,a_{1},a_{2}}=\mathrm{E}_{k-1}( A_{n,k,a_{1}} A_{n,k,a_{2}}).$ In particular, when $k<\max\{a_1,a_2\},$ $\mathbb{T}_{k,a_{1},a_{2}}=0$; when  $k\geq \max\{a_1,a_2\},$
\begin{eqnarray*}
\mathbb{T}_{k,a_{1},a_{2}}&=&\mathrm{E}_{k-1}( A_{n,k,a_{1}} A_{n,k,a_{2}}) \notag \\
&=&\sum_{\substack{1\leq j_1,j_2\leq p ; \\ \mathbf{i}^{(l)}\in \mathcal{P}(k-1,a_l-1):\, l=1,2}}\{c(n,a_1)c(n,a_2)\}^{1/2}\sigma_{j_1,j_2}\prod_{l=1}^2\prod_{t=1}^{a_l-1}x_{ i_t^{(l)},j_l}. 
\end{eqnarray*}
To prove Lemma $\mathrm{var}(\sum_{k=1}^n\pi^2_{n,k})\to 0$, it suffices to prove $\mathrm{var}(\mathbb{T}_{k,a_{1},a_{2}})=o(n^{-2}),$ where $\mathrm{var}(\mathbb{T}_{k,a_{1},a_{2}})=\mathrm{E}(\mathbb{T}_{k,a_{1},a_{2}}^2)-\{\mathrm{E}(\mathbb{T}_{k,a_{1},a_{2}}) \}^2$. We consider without loss of generality that $k\geq \max\{a_1,a_2\}.$ 

When $\{\mathbf{i}^{(1)}\}\neq \{\mathbf{i}^{(2)}\}$, $\mathrm{E}(\prod_{l=1}^2\prod_{t=1}^{a_l}x_{ i_t^{(l)},j_t})=0$; and when $\{\mathbf{i}^{(1)}\}=\{\mathbf{i}^{(2)}\}$, it induces $a_1=a_2$ and $\mathrm{E}(\prod_{l=1}^2\prod_{t=1}^{a_l}x_{ i_t^{(l)},j_t})=\sigma_{j_1,j_2}^{a}$ where we write $a_1=a_2=a$. It follows that when $a_1\neq a_2$, $\mathrm{E}(\mathbb{T}_{k,a_{1},a_{2}})=0$; when $a_1=a_2=a$,
\begin{align*}
	\mathrm{E}(\mathbb{T}_{k,a_{1},a_{2}})=\sum_{\substack{1\leq j_1,j_2\leq p ; \\ \mathbf{i}^{(l)}\in \mathcal{P}(k-1,a_l-1):\, l=1,2}}\mathbf{1}_{ \{\{\mathbf{i}^{(1)}\}=\{\mathbf{i}^{(2)}\}\}} \times \{c(n,a_1)c(n,a_2)\}^{1/2}\sigma_{j_1,j_2}^{a}. 
\end{align*}
Then
\begin{align*}
	\{\mathrm{E}(\mathbb{T}_{k,a_{1},a_{2}})\}^2=\sum_{\substack{1\leq j_1,j_2,j_3,j_4\leq p ; \\ \mathbf{i}^{(l)}\in \mathcal{P}(k-1,a_l-1):\, l=1,2,3,4}}\mathbf{1}_{\Big\{ \substack{\{\mathbf{i}^{(1)}\}=\{\mathbf{i}^{(2)}\}\\ \{\mathbf{i}^{(3)}\} =\{\mathbf{i}^{(4)}\}}  \Big\} } \prod_{l=1}^2 c(n,a_l)\times(\sigma_{j_1,j_2}\sigma_{j_3,j_4})^{a}.
\end{align*}
In addition, we obtain
\begin{eqnarray*}
\mathrm{E}(\mathbb{T}_{k,a_{1},a_{2}}^2)=\sum_{\substack{1\leq j_1,j_2,j_3,j_4\leq p ; \\ \mathbf{i}^{(l)}\in \mathcal{P}(k-1,a_l-1):\, l=1,2,3,4}}\Big\{\prod_{l=1}^2c(n,a_l)\sigma_{j_{2l-1},j_{2l}}\Big\}\mathrm{E}\Big( \prod_{l=1}^4\prod_{t=1}^{a_l-1} x_{i_t^{(l)}, \, j_{l}}\Big),
\end{eqnarray*}where for simplicity of representation, we set   $a_3=a_1$ and $a_4=a_2$. 
Define
\begin{align*}
	G_{k,a_1,a_2,1}
	=&~\sum_{\substack{1\leq j_1,j_2,j_3,j_4\leq p ; \\ \mathbf{i}^{(l)}\in \mathcal{P}(k-1,a_l-1):\, l=1,2,3,4}}\mathbf{1}_{\Big\{ \substack{ \{\mathbf{i}^{(1)}\}=\{\mathbf{i}^{(2)}\},\\\{\mathbf{i}^{(3)}\}=\{\mathbf{i}^{(4)}\},\\\{\mathbf{i}^{(1)}\}\cap \{\mathbf{i}^{(3)}\}= \emptyset } \Big\}}\notag \\
	&~ \quad \times \Big\{\prod_{l=1}^2c(n,a_l)\sigma_{j_{2l-1},j_{2l}}\Big\}\mathrm{E}\Big( \prod_{l=1}^4\prod_{t=1}^{a_l-1} x_{i_t^{(l)}, \, j_{l}}\Big).
\end{align*}
Since $|\mathrm{E}(\mathbb{T}_{k,a_{1},a_{2}}^2)-\{\mathrm{E}(\mathbb{T}_{k,a_{1},a_{2}})\}^2|\leq |\mathrm{E}(\mathbb{T}_{k,a_{1},a_{2}}^2)-G_{k,a_1,a_2,1}|+|G_{k,a_1,a_2,1}- \{\mathrm{E}(\mathbb{T}_{k,a_{1},a_{2}})\}^2|$, we next prove $\mathrm{E}(\mathbb{T}_{k,a_{1},a_{2}}^2)-G_{k,a_1,a_2,1}=o(n^{-2})$ and $G_{k,a_1,a_2,1}- \{\mathrm{E}(\mathbb{T}_{k,a_{1},a_{2}})\}^2=o(n^{-2})$ respectively. 

\smallskip
\subparagraph*{Step I: $\mathrm{E}(\mathbb{T}_{k,a_{1},a_{2}}^2)-G_{k,a_1,a_2,1}=o(n^{-2})$} 
When $\{\mathbf{i}^{(1)}\}=\{\mathbf{i}^{(2)}\},\, \{\mathbf{i}^{(3)}\} =\{\mathbf{i}^{(4)}\}$ and $\{\mathbf{i}^{(1)}\}\cap \{\mathbf{i}^{(3)}\}=\emptyset$, it implies that $a_1=a_2=a$, $|\cup_{l=1}^4 \{\mathbf{i}^{(l)}\}|\leq a_1+a_2-3$, and  
\begin{align*}
\Big(\prod_{l=1}^2\sigma_{j_{2l-1},j_{2l}}\Big)\times \mathrm{E}\Big(\prod_{l=1}^4\prod_{t=1}^{a_l-1} x_{i_t^{(l)}, \, j_{l}}\Big)=(\sigma_{j_1,j_2}\sigma_{j_3,j_4})^a.	
\end{align*}
It follows that if $a_1\neq a_2$, $\{\mathrm{E}(\mathbb{T}_{k,a_{1},a_{2}})\}^2-G_{k,a_1,a_2,1}=0$;  if $a_1= a_2=a$,
\begin{align*}
&~\Big|\{\mathrm{E}(\mathbb{T}_{k,a_{1},a_{2}})\}^2-G_{k,a_1,a_2,1}\Big| \notag \\
=&~c(n,a_1)c(n,a_2) O(n^{a_1+a_2-3})\Big|\sum_{1\leq j_1,j_2,j_3,j_4\leq p}(\sigma_{j_1,j_2}\sigma_{j_3,j_4})^a\Big|=o(n^{-2})	
\end{align*} where we use $c(n,a)=\Theta(p^{-1}n^{-a})$ and by Condition \ref{cond:onesamplemean},
\begin{align}
	\sum_{1\leq j_1,j_2,j_3,j_4\leq p}(\sigma_{j_1,j_2}\sigma_{j_3,j_4})^a=O(p^2). \label{eq:onesampordersum}
\end{align} 

\subparagraph*{Step II: $G_{k,a_1,a_2,1}- \{\mathrm{E}(\mathbb{T}_{k,a_{1},a_{2}})\}^2=o(n^{-2})$} 
We write
$
	\mathrm{E}(\mathbb{T}_{k,a_{1},a_{2}}^2)-G_{k,a_1,a_2,1}=G_{k,a_1,a_2,2}+G_{k,a_1,a_2,3},
$ where 
\begin{align*}
G_{k,a_1,a_2,2}=&~	\sum_{\substack{1\leq j_1,j_2,j_3,j_4\leq p ; \\ \mathbf{i}^{(l)}\in \mathcal{P}(k-1,a_l-1):\, l=1,2,3,4}}\mathbf{1}_{\Big\{ \substack{ \{\mathbf{i}^{(1)}\}=\{\mathbf{i}^{(2)}\},\\\{\mathbf{i}^{(3)}\}=\{\mathbf{i}^{(4)}\},\\\{\mathbf{i}^{(1)}\}\cap \{\mathbf{i}^{(3)}\} \neq \emptyset } \Big\}}\notag \\
&~\times  \Big\{\prod_{l=1}^2c(n,a_l)\sigma_{j_{2l-1},j_{2l}}\Big\}\mathrm{E}\Big( \prod_{l=1}^4\prod_{t=1}^{a_l-1} x_{i_t^{(l)}, \, j_{l}}\Big),
\end{align*} and 
\begin{align*}
	G_{k,a_1,a_2,3}=&~	\sum_{\substack{1\leq j_1,j_2,j_3,j_4\leq p ; \\ \mathbf{i}^{(l)}\in \mathcal{P}(k-1,a_l-1):\, l=1,2,3,4}}\mathbf{1}_{\Big\{ \substack{ \{\mathbf{i}^{(1)}\}\neq \{\mathbf{i}^{(2)}\} \text{ or }\\\{\mathbf{i}^{(3)}\}\neq \{\mathbf{i}^{(4)}\}} \Big\}}\notag \\
&~\times  \Big\{\prod_{l=1}^2c(n,a_l)\sigma_{j_{2l-1},j_{2l}}\Big\}\mathrm{E}\Big( \prod_{l=1}^4\prod_{t=1}^{a_l-1} x_{i_t^{(l)}, \, j_{l}}\Big).
\end{align*}

For $G_{k,a_1,a_2,2}$, it is a summation over the indexes satisfying $ \{\mathbf{i}^{(1)}\}=\{\mathbf{i}^{(2)}\},\{\mathbf{i}^{(3)}\}=\{\mathbf{i}^{(4)}\}$ and $\{\mathbf{i}^{(1)}\}\cap \{\mathbf{i}^{(3)}\} \neq \emptyset$. Thus $|\cup_{l=1}^4 \{\mathbf{i}^{(l)}\}|\leq a_1+a_2-3,$ and by $c(n,a)=\Theta(p^{-1}n^{-a})$ and \eqref{eq:onesampordersum}, 
\begin{align*}
|G_{k,a_1,a_2,2}|\leq &~	Cp^{-2}n^{-(a_1+a_2)}n^{a_1+a_2-3}\sum_{1\leq j_1,j_2,j_3,j_4\leq p}\sigma_{j_1,j_2}\sigma_{j_3,j_4}=o(n^{-2}).
\end{align*}
For $G_{k,a_1,a_2,3}$, it is a summation over the indexes satisfying $ \{\mathbf{i}^{(1)}\}\neq \{\mathbf{i}^{(2)}\}$ or $\{\mathbf{i}^{(3)}\}\neq \{\mathbf{i}^{(4)}\}$. We assume without loss of generality that $\{\mathbf{i}^{(1)}\}\neq \{\mathbf{i}^{(2)}\}$ and there exists an index $m\in \{\mathbf{i}^{(1)}\}$ but $m\not \in \{\mathbf{i}^{(2)}\}$.  Similarly to Section \ref{sec:prooftargetorder}, we know 
\begin{align}
\Big(\prod_{l=1}^2\sigma_{j_{2l-1},j_{2l}}\Big)\times \mathrm{E}\Big(\prod_{l=1}^4\prod_{t=1}^{a_l-1} x_{i_t^{(l)}, \, j_{l}}\Big)\label{eq:nonzeroonesamm}
\end{align} is nonzero only when $|\cup_{l=1}^4\{\mathbf{i}^{(l)}\}|\leq a_1+a_2-2$, that is, each index appears at least twice among the four sets $\{\mathbf{i}^{(l)}\}, l=1,2,3,4$. Therefore, we know if $\eqref{eq:nonzeroonesamm}\neq 0$, $m\in \{\mathbf{i}^{(3)}\}\cup \{\mathbf{i}^{(4)}\}$. If $m\in \{\mathbf{i}^{(3)}\}$ but $m \not \in \{\mathbf{i}^{(4)}\}$, 
$
\eqref{eq:nonzeroonesamm}= \sigma_{j_1,j_2}\sigma_{j_3,j_4}\sigma_{j_1,j_3} \mathrm{E}(\text{other terms}). 	
$ Under this case, we define $
		\tilde{K}_0={-(2+\epsilon)(4+\gamma)\log p}/{(\epsilon \log \delta)} 
	$, where $\gamma$ and $\epsilon$ are some positive constants and $\delta$ is from Condition \ref{cond:onesamplemean}. Then we have 
\begin{align}
	\sum_{1\leq j_1,j_2,j_3,j_4\leq p} \eqref{eq:nonzeroonesamm}\leq &~ C \sum_{1\leq j_1,j_2,j_3,j_4\leq p} \sigma_{j_1,j_2}\sigma_{j_3,j_4}\sigma_{j_1,j_3} \label{eq:jodreonesammean1taa} \\
	\leq & ~  C\sum_{ \substack{ |j_1-j_2|\leq \tilde{K}_0, \\ |j_3-j_4|\leq \tilde{K}_0 ,\\|j_1-j_3|\leq \tilde{K}_0 }}1+ C\sum_{| j_1- j_2| \geq \tilde{K}_0} \delta^{|j_1-j_2|\epsilon/(2+\epsilon)} \notag \\
	=&~ O(p\tilde{K}_0^2) + O(p^4p^{-(4+\gamma)}),\notag
\end{align} where in the second inequality,  we use the symmetricity of $j$ indexes and also use Lemma \ref{lm:mixingineq} similarly as in Section \ref{sec:proof3132}. If $m\in \{\mathbf{i}^{(4)}\}$ but $m \not \in \{\mathbf{i}^{(s)}\}$, \eqref{eq:jodreonesammean1taa} also holds similarly.  If $m\in \{\mathbf{i}^{(3)}\}$ and $m\in \{\mathbf{i}^{(4)}\}$, $
\eqref{eq:nonzeroonesamm}= \sigma_{j_1,j_2}\sigma_{j_3,j_4}\mathrm{E}(x_{m,j_1}x_{m,j_3}x_{m,j_4})\mathrm{E}(\text{other terms}). 	
$ Similarly to \eqref{eq:jodreonesammean1taa}, as $\mathrm{E}(\mathbf{x})=\mathbf{0}$, if $|j_1-j_3|>\tilde{K}_0$ and $|j_1-j_4|>\tilde{K}_0$, $\eqref{eq:nonzeroonesamm}\leq C\delta^{|j_1-j_2|\epsilon/(2+\epsilon)}.$ Thus under this case, we also have $\sum_{1\leq j_1,j_2,j_3,j_4\leq p} \eqref{eq:nonzeroonesamm}=O(p\tilde{K}_0^2) + O(p^{-\gamma}).$  Recall that $\eqref{eq:nonzeroonesamm}\neq 0$ only when $|\cup_{l=1}^4\{\mathbf{i}^{(l)}\}|\leq a_1+a_2-2$. By $c(n,a)=\Theta(p^{-1}n^{-a})$ and $\tilde{K}_0=O(\log p)$,
\begin{align*}
	|G_{k,a_1,a_2,3}|\leq &~  Cp^{-2} n^{-(a_1+a_2)}n^{a_1+a_2-2}\sum_{1\leq j_1,j_2,j_3,j_4\leq p}\left|\eqref{eq:nonzeroonesamm}\right| \notag \\
	=&~ n^{-2}p^{-2}\Big\{ O(p\tilde{K}_0^2) + O(p^{-\gamma})\Big\} =o(n^{-2}). 
\end{align*}

In summary, 
\begin{align*}
	\mathrm{var}(\mathbb{T}_{k,a_{1},a_{2}})\leq |\mathrm{E}(\mathbb{T}_{k,a_{1},a_{2}}^2)-G_{k,a_1,a_2,1}|+|G_{k,a_1,a_2,2}|+|G_{k,a_1,a_2,3}|=o(n^{-2}),
\end{align*} and then $\mathrm{var}(\sum_{k=1}^n \pi_{n,k}^2)\to 0$ is proved.

\paragraph{Proof of $\sum_{k=1}^n\mathrm{E}(D_{n,k}^4)\to 0$}\label{sec:onesamclt2pf}

Similarly to Section \ref{sec:proofsecondgoaljointnormal}, 
\begin{align*}
\sum_{k=1}^n\mathrm{E}(D_{n,k}^4)=\sum_{k=1}^n \sum_{{1\leq r_1,r_2,r_3,r_4\leq m}}\prod_{l=1}^4 t_{r_l}\times \mathrm{E}\Big(\prod_{l=1}^4 A_{n,k,a_{r_l}}\Big). 	
\end{align*}To prove  $\sum_{k=1}^n\mathrm{E}(D_{n,k}^4)\to 0$, it suffices to show that for given $1\leq k \leq n$ and finite integers $(a_1,a_2,a_3, a_4)$, we have $\mathrm{E}(\prod_{l=1}^4 A_{n,k,a_{l}})=o(n^{-1})$. 

In particular,
\begin{eqnarray*}
\mathrm{E}\Big(\prod_{l=1}^4 A_{n,k,a_{l}}\Big)&=&	\Big\{\prod_{l=1}^4c(n,a_l)\Big\}^{1/2} \sum_{ \substack{ \mathbf{i}^{(l)} \in \mathcal{P}(k-1,a_l-1),\, l=1,\ldots,4; \\ 1\leq j_1,j_2,j_3,j_4\leq p} }\notag \\
&&\mathrm{E}\Big(\prod_{l=1}^4 x_{k,j_l}\Big)\mathrm{E}\Big( \prod_{l=1}^4 \prod_{t=1}^{a_l-1}x_{i_t,j_l}\Big).
\end{eqnarray*}
Similarly to Section \ref{sec:proofsecondgoaljointnormal}, we have $\mathrm{E}( \prod_{l=1}^4 \prod_{t=1}^{a_l-1}x_{i_t,j_l})\neq 0$ only when $|\cup_{l=1}^4\{\mathbf{i}^{(l)}\}|\leq \sum_{l=1}^4(a_l-1)/2$. 
We will prove that 
\begin{align}
\sum_{1\leq j_1,j_2,j_3,j_4\leq p}\mathrm{E}\Big(\prod_{l=1}^4 x_{k,j_l}\Big)=O(p^2). \label{eq:sumjexponsammeanord}	
\end{align}
Then as $c(n,a)=\Theta(p^{-1}n^{-a})$,
\begin{align*}
	\mathrm{E}\Big(\prod_{l=1}^4 A_{n,k,a_{l}}\Big)=O(1)p^{-2}n^{-\sum_{l=1}^4 a_l/2}n^{\sum_{l=1}^4(a_l-1)/2}p^2=o(n^{-1}).
\end{align*}

To finish the proof, it remains to show \eqref{eq:sumjexponsammeanord}. When $|\{j_1,j_2,j_3,j_4\}|\leq 2$,
\begin{align*}
\sum_{1\leq j_1,j_2,j_3,j_4\leq p}\mathrm{E}\Big(\prod_{l=1}^4 x_{k,j_l}\Big)\mathbf{1}_{ \{ |\{j_1,j_2,j_3,j_4\}|\leq 2 \}}	=O(p^2).
\end{align*}When $|\{j_1,j_2,j_3,j_4\}|\geq 3$, we assume without loss of generality that $j_1\leq j_2\leq j_3\leq j_4$. For $\tilde{K}_0$ defined in Section \ref{sec:onesamclt1pf}, if $|j_1-j_2|> \tilde{K}_0$ or  $|j_3-j_4|> \tilde{K}_0$, $|\mathrm{E}(\prod_{l=1}^4 x_{k,j_l})|\leq C\delta^{|j_1-j_2|\epsilon/(2+\epsilon)}=O(p^{-(4+\gamma)})$. If $|j_1-j_2|\leq \tilde{K}_0$ and  $|j_3-j_4|\leq \tilde{K}_0$, but $|j_2-j_3|>K_0$, by Lemma \ref{lm:mixingineq}, 
\begin{align*}
\Big|\mathrm{E}\Big(\prod_{l=1}^4 x_{k,j_l}\Big)\Big|	\leq  \sigma_{j_1,j_2}\sigma_{j_3,j_4}+C\delta^{|j_1-j_2|\epsilon/(2+\epsilon)} =\sigma_{j_1,j_2}\sigma_{j_3,j_4}+ O(p^{-(4+\gamma)}).
\end{align*} Therefore
\begin{align*}
&~\sum_{1\leq j_1,j_2,j_3,j_4\leq p}\mathrm{E}\Big(\prod_{l=1}^4 x_{k,j_l}\Big)\mathbf{1}_{ \{ |\{j_1,j_2,j_3,j_4\}|\geq 3 \}} \notag \\
=&~O(p\tilde{K}_0^3)+O(p^4p^{-(4+\gamma)})+	\sum_{1\leq j_1,j_2,j_3,j_4\leq p}\sigma_{j_1,j_2}\sigma_{j_3,j_4}=O(p^2),	
\end{align*}
where in the last equation, we use Condition \ref{cond:onesamplemean} (2). 
In summary,  \eqref{eq:sumjexponsammeanord} is proved and the proof is finished.

\subsection{Lemmas for the proof of Theorem \ref{thm:twosamplemean}} 
\subsubsection{Proof of Lemma \ref{lm:twosamvar} (on Page \pageref{lm:twosamvar}, Section \ref{sec:proofthm33})} \label{sec:lmtwosamvar}

 Under 	$H_0: \boldsymbol{\mu}=\boldsymbol{\nu}$, we assume $\boldsymbol{\mu}=\boldsymbol{\nu}=\mathbf{0}$ without loss of generality by Proposition \ref{prop:locinvatwosam}. To derive $\mathrm{var}\{\mathcal{U}(a)\}$, we write $	\mathcal{U}(a)=\sum_{j=1}^p \mathcal{U}^{(j)}(a)$, where we define $G(a,c)=(-1)^{a-c}  \binom{a}{c} (P^{n_x}_{c})^{-1} (P^{n_y}_{a-c})^{-1}$, and
 \begin{align}
 	\mathcal{U}^{(j)}(a)=&\sum_{c=0}^{a} G(a,c)\sum_{\substack{\mathbf{k}\in \mathcal{P}(n_x,c),\\ \mathbf{s}\in \mathcal{P}(n_y,a-c) }} \prod_{t=1}^c x_{k_{t},j} \prod_{m=1}^{a-c}  y_{s_{m},j}.\label{eq:equivaujdefitwosam} 
 \end{align} 
Since $\mathrm{E}\{\mathcal{U}(a)\}=0$ under $H_0$, 
\begin{eqnarray}
	&&\mathrm{var}\{\mathcal{U}(a)\}=\mathrm{E}\{\mathcal{U}^2(a)\} =\sum_{1\leq j_1,j_2\leq p}\mathrm{E}\{\mathcal{U}^{(j_1)}(a) \times \mathcal{U}^{(j_2)}(a)\}. \label{eq:varuatwosam}
\end{eqnarray}
Note that for given $1\leq j_1,j_2 \leq p$, 
\begin{align*}
\mathrm{E}\{\mathcal{U}^{(j_1)}(a)\mathcal{U}^{(j_2)}(a)\} =\sum_{\substack{0\leq c\leq a, \\ \mathbf{k}\in \mathcal{P}(n_x,c),\\ \mathbf{s}\in \mathcal{P}(n_y,a-c) }}\,  \sum_{\substack{0\leq \tilde{c}\leq a, \\ \tilde{\mathbf{k}}\in \mathcal{P}(n_x,\tilde{c}),\\ \tilde{\mathbf{s}}\in \mathcal{P}(n_y,a-\tilde{c}) }}G(a,c)G(a,\tilde{c}) Q(\mathbf{k},\mathbf{s},\tilde{\mathbf{k}},\tilde{\mathbf{s}},\mathbf{j}). \notag
\end{align*} where we define
\begin{align*}
	Q(\mathbf{k},\mathbf{s},\tilde{\mathbf{k}},\tilde{\mathbf{s}},\mathbf{j})=\mathrm{E}\Big( \prod_{t=1}^c x_{k_{t},j_1} \prod_{\tilde{t}=1}^{\tilde{c}} x_{\tilde{k}_{\tilde{t}},j_2}\Big) \mathrm{E}\Big(\prod_{m=1}^{a-c}  y_{s_{m},j_1}  \prod_{\tilde{m}=1}^{a-\tilde{c}}  y_{\tilde{s}_{\tilde{m}},j_2} \Big).
\end{align*}
Since we assume the $n=n_x+n_y$ copies are independent from each other and  $\boldsymbol{\mu}=\boldsymbol{\nu}=\mathbf{0}$, 
then $Q(\mathbf{k},\mathbf{s},\tilde{\mathbf{k}},\tilde{\mathbf{s}})= 0$ if $\{\mathbf{k}\}\neq \{\tilde{\mathbf{k}}\}$ or $\{\mathbf{s}\}\neq \{\tilde{\mathbf{s}}\}$. If $\{\mathbf{k}\}=\{\tilde{\mathbf{k}}\}$ and $\{\mathbf{s}\}= \{\tilde{\mathbf{s}}\}$, it induces $c=\tilde{c}$ and $ Q(\mathbf{k},\mathbf{s},\tilde{\mathbf{k}},\tilde{\mathbf{s}},\mathbf{j})=\sigma_{x,j_1,j_2}^c\sigma_{y,j_1,j_2}^{a-c}$. It follows that
\begin{align}
	\mathrm{E}\{\mathcal{U}^{(j_1)}(a)\mathcal{U}^{(j_2)}(a)\} =&~\sum_{c=0}^aG^2(c)P^{n_x}_{c}P^{n_y}_{a-c}c!(a-c)!\sigma_{x,j_1,j_2}^c\sigma_{y,j_1,j_2}^{a-c} \label{eq:covjtwosam}  \\
	=&~a!\sum_{c=0}^a \binom{a}{c} (P^{n_x}_{c})^{-1} (P^{n_y}_{a-c})^{-1}\sigma_{x,j_1,j_2}^c\sigma_{y,j_1,j_2}^{a-c} \notag \\
	\simeq &~ a!\Big(\frac{\sigma_{x,j_1,j_2}}{n_x} + \frac{\sigma_{y,j_1,j_2}}{n_y}\Big)^a. \notag
\end{align}

Combining \eqref{eq:varuatwosam} and \eqref{eq:covjtwosam}, we obtain $\mathrm{var}\{\mathcal{U}(a)\}$. By Condition \ref{cond:twosamplemeancond}, $\mathrm{var}\{\mathcal{U}(a)\}=\Theta(pn^{-a})$.

\subsubsection{Proof of Lemma \ref{lm:twosamcov} (on Page \pageref{lm:twosamcov}, Section \ref{sec:proofthm33})} \label{sec:prooftwosamcov}
Since under $H_0$, $\mathrm{E}\{\mathcal{U}(a)\}=\mathrm{E}\{\mathcal{U}(b)\}=0$, we have $\mathrm{cov}\{\mathcal{U}(a), \mathcal{U}(b)\}=\mathrm{E}\{\mathcal{U}(a)\times \mathcal{U}(b)\}$. Following \eqref{eq:equivaujdefitwosam}, 
\begin{eqnarray}
&&\mathrm{E}\{ \mathcal{U}(a) \times \mathcal{U}(b)\}=
	\sum_{1\leq j_1,j_2\leq p}\mathrm{E}\{\mathcal{U}^{(j_1)}(a) \times \mathcal{U}^{(j_2)}(b)\}, \label{eq:covuatwosam}
\end{eqnarray} where
\begin{align*}
\mathrm{E}\{\mathcal{U}^{(j_1)}(a)\times \mathcal{U}^{(j_2)}(b)\} 
=&~\sum_{\substack{0\leq c\leq a, \\ \mathbf{k}\in \mathcal{P}(n_x,c),\\ \mathbf{s}\in \mathcal{P}(n_y,a-c) }}\, \sum_{\substack{0\leq \tilde{c}\leq b, \\ \tilde{\mathbf{k}}\in \mathcal{P}(n_x,c),\\ \tilde{\mathbf{s}}\in \mathcal{P}(n_y,b-c) }}G(a,c)G(b,\tilde{c}) \notag \\
&~\times \mathrm{E}\Big( \prod_{t=1}^c x_{k_{t},j_1}\prod_{\tilde{t}=1}^{\tilde{c}} x_{\tilde{k}_{\tilde{t}},j_2} \Big)  \mathrm{E}\Big( \prod_{m=1}^{a-c}  y_{s_{m},j_1} \prod_{\tilde{m}=1}^{b-\tilde{c}}  y_{\tilde{s}_{\tilde{m}},j_2} \Big).
\end{align*} 
As $a\neq b$, $\{\mathbf{k}\}\neq \{\tilde{\mathbf{k}}\}$ and $\{\mathbf{s}\}\neq \{\tilde{\mathbf{s}}\}$ always hold. Then as $\boldsymbol{\mu}=\boldsymbol{\nu}=\mathbf{0}$, $\mathrm{E}( \prod_{t=1}^c x_{k_{t},j_1}\prod_{\tilde{t}=1}^{\tilde{c}} x_{\tilde{k}_{\tilde{t}},j_2} )=0$ and  $\mathrm{E}( \prod_{m=1}^{a-c}  y_{s_{m},j_1} \prod_{\tilde{m}=1}^{b-\tilde{c}}  y_{\tilde{s}_{\tilde{m}},j_2} )=0$, similarly to Section \ref{sec:proofcovariancezro}. It follows that $\eqref{eq:covuatwosam}=0$ and the lemma is proved.

\subsubsection{Proof of Lemma \ref{lm:twosamjointnormal} (on Page \pageref{lm:twosamjointnormal}, Section \ref{sec:proofthm33})} \label{sec:prooftwosamjointnormal}
By the  Cram\'er-Wold Theorem, to prove the asymptotic joint normality of the U-statistics, it suffices to prove that any of their fixed converges to normal. For illustration, we first prove the asymptotic normality for each $\mathcal{U}(a)$ of finite $a$. The similar arguments can be applied to the linear combination of finite U-statistics and then the joint normality is obtained. 


Recall $\mathcal{U}(a)=\sum_{j=1}^p \mathcal{U}^{(j)}(a)$ from  \eqref{eq:equivaujdefitwosam}. 
To derive the limiting distribution of $\mathcal{U}(a)$, we use Bernstein's block method  in \cite[][page 338]{ibragimov1971independent}; see also \cite{chen2014two,xu2016adaptive}. Specifically,  we partition the sequence,
	$
		\sigma^{-1}(a) \times \mathcal{U}^{(j)}(a), 
	$ $j=1,\ldots,p$, 
into $r$ blocks, where each block contains $b$ variables such that $rb\leq p <(r+1)b$. For each $1\leq k \leq r$, we partition the $k$th block into two sub-blocks with a larger one $A_{k,1}$ and a smaller one $A_{k,2}$. Suppose each $A_{k,1}$ has $b_1$ variables and each $A_{k,2}$ has $b_2=b-b_1$ variables. We require $r\rightarrow \infty$, $b_1\rightarrow \infty$, $b_2\rightarrow \infty$, $rb_1/p\rightarrow 1$ and $rb_2/p\rightarrow 0$ as $p\rightarrow \infty$. We write
\begin{eqnarray*}
	A_{k,1}(a)=\sum_{i=1}^{b_1}  \mathcal{U}^{(k-1)b+i}(a), \quad
	A_{k,2}(a)=\sum_{i=1}^{b_2}  \mathcal{U}^{(k-1)b+b_1+i}(a),
\end{eqnarray*}
and further define $\mathcal{U}_1=\sigma^{-1}(a) \sum_{k=1}^r A_{k,1}(a)$, $\mathcal{U}_2=\sigma^{-1}(a) \sum_{k=1}^r A_{k,2}(a)$, and $\mathcal{U}_3=\sigma^{-1}(a) \sum_{j=rb+1}^p \mathcal{U}^{(j)}(a)$. 
Thus we have the decomposition:
$
	\sigma^{-1}(a) \times \mathcal{U}(a)=\mathcal{U}_1+\mathcal{U}_2+\mathcal{U}_3.
$

The Bernstein's block method makes $A_{k,1}$ ``almost" independent, thus the study of $\mathcal{U}_1$ may be related to the cases of sums of independent random variables. In addition, since $b_2$ is small compared with $b_1$, we will show that the sums $\mathcal{U}_2$ and $\mathcal{U}_3$ will be small compared with the total sum of variables in the sequence, i.e., $\sigma^{-1}(a) \times \mathcal{U}(a)$. In particular, we first show
\begin{eqnarray*}
	\sigma^{-1}(a)\times \mathcal{U}(a)=\mathcal{U}_1+o_p(1),
\end{eqnarray*} where $o_p(1)$ represents that the remaining term  converges to 0 in probability. Since $\mathrm{E}(\mathcal{U}_2)=\mathrm{E}(\mathcal{U}_3)=0$, it suffices to prove that $\mathrm{var}(\mathcal{U}_2)=\mathrm{var}(\mathcal{U}_3)=o(1)$. 

For $\mathcal{U}_2$, note that $\mathcal{U}_2=\sigma^{-1}(a) \sum_{k=1}^r A_{k,2}(a)$. Then
\begin{align}
	& ~\mathrm{var}({\mathcal{U}_2}) \label{eq:varu2form1} \\
\leq &~ \sigma^{-2}(a) \sum_{\substack{ 1\leq k_1,k_2\leq r; \\  1\leq i_1,i_2\leq b_2} } \left| \mathrm{cov} \Big\{ \mathcal{U}^{((k_1-1)b+b_1+i_1)}(a),\ \mathcal{U}^{((k_2-1)b+b_1+i_2)}(a) \Big\}\right|. \notag 
\end{align}
Recall $\alpha_x(s)$ and $\alpha_y(s)$ in Condition \ref{cond:twosamplemeancond}. 
Define $\alpha(s)=\alpha_x(s)+\alpha_y(s)$, then $\alpha(s)\leq C\delta^s$, where $\delta=\max\{\delta_x, \delta_y\}\in (0,1)$. 
By the  $\alpha$-mixing inequality in Lemma \ref{lm:mixingineq}, 
\begin{align*}
\Big|\mathrm{cov}\left \{n^{a/2}\mathcal{U}^{(i)}(a) , n^{a/2} \mathcal{U}^{(j)}(a)  \right\} \Big|\leq  8\{\alpha(|i-j|)\}^{\frac{\epsilon}{2+\epsilon}} \max_{1\leq j \leq p} \left[\mathrm{E}\left| n^{a/2} \mathcal{U}^{(j)}(a)   \right| ^{2+\epsilon}\right]^{\frac{2}{2+\epsilon}}.
\end{align*}
We take $\epsilon=2$, and by Lemma \ref{lemma:orderfinaltwo} (on Page \pageref{lemma:orderfinaltwo}, Section \ref{sec:prooflmorderfinaltwo}),  we have $
	\max_{1\leq j \leq p} \mathrm{E}\{ n^{a/2} \mathcal{U}^{(j)}(a)   \} ^{2+\epsilon} < \infty.
$
It follows that
\begin{eqnarray}
	&&\left |\mathrm{cov} \left\{ \mathcal{U}^{((k_1-1)b+b_1+i_1)}(a),\mathcal{U}^{((k_2-1)b+b_1+i_2)}(a) \right\}  \right |\label{eq:varu2form2}\\
	&=&n^{-a}\left |\mathrm{cov} \left\{ n^{a/2}  \mathcal{U}^{((k_1-1)b+b_1+i_1)}(a), n^{a/2} \mathcal{U}^{((k_2-1)b+b_1+i_2)}(a) \right\} \right |\notag \\
	&\leq & Cn^{-a} \alpha \left\{ | ((k_1-1)b+b_1+i_1)-((k_2-1)b+b_1+i_2)  | \right\}^{\frac{2}{4}} \notag  \\
	&\leq & Cn^{-a}  \delta^{|k_1b+i_1-k_2b-i_2|/2}. \notag 
\end{eqnarray}
By \eqref{eq:varu2form1}, \eqref{eq:varu2form2} and $\sigma^2(a) =\Theta(pn^{-a})$ from Lemma \ref{lm:twosamvar},
\begin{eqnarray*}
	&& \mathrm{var}( \mathcal{U}_2)\notag \\
	 &\leq & \sigma^{-2}(a)  \sum_{\substack{ 1\leq k_1,k_2\leq r; \\  1\leq i_1,i_2\leq b_2} } \left| \mathrm{cov} \Big\{ \mathcal{U}^{((k_1-1)b+b_1+i_1)}(a),\mathcal{U}^{((k_2-1)b+b_1+i_2)}(a) \Big\}\right| \\
	&\leq& \sigma^{-2} (a)  \sum_{\substack{ 1\leq k_1,k_2\leq r; \\  1\leq i_1,i_2\leq b_2} }  n^{-a}C\delta^{|k_1b+i_1-k_2b-i_2|/2} \\
	&=& O(1) p^{-1}n^a rb_2n^{-a} =O(1) {rb_2}p^{-1},
\end{eqnarray*} which converges to 0 by our construction, i.e., $rb_2/p \rightarrow 0$. This shows that $\mathrm{var}( \mathcal{U}_2)=o(1)$. Next we exmaine 
$
	\mathcal{U}_3=\sigma^{-1}(a) \sum_{j=rb+1}^p \mathcal{U}^{(j)}(a).
$
Similarly,  by Lemmas  \ref{lm:mixingineq}
and \ref{lemma:orderfinaltwo},  and $\epsilon=2$,
\begin{eqnarray*}
	\mathrm{var}( \mathcal{U}_3 ) &=& \sigma^{-2}(a)n^{-a} \sum_{i=rb+1}^{p} \sum_{j=rb+1}^{p} \mathrm{cov} \left \{ n^{a/2}\mathcal{U}^{(i)}(a)  , n^{a/2}\mathcal{U}^{(j)}(a) \right\} \\
	&\leq & O(1)p^{-1} {n^{a}} n^{-a} \sum_{i=rb+1}^{p} \sum_{j=rb+1}^{p} C \alpha \left( \left | i-j \right|  \right)^{\frac{\epsilon}{2+\epsilon}} \\
	&\leq & O(1) p^{-1} \sum_{i=rb+1}^{p} \sum_{j=rb+1}^{p} \delta^{\left|i-j \right|/2}\\
	&\leq & O(1) p^{-1} (p-rb-1) \\
	&\leq & O(1) p^{-1} b.
\end{eqnarray*}
Since $b/p \rightarrow 0$, $\mathrm{var}(\mathcal{U}_3)=o(1)$.

Given $\mathrm{var}( \mathcal{U}_2 ) =o(1)$ and $\mathrm{var}( \mathcal{U}_3 ) =o(1)$ above, next we focus on $\mathcal{U}_1$. By the $\alpha$-mixing assumption in Condition \ref{cond:twosamplemeancond}, and following the similar arguments   in \cite[][page 338]{ibragimov1971independent},  we have for properly chosen $r$ and $b_2$,
\begin{eqnarray*}
	\Big | \mathrm{E} \left \{ \mathrm{exp}(it\mathcal{U}_1 ) \right \} - \prod_{k=1}^r \mathrm{E} \left [ \mathrm{exp} \left \{ it \sigma^{-1}(a) A_{k,1}(a) \right \} \right]  \Big | \leq 16r\alpha(b_2) \rightarrow 0.
\end{eqnarray*}
This suggests there exist  independent random variables $\left \{ \xi_k: k=1,\cdots,r \right \}$ such that $\xi_k$ and $A_{k,1}(a) $ are identically distributed and $\mathcal{U}_1$ has the same asymptotic distribution as $\sigma^{-1}(a) \sum_{k=1}^r \xi_k$. To prove the asymptotic normality of $\sigma^{-1}(a)\mathcal{U}_1$, now it remains to show that central limit theorem holds for $\sigma^{-1}(a)\sum_{k=1}^r \xi_k$. Then we check the Lyapunov condition, i.e.,  check that the moments of $\xi_{k}$ satisfy
\begin{eqnarray} \label{eq:lynaproof}
	s_r^{-4}{\sum_{k=1}^r \mathrm{E} \left \{ \sigma^{-1}(a) |\xi_k| \right\} ^4  } \rightarrow 0,
\end{eqnarray} where we define $ s_r^2=\sum_{k=1}^r \mathrm{var} \{ \sigma^{-1}(a) \xi_k \} $.  By Lemma \ref{lemma:orderfinaltwo},  for even $\epsilon>0$,
\begin{eqnarray}\label{proof:bound}
	\mathrm{M}_{4+\epsilon}:=\mathrm{max}_{1 \leq j \leq p} \left \{ \left \| n^{a/2} \left \{ \mathcal{U}^{(j)} (a)  \right \} \right \|_{4+\epsilon}\right \} < \infty.
\end{eqnarray}
Then by the moment bounds in  \cite[][Theorem 1]{kim1994momentbounds}, 
and the $\alpha$-mixing assumption in Condition \ref{cond:twosamplemeancond}, for $g(2,\epsilon)=\epsilon/(4+\epsilon) $,
\begin{eqnarray*}
    \mathrm{E} \left(\left [ \sum_{j=1}^{b_1} n^{a/2} \left \{\mathcal{U}^{(j)} (a)  \right \}  \right]^4 \right)\leq Cb_1^{2} \Big \{ C +M_{4+\epsilon}^4 \sum_{j=1}^{b_1} j^{2-1} \alpha(j)^{g(2,\epsilon)}      \Big \} 
\end{eqnarray*}
As $\delta \in (0,1)$ and $0<g(2,\epsilon)< 1$, 
\begin{eqnarray*}
	\sum_{j=1}^{\infty} j \alpha (j)^{g(2,\epsilon)} \leqslant  C \sum_{j=1}^{\infty} j \times (\delta^{g(2,\epsilon)})^j <\infty.  
	\end{eqnarray*}
It follows that
\begin{align*}
	\mathrm{E}\left \{ \sigma^{-1}(a)A_{1,1}(a) \right \}^4 =&~ \sigma^{-4}(a) n^{-2a} \mathrm{E} \left [ \sum_{j=1}^{b_1} n^{a/2} \left \{ \mathcal{U}^{(j)}(a) \right \} \right ]^4 \\
	\leq &~ O(1) p^{-2} n ^{2a} n^{-2a} \times b_1^{2} \left \{C +M_{4+\epsilon}^4 \sum_{j=1}^{b_1} j^{2-1} \alpha(j)^{g(2,\epsilon)}  \right \} \\
	=&~ O(1) p^{-2} \times b_1^2.
\end{align*}
Similarly, for other $k>1$, $\mathrm{E}\left \{ \sigma^{-1}(a)A_{k,1}(a) \right \}^4$ have the same bound. Thus,
\begin{eqnarray} \label{eq:numeratoroder}
	\sum_{k=1}^r \sigma^{-4}(a) E|\xi_k|^4 = O(1)rp^{-2}b_1^2.
\end{eqnarray}
In addition, \begin{align*}
	\mathrm{var} \{ \sigma^{-1}(a)\xi_{k}   \} =&~ \sigma^{-2}(a) \mathrm{var} \left\{  \sum_{i=1}^{b_1} \mathcal{U}^{\left( (k-1)b+i \right)} (a)  \right \} \\
	=&~ \sigma^{-2}(a)\sum_{1\leq i_1,i_2\leq b_1}\mathrm{cov} \left\{ \mathcal{U}^{\left( (k-1)b+i_1   \right) } (a),\mathcal{U}^{\left( (k-1)b+i_2   \right) } (a)   \right \} \\
	=&~ \sigma^{-2}(a)\sum_{1\leq i_1,i_2\leq b_1}\eqref{eq:covjtwosam}.
\end{align*}
By Condition \ref{cond:twosamplemeancond} and $rb_1/p\to 1$, we have
\begin{eqnarray} \label{eq:denominatorder}
	s_r^4&=&\Big [\sum_{j=1}^r \mathrm{var} \left \{{\xi_j}/{\sigma(a)} \right\}    \Big ]^2  \\
	&=&  \Theta(1) p^{-2} n^{2a} ( r\times b_1 n^{-a}  )^2 = \Theta(1) p^{-2}r^2 b_1^2 .\notag
\end{eqnarray}
Combine \eqref{eq:numeratoroder} and \eqref{eq:denominatorder},   \eqref{eq:lynaproof} is proved as $r\rightarrow \infty$. 

In summary, for any finite integer $a$, we prove the  asymptotic normality of $\mathcal{U}(a)/\sigma(a)$.   For any linear combination of U-statistics  $Z_n:=\sum_{r=1}^m t_r \mathcal{U}(a_r)/\sigma(a_r)$, we can similarly decompose  $Z_n$ into three parts and apply the analysis above. The similar conclusion holds for finite $m$ and the asymptotic joint normality is obtained by the Cram\'er-Wold Theorem. 


\subsubsection{Proof of Lemma \ref{lemma:orderfinaltwo} (on Page \pageref{lemma:orderfinaltwo}, Section \ref{sec:prooftwosamjointnormal})} \label{sec:prooflmorderfinaltwo}
\begin{lemma} \label{lemma:orderfinaltwo}
For $\forall$ finite even $\omega >0$ any $\forall$ finite integer $a>0$,  
	\begin{eqnarray*}
	\max_{1\leq j \leq p} \mathrm{E}\left\{ n^{a/2} \mathcal{U}^{(j)}(a)   \right\} ^{\omega} < \infty.
\end{eqnarray*}
\end{lemma}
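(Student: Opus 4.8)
The plan is to bound $\mathrm{E}\{ n^{a/2}\mathcal{U}^{(j)}(a)\}^{\omega}$ uniformly in $j$ by expanding the $\omega$-th power and carefully counting how many free summation indices survive after taking expectations. Recall from \eqref{eq:equivaujdefitwosam} that
\begin{eqnarray*}
	\mathcal{U}^{(j)}(a)=\sum_{c=0}^{a} G(a,c)\sum_{\substack{\mathbf{k}\in \mathcal{P}(n_x,c),\\ \mathbf{s}\in \mathcal{P}(n_y,a-c) }} \prod_{t=1}^c x_{k_{t},j} \prod_{m=1}^{a-c}  y_{s_{m},j},
\end{eqnarray*}
with $G(a,c)=(-1)^{a-c}\binom{a}{c}(P^{n_x}_c)^{-1}(P^{n_y}_{a-c})^{-1}$, so $|G(a,c)|=O(n^{-a})$. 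Since $\omega$ is a fixed even integer and $a$ is fixed, $\{n^{a/2}\mathcal{U}^{(j)}(a)\}^{\omega}$ is a finite linear combination (with coefficients of order $n^{a\omega/2}\cdot n^{-a\omega}=n^{-a\omega/2}$) of terms of the form
\begin{eqnarray*}
	\mathrm{E}\Big( \prod_{r=1}^{\omega} \prod_{t=1}^{c_r} x_{k^{(r)}_t,j}\prod_{m=1}^{a-c_r} y_{s^{(r)}_m,j}\Big),
\end{eqnarray*}
summed over $\mathbf{k}^{(r)}\in\mathcal{P}(n_x,c_r)$ and $\mathbf{s}^{(r)}\in\mathcal{P}(n_y,a-c_r)$ for $r=1,\dots,\omega$. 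Each such expectation factors into an $x$-part and a $y$-part because the two samples are independent.

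First I would observe that under $H_0$ we may take $\boldsymbol{\mu}=\boldsymbol{\nu}=\mathbf{0}$ by Proposition \ref{prop:locinvatwosam}, so $\mathrm{E}(x_{i,j})=\mathrm{E}(y_{i,j})=0$. Then, exactly as in the variance computations of Section \ref{sec:lmtwosamvar}, a product over the $x$-copies has nonzero expectation only if every copy index appearing among the $\mathbf{k}^{(r)}$'s appears at least twice; likewise for the $y$-copies. Consequently the total number of distinct $x$-copy indices is at most $\lfloor (\sum_r c_r)/2\rfloor \le a\omega/2$, and similarly the distinct $y$-copy indices number at most $a\omega/2$. This means the number of free summation indices in the $x$-part is $O(n^{a\omega/2})$ and in the $y$-part is $O(n^{a\omega/2})$, but since the nonzero constraints link the two parts' index counts to the same $c_r$ partition, the combined count of free $x$- and $y$-indices across all $\omega$ blocks is at most $a\omega/2$; hence the whole sum over copy indices is $O(n^{a\omega/2})$. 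Each individual expectation is bounded by a constant uniformly in $j$ and $p$ because, using $|x_{i,j}y_{i',j}\cdots|\le$ a sum of even powers via repeated application of $|uv|\le(u^2+v^2)/2$ (Lemma \ref{lm:inequaluvtheta}-type bounds) together with H\"older's inequality, it reduces to products of marginal moments $\mathrm{E}(x_j-\mu_j)^{k}$ and $\mathrm{E}(y_j-\nu_j)^{k}$ for $k\le a\omega$, which are uniformly bounded under the moment assumption in Condition \ref{cond:twosamplemeancond}(2) (the sub-Gaussian-type exponential moment bound implies all polynomial moments of every fixed order are uniformly bounded in $j$). Multiplying the $O(n^{a\omega/2})$ free-index count by the $O(n^{-a\omega/2})$ coefficient and the $O(1)$ per-term bound gives $\max_j \mathrm{E}\{n^{a/2}\mathcal{U}^{(j)}(a)\}^{\omega}=O(1)$.

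The main obstacle is the bookkeeping in the index-counting step: one must argue rigorously that after imposing the ``each index appears at least twice'' constraints separately on the $x$- and on the $y$-copy indices, the surviving number of free indices is exactly $\le a\omega/2$ in total (not $a\omega/2$ in each part, which would give $n^{a\omega}$ and be too weak). The point is that $\sum_{r=1}^{\omega} c_r$ counts the $x$-slots and $\sum_{r=1}^{\omega}(a-c_r)$ counts the $y$-slots, these sum to $a\omega$, and the pairing constraint within each group halves each, so the free indices total $\lfloor \sum_r c_r/2\rfloor+\lfloor\sum_r(a-c_r)/2\rfloor\le a\omega/2$. I would present this via the notation $\mathcal{P}(n,a)$ and set-operation conventions introduced at the start of Section \ref{sec:alllemma}, mirroring the structure used in the proof of Lemma \ref{lm:twosamvar}, and then note that the remaining uniform-boundedness estimate is routine given Condition \ref{cond:twosamplemeancond}. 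A secondary minor point worth stating carefully is that $n_x=\Theta(n_y)=\Theta(n)$ so that $P^{n_x}_c\asymp n^c$ and $P^{n_y}_{a-c}\asymp n^{a-c}$, justifying $|G(a,c)|=O(n^{-a})$.
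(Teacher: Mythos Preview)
Your proposal is correct and follows essentially the same approach as the paper: expand the $\omega$-th power, factor into an $x$-part and a $y$-part by independence of the two samples, use $\mathrm{E}(x_{i,j})=\mathrm{E}(y_{i,j})=0$ under $H_0$ to impose the ``each index appears at least twice'' constraint in each part, and then count that the total number of free indices is at most $\lfloor\sum_r c_r/2\rfloor+\lfloor\sum_r(a-c_r)/2\rfloor\le a\omega/2$, matching the $n^{-a\omega/2}$ coefficient from $\prod_r G(a,c_r)$. The paper presents exactly this computation, bounding the $x$-sum by $O(n_x^{\sum_l c_l/2})$ and the $y$-sum by $O(n_y^{\sum_l(a-c_l)/2})$ separately before combining, so your concern about the bookkeeping is well placed but your resolution of it is precisely what the paper does.
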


\begin{proof}
Recall the definition of $\mathcal{U}^{(j)}(a)$ in \eqref{eq:equivaujdefitwosam}.
For positive even $\omega$, 
\begin{eqnarray}
	&&\mathrm{E}[\{\mathcal{U}^{(j)}(a)\}^{\omega}] \label{eq:uaomegaexp} \\
&=& \sum_{l=1}^{\omega} \sum_{\substack{ 0\leq c_l \leq a,\\ \mathbf{k}^{(l)}\in \mathcal{P}(n_x,c_l),\\ \mathbf{s}^{(l)}\in \mathcal{P}(n_y,a-c_l) } } G(c_l) \mathrm{E}\Biggr( \prod_{l=1}^{\omega} \prod_{t_l=1}^{c_l}x_{k_{t_l}^{(l)},j} \Biggr)\mathrm{E}\Biggr( \prod_{l=1}^{\omega} \prod_{m_l=1}^{a-c_l}y_{s_{m_l}^{(l)},j} \Biggr).  \notag 
\end{eqnarray}
Define the index tuple $(\mathbf{k}^{(1)},\ldots,\mathbf{k}^{(\omega)})=(k_1^{(1)},\ldots, k_{c_1}^{(1)},\ldots, k_1^{(\omega)},\ldots, k_{c_{\omega}}^{(\omega)})$. When $|\{(\mathbf{k}^{(1)},\ldots,\mathbf{k}^{(\omega)}) \}|>\sum_{l=1}^{\omega}c_l/2$, it means that one of the index appears only once. Suppose index $i\in \{(\mathbf{k}^{(1)},\ldots,\mathbf{k}^{(\omega)}) \}$ only appears once, then under $H_0$, 
\begin{eqnarray}
	&& \mathrm{E}\Biggr( \prod_{l=1}^{\omega} \prod_{t_l=1}^{c_l}x_{k_{t_l}^{(l)},j} \Biggr)=\mathrm{E}(x_{i,j})\times\mathrm{E}(\mathrm{other\ terms})=0.\label{eq:expxtwosammean}
\end{eqnarray}
Thus $\eqref{eq:expxtwosammean}\neq 0$ only when  $|\{(\mathbf{k}^{(1)},\ldots,\mathbf{k}^{(\omega)}) \}|\leq \sum_{l=1}^{\omega}c_l/2$. By the  boundedness of moments in Condition \ref{cond:twosamplemeancond},
\begin{align*}
	\max_{1\leq j \leq p}\ \sum_{\substack{ 0\leq c_l \leq a,\, \mathbf{k}^{(l)}\in \mathcal{P}(n_x,c_l)} }  \mathrm{E}\Biggr( \prod_{l=1}^{\omega} \prod_{t_l=1}^{c_l}x_{k_{t_l}^{(l)},j} \Biggr)=O\left(n_x^{\sum_{l=1}^{\omega}c_l/2}\right). 
\end{align*}
Similarly, we have
\begin{align*}
	\max_{1\leq j \leq p}\ \sum_{\substack{ 0\leq c_l \leq a,\, \mathbf{s}^{(l)}\in \mathcal{P}(n_y,a-c_l)} } \mathrm{E}\Biggr( \prod_{l=1}^{\omega} \prod_{m_l=1}^{a-c_l}y_{s_{m_l}^{(l)},j} \Biggr)=O\left(n_y^{\sum_{l=1}^{\omega}(a-c_l)/2}\right). 
\end{align*}
As $G(a,c)=\Theta(n_x^{-c}n_y^{-(a-c)})$, by \eqref{eq:uaomegaexp}, 
$
\max_{1\leq j\leq p}\mathrm{E}[\{n^{a/2}\mathcal{U}^{(j)}(a)\}^{\omega}] <\infty.
$
\end{proof}

\subsection{Lemmas for the proof of Theorem \ref{THM:TWOSAMPLEMANINF}} \label{sec:condindpmeas}

\subsubsection{Proof of Lemma \ref{lm:condindpmeas}  (on Page \pageref{lm:condindpmeas}, Section \ref{sec:proofasymindptwosam})}

Recall $\mathcal{U}^{(j)}(a)$ defined in \eqref{eq:equivaujdefitwosam}. 
Similarly to $\tilde{\mathcal{U}}_c(a)$, we define $\tilde{\mathcal{U}}_c^{(j)}(a)$ as the sequence of random variables on the conditional probability measure $\tilde{P}$, given the event $n_xn_y\mathcal{U}(\infty)/(n_x+n_y)-\tau_p\leq u$ such that
\begin{align*}
&~\tilde{P}\Big\{\tilde{\mathcal{U}}_c^{(j)}(a)\leq u_j: 1\leq j\leq p\Big\} \notag \\
=&~{P}\Big\{{\mathcal{U}}^{(j)}(a)\leq u_j: 1\leq j\leq p ~\Big| ~ \frac{n_xn_y}{n_x+n_y}\mathcal{U}(\infty)\leq \tau_p+u \Big\}.
\end{align*}
Then $\sigma^{-1}(a)\tilde{\mathcal{U}}_c(a)=\sigma^{-1}(a)\sum_{j=1}^p \tilde{\mathcal{U}}_c^{(j)}(a)$, and
 we prove the asymptotic normality of $\sigma^{-1}(a)\tilde{\mathcal{U}}_c(a)$
similarly to  Section  \ref{sec:prooftwosamjointnormal}. In particular, we partition the sequence
$
	\{\sigma^{-1}(a) \times \tilde{\mathcal{U}}_c^{(j)}(a):  1\leq j\leq p\}
$ into $r$ blocks, where each block contains $b$ variables such that $rb\leq p<(r+1)b$. For each $1\leq k\leq r$, we further partition the $k$th block into  two sub-blocks such that a larger one $\tilde{A}_{k,1}$ contains the first $b_1$ variables and a smaller one $\tilde{A}_{k,2}$ contains the last $b_2=b-b_1$ variables. Similarly, for $1\leq k\leq r$, we write
\begin{eqnarray*}
	\tilde{A}_{k,1}(a)=\sum_{i=1}^{b_1}  \tilde{\mathcal{U}}_c^{(k-1)b+i}(a), \quad
	\tilde{A}_{k,2}(a)=\sum_{i=1}^{b_2}  \tilde{\mathcal{U}}_c^{(k-1)b+b_1+i}(a).
\end{eqnarray*}Correspondingly,  define $\tilde{\mathcal{U}}_1=\sigma^{-1}(a) \sum_{k=1}^r \tilde{A}_{k,1}(a)$, $\tilde{\mathcal{U}}_2=\sigma^{-1}(a) \sum_{k=1}^r \tilde{A}_{k,2}(a)$ and $\tilde{\mathcal{U}}_3=\sigma^{-1}(a) \sum_{j=rb+1}^p \tilde{\mathcal{U}}_c^{(j)}(a)$.
Then we have the decomposition:
$
	\sigma^{-1}(a) \times \tilde{\mathcal{U}}_c(a)=\tilde{\mathcal{U}}_1+\tilde{\mathcal{U}}_2+\tilde{\mathcal{U}}_3.
$ To show that $\sigma^{-1}(a) \times \tilde{\mathcal{U}}_c(a)$ satisfies the central limit theorem, we first show that $\tilde{\mathrm{E}}(\tilde{\mathcal{U}}_2^2)=o(1)$ and $\tilde{\mathrm{E}}(\tilde{\mathcal{U}}_3^2)=o(1)$.
\begin{align*}
	\tilde{\mathrm{E}}(\tilde{\mathcal{U}}_2^2)=&~\sigma^{-2}(a)\tilde{\mathrm{E}}\Big\{ \Big(\sum_{k=1}^r \tilde{A}_{k,2}(a)\Big)^2  \Big\}\notag \\
	\leq &~ \sigma^{-2}(a)\Biggr( \sum_{1\leq k_1,k_2\leq r}\Big[ \tilde{\mathrm{E}}\Big\{ \tilde{A}_{k_1,2}^2(a) \Big\}    \Big]^{1/2} \Big[ \tilde{\mathrm{E}}\Big\{ \tilde{A}_{k_2,2}^2(a) \Big\}    \Big]^{1/2} \Biggr) \notag \\
	\leq &~\sigma^{-2}(a) \Big[P\Big\{ \frac{n_xn_y}{n_x+n_y}\mathcal{U}(\infty)<\tau_p    \Big\}  \Big]^{-1} \notag \\
	&~\times \Biggr( \sum_{1\leq k_1,k_2\leq r}\Big[ {\mathrm{E}}\Big\{ {A}_{k_1,2}^2(a) \Big\}    \Big]^{1/2} \Big[ {\mathrm{E}}\Big\{ {A}_{k_2,2}^2(a) \Big\}    \Big]^{1/2} \Biggr),
\end{align*} where in the last inequality we use the fact that
\begin{align*}
	\tilde{\mathrm{E}}\Big\{ \tilde{A}_{k,2}^2(a) \Big\} =&~ \frac{\mathrm{E}\{  A_{k,2}^2(a)  \mathbf{1}_{\{ n_xn_y\mathcal{U}(\infty)/(n_x+n_y)<\tau_p+u \}} \}}{P\{ n_xn_y\mathcal{U}(\infty)/(n_x+n_y)<\tau_p+u   \}  }\notag \\
	\leq &~\frac{\mathrm{E}\{A_{k,2}^2(a) \}}{ P\{ n_xn_y\mathcal{U}(\infty)/(n_x+n_y)<\tau_p+u \} }.
\end{align*} The upper bound above converges to 0 under the $\alpha$-mixing condition by choosing proper convergence rate $b_2$; see Eq. (18.4.8) of \cite{ibragimov1971independent}. Similarly, we can also show $\tilde{\mathrm{E}}(\tilde{\mathcal{U}}_3^2)=o(1)$. It remains to examine the $\tilde{\mathcal{U}}_1$. 
Define $\alpha(s)$ as the mixing coefficient of $\{(x_{1,j},\ldots,x_{n_x,j},y_{1,j},\ldots, y_{n_y,j}: j=1,\ldots,p)\}$ and define $\tilde{\alpha}(s)$ as the corresponding  mixing coefficient on the conditional probability measure. 
Following a similar argument to that in \cite[][Lemma 2.2]{hsing1995}, we have
\begin{align*}
	\tilde{\alpha}(d)\leq 4\frac{ \max_{1\leq h\leq p-d}P\{ U^0_{h,d}(\infty)>\tau_p+u  \}+\alpha(d) }{ [P\{ n_xn_y\mathcal{U}(\infty)/(n_x+n_y)<\tau_p+u \}]^3  },
\end{align*}  
where    $U_{h,d}^0(\infty)=\max_{ h\leq j\leq h+d} U^{(j)}(\infty)$, $U^{(j)}(\infty)=\sigma_{j,j}^{-1}\times(\bar{x}_j-\bar{y}_j)^2\times n_xn_y/(n_x+n_y)$, and recall  $\tau_p=2\log p-\log \log p$. Since $x_{i,j}$ and $y_{i,j}$ are sub-gaussian random variables by Condition \ref{cond:twosamplemeancond}  \cite[Proposition 2.5.2]{vershynin2018high}, we know 
$
	\sigma_{j,j}^{-1/2}\times (\bar{x}_j-\bar{y}_j)\times \sqrt{n_xn_y}/\sqrt{n_x+n_y}
$ is a sub-gaussian variable with variance 1.  Therefore, $\max_{1\leq h\leq p-d}P\{U_{h,d}^0(\infty)>\tau_p+u\}\leq d\max_{1\leq j\leq p} P\{U^{(j)}(\infty)>\tau_p+u \}\leq Cd\exp\{-(\tau_p+u)/2\}\leq Cdp^{-1}\sqrt{\log p}$.
Then similarly to \cite[page 338]{ibragimov1971independent},  we have
\begin{eqnarray*}
	&&\Big | \tilde{\mathrm{E}} \left \{ \mathrm{exp}(it\tilde{\mathcal{U}}_1 ) \right \} - \prod_{k=1}^r \tilde{\mathrm{E}} \left [ \mathrm{exp} \left \{ it \sigma^{-1}(a) \tilde{A}_{k,1}(a) \right \} \right]  \Big | \notag\\
	 &\leq & 16r\tilde{\alpha}(b_2) \notag \\
	  &\leq & 64r\frac{ \max_{1\leq h\leq p-b_2}P\{ U^0_{h,b_2}(\infty)>\tau_p+u  \}+\alpha(b_2) }{ [P\{ n_xn_y\mathcal{U}(\infty)/(n_x+n_y)<\tau_p+u \}]^3  },
\end{eqnarray*} which converges to 0 for properly chosen $r$ and $b_2$ such that $rb_2\sqrt{\log p}/p\to 0$.
Thus there exist independent $\{\tilde{\xi}_k: k=1,\ldots, r \}$ such that $\tilde{\xi}_k$ and $\tilde{A}_{k1}(a)$ are identically distributed on probability measure $\tilde{P}$. Similarly to \cite[Lemma 2.4, Lemma 2.5]{hsing1995}, we have $\tilde{\mathrm{E}}\{\sigma^{-1}(a) \sum_{k=1}^r \tilde{\xi}_k \}\to 0$ and $\tilde{\mathrm{E}}[\{\sigma^{-1}(a) \sum_{k=1}^r \tilde{\xi}_k \}^2]\to 1$. To show the asymptotic normality on the conditional probability measure, it remains to check the Lyapunov condition that
\begin{eqnarray*} 
	{\sum_{k=1}^r \tilde{\mathrm{E}} \left \{ \sigma^{-1}(a) |\tilde{\xi}_k| \right\} ^4  } \leq \sigma^{-4}(a) \frac{\sum_{k=1}^r\mathrm{E}(\xi_k^4)}{P\{ n_xn_y\mathcal{U}(\infty)/(n_x+n_y)<\tau_p+u \}}  \rightarrow 0,
\end{eqnarray*}  where $\xi_k$ are define same as in Appendix Section \ref{sec:prooftwosamjointnormal}, and the convergence result follows from \eqref{eq:lynaproof}. This implies the asymptotic normality of conditional distribution given $\{ n_xn_y\mathcal{U}(\infty)/(n_x+n_y)<\tau_p+u \}$. Thus we obtain the asymptotic independence between $\mathcal{U}(a)/\sigma(a)$ and $\mathcal{U}(\infty)$.

\subsection{Lemmas for the proof of Theorem \ref{thm:altcltmeantest}}


\subsubsection{Proof of Lemma \ref{lm:twomeanaltvar}}\label{sec:pftwomeanaltvar}
Recall the definitions in \eqref{eq:altmeandeft}.  $T_{a,2}$ is the  summation over $j$ indexes in the set $\{k_0,\ldots,p\}$ such that $\mu_j=\nu_j=0$. Then $\mathrm{E}(T_{a,2})=0$.  Following the argument in Section \ref{sec:lmtwosamvar}, we obtain 
\begin{align*}
	\mathrm{var}(T_{a,2}) \simeq  \sum_{k_0+1 \leq j_1,j_2\leq p} a!\Big(\frac{\sigma_{x,j_1,j_2}}{n_x} + \frac{\sigma_{y,j_1,j_2}}{n_y}\Big)^a.
\end{align*}
Let $\mathcal{V}_{a,j_1,j_2}=\{{\sigma_{x,j_1,j_2}}/\gamma + {\sigma_{y,j_1,j_2}}/(1-\gamma)\}^a$.   By the mixing assumption in Condition \ref{cond:twosamplemeancond} and Lemma \ref{lm:mixingineq}, we know there exist some constants $C$ and $\tilde{\delta}$ such that $|\mathcal{V}_{a,j_1,j_2}|\leq C\tilde{\delta}^{|j_1-j_2|}$. Note that 
\begin{align*}
	&~\Big|\sum_{1\leq j_1,j_2\leq p}\mathcal{V}_{a,j_1,j_2}-  \sum_{k_0+1 \leq j_1,j_2\leq p} \mathcal{V}_{a,j_1,j_2} \Big|\notag \\
= &~\Big|\Big(\sum_{1\leq j_1,j_2\leq k_0}+ \sum_{1\leq j_1\leq k_0, \, k_0+1\leq j_2\leq p}+ \sum_{1\leq j_2\leq k_0, \, k_0+1\leq j_1\leq p} \Big)\mathcal{V}_{a,j_1,j_2}\Big| \notag \\
\leq  &~C \Big(\sum_{1\leq j_1,j_2\leq k_0}+ \sum_{1\leq j_1\leq k_0, \, k_0+1\leq j_2\leq p}+ \sum_{1\leq j_2\leq k_0, \, k_0+1\leq j_1\leq p} \Big) \tilde{\delta}^{|j_1-j_2|}=O(k_0). 
\end{align*}
Since $k_0=o(p)$ and Condition \ref{cond:twosamplemeancond} assumes that $\sum_{1\leq j_1,j_2\leq p}\mathcal{V}_{a,j_1,j_2}=\Theta(p)$, then $\sum_{k_0+1 \leq j_1,j_2\leq p} \mathcal{V}_{a,j_1,j_2} =\Theta(p)$. It follows that $ \mathrm{var}(T_{a,2})=\Theta(p^2n^{-a})$.  


It remains to prove $\mathrm{var}(T_{a,1})=o(pn^{-a})$. Note that $\mathrm{var}(T_{a,1})=\mathrm{E}(T_{a,1}^2)-\{\mathrm{E}(T_{a,1})\}^2$, and $\mathrm{E}(T_{a,1})=k_0\rho^a$. Following the definition in  \eqref{eq:altmeandeft}, 
\begin{align*}
\mathrm{E}(T_{a,1}^2)=& ~\sum_{\substack{1\leq j_1,j_2\leq k_0 }}	\sum_{\substack{0\leq c\leq a, \\ \mathbf{k}\in \mathcal{P}(n_x,c),\\ \mathbf{s}\in \mathcal{P}(n_y,a-c) }}\,  \sum_{\substack{0\leq \tilde{c}\leq a, \\ \tilde{\mathbf{k}}\in \mathcal{P}(n_x,\tilde{c}),\\ \tilde{\mathbf{s}}\in \mathcal{P}(n_y,a-\tilde{c}) }}G(a,c)G(a,\tilde{c}) Q(\mathbf{k},\mathbf{s},\tilde{\mathbf{k}},\tilde{\mathbf{s}},\mathbf{j}),
\end{align*} where similarly to Section \ref{sec:lmtwosamvar}, 
\begin{align*}
	Q(\mathbf{k},\mathbf{s},\tilde{\mathbf{k}},\tilde{\mathbf{s}},\mathbf{j})=\mathrm{E}\Big( \prod_{t=1}^c x_{k_{t},j_1} \prod_{\tilde{t}=1}^{\tilde{c}} x_{\tilde{k}_{\tilde{t}},j_2}\Big) \mathrm{E}\Big(\prod_{m=1}^{a-c}  y_{s_{m},j_1}  \prod_{\tilde{m}=1}^{a-\tilde{c}}  y_{\tilde{s}_{\tilde{m}},j_2} \Big). 
\end{align*}

Since $\mathrm{E}(\mathbf{y})=\boldsymbol{\nu}=\mathbf{0}$, if $\{\mathbf{s}\}\neq \{\tilde{\mathbf{s}}\}$, $Q(\mathbf{k},\mathbf{s},\tilde{\mathbf{k}},\tilde{\mathbf{s}},\mathbf{j}) = 0$. If $\{\mathbf{s}\}= \{\tilde{\mathbf{s}}\}$, it induces  $c=\tilde{c}$. When $\{\mathbf{s}\}= \{\tilde{\mathbf{s}}\}$, let  $b=|\{\mathbf{k}\}\cap\{\tilde{\mathbf{k}}\}|$, then $0\leq b\leq c$,  
\begin{align*}
	\mathrm{E}\{Q(\mathbf{k},\mathbf{s},\tilde{\mathbf{k}},\tilde{\mathbf{s}},\mathbf{j}) \}
=\mu_{j_1}^{c-b} \mu_{j_2}^{c-b} \varphi_{j_1,j_2}^{b}\sigma_{j_1,j_2}^{a-c}=\rho^{2(c-b)}\varphi_{j_1,j_2}^{b}\sigma_{j_1,j_2}^{a-c}, 
\end{align*}and 
\begin{align*}
\mathrm{E}(T_{a,1}^2)=& ~\sum_{\substack{1\leq j_1,j_2\leq k_0 }}	\sum_{\substack{0\leq c\leq a, \\ \mathbf{k}, \tilde{\mathbf{k}}\in \mathcal{P}(n_x,c);\\ \mathbf{s},\tilde{\mathbf{s}}\in \mathcal{P}(n_y,a-c) }}G^2(a,c)\times \rho^{2(c-b)}\varphi_{j_1,j_2}^{b}\sigma_{j_1,j_2}^{a-c}\times \mathbf{1}_{\{ \{\mathbf{s}\}=\{\tilde{\mathbf{s}}\} \}}	. 
\end{align*}
We next decompose $\mathrm{E}(T_{1,a}^2)=G_{t,1,a,1}+G_{t,1,a,2}+G_{t,1,a,3}$, where
\begin{align*}
G_{t,1,a,1}=\sum_{\substack{1\leq j_1,j_2\leq k_0 }}	\sum_{\substack{0\leq c\leq a, \\ \mathbf{k}, \tilde{\mathbf{k}}\in \mathcal{P}(n_x,c);\\ \mathbf{s},\tilde{\mathbf{s}}\in \mathcal{P}(n_y,a-c) }}G^2(a,c) \rho^{2(c-b)}\varphi_{j_1,j_2}^{b}\sigma_{j_1,j_2}^{a-c} \mathbf{1}_{\{ \{\mathbf{s}\}=\{\tilde{\mathbf{s}}\},c=a, b=0 \}},		
\end{align*}
\begin{align*}
	G_{t,1,a,2}=\sum_{\substack{1\leq j_1,j_2\leq k_0 }}	\sum_{\substack{0\leq c\leq a, \\ \mathbf{k}, \tilde{\mathbf{k}}\in \mathcal{P}(n_x,c);\\ \mathbf{s},\tilde{\mathbf{s}}\in \mathcal{P}(n_y,a-c) }}G^2(a,c) \rho^{2(c-b)}\varphi_{j_1,j_2}^{b}\sigma_{j_1,j_2}^{a-c} \mathbf{1}_{\{ \{\mathbf{s}\}=\{\tilde{\mathbf{s}}\},c\leq a-1, b=0 \}},
\end{align*} and 
\begin{align*}
	G_{t,1,a,3}=\sum_{\substack{1\leq j_1,j_2\leq k_0 }}	\sum_{\substack{0\leq c\leq a, \\ \mathbf{k}, \tilde{\mathbf{k}}\in \mathcal{P}(n_x,c);\\ \mathbf{s},\tilde{\mathbf{s}}\in \mathcal{P}(n_y,a-c) }}G^2(a,c) \rho^{2(c-b)}\varphi_{j_1,j_2}^{b}\sigma_{j_1,j_2}^{a-c} \mathbf{1}_{\{ \{\mathbf{s}\}=\{\tilde{\mathbf{s}}\}, 1\leq b\leq c \}}.
\end{align*}
Note that
$
	|\mathrm{var}(T_{a,1})|	\leq |G_{t,1,a,1}-\{\mathrm{E}(T_{a,1})\}^2|+ |G_{t,1,a,2}|+|G_{t,1,a,3}|. 
$ To prove $\mathrm{var}(T_{a,1})=o(pn^{-a})$, we will next show $|G_{t,1,a,1}-\{\mathrm{E}(T_{a,1})\}^2|$, $|G_{t,1,a,2}|$ and $|G_{t,1,a,3}|$ are $o(pn^{-a})$ respectively. 

First, as $\sum_{\mathbf{k}, \tilde{\mathbf{k}}\in \mathcal{P}(n_x,a);\, \mathbf{s},\tilde{\mathbf{s}}\in \mathcal{P}(n_y,a-c) } \mathbf{1}_{\{ \{\mathbf{s}\}=\{\tilde{\mathbf{s}}\},c=a, b=0 \}}=P^{n_x}_{2a}$ and $G(a,a)=(P^{n_x}_a)^{-1}$,
\begin{align*}
	G_{t,1,a,1}=\sum_{\substack{1\leq j_1,j_2\leq k_0 }}	\sum_{\substack{0\leq c\leq a, \\ \mathbf{k}, \tilde{\mathbf{k}}\in \mathcal{P}(n_x,c);\\ \mathbf{s},\tilde{\mathbf{s}}\in \mathcal{P}(n_y,a-c) }}G^2(a,c)\rho^{2a}\mathbf{1}_{\{ \{\mathbf{s}\}=\{\tilde{\mathbf{s}}\},c=a, b=0 \}}=\frac{P^{n_x}_{2a}}{(P^{n_x}_a)^{2}}k_0^2\rho^{2a}.
\end{align*} Then $|G_{t,1,a,1}-\{\mathrm{E}(T_{a,1})\}^2|= o(1)k_0^2n^{-2a}n^{2a}\rho^{2a}=o(pn^{-a})$, where we use  $\mathrm{E}(T_{a,1})=k_0\rho^{a}$. In addition, as  $\sum_{\substack{\mathbf{k}, \tilde{\mathbf{k}}\in \mathcal{P}(n_x,c); \mathbf{s},\tilde{\mathbf{s}}\in \mathcal{P}(n_y,a-c) }} \mathbf{1}_{\{ \{\mathbf{s}\}=\{\tilde{\mathbf{s}}\},c\leq a-1, b=0 \}}=O(n^{2c+a-c})$ and $G(a,c)=\Theta(n^{-a})$, we have
\begin{align*}
	|G_{t,1,a,2}|\leq C\sum_{\substack{1\leq j_1,j_2\leq k_0 }}\sum_{c=0}^{a-1}n^{-(a-c)}\rho^{2c}\sigma_{j_1,j_2}^{a-c}. 
\end{align*} 
Since $\sum_{1\leq j_1,j_2\leq k_0 } \sigma_{j_1,j_2}= O(k_0)$ by Condition \ref{cond:twosamplemeancond} and  Lemma \ref{lm:mixingineq}, we further know $|G_{t,1,a,2}| =\sum_{c=0}^{a-1}O(k_0\rho^{2c} n^{-(a-c)}).$ As $\rho=O(k_0^{-1/a}p^{1/(2a)}n^{-1/2})$ and $k_0=o(p)$, we obtain $|G_{t,1,a,2}|=o(pn^{-a}).$ Moreover, as $G(a,c)=\Theta(n^{-a})$, $\varphi_{j_1,j_2}=\rho^2+\sigma_{j_1,j_2}$, and $\sum_{\substack{\mathbf{k}, \tilde{\mathbf{k}}\in \mathcal{P}(n_x,c); \mathbf{s},\tilde{\mathbf{s}}\in \mathcal{P}(n_y,a-c) }} \mathbf{1}_{\{ \{\mathbf{s}\}=\{\tilde{\mathbf{s}}\}, b\geq 1 \}}=O(n^{2c-b+a-c})$, 
\begin{align*}
	|G_{t,1,a,3}|\leq C\sum_{ \substack{ 0\leq c\leq a,\\ 1\leq b \leq c} }\sum_{1\leq j_1,j_2\leq k_0}n^{-(b+a-c)} \rho^{2(c-b)}(\sigma_{j_1,j_2}+\rho^2)^{b}_{j_1,j_2} \sigma_{j_1,j_2}^{a-c}. 
\end{align*}
For given $c$ and $b$, the maximum order of $\sum_{1\leq j_1,j_2\leq k_0}n^{-(b+a-c)} \rho^{2(c-b)}(\sigma_{j_1,j_2}+\rho^2)^{b}_{j_1,j_2} \sigma_{j_1,j_2}^{a-c}$ is bounded by the following two quantities:
\begin{eqnarray}
	&&\sum_{1\leq j_1,j_2\leq k_0}Cn^{-(b+a-c)}  \rho^{2c}\sigma_{j_1,j_2}^{a-c}, \label{eq:uppertwosamalt1}\\
	&&\sum_{1\leq j_1,j_2\leq k_0} Cn^{-(b+a-c)} \sigma_{j_1,j_2}^{b+a-c}\rho^{2(c-b)}. \label{eq:uppertwosamalt2}
\end{eqnarray}
For \eqref{eq:uppertwosamalt1}, when $c=a$,  $\eqref{eq:uppertwosamalt1}=O(k_0^2n^{-b}\rho^{2a})=o(pn^{-a})$.  When $c\leq a-1$, since $\sum_{1\leq j_1,j_2\leq k_0 } \sigma_{j_1,j_2}= O(k_0)$ by Condition \ref{cond:twosamplemeancond} and Lemma \ref{lm:mixingineq}, then $\eqref{eq:uppertwosamalt1}=O(k_0n^{-(b+a-c)} \rho^{2c})=o(pn^{-a})$. For \eqref{eq:uppertwosamalt2}, as $b\geq 1$, $b+a-c\geq 1$.  Then similarly by Condition \ref{cond:twosamplemeancond} and Lemma \ref{lm:mixingineq}, $\eqref{eq:uppertwosamalt2}=O(k_0n^{-(b+a-c)}\rho^{2(c-b)})=o(pn^{-a})$.

In summary, we obtain $\mathrm{var}(T_{a,1})=o(pn^{-a})=o(1)\mathrm{var}(T_{a,2})$. Then 
\begin{align*}
	\mathrm{var}\{\mathcal{U}(a) \}\simeq \mathrm{var}(T_{a,2}) \simeq  \sum_{k_0+1 \leq j_1,j_2\leq p} a!\Big(\frac{\sigma_{x,j_1,j_2}}{n_x} + \frac{\sigma_{y,j_1,j_2}}{n_y}\Big)^a. 
\end{align*} By the Markov's inequality, $\{T_{a,1}-\mathrm{E}(T_{a,1}) \}/\sigma(a) \xrightarrow{P} 0.$


\subsubsection{Proof of Lemma \ref{lm:twomeanaltcov}}\label{sec:pftwomeanaltcov}

Note that 
\begin{align*}
		\{\sigma(a)\sigma(b)\}^{-1}\mathrm{cov}\{\mathcal{U}(a),\mathcal{U}(b)  \} =\{\sigma(a)\sigma(b)\}^{-1}\times \sum_{1\leq l_1,l_2\leq 2} \mathrm{cov}(T_{a,l_1},T_{b,l_2}). 
\end{align*}
Lemma \ref{lm:twomeanaltvar} suggests that $\mathrm{var}(T_{a,1})=o(1)\sigma^2(a)$.  By the Cauchy-Schwarz inequality, $\{\sigma(a)\sigma(b)\}^{-1}\mathrm{cov}\{\mathcal{U}(a),\mathcal{U}(b)  \}=\{\sigma(a)\sigma(b)\}^{-1}\mathrm{cov}(T_{a,2},T_{b,2})+o(1)$. To finish the proof, it suffices to show $\mathrm{cov}(T_{a,2},T_{b,2})=0$. Note that $T_{a,2}$ and $T_{b,2}$ are summation over $j$ indexes in the set $\{k_0,\ldots,p\}$ such that $\mu_j=\nu_j=0$. Then the proof in Section \ref{sec:prooftwosamcov} applies similarly and we have $\mathrm{cov}(T_{a,2},T_{b,2})=0$.

\subsection{Lemmas for the proof of Theorem \ref{thm:twosamnull}}

\subsubsection{Proof of Lemma \ref{lm:twosampvartest} (on Page \pageref{lm:twosampvartest}, Section \ref{sec:firstpfthmtwoclt})}\label{sec:pftwosampvartest}
In the following, we will first derive the form of $\mathrm{var}\{\tilde{\mathcal{U}}(a)\} $ and then prove that $\mathrm{var}\{\tilde{\mathcal{U}}(a)\} =o(1)\mathrm{var}\{\tilde{\mathcal{U}}^*(a)\}$.

As we assume $\mathrm{E}(\mathbf{x})=\mathrm{E}(\mathbf{y})=\mathbf{0}$, then $\mathrm{cov}(x_{1,j_1},x_{1,j_2})=\mathrm{E}(x_{1,j_1}x_{1,j_2})$ and $\mathrm{cov}(y_{1,j_1},y_{1,j_2})=\mathrm{E}(y_{1,j_1}y_{1,j_2})$. It follows that $\mathrm{E}\{\tilde{\mathcal{U}}(a)\}=0$ and  $\mathrm{var}\{\tilde{\mathcal{U}}(a)\} = \mathrm{E}\{\tilde{\mathcal{U}}^2(a)\}$. By definition, 
\begin{eqnarray*}
	\tilde{\mathcal{U}}(a)=(P^{n_x}_{a}P^{n_y}_{a})^{-1}\sum_{1\leq j_1,j_2\leq p} \sum_{\substack{\mathbf{i}\in \mathcal{P}(n_x,a);\\ \mathbf{w}\in \mathcal{P}(n_y,a)  } }\ \mathbb{D}_{\mathbf{x},\mathbf{y}}(\mathbf{i},\mathbf{w},j_1,j_2), 
\end{eqnarray*}
where we define  $\mathbb{D}_{\mathbf{x},\mathbf{y}}(\mathbf{i},\mathbf{w},j_1,j_2)=\prod_{t=1}^a  (x_{i_{t},j_1}x_{i_{t},j_2} -y_{w_{t},j_1}y_{w_{t},j_2})$.  Then
\begin{align*}
\mathrm{var}\{\tilde{\mathcal{U}}(a)\}=\frac{1}{(P^{n_x}_{a}P^{n_y}_{a})^2} \sum_{\substack{1\leq j_1,j_2,j_3,j_4 \leq p;\\ \mathbf{i},\, \tilde{\mathbf{i}}\in \mathcal{P}(n_x,a);\\ \mathbf{w},\, \tilde{\mathbf{w}}\in \mathcal{P}(n_y,a)  } } \mathrm{E}\Big\{\mathbb{D}_{\mathbf{x},\mathbf{y}}(\mathbf{i},\mathbf{w},j_1,j_2)\mathbb{D}_{\mathbf{x},\mathbf{y}}(\tilde{\mathbf{i}},\tilde{\mathbf{w}},j_3,j_4)    \Big\}.
\end{align*}
Under $H_0$, $\Sigma_{x}=\Sigma_{y}=\Sigma =(\sigma_{j_1,j_2})_{p\times p}$, then $\mathrm{E}(x_{1,j_1}x_{1,j_2}-\sigma_{j_1,j_2})=0$ and $\mathrm{E}(y_{1,j_1}y_{1,j_2}-\sigma_{j_1,j_2})=0$. If  $|\{\mathbf{i}\} \cap \{\tilde{\mathbf{i}}\}|+|\{\mathbf{w}\} \cap \{\tilde{\mathbf{w}}\}|<a,$ it means that the common indexes between $(\mathbf{i},\mathbf{w})$ and  $(\tilde{\mathbf{i}}, \tilde{\mathbf{w}})$ is smaller than $a$, then we know $\mathrm{E}\{\mathbb{D}_{\mathbf{x},\mathbf{y}}(\mathbf{i},\mathbf{w},j_1,j_2)\mathbb{D}_{\mathbf{x},\mathbf{y}}(\tilde{\mathbf{i}},\tilde{\mathbf{w}},j_3,j_4) \}=0.$ If  $|\{\mathbf{i}\} \cap \{\tilde{\mathbf{i}}\}|+|\{\mathbf{w}\} \cap \{\tilde{\mathbf{w}}\}|\geq a,$ we know $\mathrm{E}\{\mathbb{D}_{\mathbf{x},\mathbf{y}}(\mathbf{i},\mathbf{w},j_1,j_2)\mathbb{D}_{\mathbf{x},\mathbf{y}}(\tilde{\mathbf{i}},\tilde{\mathbf{w}},j_3,j_4) \}$ is a linear combination of  $(\mathbf{X}_{j_1,j_2,j_3,j_4})^{m}(\mathbf{Y}_{j_1,j_2,j_3,j_4})^{a-m}$, where $a- |\{\mathbf{w}\} \cap \{\tilde{\mathbf{w}}\}| \leq  m \leq |\{\mathbf{i}\} \cap \{\tilde{\mathbf{i}}\}| $ and 
\begin{eqnarray*}
	\mathbf{X}_{j_1,j_2,j_3,j_4}&=&\mathrm{E}\{(x_{1,j_1}x_{1,j_2}-\sigma_{j_1,j_2})(x_{1,j_3}x_{1,j_4}-\sigma_{j_3,j_4})\}, \notag \\
	\mathbf{Y}_{j_1,j_2,j_3,j_4}&=&\mathrm{E}\{(y_{1,j_1}y_{1,j_2}-\sigma_{j_1,j_2})(y_{1,j_3}y_{1,j_4}-\sigma_{j_3,j_4})\}.	
\end{eqnarray*} 
And if $|\{\mathbf{i}\} \cap \{\tilde{\mathbf{i}}\}|+|\{\mathbf{w}\} \cap \{\tilde{\mathbf{w}}\}|=t_0,$ 
\begin{align*}
\sum_{\substack{ \mathbf{i},\, \tilde{\mathbf{i}}\in \mathcal{P}(n_x,a);\\ \mathbf{w},\, \tilde{\mathbf{w}}\in \mathcal{P}(n_y,a)  } }	\mathbf{1}_{\{|\{\mathbf{i}\} \cap \{\tilde{\mathbf{i}}\}|+|\{\mathbf{w}\} \cap \{\tilde{\mathbf{w}}\}|=t_0 \} }= O(n^{4a-t_0}),
\end{align*} which achieves the largest order at $t_0=a$ when $t_0\geq a$. Therefore,
\begin{eqnarray*}
\mathrm{var}\{\tilde{\mathcal{U}}(a)\} &\simeq &\frac{1}{(P^{n_x}_{a}P^{n_y}_{a})^2} \sum_{\substack{1\leq j_1,j_2,j_3,j_4 \leq p;\\ \mathbf{i},\, \tilde{\mathbf{i}}\in \mathcal{P}(n_x,a);\\ \mathbf{w},\, \tilde{\mathbf{w}}\in \mathcal{P}(n_y,a)  } }\mathbf{1}_{\{|\{\mathbf{i}\} \cap \{\tilde{\mathbf{i}}\}|+|\{\mathbf{w}\} \cap \{\tilde{\mathbf{w}}\}|=a \} } \notag \\
	&&\times \mathrm{E}\Big\{\mathbb{D}_{\mathbf{x},\mathbf{y}}(\mathbf{i},\mathbf{w},j_1,j_2)\mathbb{D}_{\mathbf{x},\mathbf{y}}(\tilde{\mathbf{i}},\tilde{\mathbf{w}},j_3,j_4)    \Big\}.
\end{eqnarray*}
It follows that
\begin{eqnarray}
	&&\mathrm{var}\{\tilde{\mathcal{U}}(a)\}\label{eq:varleadtwosam} \\
	&\simeq &\sum_{1\leq j_1,j_2,j_3,j_4\leq p} \sum_{m=0}^a \frac{P^{n_x}_{2a-m} P^{n_y}_{a+m}}{(P^{n_x}_aP^{n_y}_a)^2}\binom{a}{m}^2\binom{a-m}{a-m}^2 \notag \\ 
	&& \quad \quad \times m!(a-m)! (\mathbf{X}_{j_1,j_2,j_3,j_4})^{m}(\mathbf{Y}_{j_1,j_2,j_3,j_4})^{a-m},  \notag 
\end{eqnarray} and then $\eqref{eq:varleadtwosam}\simeq \sum_{1\leq j_1,j_2,j_3,j_4\leq p} a!(\mathbf{X}_{j_1,j_2,j_3,j_4}/n_x+ \mathbf{Y}_{j_1,j_2,j_3,j_4}/n_y )^a.$	 


We next prove $\mathrm{var}\{\tilde{\mathcal{U}}(a)\} =o(1)\mathrm{var}\{\tilde{\mathcal{U}}^*(a)\}$ under Conditions \ref{cond:twosamplecov1} and  \ref{cond:twosamplecov2} in the following Sections \ref{sec:twosamcovsmallord1}  and \ref{sec:twosamcovsmallord2}  respectively.  

\paragraph{Under Condition \ref{cond:twosamplecov1}} \label{sec:twosamcovsmallord1}

To prove $\mathrm{var}\{\tilde{\mathcal{U}}(a)\} =o(1)\mathrm{var}\{\tilde{\mathcal{U}}^*(a)\}$ under Condition  \ref{cond:twosamplecov1}, we will first show $\mathrm{var}\{\tilde{\mathcal{U}}(a)\}=\Theta(p^2n^{-a})$. Note that ${P^{n_x}_{2a-m} P^{n_y}_{a+m}}/(P^{n_x}_aP^{n_y}_a)^2 \simeq Cn^{a}$. By \eqref{eq:varleadtwosam}, it remains to show that for any $m\in \{0,1,\ldots,a\}$, 
\begin{align}
	\sum_{ {1\leq j_1,j_2,j_3,j_4 \leq p} }  (\mathbf{X}_{j_1,j_2,j_3,j_4})^{m} (\mathbf{Y}_{j_1,j_2,j_3,j_4})^{a-m}=\Theta(p^2). \label{eq:twopsqorderxy}
\end{align}
We next prove \eqref{eq:twopsqorderxy} by discussing different cases of $\{j_1,j_2,j_3,j_4\}$, and using $K_0 = {-(2+\epsilon)(8+2\mu) (\log p)}/{(\epsilon \log \delta)}$ similarly to  \eqref{eq:thresholdd}, where $\epsilon$ and $\mu$ are positive constants and $\delta=\max\{\delta_x, \delta_y\}$ from Condition \ref{cond:twosamplecov1}.   

\medskip
\textbf{Case 1:} If $|j_1-j_2|\leq K_0$ and $|j_3-j_4|\leq K_0$,  we define  a distance $\kappa_d=\min\{ |j_1-j_3|, |j_1-j_4|, |j_2-j_3|, |j_2-j_4| \} $,  and discuss when $\kappa_d > K_0$ and $\kappa_d \leq K_0$ respectively.  For the simplicity of notation, define two indicator functions $I_1=\mathbf{1}_{\{|j_1-j_2|\leq K_0, |j_3-j_4|\leq K_0 , \kappa_d> K_0 \} }$ and $I_2=\mathbf{1}_{\{|j_1-j_2|\leq K_0, |j_3-j_4|\leq K_0 , \kappa_d\leq K_0 \} }$.
By definition, we have  $ \mathbf{X}_{j_1,j_2,j_3,j_4}= \mathrm{cov}(x_{1,j_1}x_{1,j_2},x_{1,j_3}x_{1,j_4})$ and $ \mathbf{Y}_{j_1,j_2,j_3,j_4}= \mathrm{cov}(y_{1,j_1}y_{1,j_2},y_{1,j_3}y_{1,j_4})$. When $\kappa_d > K_0$, we know $\mathbf{X}_{j_1,j_2,j_3,j_4} \leq  C\delta^{\frac{K_0\epsilon}{2+\epsilon}}$ by Condition \ref{cond:twosamplecov1} (2) and  (3) and Lemma \ref{lm:mixingineq}. It follows that
\begin{align}
	& \Big|\sum_{ {1\leq j_1,j_2,j_3,j_4 \leq p} }  (\mathbf{X}_{j_1,j_2,j_3,j_4})^{m} (\mathbf{Y}_{j_1,j_2,j_3,j_4})^{a-m}\times  I_1 \Big|	\label{eq:case1discp4twosam} \\
	 \leq  &Cp^4\delta^{\frac{K_0\epsilon}{2+\epsilon}}=O(1)p^4\times p^{-(8+2\mu)}=o(1). \notag
\end{align} In addition, note that $\sum_{1\leq j_1,j_2,j_3,j_4 \leq p}I_2= O(pK_0^3)=O(p\log^3p)$. By Condition \ref{cond:twosamplecov1} (2), we know
\begin{align*}
	 \Big|\sum_{ {1\leq j_1,j_2,j_3,j_4 \leq p} }  (\mathbf{X}_{j_1,j_2,j_3,j_4})^{m} (\mathbf{Y}_{j_1,j_2,j_3,j_4})^{a-m}\times  I_2 \Big|	=O(p\log^3p).
\end{align*}
\medskip

\textbf{Case 2:} If $|j_1-j_2|>K_0$ or $|j_3-j_4|> K_0$, by Lemma \ref{lm:mixingineq}, we know that $|\sigma_{j_1,j_2}\sigma_{j_3,j_4}|\leq C\delta^{\frac{K_0\epsilon}{2+\epsilon}}$. We consider $|j_1-j_2|>K_0$ without loss of generality and discuss the following cases (i)--(iv). 

\smallskip

\textbf{(i)} When $|j_2-j_3|>K_0/2$ and $|j_2-j_4|>K_0/2$, 
 \begin{align*}
 	|\mathbf{X}_{j_1,j_2,j_3,j_4}|=&| \mathrm{cov}(x_{1,j_1}x_{1,j_3}x_{1,j_4} \, , \, x_{1,j_2})-\sigma_{j_1,j_2}\sigma_{j_3,j_4}|
 	\leq  C\delta^{\frac{K_0\epsilon}{2(2+\epsilon)}}.
 \end{align*} 
 
\textbf{(ii)} When  $|j_2-j_3|\leq K_0/2$ and $|j_2-j_4|\leq K_0/2$, we know that $|j_1-j_3|\geq |j_1-j_2|-|j_2-j_3|>K_0/2$ and $|j_1-j_4|\geq |j_1-j_2|-|j_2-j_4|>K_0/2$. Then
\begin{align}
	|\mathbf{X}_{j_1,j_2,j_3,j_4}|=&| \mathrm{cov}(x_{1,j_1}\, ,\, x_{1,j_2}x_{1,j_3}x_{1,j_4})-\sigma_{j_1,j_2}\sigma_{j_3,j_4}|\leq C\delta^{\frac{K_0\epsilon}{2(2+\epsilon)}}.  \label{eq:xj4upbdtwosam2}
\end{align}

\textbf{(iii)} When  $|j_2-j_3|\leq K_0/2$ and $|j_2-j_4|> K_0/2$, as we know $|j_1-j_2|>K_0$, then $|j_1-j_3|>K_0/2$.  We next discuss three sub-cases.

\smallskip

\textbf{(iiia)} If $|j_1-j_4|> K_0/2$, we know \eqref{eq:xj4upbdtwosam2} also holds.

\smallskip

For easy presentation, let $I_3$ be an indicator function when $\{j_1,j_2,j_3,j_4\}$ satisfies the sub-cases (i), (ii) and (iiia) above. Then similarly to \eqref{eq:case1discp4twosam}, 
\begin{align*}
 \Big|\sum_{ {1\leq j_1,j_2,j_3,j_4 \leq p} }  (\mathbf{X}_{j_1,j_2,j_3,j_4})^{m} (\mathbf{Y}_{j_1,j_2,j_3,j_4})^{a-m}\times  I_3 \Big|	=o(1).\notag	
\end{align*}

\textbf{(iiib)} If $|j_1-j_4|\leq K_0/2$, and $|j_3-j_4|\leq K_0/2$, we  know under this case $|j_2-j_3|, |j_1-j_4|, |j_3-j_4|\leq K_0$. Let $I_4=\mathbf{1}_{ \{ |j_2-j_3|, |j_1-j_4|, |j_3-j_4|\leq K_0 \} }$. We have $\sum_{1\leq j_1,j_2,j_3,j_4\leq p}I_4=O(pK_0^3).$ By Condition \ref{cond:twosamplecov1} (2), we know
\begin{align*}
	 \Big|\sum_{ {1\leq j_1,j_2,j_3,j_4 \leq p} }  (\mathbf{X}_{j_1,j_2,j_3,j_4})^{m} (\mathbf{Y}_{j_1,j_2,j_3,j_4})^{a-m}\times  I_4 \Big|	=O(p\log^3p).
\end{align*}

\smallskip

\textbf{(iiic)} If $|j_1-j_4|\leq K_0/2$, and $|j_3-j_4|> K_0/2$, we know
\begin{align*}
	\mathbf{X}_{j_1,j_2,j_3,j_4}\geq  \mathrm{E}(x_{1,j_1}x_{1,j_4}) \mathrm{E}(x_{1,j_2}x_{1,j_3})-C\delta^{\frac{K_0\epsilon}{2(2+\epsilon)}}. \notag
\end{align*}Let $I_5$ be an indicator function of the sub-case (iiic) above. Then
\begin{align*}
	& ~\Big|\sum_{ {1\leq j_1,j_2,j_3,j_4 \leq p} }  (\mathbf{X}_{j_1,j_2,j_3,j_4})^{m} (\mathbf{Y}_{j_1,j_2,j_3,j_4})^{a-m}\times  I_5 \Big| \notag \\
=&~ \Big| \sum_{ {1\leq j_1,j_2,j_3,j_4 \leq p} }(\sigma_{j_1,j_4}\sigma_{j_2,j_3})^a\times I_5\Big|  + O(p^4 p^{-(4+\mu)}) \notag \\
 =&~ \Big|\sum_{|j_1-j_4|\leq K_0/2,\,|j_2-j_3|\leq K_0/2} (\sigma_{j_1,j_4}\sigma_{j_2,j_3})^a\Big|+o(1) \notag \\
= &~ \Big| \sum_{ {1\leq j_1,j_2,j_3,j_4 \leq p} }(\sigma_{j_1,j_4}\sigma_{j_2,j_3})^a - \sum_{ |j_1-j_4|> K_0\, \mathrm{ or }\, |j_2-j_3|> K_0 }(\sigma_{j_1,j_4}\sigma_{j_2,j_3})^a \Big| +o(1) \notag \\
=&~ \Theta(p^2).
\end{align*} where the last equation uses  Conditions \ref{cond:twosamplecov1} (3) and (4) and Lemma \ref{lm:mixingineq}.

\medskip
\textbf{(iv)} When  $|j_2-j_3|> K_0/2$ and $|j_2-j_4|\leq K_0/2$, this is symmetric to the sub-case (iii) discussed above.  Define an  indicator function $I_6=\mathbf{1}_{ \{|j_2-j_3|> K_0/2,  |j_2-j_4|\leq K_0/2\} }$. We then have
\begin{align*}
	& \Big|\sum_{ {1\leq j_1,j_2,j_3,j_4 \leq p} }  (\mathbf{X}_{j_1,j_2,j_3,j_4})^{m} (\mathbf{Y}_{j_1,j_2,j_3,j_4})^{a-m}\times  I_6 \Big|=\Theta(p^2). \notag 
\end{align*}

In summary,  \eqref{eq:twopsqorderxy} is proved and thus $\mathrm{var}\{\tilde{\mathcal{U}}(a)\}=\Theta(p^2 n^{-a})$ is obtained. To prove $\mathrm{var}\{\tilde{\mathcal{U}}(a)\} =o(1)\mathrm{var}\{\tilde{\mathcal{U}}^*(a)\}$, it remains to show that $\mathrm{var}\{\tilde{\mathcal{U}}^*(a)\}=o(p^2n^{-a})$.

We write $
	\mathcal{U}(a)=\sum_{c=0}^a \sum_{b_1=0}^c \sum_{b_2=0}^{a-c} C_{a,c,b_1,b_2}T_{b_1,b_2,c},
$ where we define $C_{a,c,b_1,b_2}= (-1)^{c-b_1+b_2} a!/\{b_1!b_2!(c-b_1)!(a-c-b_2)!\}$, and 
\begin{align}
	T_{b_1,b_2,c}=& \sum_{\substack{1\leq j_1, j_2 \leq p}} \ \sum_{\substack{\mathbf{i}\in \mathcal{P}(n_x,2c-b_1);\\ \mathbf{w}\in \mathcal{P}(n_y,2(a-c)-b_2) }}  (P^{n_x}_{2c-b_1}P^{n_y}_{2(a-c)-b_2})^{-1}\label{eq:tb1b2c} \\
	& \times \prod_{k=1}^{b_1}  (x_{i_k, j_1}x_{i_k,  j_2}-\sigma_{j_1,j_2} ) \prod_{k=b_1+1}^{c}  x_{i_{k}, j_1} \prod_{k=c+1}^{2c-b_1} x_{i_{k}, j_2}  \notag \\
	& \times \prod_{m=1}^{b_2}  (y_{{w}_m, j_1}y_{{w}_m,  j_2}-\sigma_{j_1,j_2}) \prod_{l=b_2+1}^{a-c} y_{{w}_{l}, j_1} \prod_{q=a-c+1}^{2(a-c)-b_2}y_{{w}_{q}, j_2}.  \notag 
\end{align}
Then  $\tilde{\mathcal{U}}(a)=\sum_{c=0}^a(-1)^{a-c}T_{c,a-c,c}$ and $\tilde{\mathcal{U}}^*(a)=\sum_{c=0}^a \sum_{b_1=0}^c \sum_{b_2=0}^{a-c} C_{a,c,b_1,b_2}\times T_{b_1,b_2,c}  \mathbf{1}_{b_1+b_2\leq a-1}$. Note that $\mathrm{var}\{ \tilde{\mathcal{U}}^*(a)\}\leq C\max_{b_1,b_2,c; b_1+b_2\leq a-1} \{\mathrm{var}( T_{b_1,b_2,c})\},$ where $C$ is some constant. When $a$ is finite, to prove $\mathrm{var}\{\tilde{\mathcal{U}}^*(a)\}=o(p^2n^{-a})$, it suffices to show that $\mathrm{var}( T_{b_1,b_2,c})=o(p^2n^{-a})$ for each $(b_1,b_2,c)$ satisfying $b_1+b_2\leq a-1$.  Note that $\mathrm{E}( T_{b_1,b_2,c})=0$ under $H_0$, then $\mathrm{var}( T_{b_1,b_2,c})=\mathrm{E}( T_{b_1,b_2,c}^2) $  and 
\begin{align}
	\mathrm{var}( T_{b_1,b_2,c})	=&~(P^{n_x}_{2c-b_1}P^{n_y}_{2(a-c)-b_2})^{-2}\sum_{\substack{1\leq j_1, j_2 \leq p; \\1\leq \tilde{j}_1, \tilde{j}_2 \leq p}} \ \sum_{\substack{\mathbf{i},\, \tilde{\mathbf{i}}\in \mathcal{P}(n_x,2c-b_1);\\ \mathbf{w}\, \tilde{\mathbf{w}}\in \mathcal{P}(n_y,2(a-c)-b_2) }}   \label{eq:vartb1b2c}  \\
	&~\quad \mathbb{T}(\mathbf{i},\tilde{\mathbf{i}},\mathbf{w},\tilde{\mathbf{w}},j_1,j_2,\tilde{j}_1,\tilde{j}_2),  \notag 
\end{align}
where we let
\begin{align*}
&~\mathbb{T}(\mathbf{i},\tilde{\mathbf{i}},\mathbf{w},\tilde{\mathbf{w}},j_1,j_2,\tilde{j}_1,\tilde{j}_2) \notag \\
=&~\mathrm{E}\Big\{ \prod_{k=1}^{b_1}  (x_{i_k, j_1}x_{i_k,  j_2} -\sigma_{j_1,j_2})( x_{\tilde{i}_k, \tilde{j}_1}x_{\tilde{i}_k,  \tilde{j}_2} -\sigma_{\tilde{j}_1,\tilde{j}_2 } )\prod_{k=b_1+1}^{c}  (x_{i_{k}, j_1} x_{\tilde{i}_{k}, \tilde{j}_1} ) \notag \\
&~ \times \prod_{k=c+1}^{2c-b_1} (x_{i_{k}, j_2}x_{\tilde{i}_{k}, \tilde{j}_2})\Big\}  \mathrm{E}\Big\{ \prod_{m=1}^{b_2}  (y_{{w}_m, j_1}y_{{w}_m,  j_2} -\sigma_{j_1,j_2})(y_{\tilde{w}_m, \tilde{j}_1}y_{\tilde{w}_m,  \tilde{j}_2} -\sigma_{j_1,j_2}) \notag \\
&~\times \prod_{m=b_2+1}^{a-c} (y_{{w}_{m}, j_1}y_{\tilde{w}_{m}, \tilde{j}_1} )\prod_{m=a-c+1}^{2(a-c)-b_2}(y_{{w}_{m}, j_2}y_{\tilde{w}_{m}, \tilde{j}_2})\Big\}.
\end{align*}
Since we assume without loss of generality that $\mathrm{E}(\mathbf{x})=\mathrm{E}(\mathbf{y})=\mathbf{0}$, then $\mathrm{E}(x_{1,j_1}x_{1,j_2}- \sigma_{j_1,j_2})=\mathrm{E}(y_{1,j_1}x_{1,j_2}- \sigma_{j_1,j_2})=0$. It follows that when $\{\mathbf{i}\}\neq \{\tilde{\mathbf{i}}\}$ or $\{\mathbf{w}\}\neq \{\tilde{\mathbf{w}}\}$, $\mathbb{T}(\mathbf{i},\tilde{\mathbf{i}},\mathbf{w},\tilde{\mathbf{w}},j_1,j_2,\tilde{j}_1,\tilde{j}_2)=0$. When  $\{\mathbf{i}\}= \{\tilde{\mathbf{i}}\}$ and $\{\mathbf{w}\}= \{\tilde{\mathbf{w}}\}$, we have $|\{\mathbf{i}\}\cup \{\tilde{\mathbf{i}}\} |+|\{\mathbf{w}\}\cup \{\tilde{\mathbf{w}}\}|= 2c-b_1+2(a-c)-b_2$. By Condition \ref{cond:twosamplecov1} (1) and (2), for any given $\{j_1,j_2,\tilde{j}_1,\tilde{j}_2\},$
\begin{align}
&~(P^{n_x}_{2c-b_1}P^{n_y}_{2(a-c)-b_2})^{-2} \ \sum_{\substack{\mathbf{i},\, \tilde{\mathbf{i}}\in \mathcal{P}(n_x,2c-b_1);\\ \mathbf{w}\, \tilde{\mathbf{w}}\in \mathcal{P}(n_y,2(a-c)-b_2) }}\mathbb{T}(\mathbf{i},\tilde{\mathbf{i}},\mathbf{w},\tilde{\mathbf{w}},j_1,j_2,\tilde{j}_1,\tilde{j}_2) \label{eq:varsmallordernsumtwosam1} \\
=&~O(n^{-2(2a+b_1+b_2)}\times n^{2a-b_1-b_2})=O(n^{-2a+b_1+b_2})=o(n^{-a-1}) \notag 
\end{align} where in the last equation, we use $b_1+b_2\leq a-1$. 
In addition, similarly to \eqref{eq:twopsqorderxy},  we have that for any given $(\mathbf{i},\tilde{\mathbf{i}},\mathbf{w},\tilde{\mathbf{w}}),$
\begin{align} \label{eq:varsmallorderpsumtwosam2}
 \sum_{1\leq j_1,j_2,\tilde{j}_1,\tilde{j}_2\leq p}	\mathbb{T}(\mathbf{i},\tilde{\mathbf{i}},\mathbf{w},\tilde{\mathbf{w}},j_1,j_2,\tilde{j}_1,\tilde{j}_2) = O(p^2).
\end{align}

In summary, by  \eqref{eq:varsmallordernsumtwosam1} and \eqref{eq:varsmallorderpsumtwosam2}, we know
$
	\mathrm{var}\{ \tilde{\mathcal{U}}^*(a) \} =O(p^2 n^{-a-1})=o(p^2n^{-a}). 
$

\paragraph{Under Condition \ref{cond:twosamplecov2}} \label{sec:twosamcovsmallord2}

In this section, we prove that $\mathrm{var}\{\tilde{\mathcal{U}}(a)\} =o(1)\mathrm{var}\{\tilde{\mathcal{U}}^*(a)\}$  under Condition \ref{cond:twosamplecov2}. Recall that we have already obtained $\mathrm{var}\{\tilde{\mathcal{U}}(a)\} $ in \eqref{eq:varleadtwosam}.   
By Condition \ref{cond:twosamplecov2} (3), we have
\begin{eqnarray}
	&&\quad \mathbf{X}_{j_1,j_2,j_3,j_4}\label{eq:twosamcovexpterm4th} 
	= \kappa_x( \sigma_{j_1,j_3}\sigma_{j_2,j_4}+\sigma_{j_1,j_4}\sigma_{j_2,j_3})+(\kappa_x-1)\sigma_{j_1,j_2}\sigma_{j_3,j_4},  \\
&&\quad \mathbf{Y}_{j_1,j_2,j_3,j_4}=\kappa_y( \sigma_{j_1,j_3}\sigma_{j_2,j_4}+\sigma_{j_1,j_4}\sigma_{j_2,j_3})+(\kappa_y-1)\sigma_{j_1,j_2}\sigma_{j_3,j_4}. \notag
\end{eqnarray}
Then by Condition \ref{cond:twosamplecov2} (1) and (4),  we know $(\mathbf{X}_{j_1,j_2,j_3,j_4})^{m}(\mathbf{Y}_{j_1,j_2,j_3,j_4})^{a-m}$ is a linear combination of 
\begin{align}
  \prod_{t=1}^a \Big\{\sigma_{j_{g_{1}^{(t)}},\, j_{g_2^{(t)}}}\times \sigma_{ j_{g_3^{(t)}},\,  j_{g_4^{(t)}}}\Big\},	 \label{eq:twosamvarsamllorder}
\end{align} where $\{(g_{1}^{(t)},g_2^{(t)}), ( g_3^{(t)},g_4^{(t)}): t=1,\ldots, a\}$ are $a$ allocations of the set $\{1,2,3,4\}$ into 2 (unordered) pairs. When the $a$ allocations are the same, by the symmetricity of $j$ indexes,
\begin{align*}
\sum_{1\leq j_1,j_2,j_3,j_4\leq p} \prod_{t=1}^a \sigma_{j_{g_{1}^{(t)}},j_{g_2^{(t)}}}\sigma_{ j_{g_3^{(t)}}, j_{g_4^{(t)}}}= \sum_{1\leq j_1,j_2,j_3,j_4\leq p}( \sigma_{j_1,j_3}\sigma_{j_2,j_4})^a. 
\end{align*}
When the $a$ allocations are different, by Condition  \ref{cond:twosamplecov2} (4), 
\begin{eqnarray}
&&\quad \quad \sum_{1\leq j_1,j_2,j_3,j_4\leq p} \prod_{t=1}^a \sigma_{j_{g_{1}^{(t)}},j_{g_2^{(t)}}}\sigma_{ j_{g_3^{(t)}}, j_{g_4^{(t)}}}=o(1) \sum_{1\leq j_1,j_2,j_3,j_4\leq p}( \sigma_{j_1,j_3}\sigma_{j_2,j_4})^a, \label{eq:smallodtwosamcond2}
\end{eqnarray} 
which can be obtained by taking  square of both sides of \eqref{eq:smallodtwosamcond2} and using Condition \ref{cond:twosamplecov2} (4). 
It follows that by \eqref{eq:varleadtwosam}, Condition \ref{cond:twosamplecov2} (1) and (4) and the symmetricity of $j$ indexes, 
\begin{align}
	\mathrm{var}\{\tilde{\mathcal{U}}(a)\} =\Theta(n^{-a})\sum_{1\leq j_1,j_2,j_3,j_4\leq p} ( \sigma_{j_1,j_3}\sigma_{j_2,j_4})^a.\label{eq:twosmavarordsecond}
\end{align}



We next show $\mathrm{var}\{\tilde{\mathcal{U}}^*(a)\}= o(1)\mathrm{var}\{\tilde{\mathcal{U}}(a)\}$.
Similarly to Section \ref{sec:twosamcovsmallord1}, we know it suffices to prove $\mathrm{var}(T_{b_1,b_2,c})=o(1)\mathrm{var}\{\tilde{\mathcal{U}}(a)\}$ for $0\leq c\leq a$, $0\leq b_1\leq c$, $0\leq b_2\leq a-c$ and $b_1+b_2\leq a-1$. 
Note that \eqref{eq:vartb1b2c} still holds here,   
and when $\{\mathbf{i}\}\neq \{\tilde{\mathbf{i}}\}$ or $\{\mathbf{w}\}\neq \{\tilde{\mathbf{w}}\}$, $\mathbb{T}(\mathbf{i},\tilde{\mathbf{i}},\mathbf{w},\tilde{\mathbf{w}},j_1,j_2,\tilde{j}_1,\tilde{j}_2)=0$. Therefore, \eqref{eq:varsmallordernsumtwosam1} also holds. 
By Condition \ref{cond:twosamplecov2} (3) and (4), similarly to the analysis of \eqref{eq:twosmavarordsecond}, we have for any given $(\mathbf{i},\tilde{\mathbf{i}},\mathbf{w},\tilde{\mathbf{w}}),$
\begin{eqnarray}
	&& \sum_{1\leq j_1,j_2,j_3,j_4\leq p}\mathbb{T}(\mathbf{i},\tilde{\mathbf{i}},\mathbf{w},\tilde{\mathbf{w}},j_1,j_2,\tilde{j}_1,\tilde{j}_2) \label{eq:varsmallorderpsumtwosam2second} \\
	&=& O(1)\sum_{1\leq j_1,j_2,j_3,j_4\leq p} ( \sigma_{j_1,j_3}\sigma_{j_2,j_4})^a. \notag
\end{eqnarray}
Combining \eqref{eq:varsmallordernsumtwosam1} and \eqref{eq:varsmallorderpsumtwosam2second}, 
\begin{align*}
	\mathrm{var}(T_{b_1,b_2,c}) = O(n^{-a-1})\sum_{1\leq j_1,j_2, j_3,j_4 \leq p} (\sigma_{j_1,j_3}\sigma_{j_2,j_4})^a \notag =  o(1) \mathrm{var}\{\tilde{\mathcal{U}}(a) \}. 
\end{align*}

\subsubsection{Proof of Lemma \ref{lm:twosampcovtest} (on Page \pageref{lm:twosampcovtest}, Section \ref{sec:firstpfthmtwoclt})} \label{lm:pftwosampcovtest}

Since $\mathrm{E}\{\mathcal{U}(a)\}=\mathrm{E}\{\mathcal{U}(b)\}=0$ under $H_0$, $\mathrm{cov}\{ \mathcal{U}(a)/\sigma(a), \mathcal{U}(b)/\sigma(b)\}=\mathrm{E}\{\mathcal{U}(a) \mathcal{U}(b)\}/\{\sigma(a)\sigma(b)\}.$ Recall that $\mathcal{U}(a) = \tilde{\mathcal{U}}(a)+\tilde{\mathcal{U}}^*(a) $ and $\mathcal{U}(b) = \tilde{\mathcal{U}}(b)+\tilde{\mathcal{U}}^*(b).$ Then
\begin{align}
\mathrm{E}\Big\{ \frac{\mathcal{U}(a)}{\sigma(a)}\times \frac{\mathcal{U}(b)}{\sigma(b)}\Big\}=&~\mathrm{E}\Big\{ \frac{\tilde{\mathcal{U}}(a)+\tilde{\mathcal{U}}^*(a) }{\sigma(a)}\times \frac{\tilde{\mathcal{U}}(b)+\tilde{\mathcal{U}}^*(b)}{\sigma(b)}\Big\} \label{eq:covprodexptwosam} \\
= &~ \mathrm{E} \Big\{ \frac{\tilde{\mathcal{U}}(a) \tilde{\mathcal{U}}(b) } {\sigma(a)\sigma(b)}\Big\}+ o(1), \notag
\end{align} where the last equation follows by Lemma \ref{lm:twosampvartest}. By the definition and notation in Section \ref{sec:pftwosampvartest},
\begin{eqnarray*}
	\tilde{\mathcal{U}}(a)=\tilde{C}_a\sum_{\substack{1\leq j_1,j_2\leq p;\\ \mathbf{i}\in \mathcal{P}(n_x,a);\\ \mathbf{w}\in \mathcal{P}(n_y,a)  } }\ \mathbb{D}_{\mathbf{x},\mathbf{y}}(\mathbf{i},\mathbf{w},j_1,j_2), \quad 	\tilde{\mathcal{U}}(b) = \tilde{C}_b\sum_{\substack{ 1\leq \tilde{j}_1,\tilde{j}_2\leq p;\\ \tilde{\mathbf{i}}\in \mathcal{P}(n_x,b);\\  \tilde{\mathbf{w}}\in \mathcal{P}(n_y,b)  } }    \mathbb{D}_{\mathbf{x},\mathbf{y}}(\tilde{\mathbf{i}},\tilde{\mathbf{w}},\tilde{j}_1,\tilde{j}_2),  \notag 
\end{eqnarray*}
where we let $\tilde{C}_a=(P^{n_x}_{a}P^{n_y}_{a})^{-1}$, $\tilde{C}_b=(P^{n_x}_{b}P^{n_y}_{b})^{-1}$, $\mathbb{D}_{\mathbf{x},\mathbf{y}}(\mathbf{i},\mathbf{w},j_1,j_2)=\prod_{t=1}^a  (x_{i_{t},j_1}x_{i_{t},j_2} -y_{w_{t},j_1}y_{w_{t},j_2})$ and $\mathbb{D}_{\mathbf{x},\mathbf{y}}(\tilde{\mathbf{i}},\tilde{\mathbf{w}},\tilde{j}_1,\tilde{j}_2) = \prod_{t=1}^b  (x_{\tilde{i}_{t},\tilde{j}_1}x_{\tilde{i}_{t},\tilde{j}_2} -y_{\tilde{w}_{t},\tilde{j}_1}y_{\tilde{w}_{t},\tilde{j}_2}).$
It follows that 
\begin{align*}
	 \mathrm{E}\{ \tilde{\mathcal{U}}(a)  \tilde{\mathcal{U}}(b)\} =\tilde{C}_a\tilde{C}_b 	\sum_{\substack{1\leq j_1,j_2, \tilde{j}_1,\tilde{j}_2\leq p;\\ \mathbf{i}\in \mathcal{P}(n_x,a);\, \tilde{\mathbf{i}}\in \mathcal{P}(n_x,b) \\ \mathbf{w}\in \mathcal{P}(n_y,a) ; \, \tilde{\mathbf{w}}\in \mathcal{P}(n_y,b) } } \mathrm{E}\Big\{  \mathbb{D}_{\mathbf{x},\mathbf{y}}(\mathbf{i},\mathbf{w},j_1,j_2) \mathbb{D}_{\mathbf{x},\mathbf{y}}(\tilde{\mathbf{i}},\tilde{\mathbf{w}},\tilde{j}_1,\tilde{j}_2) \Big\}.
\end{align*}
As $a\neq b$, we know $\{\mathbf{i}\}\neq \{\tilde{\mathbf{i}}\}$ and $ \{\mathbf{w}\} \neq \{\tilde{\mathbf{w}}\} $. It follows that similarly to Section \ref{sec:proofcovariancezro},  $\mathrm{E}\{  \mathbb{D}_{\mathbf{x},\mathbf{y}}(\mathbf{i},\mathbf{w},j_1,j_2) \mathbb{D}_{\mathbf{x},\mathbf{y}}(\tilde{\mathbf{i}},\tilde{\mathbf{w}},\tilde{j}_1,\tilde{j}_2) \}=0$. Therefore  $ \mathrm{E}\{\tilde{\mathcal{U}}(a) \tilde{\mathcal{U}}(b)\} =0$ and $\mathrm{cov}\{ \mathcal{U}(a)/\sigma(a), \mathcal{U}(b)/\sigma(b)\}=o(1).$

\subsubsection{Derivation of $D_{n,k}$ and $\pi_{n,k}^2$} \label{lm:pftwosampiform}

To prove Lemmas  \ref{lm:twosamvario1} and  \ref{lm:twosamvario2}, we  derive the forms of $D_{n,k}$ and $\pi_{n,k}^2$ in this section.     By construction, $D_{n,k}=\sum_{r=1}^m t_rA_{n,k,a_r},$ where 	$A_{n,k,a_r}=(\mathrm{E}_k-\mathrm{E}_{k-1})[\tilde{\mathcal{U}}(a_r)/\sigma(a_r)]$.
In addition, $\pi_{n,k}^2=  \sum_{1\leq r_1,r_2\leq m} t_{r_1}t_{r_2}\mathrm{E}_{k-1}( A_{n,k,a_{r_1}} A_{n,k,a_{r_2}}).$ 
It then suffices to derive the form of  $A_{n,k,a}$ for a given integer $a$, and also derive  $\mathrm{E}_{k-1}( A_{n,k,a_{1}} A_{n,k,a_{2}})$ for two given integers $a_1$ and $a_2.$ 

For easy presentation, we define $\mathcal{X}_{i,j_1,j_2}=x_{i,j_1}x_{i_{t},j_2}-\sigma_{j_1,j_2}$ and $\mathcal{Y}_{i,j_1,j_2}=y_{i,j_1}y_{i_{t},j_2}-\sigma_{j_1,j_2}$ in the following. Then under $H_0$,
\begin{align*}
\tilde{\mathcal{U}}(a)=(P^{n_x}_{a}P^{n_y}_{a})^{-1}\sum_{\substack{1\leq j_1,j_2\leq p;\\  \mathbf{i}\in \mathcal{P}(n_x,a);\, \mathbf{w}\in \mathcal{P}(n_y,a)  } }\ \prod_{t=1}^a  (\mathcal{X}_{w_t,j_1,j_2}-\mathcal{Y}_{i_t,j_1,j_2}).	
\end{align*}
 


\paragraph{Part I: $1\leq k\leq n_x$}\label{sec:part1sumankform}  When $1\leq k\leq n_x$, similarly to Section \ref{sec:proofcltabnkform}, as $\mathrm{E}(\mathcal{X}_{1,j_1,j_2})=0$ under $H_0$, we have
\begin{align*}
(\mathrm{E}_{k}-\mathrm{E}_{k-1})\Big\{\prod_{t=1}^a  (\mathcal{X}_{i_t,j_1,j_2}-\mathcal{Y}_{w_t,j_1,j_2})\Big\}=(\mathrm{E}_{k}-\mathrm{E}_{k-1})\Big(\prod_{t=1}^a \mathcal{X}_{i_t,j_1,j_2} \Big),
\end{align*} 
which is nonzero only when $i_1,\ldots, i_a \leq k$ and $k \in \{ i_1,\ldots, i_a \}$. Then we know when $k<a$, $A_{n,k,a}=0$  and when $k\geq a$,
\begin{align}
	A_{n,k,a}=&~c_1(n,a)\sum_{\substack{1\leq j_1,j_2 \leq p;\\ \mathbf{i}\in \mathcal{P}(k-1,a-1)}}\Big( \prod_{t=1}^{a-1} \mathcal{X}_{i_t,j_1,j_2} \Big)\mathcal{X}_{k,j_1,j_2}, \label{eq:twosamdnkform1} 
\end{align} where $c_1(n,a)={a!}/\{P^{n_x}_a \sigma(a)\}$.
For two integers $a_1$ and $a_2$, 
\begin{align*}
&~\mathrm{E}_{k-1}(A_{n,k,a_1}A_{n,k,a_2}) \notag \\
=&~\prod_{l=1}^2 c(n,a_l)\sum_{\substack{1\leq j_1,j_2,j_3,j_4 \leq p;\\ \mathbf{i}^{(l)}\in \mathcal{P}(k-1,a_l-1),\, l=1,2}}\mathbb{M}_{\mathbf{x},\mathbf{y},1}(k,\mathbf{i}^{(l)},j_{2l-1},j_{2l}: l=1,2),\notag 
\end{align*}where 
\begin{align*}
	&~\mathbb{M}_{\mathbf{x},\mathbf{y},1}(k,\mathbf{i}^{(l)},j_{2l-1},j_{2l}: l=1,2) \notag \\
	=&~ \prod_{l=1}^2\Big( \prod_{t=1}^{a_l-1}\mathcal{X}_{i^{(l)}_t,j_{2l-1},j_{2l}} \Big) \mathrm{E}(\mathcal{X}_{k,j_{1},j_{2}}\mathcal{X}_{k,j_{3},j_{4}}). \notag 
\end{align*}

\paragraph{Part II: $n_x+1\leq k\leq n_x+n_y$} \label{sec:part2sumankform}
When $n_x+1\leq k\leq n_x+n_y$,   we have
\begin{eqnarray*}
 \prod_{t=1}^a  (\mathcal{X}_{i_t,j_1,j_2}-\mathcal{Y}_{i_t,j_1,j_2})=\sum_{s=0}^a  \sum_{ \substack{\mathbf{i}^* \in \mathcal{S}(\mathbf{i},s);\\ \mathbf{w}^* \in \mathcal{S}(\mathbf{w},a-s)}}  \Big(\prod_{t=1}^s \mathcal{X}_{i_t^*,j_1,j_2} \Big) \Big( \prod_{\tilde{t}=1}^{a-s} \mathcal{Y}_{w_{\tilde{t}}^*,j_1,j_2} \Big), 
\end{eqnarray*}
where $\mathcal{S}(\mathbf{i},s)$ represents the collection of sub-tuples of $\mathbf{i}$ with length $s$ and  $\mathcal{S}(\mathbf{w},a-s)$ represents the collection of sub-tuples of $\mathbf{w}$ with length $a-s$, which is similarly used in Section \ref{sec:lma41}.  
When $n_x+1\leq k\leq n_x+n_y$,  similarly to Section \ref{sec:proofcltabnkform}, $(\mathrm{E}_{k}-\mathrm{E}_{k-1}) \{\prod_{t=1}^s (x_{i_t^*,j_1}x_{i_t^*,j_2}-\sigma_{j_1,j_2})  \prod_{\tilde{t}=1}^{a-s} (y_{w_{\tilde{t}}^*,j_1}y_{w_{\tilde{t}}^*,j_2} -\sigma_{j_1,j_2} )\}\neq 0$ only when $ w_1^*,\ldots, w_{a-s}^*\leq k-n_x$ and $k-n_x \in \{ w_1^*,\ldots, w_{a-s}^*\}$, and then
\begin{align*}
(\mathrm{E}_{k}-\mathrm{E}_{k-1})\Big(\prod_{t=1}^s \mathcal{X}_{i_t^*,j_1,j_2} \prod_{\tilde{t}=1}^{a-s} \mathcal{Y}_{w_{\tilde{t}}^*,j_1,j_2} \Big)=\mathcal{Y}_{k-n_x,j_1,j_2}  \prod_{t=1}^s \mathcal{X}_{i_t^*,j_1,j_2}  \prod_{\tilde{t}=1}^{a-s-1} \mathcal{Y}_{w_{\tilde{t}}^*,j_1,j_2}. 
\end{align*}
It follows that  
\begin{align*}
	A_{n,k,a}=&\sum_{s=L_k}^{a-1}  \sum_{\substack{1\leq j_1,j_2\leq p;\\ \mathbf{i}\in \mathcal{P}(n_x,s); \\ \mathbf{w}\in \mathcal{P}(k-n_x-1, a-s-1)} } c_2(n,a,s)\mathcal{Y}_{k-n_x,j_1,j_2}  \prod_{t=1}^s \mathcal{X}_{i_t,j_1,j_2}  \prod_{\tilde{t}=1}^{a-s-1} \mathcal{Y}_{w_{\tilde{t}},j_1,j_2},  
\end{align*}
where $L_k=\max\{n_x-k+a,0\}$ and $c_2(n,a,s)={P^{n_x-s}_{a-s}P^{n_y-a+s}_{s}}\{P^{n_x}_a P^{n_y}_a\sigma(a)\}^{-1}$. 
Thus for two constants $a_1$ and $a_2$, 
\begin{align*}
	&~\mathrm{E}_{k-1}(A_{n,k,a_1}A_{n,k,a_2}) \notag \\
 =&~\sum_{\substack{1\leq j_1,j_2,j_3,j_4\leq p;\\ L_k\leq  s_l \leq a_l:\, l=1,2; \\ \mathbf{i}^{(l)}\in \mathcal{P}(n_x,s_l):\, l=1,2;\\ \mathbf{w}^{(l)}\in \mathcal{P}(k-n_x-1,a_l-s_l-1):\, l=1,2}} \prod_{l=1}^2c_2(n,a_l,s_l) \mathbb{M}_{\mathbf{x},\mathbf{y},2}(k-n_x,\mathbf{i}^{(l)},j_{2l-1},j_{2l}: l=1,2),
\end{align*}
where
\begin{align*}
&~ \mathbb{M}_{\mathbf{x},\mathbf{y},2}(k-n_x,\mathbf{i}^{(l)},j_{2l-1},j_{2l}: l=1,2) \notag \\
=&~ \prod_{l=1}^2\Big( \prod_{t=1}^{s_l} \mathcal{X}_{i_t^{(l)},j_{2l-1},j_{2l}}  \prod_{\tilde{t}=1}^{a_l-s_l-1} \mathcal{Y}_{w_{\tilde{t}}^{(l)},j_{2l-1},j_{2l}} \Big) \mathrm{E}(\mathcal{Y}_{k-n_x,j_{1},j_{2}}\mathcal{Y}_{k-n_x,j_{3},j_{4}} ).
\end{align*}

\subsubsection{Proof of Lemma \ref{lm:twosamvario1} (on Page \pageref{lm:twosamvario1}, Section \ref{sec:firstpfthmtwoclt})} \label{sec:pftwosamvario1} 

Note that by the Cauchy-Schwarz inequality, for some constant $C$, $$\mathrm{var}\Big(\sum_{k=1}^n \pi_{n,k}^2\Big)\leq Cn^2 \max_{1\leq k\leq n;\, 1\leq r_1,r_2\leq m} \mathrm{var}(\mathbb{T}_{k,a_{r_1},a_{r_2}}), $$
where for two integers $a_1$ and $a_2$, $\mathbb{T}_{k,a_{1},a_{2}}=\mathrm{E}_{k-1}( A_{n,k,a_{1}} A_{n,k,a_{2}}) 
$ is given in Section \ref{lm:pftwosampiform}.
Therefore to prove Lemma \ref{lm:twosamvario1}, it suffices to prove $\mathrm{var}(\mathbb{T}_{k,a_{r_1},a_{r_2}})=o(n^{-2})$ for every  $1\leq k\leq n$ and $1\leq r_1,r_2\leq m$.  We next prove $ \mathrm{var}(\mathbb{T}_{k,a_1,a_2})=o(n^{-2})$ when $a\leq k\leq n_x$ and $n_x+1\leq k\leq n_x+n_y$ in the following Parts I and II respectively.  


\paragraph{Part I: $a\leq k\leq n_x$}\label{sec:pdfcltpart1} 
We first derive the form of $\mathrm{var}(\mathbb{T}_{k,a_1,a_2} )$ when $a\leq k\leq n_x$. As $\mathrm{var}(\mathbb{T}_{k,a_1,a_2} )=\mathrm{E}(\mathbb{T}_{k,a_1,a_2}^2)- \{\mathrm{E}(\mathbb{T}_{k,a_1,a_2})\}^2$, we next derive $\mathrm{E}(\mathbb{T}_{k,a_1,a_2})$ and $\mathrm{E}(\mathbb{T}_{k,a_1,a_2}^2)$.  In particular,
\begin{align*}
	\mathrm{E}(\mathbb{T}_{k,a_1,a_2})=\prod_{l=1}^2c(n,a_l)\sum_{\substack{1\leq j_1,j_2,j_3,j_4 \leq p;\\ \mathbf{i}^{(l)}\in \mathcal{P}(k-1,a_l-1),\, l=1,2}}\mathrm{E}\Big\{\mathbb{M}_{\mathbf{x},\mathbf{y},1}(k,\mathbf{i}^{(l)},j_{2l-1},j_{2l}: l=1,2)\Big\}.
\end{align*}
For easy presentation, we let $a_3=a_1$ and $a_4=a_2$, and have
\begin{align*}
&~\Big\{\mathrm{E}(\mathbb{T}_{k,a_1,a_2})\Big\}^2 \notag \\
=&~ \prod_{l=1}^4c(n,a_l)	\sum_{\substack{1\leq j_1,j_2,j_3,j_4,j_5,j_6,j_7,j_8 \leq p;\\ \mathbf{i}^{(l)}\in \mathcal{P}(k-1,a_l-1),\, l=1,2,3,4}}\mathrm{E}\Big\{\mathbb{M}_{\mathbf{x},\mathbf{y},1}(k,\mathbf{i}^{(l)},j_{2l-1},j_{2l}: l=1,2)\Big\}\notag \\
&~ \times \mathrm{E}\Big\{\mathbb{M}_{\mathbf{x},\mathbf{y},1}(k,\mathbf{i}^{(l)},j_{2l-1},j_{2l}: l=3,4)\Big\}.
\end{align*}
In addition, we have
\begin{align*}
	\mathrm{E}(\mathbb{T}_{k,a_1,a_2}^2)=&~\prod_{l=1}^4c(n,a_l) \sum_{\substack{1\leq j_1,j_2,j_3,j_4,j_5,j_6,j_7,j_8 \leq p;\\ \mathbf{i}^{(l)}\in \mathcal{P}(k-1,a_l-1),\, l=1,2,3,4}} \notag \\
	&~ \mathrm{E}\Big\{\mathbb{M}_{\mathbf{x},\mathbf{y},1}(k,\mathbf{i}^{(l)},j_{2l-1},j_{2l}: l=1,2,3,4)   \Big\},
\end{align*} where we define
\begin{align*}
&~\mathbb{M}_{\mathbf{x},\mathbf{y},1}(k,\mathbf{i}^{(l)},j_{2l-1},j_{2l}: l=1,2,3,4)	\notag \\
=&~ \prod_{l=1}^4\Big(  \prod_{t=1}^{a_l-1}\mathcal{X}_{i^{(l)}_t,j_{2l-1},j_{2l}} \Big)\mathrm{E}(\mathcal{X}_{k,j_{1},j_{2}}\mathcal{X}_{k,j_{3},j_{4}})\mathrm{E}(\mathcal{X}_{k,j_{5},j_{6}}\mathcal{X}_{k,j_{7},j_{8}}).
\end{align*}

Let $\mathbf{1}_{E}$ be an indicator function of the event that $(\{\mathbf{i}^{(1)}\}\cup \{\mathbf{i}^{(2)}\})\cap (\{\mathbf{i}^{(3)}\}\cup \{\mathbf{i}^{(4)}\})=\emptyset$. 
Then define
\begin{align*}
	G_{a_1,a_2,1}=&~\prod_{l=1}^4c(n,a_l)	\sum_{\substack{1\leq j_1,j_2,j_3,j_4,j_5,j_6,j_7,j_8 \leq p;\\ \mathbf{i}^{(l)}\in \mathcal{P}(k-1,a_l-1),\, l=1,2,3,4}}\times \mathbf{1}_{E} \notag \\
	&~\times \mathrm{E}\Big\{\mathbb{M}_{\mathbf{x},\mathbf{y},1}(k,\mathbf{i}^{(l)},j_{2l-1},j_{2l}: l=1,2,3,4)\Big\}.
\end{align*} We also note that
\begin{align}
&~\mathrm{E}\Big\{\mathbb{M}_{\mathbf{x},\mathbf{y},1}(k,\mathbf{i}^{(l)},j_{2l-1},j_{2l}: l=1,2,3,4)\Big\}\times \mathbf{1}_{E} \label{eq:equiedef} \\
=&~	\mathrm{E}\Big\{\mathbb{M}_{\mathbf{x},\mathbf{y},1}(k,\mathbf{i}^{(l)},j_{2l-1},j_{2l}: l=1,2)\Big\} \notag \\
	&\quad \times \mathrm{E}\Big\{\mathbb{M}_{\mathbf{x},\mathbf{y},1}(k,\mathbf{i}^{(l)},j_{2l-1},j_{2l}: l=3,4)\Big\} \times \mathbf{1}_{E}. \notag 
\end{align}
Since $|\mathrm{var}(\mathbb{T}_{k,a_1,a_2} )|\leq |\mathrm{E}(\mathbb{T}^2_{k,a_1,a_2} )-G_{a_1,a_2,1}|+|\{\mathrm{E}(\mathbb{T}_{k,a_1,a_2} )\}^2-G_{a_1,a_2,1}|$, to prove $\mathrm{var}(\mathbb{T}_{k,a_1,a_2} )=o(n^{-2})$, we will next show that   $|\{\mathrm{E}(\mathbb{T}_{k,a_1,a_2} )\}^2-G_{a_1,a_2,1}|=o(n^{-2})$ and $|\mathrm{E}(\mathbb{T}^2_{k,a_1,a_2} )-G_{a_1,a_2,1}|=o(n^{-2})$. In particular, we present the proof under Conditions \ref{cond:twosamplecov1}  and \ref{cond:twosamplecov2} in the following Sections \ref{sec:pflem133cond1} and \ref{sec:pflem133cond2}, respectively.  

\medskip

\subparagraph{Proof under Condition \ref{cond:twosamplecov1}} \label{sec:pflem133cond1}
\quad

\smallskip

\noindent \textit{Step I: $|\{\mathrm{E}(\mathbb{T}_{k,a_1,a_2} )\}^2-G_{a_1,a_2,1}|=o(n^{-2})$.}
If $a_1\neq a_2$, we have $\mathrm{E}(\mathbb{T}_{k,a_1,a_2} )=G_{a_1,a_2,1}=0$. It remains to consider $a_1=a_2$ below. 
Note that
\begin{eqnarray}
&&  \mathrm{E}\{\mathbb{M}_{\mathbf{x},\mathbf{y},1}(k,\mathbf{i}^{(l)},j_{2l-1},j_{2l}: l=1,2)\}\label{eq:prodnonzero}	 \\
&&\times \mathrm{E}\{\mathbb{M}_{\mathbf{x},\mathbf{y},1}(k,\mathbf{i}^{(l)},j_{2l-1},j_{2l}: l=3,4)\} \notag
\end{eqnarray}satisfies that $\eqref{eq:prodnonzero}\neq 0$ only if $ \{\mathbf{i}^{(1)}\}= \{\mathbf{i}^{(2)}\}$ and $ \{\mathbf{i}^{(3)}\}= \{\mathbf{i}^{(4)}\}$. Thus, 
\begin{align*}
	\{\mathrm{E}(\mathbb{T}_{k,a_1,a_2} )\}^2=&~\prod_{l=1}^4c(n,a_l)	\sum_{\substack{1\leq j_1,j_2,j_3,j_4,j_5,j_6,j_7,j_8 \leq p;\\ \mathbf{i}^{(l)}\in \mathcal{P}(k-1,a_l-1),\, l=1,2,3,4}}\mathbf{1}_{\Big\{ \substack{\{\mathbf{i}^{(1)}\}= \{\mathbf{i}^{(2)}\},\\ \{\mathbf{i}^{(3)}\}= \{\mathbf{i}^{(4)}\} }\Big\}}\times \eqref{eq:prodnonzero}. \notag 
\end{align*} 
Similarly, $\mathrm{E}\{\mathbb{M}_{\mathbf{x},\mathbf{y},1}(k,\mathbf{i}^{(l)},j_{2l-1},j_{2l}: l=1,2,3,4)\}\times \mathbf{1}_{E}\neq 0$ only when $ \{\mathbf{i}^{(1)}\}= \{\mathbf{i}^{(2)}\}$ and $ \{\mathbf{i}^{(3)}\}= \{\mathbf{i}^{(4)}\}$. Therefore, by \eqref{eq:equiedef},
\begin{align*}
G_{a_1,a_2,1}=	\prod_{l=1}^4c(n,a_l)	\sum_{\substack{1\leq j_1,j_2,j_3,j_4,j_5,j_6,j_7,j_8 \leq p;\\ \mathbf{i}^{(l)}\in \mathcal{P}(k-1,a_l-1),\, l=1,2,3,4}} \mathbf{1}_{\Big\{ \substack{\{\mathbf{i}^{(1)}\}= \{\mathbf{i}^{(2)}\},\\ \{\mathbf{i}^{(3)}\}= \{\mathbf{i}^{(4)}\},\\ \{\mathbf{i}^{(1)}\}\cap\{\mathbf{i}^{(3)}\}= \emptyset \} } \Big\} }\times \eqref{eq:prodnonzero},
\end{align*}and then
\begin{eqnarray}
\quad &&|\{\mathrm{E}(\mathbb{T}_{k,a_1,a_2} )\}^2-G_{a_1,a_2,1}| \label{eq:diffpart1clt1twocov} \\
& \leq &\prod_{l=1}^4c(n,a_l)	\sum_{\substack{1\leq j_1,j_2,j_3,j_4,j_5,j_6,j_7,j_8 \leq p;\\ \mathbf{i}^{(l)}\in \mathcal{P}(k-1,a_l-1),\, l=1,2,3,4}} \mathbf{1}_{\Big\{ \substack{\{\mathbf{i}^{(1)}\}= \{\mathbf{i}^{(2)}\},\\ \{\mathbf{i}^{(3)}\}= \{\mathbf{i}^{(4)}\},\\ \{\mathbf{i}^{(1)}\}\cap \{\mathbf{i}^{(3)}\}\neq \emptyset \} } \Big\} }\times |\eqref{eq:prodnonzero}|. \notag 
\end{eqnarray}
Note that
\begin{align}
	\sum_{\substack{ \mathbf{i}^{(l)}\in \mathcal{P}(k-1,a_l-1),\, l=1,2,3,4}} \mathbf{1}_{\Big\{ \substack{\{\mathbf{i}^{(1)}\}= \{\mathbf{i}^{(2)}\},\\ \{\mathbf{i}^{(3)}\}= \{\mathbf{i}^{(4)}\},\\ \{\mathbf{i}^{(1)}\}\cap \{\mathbf{i}^{(3)}\}\neq \emptyset \} } \Big\} }=O(n^{a_1+a_2-3}). \label{eq:taaorderncond2}
\end{align}
In addition, by Condition \ref{cond:twosamplecov1} (2), 
\begin{align}
	&~\sum_{1\leq j_1,j_2,j_3,j_4,j_5,j_6,j_7,j_8 \leq p} |\eqref{eq:prodnonzero} |\label{eq:twocovpordond11} \\
\leq & C\sum_{1\leq j_1,j_2,j_3,j_4,j_5,j_6,j_7,j_8 \leq p}\Big|\mathrm{E}(\mathcal{X}_{k,j_{1},j_{2}}\mathcal{X}_{k,j_{3},j_{4}})\mathrm{E}(\mathcal{X}_{k,j_{5},j_{6}}\mathcal{X}_{k,j_{7},j_{8}})\Big|. \notag
\end{align}
Recall that $\mathrm{E}(\mathcal{X}_{k,j_{1},j_{2}}\mathcal{X}_{k,j_{3},j_{4}})=\mathbf{X}_{j_1,j_2,j_3,j_4}$ and $\mathrm{E}(\mathcal{X}_{k,j_{5},j_{6}}\mathcal{X}_{k,j_{7},j_{8}})=\mathbf{X}_{j_5,j_6,j_7,j_8}$ following the notation in Section \ref{sec:pftwosampvartest}. Following the similar analysis for the proof of \eqref{eq:twopsqorderxy}, we obtain 
$
	\sum_{1\leq j_1,j_2,j_3,j_4\leq p} |\mathbf{X}_{j_1,j_2,j_3,j_4}|=O(p^2)$ and $  \sum_{1\leq j_5,j_6,j_7,j_8\leq p} |\mathbf{X}_{j_5,j_6,j_7,j_8}|=O(p^2).
$
It follows that $ \eqref{eq:twocovpordond11}=O(p^4)$. Note that $c(n,a)=\Theta(p^{-1}n^{-a/2})$ by Lemma \ref{lm:twosampvartest}. 
Combining \eqref{eq:taaorderncond2} and \eqref{eq:twocovpordond11}, we obtain $\{\mathrm{E}(\mathbb{T}_{k,a_1,a_2} )\}^2-G_{a_1,a_2,1}=o(n^{-2}).$

\smallskip

\noindent \textit{Step II: $|\mathrm{E}(\mathbb{T}^2_{k,a_1,a_2} )-G_{a_1,a_2,1}|=o(n^{-2})$.}
By construction, we have
\begin{eqnarray}
\quad \quad \quad && \mathrm{E}(\mathbb{T}^2_{k,a_1,a_2} )-G_{a_1,a_2,1}=\prod_{l=1}^4c(n,a_l)	\sum_{\substack{1\leq j_1,j_2,j_3,j_4,j_5,j_6,j_7,j_8 \leq p;\\ \mathbf{i}^{(l)}\in \mathcal{P}(k-1,a_l-1),\, l=1,2,3,4}} (1-\mathbf{1}_E) \label{eq:twosamdiffp1g} \\
	&&\quad \quad \quad\quad \quad \quad\quad \quad \quad \times \mathrm{E}\Big\{\mathbb{M}_{\mathbf{x},\mathbf{y},1}(k,\mathbf{i}^{(l)},j_{2l-1},j_{2l}: l=1,2,3,4)\Big\}.\notag
\end{eqnarray} 
When  $|\cup_{l=1}^4 \{\mathbf{i}^{(l)}\}|> a_1+a_2-2$, which means that there exists one index that  only appears once among the four sets $\{\mathbf{i}^{(l)}\}$, $l=1,2,3,4$, then similarly to Section \ref{sec:prooftargetorder},  
\begin{align}
	&~ \mathrm{E}\Big\{\mathbb{M}_{\mathbf{x},\mathbf{y},1}(k,\mathbf{i}^{(l)},j_{2l-1},j_{2l}: l=1,2)\Big\}\times (1-\mathbf{1}_E)\label{eq:diffmtwosamm} 
\end{align} satisfies that $\eqref{eq:diffmtwosamm}=0$. 
 When $|\cup_{l=1}^4 \{\mathbf{i}^{(l)}\}|< a_1+a_2-2$, 
 \begin{align}
 	\sum_{\substack{ \mathbf{i}^{(l)}\in \mathcal{P}(k-1,a_l-1),\, l=1,2,3,4}}\mathbf{1}_{\{|\cup_{l=1}^4 \{\mathbf{i}^{(l)}\}|< a_1+a_2-2 \} }=O(n^{a_1+a_2-3}).\label{eq:twosamnodr3mincond1}
 \end{align}  
Similarly to the analysis of \eqref{eq:twocovpordond11} above, by  Condition \ref{cond:twosamplecov1}, we have
\begin{align}
\sum_{1\leq j_1,j_2,j_3,j_4,j_5,j_6,j_7,j_8 \leq p}\eqref{eq:diffmtwosamm}=O(p^4).	\label{eq:twosamp4ordc1}
\end{align}
Therefore, by \eqref{eq:twosamnodr3mincond1}, \eqref{eq:twosamp4ordc1} and $c(n,a)=\Theta(p^{-1}n^{-a/2})$,
\begin{eqnarray*}
	&&\prod_{l=1}^4c(n,a_l)	\sum_{\substack{1\leq j_1,j_2,j_3,j_4,j_5,j_6,j_7,j_8 \leq p;\\ \mathbf{i}^{(l)}\in \mathcal{P}(k-1,a_l-1),\, l=1,2,3,4}}(1-\mathbf{1}_E)  \mathbf{1}_{ \{|\cup_{l=1}^4 \{\mathbf{i}^{(l)}\}|< a_1+a_2-2 \}} \notag \\
	&& \quad \quad \quad \quad\quad \quad \times \mathrm{E}\Big\{\mathbb{M}_{\mathbf{x},\mathbf{y},1}(k,\mathbf{i}^{(l)},j_{2l-1},j_{2l}: l=1,2)\Big\}\notag \\
	&= & O(1)n^{-a_1-a_2}p^{-4} n^{a_1+a_2-3}p^4 = o(n^{-2}). 
\end{eqnarray*}
Last,  we consider $|\cup_{l=1}^4 \{\mathbf{i}^{(l)}\}|= a_1+a_2-2$. Note that $1-\mathbf{1}_E \neq 0$ indicates that $(\{\mathbf{i}^{(1)}\}\cup \{\mathbf{i}^{(2)}\})\cap (\{\mathbf{i}^{(3)}\}\cup \{\mathbf{i}^{(4)}\})\neq \emptyset$ under this case.  By the symmetricity of the $j$ indexes, we have
\begin{align}
	&~ \Big| \sum_{1\leq j_1,j_2,j_3,j_4,j_5,j_6,j_7,j_8 \leq p}\eqref{eq:diffmtwosamm}\Big| \label{eq:sumjmixtwocov} \\
	\leq &~C \sum_{1\leq j_1,j_2,j_3,j_4,j_5,j_6,j_7,j_8 \leq p}\Big|\mathrm{E}(\mathcal{X}_{k,j_{1},j_{2}}\mathcal{X}_{k,j_{3},j_{4}})\mathrm{E}(\mathcal{X}_{k,j_{5},j_{6}}\mathcal{X}_{k,j_{7},j_{8}}) \notag \\
	& \quad \quad \times \mathrm{E}(\mathcal{X}_{k,j_{1},j_{2}}\mathcal{X}_{k,j_{5},j_{6}})\mathrm{E}(\mathcal{X}_{k,j_{3},j_{4}}\mathcal{X}_{k,j_{7},j_{8}})\Big|. \notag 
\end{align}Following similar arguments to that in  Sections \ref{sec:pfvartaamigixing} and \ref{sec:twosamcovsmallord1}, by discussing different cases of $j$ indexes, we have $\eqref{eq:sumjmixtwocov}=o(p^4)$. Thus,
\begin{eqnarray*}
	&&\prod_{l=1}^4c(n,a_l)	\sum_{\substack{1\leq j_1,j_2,j_3,j_4,j_5,j_6,j_7,j_8 \leq p;\\ \mathbf{i}^{(l)}\in \mathcal{P}(k-1,a_l-1),\, l=1,2,3,4}}(1-\mathbf{1}_E)  \mathbf{1}_{ \{|\cup_{l=1}^4 \{\mathbf{i}^{(l)}\}|= a_1+a_2-2 \}} \notag \\
	&& \quad \quad \quad \quad\quad \quad \times \mathrm{E}\Big\{\mathbb{M}_{\mathbf{x},\mathbf{y},1}(k,\mathbf{i}^{(l)},j_{2l-1},j_{2l}: l=1,2)\Big\}\notag \\
	&= & o(1)n^{-a_1-a_2}p^{-4} n^{a_1+a_2-2}p^4 = o(n^{-2}). 
\end{eqnarray*}In summary, we obtain $ \mathrm{E}(\mathbb{T}^2_{k,a_1,a_2} )-G_{a_1,a_2,1}=o(n^{-2}).$

%

\medskip

\subparagraph{Proof under Condition \ref{cond:twosamplecov2}} \label{sec:pflem133cond2}


Similarly to  Section \ref{sec:pflem133cond1}, we next prove $|\{\mathrm{E}(\mathbb{T}_{k,a_1,a_2} )\}^2-G_{a_1,a_2,1}|=o(n^{-2})$ and  $|\mathrm{E}(\mathbb{T}^2_{k,a_1,a_2} )-G_{a_1,a_2,1}|=o(n^{-2})$. 

\smallskip

\noindent \textit{Step I: $|\{\mathrm{E}(\mathbb{T}_{k,a_1,a_2} )\}^2-G_{a_1,a_2,1}|=o(n^{-2})$.}
Following the same analysis in Section \ref{sec:pflem133cond1}, we obtain \eqref{eq:diffpart1clt1twocov} and \eqref{eq:taaorderncond2}. 
By Condition \ref{cond:twosamplecov2} (2) and (4), we have
\begin{eqnarray}
\quad	&&\sum_{\substack{1\leq j_1,j_2,j_3,j_4  j_5,j_6,j_7,j_8 \leq p}}\eqref{eq:prodnonzero}\label{eq:taaorderpcond2} \\
\quad	&=& O(1)\Big\{ \sum_{1\leq j_1,j_2,j_3,j_4\leq p}(\sigma_{j_1,j_2}\sigma_{j_3,j_4})^{a_1}\Big\}  \Big\{ \sum_{1\leq j_5,j_6,j_7,j_8\leq p}(\sigma_{j_5,j_6}\sigma_{j_7,j_8})^{a_2}\Big\}. \notag 
\end{eqnarray}Note that $\sigma^2(a)=\Theta(n^{-a})\times \sum_{1\leq j_1,j_2,j_3,j_4\leq p}( \sigma_{j_1,j_3}\sigma_{j_2,j_4})^a$ by Lemma \ref{lm:twosampvartest}, and $c(n,a)=\Theta(1)\{n^a\sigma(a)\}^{-1}$. Combining \eqref{eq:taaorderncond2} and \eqref{eq:taaorderpcond2}, we have $|\{\mathrm{E}(\mathbb{T}_{k,a_1,a_2} )\}^2-G_{a_1,a_2}|=o(n^{-2})$.

\smallskip

\noindent \textit{Step II: $|\mathrm{E}(\mathbb{T}^2_{k,a_1,a_2} )-G_{a_1,a_2,1}|=o(n^{-2})$.}
Similarly to Section \ref{sec:pflem133cond1}, we have \eqref{eq:twosamdiffp1g} and $\mathrm{E}\{\mathbb{M}_{\mathbf{x},\mathbf{y},1}(k,\mathbf{i}^{(l)},j_{2l-1},j_{2l}: l=1,2,3,4)\}\neq 0$ only when $|\cup_{l=1}^4 \{\mathbf{i}^{(l)}\}|\leq a_1+a_2-2$. 

When $|\cup_{l=1}^4 \{\mathbf{i}^{(l)}\}|< a_1+a_2-2$, \eqref{eq:twosamnodr3mincond1} still holds. 
By  Condition \ref{cond:twosamplecov2} (2) and (4), similarly to \eqref{eq:taaorderpcond2}, we have
\begin{eqnarray*}
	&&\sum_{\substack{1\leq j_1,j_2,j_3,j_4  j_5,j_6,j_7,j_8 \leq p}}  \mathrm{E}\Big\{\mathbb{M}_{\mathbf{x},\mathbf{y},1}(k,\mathbf{i}^{(l)},j_{2l-1},j_{2l}: l=1,2,3,4)\Big\} \notag \\
&=& O(1)\Big\{ \sum_{1\leq j_1,j_2,j_3,j_4\leq p}(\sigma_{j_1,j_2}\sigma_{j_3,j_4})^{a_1}\Big\}  \Big\{ \sum_{1\leq j_5,j_6,j_7,j_8\leq p}(\sigma_{j_5,j_6}\sigma_{j_7,j_8})^{a_2}\Big\}. \notag 
\end{eqnarray*}
Note that $\sigma^2(a)=\Theta(n^{-a})\times \sum_{1\leq j_1,j_2,j_3,j_4\leq p}( \sigma_{j_1,j_3}\sigma_{j_2,j_4})^a$ by Lemma \ref{lm:twosampvartest}, and $c(n,a)=\Theta(1)\{n^a\sigma(a)\}^{-1}$. Then we have
\begin{align*}
&~\prod_{l=1}^4c(n,a_l)	\sum_{\substack{1\leq j_1,j_2,j_3,j_4,j_5,j_6,j_7,j_8 \leq p;\\ \mathbf{i}^{(l)}\in \mathcal{P}(k-1,a_l-1),\, l=1,2,3,4}} \mathbf{1}_{ \{|\cup_{l=1}^4 \{\mathbf{i}^{(l)}\}|< a_1+a_2-2 \}} \notag \\
	&~\times \mathrm{E}\Big\{\mathbb{M}_{\mathbf{x},\mathbf{y},1}(k,\mathbf{i}^{(l)},j_{2l-1},j_{2l}: l=1,2,3,4)\Big\} = o(n^{-2}).\notag	
\end{align*}

When $|\cup_{l=1}^4 \{\mathbf{i}^{(l)}\}|= a_1+a_2-2 $, by the construction of $\mathbf{1}_E$, we know 
\begin{align}
	&~\mathrm{E}\Big\{\mathbb{M}_{\mathbf{x},\mathbf{y},1}(k,\mathbf{i}^{(l)},j_{2l-1},j_{2l}: l=1,2,3,4)\Big\} \times (1-\mathbf{1}_E)\label{eq:twosam2pfxpi4form} 
\end{align} 
satisfies that $\eqref{eq:twosam2pfxpi4form}\neq 0$ if $(\{\mathbf{i}^{(1)}\}\cup \{\mathbf{i}^{(2)}\})\cap (\{\mathbf{i}^{(3)}\}\cup \{\mathbf{i}^{(4)}\})\neq \emptyset$.
Then by Condition \ref{cond:twosamplecov2} (3) and (4), we know \eqref{eq:twosam2pfxpi4form}  is a linear combination of $\sum_{1\leq j_1,\ldots,j_8\leq p}\prod_{t=1}^{a+b} \sigma_{j_{g_{2t-1}},\,  j_{g_{2t}} }$ with $S_{\mathcal{G}}>4$,  where we recall that $S_{\mathcal{G}}$ is the number of distinct sets among $\{g_{2t-1},g_{2t}\}, t=1,\ldots, a+b$, induced by  $\mathcal{G}=(g_1,\ldots,g_{2(a+b)})$. 
Therefore,
\begin{align*}
	&~\prod_{l=1}^4c(n,a_l)	\sum_{\substack{1\leq j_1,j_2,j_3,j_4,j_5,j_6,j_7,j_8 \leq p;\\ \mathbf{i}^{(l)}\in \mathcal{P}(k-1,a_l-1),\, l=1,2,3,4}} (1-\mathbf{1}_E) \times \mathbf{1}_{ \{|\cup_{l=1}^4 \{\mathbf{i}^{(l)}\}|= a_1+a_2-2 \}} \notag \\
	&~\times \mathrm{E}\Big\{\mathbb{M}_{\mathbf{x},\mathbf{y},1}(k,\mathbf{i}^{(l)},j_{2l-1},j_{2l}: l=1,2,3,4)\Big\} \notag \\
\leq & ~ C \Big\{\prod_{l=1}^4c(n,a_l)\Big\}\times n^{a_1+a_2-2} \sum_{\mathcal{G}: S_{\mathcal{G}}>4} \Big|\sum_{ 1\leq j_1,\ldots,j_8\leq p }\prod_{t=1}^{a+b}\sigma_{j_{g_{2t-1}},\, j_{g_{2t}}}\Big|\notag \\
=&~ o(n^{-2}).
\end{align*} where the last equation follows by Condition \ref{cond:twosamplecov2} (4), $\sigma^2(a)=\Theta(n^{-a})\times \sum_{1\leq j_1,j_2,j_3,j_4\leq p}( \sigma_{j_1,j_3}\sigma_{j_2,j_4})^a$, and $c(n,a)=\Theta(1)\{n^a\sigma(a)\}^{-1}$. In summary, we obtain $ \mathrm{E}(\mathbb{T}^2_{k,a_1,a_2} )-G_{a_1,a_2,1}=o(n^{-2}).$

\smallskip

\paragraph{Part II: $n_x\leq k\leq n_x+n_y$}\label{sec:pdfcltpart2} 
In this section, we prove that when $n_x\leq k\leq n_x+n_y$, $\mathrm{var}(\mathbb{T}_{k,a_1,a_2})=o(n^{-2})$. Recall the form derived in Section \ref{sec:part2sumankform}.  We have $\mathbb{T}_{k,a_1,a_2}=\sum_{ L_1\leq s_1\leq a_1, L_2\leq s_2\leq a_2}\mathbb{T}_{k,a_1,a_2,s_1,s_2}$, where
\begin{align*}
	&~ \mathbb{T}_{k,a_1,a_2,s_1,s_2}=\sum_{\substack{1\leq j_1,j_2,j_3,j_4\leq p; \\ \mathbf{i}^{(l)}\in \mathcal{P}(n_x,s_l):\, l=1,2;\\ \mathbf{w}^{(l)}\in \mathcal{P}(k-n_x-1,a_l-s_l-1):\, l=1,2}} \prod_{l=1}^2c_2(n,a_l,s_l) \notag \\
&~\quad \quad \quad \quad \quad \quad\quad  \times  \mathbb{M}_{\mathbf{x},\mathbf{y},2}(k-n_x,\mathbf{i}^{(l)},j_{2l-1},j_{2l}: l=1,2). 
\end{align*}
To prove $\mathrm{var}(\mathbb{T}_{k,a_1,a_2})=o(n^{-2})$, it suffices to prove $\mathrm{var}(\mathbb{T}_{k,a_1,a_2,s_1,s_2})=o(n^{-2})$. In particular, for easy presentation, we set $a_3=a_1$, $a_4=a_2$  $s_3=s_1$ and $s_4=s_2$,  and then have 
\begin{align*}
	\{\mathrm{E}(\mathbb{T}_{k,a_1,a_2,s_1,s_2})\}^2 = &~ \sum_{\substack{1\leq j_1,j_2,j_3,j_4,j_5,j_6,j_7,j_8\leq p; \\ \mathbf{i}^{(l)}\in \mathcal{P}(n_x,s_l):\, l=1,2,3,4;\\ \mathbf{w}^{(l)}\in \mathcal{P}(k-n_x-1,a_l-s_l-1):\, l=1,2,3,4}} \prod_{l=1}^4c_2(n,a_l,s_l) \notag \\
	&~ \mathrm{E}\Big\{ \mathbb{M}_{\mathbf{x},\mathbf{y},2}(k-n_x,\mathbf{i}^{(l)},j_{2l-1},j_{2l}: l=1,2)\Big\} \notag \\
	&~ \times \mathrm{E}\Big\{ \mathbb{M}_{\mathbf{x},\mathbf{y},2}(k-n_x,\mathbf{i}^{(l)},j_{2l-1},j_{2l}: l=3,4)\Big\}.
\end{align*}In addition, we have 
\begin{align*}
\mathrm{E}(\mathbb{T}_{k,a_1,a_2,s_1,s_2}^2)= &~ \sum_{\substack{1\leq j_1,j_2,j_3,j_4,j_5,j_6,j_7,j_8\leq p; \\ \mathbf{i}^{(l)}\in \mathcal{P}(n_x,s_l):\, l=1,2,3,4;\\ \mathbf{w}^{(l)}\in \mathcal{P}(k-n_x-1,a_l-s_l-1):\, l=1,2,3,4}} \prod_{l=1}^4c_2(n,a_l,s_l) \notag \\
	& ~ \quad \times  \mathrm{E}\Big\{ \mathbb{M}_{\mathbf{x},\mathbf{y},2}(k-n_x,\mathbf{i}^{(l)},j_{2l-1},j_{2l}: l=1,2,3,4)\Big\},
\end{align*} where we define
\begin{align*}
&~\mathbb{M}_{\mathbf{x},\mathbf{y},2}(k-n_x,\mathbf{i}^{(l)},j_{2l-1},j_{2l}: l=1,2,3,4) \notag \\
=&~ \prod_{l=1}^4\Big( \prod_{t=1}^{s_l} \mathcal{X}_{i_t^{(l)},j_{2l-1},j_{2l}}  \prod_{\tilde{t}=1}^{a_l-s_l-1} \mathcal{Y}_{w_{\tilde{t}}^{(l)},j_{2l-1},j_{2l}} \Big) \notag \\
&~ \mathrm{E}(\mathcal{Y}_{k-n_x,j_{1},j_{2}}\mathcal{Y}_{k-n_x,j_{3},j_{4}} )\times \mathrm{E}(\mathcal{Y}_{k-n_x,j_{5},j_{6}}\mathcal{Y}_{k-n_x,j_{7},j_{8}} ).
\end{align*}
Therefore $\mathrm{var}(\mathbb{T}_{k,a_1,a_2,s_1,s_2})=\mathrm{E}(\mathbb{T}_{k,a_1,a_2,s_1,s_2}^2)-\{\mathrm{E}(\mathbb{T}_{k,a_1,a_2,s_1,s_2})\}^2$ is derived. We note that the form of $\mathrm{var}(\mathbb{T}_{k,a_1,a_2,s_1,s_2})$ is very similar to the $\mathrm{var}(\mathbb{T}_{k,a_1,a_2})$ in Section \ref{sec:pdfcltpart1}.
In particular, we can write $\mathcal{Z}_{i,j_1,j_2}=\mathcal{X}_{i,j_1,j_2}$ if $i\leq n_x$ and  $\mathcal{Z}_{i,j_1,j_2}=\mathcal{Y}_{i-n_x,j_1,j_2}$ if $i>n_x$. 
Then we let $\mathbf{q}^{(l)}=( \mathbf{i}^{(l)}, \tilde{\mathbf{w}}^{(l)})$ to be a joint index tuple of $  \mathbf{i}^{(l)}$ and ${\mathbf{w}}^{(l)}$, where $ \tilde{\mathbf{w}}^{(l)}$ is transformed from ${\mathbf{w}}^{(l)}$  by adding each index  with $n_x.$ 
Also let $\mathbf{1}_{\tilde{E}}$ be an indicator function of the event that $(\{\mathbf{q}^{(1)}\}\cup \{\mathbf{q}^{(2)}\})\cap (\{\mathbf{q}^{(3)}\}\cup \{\mathbf{q}^{(4)}\})=\emptyset$. 
Then define
\begin{align*}
	G_{a_1,a_2,2}=&~\prod_{l=1}^4c(n,a_l)	\sum_{\substack{1\leq j_1,j_2,j_3,j_4,j_5,j_6,j_7,j_8\leq p; \\ \mathbf{i}^{(l)}\in \mathcal{P}(n_x,s_l):\, l=1,2,3,4;\\ \mathbf{w}^{(l)}\in \mathcal{P}(k-n_x-1,a_l-s_l-1):\, l=1,2,3,4}}\times \mathbf{1}_{\tilde{E}} \notag \\
	&~\times \mathrm{E}\Big\{\mathbb{M}_{\mathbf{x},\mathbf{y},2}(k,\mathbf{i}^{(l)},j_{2l-1},j_{2l}: l=1,2,3,4)\Big\}.
\end{align*} 
Similarly to Section \ref{sec:pdfcltpart1}, we also note that
\begin{align}
&~\mathrm{E}\Big\{\mathbb{M}_{\mathbf{x},\mathbf{y},2}(k,\mathbf{i}^{(l)},j_{2l-1},j_{2l}: l=1,2,3,4)\Big\}\times \mathbf{1}_{\tilde{E}} \notag \\
=&~	\mathrm{E}\Big\{\mathbb{M}_{\mathbf{x},\mathbf{y},2}(k,\mathbf{i}^{(l)},j_{2l-1},j_{2l}: l=1,2)\Big\} \notag \\
	&\quad \times \mathrm{E}\Big\{\mathbb{M}_{\mathbf{x},\mathbf{y},2}(k,\mathbf{i}^{(l)},j_{2l-1},j_{2l}: l=3,4)\Big\} \times \mathbf{1}_{\tilde{E}}. \notag 
\end{align}
Given Conditions \ref{cond:twosamplecov1} and \ref{cond:twosamplecov2}, we know that similarly   to Section  \ref{sec:pdfcltpart1}, we can show $|\{\mathrm{E}(\mathbb{T}_{k,a_1,a_2,s_1,s_2})\}^2-G_{a_1,a_2,2}|=o(n^{-2})$ and $|\mathrm{E}(\mathbb{T}_{k,a_1,a_2,s_1,s_2}^2 )-G_{a_1,a_2,2}|=o(n^{-2})$ respectively.  Finally we obtain $\mathrm{var}(\mathbb{T}_{k,a_1,a_2,s_1,s_2} )=o(n^{-2})$. The proof is very similar and the details is thus skipped. 


%
%
%
%
%
%

\smallskip

\subsubsection{Proof of Lemma \ref{lm:twosamvario2} (on Page \pageref{lm:twosamvario2}, Section \ref{sec:firstpfthmtwoclt})} \label{sec:pftwosamvario2}
Recall the form of $D_{n,k}$ derived in Section \ref{lm:pftwosampiform}:  
\begin{align*}
 	\sum_{k=1}^n\mathrm{E}(D_{n,k}^4)=\sum_{k=1}^n \sum_{{1\leq r_1,r_2,r_3,r_4\leq m}}\prod_{l=1}^4 t_{r_l}\times \mathrm{E}\Big(\prod_{l=1}^4 A_{n,k,a_{r_l}}\Big). 
 \end{align*} 
To prove  Lemma \ref{lm:twosamvario2}, it suffices to show that for given $1\leq k \leq n$ and $1\leq r_1,r_2,r_3,r_4\leq m$, we have $\mathrm{E}(\prod_{l=1}^4 A_{n,k,a_{r_l}})=o(n^{-1})$. In addition, by the Cauchy-Schwarz inequality, it suffices to show $\mathrm{E}(A_{n,k,a}^4)=o(n^{-1})$ for each given finite $a$. 

\paragraph{Part I: $1\leq k \leq n_x$}\label{sec:pfpartidnk4}
We consider without loss of generality that $k\geq a$ and 
\begin{align*}
	\mathrm{E}\Big(\prod_{l=1}^4 A_{n,k,a}^4\Big)=&~c^4(n,a) \sum_{\substack{1\leq j_1,\ldots, j_8 \leq p;\\ \mathbf{i}^{(l)}\in \mathcal{P}(k-1,a-1),\, l=1,\ldots, 4}} 	\mathrm{E}\Big(\prod_{l=1}^4 \prod_{t_l=1}^{a-1} \mathcal{X}_{i_{t_l}^{(l)} ,j_{2l-1},j_{2l} }  \Big)\notag \\
	&~\times \mathrm{E}\Big(\prod_{l=1}^4\mathcal{X}_{j_{2l-1},j_{2l}}\Big). 
\end{align*}
As $\mathrm{E}(\mathcal{X}_{j_1,j_2})=0$ under $H_0$, we know  
\begin{align*}
	\mathrm{E}\Big(\prod_{l=1}^4 \prod_{t_l=1}^{a-1} \mathcal{X}_{i_{t_l}^{(l)},j_{2l-1},j_{2l}} \Big)\neq 0
\end{align*} only when $|\cup_{l=1}^4\{\mathbf{i}^{(l)} \}|\leq 2(a-1)$. 
Note that $c(n,a)=\Theta(1)\{ n^a\sigma(a)\}^{-1}$. To finish the proof, it suffices to show that for given $(\mathbf{i}^{(1)},\mathbf{i}^{(2)},\mathbf{i}^{(3)},\mathbf{i}^{(4)})$, we have
\begin{eqnarray}
&&\quad \quad \ \sum_{1\leq j_1,\ldots,j_8\leq p} \mathrm{E}\Big(\prod_{l=1}^4 \prod_{t_l=1}^{a-1} \mathcal{X}_{i_{t_l}^{(l)},j_{2l-1},j_{2l}} \Big)\mathrm{E}\Big(\prod_{l=1}^4\mathcal{X}_{j_{2l-1},j_{2l}}\Big) = O(n^{2a})\sigma^4(a).\label{eq:pfgoaldnk4twocov}
\end{eqnarray}
We next prove \eqref{eq:pfgoaldnk4twocov} under Conditions \ref{cond:twosamplecov1} and \ref{cond:twosamplecov2} in the following Sections \ref{sec:dnk4condmix} and \ref{sec:pfconda6clt2k1}, respectively.
\smallskip

\subparagraph{Under Condition \ref{cond:twosamplecov1}} \label{sec:dnk4condmix}
Recall that $\mathcal{X}_{i,j_1,j_2}=x_{i,j_1}x_{i,j_2}-\sigma_{j_1,j_2}$.  By the symmetricity of the $j$ indexes, we have
\begin{align*}
&\sum_{1\leq j_1,\ldots,j_8\leq p} \Big| \mathrm{E}\Big(\prod_{l=1}^4\mathcal{X}_{j_{2l-1},j_{2l}}\Big)\Big|\leq C\sum_{1\leq j_1,\ldots,j_8\leq p}\Biggr\{\ \Big| \mathrm{E}\Big(\prod_{l=1}^8 x_{1,j_l}\Big)  \Big| \notag \\
&~\quad +\Big| \mathrm{E}\Big(\prod_{l=1}^6 x_{1,j_l} \Big)\sigma_{j_7,j_8} \Big|+ \Big|\mathrm{E}\Big(\prod_{l=1}^4 x_{1,j_l}\Big)\sigma_{j_5,j_6}\sigma_{j_7,j_8} \Big|+ \Big|\prod_{l=1}^4\sigma_{j_{2l-1},\, j_{2l}} \Big|\ \Biggr\}.
\end{align*}
Under Condition \ref{cond:twosamplecov1} with the mixing-type assumption, 
following similar analysis in Sections \ref{sec:pfvartaamigixing} and \ref{pf:dnk4cond21pfcompl}, we know  $\sum_{1\leq j_1,\ldots,j_8\leq p} |\mathrm{E}(\prod_{l=1}^8 x_{1,j_l})|,$ $\sum_{1\leq j_1,\ldots,j_8\leq p}| \mathrm{E}(\prod_{l=1}^6 x_{1,j_l})\sigma_{j_7,j_8}|$, $\sum_{1\leq j_1,\ldots,j_8\leq p}  |\mathrm{E}(\prod_{l=1}^4 x_{1,j_l})\sigma_{j_5,j_6}\sigma_{j_7,j_8} |$ and $\sum_{1\leq j_1,\ldots,j_8\leq p} |\prod_{l=1}^4\sigma_{j_{2l-1},\, j_{2l}} |$ are all $O(p^4)$. It follows that 
\begin{align}
	\sum_{1\leq j_1,\ldots,j_8\leq p} 
	\Big|\mathrm{E}\Big(\prod_{l=1}^4\mathcal{X}_{j_{2l-1},j_{2l}}\Big) \Big|=O(p^4), \label{eq:p4ordtwocvpartial}
\end{align} 
Recall that Lemma \ref{lm:twosampvartest} shows that $\sigma^2(a)=\Theta(p^2n^{-a})$. By \eqref{eq:p4ordtwocvpartial} and Condition \ref{cond:twosamplecov1} (2), we have \eqref{eq:pfgoaldnk4twocov} holds and $\mathrm{E}( A_{n,k,a}^4)=o(n^{-1})$.   

\smallskip
\subparagraph{Under Condition \ref{cond:twosamplecov2}} \label{sec:pfconda6clt2k1}

By Condition \ref{cond:twosamplecov2} (3), we know that  $\mathrm{E}(\prod_{l=1}^4 \prod_{t_l=1}^{a-1} \mathcal{X}_{i_{t_l}^{(l)},j_{2l-1},j_{2l}} )\times  \mathrm{E}(\prod_{l=1}^4\mathcal{X}_{j_{2l-1},j_{2l}}) $ is a linear combination of
$\mathrm{E}(\prod_{t=1}^{4a}\sigma_{j_{g_{2t-1}},\,  j_{g_{2t}} } )$, where $\mathcal{G}=(g_1,\ldots, g_{8a})\in\{1,\ldots,8\}^{8a}$ satisfies  that $g_{2t-1}\neq g_{2t}$ for $t=1,\ldots, 4a$ and the number of $g$'s equal to $m$ is $a$ for each $m\in \{1,\ldots,8\}$. By Condition \ref{cond:twosamplecov2} (4), for given $\mathcal{G}$ satisfying the constraints, $\sum_{1\leq j_1,\ldots,j_8\leq p}\sigma_{j_{g_{2t-1}},\,  j_{g_{2t}}}=O(1)\sum_{1\leq j_1,\ldots,j_8\leq p}(\sigma_{j_1,j_2}\sigma_{j_3,j_4}\sigma_{j_5,j_6}\sigma_{j_7,j_8})^a$. Then we have
\begin{align*}
&~\sum_{1\leq j_1,\ldots,j_8\leq p}\mathrm{E}\Big(\prod_{l=1}^4 \prod_{t_l=1}^{a-1} \mathcal{X}_{i_{t_l}^{(l)},j_{2l-1},j_{2l}} \Big)\times  \mathrm{E}(\prod_{l=1}^4\mathcal{X}_{j_{2l-1},j_{2l}})\notag \\
=&~O(1)\sum_{1\leq j_1,\ldots,j_8\leq p}	(\sigma_{j_1,j_2}\sigma_{j_3,j_4}\sigma_{j_5,j_6}\sigma_{j_7,j_8})^a= O(1)\Big(\sum_{1\leq j_1,j_2\leq p}\sigma_{j_1,j_2}^a \Big)^4. 
\end{align*}Recall that Lemma \ref{lm:twosampvartest} shows that $\sigma^2(a)=\Theta(n^{-a})(\sum_{1\leq j_1,j_2\leq p}\sigma_{j_1,j_2}^a )^2$. Therefore, \eqref{eq:pfgoaldnk4twocov} is obtained and Lemma \ref{lm:twosamvario2} is proved.  


\smallskip

\paragraph{Part II: $n_x+1\leq k \leq n_x+n_y$}
Section \ref{sec:part2sumankform} derives that  $A_{n,k,a}=\sum_{s=L_k}^{a-1}A_{n,k,a,s}$, where
\begin{align*}
	A_{n,k,a,s} =&~ \sum_{\substack{1\leq j_1,j_2\leq p;\\ \mathbf{i}\in \mathcal{P}(n_x,s); \\ \mathbf{w}\in \mathcal{P}(k-n_x-1, a-s-1)} } c_2(n,a,s)\mathcal{Y}_{k-n_x,j_1,j_2}  \prod_{t=1}^s \mathcal{X}_{i_t,j_1,j_2}  \prod_{\tilde{t}=1}^{a-s-1} \mathcal{Y}_{w_{\tilde{t}},j_1,j_2}. 
\end{align*}
Similarly to Section \ref{sec:pfpartidnk4}, it suffices to show that for given finite integers $a$ and $s$, $\mathrm{E}(  A_{n,k,a,s}^4)=o(n^{-1}).$ 
Following the arguments in  Section \ref{sec:pdfcltpart2}, we know $A_{n,k,a,s}$ takes a similar form to $A_{n,k,a}$ in Section \ref{sec:pfpartidnk4}. Therefore the proof in Section \ref{sec:dnk4condmix} can be applied  similarly to show $	\mathrm{E}( A_{n,k,a,s}^4)=o(n^{-1})$ in this section. The proof will be very similar and the details are thus skipped.  

\subsection{Lemmas for the proof of Theorem \ref{thm:twosamaltclt}} \label{sec:twocovalt}

\subsubsection{Proof of Lemma \ref{lm:twosamaltsmall1} (on Page \pageref{lm:twosamaltsmall1}, Section \ref{sec:pfthm49alttwo})}  \label{sec:pftwocovaltlm}
In this section, to prove Lemma \ref{lm:twosamaltsmall1}, we study $\mathrm{var}(T_{D,a,1})$, $\mathrm{var}(T_{D,a,2})$ and $\mathrm{var}\{\tilde{\mathcal{U}}^*(a) \}$ respectively. 

\vspace{0.3em}
\subparagraph*{Part I: $\mathrm{var}(T_{D,a,1})$} 
We first derive $\mathrm{var}(T_{D,a,1})$. Note that $T_{D,a,1}$ is a summation over $j$ indexes in $\mathbb{J}_0$, and $\sigma_{x,j_1,j_2}=\sigma_{y,j_1,j_2}$ for $j_1,j_2\in \mathbb{J}_0$. Following the arguments in Section \ref{sec:pftwosampvartest}, similarly to \eqref{eq:varleadtwosam},  we have 
\begin{align*}
	\mathrm{var}(T_{D,a,1})\simeq &\sum_{1\leq j_1,j_2,j_3,j_4\in \mathbb{J}_0} a!(\mathbf{X}_{j_1,j_2,j_3,j_4}/n_x+ \mathbf{Y}_{j_1,j_2,j_3,j_4}/n_y )^a. \notag
\end{align*} 
By Condition \ref{cond:twosamaltdist} (3), \eqref{eq:twosamcovexpterm4th} still  holds. Then by Condition \ref{cond:twosamaltmoment} and the symmetricity of $j$ indexes,
\begin{eqnarray}
\quad \mathrm{var}(T_{D,a,1})\simeq 	 C_{\kappa,a} \label{eq:twocovvarord}  \sum_{1\leq j_1,j_2,j_3,j_4\in \mathbb{J}_0} a! \sigma_{j_1,j_2}^a\sigma_{j_3,j_4}^a	, 
\end{eqnarray} where $C_{\kappa,a} =\{(\kappa_x-1)/n_x+(\kappa_y-1)/n_y\}^a+ 2(\kappa_x/n_x+\kappa_y/n_y)^a$, and $\mathrm{var}(T_{D,a,1})$ is of order $\Theta(n^{-a}\mathbb{V}_{a,a,0,0}^{1/2})$ with $\mathbb{V}_{a,a,0,0}^{1/2}=\sum_{j_1,\ldots,j_4\in \mathbb{J}_0}(\sigma_{x,j_1,j_2}\sigma_{x,j_3,j_4})^a$ defined on Page \pageref{eq:condtwosamdef}. 

\vspace{0.3em}
\subparagraph*{Part II: $\mathrm{var}(T_{D,a,2})$}
We show $\mathrm{var}(T_{D,a,2})=o(1)\mathrm{var}(T_{D,a,1})$. Particularly,
\begin{align*}
T_{D,a,2}=\sum_{(j_1,j_2)\in J_{0,D}} \frac{1}{P^{n_x}_a P^{n_y}_a} \sum_{ \substack{ \mathbf{i}\in \mathcal{P}(n_x,a),\\ \mathbf{w}\in \mathcal{P}(n_y,a) }} \prod_{t=1}^a  (\mathcal{X}_{i_{t},j_1,j_2} -\mathcal{Y}_{w_{t},j_1,j_2}),
\end{align*} 
where we redefine $\mathcal{X}_{i,j_1,j_2}=x_{i,j_1}x_{i,j_2}-\sigma_{y,j_1,j_2}$ and $\mathcal{Y}_{i,j_1,j_2}=y_{i,j_1}y_{i,j_2}-\sigma_{y,j_1,j_2}$. 
Moreover, we define
\begin{align*}
	G_{D,a}=\sum_{(j_1,j_2), (j_3,j_4)\in J_{0,D}}(P^{n_x}_a P^{n_y}_a)^{-2}\sum_{ \substack{ \mathbf{i},\, \tilde{\mathbf{i}} \in \mathcal{P}(n_x,a),\\ \mathbf{w},\, \tilde{\mathbf{w}}\in \mathcal{P}(n_y,a) }} \mathbf{1}_{\{\{\mathbf{i}\}\cap \{\tilde{\mathbf{i}} \}=\emptyset\} }(D_{j_1,j_2}D_{j_3,j_4})^a. 
\end{align*}
To prove $\mathrm{var}(T_{D,a,2})=\mathrm{E}(T_{D,a,2}^2)-\{\mathrm{E}(T_{D,a,2})\}^2$ is $o(1)\mathrm{var}(T_{D,a,1})$, we next show $|\mathrm{E}(T_{D,a,2}^2)-G_{D,a}|$ and $|\{\mathrm{E}(T_{D,a,2})\}^2-G_{D,a}|$ are both $o(1)\mathrm{var}(T_{D,a,1})$. 

Note that $\mathrm{E}(\mathcal{X}_{i,j_1,j_2})=D_{j_1,j_2}$ and $\mathrm{E}(\mathcal{Y}_{i,j_1,j_2})=0$. We have
\begin{align*}
	\{\mathrm{E}(T_{D,a,2})\}^2=\sum_{(j_1,j_2), (j_3,j_4)\in J_{0,D}}(P^{n_x}_a P^{n_y}_a)^{-2}\sum_{ \substack{ \mathbf{i},\, \tilde{\mathbf{i}} \in \mathcal{P}(n_x,a),\\ \mathbf{w},\, \tilde{\mathbf{w}}\in \mathcal{P}(n_y,a) }}(D_{j_1,j_2}D_{j_3,j_4})^a.
\end{align*} Then
\begin{align*}
	&~ |\{\mathrm{E}(T_{D,a,2})\}^2-G_{D,a}| \notag \\
	\leq &~\Big|\sum_{(j_1,j_2), (j_3,j_4)\in J_{0,D}}(P^{n_x}_a P^{n_y}_a)^{-2}\sum_{ \substack{ \mathbf{i},\, \tilde{\mathbf{i}} \in \mathcal{P}(n_x,a),\\ \mathbf{w},\, \tilde{\mathbf{w}}\in \mathcal{P}(n_y,a) }} \mathbf{1}_{\{\{\mathbf{i}\}\cap \{\tilde{\mathbf{i}} \}\neq \emptyset\} }(D_{j_1,j_2}D_{j_3,j_4})^a\Big| \notag \\
	\leq & Cn^{-1}\sum_{(j_1,j_2), (j_3,j_4)\in J_{0,D}}|D_{j_1,j_2}D_{j_3,j_4}|^a,
\end{align*} where we use $\sum_{ \substack{ \mathbf{i},\, \tilde{\mathbf{i}} \in \mathcal{P}(n_x,a), \mathbf{w},\, \tilde{\mathbf{w}}\in \mathcal{P}(n_y,a) }} \mathbf{1}_{\{\{\mathbf{i}\}\cap \{\tilde{\mathbf{i}} \}\neq \emptyset\} }=O(n^{4a-1})$. 
In addition, 
\begin{align*}
	&~ |\mathrm{E}(T_{D,a,2}^2)-G_{D,a}| \notag \\
\leq &~	C\sum_{(j_1,j_2), (j_3,j_4)\in J_{0,D}}(P^{n_x}_a P^{n_y}_a)^{-2}\sum_{ \substack{ \mathbf{i},\, \tilde{\mathbf{i}} \in \mathcal{P}(n_x,a),\\ \mathbf{w},\, \tilde{\mathbf{w}}\in \mathcal{P}(n_y,a) }}  \notag \\
&\Biggr(\mathbf{1}_{\{\{\mathbf{i}\}\cap \{\tilde{\mathbf{i}} \}= \emptyset\} }\Big| \mathrm{E}\Big\{  \prod_{t=1}^a  (\mathcal{X}_{i_{t},j_1,j_2} -\mathcal{Y}_{w_{t},j_1,j_2})(\mathcal{X}_{\tilde{i}_{t},j_3,j_4} -\mathcal{Y}_{\tilde{w}_{t},j_3,j_4})\Big\}-(D_{j_1,j_2}D_{j_3,j_4})^a\Big| \notag \\
&~\ +\mathbf{1}_{\{\{\mathbf{i}\}\cap \{\tilde{\mathbf{i}} \}\neq \emptyset\} }\Big|\mathrm{E}\Big\{  \prod_{t=1}^a  (\mathcal{X}_{i_{t},j_1,j_2} -\mathcal{Y}_{w_{t},j_1,j_2})(\mathcal{X}_{\tilde{i}_{t},j_3,j_4} -\mathcal{Y}_{\tilde{w}_{t},j_3,j_4})  \Big\}\Big| \Biggr). \notag 
\end{align*}
We redefine $\mathbf{X}_{j_1,j_2,j_3,j_4}=\mathrm{E}(\mathcal{X}_{i,j_1,j_2}\mathcal{X}_{i,j_3,j_4})$ and $\mathbf{Y}_{j_1,j_2,j_3,j_4}=\mathrm{E}(\mathcal{Y}_{i,j_1,j_2}\mathcal{Y}_{i,j_3,j_4})$. Then
\begin{align*}
	&~|\mathrm{E}(T_{D,a,2}^2)-G_{D,a}| \notag \\
	\leq &~ C \sum_{1\leq  m_1+m_2 \leq a} n^{-m_1-m_2} \notag \\
	&~\quad \times  \sum_{(j_1,j_2), (j_3,j_4)\in J_{0,D}} \Big| \mathbf{X}_{j_1,j_2,j_3,j_4}^{m_1} \mathbf{Y}^{m_2}_{j_1,j_2,j_3,j_4} (D_{j_1,j_2}D_{j_3,j_4})^{a-m_1-m_2}\Big|. 
\end{align*}
\noindent 
Note that $\mathbf{Y}_{j_1,j_2,j_3,j_4}=\sigma_{y,j_1,j_3}\sigma_{y,j_2,j_4}+\sigma_{y,j_1,j_4}\sigma_{y,j_2,j_3}$ and $\sigma_{y,j_1,j_2}=\sigma_{x,j_1,j_2}-D_{j_1,j_2}$. 
By Conditions \ref{cond:twosamaltdist} and \ref{cond:twosamaltmoment}, the H\"older's inequality and definitions in \eqref{eq:condtwosamdef}, we have
\begin{align*}
\mathrm{var}(T_{D,a,2})\leq &~ C \max_{\substack{\mathcal{H}\in \mathbb{H},\\ t=1,2 }}\Big\{ \sum_{m=1}^a(n^{-a}\mathbb{V}_{a,\mathcal{H},x ,t})^{m/a}(\mathbb{V}_{a, \mathcal{H}, D,3})^{1-m/a},\, n^{-1}\mathbb{V}_{a, \mathcal{H}, D,3} \Big\}.
\end{align*}
Therefore by Condition \ref{cond:twosamaltmoment} and \eqref{eq:twocovvarord}, $\mathrm{var}(T_{D,a,2})=o(1)n^{-a} \mathbb{V}_{a, a,0,0 }^{1/2}=o(1)\mathrm{var}(T_{D,a,1})$.

\vspace{0.3em}

\subparagraph*{Part III: $\mathrm{var}\{\tilde{\mathcal{U}}^*(a) \}$}
Last, we prove $\mathrm{var}\{\tilde{\mathcal{U}}^*(a) \}=o(1)\mathrm{var}(T_{D,a,1})$. Similarly to Section \ref{sec:pftwosampvartest}, we write $\tilde{\mathcal{U}}^*(a)=\sum_{c=0}^a \sum_{b_1=0}^c \sum_{b_2=0}^{a-c} C_{a,c,b_1,b_2}\times T_{b_1,b_2,c}  \mathbf{1}_{b_1+b_2\leq a-1}$, where $T_{b_1,b_2,c} $ is defined in \eqref{eq:tb1b2c}. For finite $a$, to prove $\mathrm{var}\{\tilde{\mathcal{U}}^*(a) \}=o(1)\mathrm{var}(T_{D,a,1})$, it suffices to prove $\mathrm{var}( T_{b_1,b_2,c})=o(1)\mathrm{var}(T_{D,a,1})$ for $0\leq c\leq a$ and $b_1+b_2\leq a-1$. As $\mathrm{E}(\mathcal{Y}_{i,j_1,j_2})=0$ and $\mathrm{E}(\mathbf{x})=\mathrm{E}(\mathbf{y})=0$, we know that if $b_1+b_2\leq a-1$, $\mathrm{E}(T_{b_1,b_2,c})=0$. Then $\mathrm{var}(T_{b_1,b_2,c})=\mathrm{E}(T_{b_1,b_2,c}^2)$, which takes a similar form to \eqref{eq:vartb1b2c}. Specifically, we can write $\mathrm{var}(T_{b_1,b_2,c})=\mathrm{var}(T_{b_1,b_2,c})_{(1)}+\mathrm{var}(T_{b_1,b_2,c})_{(2)}$, where
\begin{align*}
	\mathrm{var}(T_{b_1,b_2,c})_{(1)}=&~\sum_{\substack{j_1, j_2, \tilde{j}_1, \tilde{j}_2 \in \mathbb{J}_0}}(P^{n_x}_{2c-b_1}P^{n_y}_{2(a-c)-b_2})^{-2} \ \sum_{\substack{\mathbf{i},\, \tilde{\mathbf{i}}\in \mathcal{P}(n_x,2c-b_1);\\ \mathbf{w}\, \tilde{\mathbf{w}}\in \mathcal{P}(n_y,2(a-c)-b_2) }}   \notag \\
	&~\quad \mathbb{T}(\mathbf{i},\tilde{\mathbf{i}},\mathbf{w},\tilde{\mathbf{w}},j_1,j_2,\tilde{j}_1,\tilde{j}_2),  \notag 
\end{align*}  and 
\begin{align*}
	\mathrm{var}(T_{b_1,b_2,c})_{(2)}=&~\sum_{\substack{(j_1, j_2),\\ (\tilde{j}_1, \tilde{j}_2) \in J_{0,D}}}(P^{n_x}_{2c-b_1}P^{n_y}_{2(a-c)-b_2})^{-2} \ \sum_{\substack{\mathbf{i},\, \tilde{\mathbf{i}}\in \mathcal{P}(n_x,2c-b_1);\\ \mathbf{w}\, \tilde{\mathbf{w}}\in \mathcal{P}(n_y,2(a-c)-b_2) }}   \notag \\
	&~\quad \mathbb{T}(\mathbf{i},\tilde{\mathbf{i}},\mathbf{w},\tilde{\mathbf{w}},j_1,j_2,\tilde{j}_1,\tilde{j}_2),  \notag 
\end{align*} and $\mathbb{T}(\mathbf{i},\tilde{\mathbf{i}},\mathbf{w},\tilde{\mathbf{w}},j_1,j_2,\tilde{j}_1,\tilde{j}_2)$ is defined same as in \eqref{eq:vartb1b2c}. 

Note that $\mathrm{var}(T_{b_1,b_2,c})_{(1)}$ is a summation over $j$ indexes in $\mathbb{J}_0$, and $\sigma_{x,j_1,j_2}=\sigma_{y,j_1,j_2}$ for $j_1,j_2\in \mathbb{J}_0$. Therefore the arguments under $H_0$ in Section \ref{sec:pftwosampvartest} can be applied similarly to $\mathrm{var}(T_{b_1,b_2,c})_{(1)}$. Then we have $\mathrm{var}(T_{b_1,b_2,c})_{(1)}=o(n^{-a})(\sum_{j_1,j_2\in \mathbb{J}_0}\sigma_{j_1,j_2}^a)^2$ which is $o(1)\mathrm{var}(T_{D,a,1})$. We next consider $\mathrm{var}(T_{b_1,b_2,c})_{(2)}$. As $\mathrm{E}(\mathcal{Y}_{i,j_1,j_2})=0$ and $\mathrm{E}(\mathbf{x})=\mathrm{E}(\mathbf{y})=0$, by the definition in \eqref{eq:vartb1b2c}, we know  $\mathrm{E}\{\mathbb{T}(\mathbf{i},\tilde{\mathbf{i}},\mathbf{w},\tilde{\mathbf{w}},j_1,j_2,\tilde{j}_1,\tilde{j}_2) \}\neq 0$ only when $\{i_{b_1+1},\ldots,i_{2c-b_1}\}=\{\tilde{i}_{b_1+1},\ldots,\tilde{i}_{2c-b_1}\}$ and $\{\mathbf{w}\}=\{\tilde{\mathbf{w}}\}.$  
 Let $m_0=b_1-| \{i_1,\ldots,i_{b_1}\}\cap  \{\tilde{i}_1,\ldots,\tilde{i}_{b_1}\}|$.  By Condition \ref{cond:twosamaltdist} (3) and the  H\"older's inequality, 
\begin{align*}
&~ \mathrm{var}(T_{b_1,b_2,c})_{(2)}\notag \\
\leq &~ Cn_x^{-(c-b_1)}n_y^{-(a-c-b_2)}\max_{\substack{ \mathcal{H}\in \mathbb{H}_0,\\ 0\leq m_0\leq b_1}} \Big\{ \Big(n_y^{-a}\sum_{ (j_1,j_2),(j_3,j_4) \in J_{0,D} }| \sigma_{y,j_{h_1},j_{h_2}}\sigma_{y,j_{h_3},j_{h_4}}|^{a} \Big)^{\frac{a-c}{a}}\notag \\
&~\quad \times \Big(n_x^{-a}\sum_{ (j_1,j_2),(j_3,j_4) \in J_{0,D} }| \sigma_{x,j_{h_1},j_{h_2}}\sigma_{x,j_{h_3},j_{h_4}}|^{a} \Big)^{\frac{c-m_0}{a}} \notag \\
&~\quad  \times \Big( \sum_{ (j_1,j_2),(j_3,j_4) \in J_{0,D} } |D_{j_{h_1},j_{h_2}}D_{j_{h_3},j_{h_4}}|^a\Big)^{\frac{m_0}{a}}\Big\}    \notag \\
\leq &~Cn^{-(a-b_1-b_2)}\max_{\substack{ \mathcal{H}\in \mathbb{H}_0, t=1,2}} \{ n^{-a}\mathbb{V}_{a,\mathcal{H},x,t}, \mathbb{V}_{a,\mathcal{H},D,3} \},
\end{align*} where the last inequality uses $\sigma_{y,j_{1},j_{2}}=\sigma_{x,j_{1},j_{2}}-D_{j_1,j_2}$. As   $b_1+b_2\leq a-1$, $
\mathrm{var}(T_{b_1,b_2,c})_{(2)}\leq Cn^{-1}	\max_{\substack{ \mathcal{H}\in \mathbb{H}_0;  t=1,2}} \{ n^{-a}\mathbb{V}_{a,\mathcal{H},x,t}, \mathbb{V}_{a,\mathcal{H},D,3} \}.
$
By Condition \ref{cond:twosamaltmoment} and \eqref{eq:twocovvarord}, we  know $\mathrm{var}( T_{b_1,b_2,c})_{(2)}=o(1)\mathrm{var}(T_{D,a,1})$.

\subsection{Proof of Remark \ref{rm:standizedmax}} \label{sec:pfstandmax}
In this section, we prove the conclusion in Remark \ref{rm:standizedmax}. To be specific, we prove in the following that under the conditions of Theorem \ref{thm:asymindpt}, 
\begin{align}
	& \Big| P\Big( n  (M_n^{\dag})^2 > y_p, \frac{{\mathcal{U}}(a_1)  }{\sigma(a_1)} \leq 2z_1, \ldots,  \frac{{\mathcal{U}}(a_m)  }{\sigma(a_m)} \leq 2z_m\Big) \label{eq:goalaltstandmaxindp} \\ 
	&\ -  P\Big( n  (M_n^{\dag})^2 > y_p \Big) \prod_{r=1}^m P \Big( \frac{{\mathcal{U}}(a_r)  }{\sigma(a_r)} \leq 2z_r \Big)\Big| \to 0. \notag
\end{align}


Note that we already know $ M_n/n$ and $\mathcal{U}(a_r)/\sigma(a_r)$'s for $r=1,\ldots,m$ are asymptotically independent by the proof of Lemmas \ref{lm:boundedsmalldiff} and \ref{lm:addprop0termtrue}. In this section, the proof idea  is that we  show the difference between  $n(M_n^{\dagger})^2$ and $M_n/n$ is $o_p(1)$ and then obtain \eqref{eq:goalaltstandmaxindp}. To prove that   $n(M_n^{\dagger})^2-M_n/n$ is $o_p(1)$, we introduce an intermediate variable $\tilde{M}_n/n$ defined below, and show that $\tilde{M}_n/n-n(M_n^{\dagger})^2=o_p(1)$ and $\tilde{M}_n/n-M_n/n=o_p(1)$ respectively.


Specifically, we define $$\tilde{M}_n/n=\max_{1\leq j_1\neq j_2\leq p} | n  \hat{\sigma}_{j_1,j_2}^2/{\theta}_{j_1,j_2} |,$$ where  $\hat{\sigma}_{j_1,j_2}=\sum_{i=1}^n\{(x_{i,j_1}-\bar{x}_{j_1})(x_{i,j_2}-\bar{x}_{j_2})\}/n$ and $\theta_{j_1,j_2}=\mathrm{var}\{(x_{i,j_1}-\mu_{j_1})(x_{i,j_2}-\mu_{j_2})\}$. Moreover, by  \eqref{eq:indpalldefinewvar}, we have $$M_n/n=\max_{1\leq j_1\neq j_2\leq p} | n  \tilde{\sigma}_{j_1,j_2}^2/{\theta}_{j_1,j_2} |,$$ where we use the fact that $\theta_{j_1,j_2}=\sigma_{j_1,j_1} \sigma_{j_2,j_2}$ by Condition \ref{cond:maxiidcolumn} and define $\tilde{\sigma}_{j_1,j_2}=\sum_{i=1}^n\{(x_{i,j_1}-\mu_{j_1})(x_{i,j_2}-\mu_{j_2})\}/n$. In addition, we have $$M_n^{\dagger}=\max_{1\leq j_1\neq j_2\leq p} |   \hat{\sigma}_{j_1,j_2}| /(\hat{\theta}_{j_1,j_2})^{1/2},$$ where we let 
$
	\hat{\theta}_{j_1,j_2}=\widehat{\mathrm{var}}(\hat{\sigma}_{j_1,j_2})=n^{-1}\sum_{i=1}^n\{(x_{i,j_1}-\bar{x}_{j_1})(x_{i,j_2}-\bar{x}_{j_2})-\hat{\sigma}_{j_1,j_2}\}^2.
$ 
In the following, we will first compare $\tilde{M}_n/n$ and $n(M_n^{\dagger})^2$, and then compare  $\tilde{M}_n/n$ and $M_n/n$. Also for simplicity, we assume without loss of generality that $\mu_j=0$ and $\sigma_{j,j}=1$.

Note that   $n(M_n^{\dagger})^2=\max_{1\leq j_1\neq j_2\leq p} | n  \hat{\sigma}_{j_1,j_2}^2/\hat{\theta}_{j_1,j_2} |$, which differs from $\tilde{M}_n/n$ only by replacing $\theta_{j_1,j_2}$ with $\hat{\theta}_{j_1,j_2}$. 
By the proof of Lemma 3 in \cite{cai2013two}, we know that for any $C_2>0$, there exists some constant $C_1$ such that
\begin{align*}
	P\Big( \max_{1\leq j_1\neq j_2\leq p} |\hat{\theta}_{j_1,j_2}-\theta_{j_1,j_2}|/\theta_{j_1,j_2} \geq C_1\sqrt{\log p/n} \Big)=O(p^{-C_2}).
\end{align*}
%
%
Under the event $|\hat{\theta}_{j_1,j_2}/\theta_{j_1,j_2}-1|\leq C_1\sqrt{\log p /n}$, we have  
\begin{align*}
&~ |  \tilde{M}_n/n - n(M_n^{\dag})^2 |\notag \\
= &~ \Big|\max_{1\leq j_1\neq j_2\leq p} n\hat{\sigma}_{j_1,j_2}^2/\theta_{j_1,j_2}-\max_{1\leq j_1\neq j_2\leq p} n \hat{\sigma}_{j_1,j_2}^2/\hat{\theta}_{j_1,j_2} \Big| \notag \\
\leq &~ \max_{1\leq j_1\neq j_2\leq p} | n  \hat{\sigma}_{j_1,j_2}^2/{\theta}_{j_1,j_2} | \times \max_{1\leq j_1\neq j_2\leq p} |1 - \theta_{j_1,j_2}/\hat{\theta}_{j_1,j_2} | \notag \\
\leq &~ \max_{1\leq j_1\neq j_2\leq p} | n  \hat{\sigma}_{j_1,j_2}^2/{\theta}_{j_1,j_2} | C_1\sqrt{\log p/n}.
\end{align*} 
It follows that $n(M_n^{\dag})^2=\tilde{M}_n/n\{1+O(\sqrt{\log p/n})\}$.  Since $\log p /n \to 0$ and $\tilde{M}_n/n$ has a limit by   Theorem 3 in \citet{cai2011}, then $|\tilde{M}_n/n-n(M_n^{\dag})^2|=o_p(1)$. 


We next  compare  $\tilde{M}_n/n$ and $M_n/n$.  by Lemma \ref{lm:maxdiffbound}, 
\begin{align*} 
&~ |\tilde{M}_n/n - M_n/n| \notag \\
\leq &~ C \max_{1\leq j_1\neq j_2\leq p} \Big|     \sum_{i=1}^n(x_{i,j_1}-\bar{x}_{j_1} )(x_{i,j_2}-\bar{x}_{j_2})-\sum_{i=1}^nx_{i,j_1}x_{i,j_2} \Big|^2\Big/n \notag \\
& ~+ C\sqrt{  M_n/n} \max_{1\leq j_1\neq j_2\leq p} \Big|     \sum_{i=1}^n(x_{i,j_1}-\bar{x}_{j_1} )(x_{i,j_2}-\bar{x}_{j_2})-\sum_{i=1}^nx_{i,j_1}x_{i,j_2} \Big|\Big/\sqrt{n} \notag \\
\leq &~C\max_{1\leq j\leq p} n\bar{x}_j^4+Cn^{1/2}\sqrt{M_n/n}\max_{1\leq j\leq p} \bar{x}_{j}^2,
\end{align*} where in the last inequality we use $ \max_{1\leq j_1\neq j_2\leq p} \bar{x}_{j_1}\bar{x}_{j_2} \leq \max_{1\leq j_1\neq j_2\leq p} (\bar{x}_{j_1}^2+\bar{x}_{j_2}^2)/2 \leq \max_{1\leq j\leq p}\bar{x}_j^2$. 
By  Eq. (27) in Lemma 2 of \citet{cai2011adaptive}, we know that $\max_{1\leq j\leq p}|\bar{x}_j|=O_p(\sqrt{\log p/n}).$ Since we assume $\log p=o(n^{1/7})$, and  Proposition 6.3 in \cite{cai2011} shows that $M_n/n$ has a limit, we know  $|\tilde{M}_n/n - M_n/n| =o_p(1)$. 



In summary, $| M_n/n-n(M_n^{\dag})^2| \leq |M_n/n- \tilde{M}_n/n |+|\tilde{M}_n/n-n(M_n^{\dag})^2|=o_p(1)$. Since $| M_n/n-n(M_n^{\dag})^2| =o_p(1)$ and  $ M_n/n$ and $\mathcal{U}(a_r)/\sigma(a_r)$'s for $r=1,\ldots,m$ are asymptotically independent,   similarly to the proof of Lemma \ref{lm:addprop0termtrue}, we know \eqref{eq:goalaltstandmaxindp} is proved.

\subsection{Proof of Corollary  \ref{prop:generalrestwomean}}\label{sec:pfcor41}
Since the proofs in Sections \ref{sec:proofthm33} and  \ref{sec:proofthem34} do not rely on $\boldsymbol{\Sigma}_x=\boldsymbol{\Sigma}_y$, 
the proof of Corollary  \ref{prop:generalrestwomean} follows from Sections \ref{sec:proofthm33} and  \ref{sec:proofthem34} directly. We also obtain $\mathrm{var}\{\mathcal{U}(a)\}$ under the null and alternative hypotheses by Lemma \ref{lm:twosamvar} (on Page \pageref{lm:twosamvar}) and Lemma \ref{lm:twomeanaltvar}  (on Page \ref{lm:twomeanaltvar}), respectively.


\section{Computation \& Supplementary Simulations}\label{sec:c}
 
\subsection{Computation}

\subsubsection{Formulae for (2.15)} \label{sec:formulaevu}

Note that $\mathcal{U}_l(a)=U_l^{\mathbf{1}_a}$ by the  definitions in \eqref{eq:uvorderstat}, and for different $l$'s, the computation  methods of $U_l^{\mathbf{1}_a}$'s    are the same. Therefore  
 in the following, for simplicity, we give the formulae of $U_l^{\mathbf{1}_a}$ without the subscript $l$:
\begin{align*}
	\ U^{\mathbf{1}_1} =& V^{(1)}, \\
	\ U^{\mathbf{1}_{2}}=& V^{(1,1)}-V^{(2)}, \\
	\ U^{\mathbf{1}_{3}}=& V^{\mathbf{1}_3}-3V^{(2,1)}+2V^{(3)}, \\
	\ U^{\mathbf{1}_{4}}=&V^{\mathbf{1}_4} - 6V^{(2,1,1)}+8V^{(3,1)}+3V^{(2,2)}-6V^{(4)},\\
\ U^{\mathbf{1}_{5}} =& V^{\mathbf{1}_5} -10 V^{(2,\mathbf{1}_3)}+20V^{(3,\mathbf{1}_2)}+15V^{(2,2,1)} -30 V^{(4,1)} \notag \\
	& -20  V^{(2,3)} +24V^{(5)},  \\
\ U^{\mathbf{1}_{6}}  =& V^{\mathbf{1}_6} -15 V^{(\mathbf{1}_4,2)} +40 V^{(3,\mathbf{1}_3)} + 45V^{(1,1,2,2)}, \notag \\ 
	&-90V^{(1,1,4)} -120 V^{(1,2,3)}+144V^{(1,5)}  -15 V^{(2,2,2)}\notag \\
	&+ 90V^{(2,4)} +40 V^{(3,3)}  -120 V^{(6)},
\end{align*}	where $U^{\mathbf{1}_a}$ and $V^{(t_1,\ldots, t_k)}$ are defined as in \eqref{eq:uvorderstat}.
 
\subsubsection{Computation with unknown mean}  \label{sec:u2compute}
In this section, we provide the details of the computation of $\mathcal{U}(a)$ when $\mathrm{E}(x_{i,j})$ is unknown. 
We note that $\mathcal{U}(a)$ is some linear combination of 
\begin{align}
\sum_{1\leq i_1\neq \ldots \neq i_{k} \leq n} \prod_{t=1}^{k} x_{i_t,j_1}^{r_{t,1}}x_{i_t,j_2}^{r_{t,2}}, \label{eq:generalucomp1}	
\end{align} where $a\leq k\leq 2a$, $r_{t,1},r_{t,2}\geq 0$ and $r_{t,1}+r_{t,2}\geq 1$. 
A direct calculation of \eqref{eq:generalucomp1}	 has computational cost $O(n^k)$, which is large when $k$ is large. But following the discussion in Section \ref{sec:computtest}, we  can similarly reduce the computational cost of \eqref{eq:generalucomp1}	 to order $O(n)$ with an iterative method. In particular, we note that 
\begin{align}
& \sum_{1\leq i_1\neq \ldots \neq i_{k} \leq n} \prod_{t=1}^{k} x_{i_t,j_1}^{r_{t,1}}x_{i_t,j_2}^{r_{t,2}} \label{eq:generalucomputerelation} \\
=&\Big( \sum_{1\leq i_1\neq \ldots \neq i_{k-1} \leq n} \prod_{t=1}^{k-1} x_{i_t,j_1}^{r_{t,1}}x_{i_t,j_2}^{r_{t,2}}\Big)\Big( \sum_{i=1}^n x_{i,j_1}^{r_{k,1}} x_{i,j_2}^{r_{k,2}}\Big) \notag \\
&-\sum_{m=1}^{k-1}\sum_{1\leq i_1\neq \ldots \neq i_{k-1} \leq n}\Big(\prod_{t=1}^{k-1} x_{i_t,j_1}^{r_{t,1}}x_{i_t,j_2}^{r_{t,2}}\Big)x_{i_{m},j_1}^{r_{k},1} x_{i_{m},j_2}^{r_{k},2}. \notag	
\end{align}Suppose we can  compute $\sum_{1\leq i_1\neq \ldots \neq i_{k-1} \leq n} \prod_{t=1}^{k-1} x_{i_t,j_1}^{r_{t,1}}x_{i_t,j_2}^{r_{t,2}}$ with cost $O(n)$ for any $(r_{t,1},r_{t,2}), t=1,\ldots,k-1$. Then by the relationship in \eqref{eq:generalucomputerelation}, we can obtain \eqref{eq:generalucomp1} with cost $O(n)$ iteratively.

We then illustrate the iterative method with some examples. When $k=1$, for any given $(r_{1,1},r_{1,2})$, we know $\sum_{i=1}^n x_{i,j_1}^{r_{1,1}}x_{i,j_2}^{r_{1,2}}$ can be computed with cost $O(n)$. When $k=2$, by \eqref{eq:generalucomputerelation}, we have $\sum_{1\leq i_1\neq i_2\leq n}\prod_{t=1}^2 x_{i_t,j_1}^{r_{t,1}}x_{i_t,j_2}^{r_{t,2}}=(\sum_{i=1}^n x_{i,j_1}^{r_{1,1}} x_{i,j_2}^{r_{1,2}})(\sum_{i=1}^n x_{i,j_1}^{r_{2,1}} x_{i,j_2}^{r_{2,2}})-\sum_{i=1}^n x_{i,j_1}^{r_{1,1}+r_{2,1}} x_{i,j_2}^{r_{1,2}+r_{2,2}}$, which can be computed with cost $O(n)$. For a general $k$, suppose for any given $(r_{t,1},r_{t,2}), t=1,\ldots, k-1$, we can compute $\sum_{1\leq i_1\neq \ldots \neq i_{k-1} \leq n} \prod_{t=1}^{k-1} x_{i_t,j_1}^{r_{t,1}}x_{i_t,j_2}^{r_{t,2}}$ with cost $O(n)$. Then by \eqref{eq:generalucomputerelation}, we can obtain  \eqref{eq:generalucomp1} with computational cost $O(n)$.

Given the iterative method discussed above, we can compute $\mathcal{U}(a)$ with cost $O(p^2n)$. 
For example,  we can write $\mathcal{U}(1)$ as
\begin{align*}
\sum_{1\leq j_1\neq j_2\leq p}\Big\{n^{-1}\sum_{i=1}^nx_{i,j_1}x_{i,j_2}-(P^n_2)^{-1}\Big( \sum_{i_1=1}^nx_{i_1,j_1} \sum_{i_2=1}^n x_{i_2,j_2}-\sum_{i=1}^n x_{i,j_1}x_{i,j_2}\Big)\Big\}.
\end{align*}
For $a=2$, similar analysis holds. Note that 
\begin{align*}
\mathcal{U}(2)=\sum_{1\leq j_1\neq j_2\leq p}\Big\{ {(P^n_2)^{-1}}\mathcal{U}_1(2)-2{(P^n_3)^{-1}}\mathcal{U}_2(2)+{(P^n_4)^{-1}} \mathcal{U}_3(2)\Big\},
\end{align*} where 
\begin{align*}
	\mathcal{U}_1(2)=&\sum_{1\leq i_1\neq i_2\leq n}\prod_{t=1}^2 x_{i_t,j_1} x_{i_t,j_2}, \notag \\
	\mathcal{U}_2(2)=& \sum_{1\leq i_1\neq i_2 \neq i_3 \leq n}(x_{i_1,j_1} x_{i_1,j_2})(x_{i_2,j_1})(x_{i_3,j_2}),\notag \\
	\mathcal{U}_3(2)=& \sum_{1\leq i_1\neq i_2 \neq i_3 \neq i_4 \leq n} \prod_{t=1}^2 x_{i_t,j_1} \prod_{t=3}^4 x_{i_t,j_2}. 
\end{align*}
We then find that $\mathcal{U}_1(2), \mathcal{U}_2(2)$ and $\mathcal{U}_3(2)$ can be computed with  cost $O(n)$ using the following formulae. 
\begin{align*}
	\mathcal{U}_1(2)=\Big(\sum_{i=1}^n x_{i,j_1}x_{i,j_2}\Big)^2-\sum_{i=1}^n (x_{i,j_1}x_{i,j_2})^2.
\end{align*}
\begin{align*}
\mathcal{U}_2(2)=&	\Big(\sum_{i=1}^n x_{i,j_1}x_{i,j_2}\Big)\Big(\sum_{1\leq i_1\neq i_2\leq n} x_{i,j_1}x_{i,j_2}\Big) \notag \\
&- \sum_{1\leq i_1\neq i_2 \leq n} (x_{i_1,j_1}^2 x_{i_1,j_2}) x_{i_2,j_2}-\sum_{1\leq i_1\neq i_2 \leq n} (x_{i_1,j_1} x_{i_1,j_2}^2) x_{i_2,j_1}, \notag 
\end{align*} where we use $\sum_{1\leq i_1\neq i_2\leq n} x_{i,j_1}x_{i,j_2}=(\sum_{i=1}^n x_{i,j_1} )(\sum_{i=1}^n x_{i,j_2})-\sum_{i=1}^n x_{i,j_1} x_{i,j_2}$, and $\sum_{1\leq i_1\neq i_2 \leq n} (x_{i_1,j_1}^2 x_{i_1,j_2}) x_{i_2,j_2}=(\sum_{i=1}^n x_{i,j_1}^2x_{i,j_2})(\sum_{i=1}^n x_{i,j_2})-\sum_{i=1}^n x_{i,j_1}^2 x_{i,j_2}^2.$
\begin{align*}
	\mathcal{U}_3(2)=& \Big( \sum_{1\leq i_1\neq i_2\leq n} x_{i_1,j_1}x_{i_2,j_1} \Big)\Big( \sum_{1\leq i_3\neq i_4\leq n} x_{i_3,j_2}x_{i_4,j_2} \Big) - 2\mathcal{U}_1(2)-4\mathcal{U}_3(2), \notag
\end{align*}where we use $ \sum_{1\leq i_1\neq i_2\leq n} x_{i_1,k}x_{i_2,k}=(\sum_{i=1}^n x_{i,k})^2- \sum_{i=1}^n x_{i,k}^2$ for $k=j_1,j_2$.

When $a\geq 3$, the similar iterative method can be applied. But the closed form for computation might be hard to derive directly. Alternatively, we 
  introduce a simplified form of U-statistics: 
$
	\mathcal{U}_c(a)=(P^n_a)^{-1}\sum_{1\leq i_1\neq \ldots \neq i_a\leq n}\allowbreak  \sum_{1\leq j_1\neq j_2\leq p} \prod_{t=1}^a(x_{i_t,j_1}-\bar{x}_{j_1})(x_{i_t,j_2}-\bar{x}_{j_2}). 
$ We note that $\mathcal{U}_c(a)$ takes a similar form to  $\tilde{\mathcal{U}}(a)$ in  \eqref{eq:originleadingterm}, but replacing each observation $x_{i,j}$ with the centered correspondence $x_{i,j}-\bar{x}_j$. Therefore, $\mathcal{U}_c(a)$ can be computed with cost $O(n)$ using Algorithm \ref{Algo2}, if we set $s_{i,l}=(x_{i,j_1}-\bar{x}_{j_1})(x_{i,j_2}-\bar{x}_{j_2})$ in Algorithm \ref{Algo2} for $l\in \{(j_1,j_2): 1\leq j_1\neq j_2\leq p\}$.  We then show that we can substitute $\mathcal{U}(a)$  with  $\mathcal{U}_c(a)$ when $a\geq 3$ in computation under certain conditions. 



\begin{proposition}\label{prop:centeredclt}
Under the Conditions of Theorem \ref{thm:computation}, consider $a\geq 3$. If $a$ is odd, $p=o(n^{1+a/2})$; if $a$ is even, $p=o(n^{a/2})$. Then $\{\mathcal{U}(a)-\mathcal{U}_c(a)\}/\sigma(a)\xrightarrow{P} 0$. 
\end{proposition}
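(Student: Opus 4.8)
The plan is to prove the $L^2$ statement $\mathrm{E}\big[\{\mathcal{U}(a)-\mathcal{U}_c(a)\}^2\big]=o(\sigma^2(a))=o(p^2n^{-a})$, which immediately gives $\{\mathcal{U}(a)-\mathcal{U}_c(a)\}/\sigma(a)\xrightarrow{P}0$ by Markov's inequality. First I would reduce the problem. Both statistics are location invariant---$\mathcal{U}(a)$ by Proposition~\ref{prop:locinvariance}, and $\mathcal{U}_c(a)$ by construction since it is built from the centered data---so without loss of generality $\mathrm{E}(\mathbf{x})=\mathbf{0}$. By Lemma~\ref{lm:varianceorder}, $\mathcal{U}(a)=\tilde{\mathcal{U}}(a)+\tilde{\mathcal{U}}^*(a)$ with $\tilde{\mathcal{U}}^*(a)/\sigma(a)\xrightarrow{P}0$, so it suffices to compare $\tilde{\mathcal{U}}(a)$ with the same statistic evaluated on $\{x_{i,j}-\bar{x}_j\}$, i.e.\ to show $\{\mathcal{U}_c(a)-\tilde{\mathcal{U}}(a)\}/\sigma(a)\xrightarrow{P}0$.

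Next I would expand the effect of centering. Writing $(x_{i,j_1}-\bar{x}_{j_1})(x_{i,j_2}-\bar{x}_{j_2})=x_{i,j_1}x_{i,j_2}+w_{i,j_1,j_2}$ with $w_{i,j_1,j_2}=-x_{i,j_1}\bar{x}_{j_2}-x_{i,j_2}\bar{x}_{j_1}+\bar{x}_{j_1}\bar{x}_{j_2}$ and using the symmetry of $\tilde{\mathcal{U}}(a)$ in $i_1,\dots,i_a$,
\begin{align*}
\mathcal{U}_c(a)-\tilde{\mathcal{U}}(a)=\sum_{k=1}^a\binom{a}{k}R_k(a),\quad
R_k(a)=\frac{1}{P^n_a}\sum_{\mathbf{i}\in\mathcal{P}(n,a)}\sum_{1\le j_1\ne j_2\le p}\prod_{t=1}^k w_{i_t,j_1,j_2}\prod_{t=k+1}^{a}x_{i_t,j_1}x_{i_t,j_2}.
\end{align*}
Substituting $\bar{x}_j=n^{-1}\sum_{m}x_{m,j}$ turns each $R_k(a)$ into an $O(1)$-term linear combination of normalized polynomial statistics in $\{x_{m,j}\}$, each carrying at least $k$ extra ``averaging'' row indices together with a compensating power $n^{-k'}$, $k'\ge k$. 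Since $a$ is fixed, it suffices to establish $\mathrm{E}\big[R_k(a)^2\big]=o(p^2n^{-a})$ for each $1\le k\le a$.

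To bound $\mathrm{E}[R_k(a)^2]$ I would expand the square into a sum over the row indices (from the two $a$-tuples and from the averaging sums) and the column indices, and take expectations term by term. Under $H_0$, $\sigma_{j_1,j_2}=0$ for $j_1\ne j_2$, and under Condition~\ref{cond:alphamixing} the coordinates are locally weakly dependent; hence a term survives only when the $x$-factors organize into matched row--column clusters, which simultaneously forces coincidences among the row indices (lowering the power of $n$) and restricts the free column indices (lowering the power of $p$), while Condition~\ref{cond:higherordermomentvarest} (uniformly bounded $8a$-th moments) bounds each surviving term by $O(1)$. Carrying out this accounting---in the spirit of the proof of Lemma~\ref{lm:varianceorder} and of Theorem~\ref{thm:computation}---the dominant contribution comes from the fully centered term $R_a(a)$. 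For even $a$ its mean has exact order $p^2n^{-a}$ (the balanced split of the $a$ averaging indices pairs off cleanly against the tuple indices through second-moment clusters), so $\mathrm{E}[R_a(a)^2]/\sigma^2(a)\gtrsim p^2n^{-a}$ and the hypothesis $p=o(n^{a/2})$ is exactly what drives this to zero; for odd $a$ an odd number of mean-zero $x$-factors cannot pair into even clusters, this leading mean cancels, only the variance of $R_a(a)$ survives, and an extra factor $n^{-2}$ appears, which vanishes precisely when $p=o(n^{1+a/2})$. The terms $R_k(a)$ with $k<a$ carry at least one additional factor $n^{-1}$ and are negligible under either hypothesis.

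The main obstacle is this last combinatorial bookkeeping: enumerating the index patterns that produce nonzero expectations for the mixed ``averaging $+$ tuple'' sums, verifying that $R_a(a)$ is the binding term, and pinning down its order---in particular tracking the parity cancellation that separates the even and odd cases and yields the two different dimension conditions. The remaining ingredients (the two reductions and Markov's inequality) are routine, and the needed moment/dependence estimates are exactly those already used in Section~\ref{sec:a}.
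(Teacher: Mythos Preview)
Your plan matches the paper's: reduce to mean zero, replace $\mathcal{U}(a)$ by its leading piece $\tilde{\mathcal{U}}(a)$ via Lemma~\ref{lm:varianceorder}, expand the effect of centering, and bound the second moment of the remainder by index combinatorics under the mixing and moment assumptions. The paper's finer decomposition into $T_{s_1,s_2,s_3}$ is exactly the further expansion of your $R_k$ after writing out each $w_{i,j_1,j_2}$ (with $s_3=a-k$), so the overall route is the same.

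Two corrections to your heuristics. First, the claim that $R_k$ with $k<a$ carries an extra factor $n^{-1}$ and is therefore negligible is not borne out. In the paper's binding case (all four column indices mutually far apart), the second-moment bound is $O(p^4 n^{4\lfloor a/2\rfloor-4a})$ \emph{independently} of $s_3=a-k$: more averaging sums bring both a larger normalization $n^{-2(2a-s_3)}$ and proportionally more free row indices, and the two effects cancel. So you will need to do the bookkeeping for every $R_k$, not just $R_a$; fortunately the resulting bounds are identical. Second, your mean-based parity explanation gives only a lower bound (necessity of the dimension conditions), not the upper bound you need. The paper obtains sufficiency, and the even/odd split, directly from the second-moment index count: the requirement that each row index appear at least twice within each $j$-block forces the number of free row indices to involve $\lfloor a/2\rfloor$, yielding $O(p^4 n^{-2a})$ for even $a$ and $O(p^4 n^{-2a-2})$ for odd $a$. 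Your ``cannot pair into even clusters'' intuition is exactly this, but it must be applied at the second-moment level---and that is precisely the combinatorics you correctly flag as the main obstacle.
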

Proposition \ref{prop:centeredclt} is proved in the following Section \ref{sec:proofpropsimplecomputation}. It implies that the results in Theorem \ref{thm:computation} sill hold by 
  replacing $\mathcal{U}(a)$ with $\mathcal{U}_c(a)$.  As discussed above, we recommend including U-statistics of orders $\{1,2,3,\ldots,6,\infty\}$   in the adaptive testing procedure. Then Proposition  \ref{prop:centeredclt} requires that $p=o(n^2)$, which suits a  wide range of applications.  Combining Theorem \ref{thm:computation} and   Proposition  \ref{prop:centeredclt}, we can conduct the test with quick computation of cost $O(p^2n)$.

  On the other hand, we  can conduct the test more generally   without  Condition \ref{cond:higherordermomentvarest} and the requirement $p=o(n^2)$. Specifically, we compute $\tilde{\mathcal{U}}(a)$ in \eqref{eq:originleadingterm} with cost $O(p^2n)$. Then $[\tilde{\mathcal{U}}(a)-\mathrm{E}\{ \tilde{\mathcal{U}}(a)\}  ] /\sqrt{\mathrm{var}\{\tilde{\mathcal{U}}(a)\}}\xrightarrow{D}\mathcal{N}(0,1)$ by Lemma \ref{lm:varianceorder} in \ref{suppA} and Theorem \ref{thm:computation}. To test $H_0$ in \eqref{eq:nullhypindepen}, it  suffices to estimate $\mathrm{E}\{\tilde{\mathcal{U}}(a)\}$ and $\mathrm{var}\{\tilde{\mathcal{U}}(a)\}$ with permutation.  This may have higher computational cost than the method above due to permutation, but is computationally more efficient than  estimating $p$-values directly via permutation or bootstrap, especially when evaluating small $p$-values.

\subsubsection{Proof of Proposition \ref{prop:centeredclt} (on Page \pageref{prop:centeredclt})} \label{sec:proofpropsimplecomputation}

In this section, we prove Proposition \ref{prop:centeredclt}. 
As both $\mathcal{U}_c(a)$ and $\mathcal{U}(a)$  are  location invariant in the sense of Proposition \ref{prop:locinvariance},   similarly to the proof of Theorem \ref{thm:computation}, we  assume $\mathrm{E}({\mathbf x})={\mathbf 0}$ in the proofs in this section.

Let $\mathcal{U}_{c,1}=\tilde{\mathcal{U}}(a)$ in \eqref{eq:originleadingterm},  
and $\mathcal{U}_{c,2}(a)=\mathcal{U}_{c}(a)-\mathcal{U}_{c,1}(a)$. By the proof of Theorem \ref{thm:jointnormal}, we know $\{\mathcal{U}(a)-\mathcal{U}_{c,1}(a)\}/\sqrt{\mathrm{var}\{\mathcal{U}(a) \}} \xrightarrow{P} 0$. 
 To finish the proof of Proposition \ref{prop:centeredclt}, it suffices to prove $ {\mathcal{U}_{c,2}(a) }/{   \sqrt{ \mathrm{var} \{\mathcal{U}_{}(a) \}    }  }\xrightarrow{P} 0$. 
By Lemma \ref{lm:varianceorder}, $ \mathrm{var}\{\mathcal{U}_{}(a)\}=\Theta(p^2n^{-a})$.    Then it  suffices to prove $\mathrm{E}\{ \mathcal{U}_{c,2}^2(a) \} =o(p^2 n^{-a} ) $ by the Markov's inequality. 
To derive $\mathcal{U}_{c,2}(a),$
we similarly use the notation in Section \ref{lm:pfthm24lm}. 
Specifically, 
given tuple $\mathbf{i}\in \mathcal{P}(n,a)$,  let $\mathbf{i}_{(s_1+s_2+s_3)}$ represent a sub-tuple of $\mathbf{i}$ with length $s_1+s_2+s_3$, and define  $\mathcal{S}(\mathbf{i},s_1+s_2+s_3)$ to be the collection of sub-tuples of $\mathbf{i}$ with length $s_1+s_2+s_3$.  Then we write
\begin{align*}
	&~\mathcal{U}_{c,2}(a) \notag \\
	 =&~ \sum_{\substack{\mathbf{i}\in \mathcal{P}(n,a);\\ 1\leq j_1\neq j_2\leq p}} \sum_{ \substack{ 0\leq s_1,s_2\leq a;\\  0\leq s_3 < a  } } \ \sum_{ \substack{ \mathbf{i}_{(s_1+s_2+s_3)} \in \mathcal{S}(\mathbf{i},s_1+s_2+s_3)} }  (\bar{x}_{j_1}\bar{x}_{j_2})^{a-s_1-s_2-s_3}\notag \\
	&~\times  \Big\{ (-\bar{x}_{j_2})^{s_1} \prod_{t=1}^{s_1} x_{i_t,j_1} \Big\} \Big\{ (-\bar{x}_{j_1})^{s_2}\prod_{t=s_1+1}^{s_1+s_2} x_{i_t,j_2}\Big\}  \Big\{ \prod_{t=s_1+s_2+1}^{s_1+s_2+s_3} x_{i_{t},j_1}x_{i_{t},j_2} \Big\} \notag \\
   =&  \sum_{ \substack{ 0\leq s_1,s_2\leq a;\,  0\leq s_3 < a  } }  C_{s_1,s_2,s_3}T_{s_1,s_2,s_3},\notag 
\end{align*} where $C_{s_1,s_2,s_3}$ are some constants that only depend on $s_1,s_2,s_3$ and $a$, and
\begin{align*}
	T_{s_1,s_2,s_3}=& \sum_{\substack{1\leq j_1\neq j_2\leq p;\, \mathbf{i}\in \mathcal{P}(n,s_1+s_2+s_3)}} \frac{1}{P^n_{s_1+s_2+s_3}}\times (\bar{x}_{j_1}\bar{x}_{j_2})^{a-s_1-s_2-s_3} \notag \\
   & \times  \Big\{ (-\bar{x}_{j_2})^{s_1} \prod_{t=1}^{s_1} x_{i_t,j_1} \Big\} \Big\{ (-\bar{x}_{j_1})^{s_2}\prod_{t=s_1+1}^{s_1+s_2} x_{i_t,j_2}\Big\}  \Big\{ \prod_{t=s_1+s_2+1}^{s_1+s_2+s_3} x_{i_{t},j_1}x_{i_{t},j_2} \Big\}. \notag 
\end{align*}  When $a$ is finite, it suffices to prove $\mathrm{E}(T_{s_1,s_2,s_3}^2)=o(p^2 n^{-a})$. 

Particularly,
\begin{eqnarray}
\quad &&\mathrm{E}(T_{s_1,s_2,s_3}^2)\label{eq:ets1s2s3sq} \\
	&=& \sum_{ \substack{ 1\leq j_1\neq j_2\leq p \\  1\leq \tilde{j}_1\neq \tilde{j}_2\leq p } } \ \sum_{\mathbf{i},\tilde{\mathbf{i}}\in \mathcal{P}(n,s_1+s_2+s_3)} \Big(\frac{1}{P^n_{s_1+s_2+s_3}}  \Big)^2 \notag \\
	&&\times \mathrm{E}\Biggr[ (\bar{x}_{j_1}\bar{x}_{j_2})^{a-s_1-s_2-s_3}  \Big\{ (-\bar{x}_{j_2})^{s_1} \prod_{t=1}^{s_1} x_{i_t,j_1} \Big\} \Big\{ (-\bar{x}_{j_1})^{s_2}\prod_{t=s_1+1}^{s_1+s_2} x_{i_t,j_2}\Big\} \notag \\
	&& \quad \times \Big\{ \prod_{t=s_1+s_2+1}^{s_1+s_2+s_3} x_{i_{t},j_1}x_{i_{t},j_2} \Big\} (\bar{x}_{\tilde{j}_1}\bar{x}_{\tilde{j}_2})^{a-s_1-s_2-s_3}\Big\{ (-\bar{x}_{\tilde{j}_2})^{s_1} \prod_{t=1}^{s_1} x_{\tilde{i}_t,\tilde{j}_1} \Big\} \notag \\
	&& \quad \times  \Big\{ (-\bar{x}_{\tilde{j}_1})^{s_2}\prod_{t=s_1+1}^{s_1+s_2} x_{\tilde{i}_t,\tilde{j}_2}\Big\} \Big\{ \prod_{t=s_1+s_2+1}^{s_1+s_2+s_3} x_{\tilde{i}_{t},\tilde{j}_1}x_{\tilde{i}_{t},\tilde{j}_2} \Big\}\Biggr] \notag \\
	&=& \sum_{ \substack{ 1\leq j_1\neq j_2\leq p \\  1\leq \tilde{j}_1\neq \tilde{j}_2\leq p } } \ \sum_{\substack{\mathbf{i},\tilde{\mathbf{i}}\in \mathcal{P}(n,s_1+s_2+s_3);\\ \mathbf{w},\tilde{\mathbf{w}}\in \mathcal{C}(n,2a-s_1-s_2-2s_3) } }C_{n,s_1,s_2,s_3}M(\mathbf{i},\tilde{\mathbf{i}}, \mathbf{w},\tilde{\mathbf{w}},\mathbf{j}),\notag
\end{eqnarray}
where we define on Page \pageref{par:notationindpcond} that $\mathbf{w}\in \mathcal{C}(n,s)$ represents tuples $i_1,\ldots,i_s$ satisfying $1\leq i_1,\ldots, i_s\leq n$, and $C_{n,s_1,s_2,s_3}=({P^n_{s_1+s_2+s_3} n^{2a-s_1-s_2-s_3} }  )^{-2} $ and
\begin{align}
&~M(\mathbf{i},\tilde{\mathbf{i}}, \mathbf{w},\tilde{\mathbf{w}},\mathbf{j})\label{eq:defmiiww} \\
    = & ~ \prod_{t=1}^{s_1}x_{i_t,j_1}x_{\tilde{i}_t,\tilde{j}_1}\prod_{t=s_1+1}^{s_1+s_2} x_{i_t,j_2}  x_{\tilde{i}_t,\tilde{j}_2} \prod_{t=s_1+s_2+1}^{s_1+s_2+s_3} (x_{i_t,j_1}x_{i_t,j_2})(x_{\tilde{i}_t,\tilde{j}_1}x_{\tilde{i}_t,\tilde{j}_2}) \notag \\
	& ~\quad \times \prod_{k=1}^{a-s_1-s_3} x_{w_k,j_1}x_{\tilde{w}_k,\tilde{j}_1} \prod_{k=a-s_1-s_3+1}^{2a-s_1-s_2-2s_3} x_{w_k,j_2}x_{\tilde{w}_k,\tilde{j}_2} . \notag	
\end{align}
We write  $M(\mathbf{i},\tilde{\mathbf{i}}, \mathbf{w},\tilde{\mathbf{w}},\mathbf{j}) =M_{j_1} M_{j_2}M_{\tilde{j}_1}M_{\tilde{j}_2}$, where
\begin{align*}
	M_{j_1}=\prod_{t=1}^{s_1} x_{i_t,j_1} \prod_{t=s_1+s_2+1}^{s_1+s_2+s_3}  x_{i_t,j_1} \prod_{k=1}^{a-s_1-s_3} {x_{w_k, j_1}}, \ \ & M_{j_2}=  \prod_{t=s_1+1}^{s_1+s_2+s_3} x_{i_t,j_2}\prod_{k=a-s_1-s_3+1}^{2a-s_1-s_2-2s_3} {x_{w_k,j_2}}, \notag \\
M_{\tilde{j}_1}=\prod_{t=1}^{s_1}  x_{\tilde{i}_t,\tilde{j}_1} \prod_{t=s_1+s_2+1}^{s_1+s_2+s_3} x_{\tilde{i}_t,\tilde{j}_1} \prod_{k=1}^{a-s_1-s_3} {x_{\tilde{w}_k, \tilde{j}_1}}, \ \ & M_{\tilde{j}_2}=  \prod_{t=s_1+1}^{s_1+s_2+s_3} x_{\tilde{i}_t,\tilde{j}_2} \prod_{k=a-s_1-s_3+1}^{2a-s_1-s_2-2s_3} {x_{\tilde{w}_k,\tilde{j}_2}}.
\end{align*}

As $\mathrm{E}(\mathbf{x})=\mathbf{0}$, when $a=1$, $\mathrm{E}(M_{j_1})=\mathrm{E}(M_{j_2})=\mathrm{E}(M_{\tilde{j}_1})=\mathrm{E}(M_{\tilde{j}_2})=0$.  We then consider $a\geq 2$. As $\mathrm{E}(\mathbf{x})=\mathbf{0}$, $i_1\neq \ldots \neq i_{s_1+s_2+s_3}$ and $\tilde{i}_1\neq \ldots \neq \tilde{i}_{s_1+s_2+s_3}$, we know that $\mathrm{E}(M_{j_1})\neq 0$ only when 
$
	\{i_1,\ldots, i_{s_1}, i_{s_1+s_2+1},\ldots,\break i_{s_1+s_2+s_3}\}\subseteq \{w_1,\ldots, w_{a-s_1-s_3} \}
$ and
\begin{align}
	&|S_{j_1}| \leq  s_1+s_3 + \lfloor (a-2s_1-2s_3)/2 \rfloor =  \lfloor a/2\rfloor,\label{eq:centersetcont1} 
\end{align} where
$
	S_{j_1}=\{i_1,\ldots, i_{s_1}, i_{s_1+s_2+1},\ldots, i_{s_1+s_2+s_3}, w_1,\ldots, w_{a-s_1-s_3} \}.
$
Similarly, when  $\mathrm{E}(M_{j_2})\neq 0$, we know $
	\{i_{s_1+1},\ldots, i_{s_1+s_2+s_3}\}\subseteq \{ w_{a-s_1-s_3+1},\break \ldots, w_{2a-s_1-s_2-2s_3} \},
$ and 
\begin{align}
	&|S_{j_2}|\leq  s_2+s_3 + \lfloor (a-2s_2-2s_3)/2 \rfloor=  \lfloor a/2\rfloor, \label{eq:centersetcont2} 
\end{align} where
$S_{j_2}=\{i_{s_1+1},\ldots, i_{s_1+s_2+s_3},  w_{a-s_1-s_3+1},\ldots, w_{2a-s_1-s_2-2s_3} \}$.
As $|S_{j_1}\cap S_{j_2}|=s_3$, 
 combining \eqref{eq:centersetcont1} and \eqref{eq:centersetcont2}, we know that if $\mathrm{E}(M_{j_1})\neq 0$ and $\mathrm{E}(M_{j_2})\neq 0$,
\begin{align}
	& |S_{j_1}\cup S_{j_2}|\leq 2 \lfloor a/2\rfloor-s_3\label{eq:centersetcont3} 
\end{align}
Similarly,  if $\mathrm{E}(M_{\tilde{j}_1})\neq 0$, we know 
\begin{align}
	&|S_{\tilde{j}_1}|\leq  \lfloor a/2\rfloor, \label{eq:centersetconttilde1} 
\end{align} where $S_{\tilde{j}_1}=\{\tilde{i}_1,\ldots, \tilde{i}_{s_1}, \tilde{i}_{s_1+s_2+1},\ldots, \tilde{i}_{s_1+s_2+s_3}, \tilde{w}_1,\ldots, \tilde{w}_{a-s_1-s_3} \}. $
If $\mathrm{E}(M_{\tilde{j}_2})\neq 0$, we know
\begin{align}
	&|S_{\tilde{j}_2}|\leq \lfloor a/2\rfloor \label{eq:centersetconttilde2} ,
\end{align} where
$S_{\tilde{j}_2}=\{\tilde{i}_{s_1+1},\ldots, \tilde{i}_{s_1+s_2+s_3},  \tilde{w}_{a-s_1-s_3+1},\ldots, \tilde{w}_{2a-s_1-s_2-2s_3} \}.$
If  $\mathrm{E}(M_{\tilde{j}_1})\neq 0$ and $\mathrm{E}(M_{\tilde{j}_2})\neq 0$, we know
 \begin{align}
 |S_{\tilde{j}_1}\cup S_{\tilde{j}_2}|\leq 2 \lfloor a/2\rfloor -s_3. \label{eq:centersetcont4}
\end{align} 

%
%

To evaluate $\mathrm{E}(T_{s_1,s_2,s_3}^2)$ in \eqref{eq:ets1s2s3sq}, 
for the simplicity of representation, in the following we write
\begin{align*}
	\sum_{\mathrm{ALL\ SUM}} =\sum_{ \substack{ 1\leq j_1\neq j_2\leq p; \,  1\leq \tilde{j}_1\neq \tilde{j}_2\leq p } } \ \sum_{\substack{\mathbf{i},\tilde{\mathbf{i}}\in \mathcal{P}(n,s_1+s_2+s_3);\, \mathbf{w},\tilde{\mathbf{w}}\in \mathcal{C}(n,2a-s_1-s_2-2s_3) } }.
\end{align*}  We next evaluate $\mathrm{E}(T_{s_1,s_2,s_3}^2)$ by discussing the indexes $\{j_1, j_2, \tilde{j}_1, \tilde{j}_2\}$. 
We first consider $|\{j_1, j_2, \tilde{j}_1, \tilde{j}_2\}|=4$, and the summation  
$$
\sum_{\mathrm{ALL\ SUM}} \mathbf{1}_{ \{|\{j_1, j_2, \tilde{j}_1, \tilde{j}_2\}|=4\} }\times  C_{n,s_1,s_2,s_3}\times \mathrm{E}\{M(\mathbf{i},\tilde{\mathbf{i}}, \mathbf{w},\tilde{\mathbf{w}},\mathbf{j})\}.
$$ Note that $|\{j_1, j_2, \tilde{j}_1, \tilde{j}_2\}|=4$ implies that $j_1\neq j_2 \neq \tilde{j}_1 \neq \tilde{j}_2$.  Without loss of generality, we assume $j_1<j_2<\tilde{j}_1<\tilde{j}_2$ , while the other cases can follow similar analysis. Define $\kappa_1=j_2-j_1$, $\kappa_2=\tilde{j}_1-j_2$ and $\kappa_3=\tilde{j}_2-\tilde{j}_1$. In addition, for some small positive constants $\mu$ and $\epsilon$ and $\delta$ in Condition \ref{cond:alphamixing}, define $K_0=-(2+\epsilon)(4+\mu)(\log p)/(\epsilon \log \delta)$. 
If $\kappa_m=\max\{\kappa_1, \kappa_2, \kappa_3 \}\geq K_0$, we can write
\begin{align*}
	|\mathrm{E}\{M(\mathbf{i},\tilde{\mathbf{i}}, \mathbf{w},\tilde{\mathbf{w}},\mathbf{j})\}|\leq C\delta^{K_0\epsilon/(2+\epsilon)} + \Delta_{j,\tilde{j}}.
\end{align*} We next evaluate  $\Delta_{j,\tilde{j}}$ by discussing the following cases (a)--(c).
 

\medskip

\noindent \textbf{Case (a)}  If all three $\kappa_1,\kappa_2,\kappa_3>K_0$, we have
\begin{align*}
	\Delta_{j,\tilde{j}}=|\mathrm{E}(M_{j_1})\mathrm{E}(M_{j_2})\mathrm{E}(M_{\tilde{j}_1})\mathrm{E}(M_{\tilde{j}_2} )|.
\end{align*}
Then if $\Delta_{j,\tilde{j}}\neq 0$, we know $\mathrm{E}(M_{j_1}), \mathrm{E}(M_{j_2}), \mathrm{E}(M_{\tilde{j}_1})$ and $ \mathrm{E}(M_{\tilde{j}_2} )\neq 0$, which implies that \eqref{eq:centersetcont3} and \eqref{eq:centersetcont4} hold. By Condition \ref{cond:higherordermomentvarest}, we  know that
\begin{align*}
\sum_{\mathrm{ALL\ SUM}}   \Delta_{j,\tilde{j}}   \mathbf{1}_{\{|  \{j_1, j_2, \tilde{j}_1, \tilde{j}_2\}|=4 ,\kappa_1,\kappa_2,\kappa_3>K_0\}  }=O(1)p^4 n^{4 \lfloor a/2 \rfloor -2s_3 }.
\end{align*}
In addition, $\mathrm{E}\{M(\mathbf{i},\tilde{\mathbf{i}}, \mathbf{w},\tilde{\mathbf{w}},\mathbf{j})\}\neq 0$ only if $|\{\mathbf{i}\}\cup \{\tilde{\mathbf{i}\}}\cup \{\mathbf{w}\}\cup \{\tilde{\mathbf{w}\}}|\leq 2a-s_3$. 
It follows that
\begin{align}
	&~ \Biggr|\sum_{\mathrm{ALL\ SUM}} C_{n,s_1,s_2,s_3}  \mathrm{E}\{M(\mathbf{i},\tilde{\mathbf{i}}, \mathbf{w},\tilde{\mathbf{w}},\mathbf{j})\}  \mathbf{1}_{\Big\{\substack{|\{j_1, j_2, \tilde{j}_1, \tilde{j}_2\}|=4;\\\kappa_1,\kappa_2,\kappa_3>K_0}\Big\}}\Biggr| \label{eq:varests1partallK0} \\
\leq &~C \sum_{s_3=0}^{a-1}n^{-2(2a-s_3)}n^{2a-s_3}p^4  C\delta^{K_0\epsilon/(2+\epsilon)} \notag \\
&~+   \sum_{\mathrm{ALL\ SUM}} Cn^{-2(2a-s_3)}\Delta_{j,\tilde{j}}  \mathbf{1}_{ \{|\{j_1, j_2, \tilde{j}_1, \tilde{j}_2\}|=4, \kappa_1, \kappa_2,   \kappa_3>K_0\} } ,  \notag  \\
=&~ o(n^{-(a+1)})+ O(1)p^4 n^{4\lfloor a/2 \rfloor -4a }, \notag
\end{align} where we use $\sum_{\text{ALL SUM}}\mathbf{1}_{\{\mathrm{E}\{M(\mathbf{i},\tilde{\mathbf{i}}, \mathbf{w},\tilde{\mathbf{w}},\mathbf{j})\}\neq 0\} }=\sum_{s_3=0}^{a-1} n^{2a-s_3}p^4$,  $\delta^{K_0\epsilon/(2+\epsilon)} =O(1)p^{-(4+\mu)}$, and $C_{n,a,s_1,s_2,s_3}=\Theta(1)n^{-2(2a-s_3)}$.
If $a$ is even, $\eqref{eq:varests1partallK0}=O(1)p^4 n^{-2a} =o(1)p^2n^{-a}$. If $a$ is odd, $\eqref{eq:varests1partallK0}=O(1)p^4 n^{-2a-2} =o(1)p^2n^{-a}$.

\medskip

\noindent \textbf{Case (b.1)} If $\kappa_1\leq K_0$, $\kappa_2> K_0$ and $\kappa_3>K_0$,
\begin{align*}
	\Delta_{j,\tilde{j}}=|\mathrm{E}(M_{j_1}M_{j_2})\mathrm{E}(M_{\tilde{j}_1})\mathrm{E}(M_{\tilde{j}_2} )|
\end{align*}
If $\mathrm{E}(M_{\tilde{j}_1})$ and $\mathrm{E}(M_{\tilde{j}_2} )\neq 0$, we know \eqref{eq:centersetcont4} holds. We then consider $\mathrm{E}(M_{j_1}M_{j_2})$ with $j_1\neq j_2$.  Note that
\begin{align*}
	&M_{j_1}M_{j_2}\notag \\
	=&\prod_{t=1}^{s_1} x_{i_t,j_1}\prod_{t=s_1+1}^{s_1+s_2} x_{i_t,j_2} \prod_{t=s_1+s_2+1}^{s_1+s_2+s_3}  (x_{i_t,j_1} x_{i_t,j_2} ) \prod_{k=1}^{a-s_1-s_3} {x_{w_k, j_1}} \prod_{k=a-s_1-s_3+1}^{2a-s_1-s_2-2s_3} {x_{w_k,j_2}}.
\end{align*}As $\mathrm{E}(\mathbf{x})=\mathbf{0}$ and  $\mathrm{E}(x_{1,j_1}x_{1,j_2})=0$ under $H_0$ when $j_1\neq j_2$, we know $\mathrm{E}(M_{j_1}M_{j_2})\neq 0$ only when $\{i_1,\ldots, i_{s_1+s_2+s_3}\}\subseteq \{w_1,\ldots, w_{2a-s_1-s_2-2s_3}\}$ and
\begin{align}
	|S_{j_1}\cup S_{j_2}|\leq \lfloor (2a-s_3)/2 \rfloor \label{eq:centersetcont5}
\end{align}
 We then know $\Delta_{j,\tilde{j}}\neq 0$ only when \eqref{eq:centersetcont4} and \eqref{eq:centersetcont5} hold, and thus
\begin{align*}
&~\sum_{\mathrm{ALL\ SUM}}   \Delta_{j,\tilde{j}}\times    \mathbf{1}_{\{|  \{j_1, j_2, \tilde{j}_1, \tilde{j}_2\}|=4 ,\kappa_1 \leq K_0,\kappa_2,\kappa_3>K_0\}  }\notag \\
=&~ \sum_{s_3=0}^{a-1} O(1)p^3K_0 n^{2 \lfloor a/2 \rfloor-s_3 + \lfloor (2a-s_3)/2 \rfloor }.
\end{align*}Then similarly to \eqref{eq:varests1partallK0}, we have 
\begin{align}
	&~ \Big|\sum_{\mathrm{ALL\ SUM}} C_{n,a,s_1,s_2,s_3}\mathrm{E}\{M(\mathbf{i},\tilde{\mathbf{i}}, \mathbf{w},\tilde{\mathbf{w}},\mathbf{j})\}  \mathbf{1}_{\Big\{\substack{|\{j_1, j_2, \tilde{j}_1, \tilde{j}_2\}|=4 ;\\\kappa_1\leq K_0; \kappa_2,\kappa_3>K_0}\Big\}}\Big| \label{eq:varests1part3K0} \\
\leq &~ o(n^{-(a+1)})  + \sum_{\mathrm{ALL\ SUM}} C_{n,a,s_1,s_2,s_3}\Delta_{j,\tilde{j}}  \mathbf{1}_{ \left\{ \substack{ |\{j_1, j_2, \tilde{j}_1, \tilde{j}_2\}|=4; \\  \,  \kappa_1\leq K_0;\, \kappa_2,   \kappa_3>K_0}  \right\}  }   \notag  \\
=&~ o(n^{-(a+1)})+ \sum_{s_3=0}^{a-1}  O(1) p^3K_0 n^{2 \lfloor a/2 \rfloor-s_3 + \lfloor (2a-s_3)/2 \rfloor -4a+2s_3 }. \notag  
\end{align}
If $a$ is even, we use  $2 \lfloor a/2 \rfloor-s_3 + \lfloor (2a-s_3)/2 \rfloor -4a+2s_3 \leq -2a + s_3/2 \leq -a -(a+1)/2 $ as $s_3\leq a-1$. Then $\eqref{eq:varests1part3K0}=O(1)p^3K_0 n^{-a-(a+1)/2}=o(1) p^2 n^{-a}$.
If $a$ is odd, we use  $2 \lfloor a/2 \rfloor-s_3 + \lfloor (2a-s_3)/2 \rfloor -4a+2s_3 \leq -2a + s_3/2 \leq -a -(a+3)/2 $ as $2 \lfloor a/2 \rfloor=a-1$ and $s_3\leq a-1$. Then $\eqref{eq:varests1part3K0}=O(1)p^3K_0 n^{-a-(a+3)/2}=o(1) p^2 n^{-a}$.

\medskip
\noindent \textbf{Case (b.2)} If $\kappa_1>K_0$, $\kappa_2> K_0$ and $\kappa_3\leq K_0$, similarly to Case (b.1), by symmetricity, we know  
\begin{align}
	&~ \Big|\sum_{\mathrm{ALL\ SUM}} C_{n,a,s_1,s_2,s_3}\mathrm{E}\{M(\mathbf{i},\tilde{\mathbf{i}}, \mathbf{w},\tilde{\mathbf{w}},\mathbf{j})\}  \mathbf{1}_{\Big\{\substack{|\{j_1, j_2, \tilde{j}_1, \tilde{j}_2\}|=4 ;\\\kappa_1,\kappa_2>K_0;\, \kappa_3\leq K_0}\Big\}}\Big| \label{eq:varests1part3K03} \\
=&~ o(n^{-(a+1)})+ \sum_{s_3=0}^{a-1} O(1) p^3K_0 n^{2 \lfloor a/2 \rfloor-s_3 + \lfloor (2a-s_3)/2 \rfloor -4a+2s_3 }. \notag  
\end{align} Then $\eqref{eq:varests1part3K03} =o(1)p^2n^{-a}$.

\medskip

\noindent \textbf{Case (b.3)} If $\kappa_1>K_0$, $\kappa_2\leq K_0$ and $\kappa_3>K_0$, 
\begin{align*}
	\Delta_{j,\tilde{j}}=|\mathrm{E}(M_{j_1})\mathrm{E}(M_{j_2}M_{\tilde{j}_1})\mathrm{E}(M_{\tilde{j}_2} )|.
\end{align*}
If $\mathrm{E}(M_{{j}_1}), \mathrm{E}(M_{\tilde{j}_2} )\neq 0$, we know \eqref{eq:centersetcont1} and \eqref{eq:centersetconttilde1} hold. We then consider $\mathrm{E}(M_{j_2}M_{\tilde{j}_1})$. Note that
\begin{align*}
	& M_{j_2}M_{\tilde{j}_1} \notag \\
	=&\prod_{t=s_1+1}^{s_1+s_2+s_3} x_{i_t,j_2}  \prod_{t=a-s_1-s_3+1}^{2a-s_1-s_2-2s_3} {x_{w_t,j_2}} \prod_{t=1}^{s_1} x_{\tilde{i}_t,\tilde{j}_1} \prod_{t=s_1+s_2+1}^{s_1+s_2+s_3}  x_{\tilde{i}_t,\tilde{j}_1}\prod_{t=1}^{a-s_1-s_3} {x_{\tilde{w}_t, \tilde{j}_1}} .
\end{align*} If $\mathrm{E}(M_{j_2}M_{\tilde{j}_1})\neq 0$, we know that  $|S_{j_2}\cup S_{\tilde{j}_1}|\leq a$. As  
$
	|(S_{j_2}\cup S_{\tilde{j}_1})\cap (S_{j_1}\cup S_{\tilde{j}_2} )|=2s_3,
$ we have $|S_{j_1}\cup S_{j_2}\cup S_{\tilde{j}_1}\cup S_{\tilde{j}_2}|\leq a+2\lfloor a/2 \rfloor -2s_3. $ We then know
\begin{align*}
\sum_{\mathrm{ALL\ SUM}}   \Delta_{j,\tilde{j}}   \mathbf{1}_{\{|  \{j_1, j_2, \tilde{j}_1, \tilde{j}_2\}|=4 ,\kappa_1,\kappa_3 > K_0,\kappa_2\leq K_0\}  }= \sum_{s_3=0}^{a-1}  O(1)p^3K_0 n^{a+2 \lfloor a/2 \rfloor-2s_3  }.
\end{align*}
Then similarly to \eqref{eq:varests1part3K0}, we have
\begin{align}
	& ~\Big|\sum_{\mathrm{ALL\ SUM}} C_{n,a,s_1,s_2,s_3}\mathrm{E}\{M(\mathbf{i},\tilde{\mathbf{i}}, \mathbf{w},\tilde{\mathbf{w}},\mathbf{j})\}  \mathbf{1}_{\Big\{\substack{|\{j_1, j_2, \tilde{j}_1, \tilde{j}_2\}|=4 ;\\\kappa_1,\kappa_3 > K_0;\kappa_2\leq K_0}\Big\}}\Big| \label{eq:varests1part3K02} \\
	=&~o(n^{-(a+1)})+O(1) p^3K_0 n^{2 \lfloor a/2 \rfloor -3a }. \notag
\end{align}
If $a$ is even, we know $\eqref{eq:varests1part3K02} =p^3K_0n^{-2a} =o(1)p^2n^{-a} $. If $a$ is odd, we know $\eqref{eq:varests1part3K02} =p^3K_0n^{-2a-1} =o(1)p^2n^{-a} $.

\medskip

%
%
%

\noindent \textbf{Case (c)} If two of $\kappa_1,\kappa_2,\kappa_3\leq K_0$, we know $$\sum_{j_1,j_2,\tilde{j}_1,\tilde{j}_2} \mathbf{1}_{\{\text{two of } \kappa_1,\kappa_2,\kappa_3\leq K_0\} }=O(p^2K_0^2).$$ Following definition in \eqref{eq:defmiiww}, we know $\mathrm{E}\{M(\mathbf{i},\tilde{\mathbf{i}}, \mathbf{w},\tilde{\mathbf{w}},\mathbf{j})\}\neq 0$ only when $|S_{j_1}\cup S_{j_2}\cup S_{\tilde{j}_1}\cup S_{\tilde{j}_2}|\leq 2a-s_3$. It implies that 
\begin{align*}
\sum_{\mathrm{ALL\ SUM}}   \Delta_{j,\tilde{j}}   \mathbf{1}_{\{|  \{j_1, j_2, \tilde{j}_1, \tilde{j}_2\}|=4 ,\text{ two of }\kappa_1,\kappa_2,\kappa_3\leq K_0\}  }=O(1)p^2K_0^2 n^{2a-s_3 }.
\end{align*} 
Similarly to \eqref{eq:varests1part3K02},  we have
\begin{eqnarray}
	\quad && \Big|\sum_{\mathrm{ALL\ SUM}} C_{n,a,s_1,s_2,s_3}\mathrm{E}\{M(\mathbf{i},\tilde{\mathbf{i}}, \mathbf{w},\tilde{\mathbf{w}},\mathbf{j})\}  \mathbf{1}_{\Big\{\substack{|\{j_1, j_2, \tilde{j}_1, \tilde{j}_2\}|=4 ;\\\text{two of }\kappa_1,\kappa_2 ,\kappa_3\leq K_0} \Big\}}\Big| \label{eq:varests1part3K03} \\
	&=&o(n^{-(a+1)})+ \sum_{s_3=0}^{a-1}  O(1) p^2K_0^2 n^{-2a +s_3}. \notag
\end{eqnarray}
As $s_3\leq a-1$ and $K_0=O(\log p)$, we know $\eqref{eq:varests1part3K03}=O(1)p^2 K_0^2 n^{-a-1}=o(1)p^2n^{-a}$.

\medskip

\noindent \textbf{Case (d)} If $|\{j_1,j_2,j_3,j_4\}|=3$ or $2$, similar analysis can be applied, and we know that
\begin{align}
	&~ \Big|\sum_{\mathrm{ALL\ SUM}} C_{n,a,s_1,s_2,s_3}\mathrm{E}\{M(\mathbf{i},\tilde{\mathbf{i}}, \mathbf{w},\tilde{\mathbf{w}},\mathbf{j})\}  \mathbf{1}_{\{|\{j_1, j_2, \tilde{j}_1, \tilde{j}_2\}|=2 \mbox{ or }3\}}\Big| \label{eq:varests1part3K031} \\
	=&~o(n^{-(a+1)})+o(1)p^2n^{-2a}. \notag
\end{align}

Summarizing Cases (a)--(d) above, we obtain $\mathrm{E}(T_{s_1,s_2,s_3}^2)=o(p^2n^{-a})$.
 
\subsection{Simulations on One-Sample Covariance Testing} \label{sec:suppsimu}
In this section, we provide extensive simulation studies for the one-sample covariance testing discussed in Section \ref{sec:mainexamsec}. We present the results of the five simulation settings introduced in Section  \ref{sec:simulation} in the following Sections \ref{sec:npcombsimulsize}--\ref{sec:simulationstudyii}. 

\subsubsection{Study 1: Empirical Size}\label{sec:npcombsimulsize}
In this study, we verify the theoretical results under $H_0$ in Section \ref{sec:mainexamsec} and the show validity of the adaptive testing procedure across different $n$ and $p$ values.   In particular, we fix $n=100$ and take $p\in \{50,100,200,400,600,800,1000\}$. Then we
 generate $n$ i.i.d. $p$-dimensional $\mathbf{x}_i$ for $i=1,\ldots, n$, and each $\mathbf{x}_i$ has i.i.d. entries of $\mathcal{N}(0,1)$ and $\mathrm{Gamma}(2,0.5)$  respectively. 
   The results are summarized in the following Tables \ref{table:nullgaussian} and     \ref{table:nullgamma} respectively.

\begin{table}[!ht]
\centering
\caption{Empirical Type \RNum{1} errors  under Guassian distribution; $n=100$.}
\label{table:nullgaussian}
\begin{tabular}{rrrrrrrr}
  \hline
$p$ & 50 & 100 & 200 & 400 & 600 & 800 & 1000 \\ 
  \hline  
$\mathcal{U}(1)$ & 0.054 & 0.055 & 0.045 & 0.053 & 0.048 & 0.052 & 0.036 \\ 
$\mathcal{U}(2)$ & 0.058 & 0.058 & 0.066 & 0.050 & 0.071 & 0.048 & 0.063 \\ 
$\mathcal{U}(3)$ & 0.057 & 0.066 & 0.061 & 0.055 & 0.051 & 0.063 & 0.052 \\ 
$\mathcal{U}(4)$ & 0.054 & 0.067 & 0.052 & 0.080 & 0.053 & 0.041 & 0.056 \\ 
$\mathcal{U}(5)$ & 0.049 & 0.054 & 0.059 & 0.070 & 0.045 & 0.049 & 0.053 \\ 
$\mathcal{U}(6)$ & 0.039 & 0.057 & 0.063 & 0.061 & 0.056 & 0.057 & 0.074 \\ 
$\mathcal{U}(\infty)$ 1 & 0.046 & 0.055 & 0.049 & 0.067 & 0.064 & 0.042 & 0.044 \\ 
$\mathcal{U}(\infty)$ 2 & 0.040 & 0.047 & 0.045 & 0.056 & 0.048 & 0.050 & 0.048 \\ 
  adpUmin 1 & 0.056 & 0.066 & 0.067 & 0.064 & 0.067 & 0.056 & 0.051 \\ 
  adpUf 1 & 0.065 & 0.083 & 0.069 & 0.079 & 0.063 & 0.058 & 0.060 \\ 
  adpUmin 2 & 0.054 & 0.069 & 0.065 & 0.060 & 0.062 & 0.055 & 0.057 \\ 
  adpUf 2 & 0.069 & 0.082 & 0.065 & 0.065 & 0.058 & 0.057 & 0.062 \\ 
  Identity & 0.055 & 0.053 & 0.058 & 0.053 & 0.061 & 0.049 & 0.053 \\ 
  Sphericity & 0.053 & 0.050 & 0.058 & 0.053 & 0.062 & 0.049 & 0.054 \\ 
  LW & 0.058 & 0.051 & 0.053 & 0.045 & 0.067 & 0.048 & 0.058 \\ 
  Schott & 0.052 & 0.055 & 0.050 & 0.052 & 0.050 & 0.044 & 0.051 \\ 
   \hline
\end{tabular}
\end{table}

\begin{table}[!ht]
\centering
\caption{Empirical Type \RNum{1} errors  under Gamma distribution; $n=100$. }
\label{table:nullgamma}
\begin{tabular}{rrrrrrrr}
  \hline
$p$ & 50 & 100 & 200 & 400 & 600 & 800 & 1000 \\ 
  \hline
$\mathcal{U}(1)$ & 0.043 & 0.049 & 0.054 & 0.048 & 0.050 & 0.049 & 0.043 \\ 
$\mathcal{U}(2)$& 0.057 & 0.075 & 0.062 & 0.054 & 0.057 & 0.055 & 0.061 \\ 
$\mathcal{U}(3)$ & 0.054 & 0.064 & 0.050 & 0.041 & 0.057 & 0.051 & 0.056 \\ 
$\mathcal{U}(4)$ & 0.047 & 0.056 & 0.061 & 0.056 & 0.052 & 0.053 & 0.045 \\ 
$\mathcal{U}(5)$ & 0.043 & 0.043 & 0.054 & 0.052 & 0.050 & 0.053 & 0.049 \\ 
$\mathcal{U}(6)$ & 0.032 & 0.035 & 0.059 & 0.045 & 0.046 & 0.053 & 0.044 \\ 
  $\mathcal{U}(\infty)$ 1 & 0.052 & 0.045 & 0.048 & 0.053 & 0.045 & 0.049 & 0.055 \\ 
  $\mathcal{U}(\infty)$ 2 & 0.044 & 0.052 & 0.052 & 0.053 & 0.044 & 0.051 & 0.045 \\ 
  adpUmin 1 & 0.051 & 0.054 & 0.069 & 0.062 & 0.049 & 0.058 & 0.065 \\ 
  adpUf 1 & 0.055 & 0.060 & 0.075 & 0.067 & 0.054 & 0.058 & 0.067 \\ 
  adpUmin 2 & 0.049 & 0.055 & 0.068 & 0.063 & 0.049 & 0.059 & 0.066 \\ 
  adpUf 2 & 0.063 & 0.067 & 0.070 & 0.058 & 0.047 & 0.057 & 0.061 \\ 
  Identity & 1.000 & 1.000 & 1.000 & 1.000 & 1.000 & 1.000 & 1.000 \\ 
  Sphericity & 0.088 & 0.065 & 0.071 & 0.056 & 0.060 & 0.059 & 0.050 \\ 
  LW & 1.000 & 1.000 & 1.000 & 1.000 & 1.000 & 1.000 & 1.000 \\ 
  Schott & 0.051 & 0.063 & 0.053 & 0.053 & 0.055 & 0.046 & 0.060 \\ 
   \hline
\end{tabular}
\end{table}

In Tables \ref{table:nullgaussian} and     \ref{table:nullgamma}, we provide the simulation results of all the single U-statistics with orders in $\{1,\ldots,6\}$. For $\mathcal{U}(\infty)$, we first use the test statistic \eqref{eq:inftyteststat} same as in \citet{jiang2004}, which is denoted as ``$\mathcal{U}(\infty) $ 1" below. Since the convergence in  \cite{jiang2004} is slow, we use permutation to approximate the distribution in the simulations.  We also use the standardized version $M_n^{\dag}$ given in  Remark \ref{rm:standizedmax}, which is denoted as ``$\mathcal{U}(\infty) $ 2" below. Given  ``$\mathcal{U}(\infty) $ 1" and ``$\mathcal{U}(\infty) $ 2", we apply the adaptive testing with minimum combination and Fisher's method respectively. The results are denoted as ``adpUmin1", ``adpUf1", ``adpUmin2" and  ``adpUf2" respectively below. In addition, we also compare several methods in the literature. The identity and sphericity tests in \citet{chen2011} are denoted as ``Equal" and ``Spher" below; the methods in  \citet{ledoit2002} and \citet{schott2007test}, which are referred to as ``LW" and ``Schott" respectively.  

\newpage

\subsubsection{Study 2} \label{sec:study2onecovsim}
In this section, we provide the simulation results for the second setting in Section \ref{sec:twosimdata}. In particular, we  generate $n$ i.i.d. $p$-dimensional $\mathbf{x}_i$ for $i=1,\ldots, n$, and $\mathbf{x}_i$ follows multivariate Gaussian distribution with mean zero and covariance $\boldsymbol{\Sigma}_A=(1-\rho)I_p+\rho \mathbf{1}_{p,k_0}\mathbf{1}_{p,k_0}^{\intercal}$.  

Similarly to Figure \ref{fig:alternsparsfigure}, we conduct simulations on the adaptive procedure with U-statistics of orders in $\{1,\ldots, 6, \infty\}$. We provide the simulation results of all the single U-statistics and the adaptive procedure, and also compare with some other methods in the literature. We take  $(n,p)\in \{(100,300), (100,600), (100, 1000)\}$, and provide the results in the following Figures  \ref{fig:addtionalfigureplot}--\ref{fig:study2p1000} respectively. 
 
In Figure \ref{fig:addtionalfigureplot}, the first 7 plots  are simulated with $k_0\in \{2, 5, 7, 10, 13, 20, 50\}$. Particularly, we include results of $\mathcal{U}(a)$ for $a \in \{1,\ldots, 6, \infty\}$;  the adaptive procedure ``adpU" by minimum combination of these single U-statistics;  identity and sphericity tests in \cite{chen2011}, which are denoted as `Equal" and ``Shper",  respectively.  We can see that when $k_0\in\{7, 10, 13\}$, the results of ``adpU" are better than all the  other test statistics. For other cases, the results of ``adpU" are close to  the best results of single U-statistics.  In addition, we also examine the case when the nonzero off-diagonal elements of $\boldsymbol{\Sigma}_A$,  i.e., $\sigma_{j_1,j_2}$  with $1\leq j_1\neq j_2\leq k_0$, have same absolute value $|\rho|$, but can be  positive or negative with equal probability. The results of powers versus different $|\rho|$ values  are given by 8th plot in Figure \ref{fig:addtionalfigureplot}, which is consistent with  Remark \ref{rm:notallpositive} in Section \ref{sec:powerana}.


\begin{figure}[!htbp]
    \centering
    \includegraphics[width=0.4\textwidth,height=0.23\textheight]{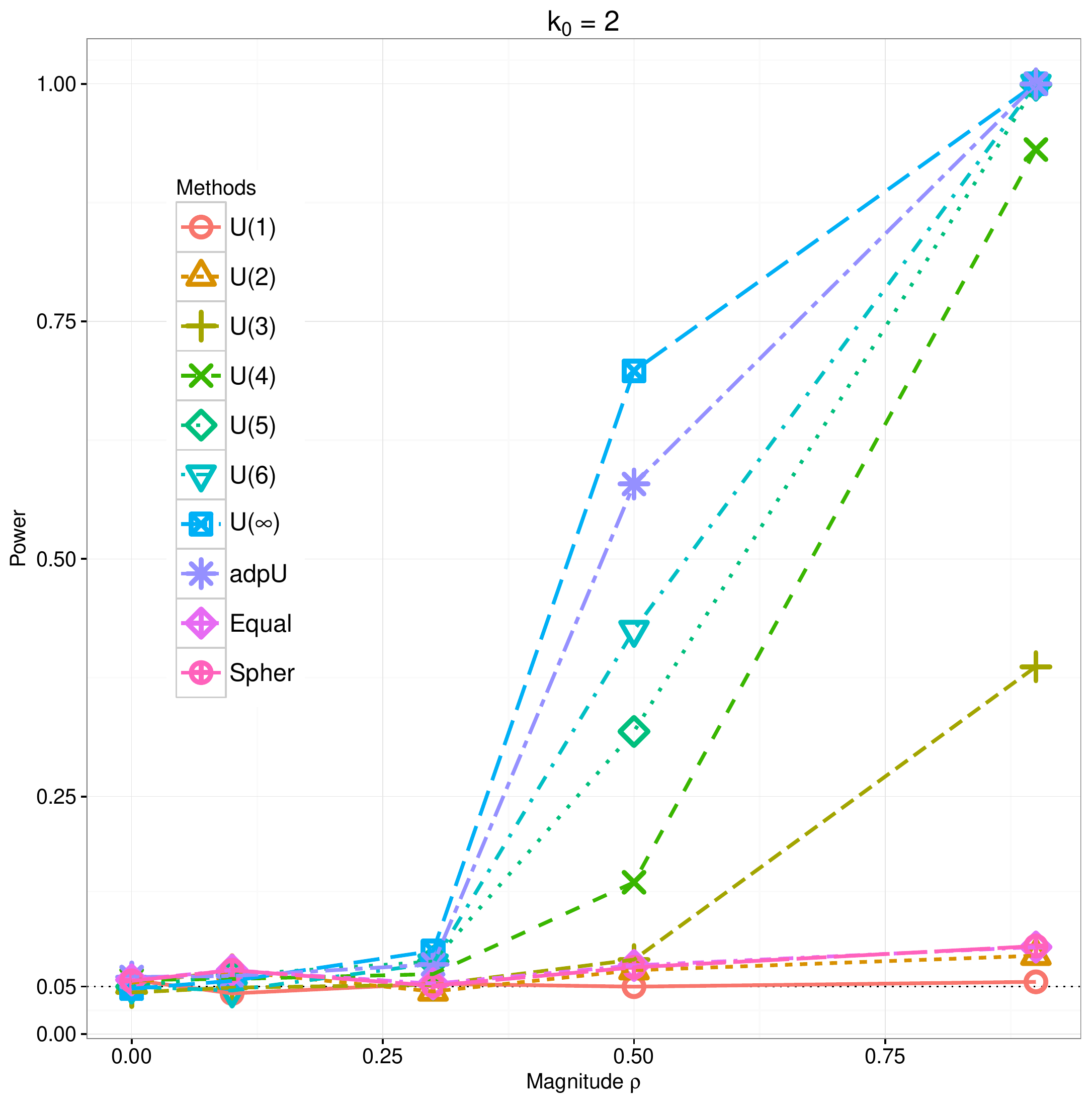} \quad
       \includegraphics[width=0.4\textwidth,height=0.23\textheight]{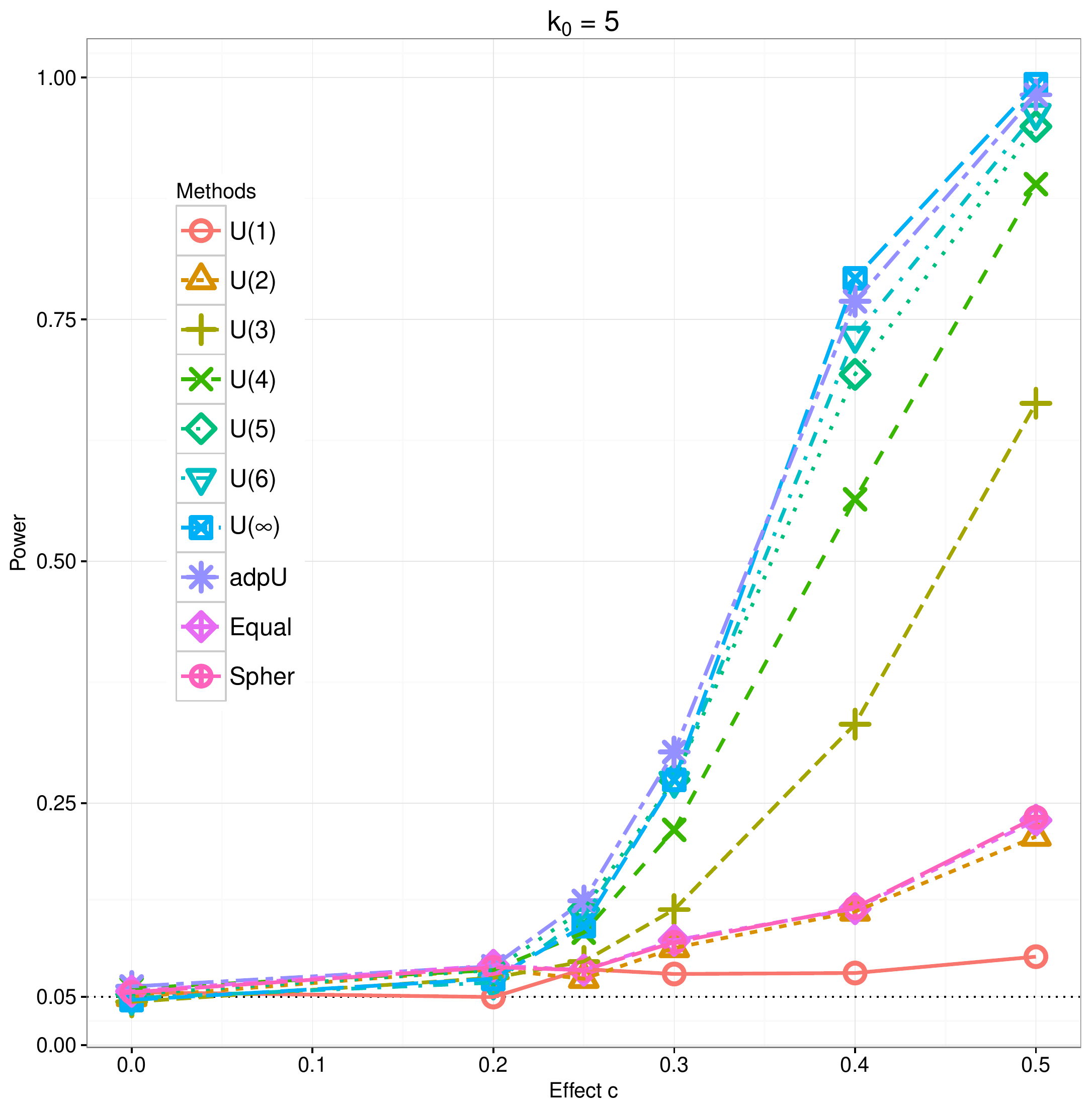}\\
           \includegraphics[width=0.41\textwidth,height=0.23\textheight]{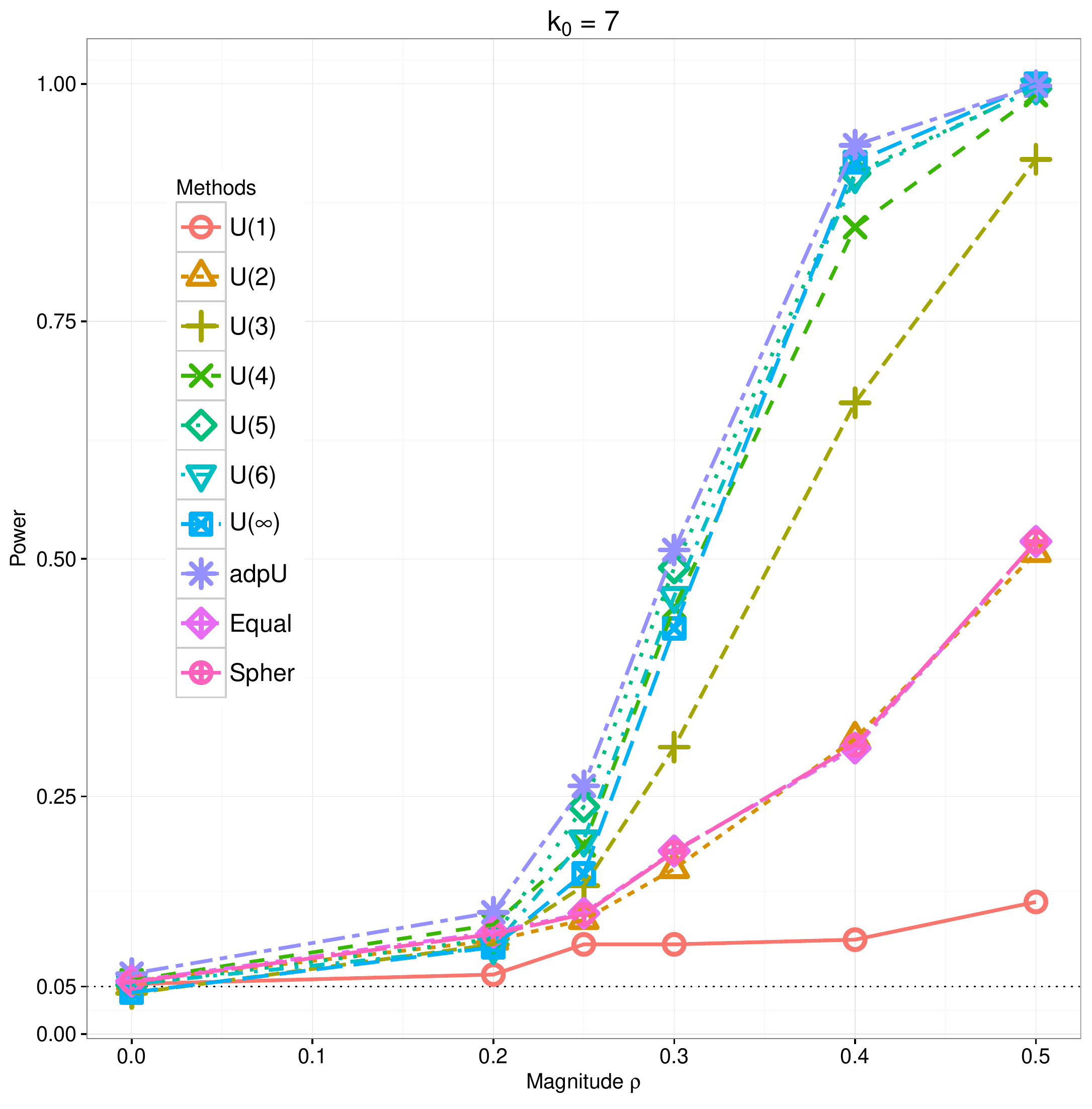} \quad
       \includegraphics[width=0.4\textwidth,height=0.23\textheight]{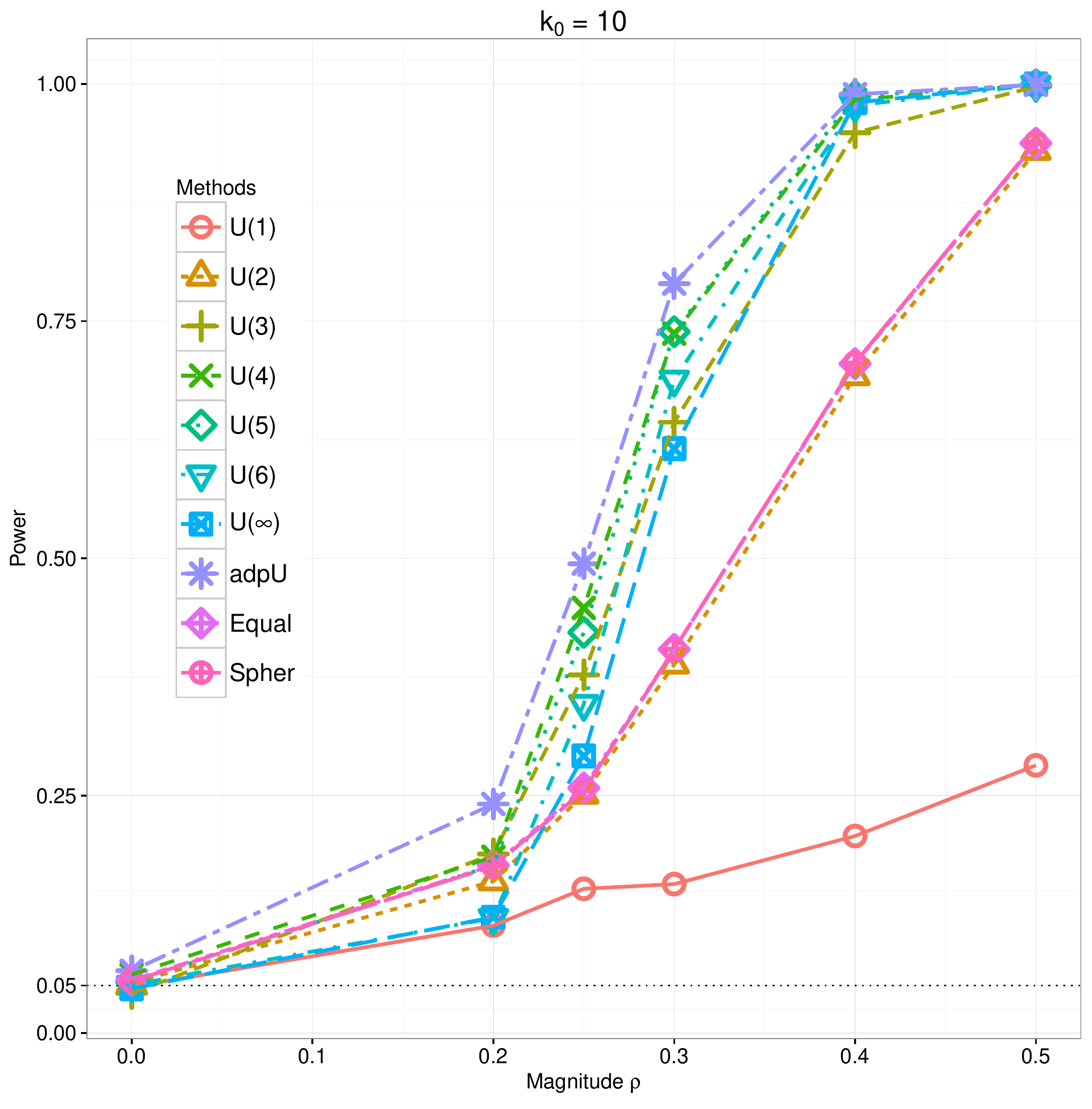}\\
           \includegraphics[width=0.4\textwidth,height=0.23\textheight]{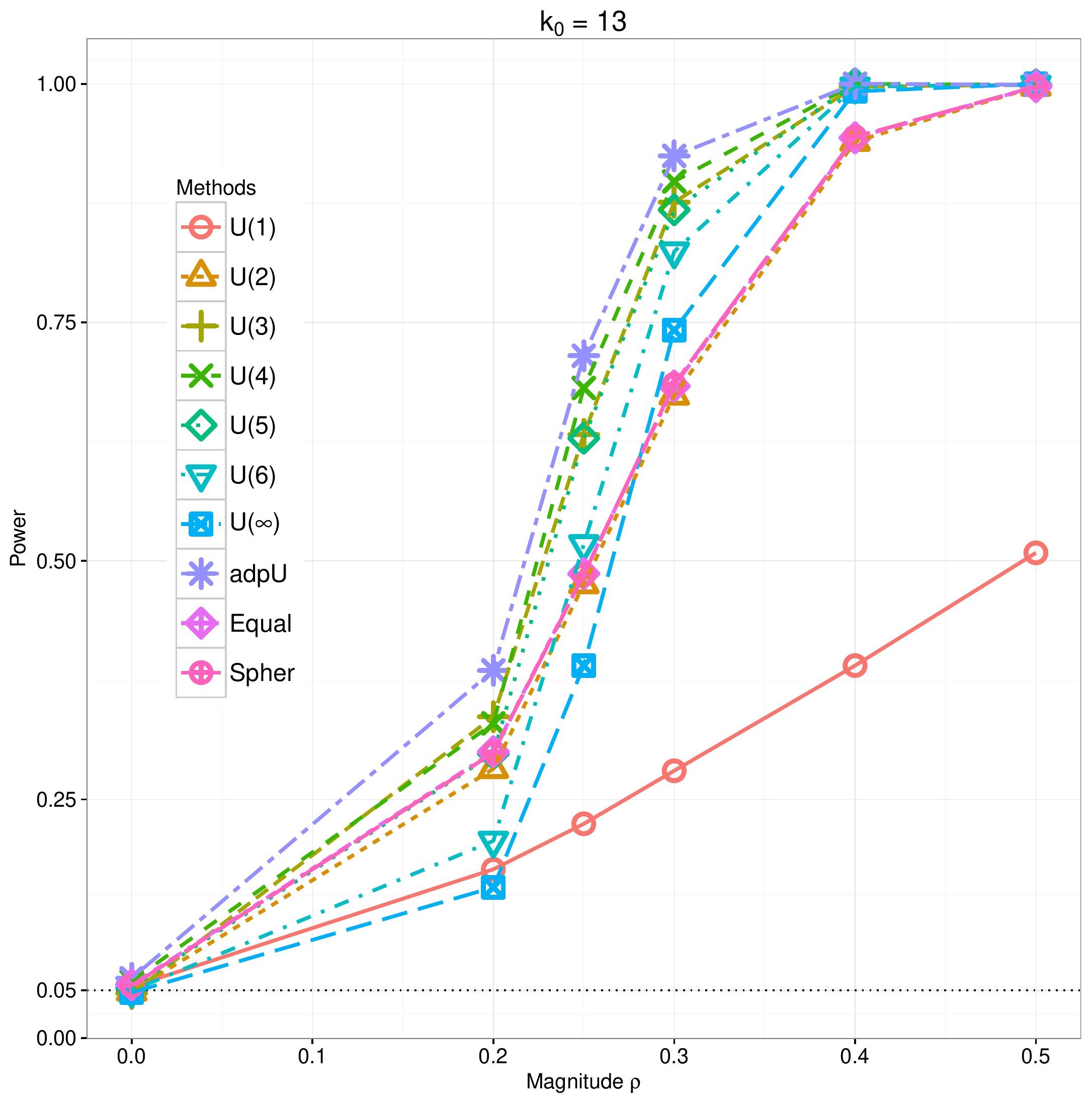}  \quad
       \includegraphics[width=0.4\textwidth,height=0.23\textheight]{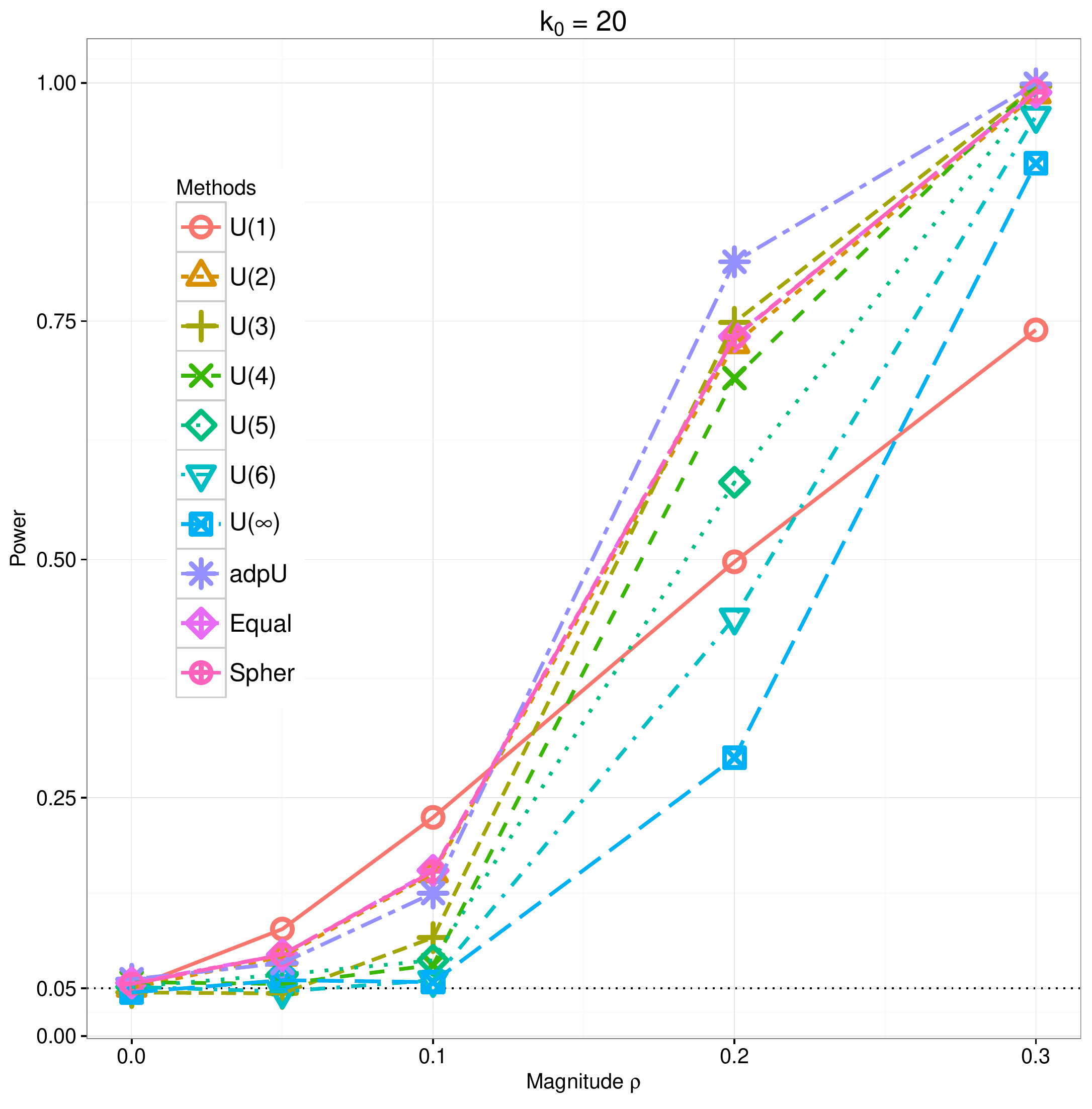} \\
       \includegraphics[width=0.4\textwidth,height=0.23\textheight]{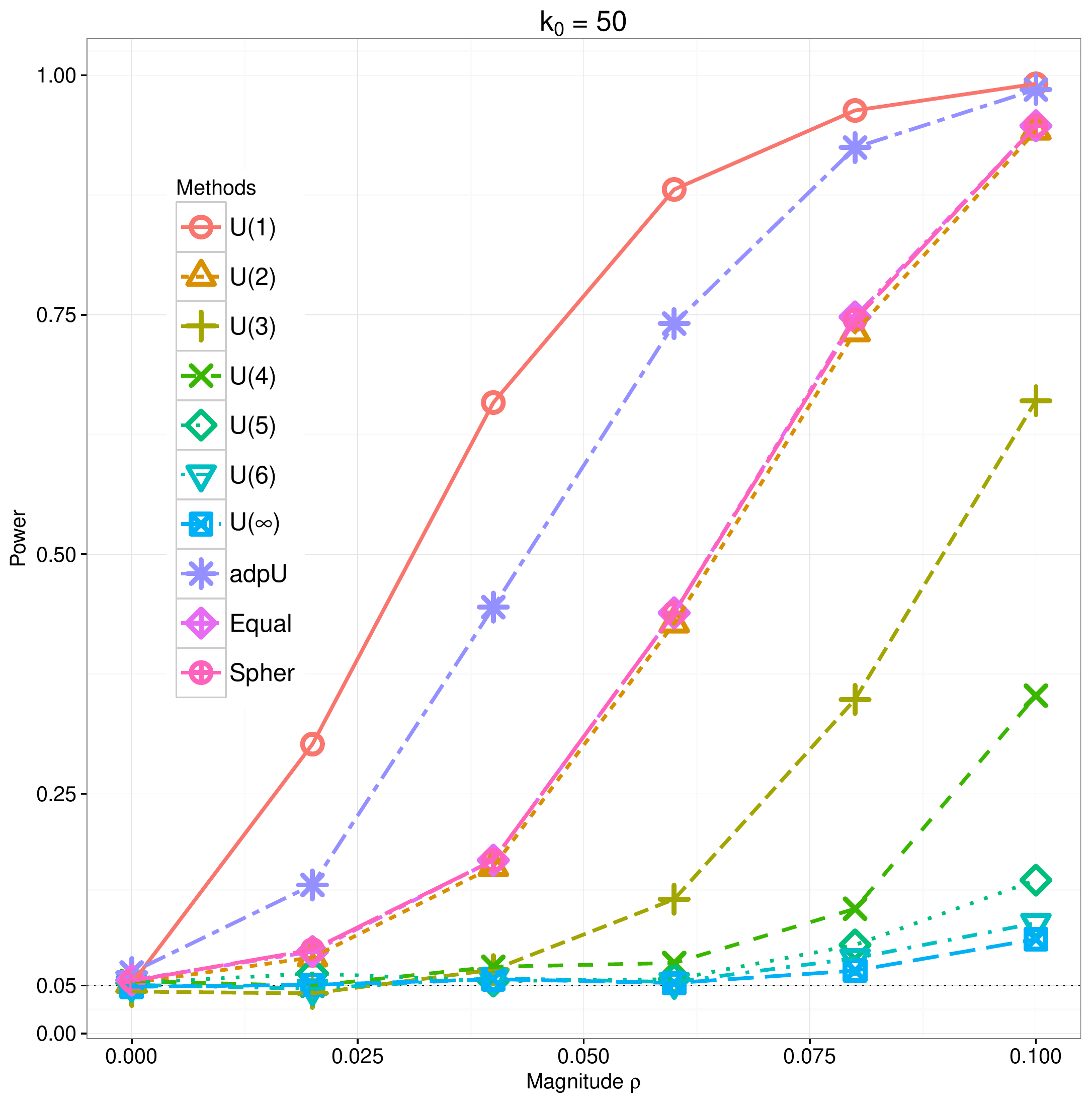}  \quad
       \includegraphics[width=0.4\textwidth,height=0.23\textheight]{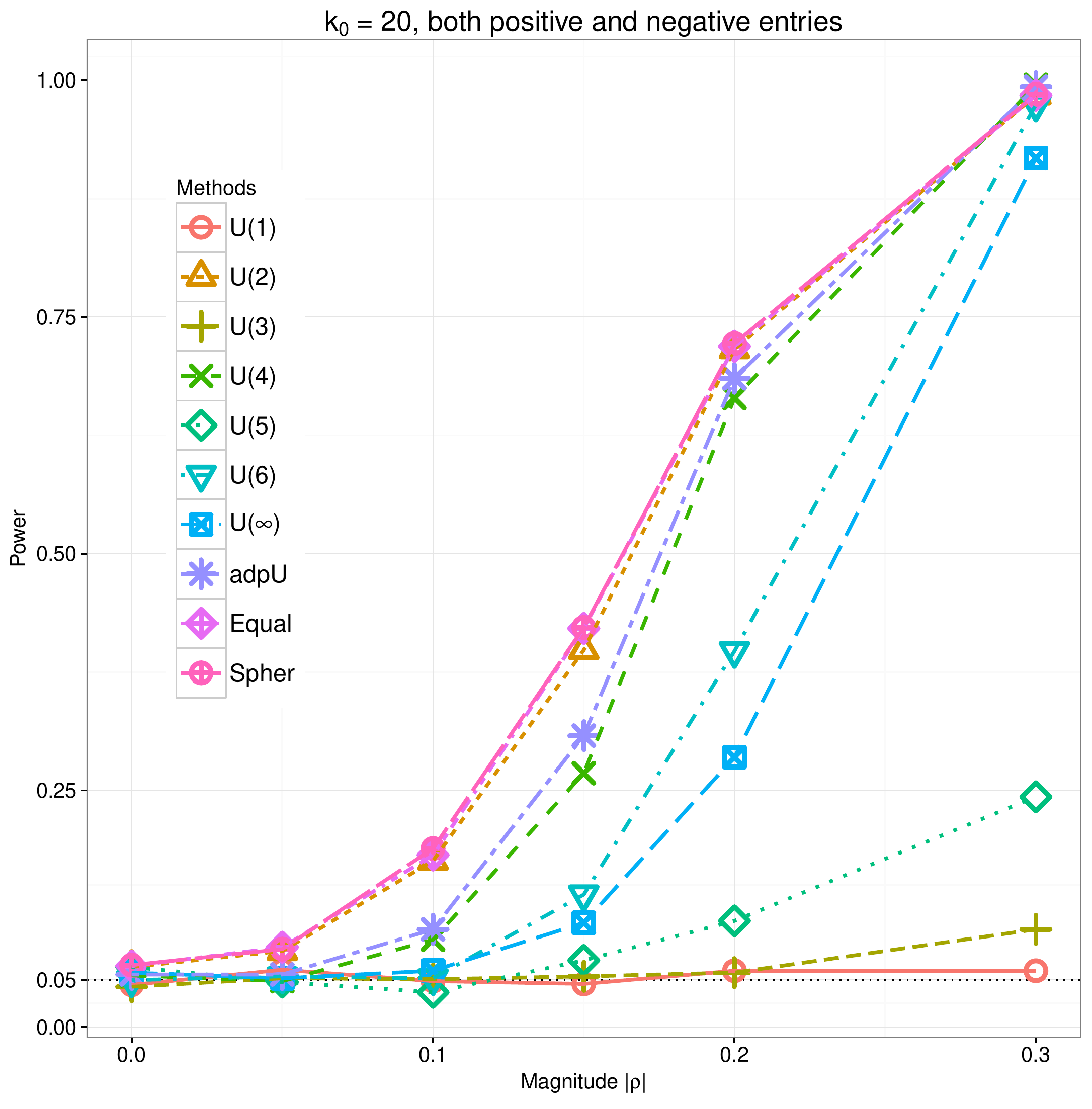}
    \caption{Study 2: $n=100, p=300$. 
    }
    \label{fig:addtionalfigureplot}
\end{figure}

In Figures \ref{fig:study2p600} and \ref{fig:study2p1000},  the meanings of the legends are the same as in Tables \ref{table:nullgaussian} and     \ref{table:nullgamma}, and are already explained in Section  \ref{sec:npcombsimulsize}.  We can find similar patterns to that in Figure \ref{fig:addtionalfigureplot}.

\begin{figure}[!htbp]
    \centering
    \includegraphics[width=0.48\textwidth,height=0.3\textheight]{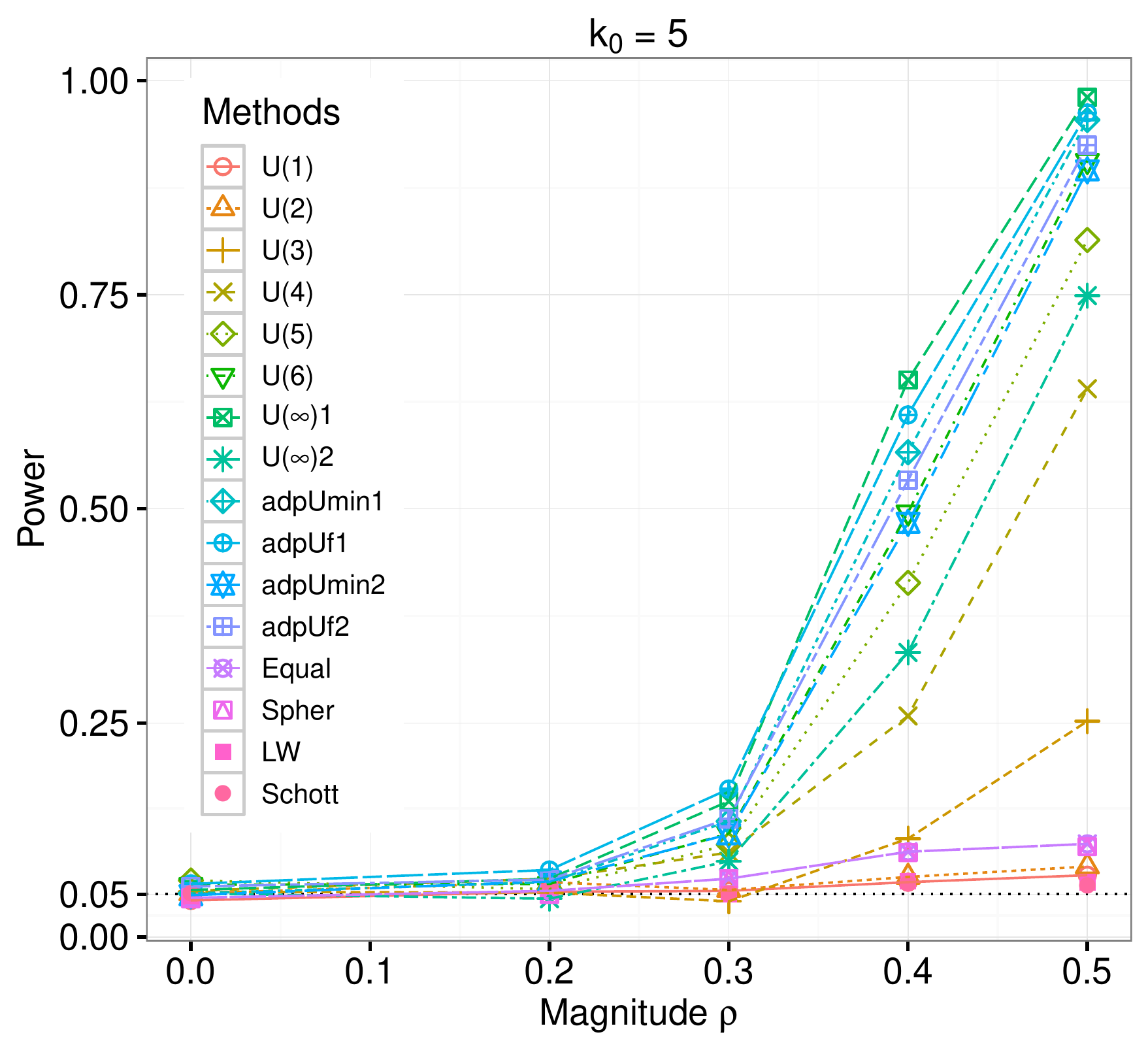} \quad
       \includegraphics[width=0.48\textwidth,height=0.3\textheight]{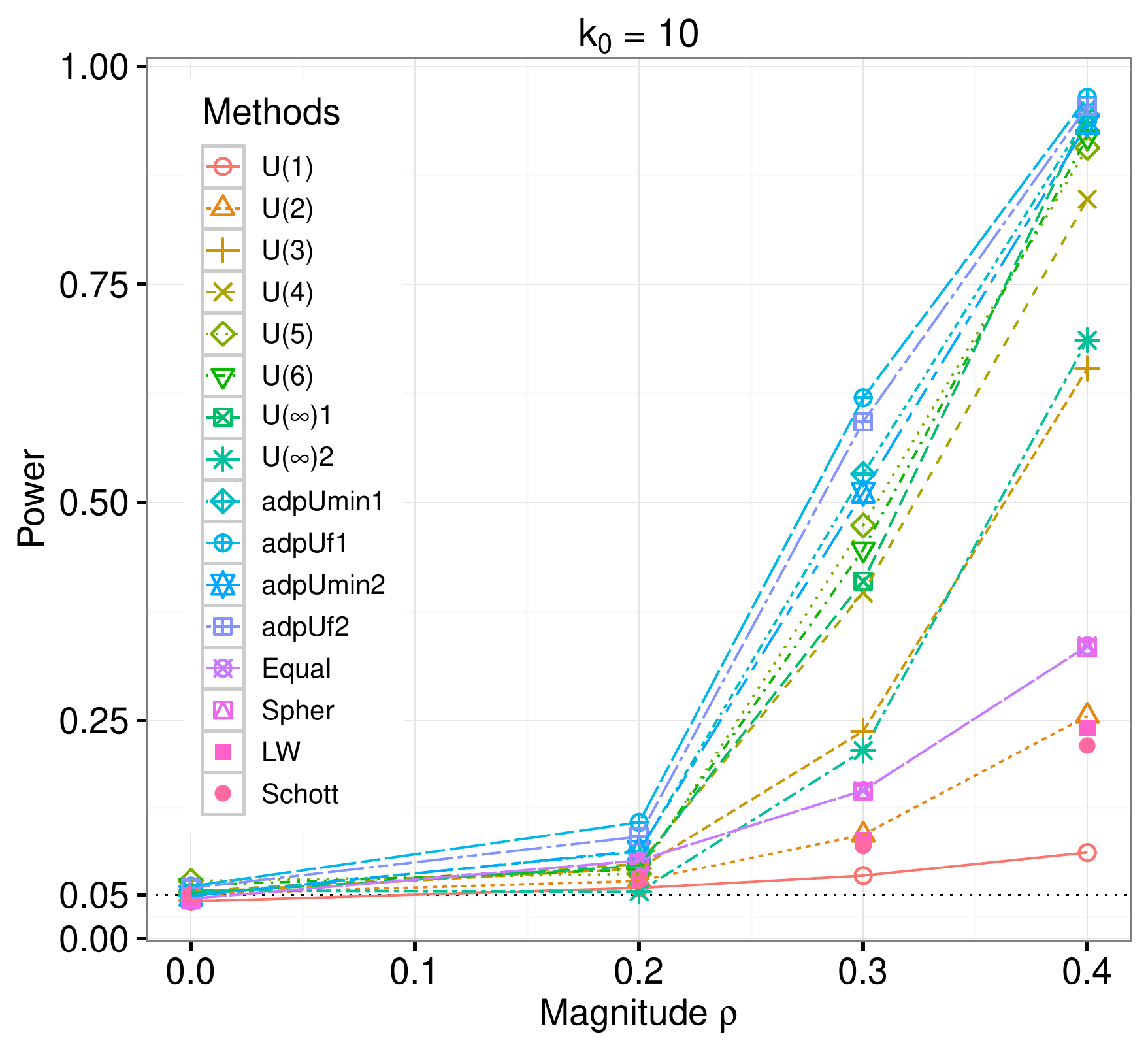}\\
           \includegraphics[width=0.48\textwidth,height=0.3\textheight]{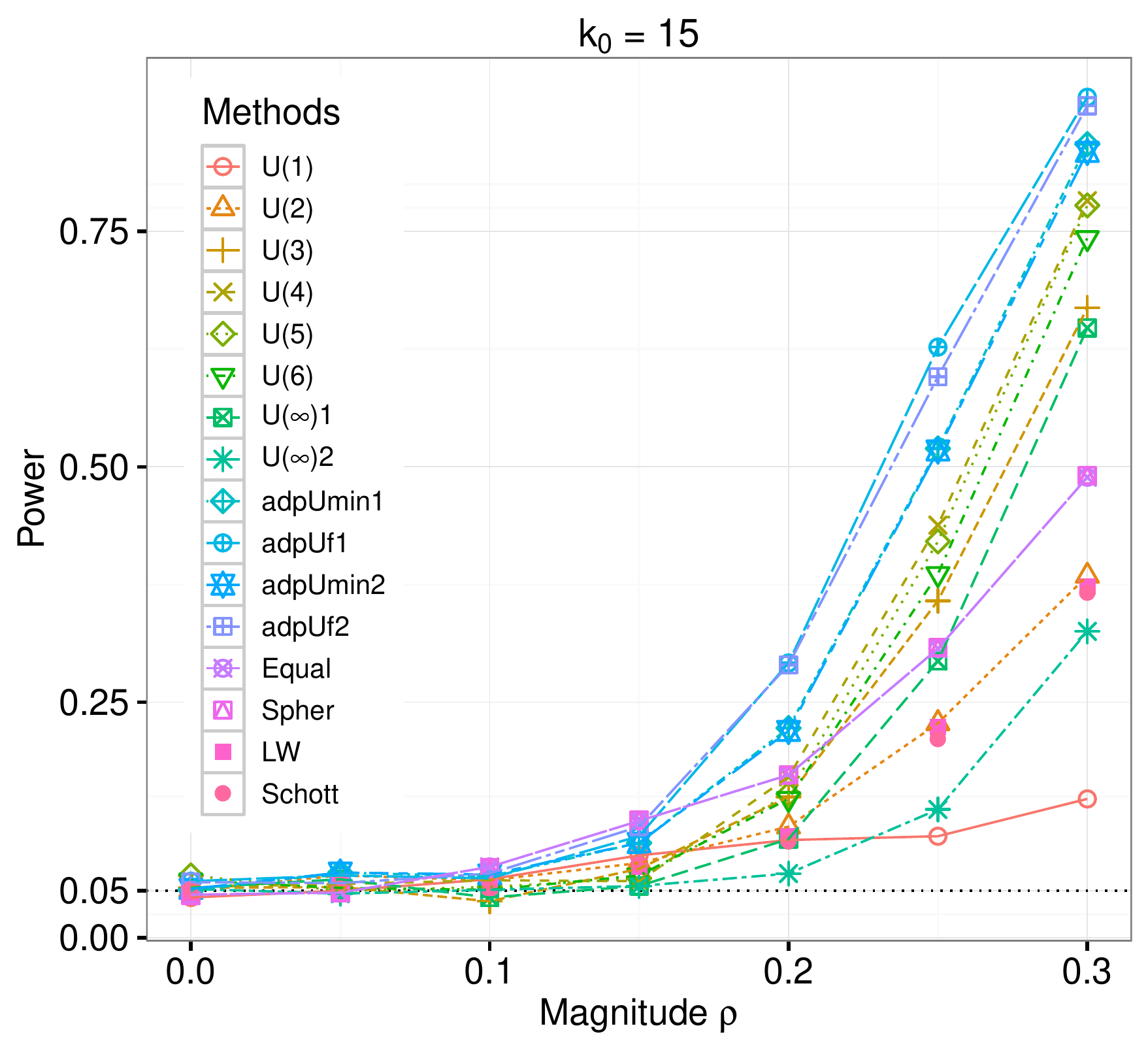} \quad
       \includegraphics[width=0.48\textwidth,height=0.3\textheight]{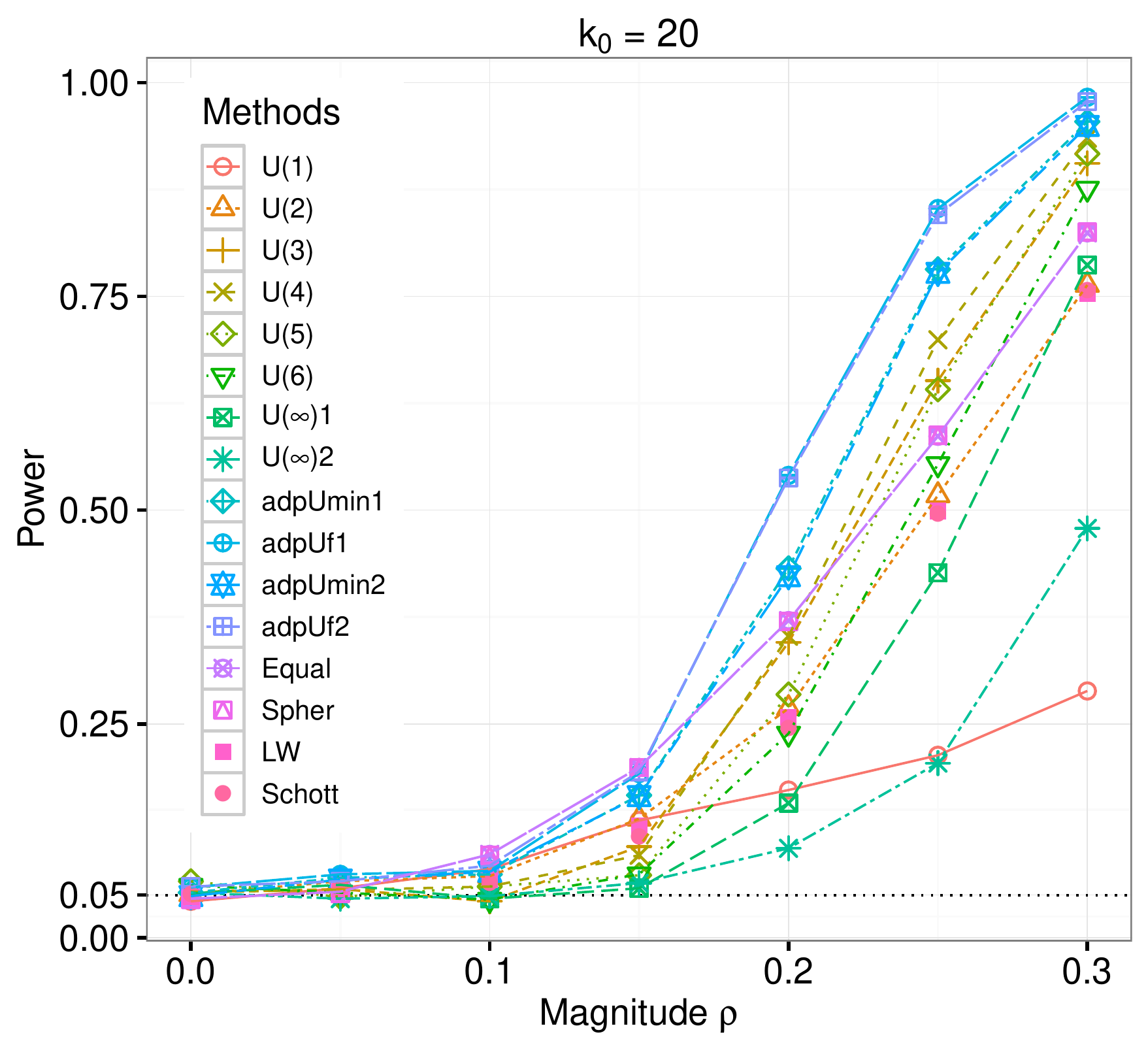} 
       \includegraphics[width=0.48\textwidth,height=0.3\textheight]{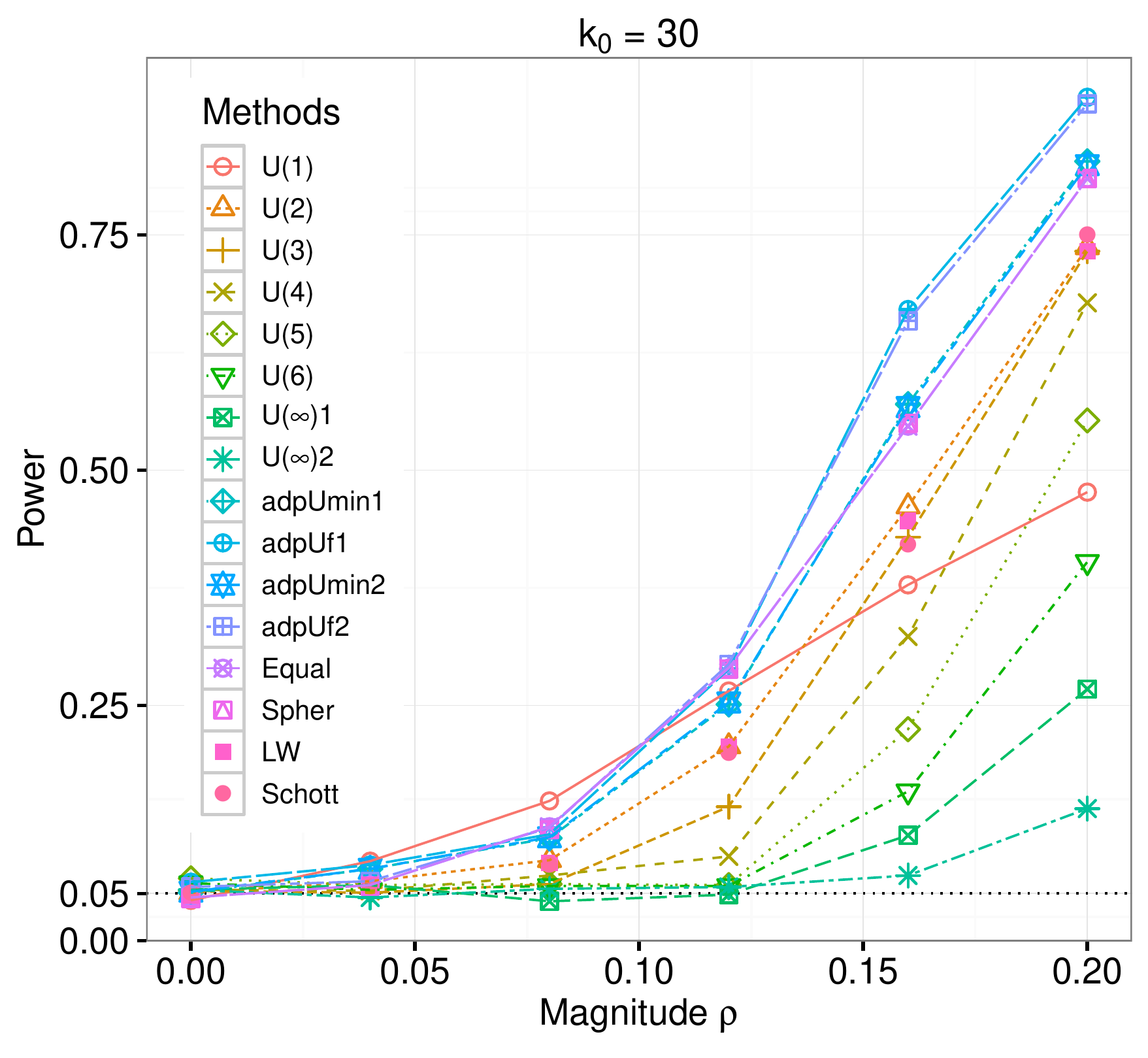} \quad
       \includegraphics[width=0.48\textwidth,height=0.3\textheight]{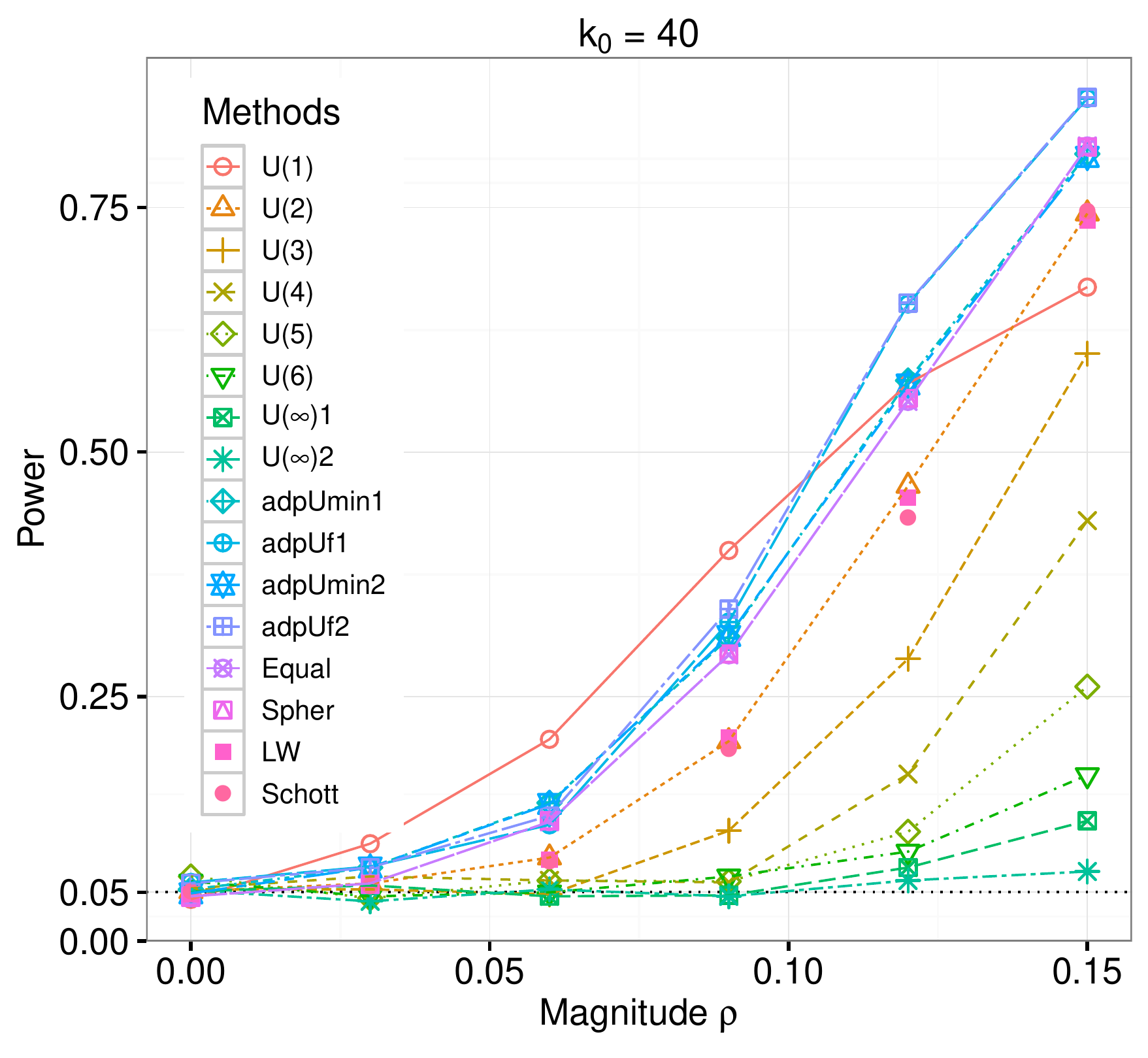}  
         \caption{Study 2: $n=100, p=600$.  
    } 
    \label{fig:study2p600}
\end{figure}

\begin{figure}[!htbp]
    \centering
    \includegraphics[width=0.48\textwidth,height=0.3\textheight]{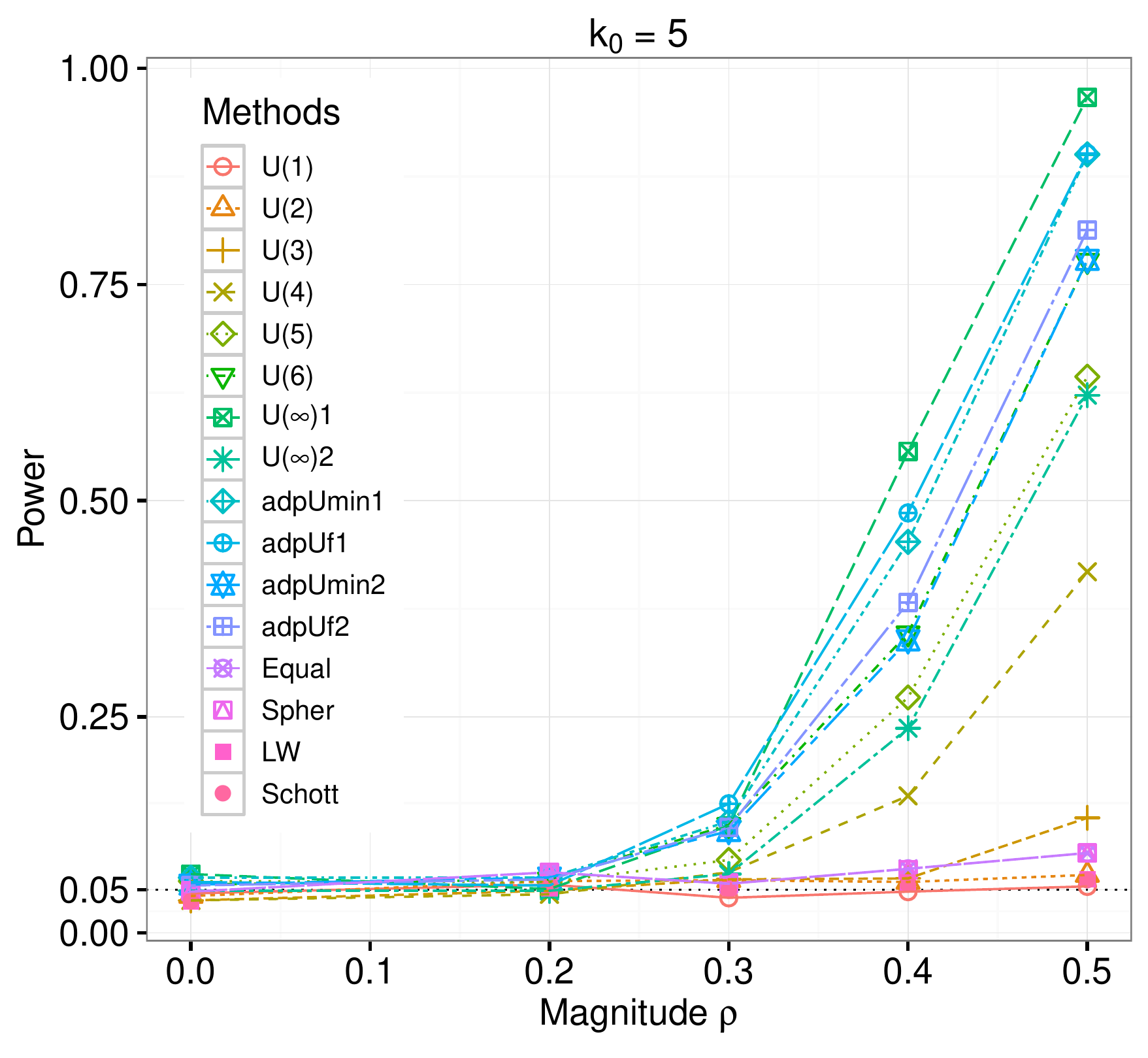} \quad
       \includegraphics[width=0.48\textwidth,height=0.3\textheight]{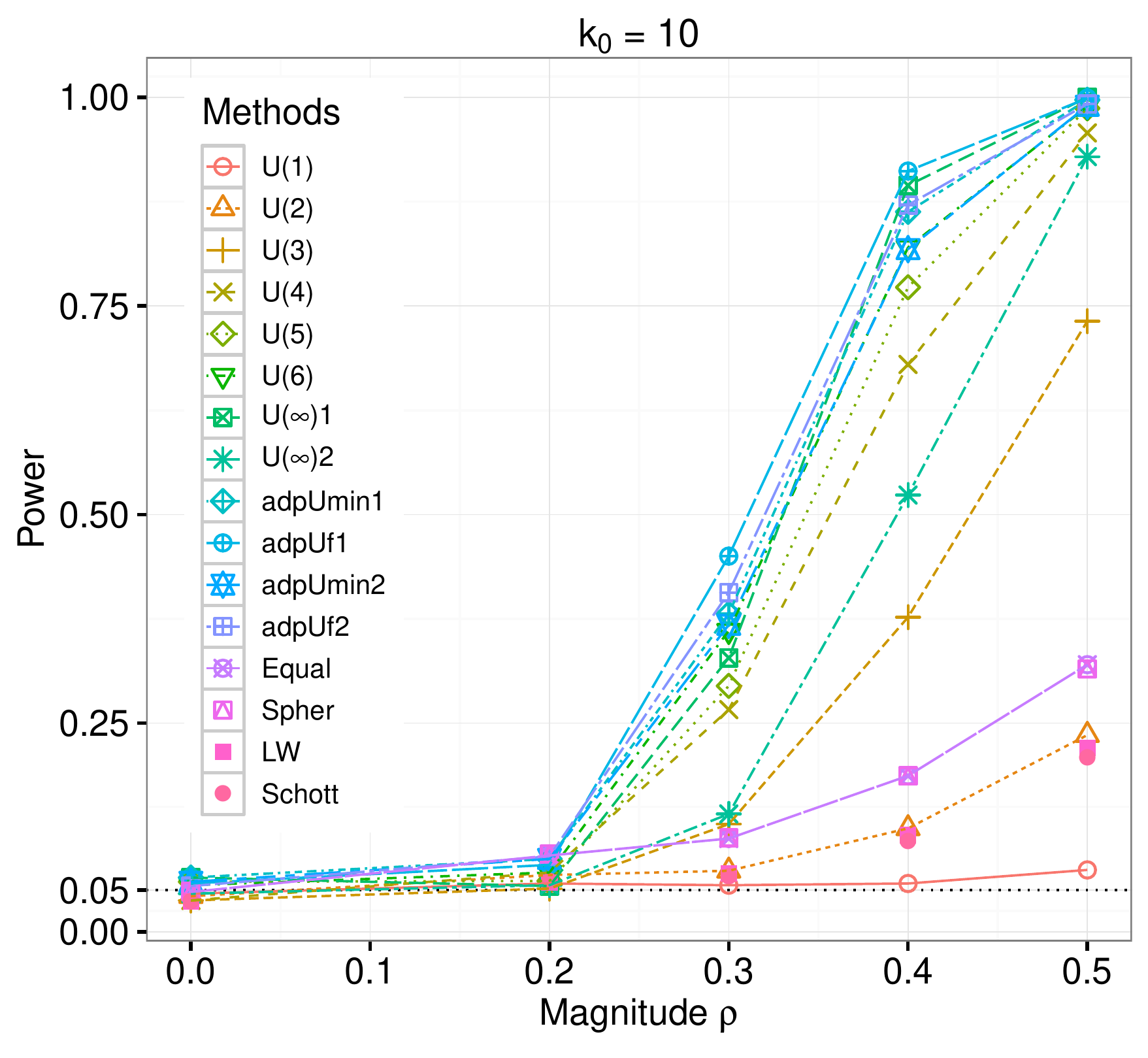}\\
           \includegraphics[width=0.48\textwidth,height=0.3\textheight]{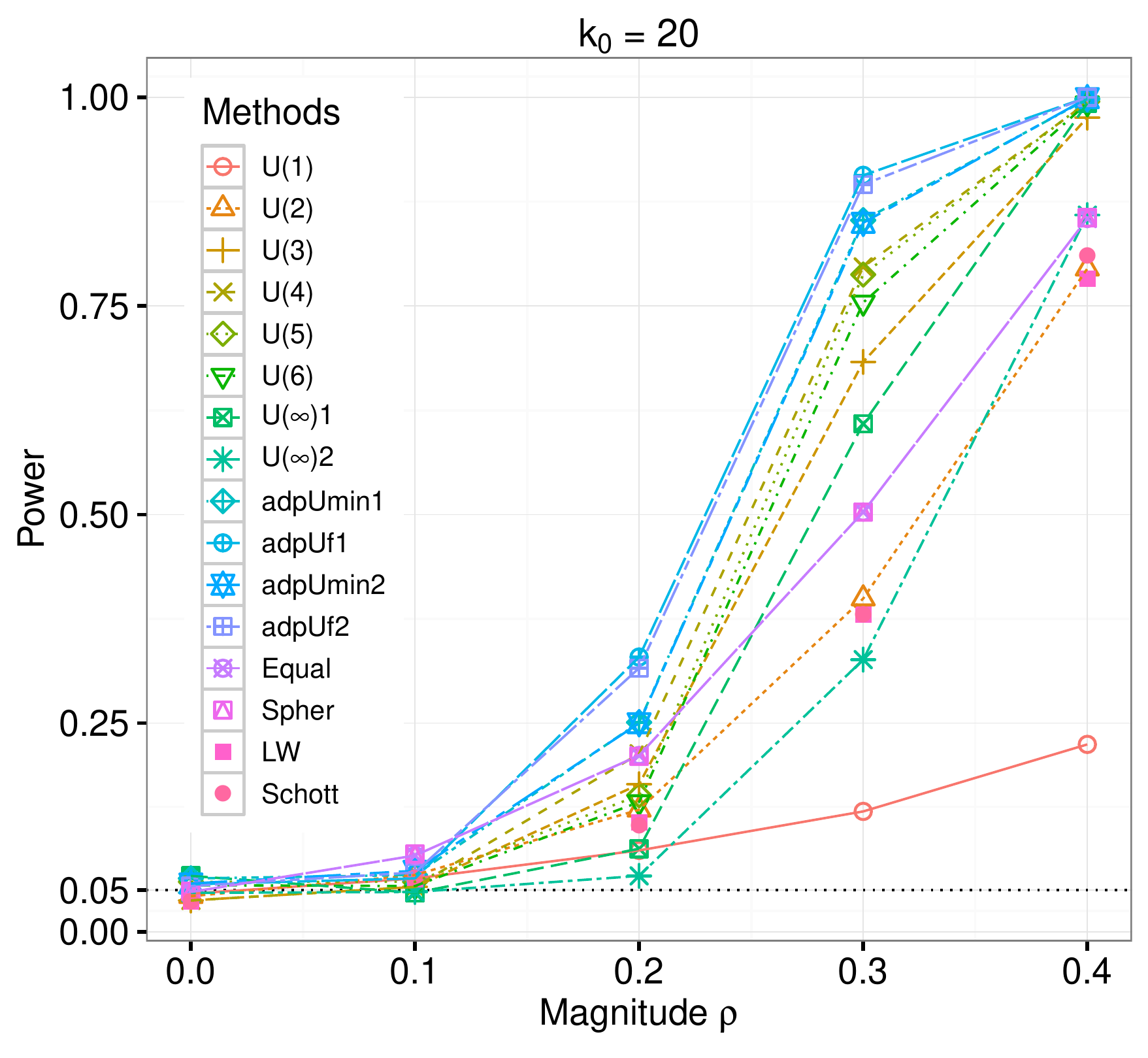} \quad
       \includegraphics[width=0.48\textwidth,height=0.3\textheight]{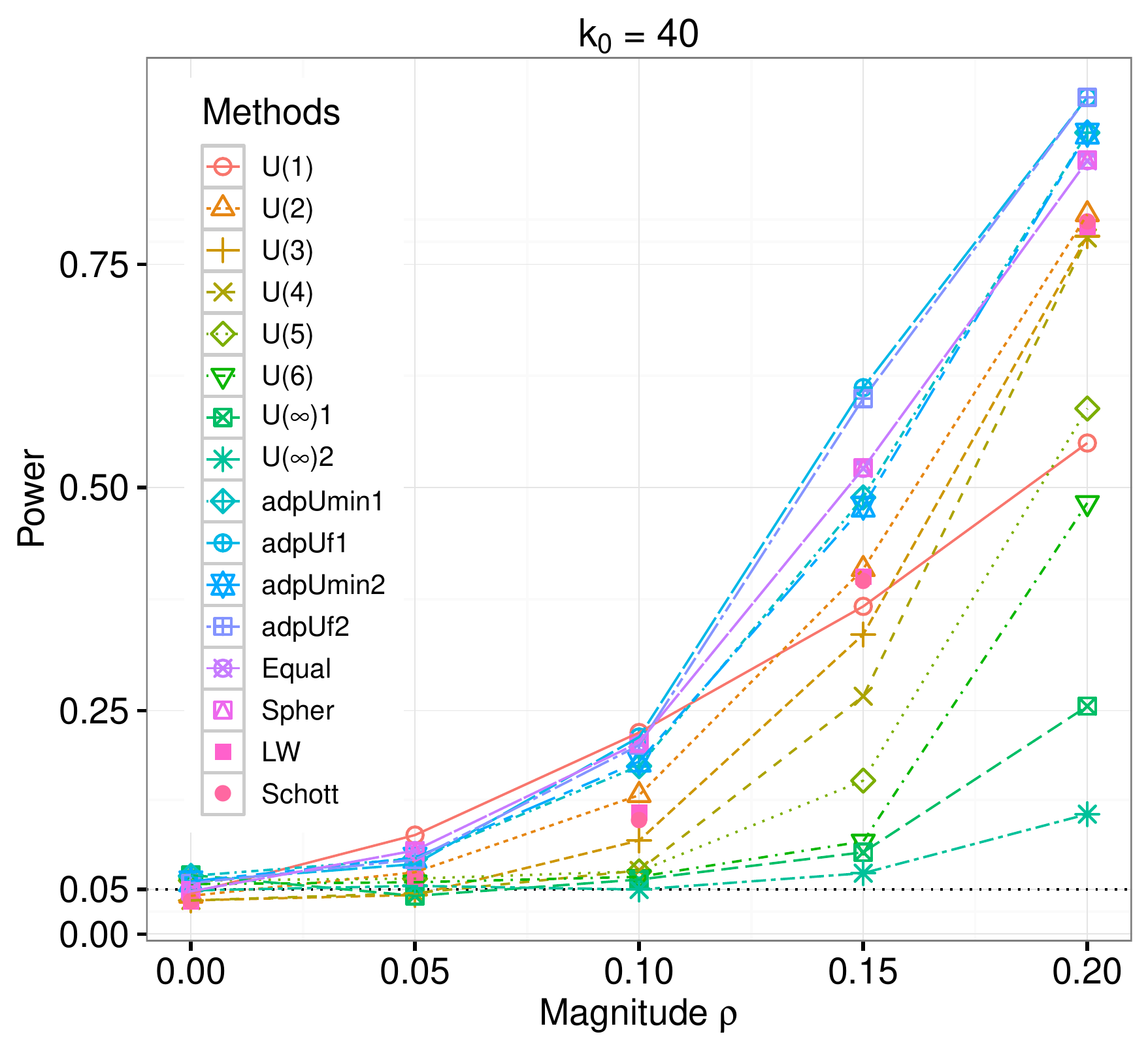}   \caption{Study 2: $n=100, p=1000$.  
    }
    \label{fig:study2p1000}
\end{figure}



\newpage

\subsubsection{Study 3}
We provide supplementary simulations for the third setting in Section \ref{sec:simulation}. In particular, we  generate $n$ i.i.d. $p$-dimensional $\mathbf{x}_i$ for $i=1,\ldots, n$, and $\mathbf{x}_i$ follows multivariate Gaussian distribution with mean zero and covariance $\boldsymbol{\Sigma}_A$. In this case, $\boldsymbol{\Sigma}_A$ is symmetric and positive definite, and has the diagonal being all one and only $|J_A|$ random positions being nonzero with value $\rho$. Note that here $\rho$ represents  the magnitude of the  alternative signal; and $|J_A|$ represents its sparsity level with a larger value indicating a denser alternative, and vice versa. We let $|J_A|$ and $\rho$ vary to examine how the power changes correspondingly. We take $(n,p)\in \{(100,600), (100, 1000)\}$, and provide the results in the following Figures \ref{fig:study3p600}--\ref{fig:study3p1000} respectively. The meanings of the legends are the same as in Tables \ref{table:nullgaussian} and     \ref{table:nullgamma}, and are already explained in Section \ref{sec:npcombsimulsize}. We observe similar patterns to that in the figures in Section \ref{sec:study2onecovsim}.

\begin{figure}[!htbp]
    \centering
    \includegraphics[width=0.48\textwidth,height=0.3\textheight]{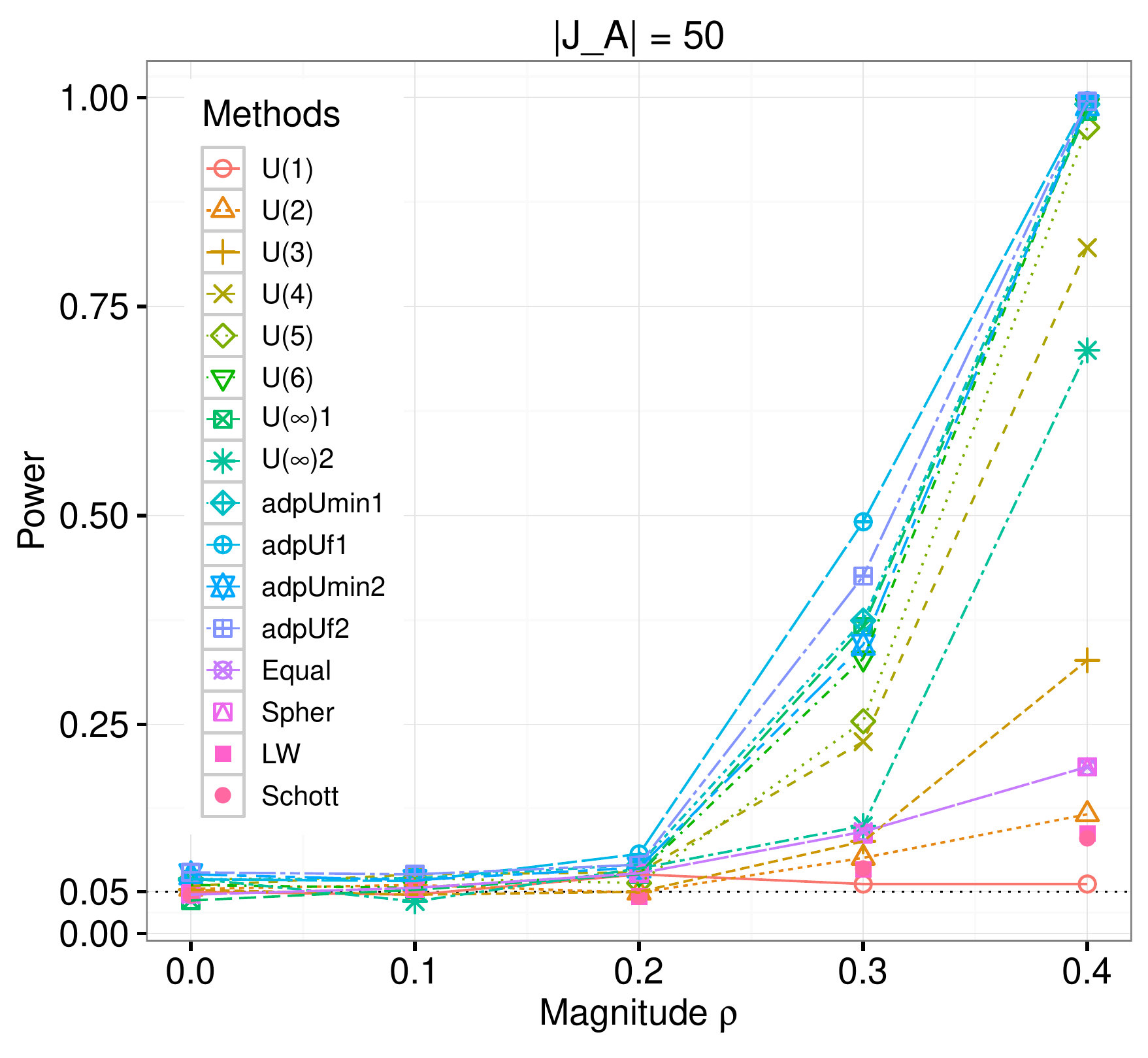} \quad
       \includegraphics[width=0.48\textwidth,height=0.3\textheight]{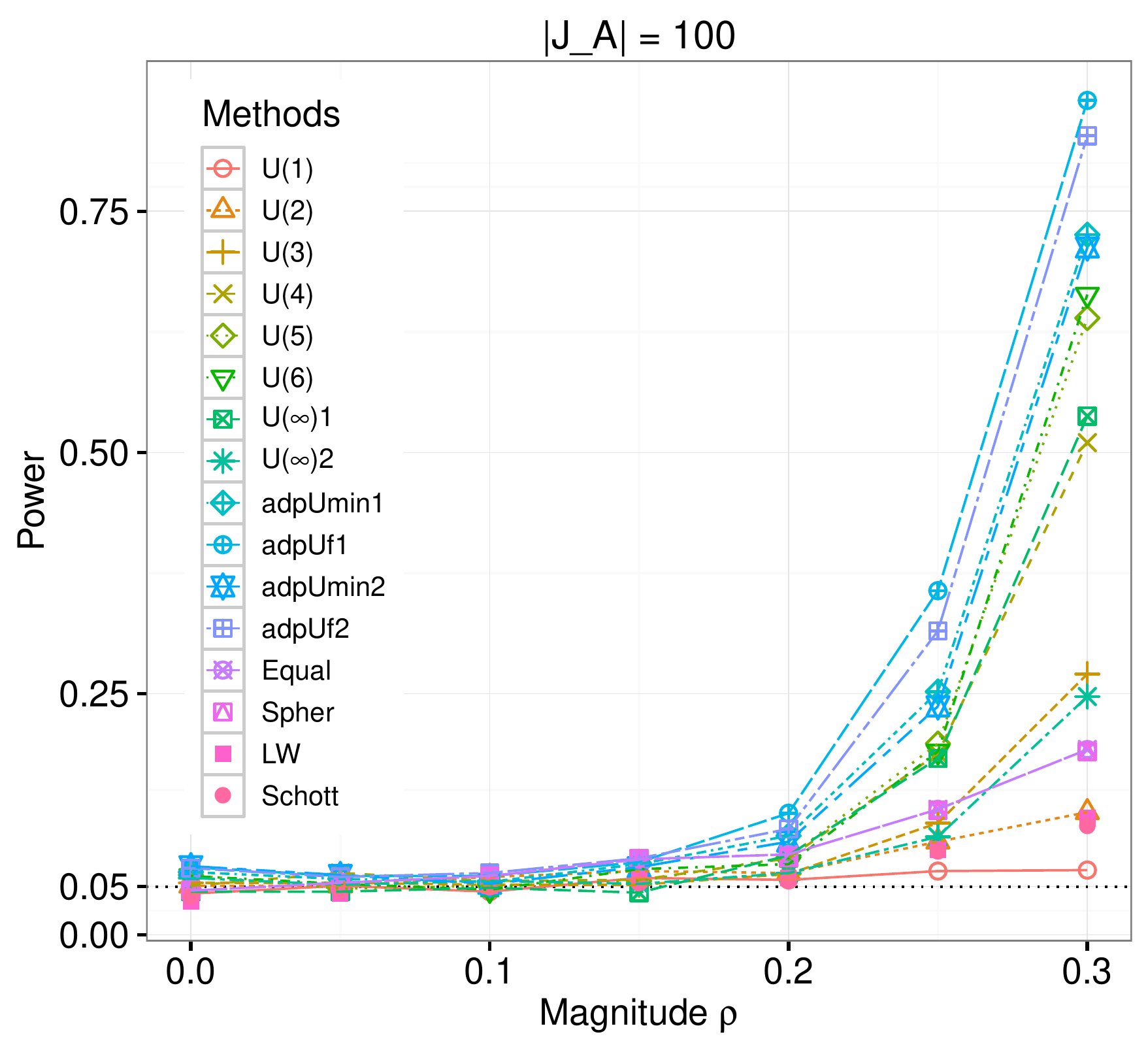}\\
           \includegraphics[width=0.48\textwidth,height=0.3\textheight]{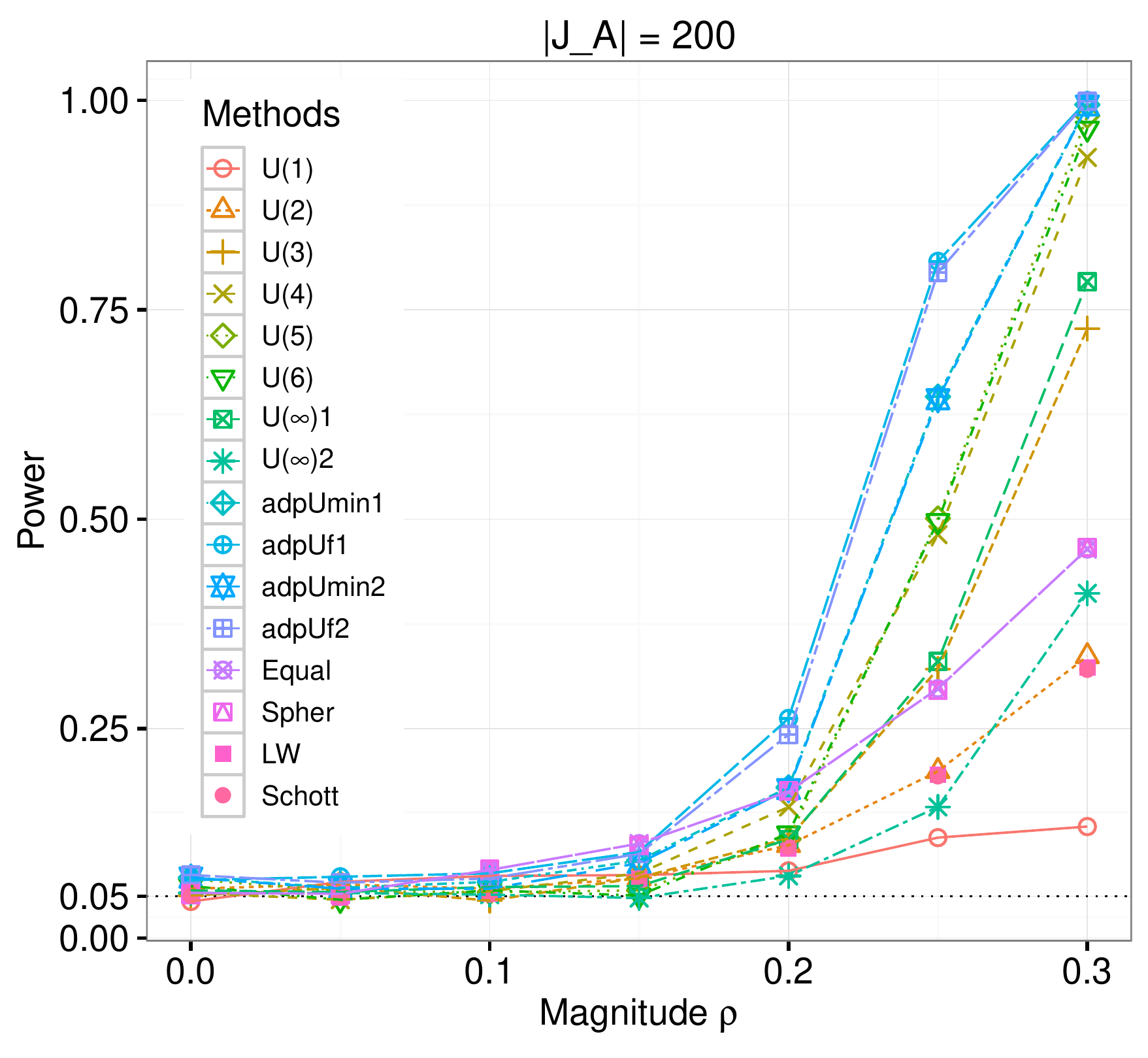} \quad
       \includegraphics[width=0.48\textwidth,height=0.3\textheight]{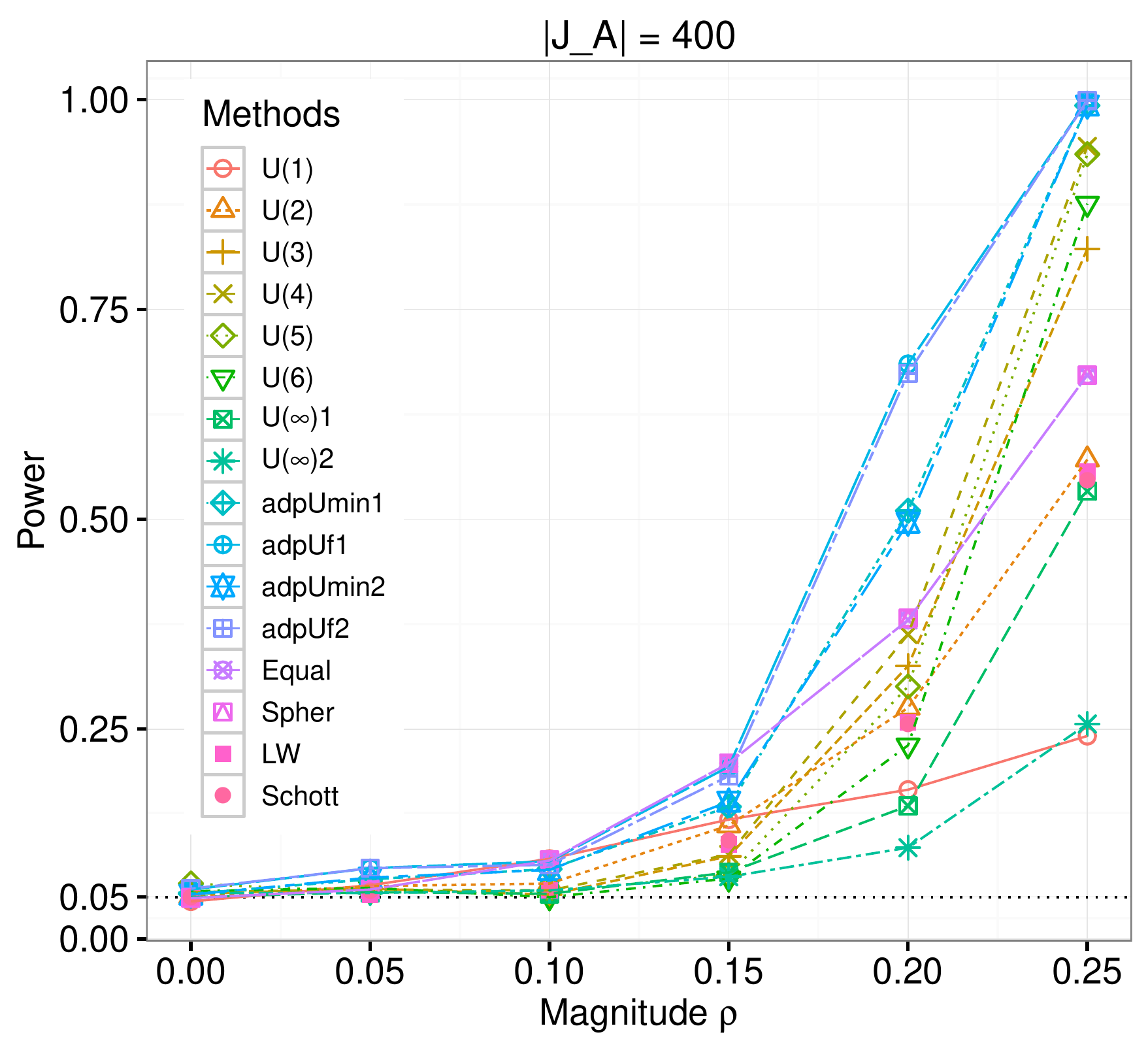}\\
           \includegraphics[width=0.48\textwidth,height=0.3\textheight]{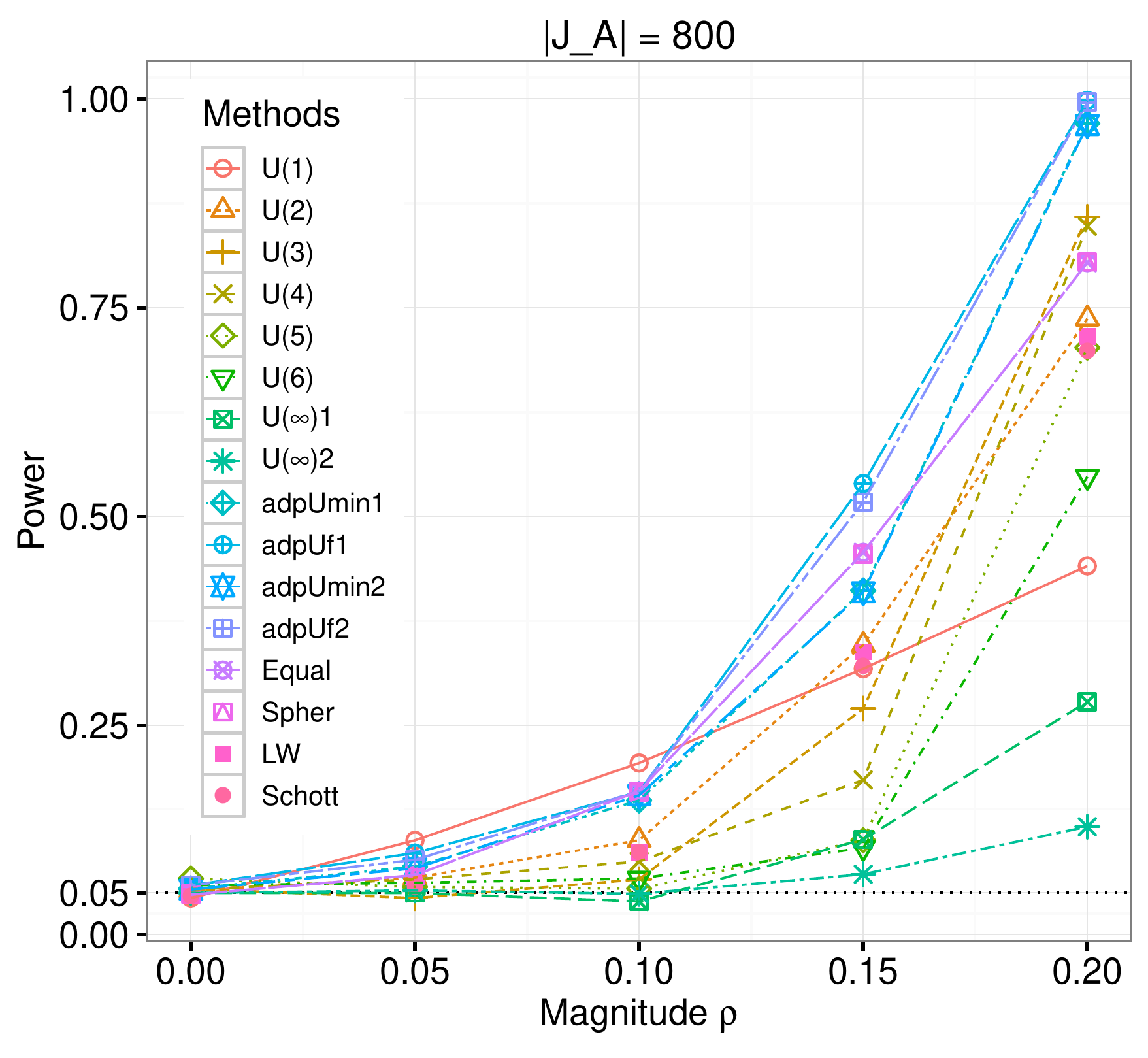}  \quad
       \includegraphics[width=0.48\textwidth,height=0.3\textheight]{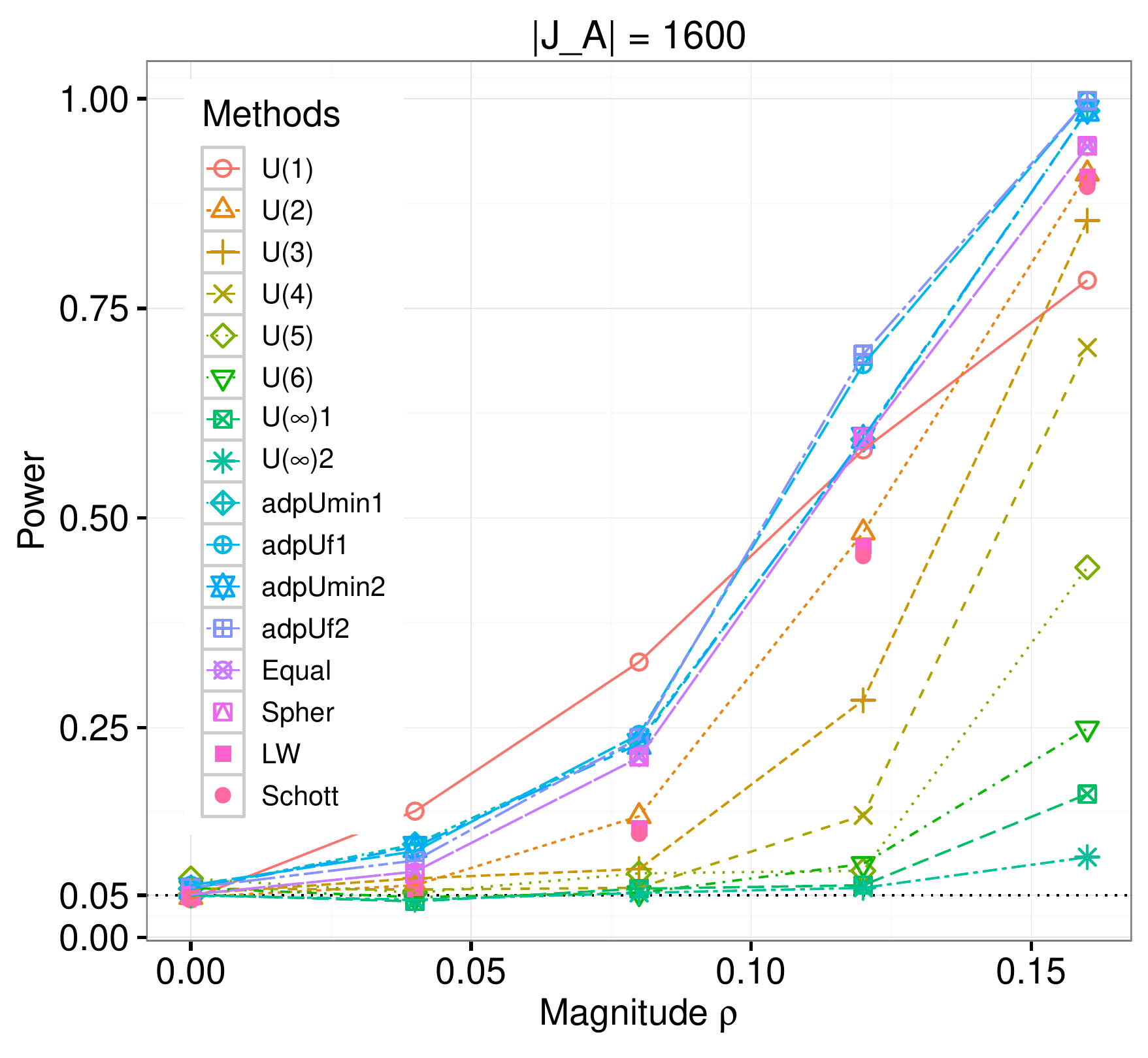}     \caption{Study 3: $n=100, p=600$.  
    }
    \label{fig:study3p600}
\end{figure}

\begin{figure}[!htbp]
    \centering
    \includegraphics[width=0.48\textwidth,height=0.3\textheight]{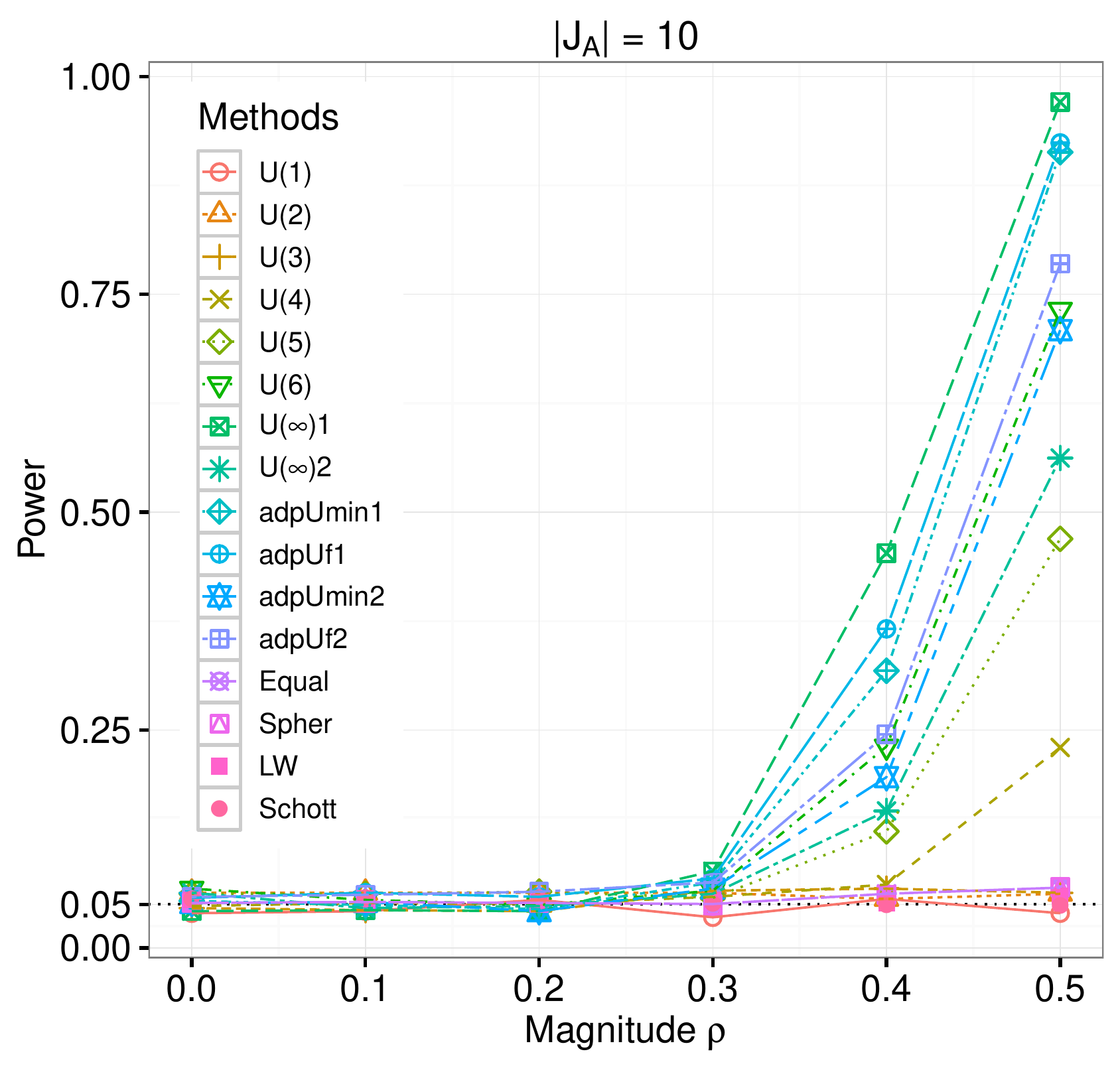} \quad
       \includegraphics[width=0.48\textwidth,height=0.3\textheight]{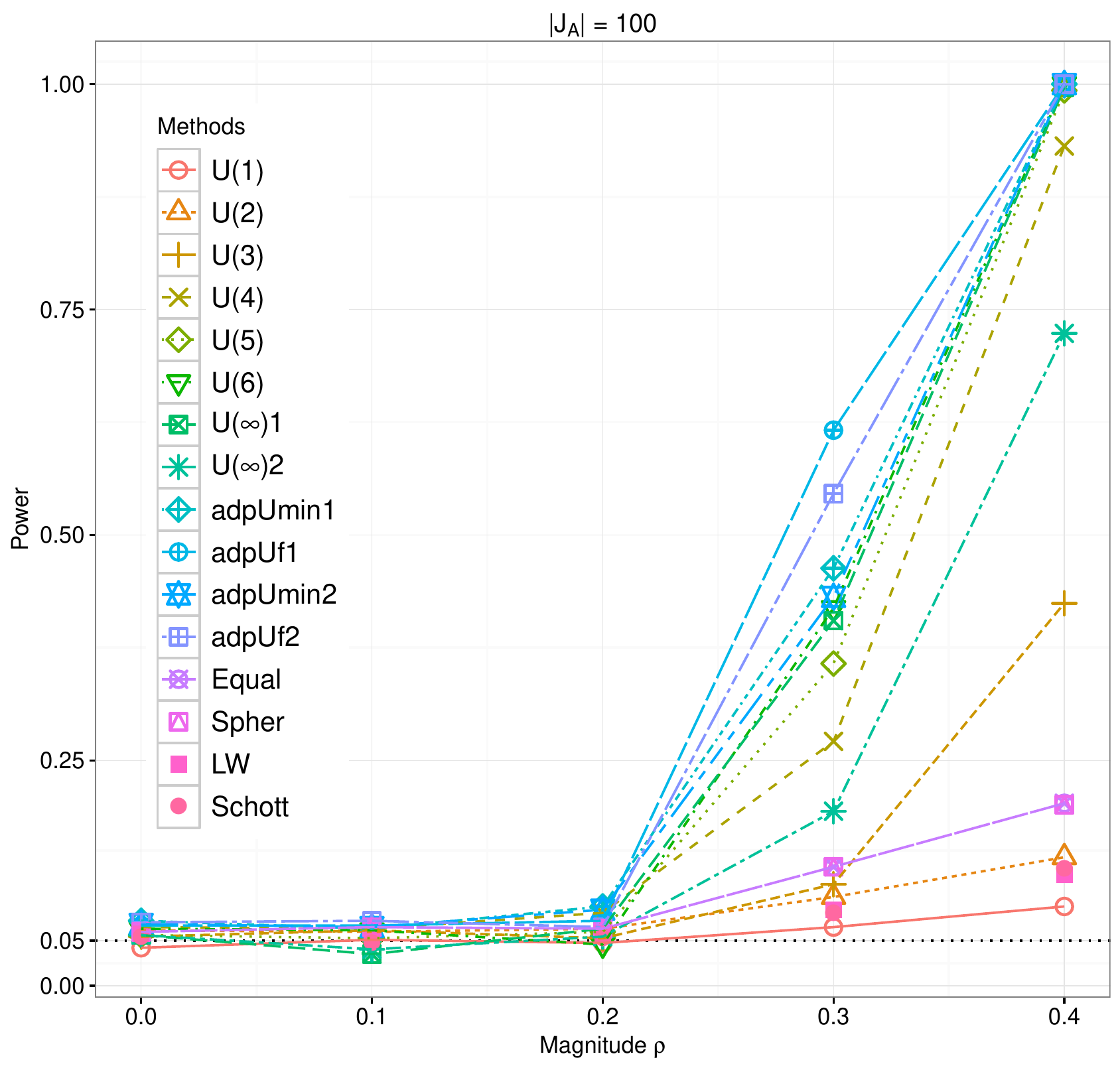}\\
           \includegraphics[width=0.48\textwidth,height=0.3\textheight]{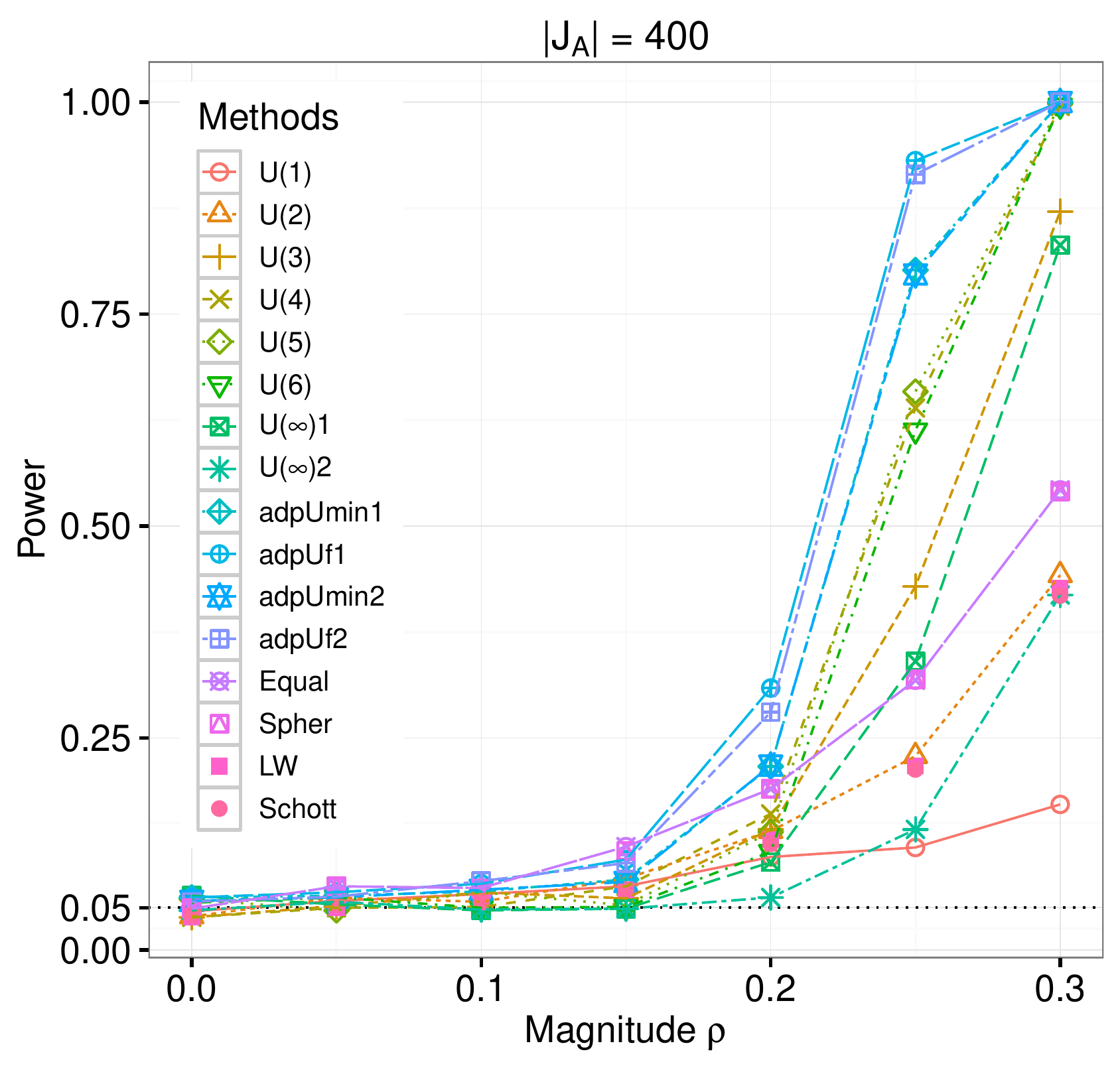} \quad
       \includegraphics[width=0.48\textwidth,height=0.3\textheight]{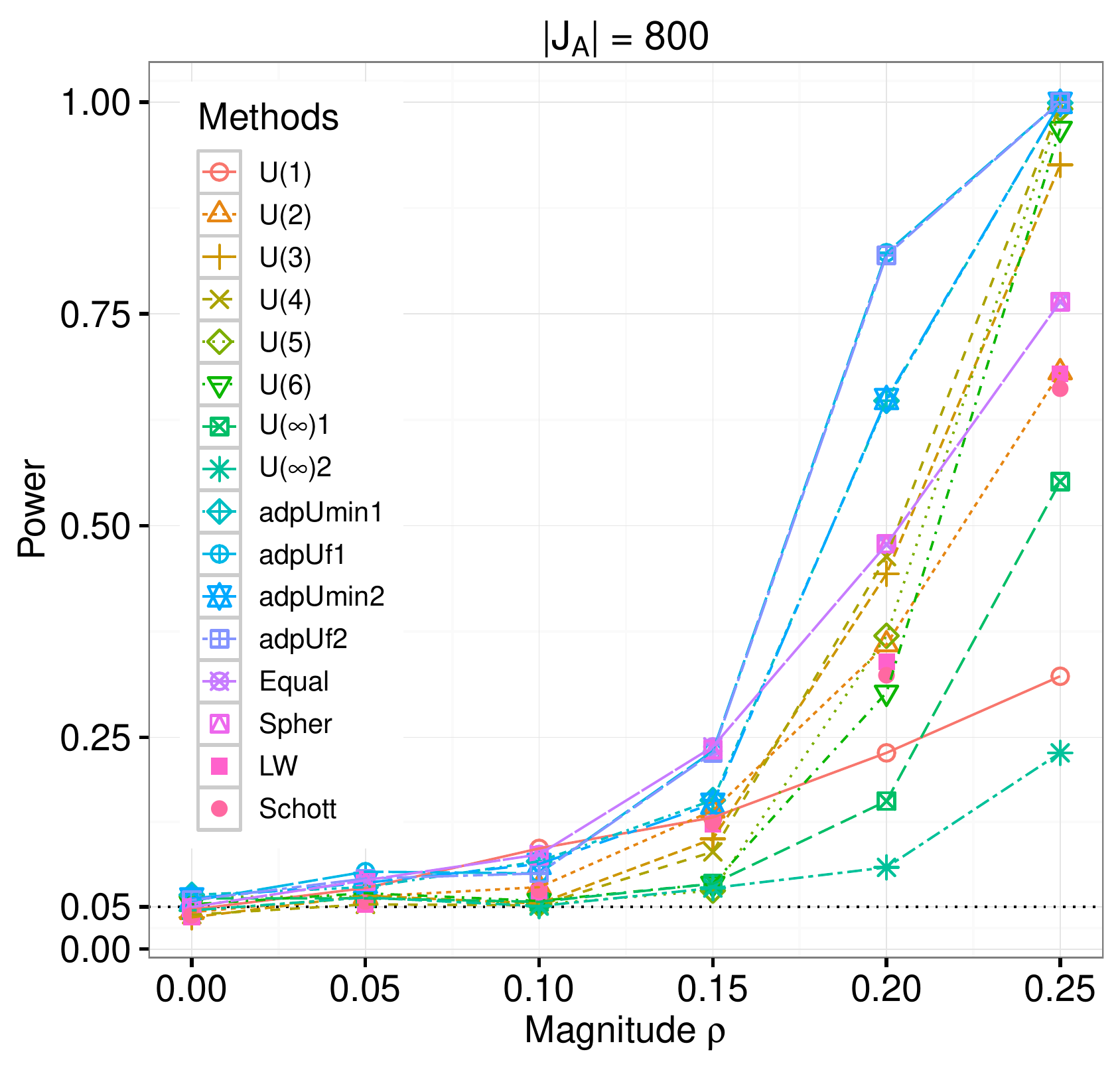}\\
           \includegraphics[width=0.48\textwidth,height=0.3\textheight]{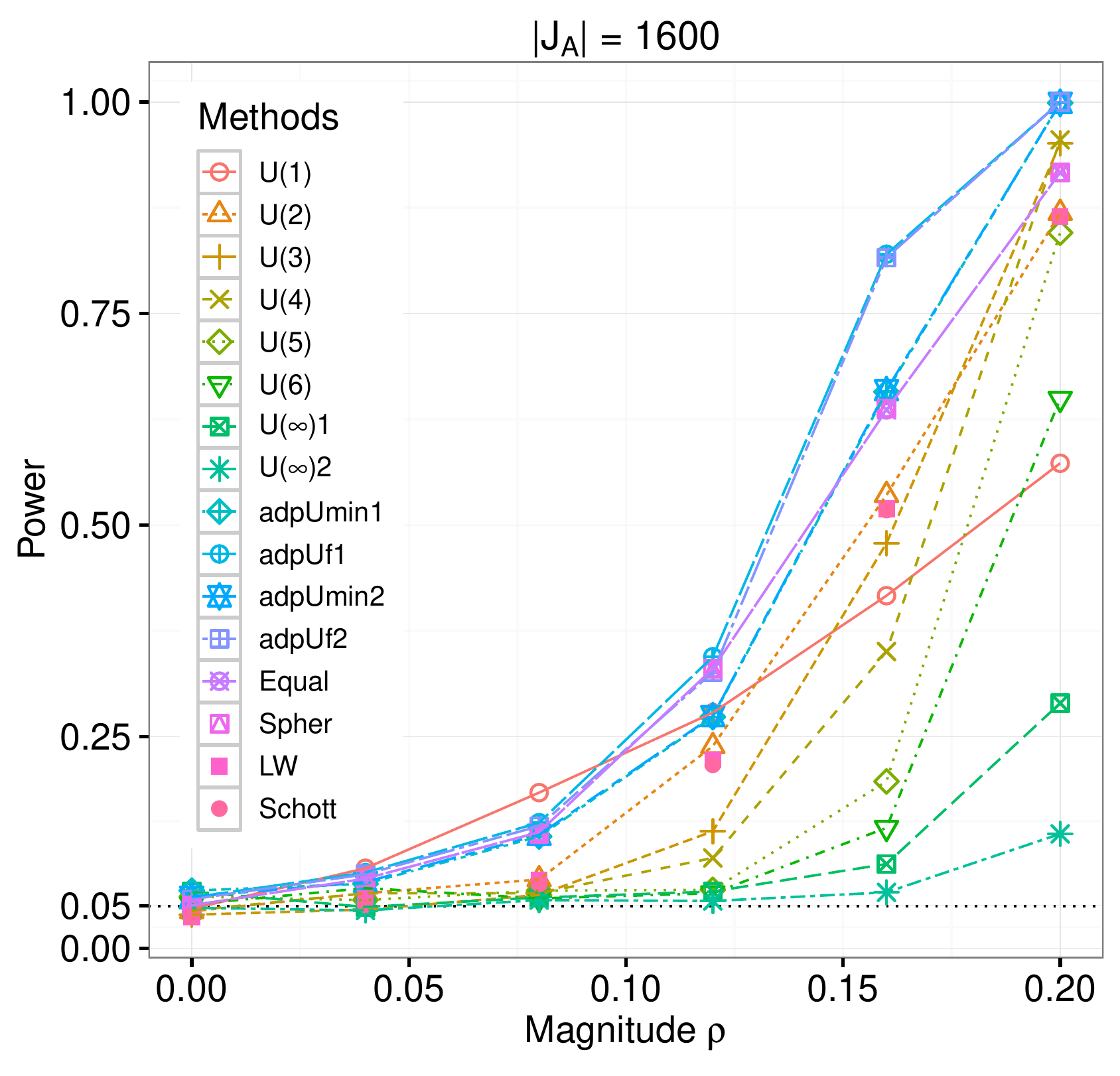}  \quad
       \includegraphics[width=0.48\textwidth,height=0.3\textheight]{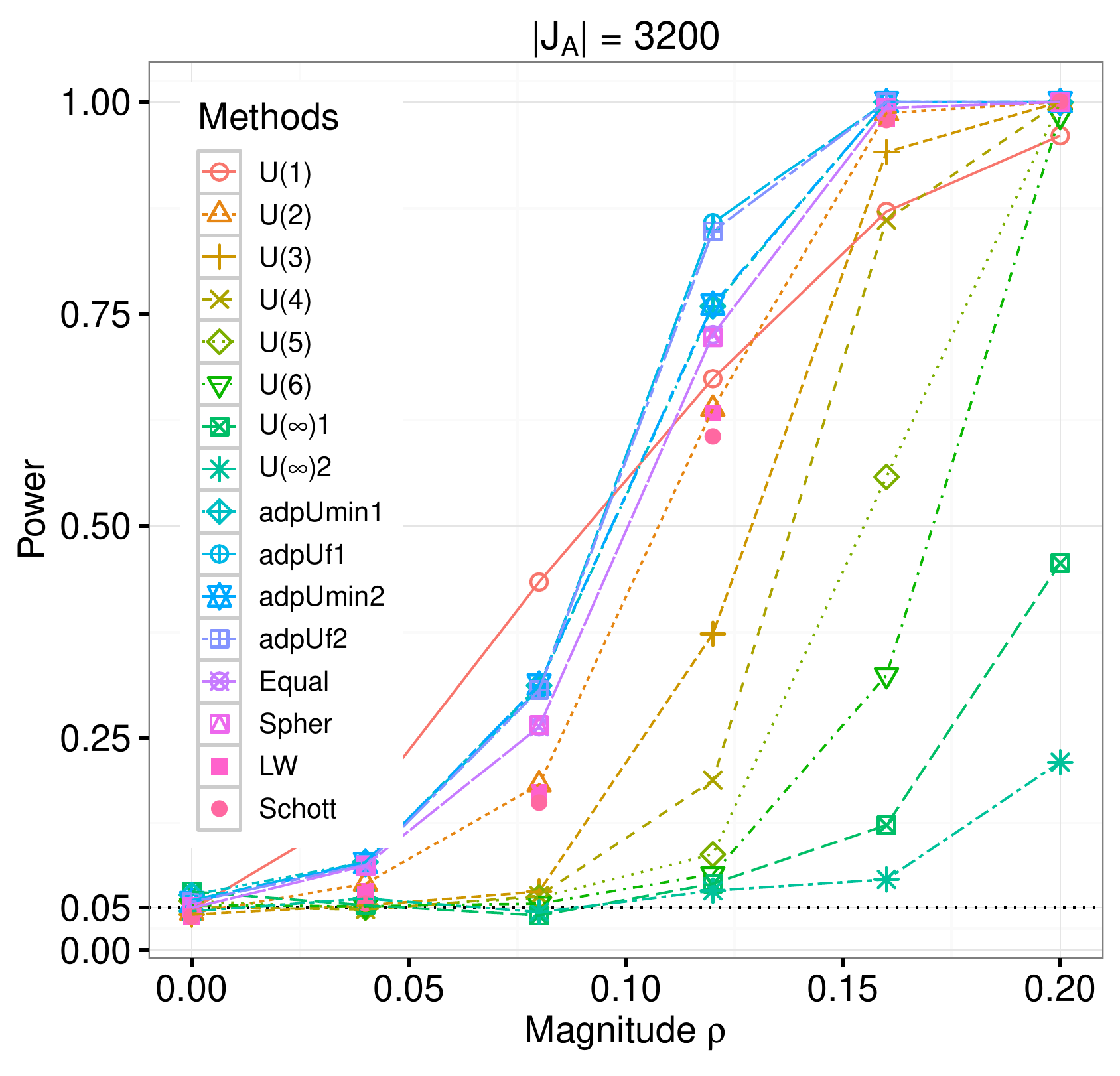}     \caption{Study 3: $n=100, p=1000$.  
    }
    \label{fig:study3p1000}
\end{figure}



\subsubsection{Study 4}

In this section, we provide the simulation results of the fourth setting in Section  \ref{sec:simulation}. In particular, we  generate $n$ i.i.d. $p$-dimensional $\mathbf{x}_i$ for $i=1,\ldots, n$, and $\mathbf{x}_i$ follows multivariate Gaussian distribution with mean zero and covariance $\boldsymbol{\Sigma}_A$. Under this setting, $\boldsymbol{\Sigma}_A$ is symmetric and positive definite and has the diagonal being all one and  $|J_A|$ random  positions taking values uniformly in the range $(0,2\rho)$. Therefore, the nonzero off-diagonal elements in $\boldsymbol{\Sigma}_A$ are different. Figure \ref{fig:study4p1000} below presents the power versus $\rho$ when $n=100$ and $p=1000$. The meanings of the legends are the same as in Tables \ref{table:nullgaussian} and     \ref{table:nullgamma}, and are already explained in Section \ref{sec:npcombsimulsize}. We observe similar patterns to that in the figures in Section \ref{sec:study2onecovsim}.

\begin{figure}[!htbp]
    \centering
    \includegraphics[width=0.48\textwidth,height=0.3\textheight]{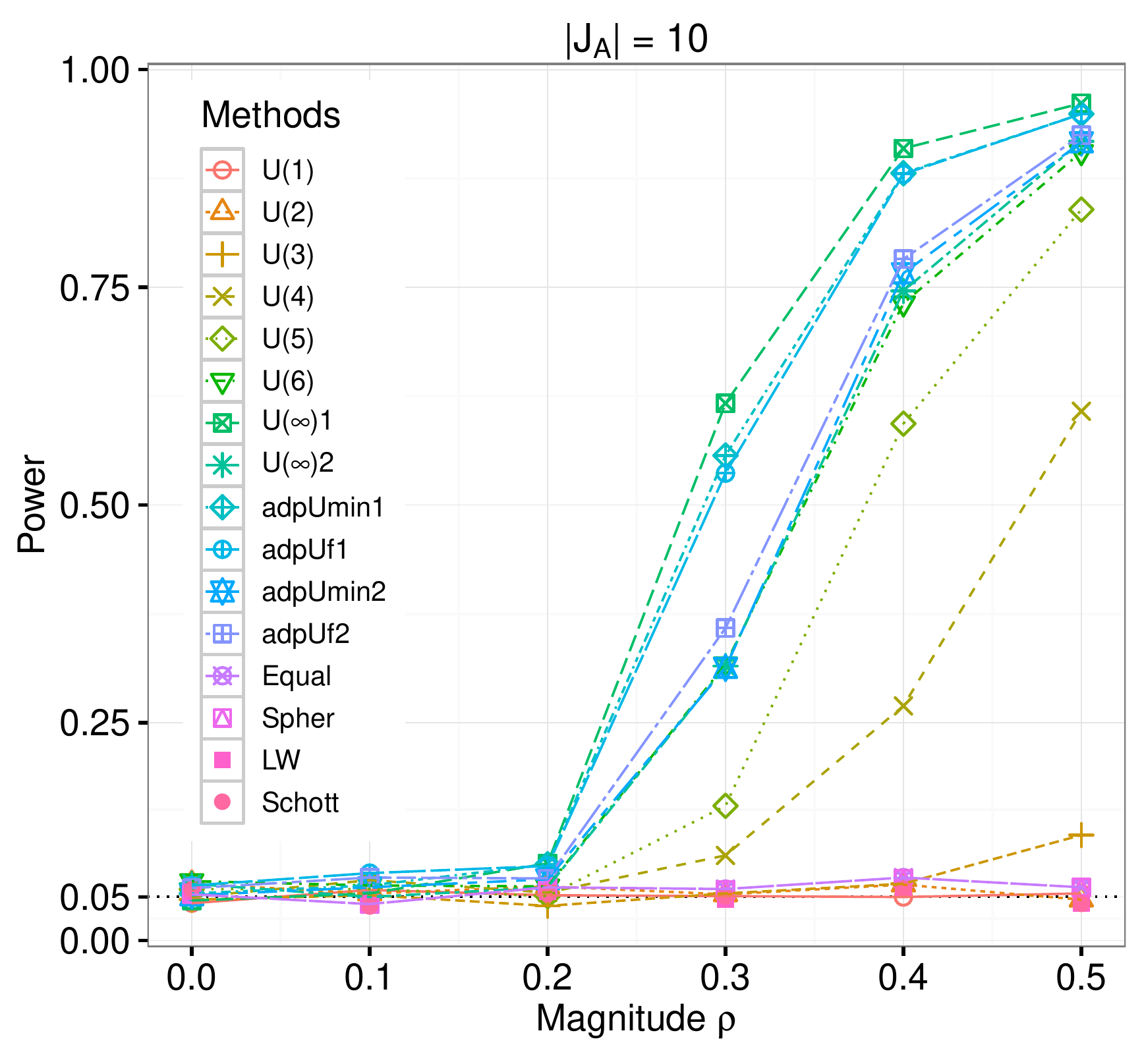} \quad
       \includegraphics[width=0.48\textwidth,height=0.3\textheight]{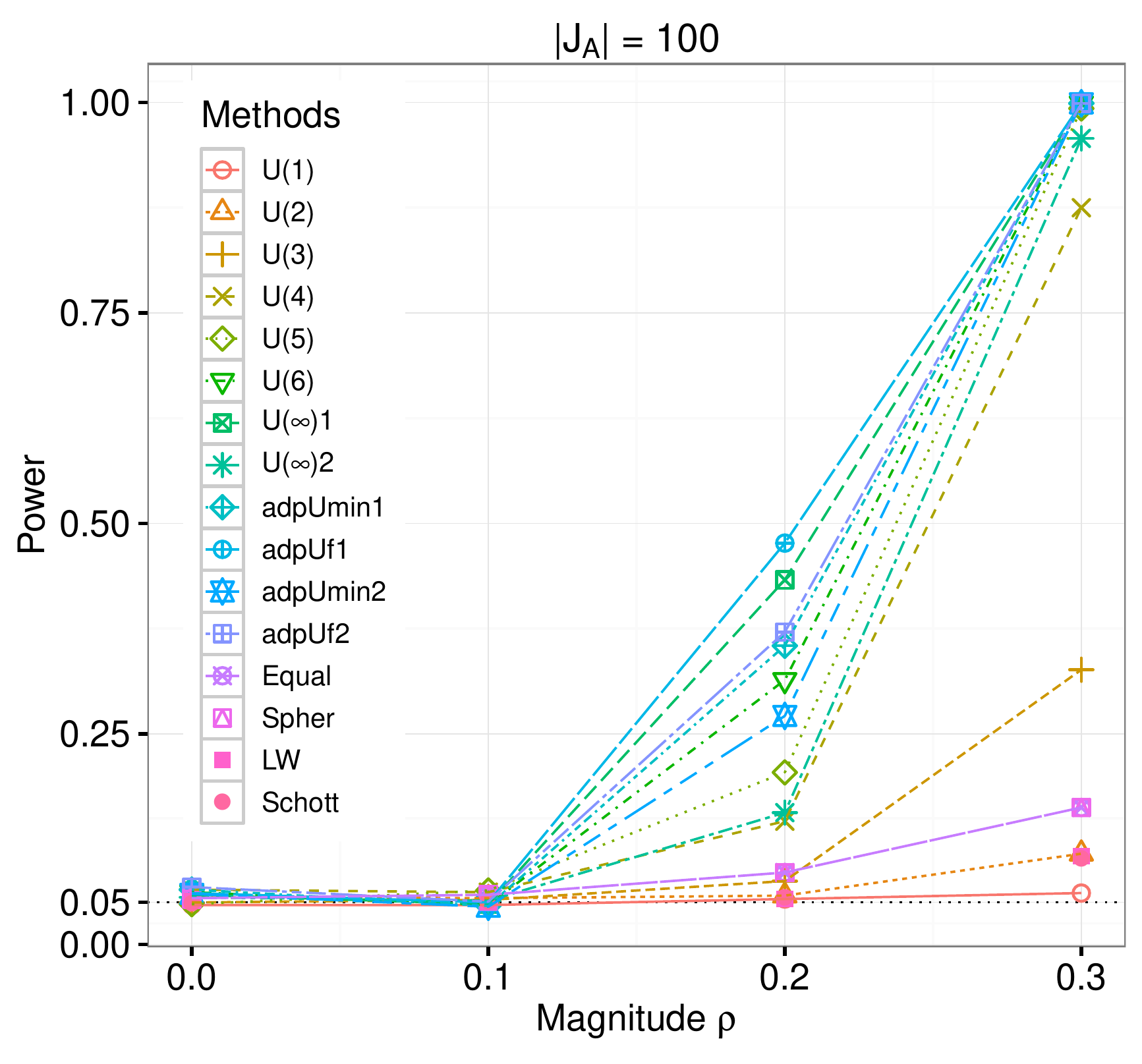}\\
           \includegraphics[width=0.48\textwidth,height=0.3\textheight]{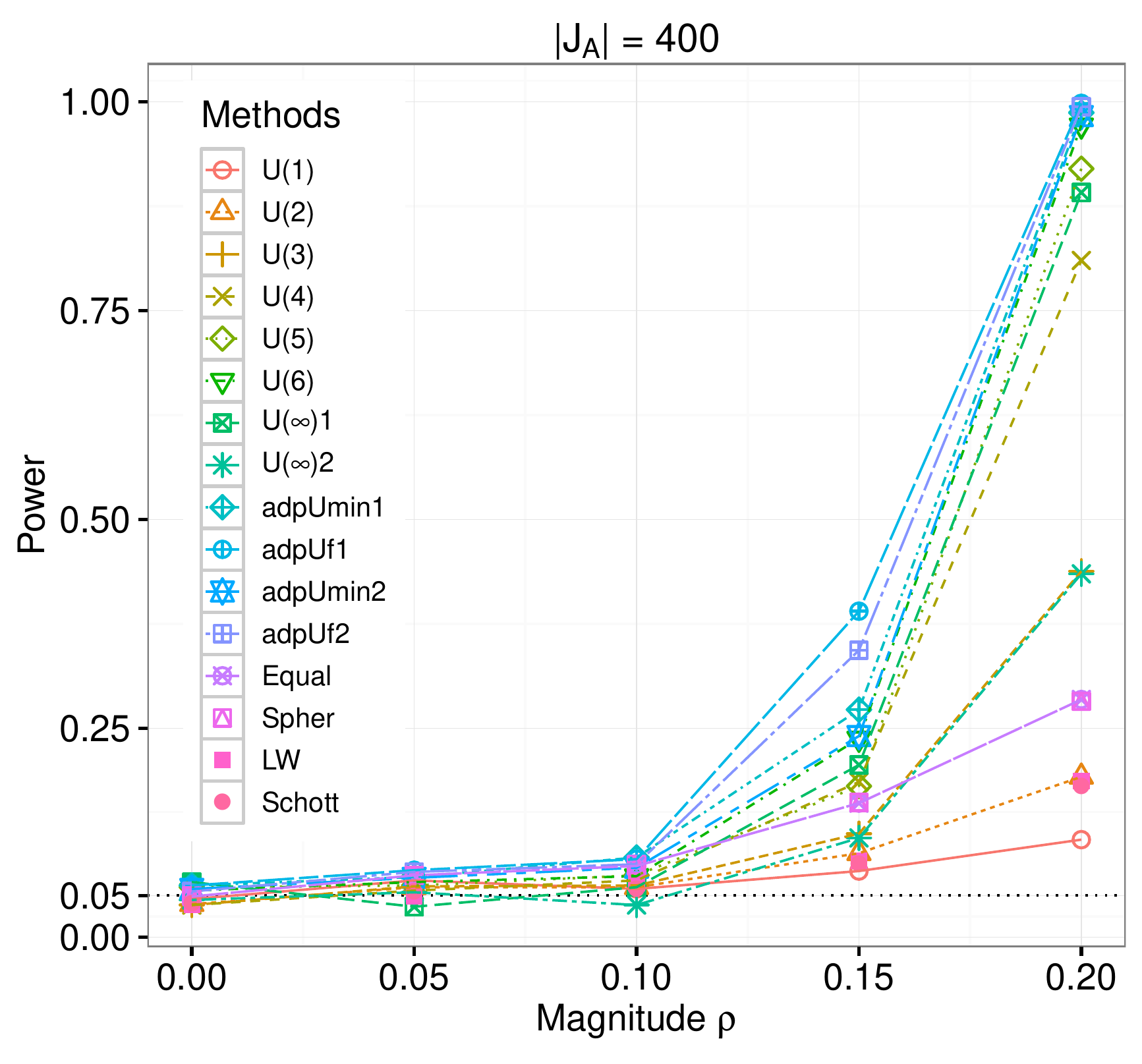} \quad
       \includegraphics[width=0.48\textwidth,height=0.3\textheight]{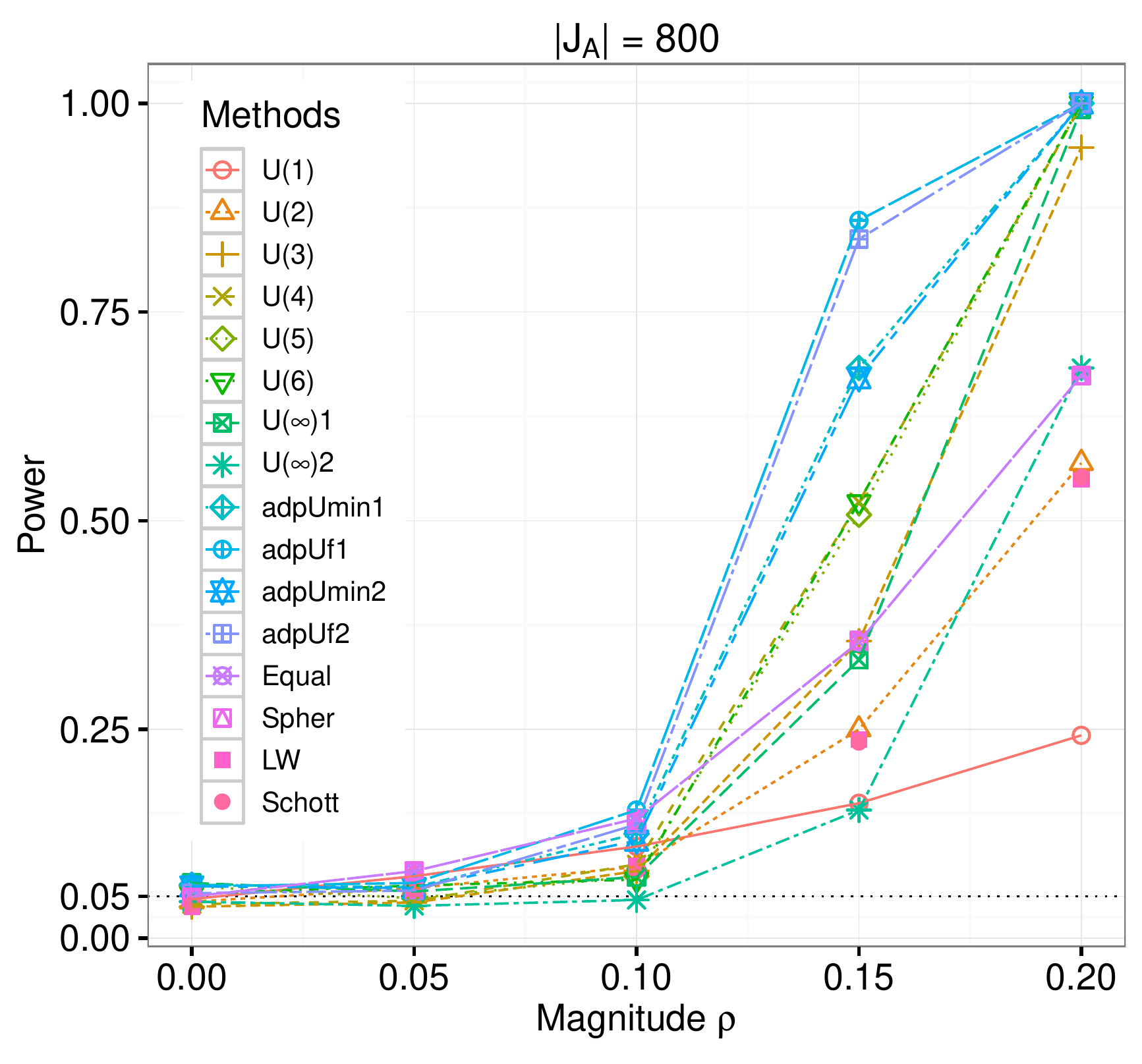}\\
           \includegraphics[width=0.48\textwidth,height=0.3\textheight]{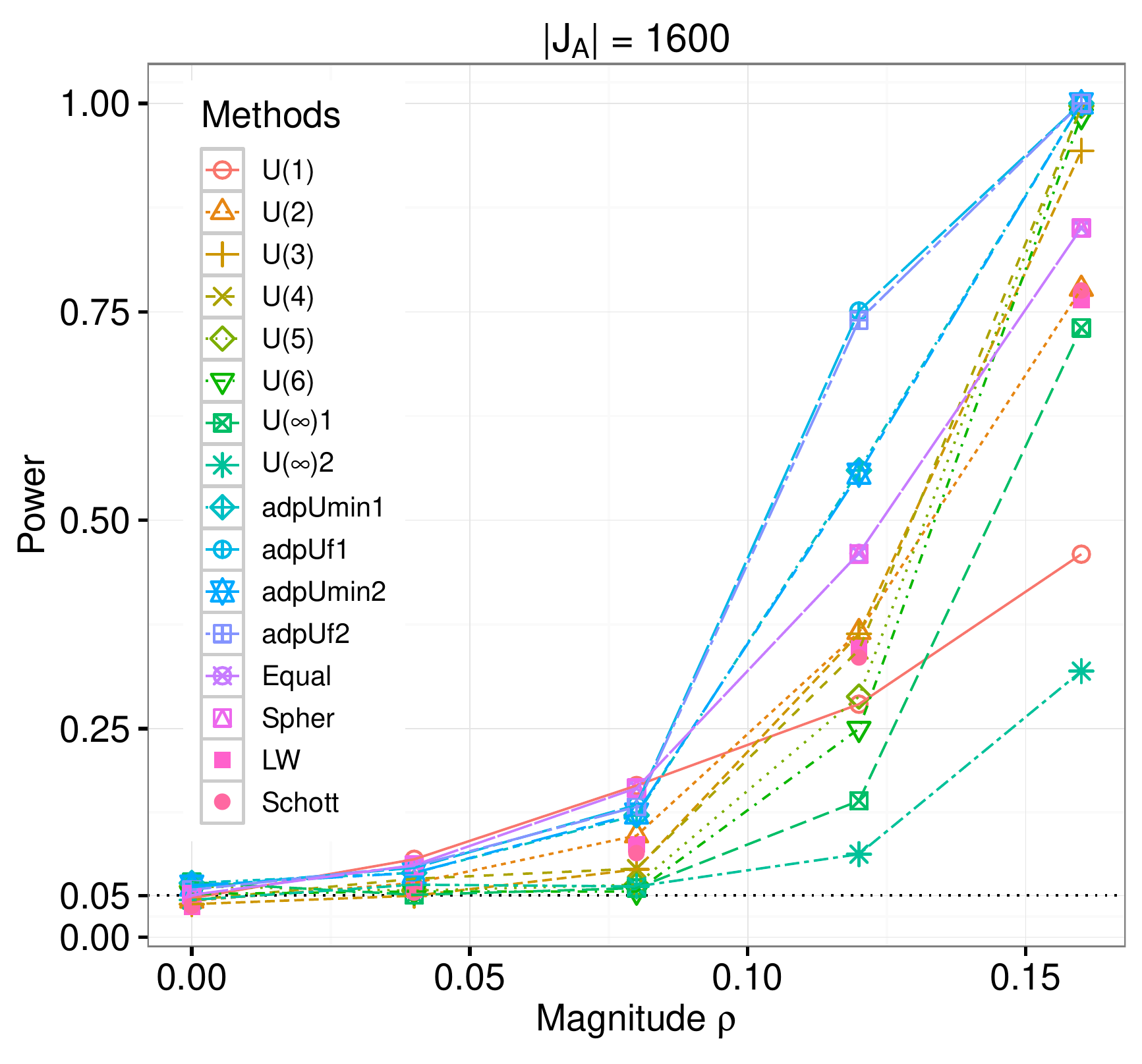}  \quad
       \includegraphics[width=0.48\textwidth,height=0.3\textheight]{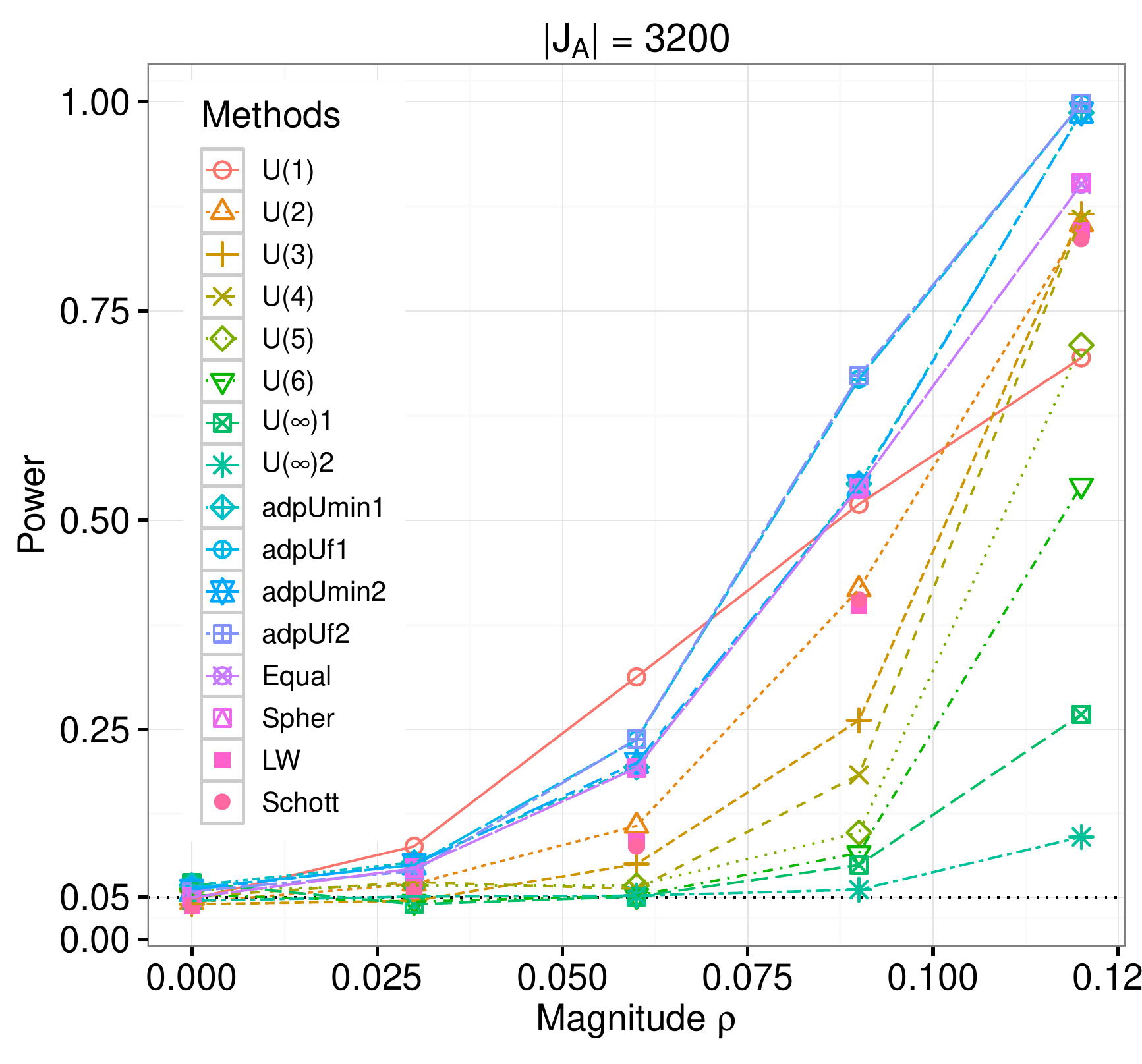}     
       \caption{Study 4: $n=100, p=1000$.  }
    \label{fig:study4p1000}
\end{figure}

\subsubsection{Study 5} \label{sec:simulationstudyii}
%
In this section, we compare our methods with the methods in \citet{chen2011} following their multivariate models. Specifically, for each $i=1,\ldots, n$, $\mathbf{x}_i= \Xi \mathbf{z}_i + \boldsymbol{\mu}$,  
where $\Xi$ is a matrix of dimension $p\times m$ with $m\geq p$. Under null hypothesis, $m=p$, $\Xi=I_p$ $\boldsymbol{\mu}=\mu_0 \mathbf{1}_p$ with $\mu_0=2$; under alternative hypothesis, $m=p+1$, $\boldsymbol{\mu}=2(\sqrt{1-\rho}+\sqrt{2\rho})\mathbf{1}_p$, $\Xi=(\sqrt{1-\rho} I_p, \sqrt{2\rho} \mathbf{1}_p)$, thus $\boldsymbol{\Sigma}=(1-\rho)I_p+2\rho \mathbf{1}_p \mathbf{1}_p^{\intercal}$.
Two settings  are examined: first, $\mathbf{z}_i$'s are i.i.d. multivariate Gaussian random vectors with mean $\mathbf{0}$ and covariance $I_p$; second, $\mathbf{z}_i=(z_{i,1},\ldots, z_{i,m})^{\intercal}$ consists of i.i.d. random variables $z_{i,j}$ which are standardized Gamma$(4,0.5)$ random variables so that $\mathbf{z}_i$ has mean $\mathbf{0}$ and covariance $I_p$. 


To mimic ``large $p$, small $n$" situation, \cite{chen2011} sets  dimension $p=c_1\exp(n^{\eta})+c_2$, where $\eta=0.4$, for $(c_1,c_2)=(1,10)$ and $(c_1,c_2)=(2,0)$ respectively. In particular, we consider $(n,p)\in \{(40,159), (40,331), (80,159),(80,331), \allowbreak (80,642)\}$.  The results are based on 1000 simulations and the nominal significance level of the tests is $5\%$.


In the tables \ref{tab:normalset1n80p331}--\ref{tab:gammaset1n80p642},  results outside and inside parentheses are calculated from parametric-permutation- and asymptotics-based methods, respectively. To be specific, psarametric-permutation-based method means estimating $p$-values or powers by permutation; and asymptotic-based method uses the asymptotic theoretical results and is described in Section \ref{sec:computtest}.   For each $a\in \{1,\ldots, 6, \infty\}$,  the row of ``$\mathcal{U}(a)$" has results using the single test statistic $\mathcal{U}(a)$; and the row of ``adpU" is obtained by the adaptive testing procedure which combines all single candidate U-statistics in the tables using the minimum combination.  In addition,  ``Ident" and ``Spher" rows  denote the identity and  sphericity tests in \cite{chen2011}  separately.

 


In the tables \ref{tab:normalset1n80p331}--\ref{tab:normalset1n80p642}, we find that the empirical sizes of most tests are close to the nominal level, except $\mathcal{U}(\infty)$ due to the slow convergence to extreme value distribution as pointed out in \cite{hall1979rate}.   ``Ident" and ``Spher" tests perform similarly to $\mathcal{U}(2)$ in both settings. This is reasonable because they are all sum-of-squares-type statistics. Moreover, for the $\rho$'s examined, $\mathcal{U}(1)$ has higher power than $\mathcal{U}(2)$, as the constructed alternative  is very dense and only has positive entries. In addition,  ``adpU" achieves  high power for different cases, and its power converges to 1, as one of the test statistics has power converging to 1.   In Tables \ref{tab:gammaset1n80p159} and  \ref{tab:gammaset1n80p642},  data are standardized with sample mean and variance. It can be seen that methods in \cite{chen2011} perform poorly in this case.  Other than this,  the results follow similar patterns to results in other tables.

\begin{table}[!htp]
\centering
\caption{Empirical Type \RNum{1} errors and power (\%) under simulation setting 1. $n=80, p = 331$. }
\label{tab:normalset1n80p331}
\begin{tabular}{rlllll}
  \hline\hline
 $\rho$ & 0 &  0.001& 0.002 & 0.003 & 0.004 \\ 
  \hline
$\mathcal{U}$(1) & 4.4 (4) & 93.4 (90.6) & 100 (99.9) & 100 (100) & 100 (100) \\ 
$\mathcal{U}$(2) & 5 (5.6) & 5.5 (6) & 7.2 (5.9) & 13.1 (10.2) & 19.7 (14.4) \\ 
$\mathcal{U}$(3) & 5.4 (6.1) &  4.5 (4) & 6.3 (5.4) & 6.9 (4.5) & 9 (5.4) \\ 
$\mathcal{U}$(4) & 4.7 (5.1) &  6 (5.4) & 3.7 (4.6) & 4.2 (5.3) & 6 (4.8) \\ 
$\mathcal{U}$(5) & 5.4 (6.3) &  4.9 (4.7) & 5.3 (5.6) & 6 (5.7) & 6.1 (5.1) \\ 
$\mathcal{U}$(6) & 4.6 (4.9) &  5.8 (5.4) & 4.9 (4.5) & 5.2 (4.8) & 4.8 (5) \\ 
$\mathcal{U}(\infty)$ & 4.7 (0.3) &  5 (0.6) & 5.5 (0.7) & 5.1 (0.4) & 5.9 (0.8) \\ 
  aSPU & 5 (5.4) &  81 (81.8) & 99.4 (99.4) & 100 (100) & 100 (100) \\ 
  Ident & 5.5 &  5.7 & 8.2 & 14.4 & 21.8 \\ 
  Spher & 5.6 &  5.7 & 8.1 & 14.2 & 21.4 \\ 
   \hline\hline
\end{tabular}
\end{table}


\begin{table}[!htbp]
\centering
\caption{Empirical Type \RNum{1} errors and power (\%) under simulation setting 2; $n=80, p = 331$.}
\label{tab:gammaset1n80p331}
\begin{tabular}{rlllll}
  \hline\hline
 $\rho$ & 0 & 0.001& 0.002 & 0.003 & 0.004 \\ 
  \hline
$\mathcal{U}$(1) & 5.3 (4.6)  & 56.7 (50.3) & 92.5 (89.3) & 99.3 (99.1) & 100 (99.8) \\ 
  $\mathcal{U}$(2) & 5.4 (5)& 5.5 (5.7) & 6.9 (5.4) & 7.7 (5.8) & 11.4 (7.3) \\ 
  $\mathcal{U}$(3) & 5.6 (5.4)  & 4.5 (3.5) & 5.7 (4) & 5.8 (4.8) & 7.2 (5.1) \\ 
  $\mathcal{U}$(4) & 4.8 (3.9)  & 4.9 (4.1) & 4.9 (5) & 6.5 (6.8) & 4.9 (5.1) \\ 
  $\mathcal{U}$(5) & 6.1 (5.1)  & 5.6 (6.1) & 5.1 (5.2) & 5.5 (5.7) & 5.2 (5.5) \\ 
  $\mathcal{U}$(6) & 6.4 (5.6)  & 5.4 (4.1) & 5.1 (5.3) & 5.1 (5.4) & 5.8 (5.3) \\ 
  $\mathcal{U}(\infty)$ & 5.5 (3)  & 5.3 (2.5) & 6 (2.8) & 5.5 (2.8) & 6.8 (3.1) \\ 
  adpU & 6.4 (6.5) & 35 (36.3) & 78.7 (79.2) & 96.1 (96.1) & 99.5 (99.6) \\ 
  Ident & 6.7  & 6.5 & 7.4 & 9.2 & 13.5 \\ 
  Spher & 6.2 & 6.2 & 7 & 9.1 & 12.9 \\ 
   \hline\hline
\end{tabular}
\end{table}

\begin{table}[!htp]
\centering
\caption{Empirical Type \RNum{1} errors and power (\%) under simulation setting 1; $n=40, p = 159$.}
\label{tab:normalset1n40p159}
\begin{tabular}{rllllll}
  \hline\hline
 $\rho$ & 0 & 0.0005 & 0.001 & 0.0015 & 0.002 & 0.0025 \\ 
  \hline
$\mathcal{U}$(1) & 5.8 (4.6) & 16.6 (13.6) & 36.5 (32.3) & 57.4 (51.3) & 69.2 (65.1) & 83.3 (80) \\ 
  $\mathcal{U}$(2) & 5.2 (4.9) & 4.6 (3.1) & 4.6 (5.6) & 5.3 (4.5) & 5.5 (4.8) & 5.9 (4.8) \\ 
  $\mathcal{U}$(3) & 4.9 (4.8) & 5.8 (5.4) & 5.6 (5.6) & 5.6 (4.9) & 4.6 (4.7) & 5.6 (5) \\ 
  $\mathcal{U}$(4) & 4.6 (5.7) & 4.2 (4.1) & 5.6 (4.6) & 4.7 (4.6) & 4.5 (5.1) & 5.3 (4.9) \\ 
  $\mathcal{U}$(5) & 5.5 (5.6) & 5.3 (6.2) & 5.7 (4.9) & 3.1 (3.1) & 4.7 (4.4) & 5.5 (5.4) \\ 
  $\mathcal{U}$(6) & 4.4 (4.3) & 4.8 (4.6) & 4.4 (4.7) & 4.3 (4.3) & 4.8 (4.6) & 5 (4.2) \\ 
  $\mathcal{U}(\infty)$ & 5.1 (0.1) & 5.1 (0.1) & 4.2 (0) & 4.6 (0.1) & 4.6 (0) & 5.5 (0.1) \\ 
  adpU & 5.7 (5.8) & 8.9 (10.6) & 18.5 (21.1) & 31.5 (34.2) & 47.4 (50.8) & 63.2 (66.2) \\ 
  Ident & 5.8 & 5.3 & 5.9 & 6.8 & 6.8 & 7.1 \\ 
  Spher & 5.8 & 5.1 & 5.7 & 6.5 & 6.5 & 7.2 \\ 
   \hline\hline
\end{tabular}
\end{table}

\begin{table}[!htp]
\centering
\caption{Empirical Type \RNum{1} errors and power (\%) under simulation setting 1; $n=40, p = 331$. }
\label{tab:normalset1n40p331}
\begin{tabular}{rllllll}
  \hline\hline
 $\rho$ & 0 & 0.0025 & 0.005 & 0.01 & 0.015 & 0.02 \\ 
  \hline
$\mathcal{U}$(1) & 5.9 (5.4) & 99.4 (99.3) & 100 (100) & 100 (100) & 100 (100) & 100 (100) \\ 
  $\mathcal{U}$(2) & 5.1 (4.4) & 7 (6.3) & 15.5 (10.7) & 65.8 (60) & 95.1 (93.1) & 99.3 (98.7) \\ 
  $\mathcal{U}$(3) & 5.4 (5.5) & 7.6 (4.6) & 13 (7.5) & 26.3 (19.7) & 53.9 (44.1) & 76.9 (68.9) \\ 
  $\mathcal{U}$(4) & 4.8 (5.1) & 4.9 (5.4) & 6.8 (5.6) & 6.3 (6.6) & 11.4 (7.7) & 14.4 (11.7) \\ 
  $\mathcal{U}$(5) & 5.9 (4.8) & 5.5 (4.9) & 7 (6.6) & 5.6 (4.9) & 8.6 (7.3) & 8.5 (8.2) \\ 
  $\mathcal{U}$(6) & 4.1 (4.9) & 3.4 (4.5) & 6.8 (4.6) & 4.8 (6.5) & 5.5 (6.6) & 8 (8.6) \\ 
  $\mathcal{U}(\infty)$ & 4.2 (0) & 4.1 (0) & 6.1 (0) & 4.9 (0) & 6.6 (0) & 7.3 (0.1) \\ 
  adpU & 5.2 (5.8) & 97.5 (98.5) & 100 (100) & 100 (100) & 100 (100) & 100 (100) \\ 
  Ident & 6.2 & 8.3 & 19.2 & 68 & 95.5 & 99.3 \\ 
  Spher & 6.3 & 8.2 & 18.6 & 67.6 & 95.4 & 99.3 \\ 
   \hline\hline
\end{tabular}
\end{table}

\begin{table}[!htp]
\centering
\caption{Empirical Type \RNum{1} errors and power (\%) under simulation setting 1; $n=80, p = 159$.}
\label{tab:normalset1n80p159}
\begin{tabular}{rllllll}
  \hline\hline
 $\rho$ & 0 & 0.0025 & 0.005 & 0.01 & 0.015 & 0.02 \\ 
  \hline
$\mathcal{U}$(1) & 5.7 (4.7) & 98.1 (97) & 100 (100) & 100 (100) & 100 (100) & 100 (100) \\ 
  $\mathcal{U}$(2) & 6.2 (5.1) & 6.8 (5.5) & 16.5 (11.4) & 68.4 (60.6) & 96.7 (94.7) & 100 (99.9) \\ 
  $\mathcal{U}$(3) & 6 (4.7) & 6.2 (5.5) & 7.4 (5.9) & 15.2 (9.2) & 34.8 (26.2) & 69.2 (61.4) \\ 
  $\mathcal{U}$(4) & 5.4 (5.6) & 4 (3.8) & 4.7 (4.2) & 7.6 (7.1) & 10.6 (9) & 18.2 (15.7) \\ 
  $\mathcal{U}$(5) & 4.5 (4.9) & 4.6 (4.2) & 4.8 (4.5) & 5.3 (5.3) & 9.6 (7.6) & 13.1 (13) \\ 
  $\mathcal{U}$(6) & 5.6 (5.3) & 3.9 (4.7) & 4 (3.3) & 5.3 (4.9) & 8.7 (8) & 12 (12.4) \\ 
  $\mathcal{U}(\infty)$ & 4.5 (0.8) & 6.1 (1.1) & 4.9 (1.4) & 5.4 (1.7) & 8 (1.5) & 10.7 (3.3) \\ 
  adpU & 5.7 (7) & 91.8 (92.6) & 99.8 (99.8) & 100 (100) & 100 (100) & 100 (100) \\ 
  Ident & 6.7 & 7.8 & 18.5 & 71.1 & 97.3 & 100 \\ 
  Spher & 6.7 & 7.2 & 18 & 69.6 & 97 & 100 \\
   \hline\hline
\end{tabular}
\end{table}

\begin{table}[!htp]
\centering
\caption{Empirical Type \RNum{1} errors and power (\%) under simulation setting 1; $n=80, p = 642$.}
\label{tab:normalset1n80p642}
\begin{tabular}{rllllll}
  \hline\hline
 $\rho$ & 0 & 0.0025 & 0.005 & 0.01 & 0.015 & 0.02 \\ 
  \hline
$\mathcal{U}$(1) & 5.8 (4.8) & 100 (100) & 100 (100) & 100 (100) & 100 (100) & 100 (100) \\ 
  $\mathcal{U}$(2) & 6.4 (6.2) & 17.9 (12.7) & 71.2 (63.4) & 99.8 (99.8) & 100 (100) & 100 (100) \\ 
  $\mathcal{U}$(3) & 5.2 (5.6) & 6.2 (3.6) & 19.3 (13.3) & 68.4 (57.3) & 96.4 (94) & 99.8 (99.6) \\ 
  $\mathcal{U}$(4) & 5.2 (5.2) & 6.2 (6.4) & 5.2 (5.2) & 8.5 (6.4) & 25 (18.3) & 57.9 (51.7) \\ 
  $\mathcal{U}$(5) & 6.4 (4.6) & 5 (5.2) & 6.4 (5.4) & 7.8 (7.2) & 11.7 (9.9) & 21.1 (16.9) \\ 
  $\mathcal{U}$(6) & 4 (4.2) & 5.8 (6.4) & 6 (6) & 4.2 (5.2) & 9.3 (10.3) & 13.1 (15.3) \\ 
  $\mathcal{U}(\infty)$ & 4.4 (0.6) & 5 (0.2) & 5.6 (0.4) & 7 (0.8) & 9.3 (0.8) & 15.3 (0.6) \\ 
  adpU & 6 (4.2) & 100 (100) & 100 (100) & 100 (100) & 100 (100) & 100 (100) \\ 
  Ident & 6.8 & 18.9 & 72.6 & 100 & 100 & 100 \\ 
  Spher & 6.6 & 18.7 & 72.6 & 100 & 100 & 100 \\ 
   \hline\hline
\end{tabular}
\end{table}

\begin{table}[!htp]
\centering
\caption{Empirical Type \RNum{1} errors and power (\%) under simulation setting 2; $n=80, p = 159$.}
\label{tab:gammaset1n80p159}
\begin{tabular}{rllllll}
  \hline\hline
 $\rho$ & 0 & 0.0005 & 0.001& 0.002 & 0.003 & 0.004 \\ 
  \hline
$\mathcal{U}$(1) & 4.9 (4.2) & 26.1 (20.4) & 57.1 (49.7) & 95.2 (93.1) & 99.9 (99.8) & 100 (99.9) \\ 
  $\mathcal{U}$(2) & 4.9 (4.4) & 3.9 (5.3) & 5.9 (5.2) & 6.7 (4.8) & 8.3 (5.6) & 12.2 (7.7) \\ 
  $\mathcal{U}$(3) & 5.4 (5.2) & 4.7 (5.3) & 4.3 (4.1) & 6 (4) & 5.9 (5.1) & 7 (5) \\ 
  $\mathcal{U}$(4) & 5.4 (4.9) & 5.5 (5.2) & 4.8 (4.8) & 5.9 (6.3) & 6.7 (7.2) & 4.6 (4.6) \\ 
  $\mathcal{U}$(5) & 7.3 (6.2) & 5.4 (5.6) & 5.8 (6.5) & 5.3 (6.3) & 5.8 (5.5) & 5.6 (5.6) \\ 
  $\mathcal{U}$(6) & 6.5 (5.6) & 4.9 (5) & 5.5 (5.3) & 4.9 (5.2) & 5.5 (5.4) & 4.2 (4.7) \\ 
  $\mathcal{U}(\infty)$ & 5.9 (3) & 5.7 (2.1) & 5.8 (2.5) & 5.7 (2.6) & 5.5 (2.9) & 6.7 (3.3) \\ 
  adpU & 5.7 (5) & 12.1 (13.1) & 34.8 (34.6) & 81.9 (82.6) & 98.1 (98.1) & 99.9 (99.8) \\ 
  Ident & 0.2 & 0.1 & 0.1 & 0.2 & 0.1 & 0.1 \\ 
  Spher & 0.2 & 0.1 & 0.1 & 0.2 & 0 & 0.1 \\ 
   \hline\hline
\end{tabular}
\end{table}

\begin{table}[!htp]
\centering
\caption{Empirical Type \RNum{1} errors and power (\%) under simulation setting 2; $n=80, p = 642$.}
\label{tab:gammaset1n80p642}
\begin{tabular}{rllllll}
  \hline\hline
 $\rho$ & 0 & 0.0005 & 0.001& 0.002 & 0.003 & 0.004 \\ 
  \hline
$\mathcal{U}$(1) & 2.8 (2.2) & 94.2 (93) & 100 (100) & 100 (100) & 100 (100) & 100 (100) \\ 
  $\mathcal{U}$(2) & 5.8 (4.2) & 4.2 (4.8) & 6 (5.6) & 11.9 (7.2) & 22.3 (14.5) & 45.9 (36.2) \\ 
  $\mathcal{U}$(3) & 3.6 (3.8) & 5.4 (5.2) & 7.2 (5) & 6 (3.6) & 11.9 (7.6) & 15.1 (9.3) \\ 
  $\mathcal{U}$(4) & 4.4 (4.4) & 4.6 (4.4) & 6.4 (6.2) & 4.8 (3.8) & 5.4 (5.2) & 7 (6.2) \\ 
  $\mathcal{U}$(5) & 7 (5.6) & 6 (5) & 6.2 (5.4) & 7 (6.2) & 6.6 (5.4) & 7.4 (5.6) \\ 
  $\mathcal{U}$(6) & 7 (5.4) & 5 (4.6) & 4.6 (5.6) & 6.8 (7.2) & 5.4 (4.6) & 5.6 (5.8) \\ 
  $\mathcal{U}(\infty)$ & 4.8 (2.2) & 6.2 (2.4) & 4.8 (0.8) & 6.2 (3) & 6.4 (2.6) & 5.2 (1.6) \\ 
  adpU & 5 (4) & 84.5 (85.9) & 100 (100) & 100 (100) & 100 (100) & 100 (100) \\ 
  Ident & 0 & 0.4 & 0.2 & 0.4 & 2.4 & 8.3 \\ 
  Spher & 0 & 0.4 & 0.2 & 0.4 & 2.4 & 7.8 \\ 
   \hline\hline
\end{tabular}
\end{table}

\newpage

\subsection{Simulations on Other Testing Examples} \label{sec:simulothertesting}

In this section, we provide the simulation results on other testing examples discussed in Section \ref{sec:extension}. We present simulations on generalized linear model in Section \ref{sec:glmsim}. In addition, we provide  simulations on two-sample covariance testing to examine the empirical type I error and power in Sections \ref{sec:twosamplecovsim1} and \ref{sec:twosamplecovsim2},  respectively.

\subsubsection{Study 6: GLM} \label{sec:glmsim}
In this study, we conduct simulations for generalized linear model considering the following model 
\begin{align}
	y_i = \mathbf{z}_i^{\intercal}\boldsymbol{\alpha} +\mathbf{x}_i^{\intercal}\boldsymbol{\beta} + \epsilon_i, \label{eq:simuglmmodel}
\end{align} for $i=1,\ldots, n$.
We generate i.i.d. $\mathbf{x}_i$  from the multivariate normal distribution $\mathcal{N}(0,\Sigma)$. We show the results with an equal variance and a first-order autoregressive correlation matrix case, that is,  $\Sigma = (0.4^{|i-j|})$.  We further generate $\mathbf{z}_i$  of two covariates with entries i.i.d. from standard normal distribution $\mathcal{N}(0,1)$, and  $\epsilon_i$ are the random errors following i.i.d. normal distribution $\mathcal{N}(0, 0.5)$. In \eqref{eq:simuglmmodel}, we take $\boldsymbol{\alpha}=(0.3,0.3)^{\intercal}$,  $\boldsymbol{\beta} = \mathbf{0}$ or $\neq \mathbf{0}$ corresponded to the null hypothesis $H_0$ and  the alternative hypothesis $H_A$, respectively. Under $H_A$, $\lfloor{p s} \rfloor$ elements in $\boldsymbol{\beta}$ are set to be non-zero, where $s \in [0,1]$ controls signal sparsity. We vary $s$ to mimic varying sparsity situations, from sparse to dense signals with $s \in \{0.001, 0.1, 0.3, 0.7, 0.9 \}$. 
The positions of non-zero elements in $\boldsymbol{\beta}$ are assumed to be uniformly distributed in $\{1,2,\dots,p\}$, and their values are constant $c$, where $c$ is the effect of signals that vary in the simulations.  The results are based on  1000 simulations with $5\%$ nominal significance level, $n=500$ and  $p=1000$.  We summarized the results in Figure \ref{fig:glmsimulation}. It shows similar patterns as in Study I.
\begin{figure}[!htbp]
    \centering
    \includegraphics[width=0.46\textwidth,height=0.3\textheight]{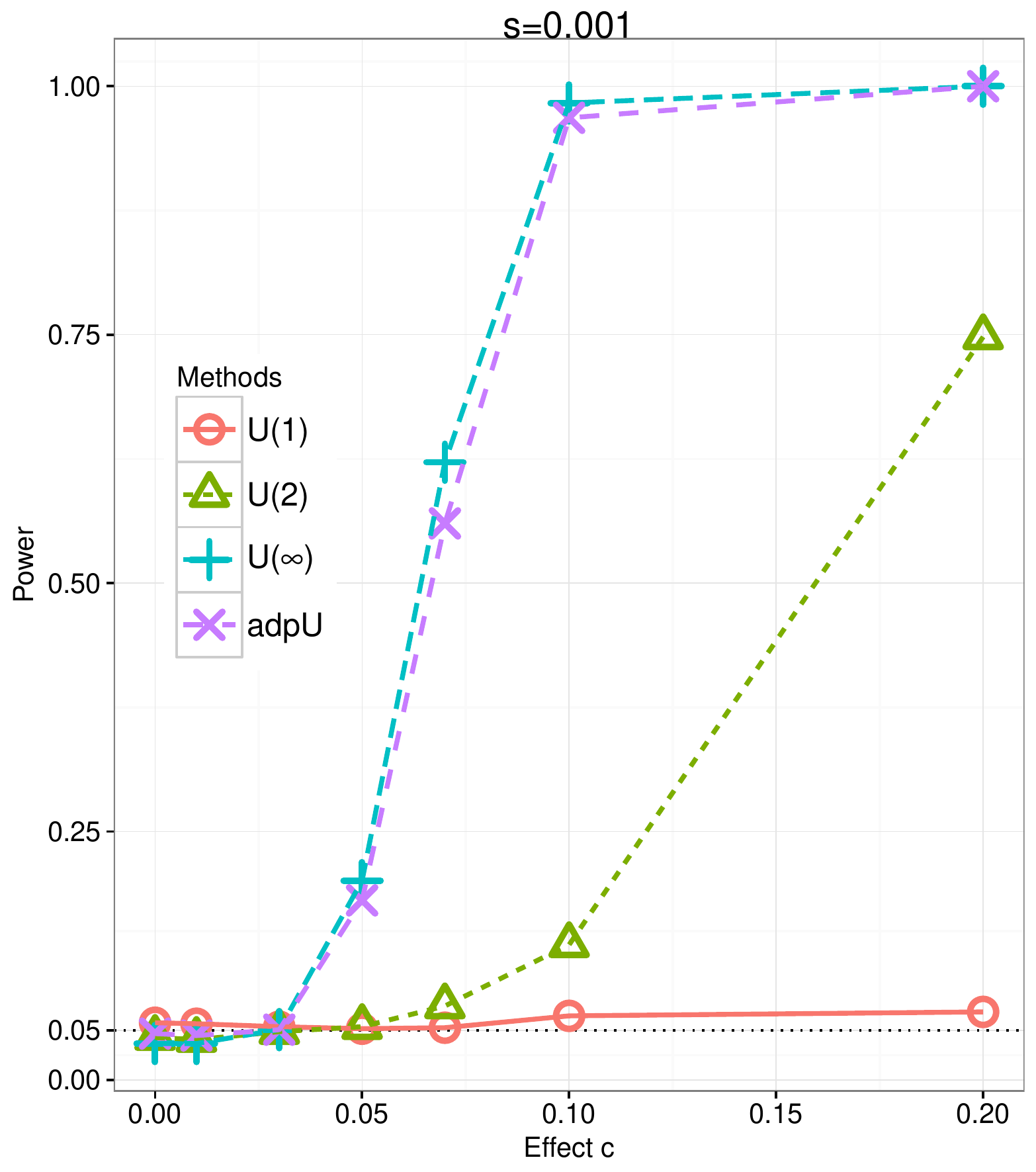} \quad \quad
       \includegraphics[width=0.46\textwidth,height=0.3\textheight]{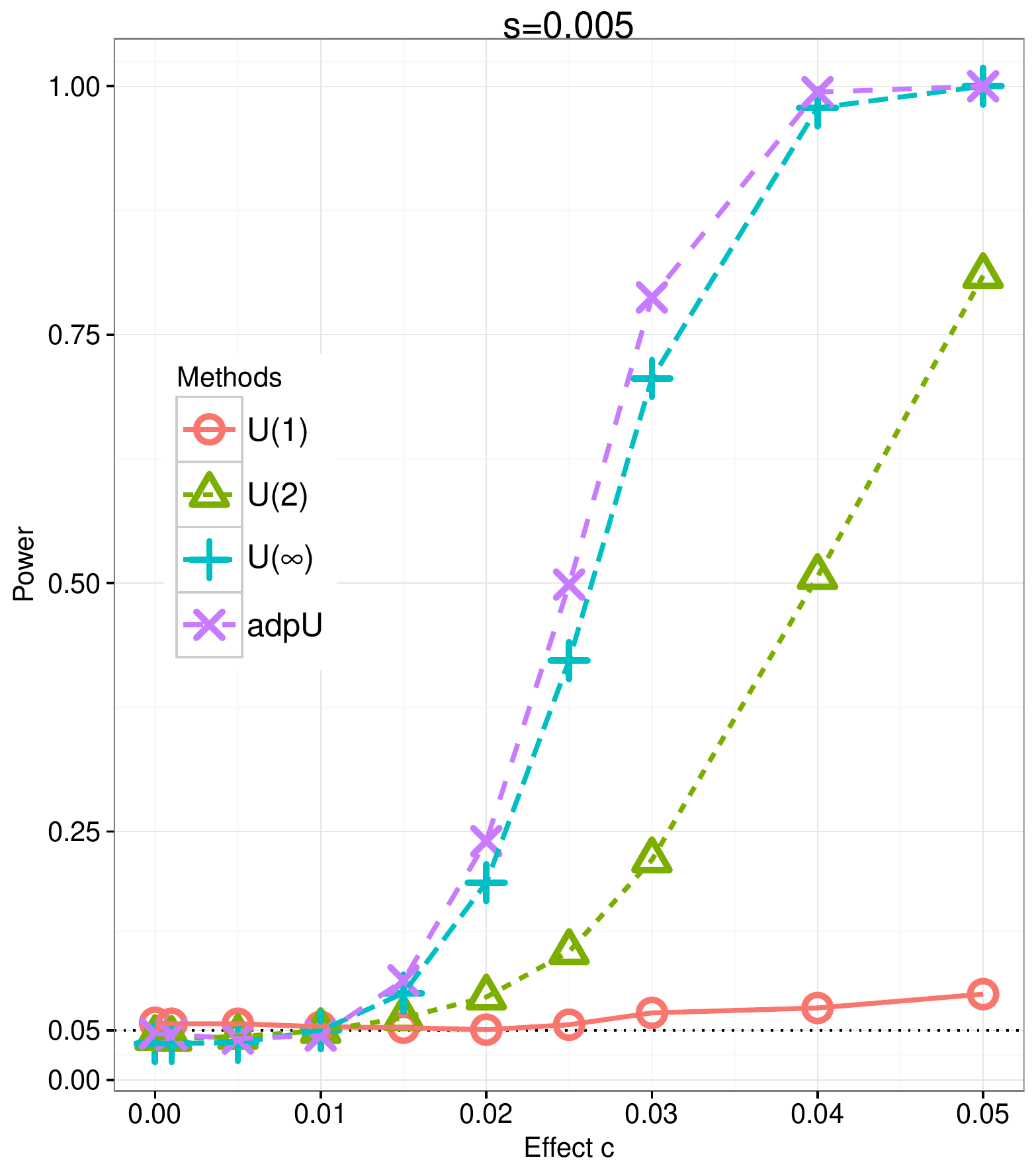}\\
           \includegraphics[width=0.46\textwidth,height=0.3\textheight]{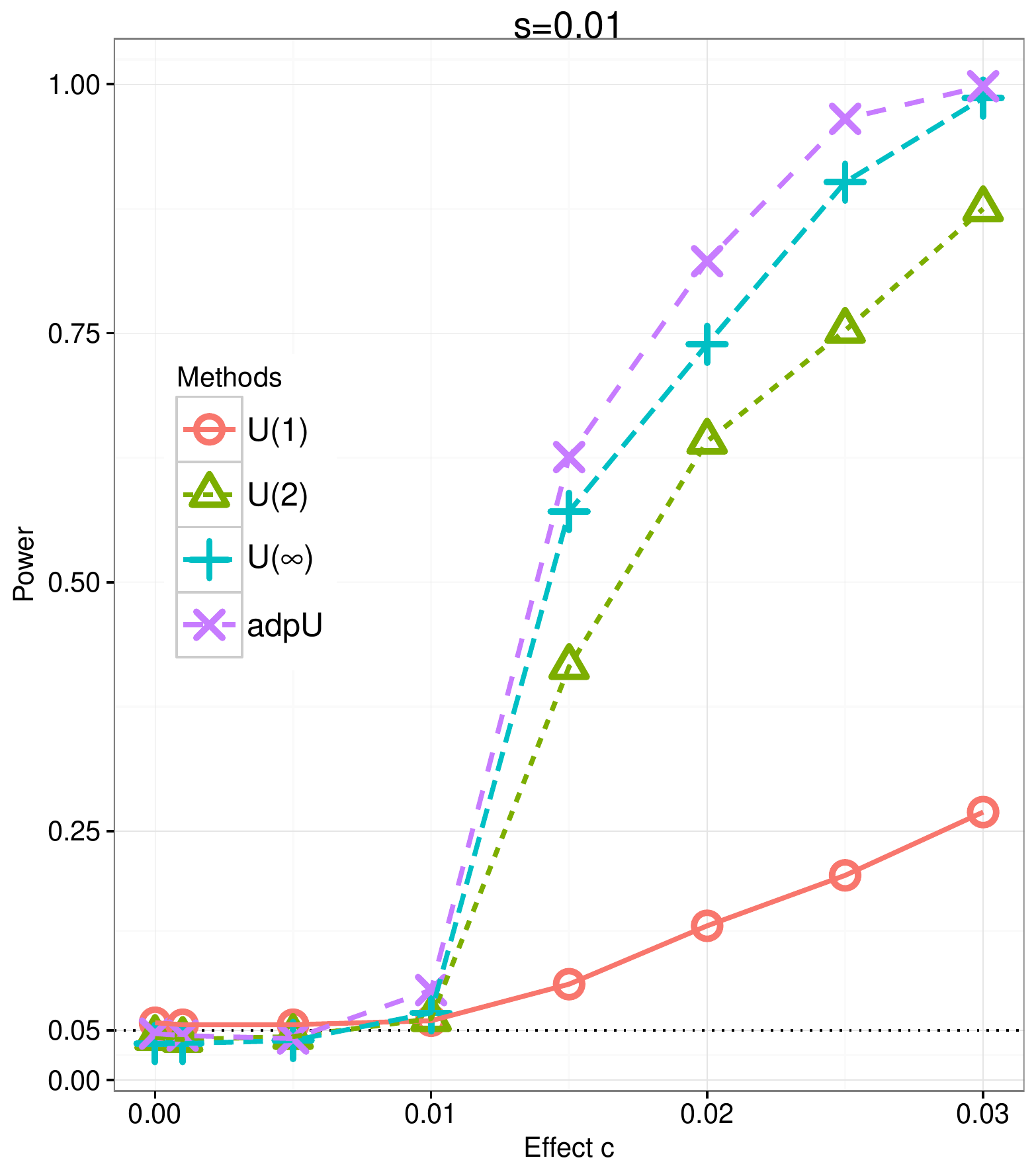} \quad \quad
       \includegraphics[width=0.46\textwidth,height=0.3\textheight]{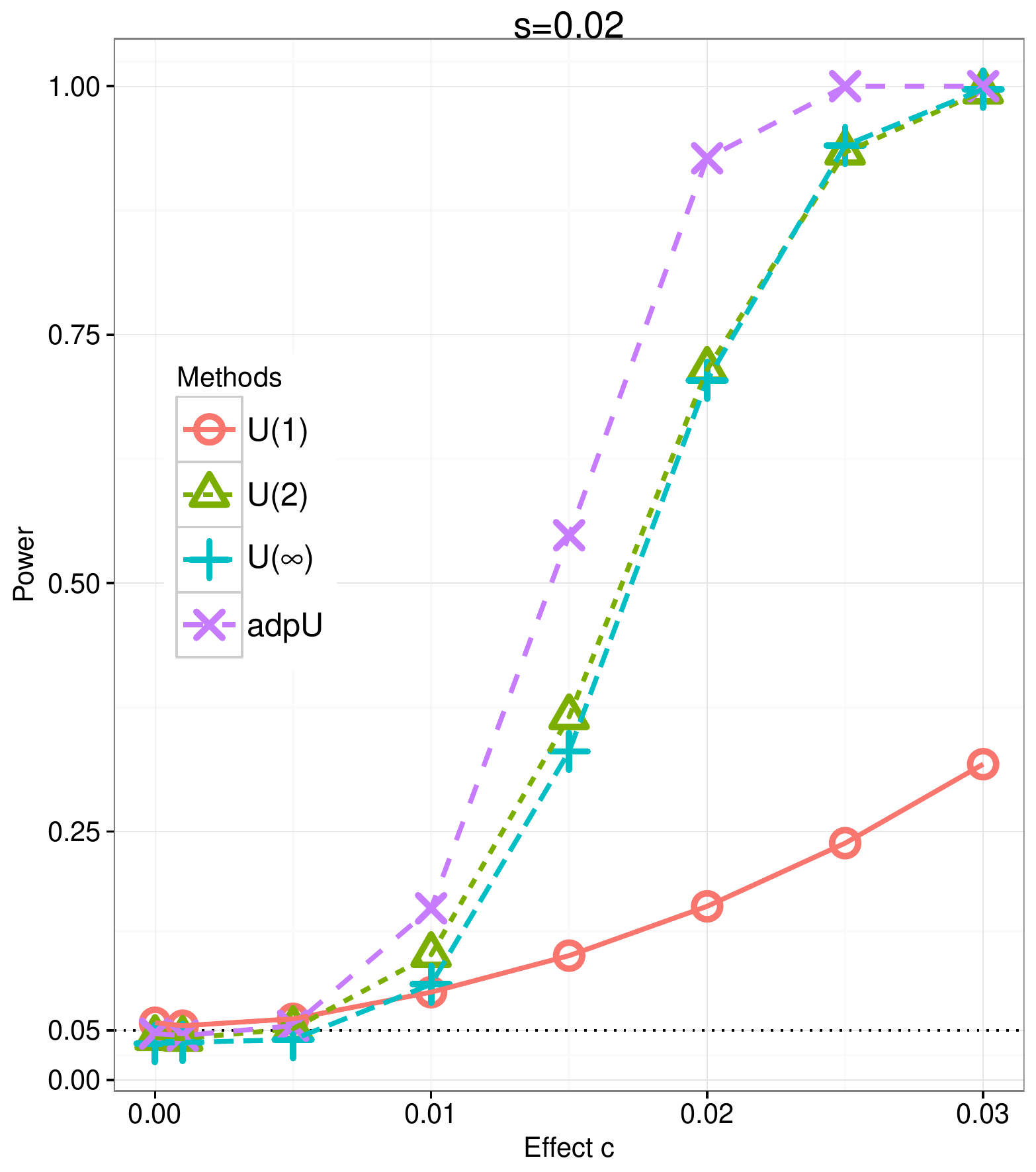}\\
           \includegraphics[width=0.46\textwidth,height=0.3\textheight]{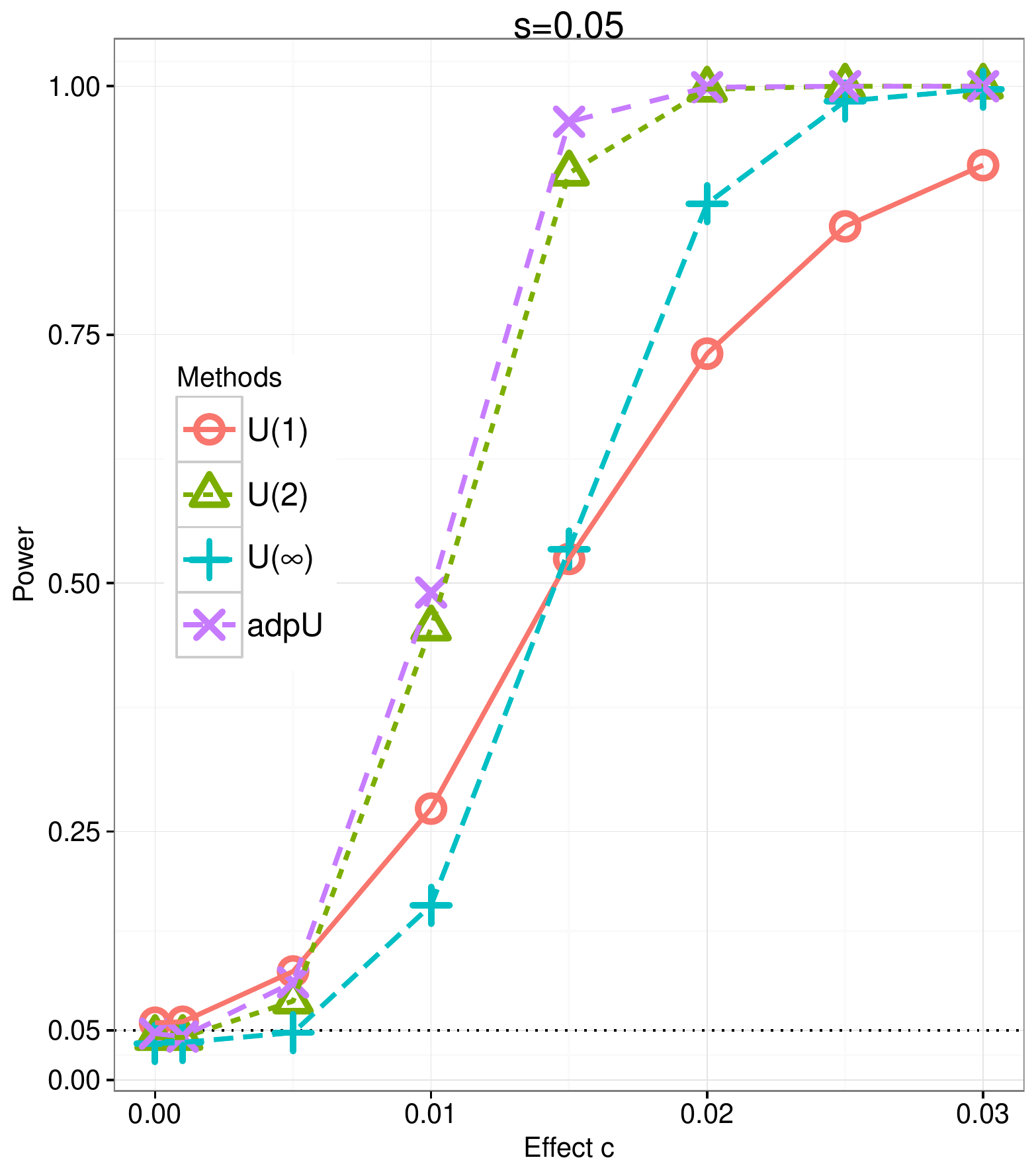}  \quad \quad
       \includegraphics[width=0.46\textwidth,height=0.3\textheight]{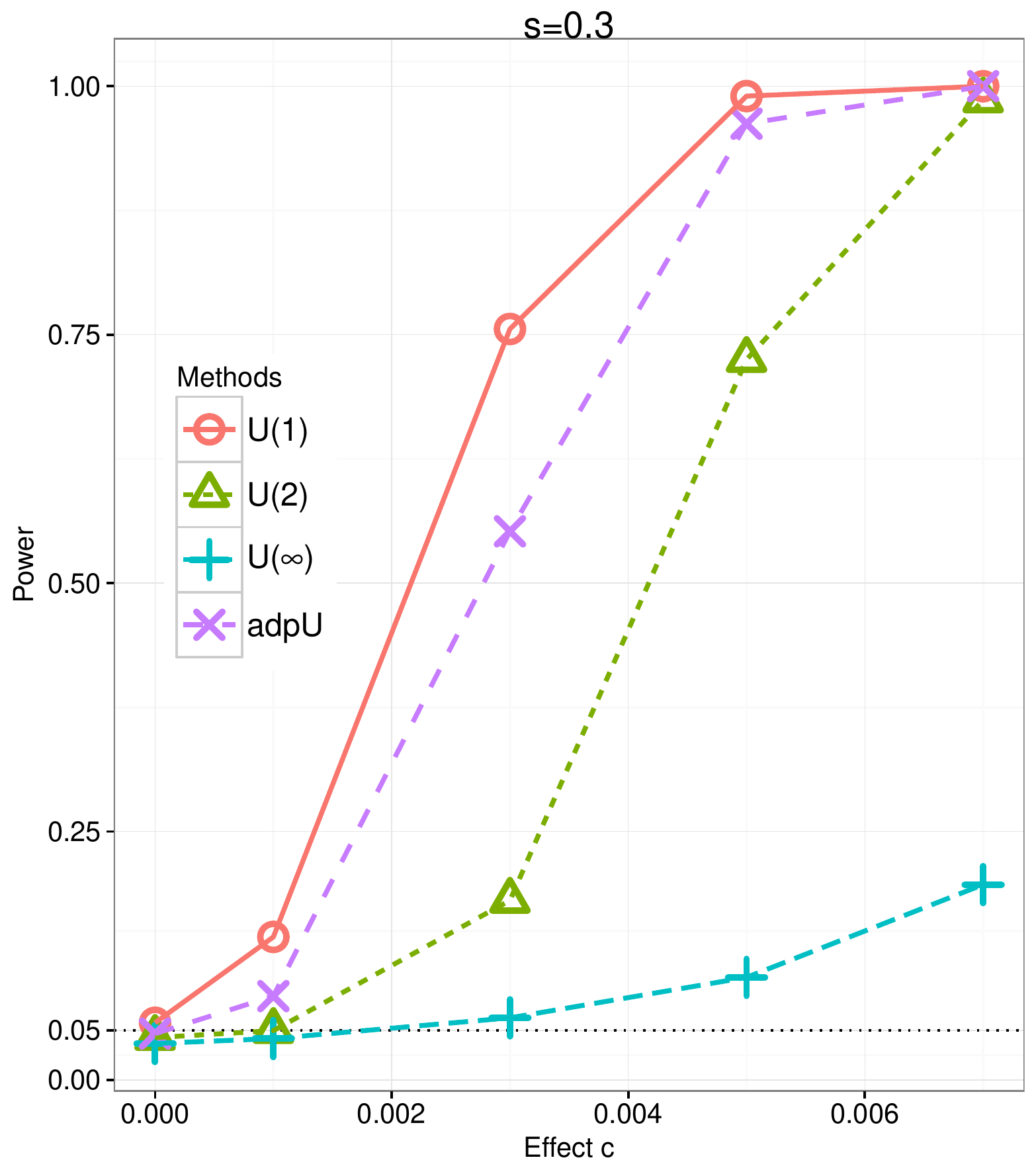}
    \caption{Power comparison under generalized linear model simulation setting. 
    }
    \label{fig:glmsimulation}
\end{figure}

\newpage

\subsubsection{Study 7: Two-sample covariance testing under $H_0$}\label{sec:twosamplecovsim1}

In this section, we examine the empirical  Type \RNum{1} errors  of the proposed the adaptive testing procedure and compare it with the other methods. 

We follow the simulation settings in \citet{weightstatsyang2017}. In particular, let $A(s)$ be the $s\times s$ covariance matrix of MA(1) model with the parameter $\theta_1=0.4$. 
In addition, $B=0.7 I_{p-s}$ is a $(p-s)\times (p-s)$ scaled identity matrix. We then define the matrix $Q(s)=\mathrm{BlkDiag}(A(s),B),$
where ``BlkDiag" indicates a block diagonal matrix. We take $s=p^{1/2}$ and $n=100$, and consider $\boldsymbol{\Sigma}_x=\boldsymbol{\Sigma}_y=Q(s)$. The results are presented in Table \ref{tb:twosamsize1}.

In Table \ref{tb:twosamsize1}, we provide the simulation results of the single U-statistics $\mathcal{U}(a)$ with $a\in \{1,\ldots,6\}$. In addition, we provide the simulation results of $\mathcal{U}(\infty)$ using permutation  and the asymptotic distribution in \citet{cai2013two}, which are denoted as ``$\mathcal{U}(\infty)$ permutation" and   ``$\mathcal{U}(\infty)$ Tony" respectively. Given the results of $\mathcal{U}(1),\ldots, \mathcal{U}(6)$ and ``$\mathcal{U}(\infty)$ (permutation)",  ``adpUmin 1" and  ``adpUf 1" represent the results of the adaptive testing procedure   using  minimum combination and Fisher's method respectively. Similarly, given the results of $\mathcal{U}(1),\ldots, \mathcal{U}(6)$ and ``$\mathcal{U}(\infty)$ (Tony)", ``adpUmin 2" and  ``adpUf 2" represent the results of the adaptive testing procedure   using  minimum combination and Fisher's method respectively.   Moreover, ``Schott", ``Sriva"  and ``Chen"   represent the methods in \citet{schott2007test,srivastava2010testing} and \citet{lijun2012},  respectively.  In addition, we denote the tests without and with Micro term in \citet{weightstatsyang2017} as ``Pan1" and ``Pan2" respectively. The tests in \cite{weightstatsyang2017} are time-consuming. Therefore we only provide the simulation results at $p=50$, which takes about 100 times the time of the proposed adaptive testing procedure.


Based on our simulation results, we find that the empirical  Type \RNum{1} errors  of the single U-statistics  are close the nominal levels, which verifies the theoretical results of Theorem \ref{thm:twosamnull}. Moreover, comparing ``$\mathcal{U}(\infty)$ (permutation)"  and ``$\mathcal{U}(\infty)$ (Tony)",  we find that using the asymptotic distribution in \citet{cai2013two} gives conservative  Type \RNum{1} errors  that are smaller than the nominal levels. In addition, by examining the results of minimum combination and Fisher's method, we find that both of the two methods give empirical  Type \RNum{1} errors  that are close to the nominal level,  while the Fisher's method may have slight size inflation compared to the minimum combination.

\begin{table}[ht]
\centering
\caption{Empirical Type-I errors under $\boldsymbol{\Sigma}_x=\boldsymbol{\Sigma}_y=Q(s)$; $n=100$, $s=p^{1/2}$}
\label{tb:twosamsize1}
\begin{tabular}{rrrrr}
  \hline \hline
$p$ & 50 & 100 & 200 & 300 \\ 
  \hline
$\mathcal{U}(1)$ & 0.052 & 0.055 & 0.040 & 0.039 \\ 
$\mathcal{U}(2)$ & 0.051 & 0.060 & 0.053 & 0.047 \\ 
$\mathcal{U}(3)$ & 0.048 & 0.061 & 0.054 & 0.054 \\ 
$\mathcal{U}(4)$ & 0.039 & 0.059 & 0.067 & 0.053 \\ 
$\mathcal{U}(5)$ & 0.056 & 0.046 & 0.041 & 0.066 \\ 
$\mathcal{U}(6)$ & 0.045 & 0.044 & 0.041 & 0.044 \\ 
$\mathcal{U}(\infty)$ (permutation) & 0.047 & 0.042 & 0.049 & 0.052 \\
 adpUmin 1 & 0.043 & 0.057 & 0.059 & 0.053 \\ 
  adpUf 1 & 0.076 & 0.081 & 0.060 & 0.076 \\ 
$\mathcal{U}(\infty)$ (Tony) & 0.018 & 0.024 & 0.016 & 0.013 \\  
  adpUmin 2 & 0.044 & 0.056 & 0.059 & 0.051 \\ 
  adpUf 2 & 0.051 & 0.056 & 0.040 & 0.050 \\ 
   Chen & 0.050 & 0.049 & 0.049 & 0.050 \\ 
  Sriva & 0.166 & 0.002 & 0.000 & 0.000 \\ 
  Schott & 0.074 & 0.119 & 0.236 & 0.418 \\ 
  Pan1 & 0.055 & NA & NA & NA  \\ 
  Pan2 & 0.058 & NA & NA  &  NA \\ 
   \hline \hline
\end{tabular}
\end{table}


\newpage

\subsubsection{Study 8: Two-sample covariance testing power}\label{sec:twosamplecovsim2}
In this section, we examine the power of the two-sample covariance testing. 

We follow the covariance matrix models in \citet{weightstatsyang2017}. In particular, let $H(\tau_0,\tau_1,r)=(h_{i,j})_{p\times p}$, where $h_{i,j}=0$ except $h_{i,i}=\tau_0$, $i=1,\ldots,r$ and $h_{i,i+1}=h_{i,i-1}=\tau_1$, $i=1,\ldots, r-1$. Here $\tau_0$ and $\tau_1$ are used to measure the level of faint alternatives and $r$ is used to measure the sparsity level  of  alternative.  We fix $\boldsymbol{\Sigma}_x=I_{p}$, the $p\times p$ identity matrix, and  examine the following three representative covariance matrix models of $\boldsymbol{\Sigma}_y$. 

\textit{Model 1:} (Extreme faint, $\tau_0=0.04, \tau_1=0.2, r=p$).  $\boldsymbol{\Sigma}_y=I_p+H(0.04, 0.2, p)$. This matrix can also be considered as the covariance matrix of MA(1) model with the parameter $\theta_1=0.2$, which is also used in  \citet{lijun2012}. 

\textit{Model 2:} (Extreme sparse, $\tau_0=1, \tau_1=1.5, r=2$). $\boldsymbol{\Sigma}_y=I_p+H(1, 1.5, 2)$. This model only has four large disturbances compared with $\boldsymbol{\Sigma}_x$, which is regarded as the extreme sparse (ES) alternative.

\textit{Model 3:} (Reasonable faint and sparse, $\tau_0=0.3, \tau_1=0.3, r=p/10$) $\boldsymbol{\Sigma}_y=I_p+H(0.3, 0.3, p/10)$. The value of $r$ here is between $2$ (in Model 2) and $p$ (in Model 1), which is regarded as a moderately sparse setting. 

Under each model above, we take $n=100$, $p\in \{50, 100, 200, 300\}$, and provide the simulation results of the Models 1--3  in the Tables \ref{tb:model1}--\ref{tb:model3} respectively. The explanation of each row are the same as in Table \ref{tb:twosamsize1}, which is given in Section \ref{sec:twosamplecovsim1}.  Similarly, we note that the tests in \citet{weightstatsyang2017} are very time-consuming. Therefore for ``Pan 1" and ``Pan 2",  we only provide the simulation results at $p=50$, which takes about 100 times the time of the proposed adaptive testing procedure.


\begin{table}[!ht]
\centering
\caption{Empirical Power under Model 1 (Extreme faint); $n=100$.}
\label{tb:model1}
\begin{tabular}{rrrrr}
  \hline \hline
$p$ & 50 & 100 & 200 & 300 \\ 
  \hline
$\mathcal{U}(1)$ & 0.397 & 0.389 & 0.408 & 0.416 \\ 
$\mathcal{U}(2)$ & 0.445 & 0.458 & 0.456 & 0.484 \\ 
$\mathcal{U}(3)$ & 0.290 & 0.309 & 0.354 & 0.371 \\ 
$\mathcal{U}(4)$ & 0.197 & 0.211 & 0.199 & 0.205 \\ 
$\mathcal{U}(5)$ & 0.244 & 0.397 & 0.752 & 0.855 \\ 
$\mathcal{U}(6)$ & 0.054 & 0.052 & 0.054 & 0.091 \\ 
$\mathcal{U}(\infty)$ (permutation) & 0.066 & 0.062 & 0.044 & 0.029 \\ 
  adpUmin 1 & 0.478 & 0.511 & 0.692 & 0.783 \\ 
  adpUf 1 & 0.600 & 0.648 & 0.843 & 0.886 \\ 
$\mathcal{U}(\infty)$ (Tony) & 0.091 & 0.072 & 0.087 & 0.072 \\ 
  adpUmin 2 & 0.480 & 0.513 & 0.691 & 0.781 \\ 
  adpUf 2 & 0.619 & 0.669 & 0.855 & 0.903 \\ 
  Chen & 0.573 & 0.574 & 0.569 & 0.623 \\ 
  Sriva & 0.513 & 0.586 & 0.598 & 0.569 \\ 
  Schott & 0.667 & 0.731 & 0.888 & 0.956 \\ 
  Pan1 & 0.640 & NA  & NA & NA \\ 
  Pan2 & 0.669 & NA & NA & NA \\ 
   \hline \hline
\end{tabular}
\end{table}

\begin{table}[!ht]
\centering
\caption{Empirical Power under Model 2 (Extreme sparse); $n=100$.}
\label{tb:model2}
\begin{tabular}{rrrrr}
  \hline \hline
$p$ & 50 & 100 & 200 & 300 \\ 
  \hline
$\mathcal{U}(1)$ & 0.068 & 0.056 & 0.048 & 0.049 \\ 
$\mathcal{U}(2)$ & 0.725 & 0.364 & 0.122 & 0.086 \\ 
$\mathcal{U}(3)$ & 0.993 & 0.960 & 0.850 & 0.660 \\ 
$\mathcal{U}(4)$ & 1.000 & 0.997 & 0.988 & 0.956 \\ 
$\mathcal{U}(5)$ & 0.934 & 0.874 & 0.803 & 0.682 \\ 
$\mathcal{U}(6)$ & 0.972 & 0.960 & 0.935 & 0.914 \\ 
$\mathcal{U}(\infty)$ (permutation) & 0.966 & 0.919 & 0.852 & 0.772 \\ 
  adpUmin 1 & 1.000 & 0.992 & 0.984 & 0.959 \\ 
  adpUf 1 & 1.000 & 0.996 & 0.989 & 0.970 \\
$\mathcal{U}(\infty)$ (Tony) & 0.999 & 1.000 & 0.997 & 1.000 \\  
  adpUmin 2 & 1.000 & 0.997 & 0.993 & 0.995 \\ 
  adpUf 2 & 1.000 & 0.999 & 0.992 & 0.992 \\ 
  Chen & 0.800 & 0.457 & 0.196 & 0.127 \\ 
  Sriva & 0.787 & 0.433 & 0.166 & 0.101 \\ 
  Schott & 0.864 & 0.640 & 0.550 & 0.654 \\ 
  Pan1 & 0.673 & NA & NA & NA \\ 
  Pan2 & 0.694 & NA & NA & NA \\ 
   \hline \hline
\end{tabular}
\end{table}

\begin{table}[!ht]
\centering
\caption{Empirical Power under Model 3 (Reasonable faint and sparse); $n=100$.}
\label{tb:model3}
\begin{tabular}{rrrrr}
  \hline \hline
$p$ & 50 & 100 & 200 & 300 \\ 
  \hline
$\mathcal{U}(1)$ & 0.072 & 0.067 & 0.069 & 0.070 \\ 
$\mathcal{U}(2)$ & 0.090 & 0.096 & 0.096 & 0.083 \\ 
$\mathcal{U}(3)$ & 0.155 & 0.151 & 0.152 & 0.145 \\ 
$\mathcal{U}(4)$ & 0.175 & 0.162 & 0.162 & 0.154 \\ 
$\mathcal{U}(5)$ & 0.347 & 0.582 & 0.868 & 0.946 \\ 
$\mathcal{U}(6)$ & 0.308 & 0.494 & 0.732 & 0.854 \\ 
$\mathcal{U}(\infty)$ (permutation) & 0.028 & 0.034 & 0.027 & 0.018 \\ 
  adpUmin 1 & 0.337 & 0.496 & 0.797 & 0.901 \\ 
  adpUf 1 & 0.355 & 0.535 & 0.802 & 0.910 \\
$\mathcal{U}(\infty)$ (asymptotic) & 0.254 & 0.319 & 0.409 & 0.403 \\    
  adpUmin 2 & 0.348 & 0.508 & 0.798 & 0.901 \\ 
  adpUf 2 & 0.426 & 0.620 & 0.862 & 0.940 \\ 
  Chen & 0.138 & 0.149 & 0.153 & 0.144 \\ 
  Sriva & 0.092 & 0.096 & 0.097 & 0.100 \\ 
  Schott & 0.189 & 0.283 & 0.486 & 0.712 \\ 
  Pan1 & 0.167 & NA & NA & NA \\ 
  Pan2 & 0.186 & NA & NA & NA \\ 
   \hline \hline
\end{tabular}
\end{table}

We then analyze the simulation results. Model 1 is the extreme faint case and $\boldsymbol{\Sigma}_y-\boldsymbol{\Sigma}_x$ is dense. We find that under this case, the U-statistics of small orders, e.g., $\mathcal{U}(1)$ and $\mathcal{U}(2)$ are powerful. The tests based on the sum-of-squares type statistics including ``Chen", ``Sriva" and ``Schott"  are also powerful under this case. Our proposed adaptive testing procedure using Fisher's method has comparable power performance to ``Pan 1" and ``Pan 2", and is computationally more efficient.  Model 2 is the extreme sparse case. Under this case, we find that generally U-statistics of higher orders, e.g., $\mathcal{U}(4)$ and $\mathcal{U}(\infty)$, are more powerful than the U-statistics of smaller orders, e.g., $\mathcal{U}(1)$ and $\mathcal{U}(2)$. Model 3 is the moderately faint and sparse case. Under this case, we can see that a finite-order U-statistic $\mathcal{U}(5)$ is the most powerful one. Neither the maximum-type test statistic $\mathcal{U}(\infty)$ and  the sum-of-squares type test statistic $\mathcal{U}(2)$, ``Chen", ``Sriva" and ``Schott" are very powerful.  Tests in \cite{weightstatsyang2017} considering only faint or sparse alternatives are not very powerful under this case. On the other hand, the proposed adaptive testing procedure maintains high power under this case.


\bibliographystyle{chicago}
\bibliography{Citation.bib}

\end{document}